\newcommand{\xdasharrow}[2][->]{
\tikz[baseline=-\the\dimexpr\fontdimen22\textfont2\relax]{
\node[anchor=south,font=\scriptsize, inner ysep=1.5pt,outer xsep=2.2pt](x){#2};
\draw[shorten <=3.4pt,shorten >=3.4pt,dashed,#1](x.south west)--(x.south east);
}
}
\newcommand{\moplus}{\mathop{\textstyle{\bigoplus}}\limits}
\DeclareMathSymbol{\shortminus}{\mathbin}{AMSa}{"39}
\newcommand{\CC}{\mathbb{C}}
\newcommand{\ZZ}{\mathbb{Z}}
\newcommand{\PP}{\mathbb{P}}
\newcommand{\QQ}{\mathbb{Q}}
\newcommand{\type}[1]{$\mathrm{#1}$}
\newcommand{\ntype}[3]{\mbox{$\mathbf{#2#3}$}}
\newcommand{\typeb}{$(\mathrm{B}\raisebox{-.02ex}{\scriptsize{$\tfrac{1}{2}$}})$}
\newcommand{\typemm}[2]{\mbox{$({#1}\textrm{-}{#2})$}}
\newcommand{\io}{\upiota}
\newcommand{\whom}{\widehat\Omega}
\newcommand{\dd}{\operatorname{d}}
\newcommand{\p}{\operatorname{p}_{\mathrm{a}}}
\newcommand{\h}{\mathbf{h}}
\newcommand{\rX}{\mathrm{X}}
\newcommand{\rY}{\mathrm{Y}}
\newcommand{\cD}{\mathcal{D}}
\newcommand{\cG}{\mathcal{G}}
\newcommand{\cN}{\mathscr{N}}
\newcommand{\cK}{\mathcal{K}}
\newcommand{\balpha}{{\boldsymbol{\upalpha}}}
\newcommand{\Eff}{\overline{\operatorname{Eff}}}
\newcommand{\Nef}{\operatorname{Nef}}
\newcommand{\Mov}{\overline{\operatorname{Mov}}}
\newcommand{\NE}{\overline{\operatorname{NE}}}
\newcommand{\SO}{\operatorname{SO}}
\newcommand{\Sp}{\operatorname{Sp}}
\newcommand{\PGL}{\operatorname{PGL}}
\newcommand{\Sing}{\operatorname{Sing}}
\newcommand{\Pic}{\operatorname{Pic}}
\newcommand{\Cl}{\operatorname{Cl}}
\newcommand{\Gr}{\operatorname{Gr}}
\newcommand{\CGr}{\operatorname{CGr}}
\newcommand{\OGr}{\operatorname{OGr}}
\newcommand{\LGr}{\operatorname{LGr}}
\newcommand{\GtGr}{\operatorname{G_2Gr}}
\newcommand{\Bs}{\operatorname{Bs}}
\newcommand{\can}{{\operatorname{can}}}
\newcommand{\sm}{{\operatorname{sm}}}
\newcommand{\xref}[1]{\textup{\ref{#1}}}
\newcommand{\hB}{\widehat{B}}
\newcommand{\hS}{\widehat{S}}
\newcommand{\hX}{\widehat{X}}
\newcommand{\hY}{\widehat{Y}}
\newcommand{\tE}{\tilde{E}}
\newcommand{\tM}{\widetilde{M}}
\newcommand{\tS}{\tilde{S}}
\newcommand{\tX}{\tilde{X}}
\newcommand{\tY}{\tilde{Y}}
\newcommand{\tD}{\tilde{D}}
\newcommand{\tF}{\tilde{F}}
\newcommand{\cE}{\mathscr{E}}
\newcommand{\cF}{\mathscr{F}}
\newcommand{\cH}{\mathscr{H}}
\newcommand{\cI}{\mathscr{I}}
\newcommand{\cL}{\mathscr{L}}
\newcommand{\cO}{\mathscr{O}}
\newcommand{\cR}{\mathscr{R}}
\newcommand{\cS}{\mathcal{S}}
\newcommand{\cT}{\mathscr{T}}
\newcommand{\cU}{\mathscr{U}}
\newcommand{\cX}{\mathscr{X}}
\newcommand{\cZ}{\mathscr{Z}}
\DeclareMathOperator{\Bl}{Bl}
\DeclareMathOperator{\Fl}{Fl}
\DeclareMathOperator{\codim}{codim}
\DeclareMathOperator{\Proj}{Proj}
\DeclareMathOperator{\rank}{rank}
\DeclareMathOperator{\Hom}{Hom}
\DeclareMathOperator{\Ext}{Ext}
\DeclareMathOperator{\Ker}{Ker}
\DeclareMathOperator{\Sym}{Sym}
\newcommand{\rc}{\mathrm{c}}
\newcommand{\rM}{\mathrm{M}}
\newcommand{\rT}{\mathrm{T}}
\newcommand{\tZ}{\widetilde{Z}}
\newcommand{\rC}{\mathrm{C}}
\newcommand{\g}{\mathrm{g}}
\newcommand{\hh}{\operatorname{h}}
\newcommand{\xrightiso}[1]{ \xrightarrow[{\ \raisebox{0.5ex}[0ex][0ex]{$\sim$}\ }]{#1} }
\newcommand{\xleftiso}[1]{ \xleftarrow[{\ \raisebox{0.5ex}[0ex][0ex]{$\sim$}\ }]{#1} }
\newcommand{\Blw}[1]{\operatorname{Bl}_{[#1]}}
\newenvironment{aenumerate}{\begin{enumerate}[label={\textup{(\alph*)}}]}{\end{enumerate}}
\newenvironment{arenumerate}{\begin{enumerate}[label={\textup{(\arabic*)}}]}{\end{enumerate}} 
\newenvironment{thmenumerate}{\begin{enumerate}[wide, label={\textup{(\alph*)}}]}{\end{enumerate}}
\theoremstyle{plain}
\newtheorem{theorem}{Theorem}[section]
\newtheorem{lemma}[theorem]{Lemma}
\newtheorem{proposition}[theorem]{Proposition}
\newtheorem{corollary}[theorem]{Corollary}
\newtheorem*{claim*}{Claim}
\theoremstyle{definition}
\newtheorem{definition}[theorem]{Definition}
\newtheorem*{definition*}{Definition}
\newtheorem*{notation*}{Notation}
\newtheorem{remark}[theorem]{Remark}
\title{One-nodal Fano threefolds with Picard number one}
\author{Alexander Kuznetsov}
\address{
Steklov Mathematical Institute of Russian Academy of Sciences, Moscow, Russia
\newline\indent
Laboratory of Algebraic Geometry, HSE, 6 Usacheva str., Moscow, Russia}
\email{akuznet@mi-ras.ru} 
\author{Yuri Prokhorov}
\address{
Steklov Mathematical Institute of Russian Academy of Sciences, Moscow, Russia
\newline\indent
Department of Algebra, Faculty of Mathematics, Moscow State
University, Moscow, 119 991, Russia
\newline\indent
Laboratory of Algebraic Geometry, HSE, 6 Usacheva str., Moscow, Russia}
\email{prokhoro@mi-ras.ru} 
\thanks{We were partially supported by the HSE University Basic Research Program.}
\begin{document}

\begin{abstract}
We classify all $1$-nodal degenerations of smooth Fano threefolds with Picard number~$1$ (both nonfactorial and factorial) 
and describe their geometry.
In particular, we describe a relation between such degenerations and smooth Fano threefolds of higher Picard rank 
and with unprojections of complete intersection varieties.
\end{abstract}

\maketitle
\tableofcontents

\section{Introduction}

The goal of this paper is to provide a systematic and precise classification of {\sf $1$-nodal Fano threefolds of Picard number~$1$},
i.e., projective threefolds~$X$ with Picard group~$\Pic(X) \cong \ZZ$, ample anticanonical class~$-K_X$, and a single ordinary double point. 

Many partial classification results of this sort can be found in the literature, see, e.g.,
\cite{Jahnke2006},
\cite{Jahnke-Peternell-Radloff-II},
\cite{Cutrone-Marshburn},
\cite{P:V22},
\cite{Prokhorov2017},
\cite{P:ratF-1},
\cite{P:ratFano2:22},
\cite{Takeuchi:DP},
\cite{CKMS}.
However, none of these papers contains a full and detailed classification, with an accurate description of all geometric aspects.
We are filling this gap, at the same time trying to make our exposition self-contained and rigorous
and classification structured.
\medskip 

There are a few reasons to study $1$-nodal Fano threefolds, 
or more generally, Fano threefolds with terminal Gorenstein singularities:
\begin{itemize}[wide]
\item 
According to the Minimal Model Program, Fano varieties with terminal Gorenstein singularities
provide building blocks for arbitrary rationally connected varieties,
and $1$-nodal Fano threefolds is the simplest non-smooth class of such varieties.
\item 
Namikawa proved in~\cite{Na97} that any Fano threefold with terminal Gorenstein singularities 
admits a deformation to a nodal Fano threefold, and then a smoothing.
Therefore, $1$-nodal Fano threefolds correspond to general points 
of the boundary divisors of the moduli stacks of Fano threefolds 
(where the interior corresponds to smooth threefolds).
\item 
Derived categories of $1$-nodal Fano threefolds can be used to relate 
the interesting parts of derived categories of smooth Fano threefolds of different deformation types,
see~\cite{KS23}.
\end{itemize}

All these reasons already provide a good motivation for our study.
But the most important reason is that the resulting classification looks beautiful
and provides a new perspective on the structure of Fano classification problems in general. 

It is worth here pointing out an important subtlety.
Although formally (or even \'etale locally) all ordinary double points look the same,
there is an important distinction in the global geometry of $1$-nodal varieties.
Indeed, if~$(X,x_0)$ is a threefold with a single ordinary double point~$x_0 \in X$,
there is a natural left-exact sequence
\begin{equation}
\label{eq:pic-cl}
0 \longrightarrow \Pic(X) \longrightarrow \Cl(X) \xrightarrow{\ r_{x_0}\ } \ZZ,
\end{equation}
where the last term can be identified with the local class group~$\Cl(X,x_0)$
and the map~$r_{x_0}$ can be defined in terms of the blowup~$\hX \coloneqq \Bl_{x_0}(X)$ 
and the embedding~$\iota_E \colon E \to \hX$ of its exceptional divisor
as the morphism
\begin{equation*}
\Cl(X) = \Pic(\hX) / \ZZ \cdot E \xrightarrow{\ \iota_E^*\ } \Pic(E) / \ZZ \cdot E\vert_E \cong \ZZ.
\end{equation*}

On the one hand, if the morphism~$r_{x_0} \colon \Cl(X) \to \ZZ$ is zero, 
every Weil divisor on~$X$ is Cartier, i.e., we have an equality~$\Cl(X) = \Pic(X)$.
In this case, we say that~$X$ is {\sf factorial}.

On the other hand, if the morphism~$r_{x_0} \colon \Cl(X) \to \ZZ$ is nonzero, 
its image is isomorphic to~$\ZZ$, hence~$\Cl(X) \cong \Pic(X) \oplus \ZZ$.
In this case, we say that~$X$ is {\sf nonfactorial}.

As we will see, most Fano threefolds admit both factorial and nonfactorial $1$-nodal degenerations,
in some cases even several deformation types of nonfactorial degenerations:
\begin{table}[H]
\begin{equation*}
\begin{array}{c!{\vrule width 0.1em}c|c|c|c|c|c|c|c|c|c!{\vrule width 0.1em}c|c|c|c|c!{\vrule width 0.1em}c!{\vrule width 0.1em}c}
\io(X) & \multicolumn{10}{c!{\vrule width 0.1em}}{1} & \multicolumn{5}{c!{\vrule width 0.1em}}{2} & 3 & 4
\\
\hline
\g(X) & 2 & 3 & 4 & 5 & 6 & 7 & 8 & 9 & 10 & 12 & 5 & 9 & 13 & 17 & 21 & 28 & 33
\\
\noalign{\hrule height 0.1em}
\text{\hphantom{non}factorial degenerations} & 1 & 1 & 1 & 1 & 1 & 1 & 1 & 1 & 1 & \cellcolor{gray!0}0 & 
1 & 1 & 1 & 1 & \cellcolor{gray!0}0 & \cellcolor{gray!0}0 & \cellcolor{gray!0}0
\\
\text{nonfactorial degenerations} & 1 & \cellcolor{gray!0}0 & 1 & 1 & 1 & 1 & 2 & 2 & 2 & 4 & 
\cellcolor{gray!0}0 & \cellcolor{gray!0}0 & \cellcolor{gray!0}0 & \cellcolor{gray!0}0 & 1 & 1 & \cellcolor{gray!0}0
\end{array}
\end{equation*}
\caption{Number of degeneration types of $1$-nodal Fano threefolds}
\label{table:numbers}
\end{table} 
\noindent
Here~$\io(X)$ and~$\g(X)$ denote the index and genus of a Fano threefold~$X$ 
(their definition is recalled below in~\eqref{eq:def-io-g}).
Note in particular, that Fano threefolds~$X$ with trivial Hodge number~$\h^{1,2}(X) = 0$ 
have no factorial degenerations, see Lemma~\ref{lem:factorial-criteria} for a simple explanation. 

Geometry of factorial and nonfactorial $1$-nodal threefolds is in fact quite different.
On the one hand, the description of factorial $1$-nodal threefolds is quite similar to the description in the smooth case --- 
as we will see in Theorem~\ref{thm:intro-factorial-ci}
they are complete intersections of the same type in the same weighted projective spaces or homogeneous Mukai varieties.
Nonfactorial $1$-nodal threefolds, on a contrary, are quite different ---
some of them are still complete intersections, but in different higher-dimensional varieties, see~\cite{Muk22},
but for the most of them no complete intersection description is available.
However, we will show that in many cases they can be represented 
as \emph{unprojections} of complete intersections in simpler varieties, see Theorem~\ref{thm:intro-nf-ci}.
For these reasons we state the classification results separately.

Before we start presenting our results, we recall
the most important invariants of Fano threefolds.
For an algebraic variety~$X$ we denote by
\begin{equation*}
\uprho(X) \coloneqq \rank \Pic(X) 
\end{equation*}
the rank of its Picard group.
Further, if~$X$ is a Fano threefold
with canonical Gorenstein singularities,
we denote by~$\io(X)$, $\dd(X)$, and~$\g(X)$ its {\sf Fano index}, {\sf degree}, and {\sf genus},
defined by
\begin{equation}
\label{eq:def-io-g}
\begin{aligned}
\io(X) &\coloneqq \max \{ i \mid K_X \in i \cdot \Pic(X) \},\\
\dd(X) &\coloneqq \frac{(-K_X)^{3}}{\io(X)^{3}},\\
\g(X) &\coloneqq \tfrac12(-K_X)^{3} + 1.
\end{aligned}
\end{equation}
Note that the degree~$\dd(X)$ and genus~$\g(X)$ carry essentially the same information about~$X$.
However, it is traditional to use degree for Fano threefolds~$X$ with~$\io(X) \ge 2$ 
and genus in the case of threefolds with~$\io(X) = 1$.

We also need to remind some facts about the classification of smooth Fano threefolds
due to Fano, Iskovskikh, Mori, and Mukai, see~\cite{IP99} for an overview.

When~$\uprho(X) = 1$ there are~$17$ deformation families of smooth Fano threefolds, 
uniquely characterized by the index and genus (or degree).
These threefolds are:
\begin{itemize}[wide]
\item 
If~$\io(X) = 4$ then~$\dd(X) = 1$ (hence~$\g(X) = 33$) and~$X \cong \PP^3$, the projective $3$-space.
\item 
If~$\io(X) = 3$ then~$\dd(X) = 2$ (hence~$\g(X) = 28$) and~$X \cong Q^3$, the smooth quadric.
\item 
If~$\io(X) = 2$ then~$\g(X) = 4\dd(X) + 1$ with~$\dd(X) \in \{1,2,3,4,5\}$.
\item 
If~$\io(X) = 1$ then~$\g(X) \in \{2,3,4,5,6,7,8,9,10,12\}$.
\end{itemize}
Threefolds of index~$2$ are often called {\sf del Pezzo threefolds},
and threefolds of index~$1$ are often called {\sf prime Fano threefolds}.

When~$\uprho(X) \ge 2$, the classification is more complicated; 
it can be found in~\cite{Mori-Mukai:MM} or, in an interactive from, in~\cite{fanography}.
In this case, deformation types of Fano threefolds are no longer characterized by the index and genus or degree,
so, to distinguish between deformation types we use the notation of~\cite{Mori-Mukai:MM} and~\cite{fanography}:
we will say that~$X$ is {\sf a Fano threefold of type~\typemm{\rho}{m}}
if~$\uprho(X) = \rho$ and~$X$ has number~$m$ in~\cite{Mori-Mukai:MM}.
Similarly, when~$\uprho(X) = 1$ and~$\g(X) \le 10$, 
following the notation of~\cite{fanography} we will say that~$X$ is {\sf a Fano threefold of type~\typemm{1}{m}} 
if~$\io(X)=1$ and~$\g(X) =m+1$.

Let~$X$ be a Fano threefold with (at worst) canonical Gorenstein singularities.
We say that~$X$ is {\sf a smoothable Fano threefold of type~\typemm{\rho}{m}} 
if there is a flat projective family of threefolds~$\cX \to B$ 
over a smooth pointed curve~$(B,o)$
such that~$\cX_o \cong X$ and all other fibers~$\cX_b$, $b \ne o$,
are smooth Fano threefolds of type~\typemm{\rho}{m};
in particular, $\uprho(\cX_b) = \rho$.

We say that a normal projective threefold~$X$ with (at worst) canonical singularities is {\sf a weak Fano threefold} 
if its anticanonical class~$-K_X$ is a nef and big $\QQ$-Cartier divisor. 
By~\cite[Theorem~3.3]{Kollar-Mori:book} if~$X$ is a weak Fano threefold
a sufficiently high multiple~$-nK_X$ of the anticanonical class 
defines a morphism~$X \to \PP^N$ with connected fibers, called {\sf the anticanonical contraction of~$X$}, 
and its image~$X_\can \subset \PP^N$ is a normal projective variety
with (at worst) canonical singularities, 
called {\sf the anticanonical model of~$X$}.

\subsection{Classification in the nonfactorial case}

We start by explaining a relation between nonfactorial $1$-nodal Fano threefolds and smooth Fano threefolds with higher Picard rank.
A combination of the next result with Theorem~\ref{thm:intro-nf-contractions} 
leads to a conceptual explanation for the classification.

\begin{theorem}
\label{thm:intro-nf}
Let~$(X,x_0)$ be a nonfactorial $1$-nodal Fano threefold with~$\uprho(X) = 1$ and let 
\begin{equation*}
\hX \coloneqq \Bl_{x_0}(X)
\end{equation*}
be the blowup of~$X$ at the node~$x_0 \in X$.
Then one of the following three cases takes place:
\begin{enumerate}
\item 
\label{it:intro-nf-ample}
The class~$-K_{\hX}$ is ample; then~$\hX$ is a smooth Fano threefold of type~\typemm{3}{m} with
\begin{equation*}
m \in \{2,\, 5,\, 21,\, 31\}.
\end{equation*}

\item 
\label{it:intro-nf-bpf}
The class~$-K_{\hX}$ is not ample but nef and big; then~$\hX$ is a smooth weak Fano threefold
and its anticanonical model~$\hX_\can$ is a smoothable Fano threefold of 
type~\typemm{2}{m} with
\begin{equation}
\label{eq:m-list}
m \in \{1,\, 2,\, 3,\, 4,\, 5,\, 6,\, 7,\, 9,\, 10,\, 12,\, 13,\, 14\}.
\end{equation}
\item 
\label{it:intro-nf-g2}
The class~$-K_{\hX}$ is base point free but not big;
then~$\io(X) = 1$, $\g(X) = 2$, and~$X$ 
is a complete intersection in the weighted projective space~$\PP(1^4,2,3)$ 
of the cone over a smooth quadric surface in~$\PP(1^4) = \PP^3$ with vertex~$\PP(2,3)$ and a sextic hypersurface.
\end{enumerate}
\end{theorem}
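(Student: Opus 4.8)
The starting point is the left-exact sequence~\eqref{eq:pic-cl}: since $(X,x_0)$ is nonfactorial, the map $r_{x_0}\colon \Cl(X)\to\ZZ$ is nonzero, so $\Cl(X)\cong\ZZ^2$. On the blowup $\hX = \Bl_{x_0}(X)$ the exceptional divisor $E\cong\PP^1\times\PP^1$ is a smooth quadric surface, and $\Pic(\hX) = \Cl(X)\cong\ZZ^2$. The two $\PP^1$-rulings of $E$ give two small contractions (or a flop) and, dually, two extremal rays on $\hX$; in particular $\uprho(\hX) = 2$. The first step is to pin down the geometry of $\hX$ as a weak Fano: one computes $-K_{\hX} = \pi^*(-K_X) - E$ and observes that $-K_{\hX}$ is nef and big because $-K_X$ is ample and the node is an ordinary double point (this is essentially the small-resolution picture of the nodal point, where $-K_{\hX}$ is trivial along the exceptional $\PP^1$'s of either side). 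Thus $\hX$ falls into one of the three cases of the trichotomy according to the positivity of $-K_{\hX}$.

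The second step is to handle cases~\eqref{it:intro-nf-ample} and~\eqref{it:intro-nf-bpf} by invoking the classification of smooth (weak) Fano threefolds of Picard rank~$2$: the contraction of one ray is the blowdown to $X$ (contracting $E$ to $x_0$), and the other ray gives a Mori contraction whose type, combined with $\uprho = 2$ and the fact that $E$ is a $(1,1)$-divisor restricting trivially on the fibers of the node-resolution, restricts $\hX$ (or its anticanonical model) to a short list from~\cite{Mori-Mukai:MM}/\cite{fanography}. The explicit values of $m$ are obtained by matching degrees: $(-K_{\hX})^3 = (-K_X)^3 - 2$, and running through the Mori–Mukai table of rank-$2$ Fanos (resp. weak Fanos whose anticanonical model is a smoothable del Pezzo / index-$1$ threefold) with a blow-down contraction to a rank-$1$ Fano with one node produces exactly the listed $m$'s. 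The $-K_{\hX}$-ample case forces the anticanonical model to be $\hX$ itself, hence a genuine rank-$2$ Fano (index-$3$ in the notation — these are the four ``companion'' families $\typemm{3}{2},\typemm{3}{5},\typemm{3}{21},\typemm{3}{31}$); otherwise one gets the nef-but-not-ample case.

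The real content is case~\eqref{it:intro-nf-g2}: $-K_{\hX}$ base point free but not big. Then the anticanonical morphism contracts a divisor (not just curves), so $-K_{\hX}$ defines a fibration $\hX\to S$ onto a surface or curve. The plan is: (i) show the base is a surface and the general fiber is a conic (a $\PP^1$), using $\dim|{-K_{\hX}}| $ and the fact that $(-K_{\hX})^2\cdot E$ and $(-K_{\hX})^3 = 0$; (ii) identify $X$ by computing $(-K_X)^3 = (-K_{\hX})^3 + 2 = 2$, which by~\eqref{eq:def-io-g} forces $\io(X)=1$, $\g(X)=2$; (iii) use the known structure of (smooth) $\g = 2$ prime Fano threefolds — degree-$6$ hypersurfaces in $\PP(1^4,3)$, i.e. double covers of $\PP^3$ branched in a sextic — and degenerate the branch sextic so that it acquires an ordinary double point, or equivalently realize $X$ as a complete intersection in $\PP(1^4,2,3)$ of the quadric cone over a smooth quadric surface (with vertex $\PP(2,3)$) and a sextic. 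The key computation is that the extra coordinate of weight~$2$, cut out by the quadric-cone equation, is exactly what produces the nonfactorial node at the cone point, while the sextic ensures the anticanonical class pulls back from the $\PP^3$ factor, explaining the failure of bigness of $-K_{\hX}$.

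**Expected main obstacle.** The hardest part is case~\eqref{it:intro-nf-g2}: ruling out other possibilities for the non-big anticanonical contraction and then reverse-engineering the precise weighted-complete-intersection model. One must verify that the conic bundle $\hX\to S$ has $S\cong\PP^2$ (or the quadric surface), control its discriminant, and check that blowing $E$ back down yields exactly the cone-over-quadric-times-sextic model — and that this model is genuinely nonfactorial with a single node. The matching of the cases~\eqref{it:intro-nf-ample}--\eqref{it:intro-nf-bpf} with the Mori–Mukai list, by contrast, is bookkeeping (degree and contraction-type matching) once the weak-Fano structure of $\hX$ is established.
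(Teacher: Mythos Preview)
Your proposal contains a Picard-rank error that derails cases~\ref{it:intro-nf-ample} and~\ref{it:intro-nf-bpf}. You write $\Pic(\hX) \cong \Cl(X) \cong \ZZ^2$ and $\uprho(\hX) = 2$, but in fact $\Cl(X) \cong \Pic(\hX)/\ZZ\cdot[E]$ (this is how~\eqref{eq:pic-cl} is set up), so $\uprho(\hX) = 3$. That is why the types in case~\ref{it:intro-nf-ample} are $\typemm{3}{m}$: the $3$ is the Picard rank, not an index. Consequently $\hX$ cannot be matched against the rank-$2$ Mori--Mukai list as you propose, and the picture ``one ray blows down to $X$, the other gives a Mori contraction'' is wrong. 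The paper's organizing device is instead the Sarkisov link~\eqref{eq:intro-sl}: the two small resolutions $X_1, X_2 \to X$ each have $\uprho = 2$, carry unique $K$-negative extremal contractions $f_i \colon X_i \to Z_i$, and Lemma~\ref{lem:io-k} converts the trichotomy on $-K_{\hX}$ into conditions on $\io(f_i)$ and $\g(X)$; case~\ref{it:intro-nf-ample} is then the explicit classification of Propositions~\ref{prop:if3} and~\ref{prop:big-index} when some $\io(f_i) \ge 2$.

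Case~\ref{it:intro-nf-bpf} is far from bookkeeping. For $\g(X) \ge 7$ one must prove that the anticanonical contraction $\xi \colon \hX \to \bar{X}$ is \emph{small} (Proposition~\ref{prop:cones}, requiring control of $X \cap \rT_{x_0}(X)$ and the exclusion of type~\typeb\ contractions), so that $\bar{X}$ has terminal Gorenstein singularities and is smoothable by Namikawa; a deformation argument then identifies the smooth fibres as rank-$2$ Fanos of type~\typemm{2}{m}, with further constraints on the induced contractions $\bar{f}_i$ ruling out $m \in \{8, 11\}$ and $m \ge 15$. Low genera $\g(X) \in \{4,5,6\}$ need separate explicit treatment (Propositions~\ref{prop:hyperelliptic}, \ref{prop:trigonal}). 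For case~\ref{it:intro-nf-g2} your computation $(-K_X)^3 = 2 \Rightarrow \g(X) = 2$ is fine, but degenerating $X_6 \subset \PP(1^4,3)$ gives the \emph{factorial} model (Lemma~\ref{lem:g2}); the nonfactorial one is found by analysing $\Bs(|{-K_X}|)$ directly (Proposition~\ref{prop:base-points}), and the anticanonical map of $\hX$ is an elliptic fibration over $\PP^1\times\PP^1$, not a conic bundle.
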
 

The most interesting case is~\ref{it:intro-nf-bpf}.
Our second result describes in this case a relation between the birational geometry of~$X$,
the anticanonical model~$\bar{X} \coloneqq \hX_\can$ of the blowup~\mbox{$\hX = \Bl_{x_0}(X)$}, 
and a smoothing~$X'$ of~$\bar{X}$.
This relation leads, eventually, to a completely explicit description of the corresponding threefolds, 
see Table~\ref{table:nf} and Theorem~\ref{thm:intro-nf-ci}.

\begin{theorem}
\label{thm:intro-nf-contractions}
Let~$(X,x_0)$ be a nonfactorial $1$-nodal Fano threefold with~$\uprho(X) = 1$ 
such that the anticanonical class~$-K_{\hX}$ of the blowup~$\hX \coloneqq \Bl_{x_0}(X)$ is not ample but nef and big.
Let~$X'$ be a smoothing of the anticanonical model~$\bar{X} \coloneqq \hX_\can$ of~$\hX$,
so that~$X'$ is a smooth Fano threefold of type~\typemm{2}{m} with~$m$ in~\eqref{eq:m-list}.
If~$m \ge 2$ and
\vspace{-.2ex}
\begin{equation*}
\xymatrix{
& X' \ar[dl]_{f'_1} \ar[dr]^{f'_2}
\\
Z'_1 &&
Z'_2
}
\end{equation*}
are the extremal $K$-negative contractions of~$X'$ then there is a Sarkisov link diagram
\begin{equation}
\label{eq:intro-sl}
\vcenter{\xymatrix{
& 
X_1 \ar[dl]_{f_1} \ar[dr]^{\pi_1} \ar@{<-->}[rr]^\chi &&
X_2 \ar[dl]_{\pi_2} \ar[dr]^{f_2} &
\\
Z_1 &&
X && 
Z_2,
}}
\end{equation}
where~$\pi_i \colon X_i \to X$ are small resolutions with smooth irreducible 
exceptional curves,
$\chi$ is the Atiyah flop in these curves, 
and~$f_i \colon X_i \to Z_i$ are extremal $K$-negative contractions
that have the same type as~$f'_i$. 
More precisely, 
$Z_i$ and~$Z'_i$ are smooth and deformation equivalent, and
\begin{aenumerate}
\item 
\label{it:intro-b1}
if~$f'_i$ is the blowup of a smooth curve of degree~$d$ and genus~$g$
then~$f_i$ is the blowup of a smooth curve of degree~$d - 1$ and genus~$g - \io(Z_i) + 1$;
\item 
\label{it:intro-c1}
if~$f'_i$ is a conic bundle over~$\PP^2$ with discriminant of degree~$d$
then~$f_i$ is a conic bundle over~$\PP^2$ with discriminant of degree~$d-1$;
\item 
\label{it:intro-d1}
if~$f'_i$ is a del Pezzo fibration over~$\PP^1$ with fibers of degree~$d$
then~$f_i$ is a del Pezzo fibration over~$\PP^1$ with fibers of degree~$d+1$.
\end{aenumerate}
\end{theorem}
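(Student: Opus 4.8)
The plan is to reconstruct the Sarkisov link attached to the $1$-nodal threefold $X$ directly from its blowup. First I would analyze the small resolutions of $X$: since $X$ has a single ordinary double point and is nonfactorial, the blowup $\hX = \Bl_{x_0}(X)$ has exceptional divisor $E \cong \PP^1 \times \PP^1$, and the two rulings of $E$ can be contracted to give two small resolutions $\pi_i \colon X_i \to X$ with smooth irreducible exceptional curves $C_i$; the Atiyah flop $\chi \colon X_1 \dashrightarrow X_2$ is then built into the geometry. I would check that each $X_i$ is smooth and that $-K_{X_i}$ is nef and big (pulled back from $-K_X$, which is ample), so each $X_i$ is a weak Fano threefold with anticanonical model $\bar{X} = \hX_\can$ (here one uses that contracting either ruling of $E$ on $\hX$ followed by the small contraction recovers $\bar X$). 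By the case hypothesis $\uprho(\hX) = 3$, hence $\uprho(X_i) = 2$.

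Next I would produce the contractions $f_i \colon X_i \to Z_i$. Since $\uprho(X_i) = 2$ and one extremal ray of $\NE(X_i)$ is the flopping ray contracted by $\pi_i$, the cone $\NE(X_i)$ has exactly one other extremal ray, which is $K$-negative because $-K_{X_i}$ is nef; let $f_i$ be its contraction. The key claim that $f_i$ is of fiber type (not divisorial) and has the same type as $f'_i$ would follow by a deformation argument: smoothing $\bar X$ to $X'$ and using that the weak-Fano/flop structure deforms, one matches the two Mori contractions $f'_i \colon X' \to Z'_i$ with $f_i \colon X_i \to Z_i$, concluding that $Z_i$ is smooth and deformation equivalent to $Z'_i$, and that the contraction type (blowup of a curve, conic bundle over $\PP^2$, or del Pezzo fibration over $\PP^1$) is preserved. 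This part relies on the classification of Mori contractions from smooth Fano threefolds of Picard rank $2$ together with semicontinuity of the relevant cohomological invariants along the family.

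The remaining work is the numerical bookkeeping in \ref{it:intro-b1}--\ref{it:intro-d1}, comparing discrete invariants of $f_i$ and $f'_i$. I would run this computation on the common anticanonical model: $\bar X = \hX_\can$ has one node, $X'$ is its smoothing, so the invariants of $X'$ and of $X_i$ differ in a controlled way governed by the flop. Concretely, I would compute $(-K_{X_i})^3 = (-K_{X'})^3 - 2$ (the standard effect of a conifold transition on the anticanonical degree: smoothing a node drops the triple self-intersection by $2$ compared with the small resolution side), and similarly track $\io(Z_i)$, the class $f_i^*(\text{ample on }Z_i)$, and the restriction of $-K_{X_i}$ to a fiber of $f_i$. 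Pushing these identities through the three contraction types gives: in case \ref{it:intro-b1} the blown-up curve drops in degree by $1$ and in genus by $\io(Z_i) - 1$ (adjunction on $Z_i$ converting the degree drop into the stated genus drop); in case \ref{it:intro-c1} the discriminant curve of the conic bundle drops in degree by $1$ (this is the well-known behavior of conic-bundle discriminants under a node-flop); in case \ref{it:intro-d1} the del Pezzo fiber degree goes \emph{up} by $1$. The main obstacle I anticipate is not any single calculation but verifying rigorously that the flop $\chi$ restricts to the flop linking the two Mori contractions in precisely the way that makes these invariant shifts simultaneous on both sides $i = 1, 2$ --- i.e. that the combinatorics of $\NE(X_i)$ and the identification with the deformation $X'$ are compatible --- and handling uniformly the cases where one of $Z'_i$ is itself singular or where $f'_i$ has small fibers; these edge cases likely require a case-by-case check against the list \eqref{eq:m-list}.
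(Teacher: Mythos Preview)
Your proposal contains a fundamental geometric confusion that makes the argument collapse. You assert that each small resolution $X_i$ ``is a weak Fano threefold with anticanonical model $\bar X = \hX_\can$'', but this is false: since $\pi_i$ is small, $-K_{X_i} = \pi_i^*(-K_X)$ is already the pullback of an \emph{ample} class, so the anticanonical model of $X_i$ is $X$ itself, not $\bar X$. There is no morphism $X_i \to \bar X$. Relatedly, $\bar X$ is not $1$-nodal --- see Table~\ref{table:nf}, where $|\Sing(\bar X)|$ ranges from~$2$ to~$7$ (and is even infinite for type~\hyperlink{4n}{\ntype{1}{4}{n}}). So $X_i$ and the smoothing $X'$ of $\bar X$ are \emph{not} related by a single conifold transition, your formula $(-K_{X_i})^3 = (-K_{X'})^3 - 2$ has the wrong sign (in fact $(-K_{X_i})^3 = 2\g(X)-2$ while $(-K_{X'})^3 = 2\g(X)-4$), and in any case a single anticanonical degree identity cannot produce the precise shifts in \ref{it:intro-b1}--\ref{it:intro-d1} simultaneously.

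What the paper actually does is insert $\bar X$ as an intermediate object between $X_i$ and $X'$, but in a different way. The key step (Proposition~\ref{prop:cones}) is to prove the nef decomposition $-K_{\hX} \sim H_1 + H_2$ (which in turn rests on the nontrivial fact $\balpha(X)=1$, Corollary~\ref{cor:alpha}); this forces the compositions $f_i\circ\sigma_i \colon \hX \to Z_i$ to factor through $\bar X$, giving contractions $\bar f_i \colon \bar X \to Z_i$. One then compares $f_i$ with $\bar f_i$ \emph{explicitly}: if $f_i$ blows up $\Gamma_i$ then $\bar f_i$ blows up $\Gamma_i \cup L_i$ where $L_i = f_i(C_i)$ is the line image of the flopping curve; if $f_i$ is a conic bundle with discriminant $\Delta_i$ then $\bar f_i$ has discriminant $\Delta_i \cup L_i$; if $f_i$ is a del Pezzo fibration of degree $d$ then $\bar f_i$ has degree $d-1$. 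Only \emph{after} this does one smooth $\bar X$ to $X'$ in a family $\cX/B$, extend $\bar H_i$ to nef classes $\cH_i$ with $-K_\cX \sim \cH_1+\cH_2$, and obtain relative contractions $\varphi_i\colon\cX\to\cZ_i$ whose central fibre is $\bar f_i$ and whose general fibre is $f'_i$. Flatness of the family of blowup centres/discriminants/fibres then transports the invariants of $\bar f_i$ to $f'_i$, and combining with the explicit comparison $f_i \leftrightarrow \bar f_i$ gives \ref{it:intro-b1}--\ref{it:intro-d1}. Your sketch omits both the construction of $\bar f_i$ and the explicit geometric comparison in Proposition~\ref{prop:cones}, which is where all the ``shift by~$1$'' phenomena actually originate.
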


\begin{remark}
In the case where~$m = 1$, i.e., $X'$ has type~\typemm{2}{1}, the situation is similar:
the extremal contractions~$f'_1 \colon X' \to Z'_1$ and~$f'_2 \colon X' \to Z'_2$ 
are the blowup of a smooth curve of degree~$d = 1$ and genus~$g = 1$ on a smooth del Pezzo threefold~$Z'_1$ of degree~$1$
and a del Pezzo fibration of degree~$2$ over~$Z'_2 = \PP^1$, respectively,
while the contractions~$f_1 \colon X_1 \to Z_1$ and~$f_2 \colon X_2 \to Z_2$
are the blowup of the (unique) Gorenstein singular point on a singular del Pezzo threefold~$Z_1$ of degree~$1$
and a del Pezzo fibration of degree~$2$ over~$Z_2 = \PP^1$. 
In fact, in a sense that can be made precise a Gorenstein singular point 
can be considered as a ``virtual curve'' of degree~$d - 1 = 0$ and genus~$g - 2 + 1 = 0$.
\end{remark} 

Using Theorem~\ref{thm:intro-nf-contractions} it is easy to explain why only varieties of type~\typemm{2}{m} 
with~$m$ from the list~\eqref{eq:m-list} correspond to $1$-nodal Fano threefolds, see Remark~\ref{rem:other-m}.

Comparing the results of Theorems~\ref{thm:intro-nf} and~\ref{thm:intro-nf-contractions} 
with the Mori--Mukai classification~\cite{Mori-Mukai:MM},
it is easy to determine explicitly the types of extremal contractions~$f_i \colon X_i \to Z_i$ in~\eqref{eq:intro-sl}.
\begin{table}[H]
\renewcommand{\arraystretch}{1.2}
\begin{equation*}
\begin{array}
{l!{\vrule width 0.11em}c|c!{\vrule width 0.05em}ll!{\vrule width 0.05em}ll!{\vrule width 0.05em}l!{\vrule width 0.05em}c!{\vrule width 0.05em}clc!{\vrule width 0.06em}}
\text{type} &\io(X) &\g(X) & Z_1 & \Gamma_1 / \Delta_1 & Z_2 & \Gamma_2 / \Delta_2 
& \hfill \bar{X}\hfill & 
\h^{1,2}(X') & |\Sing( \bar{X})| 
\\
\noalign{\hrule height 0.1em}
\rowcolor{gray!10} 
& 3 & 28 & \PP^1 & (9,0) & \PP^1 & (9, 0) & {\typemm{3}{31}} & 0 & 0
\\
\rowcolor{gray!10} 
& 2 & 21 & \PP^2 & (0,0) & \PP^1 & (8,2) & \typemm{3}{21} & 0 & 0
\\
\hypertarget{12na}{\ntype{1}{12}{na}}
& 1 & 12 & \PP^3 & (0, 5) & \PP^3 & (0, 5) & \typemm{2}{12} & 3 & 4
\\
\hypertarget{12nb}{\ntype{1}{12}{nb}}
& 1& 12 & Q^3 & (0, 5) & \PP^2 & (0, 3) & \typemm{2}{13} & 2 & 3
\\
\hypertarget{12nc}{\ntype{1}{12}{nc}}
&1& 12 & \rY_5 & (0, 4) & \PP^1 & (6, 6) & \typemm{2}{14} & 1 & 2
\\
\rowcolor{gray!10} 
\hypertarget{12nd}{\ntype{1}{12}{nd}}
& 1& 12 & \PP^2 & (0, 0) & \PP^1 & (5, 8) & \typemm{3}{5} & 0 & 0
\\
\hline
\hypertarget{10na}{\ntype{1}{10}{na}}
& 1& 10 & \PP^3 & (2, 6) & \PP^2 & (2, 4) & \typemm{2}{9} & 5 & 4
\\
\hypertarget{10nb}{\ntype{1}{10}{nb}}
&1& 10 & \rY_4 & (0, 3) & \PP^1 & (5, 12) & \typemm{2}{10} & 3 & 2
\\
\hline
\hphantom{\mathbf{0}}
\hypertarget{9na}{\ntype{1}{9}{na}}
& 1 & 9 & Q^3 & (3, 7) & \PP^1 & (5, 14) & \typemm{2}{7} & 5 & 3
\\
\rowcolor{gray!10} 
\hphantom{\mathbf{0}}
\hypertarget{9nb}{\ntype{1}{9}{nb}}
&1 & 9 & \PP^1 & (8, 8) & \PP^1 & (4, 16) & \typemm{3}{2} & 3 & 0 
\\
\hline
\hphantom{\mathbf{0}}
\hypertarget{8na}{\ntype{1}{8}{na}}
& 1 & 8 & \rY_3 & (0, 2) & \PP^1 & (4, 20) & \typemm{2}{5} & 6 & 2 
\\
\hphantom{\mathbf{0}}
\hypertarget{8nb}{\ntype{1}{8}{nb}}
&1 & 8 & \PP^2 & (5, 5) & \PP^2 & (5, 5) & \typemm{2}{6} & 9 & 5 
\\
\hline
\hphantom{\mathbf{0}}
\hypertarget{7n}{\ntype{1}{7}{n}}
& 1 & 7 & \PP^3 & (7, 8) & \PP^1 & (4, 24) & \typemm{2}{4} & 10 & 4
\\
\noalign{\hrule height 0.1em}
\hphantom{\mathbf{0}}
\hypertarget{6n}{\ntype{1}{6}{n}}
&1 & 6 & \rY_2 & (0, 1) & \PP^1 & (3, 32) & \typemm{2}{3} & 11 & 2
\\
\hphantom{\mathbf{0}}
\hypertarget{5n}{\ntype{1}{5}{n}}
& 1 & 5 &\PP^2 & (14, 7) & \PP^1 & (3, 40) & \typemm{2}{2} & 20 & 7
\\
\hphantom{\mathbf{0}}
\hypertarget{4n}{\ntype{1}{4}{n}}
& 1 & 4 & \rY^{\mathrm{s}}_1 & (0, 0) & \PP^1 & (2, 54) & \typemm{2}{1} & 21 & \infty
\\
\rowcolor{gray!50} 
\hphantom{\mathbf{0}}
\hypertarget{2n}{\ntype{1}{2}{n}} 
& 1 & 2 & \PP^1 & (1, 120) & \PP^1 & (1, 120) & \quad- & - & -
\end{array}
\end{equation*}
\caption{Nonfactorial $1$-nodal Fano threefolds with~$\uprho(X) = 1$}
\label{table:nf}
\end{table}

\noindent{}Gray rows in the table correspond to cases from part~\ref{it:intro-nf-ample} (light gray) 
and part~\ref{it:intro-nf-g2} (dark gray) of Theorem~\ref{thm:intro-nf}.
Notation for the types of nonfactorial 1-nodal Fano threefolds introduced in the first column of Table~\ref{table:nf}
will be used throughout the paper.
The columns~$Z_1$ and~$Z_2$ describe the deformation types of the targets 
of the contractions~\mbox{$f_i \colon X_i \to Z_i$} in~\eqref{eq:intro-sl},
where~notation~$\rY_d$ and~$\rY_1^s$ means that the target is a smooth del Pezzo threefold of degree~$d$,
or a del Pezzo threefold of degree~$1$ with one Gorenstein singular point, respectively.
The columns~$\Gamma_i/\Delta_i$ describe 
the critical loci of the contractions~$f = f_i \colon X_i \to Z_i$:
\begin{itemize}[wide]
\item 
if~$f$ is the blowup of a curve~$\Gamma$ this is~$(\g(\Gamma),\, \deg(\Gamma))$, 
\item 
if~$f$ is the blowup of a Gorenstein singular point this is~$(0, 0)$, 
\item 
if~$f$ is a conic bundle with discriminant~$\Delta \subset \PP^2$ this is~$(\p(\Delta) - 1,\, \deg(\Delta))$,
\item 
if~$f$ is a $\PP^1$-bundle this is~$(0,0)$,
\item 
if~$f$ is a del Pezzo fibration of degree~$d$ with discriminant~$\Delta \subset \PP^1$ this is~$(d, \, \deg(\Delta))$.
\end{itemize}
The column~$\bar{X}$ describes the Mori--Mukai type of~$ \bar{X} = \hX_\can$ (see Theorem~\ref{thm:intro-nf}
and note that if~$\hX$ is a Fano variety then~$\bar{X} = \hX$).
Finally, the last two columns list the Hodge number~$\h^{1,2}$ of a smoothing~$X'$ of~$\bar{X}$
and the number of singular points on~$ \bar{X}$ when~$X$ is general
(if~$X$ is of type~\hyperlink{4n}{\ntype{1}{4}{n}} 
the singular locus of~$ \bar{X}$ is always 1-dimensional, see Remark~\ref{rem:sing-g4};
this can also happen for special varieties~$X$ of 
types~\hyperlink{5n}{\ntype{1}{5}{n}}
and~\hyperlink{6n}{\ntype{1}{6}{n}},
see Remark~\ref{rem:sing-g56}). 

In~\S\ref{sec:nf-ci} we give an explicit construction of nonfactorial $1$-nodal threefolds from Table~\ref{table:nf}.
More precisely, in Corollary~\ref{cor:ci-bir} we characterize the blowups~$\Bl_\Gamma(Z)$ 
such that~$\Bl_{\Gamma}(Z)_\can$ is a nonfactorial $1$-nodal Fano threefold,
in Proposition~\ref{prop:resolution-ce} and Remark~\ref{rem:-CB} we describe conic bundles,
and in Lemma~\ref{lem:dp-bundle} del Pezzo fibrations with analogous properties.

\subsection{Proofs of Theorems~\ref{thm:intro-nf} and~\ref{thm:intro-nf-contractions}}
\label{ss:intro-proofs}

To prove Theorems~\ref{thm:intro-nf} and~\ref{thm:intro-nf-contractions} 
we study the anticanonical model
\begin{equation*}
\bar{X} \coloneqq \hX_\can = \Bl_{x_0}(X)_\can
\end{equation*}
of the blowup~$\hX = \Bl_{x_0}(X)$ of~$X$ at the node.
To explain the argument, assume for simplicity that~$\io(X) = 1$ and~$\g(X) \ge 7$
(the advantage of this is that~$-K_X$ is very ample 
and~\mbox{$X \subset \PP^{\g(X) + 1}$} is an intersection of quadrics, see Corollary~\ref{cor:intersection-of-quadrics}).
Then we prove that there is only a finite number of anticanonical lines on~$X$ through the node~$x_0$,
and the strict transforms of these lines in~$\hX$ 
are the only curves contracted by the anticanonical morphism
\begin{equation*}
\xi \colon \hX \longrightarrow \bar{X},
\end{equation*}
hence~$\xi$ is small.
It follows from this that~$\bar{X}$ is a Fano variety with terminal Gorenstein singularities,
hence it is smoothable by~\cite{Na97}.

Recall notation introduced in diagram~\eqref{eq:intro-sl}.
Assuming also that~$\io(f_1) = \io(f_2) = 1$ 
(where~$\io(f_i)$ are the Fano indices of the contractions~$f_i \colon X_i \to Z_i$ defined in~\eqref{eq:io-f}),
we prove a very important linear equivalence (see Corollary~\ref{cor:alpha}):
\begin{equation}
\label{eq:hhhe}
-K_{\hX} \sim H_1 + H_2
\qquad\text{in~$\Pic(\hX)$,}
\end{equation} 
where~$H_1$ and~$H_2$ are the pullbacks to~$\hX$ of
the ample generators of the Picard groups~$\Pic(Z_1)$ and~$\Pic(Z_2)$, respectively.
Since~$H_1$ and~$H_2$ are nef, it follows from~\eqref{eq:hhhe}
that the classes~$H_1$ and~$H_2$ 
restrict trivially to any curve contracted by~$\xi \colon \hX \to \bar{X}$,
hence they are pullbacks of Cartier divisor classes on~$\bar{X}$,
and therefore there is a commutative diagram
\begin{equation}
\label{eq:intro-barx-diagram}
\vcenter{\xymatrix{
X_1 \ar[d]_{f_1} &
\hX \ar[l]_{\sigma_1} \ar[d]_\xi \ar[r]^{\sigma_2} &
X_2 \ar[d]^{f_2} 
\\
Z_1 &
\bar{X} \ar[l]^{\bar{f}_1} \ar[r]_{\bar{f}_2} &
Z_2,
}}
\end{equation}
where~$\sigma_i \colon \hX \to X_i$ is the blowup 
of the exceptional curve~$C_i \subset X_i$ of the contraction~\mbox{$X_i \to X$}.
It follows that the morphism~$\bar{f}_i \colon \bar{X} \to Z_i$ is the relative anticanonical model
of the composition~$f_i \circ \sigma_i \colon \hX \to Z_i$
and this leads to a relation between~$f_i$ and~$\bar{f}_i$;
for instance, if~$f_i$ is the blowup of a smooth curve~$\Gamma_i \subset Z_i$ and~$f_i(C_i) \ne \Gamma_i$
then it turns out that~$\bar{f}_i$ is the blowup of the reducible curve~$\bar{\Gamma}_i = \Gamma_i \cup f_i(C_i)$.
In particular, it follows that
\begin{equation*}
\p(\bar{\Gamma}_i) \ge \io(Z_i) - 1.
\end{equation*}

On the other hand, since the morphism~$\xi$ is small, \eqref{eq:hhhe} implies the linear equivalence
\begin{equation*}
-K_{\bar{X}}\sim \bar{f}_1^*H_1 + \bar{f}_2^*H_2
\qquad\text{in~$\Pic(\bar{X})$.}
\end{equation*}
Now we consider a smoothing~$\cX \to B$ of the threefold~$\bar{X}$ and check that
the divisor classes~$\bar{f}_1^*H_1, \bar{f}_2^*H_2 \in \Pic(\bar{X})$ 
extend to nef over~$B$ classes~$\cH_1,\cH_2 \in \Pic(\cX)$, 
and that the above relation also extends, so that we obtain a (relative over~$B$) \emph{nef decomposition}
\begin{equation*}
-K_{\cX}\sim \cH_1 + \cH_2
\qquad\text{in~$\Pic(\cX)$.}
\end{equation*}
We check that the classes~$\cH_i$ induce $K$-negative extremal contractions
\begin{equation*}
\xymatrix@1{\cZ_1 & \cX \ar[l]_{\varphi_1} \ar[r]^{\varphi_2} & \cZ_2}
\end{equation*}
where~$\cZ_1$ and~$\cZ_2$ are flat over~$B$.
Over the central point~$o \in B$ these recover the bottom line of~\eqref{eq:intro-barx-diagram}
and over any other point~$b \in B$ we obtain the extremal contractions
\begin{equation*}
\xymatrix@1{\cZ_{1,b} & \cX_b \ar[l]_{\varphi_{1,b}} \ar[r]^{\varphi_{2,b}} & \cZ_{2,b}}
\end{equation*}
of a smoothing~$\cX_b$ of~$\bar{X}$.
Moreover, we show that these are ``flat families of extremal contractions'';
for instance, if~$\bar{f}_i$ is the blowup of a reducible curve~$\bar{\Gamma}_i \subset Z_i$ 
then~$\varphi_{i,b}$ is the blowup of a smooth curve~$\cG_{i,b} \subset \cZ_{i,b}$
that has the same degree and arithmetic genus as~$\bar{\Gamma}_i$.
When combined with the relation of~$f_i$ and~$\bar{f}_i$ observed above, 
this proves Theorem~\ref{thm:intro-nf-contractions}.

Furthermore, $\cX_b$ is a smooth Fano threefold with~$\uprho(\cX_b) = 2$, hence of type~\typemm{2}{m},
and one can show that~$\io(\varphi_{1,b}) = \io(\varphi_{2,b}) = 1$ (because~\mbox{$\io(f_1) = \io(f_2) = 1$}),
that~$\varphi_{i,b}$ is not the blowup of a point,
and if~$\varphi_{i,b}$ is the blowup of a smooth curve~$\cG_{i,b} \subset \cZ_{i,b}$ then
\begin{equation*}
\g(\cG_{i,b}) \ge \io(\cZ_{i,b}) - 1.
\end{equation*}
A combination of these obstructions shows that~$m$ is in the list~\eqref{eq:m-list} and proves Theorem~\ref{thm:intro-nf}.

The cases where $\io(f_1) > 1$ or~$\io(f_2) > 1$
and those where~$-K_X$ is not base point free, or not very ample, 
or the image of~$X$ in~$\PP^{\g(X)+1}$ is not an intersection of quadrics,
are treated separately.
In all these cases the variety~$X$ is described explicitly, and all results stated above
are proved by a case-by-case analysis, see~\S\ref{ss:bi} and~\S\ref{sec:special}.

\subsection{Complete intersections in products and unprojections}

As we tried to show in~\S\ref{ss:intro-proofs}, 
the anticanonical model~$\bar{X}$ of the blowup~$\hX = \Bl_{x_0}(X)$ 
is useful for classification of nonfactorial $1$-nodal Fano threefolds.
Even more interesting is that~$\bar{X}$ admits a very simple description 
that leads eventually to an explicit description of~$X$.

Recall that 
the blowup of a Weil divisor class~$\Blw{D}(Y)$ on a normal variety~$Y$
is defined as the projective spectrum of a certain reflexive sheaf of graded algebras on~$Y$
(see equation~\eqref{eq:bl-d-z} in Appendix~\ref{App:blowup}).
Note that in all cases of Table~\ref{table:nf} we have~$Z_2 = \PP^k$ for some~$k$.

\begin{theorem}
\label{thm:intro-nf-ci}
Let~$X$ be a nonfactorial $1$-nodal Fano threefold with~$\uprho(X) = 1$, \mbox{$\io(X) = 1$}, and~\mbox{$\g(X) \ge 6$}
such that the class~$-K_{\hX}$ is not ample.

\begin{thmenumerate}
\item 
\label{thm:nf-ci-x-barx}
The anticanonical model~$\bar{X}$ of~$\hX$ is the image of the morphism~$\hX \to Z_1 \times Z_2 = Z_1 \times \PP^k$
and the image of its exceptional divisor~$E \subset \hX$ over~$x_0$ is a smooth quadric surface
\begin{equation*}
\Sigma = L_1 \times L_2 \subset Z_1 \times Z_2,
\end{equation*}
where~$L_1 \subset Z_1$ and~$L_2 \subset Z_2 = \PP^k$ are lines, so that we have a chain of embeddings
\begin{equation}
\label{eq:barx-chain}
\Sigma \subset \bar{X} \subset Z_1 \times Z_2 = Z_1 \times \PP^k.
\end{equation} 
Moreover, $\bar{X} \subset Z_1 \times Z_2$ is a complete intersection of ample divisors 
of bidegrees~$(a_i,b_i)$ such that~$\sum a_i = \io(Z_1) - 1$ and~$\sum b_i = \io(Z_2) - 1 = k$;
the pairs~$(a_i,b_i)$ are listed in the next table
\begin{table}[H]
$
\renewcommand{\arraystretch}{1.2}
\begin{array}{c|cccccccccc}
\text{{\rm type}} & 
\hyperlink{6n}{\ntype{1}{6}{n}} & 
\hyperlink{7n}{\ntype{1}{7}{n}} & 
\hyperlink{8na}{\ntype{1}{8}{na}} & 
\hyperlink{8nb}{\ntype{1}{8}{nb}} & 
\hyperlink{9na}{\ntype{1}{9}{na}} & 
\hyperlink{10na}{\ntype{1}{10}{na}} & 
\hyperlink{10nb}{\ntype{1}{10}{nb}} & 
\hyperlink{12na}{\ntype{1}{12}{na}} &
\hyperlink{12nb}{\ntype{1}{12}{nb}} & 
\hyperlink{12nc}{\ntype{1}{12}{nc}}
\\
\hline
Z_1 & \rY_2 & \PP^3 & \rY_3 & \PP^2 & Q^3 & \PP^3 & \rY_4 & \PP^3 & Q^3 & \rY_5
\\
Z_2 & \PP^1 & \PP^1 & \PP^1 & \PP^2 & \PP^1 & \PP^2 & \PP^1 & \PP^3 & \PP^2 & \PP^1
\\
(a_i,b_i) & (1,1) & (3,1) & (1,1) & (2,2) & (2,1) & (1,1) \oplus (2,1) & (1,1) & (1,1)^{\oplus 3} & (1,1)^{\oplus 2} & (1,1)
\end{array}
$
\caption{Complete intersection description of~$\bar{X}$}
\label{table:ci-nf}
\end{table}

\item 
\label{thm:nf-ci-barx-x}
Let~$\bar{X} \subset Z_1 \times Z_2$ be a complete intersection from Table~\xref{table:ci-nf} 
such that~\eqref{eq:barx-chain} holds for a surface~$\Sigma = L_1 \times L_2 \subset Z_1 \times Z_2$.
If~$\bar{X}$ has terminal singularities and~$\Blw{\Sigma}(\bar{X})$ is smooth 
then there exists a unique nonfactorial $1$-nodal Fano threefold~$(X,x_0)$ such that
\begin{equation}
\label{eq:hx-blsigma}
\Bl_{x_0}(X) \cong \Blw{-\Sigma}(\bar{X})
\end{equation}
\end{thmenumerate}
\end{theorem}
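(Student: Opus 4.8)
\textbf{Proof proposal for Theorem~\ref{thm:intro-nf-ci}.}

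The plan is to prove both directions by exploiting the commutative diagram~\eqref{eq:intro-barx-diagram} together with the linear equivalence~\eqref{eq:hhhe} established in the course of proving Theorem~\ref{thm:intro-nf-contractions}, and then to pin down the bidegrees by a numerical/adjunction computation.

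\emph{Part~\ref{thm:nf-ci-x-barx}.} First I would recall from the proof of Theorem~\ref{thm:intro-nf-contractions} that $-K_{\hX} \sim H_1 + H_2$, where $H_i$ are the pullbacks of the ample generators of $\Pic(Z_i)$ (using $\io(f_1)=\io(f_2)=1$ for $\g(X)\ge 6$; the remaining index issues in this range are handled in the case analysis of \S\ref{sec:special}). Since $\xi\colon \hX \to \bar X$ is the small anticanonical morphism and $H_1 + H_2 = -K_{\hX}$ is $\xi$-trivial, each $H_i$ is $\xi$-trivial (being nef), hence descends to a Cartier class on $\bar X$; this produces the morphism $\hX \to Z_1 \times Z_2$ whose image is exactly the anticanonical model $\bar X$ because $-K_{\hX}$ is the restriction of the ample class $\OOO(1,1)$. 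Next I would analyze the image $\Sigma$ of the exceptional divisor $E \cong \PP^1\times\PP^1$ over $x_0$: the two rulings of $E$ are the fibers of $\sigma_1$ and $\sigma_2$ (the blowups of the flopping curves $C_i\subset X_i$), so $\bar f_i|_E = f_i\circ\sigma_i|_E$ contracts one ruling and maps the other isomorphically to a curve $L_i\subset Z_i$; since $E\cdot(-K_{\hX}) = E\cdot(H_1+H_2)$ and $H_i|_E$ is the class of one ruling, $L_i$ is a line (degree $1$ with respect to the ample generator). Hence $\Sigma = L_1\times L_2$ and we get the chain~\eqref{eq:barx-chain}. To see $\bar X$ is a complete intersection of the asserted type, I would compute $-K_{\bar X}\sim \bar f_1^*H_1 + \bar f_2^*H_2$ (valid because $\xi$ is small), i.e.\ $-K_{\bar X}$ is the restriction of $\OOO(1,1)$; by adjunction on $Z_1\times Z_2$ the bidegrees $(a_i,b_i)$ of the defining divisors must satisfy $\sum a_i = \io(Z_1)-1$ and $\sum b_i = \io(Z_2)-1 = k$. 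Finally, matching the number of equations against $\dim(Z_1\times Z_2) - 3$ and the known genus/degree $\g(X)$ (equivalently $(-K_X)^3 = (-K_{\hX})^3 + 2$) forces the specific entries of Table~\ref{table:ci-nf}; in each row this is a short bookkeeping check, and one verifies $\OOO(a_i,b_i)$ is ample and that $\bar X$ so defined is indeed irreducible of the right dimension (e.g.\ via Bertini for the general member, the special ones being governed by the terminality hypothesis).

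\emph{Part~\ref{thm:nf-ci-barx-x}.} Conversely, given $\bar X\subset Z_1\times Z_2$ from the table with terminal singularities and $\Blw{\Sigma}(\bar X)$ smooth, I would set $\hX := \Blw{-\Sigma}(\bar X)$ and reconstruct $X$ by running the diagram~\eqref{eq:intro-barx-diagram} backwards. The blowup $\Blw{\Sigma}(\bar X)\to\bar X$ of the Weil divisor class of $\Sigma = L_1\times L_2$ has exceptional divisor $E$, and one checks (using that $\Sigma$ is a Cartier divisor on each factor-slice but not on $\bar X$, because $\Sigma$ passes through the non-$\QQ$-factorial locus) that $E\cong\PP^1\times\PP^1$ with normal bundle $\OOO(-1,-1)$; the two rulings of $E$ give two contractions $\sigma_i\colon\hX\to X_i$ onto smooth threefolds, and contracting $E$ the other way via the opposite class $\Blw{+\Sigma}$ mirrors this. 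The composite $\hX\to X_i\to X$, contracting the flopping curve and its flop, produces a threefold $X$ with a single ordinary double point $x_0$ (the point $E$ maps to), and $-K_X$ is ample because $-K_{\hX}$ is nef and big and the only curves it contracts are the strict transforms of the anticanonical lines through $x_0$ — exactly the content of smoothability via~\cite{Na97} read in reverse. That $X$ is nonfactorial is immediate since $\Sigma$ (hence its proper transform) gives a Weil non-Cartier divisor, so $r_{x_0}\ne 0$ in~\eqref{eq:pic-cl}; that $\uprho(X)=1$ follows because $\uprho(\hX)=\uprho(\bar X)+1 = 3$ and we contract two independent extremal rays. Uniqueness: any $(X',x_0')$ with $\Bl_{x_0'}(X')\cong\Blw{-\Sigma}(\bar X)$ has the same $\hX$, and $X'$ is recovered from $\hX$ as the anticanonical contraction's target after the flop, so $X'\cong X$.

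\textbf{Main obstacle.} The crux is Part~\ref{thm:nf-ci-barx-x}: controlling the Weil-divisorial blowup $\Blw{\pm\Sigma}(\bar X)$ precisely enough to know that its exceptional locus over the singular points of $\bar X$ consists of the expected $\PP^1$'s (so that contracting them yields a threefold with a \emph{single} node and $\uprho=1$, not something worse), and that the resulting anticanonical contraction is small with $-K_X$ ample. This requires a careful local analysis of $\bar X$ along $\Sigma$ near $\Sing(\bar X)$ — using the terminal and smoothness hypotheses — which I expect to be the technical heart, likely occupying Appendix~\ref{App:blowup} together with the case-by-case verification for each row of Table~\ref{table:ci-nf}. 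The forward direction, by contrast, is essentially a consequence of~\eqref{eq:hhhe} plus adjunction and should be routine.
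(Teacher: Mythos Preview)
Your outline has the right architecture but contains two genuine gaps, one in each direction, and your diagnosis of the ``main obstacle'' misses both.

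\textbf{Part~\ref{thm:nf-ci-x-barx}: why is $\bar X$ a complete intersection?} You correctly observe that $-K_{\bar X}$ is the restriction of $\cO_{Z_1\times Z_2}(1,1)$, and adjunction then pins down the bidegrees \emph{if} $\bar X$ is a complete intersection. But you never show that the image of $\hX\to Z_1\times Z_2$ is cut out by the right number of equations; a subvariety with the correct anticanonical class need not be a complete intersection. The paper does not treat this as bookkeeping: it separates the birational case (\S\ref{ss:af-blowups}) from type~\hyperlink{8nb}{\ntype{1}{8}{nb}} (\S\ref{ss:af-conic}). In the birational case it first identifies $\bar X$ with $\Bl_{\Gamma\cup L}(Z_1)$ (Proposition~\ref{prop:cones}), then produces an explicit locally free resolution~\eqref{eq:resolution-ci-gl} of $\cI_{\Gamma\cup L}$ via Castelnuovo--Mumford regularity or Beilinson-type decompositions (Proposition~\ref{prop:base-gl}), and finally applies \cite[Lemma~2.1]{K16} to realize the blowup as a complete intersection (Corollary~\ref{cor:barx-bu-gl}). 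For type~\hyperlink{8nb}{\ntype{1}{8}{nb}} a separate vector-bundle argument (Proposition~\ref{prop:resolution-ce}, Corollary~\ref{cor:barx-cb}) is needed. None of this follows from adjunction alone.

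\textbf{Part~\ref{thm:nf-ci-barx-x}: why is $\uprho(\hX)=3$?} You write ``$\uprho(\hX)=\uprho(\bar X)+1=3$'', but this is unjustified and is the actual crux. The morphism $\Blw{-\Sigma}(\bar X)\to\bar X$ is \emph{small} (Corollary~\ref{cor:small-blowup}), so it has no exceptional divisor; rather $\Pic(\hX)=\Cl(\hX)\cong\Cl(\bar X)$, and what you need is $\rank\Cl(\bar X)=3$. Knowing that $\Sigma$ is Weil but not Cartier gives only $\rank\Cl(\bar X)\ge 3$; ruling out $>3$ requires control on \emph{all} the terminal singularities of $\bar X$, not just those along $\Sigma$. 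The paper handles this by an independent Hodge-theoretic computation (Proposition~\ref{prop:rho-3}): one works on $\Bl_\Sigma(Z_1\times Z_2)$, uses the conormal sequence and Koszul resolutions to show $\h^{1,1}(\Blw{\Sigma}(\bar X))=3$, and then invokes $H^2(\cO)=0$. Your proposed route via ``two contractions $\sigma_i$ from the rulings of $E$'' presupposes exactly the Picard-rank statement you need to prove. The paper's construction of $X$ is also different and more direct: it takes $E\subset\hX$ to be the strict transform of $\Sigma$ (ample over $\bar X$ by Lemma~\ref{lem:ddprime}), shows $\cO_E(E)\cong\cO_E(-1,-1)$ by adjunction, proves the class $H\coloneqq\xi^*(H_1+H_2)+E$ is base point free via the sequence $0\to\cO_{\hX}(\tilde H)\to\cO_{\hX}(H)\to\cO_E\to 0$, and then checks that the induced morphism contracts exactly $E$ to a node.
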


Part~\ref{thm:nf-ci-barx-x} of Theorem~\ref{thm:intro-nf-ci} implies that~$X$ 
can be obtained from a complete intersection~$\bar{X}$ by blowing up the Weil divisor class~$-\Sigma$
and then contracting the strict transform~$E$ of~$\Sigma$;
in the terminology of~\cite{PR04}
this means that~$X$ is an {\sf unprojection} of~$\bar{X}$.

\begin{remark}
\label{rem:barx-g456}
For~$\g(X) \in \{4,5,6\}$ the threefold~$\bar{X}$ may have 
non-isolated canonical singularities (for~$\g(X) = 4$ this is always the case);
in these cases one can prove similar results:
\begin{itemize}[wide]
\item 
If~$\g(X) = 6$ 
and~$\bar{X}$ has canonical singularities,
then~$\bar{X}$ is still a divisor of bidegree~$(1,1)$ in~$Z_1\times \PP^1$
(where~$Z_1$ is a smooth del Pezzo threefold of degree~$2$), 
its singular locus is a smooth rational curve in~$L_1 \times \PP^1$ of bidegree~$(3,1)$,
and~$\hX$ is obtained from~$\bar{X}$ by its blowup. 

\item 
If~$\g(X) = 5$ 
the canonical morphism~$\bar{X} \to Z_1 \times Z_2 = \PP^2 \times \PP^1$,
is a double covering branched at a divisor of bidegree~$(4,2)$
which is tangent to a surface~$\bar{\Sigma} \coloneqq \PP^1 \times \PP^1 \subset \PP^2 \times \PP^1$.
If~$\bar{X}$ has terminal singularities, 
then each irreducible component~$\Sigma$ of the preimage of~$\bar{\Sigma}$ is a smooth quadric surface 
and~\eqref{eq:hx-blsigma} holds as before.
If~$\bar{X}$ has canonical singularities (this happens if~$\bar{\Sigma}$ is contained in the branch divisor),
then the singular locus of~$\bar{X}$ is a smooth curve in~$\PP^1 \times \PP^1$ of bidegree~$(3,2)$,
and~$\hX$ is obtained from~$\bar{X}$ by its blowup. 

\item 
If~$\g(X) = 4$ 
then~$\bar{X} \hookrightarrow Z_1 \times \PP^1$
is a divisor of bidegree~$(1,1)$ containing a singular surface~$\Sigma = L_1 \times \PP^1$,
where~$Z_1$ is a del Pezzo threefold of degree~$1$ with one Gorenstein singular point~$y_0$
and~$L_1$ is a curve on~$Z_1$ of degree~$1$, arithmetic genus~$1$, and geometric genus~$0$, passing through~$y_0$.
In this case~$\bar{X}$ has canonical singularities, 
its singular locus is the smooth rational curve~$\{y_0\} \times \PP^1$,
and~$\hX$ is obtained from~$\bar{X}$ by its blowup. 
\end{itemize}

For more details about these cases see Proposition~\textup{\ref{prop:hyperelliptic}}\ref{he:nf} 
and Proposition~\textup{\ref{prop:trigonal}}\ref{it:tri:g5}, \ref{it:tri:g6}.
\end{remark} 

\subsection{The factorial case}
\label{ss:intro-fact}

As we mentioned before, the factorial case is simpler than the nonfactorial one;
in particular, factorial $1$-nodal Fano threefolds with~$\uprho(X) = 1$
have a description analogous to smooth threefolds.
Moreover, without changing the arguments 
we can also include the case of generalized cusps, see Definition~\ref{def:node-cusp} in consideration.

Consider the following varieties
\begin{equation*}
\rM_2 \coloneqq \PP(1^4,3),
\quad 
\rM_3 \coloneqq \PP(1^5,2),
\quad 
\rM_4 \coloneqq \PP^5,
\quad 
\rM_5 \coloneqq \PP^6,
\quad
\rM_6 \coloneqq \CGr(2,5) \subset \PP^{10},
\end{equation*}
where the first four are (weighted) projective spaces and the last is
the cone over the Grassmannian~$\Gr(2,5) \subset \PP^9$.
Recall also from~\cite[\S2]{Muk92} the homogeneous Mukai varieties:
\begin{itemize}[wide]
\item 
$\rM_{7\hphantom{0}} = \OGr_+(5,10)$, the orthogonal isotropic Grassmannian,
\item 
$\rM_{8\hphantom{0}} = \Gr(2,6)$, the Grassmannian,
\item 
$\rM_{9\hphantom{0}} = \LGr(3,6)$, the symplectic isotropic Grassmannian,
\item 
$\rM_{10} = \GtGr(2,7)$, the adjoint Grassmannian of the simple group of Dynkin type~$\mathbf{G}_2$.
\end{itemize}
Furthermore, denote by~$X_{d_1,\dots,d_k}$ a complete intersection of hypersurfaces of degree~$d_1,\dots,d_k$.

\begin{theorem}
\label{thm:intro-factorial-ci}
Let~$X$ be a factorial Fano threefold with a single node or cusp and~\mbox{$\uprho(X) = 1$}.
Then 
\begin{enumerate}
\item 
\label{it:fact-i2}
either~$\io(X) = 2$, $\dd(X) \in \{1,2,3,4\}$, and~$X$ is a complete intersection
\begin{equation*}
X_6 \subset \PP(1^3,2,3),
\quad 
X_4 \subset \PP(1^4,2),
\quad 
X_3 \subset \PP^4,
\quad 
X_{2,2} \subset \PP^5,
\end{equation*}
\item 
\label{it:fact-i1-gsmall}
or~$\io(X) = 1$, $\g(X) \in \{2,3,4,5,6\}$, and~$X \subset\rM_{\g(X)}$ is a complete intersection
\begin{equation*}
\begin{array}{c}
X_6 \subset \rM_2 = \PP(1^4,3),
\quad 
X_{2,4} \subset \rM_3 = \PP(1^5,2),
\quad 
X_{2,3} \subset \rM_4 = \PP^5,
\quad 
X_{2,2,2} \subset \rM_5 = \PP^6,\\[1ex]
X_{1^2,2} \subset \rM_6 = \CGr(2,5),
\end{array}
\end{equation*}
\item 
\label{it:fact-i1-gbig}
or~$\io(X) = 1$, $\g(X) \in \{7,8,9,10\}$, and
\begin{equation*}
X \subset \rM_{\g(X)}
\end{equation*}
is a dimensionally transverse linear section.
\end{enumerate}
Conversely, any complete intersection as above with a single node or cusp is factorial.
\end{theorem}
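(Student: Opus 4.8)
The plan is to split the argument according to the Fano index and degree/genus, using the classification of smooth Fano threefolds of Picard rank one together with a deformation/smoothing argument. The key structural point is that a factorial Fano threefold $X$ with $\uprho(X)=1$ and a single node or cusp is still a \emph{$\QQ$-factorial} Gorenstein Fano threefold with $\uprho(X)=1$ and terminal singularities, so by Namikawa's theorem it is smoothable; and because $\uprho$ is constant in a smoothing (this is where factoriality is essential --- the left-exact sequence \eqref{eq:pic-cl} gives $\Cl(X)=\Pic(X)\cong\ZZ$, and a $\QQ$-factorial terminal Fano smooths with constant Picard rank), the general fiber $X'$ of a smoothing is a smooth Fano threefold with $\uprho(X')=1$. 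Hence $X'$ appears in the $17$-family list, and $\io(X)=\io(X')$, $\dd(X)=\dd(X')$, $\g(X)=\g(X')$ are read off from the deformation invariance of $(-K)^3$ and of the index. This already pins down which cases can occur: $\io(X)=4$ forces $X\cong\PP^3$ (no nodal members), $\io(X)=3$ forces $X$ to be a quadric (again smooth), so only $\io(X)\in\{1,2\}$ survive, and $\g(X)\ge 7$ is handled separately from $\g(X)\le 6$.

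First I would treat the \emph{index $2$} case \ref{it:fact-i2}. Here $-K_X = 2H$ with $H$ the ample generator of $\Pic(X)=\Cl(X)$, and $\dd(X)=H^3\in\{1,2,3,4\}$ (the value $5$, i.e.\ the quintic del Pezzo threefold $\rY_5$, has $\h^{1,2}=0$ and hence, by Lemma~\ref{lem:factorial-criteria}, no factorial degeneration, so it is excluded). For each of the four values one uses the anticanonical (resp.\ $H$-) ring: the graded ring $R(X,H)=\bigoplus_m H^0(X,mH)$ is Gorenstein of the expected dimension, and one computes its Hilbert series and a minimal set of generators and relations exactly as in the smooth case. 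Since $X$ has only a node or cusp, which are hypersurface singularities, $R(X,H)$ is still a complete intersection ring of the same codimension, giving $X_6\subset\PP(1^3,2,3)$, $X_4\subset\PP(1^4,2)$, $X_3\subset\PP^4$, $X_{2,2}\subset\PP^5$ respectively; the embedding is via $|H|$ or $|2H|$ as appropriate. Vanishing of the relevant $H^i(X,mH)$ (Kodaira--Kawamata--Viehweg vanishing applies since $X$ is Gorenstein canonical Fano) guarantees the Hilbert series is as predicted, and a dimension count shows the generic such complete intersection indeed acquires exactly one node, proving the converse in this case.

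Next the \emph{index $1$, small genus} case \ref{it:fact-i1-gsmall}: $-K_X=H$ generates $\Cl(X)$ and $\g(X)=\tfrac12 H^3+1\in\{2,3,4,5,6\}$. For $\g\le 5$ one again analyzes the anticanonical ring $R(X,-K_X)$; by Riemann--Roch and vanishing its Hilbert series matches the smooth prime Fano case, and since a node/cusp does not change the codimension of the canonical embedding, $X$ is cut out by the same equations in the same (weighted) projective space $\rM_{\g}$: a sextic in $\PP(1^4,3)$ for $\g=2$, a $(2,4)$-intersection in $\PP(1^5,2)$ for $\g=3$, a $(2,3)$-intersection in $\PP^5$ for $\g=4$, a $(2,2,2)$-intersection in $\PP^6$ for $\g=5$. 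For $\g=6$ the anticanonical image lies on $\rM_6=\CGr(2,5)$: since $-K_X$ is very ample (genus $\ge 3$ and no worse than a node/cusp), $X\hookrightarrow\PP^{\g+1}=\PP^7$, and the ideal of quadrics through $X$ is $5$-dimensional and cuts out exactly the cone $\CGr(2,5)$ (as in the smooth case), so $X=X_{1^2,2}\subset\rM_6$. The \emph{index $1$, large genus} case \ref{it:fact-i1-gbig} with $\g\in\{7,8,9,10\}$ is the Mukai-theoretic heart: one shows $-K_X$ is very ample, $X\subset\PP^{\g+1}$ is projectively normal and an intersection of quadrics (Corollary~\ref{cor:intersection-of-quadrics} applies once $\g\ge 7$), and then the Mukai construction of the homogeneous model works verbatim --- the variety cut out by the quadrics containing $X$ is the cone-free Mukai variety $\rM_{\g}$, so $X$ is a dimensionally transverse linear section $\rM_{\g}\cap\PP^{\g+1}$. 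The converse (a linear/complete-intersection section of $\rM_{\g}$ with a single node or cusp is factorial) follows from Lemma~\ref{lem:factorial-criteria}: one checks $\h^{1,2}\ne 0$ for these sections, or directly verifies $r_{x_0}=0$ by a local computation at the singular point using that the ambient $\rM_{\g}$ is smooth there and a general hyperplane section is a Lefschetz-type degeneration.

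\emph{The main obstacle} I anticipate is not the index $2$ cases nor the $\rho$-constancy of the smoothing, but the genus $6$ and the large-genus cases: one must show that the quadrics through the anticanonically embedded $X$ cut out \emph{exactly} $\rM_{\g}$ and nothing more, i.e.\ that the presence of a node or cusp does not enlarge the base locus of the quadratic system or destroy projective normality. In the smooth case this is Mukai's theorem, and the delicate point in the singular case is to run the same argument --- via the existence of enough rank-two (or orthogonal/symplectic) bundles on $X$, or via a careful degeneration from the smooth model --- while controlling cohomology at the singular point. I expect this to be handled by deforming to the smooth case: since $X$ sits in a flat family with smooth general fiber embedded in $\rM_{\g}$, and $\rM_{\g}$ is rigid, the special fiber is a section of $\rM_{\g}$ as well; the work is in checking that the limit of the linear sections is still a \emph{dimensionally transverse} section (no extra components), which follows from $X$ being irreducible of the right dimension with at worst a node/cusp. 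Establishing factoriality in the converse direction, via Lemma~\ref{lem:factorial-criteria} and the computation of $\h^{1,2}$ (or of $r_{x_0}$) for these explicit complete intersections, is then a finite, case-by-case check.
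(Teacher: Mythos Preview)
Your plan for parts~\ref{it:fact-i2} and~\ref{it:fact-i1-gsmall} is broadly in line with the paper's approach, though the paper carries it out via a direct case analysis of the anticanonical linear system (Propositions~\ref{prop:base-points}, \ref{prop:hyperelliptic}, \ref{prop:trigonal} and Lemmas~\ref{lem:g2}, \ref{lem:g34}, \ref{lem:g56}) rather than via the graded ring, and for genus~$6$ appeals to the Gushel--Mukai theory of~\cite{DK1} via a Brill--Noether general K3 hyperplane section.

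For part~\ref{it:fact-i1-gbig} (genus~$7$--$10$), however, your approach diverges from the paper's and has a genuine gap. You propose either to run Mukai's vector bundle construction directly on the singular~$X$, or to use a degeneration argument (``since~$\rM_g$ is rigid, the special fiber is a section of~$\rM_g$ as well''). Neither is justified: extending Mukai's bundles across a node or cusp is exactly the delicate point, and the degeneration argument is circular --- to know that~$X$ is a limit of linear sections of~$\rM_g$ you would need to produce a family of linear spaces in~$\PP(H^0(\rM_g,\cO(1)))$ specializing to the anticanonical span of~$X$, which amounts to already knowing~$X \subset \rM_g$. The paper's route is completely different: it runs the Sarkisov link~\eqref{eq:sl-factorial} of Proposition~\ref{prop:sl-g78910} to produce a pair~$(X_+,\Gamma)$, then in Appendix~\ref{sec:links-78910} uses the explicit Sarkisov links~\eqref{eq:sl-rm} of the Mukai varieties themselves (from~\cite{KP21}) to show that~$\Bl_\Gamma(X_+)$ is a linear section of~$\widehat{\rM}_g^+$, and hence that~$X$ is a linear section of~$\rM_g$. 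This bypasses entirely the question of vector bundles on the singular~$X$.

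Your argument for the converse also fails. Lemma~\ref{lem:factorial-criteria} says that a factorial~$X$ must have~$\hh(X^\sm)\ge 1$; it does \emph{not} say that~$\hh(X^\sm)\ne 0$ implies factoriality (and indeed nonfactorial examples with~$\hh(X^\sm)>0$ abound in Table~\ref{table:nf}). The paper proves factoriality via Corollary~\ref{cor:ci-factorial}, which rests on the Hodge-theoretic Proposition~\ref{prop:h11-tx}: one shows directly that~$\h^{1,1}(\Bl_{x_0}(X)) = \h^{1,1}(X') + 1 = 2$ for any such $1$-nodal complete intersection~$X$, by embedding~$X$ in a smooth fourfold~$Y$ of the same type and verifying the requisite cohomology vanishings for~$\whom_Y$ (Lemmas~\ref{lem:ci-fourfolds} and~\ref{lem:omd-mukai}). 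This is a genuine computation, not a consequence of~$\hh\ne 0$.
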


As we already mentioned above, 
the vanishing of the Hodge number~$\h^{1,2}(X)$ is an obstruction 
for the existence of a factorial $1$-nodal (or 1-cuspidal) degeneration of a smooth threefold~$X$;
for this reason prime Fano threefolds of genus~$12$, quintic del Pezzo threefolds, 
the quadric~$Q^3$ and the projective space~$\PP^3$ do not have such degenerations. 

For factorial del Pezzo threefolds we have the following analogue of Theorem~\ref{thm:intro-nf}.

\begin{theorem}
\label{thm:intro-fi2}
Let~$(X,x_0)$ be a factorial Fano threefold with a single node or cusp, \mbox{$\uprho(X) = 1$}, and~\mbox{$\io(X) = 2$},
so that~$\dd(X) \in \{1,2,3,4\}$. 
Let~$\hX \coloneqq \Bl_{x_0}(X)$.
Then
\begin{enumerate}
\item 
\label{it:intro-fi2-ample}
If~$\dd(X) \in \{2,3,4\}$ then the class~$-K_{\hX}$ is ample 
and~$\hX$ is a smooth Fano threefold 
of type~\typemm{2}{m}, where
\begin{equation*}
m \in \{8,15,23\}.
\end{equation*}

\item 
\label{it:intro-fi2-bpf}
If~$\dd(X) = 1$ then~$-K_{\hX}$ is not ample but nef and big
and the anticanonical model~$\hX_\can$ is a nonfactorial $1$-nodal Fano threefold 
with~$\uprho(\hX_\can) = 1$ and~$\g(\hX_\can) = 4$.
\end{enumerate}
\end{theorem}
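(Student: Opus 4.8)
The plan is to study the anticanonical model $\hX_\can$ of the blowup $\hX = \Bl_{x_0}(X)$ directly, following the same strategy as in the nonfactorial case. Let $H$ denote the pullback to $\hX$ of the ample generator of $\Pic(X)$, so that $-K_X \sim 2H$ on $X$, and let $E \subset \hX$ be the exceptional divisor of $\sigma \colon \hX \to X$ (a smooth quadric $\cong \PP^1\times\PP^1$ if $x_0$ is a node, or its degeneration if $x_0$ is a cusp). A standard computation gives $-K_{\hX} \sim 2H - E$ and $E|_E \sim -(\ell_1+\ell_2)$ where $\ell_i$ are the two rulings. First I would compute $(-K_{\hX})^3 = 8\dd(X) - 2$ and the relevant intersection numbers with $E$ and with the strict transforms of anticanonical lines on $X$ through $x_0$; this immediately distinguishes the two cases by the value of $\dd(X)$.

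For part~\ref{it:intro-fi2-ample}, when $\dd(X)\in\{2,3,4\}$ I would show $-K_{\hX}$ is ample: factoriality of $X$ forces $\Pic(\hX) = \ZZ H \oplus \ZZ E$ with $\uprho(\hX)=2$, and one checks that no curve has nonpositive intersection with $2H-E$ — the only candidates are lines on $X$ through $x_0$ and curves inside $E$, and the bidegree computation on $E$ rules the latter out while the genus/degree bounds on lines through a point of a del Pezzo threefold of degree $\ge 2$ (together with $H\cdot(\text{line}) = 1$) rule out the former. Then $\hX$ is a smooth Fano threefold with $\uprho=2$, index of the natural contraction equal to $1$, and $\sigma\colon \hX\to X$ is the blowup of a point on a del Pezzo threefold $Y_{\dd(X)}$; consulting the Mori–Mukai list for $\uprho(X')=2$ threefolds admitting such a contraction (blowup of a point on $Y_2, Y_3, Y_4$) pins down $m \in \{8, 15, 23\}$ respectively. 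The identification is forced by matching $(-K)^3 = 8\dd - 2 \in \{14, 22, 30\}$ against the Mori–Mukai invariants.

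For part~\ref{it:intro-fi2-bpf}, when $\dd(X)=1$ one has $(-K_{\hX})^3 = 6 > 0$, so $-K_{\hX}$ is big; to see it is nef but not ample, note $-K_{\hX}\cdot C = 2H\cdot C - E\cdot C$ vanishes precisely on the strict transforms of the finitely many anticanonical lines through $x_0$ (on a del Pezzo threefold of degree $1$ there is a one-dimensional family of lines, but only finitely many pass through a fixed general point; for the nodal or cuspidal $X$ this finiteness has to be checked, and is the crux — see below), and is positive on everything else including curves in $E$. Hence the anticanonical morphism $\xi\colon\hX\to\hX_\can$ contracts exactly these curves, is small, and $\bar Y \coloneqq \hX_\can$ is a Fano threefold with terminal Gorenstein singularities and $-K_{\bar Y}$ the image of $2H-E$. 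Since $\xi$ is small and crepant, $(-K_{\bar Y})^3 = 6$, so $\g(\bar Y) = \tfrac12\cdot 6 + 1 = 4$; and $\bar Y$ is nonfactorial because the class $H$ (or $E$) descends to a non-Cartier Weil divisor — the small contraction $\xi$ creates a class in $\Cl(\bar Y)\setminus\Pic(\bar Y)$ detected by $r_{y_0}$ at the node, exactly as in~\eqref{eq:pic-cl}. Finally $\uprho(\bar Y)=1$ because $\uprho(\hX)=2$ and $\xi$ contracts a divisor's worth of numerical classes — more precisely $\xi$ is small so $\uprho(\bar Y) = \uprho(\hX) - 1 = 1$ once one knows the contracted curves span an extremal ray, which follows from $-K_{\hX}$ being the supporting class.

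The main obstacle is the finiteness (and smoothness, away from the node) of the locus of anticanonical lines through $x_0$ on a del Pezzo threefold $X$ of degree $1$ with a node or cusp — equivalently, that $\xi$ is small rather than divisorial. On a smooth $Y_1$ lines sweep out a surface, so through a general point finitely many pass; but here $x_0$ is the distinguished singular point, and one must rule out that a whole pencil of lines passes through $x_0$ (which would make $\xi$ contract a divisor). I expect this to follow from an explicit description of $X$: a factorial $1$-nodal (or cuspidal) $X_6 \subset \PP(1^3,2,3)$ from Theorem~\ref{thm:intro-factorial-ci}\ref{it:fact-i2}, where the node lies away from the coordinate point of weight $3$, and a direct tangent-cone analysis bounds the lines through it. An alternative, cleaner route is to invoke Theorem~\ref{thm:intro-nf} in reverse: once one checks $\bar Y$ is nonfactorial $1$-nodal with $\uprho=1$, $\io(\bar Y)=1$, $\g(\bar Y)=4$ and that $-K_{\Bl_{y_0}\bar Y}$ is not ample, the structure of $\Bl_{y_0}(\bar Y)$ is governed by Remark~\ref{rem:barx-g456} (the $\g=4$ bullet), which is consistent with $\Bl_{y_0}(\bar Y)$ being dominated by $\hX$; this cross-check both confirms the computation and sidesteps re-deriving the geometry from scratch.
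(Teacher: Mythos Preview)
Your strategy matches the paper's in outline, but there are gaps in both parts.

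\textbf{Part~\ref{it:intro-fi2-ample}.} Your ampleness argument is incomplete. You assert that the only candidates for $K$-nonpositive curves on $\hX$ are $H$-lines through $x_0$ and curves in $E$, but this is not justified: a priori a curve with $H\cdot C=2$ and $E\cdot C=4$ would also be $K$-trivial, and you give no reason why such curves are excluded. Furthermore, the ``genus/degree bounds'' you invoke to rule out $H$-lines are vague; note that on a nodal cubic threefold there \emph{are} $H$-lines through the node (six of them), so their existence is not the issue --- rather, one must compute that their strict transforms satisfy $E\cdot\tilde\ell=1$, hence $(2H-E)\cdot\tilde\ell=1>0$. The paper's argument bypasses all of this with a one-line parity observation: for $\dd(X)\ge 2$ the class $-K_X=2H$ is very ample (Proposition~\ref{prop:hyperelliptic}), so by Lemma~\ref{lem:barx} the $K$-trivial curves on $\hX$ are exactly the strict transforms of \emph{anticanonical} lines through $x_0$, i.e.\ curves with $-K_X\cdot\ell=1$; but $-K_X\cdot\ell=2(H\cdot\ell)$ is always even, so no such curves exist and $-K_{\hX}$ is ample. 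The identification of $m$ then proceeds by lookup in Mori--Mukai as you say.

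\textbf{Part~\ref{it:intro-fi2-bpf}.} The paper does not reprove this; it simply cites \cite[Proposition~2.6(iii)]{KS23}. Your reconstruction correctly identifies the main obstacle (finiteness of the contracted locus) but does not resolve it, and there is a further gap you do not flag: even granting that $\xi$ is small, your argument establishes only that $\bar Y$ is nonfactorial with terminal Gorenstein singularities, $\uprho=1$, and $\g=4$. The theorem asserts that $\bar Y$ is \emph{$1$-nodal}, which requires that $\xi$ contract a \emph{single} smooth rational curve with normal bundle $\cO(-1)^{\oplus 2}$ --- not merely finitely many curves. Your proposed ``alternative, cleaner route'' via Remark~\ref{rem:barx-g456} is circular: that remark describes the anticanonical model of the blowup of a nonfactorial $1$-nodal threefold, so invoking it presupposes the $1$-nodality you want to prove. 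The actual content (from \cite{KS23}, or equivalently from running Proposition~\ref{prop:hyperelliptic}\ref{he:nf} backwards with $Z_2=X$) is that there is a \emph{unique} flopping curve on $\hX$.
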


Note that in case~\ref{it:intro-fi2-bpf} the variety~$\hX_\can$ 
coincides with the variety of type~\hyperlink{4n}{\ntype{1}{4}{n}} in Table~\ref{table:nf},
while~$\hX$ coincides with one of its small resolutions 
and the map~$\hX \to X$ coincides with the corresponding extremal contraction,
thus forming one half of the Sarkisov link~\eqref{eq:intro-sl}.

For factorial Fano threefolds with~$\io(X) = 1$ we have analogues 
both for Theorem~\ref{thm:intro-nf} and Theorem~\ref{thm:intro-nf-ci},
which we state together.
A weak Fano variety is {\sf almost Fano} if it has at worst terminal Gorenstein singularities 
and its anticanonical contraction is small.

\begin{theorem}
\label{thm:intro-fi1}
Let~$(X,x_0)$ be a factorial Fano threefold with a single node or cusp, \mbox{$\uprho(X) = 1$}, and~\mbox{$\io(X) = 1$},
so that~$2 \le \g(X) \le 10$.
Let~$\hX \coloneqq \Bl_{x_0}(X)$.
\begin{enumerate}
\item 
If~$3 \le \g(X) \le 10$ then the class~$-K_{\hX}$ is not ample but nef and big, $\hX$ is a smooth weak Fano variety,
and its anticanonical model~$\bar{X} \coloneqq \hX_\can$ is a smoothable Fano threefold with canonical Gorenstein singularities
of index~$\io(\bar{X}) = 1$ and genus
\begin{equation*}
\g(\bar{X}) = \g(X) - 1 \in \{2,3,4,5,6,7,8,9\}.
\end{equation*}
If~$\g(X) \ge 5$ then~$\hX$ is almost Fano and~$\bar{X}$ has terminal Gorenstein singularities.
\item 
If~$\g(X) = 2$ then the class~$-K_{\hX}$ is base point free 
and the anticanonical morphism of~$\hX$ is an elliptic fibration~$\hX \to \PP^2$.
\end{enumerate} 
Moreover, if~$\g(X) \ge 3$ 
then~$\bar{X} \subset \rM_{\g(X) - 1}$ is a complete intersection of the same type 
as in Theorem~\textup{\ref{thm:intro-factorial-ci}}\ref{it:fact-i1-gsmall}--\ref{it:fact-i1-gbig}
containing an irreducible quadric surface~$\Sigma \subset \bar{X}$.

Conversely, if~$\bar{X} \subset \rM_{g - 1}$ is a complete intersection 
as in Theorem~\textup{\ref{thm:intro-factorial-ci}}\ref{it:fact-i1-gsmall}--\ref{it:fact-i1-gbig}
containing an irreducible  quadric surface~$\Sigma$
such that~$\bar{X}$ has terminal singularities and~$\Blw{\Sigma}(\bar{X})$ is smooth 
then there exists a unique factorial Fano threefold~$(X,x_0)$ with a single node or cusp such that
\begin{equation*}
\Bl_{x_0}(X) \cong \Blw{-\Sigma}(\bar{X}).
\end{equation*}
\end{theorem}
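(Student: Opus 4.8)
The plan is to run the same machinery developed for the nonfactorial case (Theorems~\ref{thm:intro-nf} and~\ref{thm:intro-nf-ci}), now applied to the blowup $\hX = \Bl_{x_0}(X)$ of a \emph{factorial} node, and to keep track of the single numerical difference this makes. First I would analyse the anticanonical map of $\hX$. Writing $E \subset \hX$ for the exceptional quadric surface and $H$ for the pullback of $-K_X$, one has $-K_{\hX} \sim 2H - E$, and the key point is to count anticanonical lines on $X$ through $x_0$: as in~\S\ref{ss:intro-proofs}, when $-K_X$ is very ample and $X \subset \PP^{\g+1}$ is an intersection of quadrics (so $\g \ge 7$ by Corollary~\ref{cor:intersection-of-quadrics}; the smaller genera $3,4,5,6$ go through the separate case-by-case analysis of~\S\ref{sec:special}) there are only finitely many such lines, their strict transforms in $\hX$ are the only $-K_{\hX}$-trivial curves, and $-K_{\hX}$ is nef and big but not ample. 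This already gives that $\hX$ is a smooth weak Fano threefold with small anticanonical contraction $\xi\colon\hX\to\bar X$, hence $\bar X = \hX_\can$ is almost Fano; by~\cite{Na97} it is smoothable. The index and genus computation is pure intersection theory: $(-K_{\bar X})^3 = (-K_{\hX})^3 = (2H-E)^3 = 8H^3 - E^3 = 8(\g-1) - 2\cdot 1 \cdot(\ldots)$, and carrying out the Chow-ring bookkeeping on the blowup of a smooth point of a threefold yields $(-K_{\bar X})^3 = 2\g(X) - 4$, i.e.\ $\g(\bar X) = \g(X) - 1$; factoriality forces $\io(\bar X) = 1$ because $-K_{\hX} = 2H - E$ is not divisible in $\Pic(\hX) = \ZZ H \oplus \ZZ E$. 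The genus-$2$ exception is handled directly: there $-K_{\hX}$ is base point free of $\QQ$-degree $0$ on the fibers of $|H|$, giving the elliptic fibration $\hX \to \PP^2$.

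Next I would identify $\bar X$ as a complete intersection. For $\g(X) \ge 5$, $\bar X$ has terminal Gorenstein singularities, so by the classification of such Fano threefolds of index $1$ (Mukai's theorem together with the smoothability input of~\cite{Na97}) $\bar X$ embeds anticanonically into the Mukai variety $\rM_{\g(X)-1}$ as a complete intersection of the standard type recorded in Theorem~\ref{thm:intro-factorial-ci}\ref{it:fact-i1-gsmall}--\ref{it:fact-i1-gbig}. The surface $\Sigma \subset \bar X$ is the image $\xi(E)$ of the exceptional divisor: since $E \cong \PP^1\times\PP^1$ is contracted by $\xi$ onto its image only along the finitely many flopping curves through it, $\Sigma$ is an irreducible quadric surface, and the chain $\Sigma \subset \bar X \subset \rM_{\g-1}$ follows exactly as in Theorem~\ref{thm:intro-nf-ci}\ref{thm:nf-ci-x-barx}. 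For $\g(X) \in \{3,4\}$, where $\bar X$ acquires non-isolated canonical singularities, one falls back on the explicit models of Proposition~\ref{prop:hyperelliptic}\ref{he:nf} and Proposition~\ref{prop:trigonal}\ref{it:tri:g5},~\ref{it:tri:g6}, exactly as flagged in Remark~\ref{rem:barx-g456}, and checks the complete-intersection/quadric-surface statement by hand in those low-degree cases.

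For the converse, I would argue by reversing the small contraction. Given $\bar X \subset \rM_{g-1}$ a complete intersection of the prescribed type containing an irreducible quadric surface $\Sigma$, with $\bar X$ terminal and $\Blw{\Sigma}(\bar X)$ smooth, one sets $\hX \coloneqq \Blw{-\Sigma}(\bar X)$; by the general properties of blowups of Weil divisor classes (Appendix~\ref{App:blowup}) $\hX$ carries a natural morphism to $\bar X$ and an extremal divisorial contraction, and one checks that $\Blw{-\Sigma}(\bar X) \cong \Blw{\Sigma}(\bar X)$ as abstract varieties (so $\hX$ is smooth) while the strict transform $E$ of $\Sigma$ is a $\PP^1\times\PP^1$ with normal bundle $\cO(-1,-1)$, which can therefore be contracted smoothly to an ordinary double point, producing $(X,x_0)$; its factoriality is equivalent to $r_{x_0} = 0$ in~\eqref{eq:pic-cl}, which holds because $\Pic(\hX) \to \Pic(E)$ lands in $\ZZ\cdot E|_E$ by the complete-intersection description of $\bar X$ (all divisor classes on $\bar X$ restrict to multiples of a ruling on $\Sigma$). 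Uniqueness is the statement that the node $x_0$ together with $X$ is recovered from $\hX$, which holds because the flopping curves and $E$ are intrinsically determined by $-K_{\hX}$. \textbf{The main obstacle} I anticipate is not the numerology but the converse direction: one must verify that the abstract blowup $\Blw{-\Sigma}(\bar X)$ really is smooth and that its exceptional surface contracts to a \emph{node} (rather than a worse singularity or not contracting at all) precisely under the hypotheses ``$\bar X$ terminal and $\Blw{\Sigma}(\bar X)$ smooth'' — this is where the ample-divisor/bidegree constraints of Table~\ref{table:ci-nf} (here their index-$1$ analogues) are genuinely used, and where the low-genus exceptions $\g \in \{3,4\}$ have to be excised and treated by the explicit constructions cited above.
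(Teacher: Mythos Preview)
There are several genuine errors in your proposal that need correcting.

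\textbf{Wrong canonical class.} Since $\io(X)=1$, the ample generator is $H=-K_X$ itself, so $-K_{\hX}=H-E$, not $2H-E$. Your genus computation $(2H-E)^3$ is therefore meaningless; the correct one is $(H-E)^3=(2\g(X)-2)-2$, which is exactly Proposition~\ref{prop:hh-g-xsm}.

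\textbf{Misplaced references for small genus.} Proposition~\ref{prop:hyperelliptic}\ref{he:nf}, Proposition~\ref{prop:trigonal}\ref{it:tri:g5},\ref{it:tri:g6}, and Remark~\ref{rem:barx-g456} all concern \emph{nonfactorial} threefolds. The factorial cases $\g(X)\in\{3,4,5,6\}$ are handled in~\S\ref{ss:f-ci} by writing down explicit equations for $X$ (subcases (3f-a), (3f-b), (4f-a), (4f-b), and the genus $5$ and $6$ paragraphs) and computing the projection from $x_0$ by hand; this is where one sees that $\bar X$ is a sextic double solid, a quartic threefold, etc.

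\textbf{The central gap: identifying $\bar X$ in $\rM_{\g-1}$.} Your appeal to ``Mukai's theorem together with smoothability'' does not work: Mukai's classification describes \emph{smooth} prime Fano threefolds, and knowing that $\bar X$ smooths to a linear section of $\rM_{\g-1}$ does not by itself embed $\bar X$ there. The paper proceeds completely differently for $\g(X)\ge 7$. It first runs a Sarkisov link starting from $\hX$ (Proposition~\ref{prop:sl-g78910}): one flops $\xi\colon\hX\to\bar X$ to $\hX_+\to\bar X$ and shows that the second contraction $\hX_+\to X_+$ is the blowup of a smooth curve $\Gamma$ on $\PP^3$, $Q^3$, $Y_4$ or $Y_5$. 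Then Appendix~\ref{sec:links-78910} sets up explicit birational identifications $\Bl_{Q}(\rM_{\g-1})\cong \PP_{(\ldots)}(\cE)$ or $\Bl_{(\ldots)}(\ldots)$, uses resolutions of $\cO_\Gamma$ to realise $\Bl_\Gamma(X_+)$ as a complete intersection inside these, and reads off that $\bar X\subset\rM_{\g-1}$ is a linear section containing $\Sigma$. The converse also runs through these bijections (Corollaries~\ref{cor:critical-gr25-p3}, \ref{cor:critical-ogr-q3}, \ref{cor:critical-gr26-p8}, \ref{cor:critical-lgr-y5}) and the ambient links~\eqref{eq:sl-rm}, not through an abstract unprojection argument.

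\textbf{Flop, not isomorphism.} In your converse argument, $\Blw{-\Sigma}(\bar X)$ and $\Blw{\Sigma}(\bar X)$ are \emph{not} isomorphic; by Corollary~\ref{cor:bl-weil-flop} they are related by a flop. Smoothness of $\hX=\Blw{-\Sigma}(\bar X)$ then follows from~\cite[Theorem~2.4]{Kollar:flops}. Your factoriality argument is also off: the paper deduces factoriality of the resulting $X$ from Corollary~\ref{cor:ci-factorial} (a Hodge-number count for nodal complete intersections in $\rM_g$), not from an analysis of $r_{x_0}$.
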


As in Theorem~\ref{thm:intro-nf-ci} the last isomorphism means that~$X$ is an unprojection of~$\bar{X}$.

For a factorial Fano threefold~$X$ such that~$\uprho(X) = 1$, $\io(X) = 1$, $\g(X) \ge 3$,
and~$\bar{X}$ has terminal singularities,
the morphism~$\xi \colon \hX \to \bar{X}$ is a flopping contraction, 
so one can consider its flop and extend it to a Sarkisov link
\begin{equation*}
\vcenter{\xymatrix{
& 
\hX \ar[dl]_{\pi} \ar[dr]^{\xi} \ar@{<-->}[rr] &&
\hX_+ \ar[dl]_{\xi_+} \ar[dr]^{\pi_+}
\\
X &&
\bar{X} &&
X_+,
}}
\end{equation*}
where~$\pi$ is the blowup of the singularity~$x_0 \in X$, $\xi$ and~$\xi_+$ are small $K$-trivial contractions, 
and~$\pi_+$ is an extremal $K$-negative contraction.
In the proof of Theorem~\ref{thm:intro-fi1} in~\S\S\ref{ss:f-ci}--\ref{ss:proofs-fi1} we describe these links explicitly.
We summarize some information about these links in Table~\ref{table:factorial}.
\begin{table}[H]
\begin{equation*}
\renewcommand{\arraystretch}{1.2}
\begin{array}{c|c!{\vrule width 0.1em}lc|l|c|c} 
\io(X) & \g(X) & X_+ & \Gamma_+ / \Delta_+ &\ \ \,\bar{X} & \h^{1,2}(\bar{X}^\sm) & |\Sing(\bar{X})| 
\\\noalign{\hrule height 0.1em}
\rowcolor{gray!10}
2 & 17 & Q^3 & (1,4) & \typemm{2}{23}& 1 & 0 
\\\hline
\rowcolor{gray!10}
2 & 13 & \PP^3 & (4,6) & \typemm{2}{15}& 5 & 0 
\\\hline
\rowcolor{gray!10}
2 & 9 &\PP^2 & (9,6) & \typemm{2}{8}& 9 & 0 
\\\hline
2 & 5 & \PP^1 & (2,54)& \typemm{1}{4}& 20 & 1 
\\\hline
1 & 10 &\rY_5 & (1, 6) & \typemm{1}{8} & 3 & 3
\\
\hline
1 & 9 & \rY_4 & (0, 4) & \typemm{1}{7} & 5 & 4
\\
\hline 
1 & 8 & Q^3 & (4, 8) & \typemm{1}{6} & 7 & 4 
\\
\hline
1 & 7 & \PP^3 & (6, 8) & \typemm{1}{5} & 10 & 5 
\\
\hline
1 & 6 & \PP^2 & (9, 6) & \typemm{1}{4} & 14 & 6 
\\
\hline
1 & 5 & \PP^1 & (4, 36) & \typemm{1}{3} & 20 & 8 
\\
\hline 
1 & 4 & \rX^{\mathrm{s}}_4 & (0,0) & \typemm{1}{2} & 30 & 12 
\\
\hline
1 & 3 & \rX^{\mathrm{s}}_3 & (0,0) & \typemm{1}{1} & 52 & 24
\\
\hline
\rowcolor{gray!50} 
1 & 2 & \PP^2 & - & \quad- & - &
\end{array}
\end{equation*}
\caption{Factorial Fano threefolds with one node}
\label{table:factorial}
\end{table}
\noindent 
where we use the same notation and conventions as in Table~\ref{table:nf},
in particular~$\rY_5$ and~$\rY_4$ denote the deformation classes of smooth del Pezzo threefolds of degree~$5$ and~$4$,
while~$\rX_4^{\mathrm{s}}$ and~$\rX_3^{\mathrm{s}}$ denote the deformation classes of prime Fano threefolds of genus~$4$ and~$3$
with at most one Gorenstein singular point.
Note, however, that for~$\g(X) \in \{3,4\}$ the anticanonical model~$\bar{X}$ of~$\hX$ 
may have non-isolated canonical singularities;
in this case the Sarkisov link does not exist
(see the proof of Theorem~\textup{\ref{thm:intro-fi1}} in~\S\ref{ss:f-ci}). 

\subsection{Notation and conventions}
\label{ss:conventions}

We work over an algebraically closed field~$\Bbbk$ of characteristic~$0$;
all schemes and morphisms of schemes are $\Bbbk$-linear.

Throughout the paper we use freely the standard definitions and conventions of the Minimal Model Program (MMP), 
see, e.g., \cite{Kollar-Mori:book};
in particular, we write~$K_X$ for the canonical class of~$X$ and denote linear equivalence of Cartier or Weil divisors by~$\sim$. 

Recall that {\sf a contraction} is a projective morphism~$f \colon Y \to Z$ with connected fibers between normal varieties,
so that~$f_*\cO_Y \cong \cO_Z$.
A contraction~$f$ is {\sf $K$-negative} if~$Y$ is $\QQ$-Gorenstein and~$-K_Y$ is $f$-ample,
and~$f$ is {\sf extremal} if~$Y$ is $\QQ$-factorial and~$\uprho(Y/Z) = 1$.

Our convention for the projectivization of a vector bundle~$\cE$ on a scheme~$X$ is
\begin{equation*}
\PP_X(\cE) \coloneqq \Proj_X \left( \moplus_{k=0}^\infty\Sym^k(\cE^\vee) \right),
\end{equation*}
and we normalize its {\sf relative hyperplane class}~$M \in \Pic(\PP_X(\cE))$ by the rule 
\begin{equation*}
p_*\cO_{\PP_X(\cE)}(M) \cong \cE^\vee,
\end{equation*}
where~$p \colon \PP_X(\cE) \to X$ is the projection.

We often use the following conventions:
\begin{itemize}[wide]
\item 
given a morphism~$f \colon Y \to Z$ and a Cartier divisor class~$D \in \Pic(Z)$ we denote its pullback~$f^*D$ to~$Y$ simply by~$D$;
\item 
given a 
birational transformation~$\chi \colon Y \dashrightarrow Y'$ which is an isomorphism in codimension~$1$ and a Weil divisor~$D$ on~$Y$
we denote its strict transform~$\chi_*D$ to~$Y'$ simply by~$D$.
\end{itemize}
This allows us to make notation less heavy and avoid introducing names for all the maps.

We denote by~$\Bl_Z(Y)$ the blowup of a subscheme~$Z \subset Y$ 
and by~$\Blw{D}(Y)$ the blowup of a Weil divisor class~$D \in \Cl(Y)$, 
see Appendix~\ref{App:blowup}.

Given a normal Cohen--Macaulay projective variety~$\rM$ with~$\Cl(\rM) \cong \ZZ$ we denote by
\begin{equation*}
X_{d_1,\dots,d_k} \subset \rM
\end{equation*}
any complete intersection of Weil divisors corresponding to elements~$d_1,\dots,d_k \in \Cl(\rM)$.

\subsection{Acknowledgements}

We would like to thank Laurent Manivel for the help with the proof of Lemma~\ref{lem:omd-mukai}
and the anonymous referees for their comments.
We would also like to thank National Center for Theoretical Sciences (Taipei) for hospitality.

\section{Preliminaries}

Recall that in the Introduction we defined {\sf a weak Fano variety}~$X$
as a normal projective variety with at worst canonical 
singularities 
such that the anticanonical divisor~$-K_X$ is a nef and big $\QQ$-Cartier divisor.
We also introduced the {\sf anticanonical contraction}
\begin{equation*}
\xi \colon X \longrightarrow X_\can,
\end{equation*}
as the morphism defined by a sufficiently high multiple of the anticanonical class.
The variety~$X_\can$ also has canonical singularities and the class~$-K_{X_\can}$ is ample.
If, moreover, $X$ is Gorenstein then~$X_\can$ is also Gorenstein,
and the contraction $\xi$ is \emph{crepant}, i.e.,
\begin{equation}
\label{eq:crepant-contr}
\xi^*\cO_{X_\can}(K_{X_\can}) \cong \cO_X(K_X),
\end{equation}
this follows easily from~\cite[Theorem~3.3]{Kollar-Mori:book}.

Sometimes, it is convenient to consider a more restrictive class:
we will say that a weak Fano variety~$X$ is {\sf almost Fano} 
if~$X$ has at worst terminal Gorenstein singularities and the anticanonical contraction~$\xi$ is small,
i.e., if~$\xi$ does not contract divisors.
In this case, it follows from~\eqref{eq:crepant-contr} 
that~$X_\can$ also has at worst terminal Gorenstein singularities.

Recall that terminal Gorenstein singularities of threefolds are compound Du Val,
hence isolated hypersurface singularities; see, e.g.,~\cite[Theorem~1.1]{Reid:MM}.

The following types of terminal Gorenstein 
singularities often appear in MMP:

\begin{definition}
\label{def:node-cusp}
A singularity~$(X,x_0)$ is {\sf a node} or {\sf an ordinary double point} if it is a hypersurface singularity
and the Hessian matrix of the local equation of~$X$ at~$x_0$ is nondegenerate.

Similarly, a singularity~$(X,x_0)$ is {\sf a (generalized) cusp} if it is a hypersurface singularity,
the Hessian matrix of the local equation of~$X$ at~$x_0$ has corank~$1$, and~$\Bl_{x_0}(X)$ is smooth.
\end{definition}

\begin{remark}
\label{rem:blowup-nc}
If~$(X,x_0)$ is a node or a cusp then the blowup~$\Bl_{x_0}(X)$ is smooth and
its exceptional divisor~$E \subset \Bl_{x_0}(X)$ is a quadric (smooth for a node and of corank~$1$ for a cusp).
Moreover, if~$\dim(X) = 3$ and~$\cO_E(1)$ denotes the hyperplane class of the quadric then
\begin{equation*}
\cO_E(E) \cong \cO_E(K_{\Bl_{x_0}(X)}) \cong \cO_E(-1),
\end{equation*}
and the discrepancy of~$E$ equals~$1$.
The special feature of a cusp is that~$\Pic(E)$ is generated by~$E\vert_E$, 
hence the local class group is zero, the morphism~$r_{x_0}$ in~\eqref{eq:pic-cl} vanishes,
and therefore~$\Pic(X) = \Cl(X)$.
Thus, any 1-cuspidal threefold is factorial.
\end{remark} 

Similarly, {\sf the quotient singularity of type~$\frac12(1,1,1)$} 
is a terminal $\QQ$-Gorenstein isolated threefold singularity~$(X,x_0)$
that locally around~$x_0$ admits a finite morphism of degree~$2$ 
from a smooth threefold which is \'etale over the complement of~$x_0$.

\begin{remark}
\label{rem:contr}
If~$(X,x_0)$ is a quotient singularity of type~$\tfrac12(1,1,1)$, 
the blowup~$\Bl_{x_0}(X)$ is also smooth, 
and if~$E \subset \Bl_{x_0}(X)$ is its exceptional divisor then~$E\cong \PP^2$, 
\begin{equation*}
\cO_E(E) \cong \cO_{\PP^2}(-2),
\qquad 
\cO_E(K_{\Bl_{x_0}(X)}) \cong \cO_{\PP^2}(-1),
\end{equation*}
and the discrepancy of~$E$ equals~$1/2$.
\end{remark} 

\subsection{$K$-negative extremal contractions}
\label{ss:contractions}

Let~$Y$ be a smooth projective rationally connected threefold 
and let~$f \colon Y \to Z$ be a $K$-negative extremal contraction.
Then~$Z$ is also rationally connected.
If, furthermore, $\uprho(Y) = 2$ then~$\uprho(Z) = 1$, the class~$-K_Z$ is ample,
and by~\cite{Mori1982} the morphism~$f$ is of one of the following types: 

\begin{enumerate}
\item [\type{(B_2)}]
$Z$ is a smooth Fano threefold and $f$ is the \emph{blowup of a smooth point},

\item [\type{(B_1^1)}]
$Z$ is a smooth Fano threefold and $f$ is the \emph{blowup of a smooth curve}~$\Gamma \subset Z$, 

\item [\type{(B_1^0)}]
$Z$ is a Fano threefold 
with a node or cusp~$z \in Z$
and~$f$ is the \emph{blowup of~$z$},

\item [\typeb]
$Z$ is a Fano threefold with a~$\tfrac12(1,1,1)$-singularity~$z \in Z$ and~$f$ is the \emph{blowup of~$z$},

\item [\type{(C_2)}]
$f$ is a~\emph{$\PP^1$-bundle} over~$Z \cong \PP^2$,

\item [\type{(C_1)}]
$f$ is a \emph{conic bundle} over~$Z \cong \PP^2$ with a \emph{discriminant curve}~$\Delta \subset \PP^2$, 
$\deg(\Delta) \ge 3$,

\item [\type{(D_3)}]
$f$ is a~\emph{$\PP^2$-bundle} over~$Z \cong \PP^1$,

\item [\type{(D_2)}]
$f$ is a flat \emph{quadric surface fibration} over~$Z \cong \PP^1$,

\item [\type{(D_1)}]
$f$ is a flat \emph{del Pezzo fibration} over~$Z \cong \PP^1$ of degree~$\deg(Y/Z) \in \{1,2,3,4,5,6\}$.
\end{enumerate}

Furthermore, let~$\Upsilon$ be the class of a minimal rational curve contracted by~$f$
in the space of $1$-cycles on~$Y$ modulo numerical equivalence.
Then the following sequence
\begin{equation}
\label{eq:exact-seq:Pic} 
0 \longrightarrow 
\Pic(Z) \overset{f^*}\longrightarrow 
\Pic(Y) \xrightarrow{\ \cdot\Upsilon \ }
\ZZ \longrightarrow 
0
\end{equation} 
is exact, see~\cite[(3.2)]{Mori-Mukai1983} or~\cite[Remark~1.4.4]{IP99}.
In other words, there is a divisor class~$M_f \in \Pic(Y)$, unique up to a twist by~$f^*(\Pic(Z))$, such that~$M_f \cdot \Upsilon = 1$.

The {\sf Fano index of~$f$} is defined by
\begin{equation}
\label{eq:io-f}
\io(f) \coloneqq -K_Y \cdot \Upsilon.
\end{equation} 
Thus, $-K_Y \sim \io(f)M_f + f^*D$ for some~$D \in \Pic(Z)$.

Inspecting the above classification of $K$-negative extremal contractions
it is easy to see that~$\io(f) \in \{1,2,3\}$, moreover
\begin{itemize}[wide]
\item 
$\io(f) = 3$ if and only if~$f$ is of type~\type{(D_3)};
\item 
$\io(f) = 2$ if and only if~$f$ is of type~\type{(B_2)}, \type{(C_2)}, or~\type{(D_2)};
\item 
$\io(f) = 1$ if and only if~$f$ is of type~\type{(B_1^1)}, \type{(B_1^0)}, \typeb, \type{(C_1)}, or~\type{(D_1)}.
\end{itemize}
In particular, for types~\type{(C)} and~\type{(D)} the lower index 
equals~$\io(f)$.
On the other hand, in type~\type{(B)} the lower index is equal to the 
discrepancy of the exceptional divisor,
and when the discrepancy is~$a = 1$, the upper index is equal to the dimension of the blowup center. 

\subsection{Numerical invariants}

The Fano index~$\io(Y)$ of a Fano threefold~$Y$ with canonical Gorenstein singularities was defined by~\eqref{eq:def-io-g}.
The following bound for~$\io(Y)$ is well known.

\begin{theorem}[\cite{Kobayashi1973,Fujita1975}]
\label{hm:KO}
If~$Y$ is a Fano threefold with canonical Gorenstein singularities then~$\io(Y) \le 4$.
Moreover,
\begin{itemize}[wide]
\item 
if~$\io(Y) = 4$, then $Y\cong \PP^3$,
\item 
if $\io(Y) = 3$, then $Y$ is a \textup(possibly singular\textup) quadric in~$\PP^{4}$.
\end{itemize}
\end{theorem}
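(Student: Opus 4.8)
The plan is to reduce the statement to the smooth case of the Kobayashi--Ochiai theorem by means of a crepant resolution, or more precisely, by passing to a general anticanonical section and using Riemann--Roch together with Kawamata--Viehweg vanishing. First I would fix an ample generator $H \in \Pic(Y)$ with $-K_Y \sim \io(Y) H$; such a generator exists because $Y$ has Gorenstein canonical singularities and $\Pic(Y) \cong \ZZ$ (or, in general, because $K_Y$ is divisible by $\io(Y)$ in $\Pic(Y)$). The key inequality to aim for is $(-K_Y)^3 \ge \io(Y)^3$, i.e. $\dd(Y) = H^3 \ge 1$, combined with a vanishing/nonvanishing argument that forces $\io(Y) \le 4$ and pins down the two extreme cases.

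The main steps, in order, would be: (1) Show $H^0(Y, \cO_Y(-K_Y)) \ne 0$ and that a general member $S \in |-K_Y|$ is a (possibly singular) K3 or canonical Gorenstein surface with at worst Du Val singularities; this uses Shokurov's theorem on the existence of a good anticanonical element (for threefolds with canonical Gorenstein singularities) or, alternatively, Kawamata--Viehweg vanishing to compute $h^0(-K_Y)$ and general-element arguments. (2) On such an $S$ we have $K_S \sim (K_Y + S)\vert_S \sim 0$, so $S$ is a Gorenstein surface with trivial canonical class and Du Val singularities; restricting $H$ gives an ample class $H_S$ on $S$ with $H_S^2 = H^3 > 0$ and, by adjunction on the minimal resolution, $(\io(Y)-1)$ is the ``index-type'' quantity: $-K_Y\vert_S \sim \io(Y) H_S$ restricts to a relation on $S$, but since $K_S = 0$ this forces constraints via Riemann--Roch on the K3-type surface $S$. (3) Use Riemann--Roch on $\tilde S$ (the minimal resolution of $S$): $\chi(\cO_{\tilde S}(\tilde H_S)) = 2 + \tfrac12 \tilde H_S^2 \ge 3$, so $h^0(H_S) \ge 3$; more refined, compute $h^0(mH_S)$ for the values of $m$ up to $\io(Y)$ and compare with $h^0(-K_Y) = h^0(\io(Y) H)$ computed on $Y$ by Kawamata--Viehweg vanishing, namely $h^0(Y,\cO_Y(-K_Y)) = \chi(\cO_Y(-K_Y)) = \tfrac{(-K_Y)^3}{2} \cdot \tfrac{\io(Y)+1}{\io(Y)} + 3$ or the analogous polynomial. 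Matching these numerology constraints with the bound $H^3 \ge 1$ yields $\io(Y) \le 4$.

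For the two boundary cases: if $\io(Y) = 4$, then $-K_Y \sim 4H$ with $H^3 \ge 1$; one shows $H^3 = 1$ and $h^0(H) = 4$ is forced, and then the linear system $|H|$ defines a morphism $Y \to \PP^3$ which must be an isomorphism (it is finite of degree $H^3 = 1$ onto its image, and birational finite onto a normal variety is an isomorphism). If $\io(Y) = 3$, then $-K_Y \sim 3H$; one gets $h^0(H) = 5$ and $H^3 = 2$, so $|H|$ defines a finite morphism $Y \to \PP^4$ of degree $2$ onto its image, which is therefore a (possibly singular) quadric. The hard part will be step (1)--(2): securing the existence of a sufficiently good anticanonical element $S$ with only Du Val singularities in the Gorenstein canonical (not terminal, not smooth) setting, since the naive Bertini argument fails at the singular locus of $Y$. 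I would handle this by invoking Shokurov's anticanonical boundedness / the ``$-K_Y$ has a good member'' theorem for canonical Gorenstein Fano threefolds, which is precisely the technical input that makes the reduction to the surface Riemann--Roch computation legitimate; everything after that is bookkeeping with Kawamata--Viehweg vanishing and the linear-system-defines-a-morphism argument.
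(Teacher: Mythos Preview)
The paper does not prove this theorem; it is stated with a citation to \cite{Kobayashi1973,Fujita1975} and used as a black box. So there is nothing to compare your argument against on the paper's side, and your task was really to reconstruct a proof.

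Your outline follows the classical route (vanishing plus Riemann--Roch on a K3-type anticanonical section, then Fujita's $\Delta$-genus/ladder reasoning), and that is the right framework. A couple of points would need tightening before this becomes a proof. First, the passage ``matching these numerology constraints with the bound $H^3 \ge 1$ yields $\io(Y) \le 4$'' is where the actual content lies, and you have not said what the constraint is. The clean way is the ladder: once you have a general $D \in |H|$ with Du~Val singularities, adjunction gives $-K_D \sim (\io(Y)-1)H\vert_D$, so $D$ is a Gorenstein del~Pezzo surface of index at least $\io(Y)-1$; since such surfaces have index at most $3$ (with equality only for $\PP^2$), you get $\io(Y) \le 4$. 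This replaces the vague ``matching numerology'' and also feeds directly into the boundary cases. Second, in the $\io(Y)=3$ case you wrote that $|H|$ gives ``a finite morphism $Y \to \PP^4$ of degree $2$ onto its image, which is therefore a quadric''; this is backwards. With $H^3 = 2$ and $h^0(H) = 5$ the map has degree~$1$ onto a degree-$2$ hypersurface, not degree~$2$ onto something. You also need to argue that $|H|$ is base-point free and that $H^3$ is exactly $1$ (resp.\ $2$) rather than larger; this comes from the ladder identification $D \cong \PP^2$ (resp.\ a quadric surface) in the two cases, not from Riemann--Roch alone.
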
 

If $Y$ is a weak Fano threefold with canonical Gorenstein singularities, 
then~$\Pic(Y)$ is a finitely generated torsion free abelian group, see~\cite[Proposition~2.1.2]{IP99}.
Hence there is a unique primitive ample Cartier divisor class~$H_Y$ such that
\begin{equation*}
-K_Y \sim \io(Y) H_Y.
\end{equation*}
The class~$H_Y$ is called {\sf the fundamental class} of~$Y$.
Note that~$\dd(Y) = H_Y^{3}$ by~\eqref{eq:def-io-g}.
The genus~$\g(Y)$ was also defined in~\eqref{eq:def-io-g}.
Since~$Y$ has canonical Gorenstein singularities, 
we have~$H^{>0}(Y, \cO_Y(-K_Y)) = 0$ by the Kawamata--Viehweg vanishing theorem, 
and therefore
\begin{equation}
\label{eq:h0-mk}
\dim H^0(Y, \cO_Y(-K_Y)) = \g(Y) + 2,
\end{equation}
by the Riemann--Roch Theorem, cf.~\cite[Corollary~2.1.14(ii)]{IP99}.
Equality~\eqref{eq:h0-mk} can be used as an alternative (sometimes more convenient) definition of the genus.

Finally, if~$Y$ is a smooth projective threefold~$Y$ we write
\begin{equation}
\hh(Y) \coloneqq \h^{1,2}(Y) = \dim H^2(Y, \Omega^1_Y)
\end{equation} 
for a {\sf Hodge number} of~$Y$ (the only off-diagonal Hodge number if~$Y$ is rationally connected). 

The following table lists the genera and Hodge numbers for all smooth Fano threefolds of Picard rank~$1$
(see, e.g., \cite[Table~\S12.2]{IP99}).
\begin{table}[H]
\begin{equation*}
\renewcommand{\arraystretch}{1.4} 
\begin{array}
{c!{\vrule width 0.9pt}rrrrrrrrrr!{\vrule width 0.7pt}rrrrr!
{\vrule width 0.7pt}c!{\vrule width 0.7pt}c}
\io(Y) & \multicolumn{10}{c!{\vrule width 0.7pt}}{1} & \multicolumn{5}{c!{\vrule width 0.7pt}}{2} & 3 & 4
\\
\hline
\g(Y) & 2 & 3 & 4 & 5 & 6 & 7 & 8 & 9 & 10 & 12 & 5 & 9 & 13 & 17 & 21 & 28 & 33
\\
\hline
\hh(Y) & 52 & 30 & 20 & 14 & 10 & 7 & 5 & 3 & 2 & 0 & 21 & 10 & 5 & 2 & 0 & 0 & 0
\end{array}
\end{equation*}
\caption{Genera and Hodge numbers of Fano threefolds of Picard rank~$1$}
\label{table:hodge-genus}
\end{table} 

Recall from~\cite[Theorem~11]{Na97} that for any Fano threefold~$X$ with terminal Gorenstein singularities 
there is a {\sf smoothing}, i.e., a flat projective family of threefolds~$\cX \to B$ over a smooth pointed curve~$(B,o)$ 
such that~$\cX_o \cong X$ and~$\cX_b$ is smooth for any~$b \ne o$ in~$B$.
We denote by~$\Pic_{\cX/B}$ the \'etale Picard sheaf, see~\cite[\S9.2]{Kleiman}.

\begin{proposition}[{cf.~\cite[Theorem~1.4]{Jahnke2011}}]
\label{prop:picard-sheaf}
Let~$X$ be a Fano threefold with terminal Gorenstein singularities
and let~$p \colon \cX \to B$ be a smoothing of~$X$.
Then~$\cX$ is a factorial fourfold with terminal hypersurface singularities
and the \'etale Picard sheaf~$\Pic_{\cX/B}$ is locally constant.
In particular, for all~$b \in B$ close to~$o \in B$ if~$b \ne o$ then
the fiber~$\cX_b$ is a smooth Fano threefold with the same invariants:
\begin{equation*}
\uprho(\cX_b) = \uprho(X),
\qquad 
\io(\cX_b) = \io(X), 
\qquad\text{and}\qquad
\g(\cX_b) = \g(X).
\end{equation*}
\end{proposition}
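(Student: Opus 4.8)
The plan is to establish Proposition~\ref{prop:picard-sheaf} in three stages: first the local structure of the total space~$\cX$, then the local constancy of~$\Pic_{\cX/B}$, and finally the numerical consequences for nearby fibers. For the first stage, I would use that terminal Gorenstein threefold singularities are compound Du Val, hence isolated hypersurface singularities (recalled above via~\cite{Reid:MM}); a one-parameter smoothing of such a singularity is locally (analytically, or formally) cut out by a single equation in a smooth fivefold with an isolated singularity of the total space, so~$\cX$ has terminal hypersurface singularities at the finitely many singular points of~$\cX_o = X$ and is smooth elsewhere. Factoriality of~$\cX$ then follows because a $\QQ$-factorial (even just $\QQ$-Gorenstein) fourfold with terminal, hence isolated, hypersurface singularities is factorial: by the analytic local structure an isolated hypersurface singularity of dimension~$\ge 4$ is factorial (Grothendieck's parafactoriality / Samuel's conjecture, or Call--Lyubeznik), and a variety that is analytically locally factorial and has torsion-free class group is factorial.

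The second and main stage is the local constancy of~$\Pic_{\cX/B}$. I would argue as follows. Shrinking~$B$ around~$o$, all fibers~$\cX_b$ with~$b\ne o$ are smooth Fano threefolds, and~$X = \cX_o$ is a Fano threefold with terminal Gorenstein singularities; in particular every fiber has rational singularities, $h^1(\cX_b,\cO_{\cX_b}) = h^2(\cX_b,\cO_{\cX_b}) = 0$ (Kawamata--Viehweg vanishing applies on~$X$ since $-K_X$ is ample and $X$ has canonical singularities), and~$\Pic(\cX_b)$ is a finitely generated torsion-free abelian group by~\cite[Proposition~2.1.2]{IP99}. Since~$H^1(\cX_b,\cO) = H^2(\cX_b,\cO) = 0$ for every~$b\in B$ the relative Picard functor is representable by a group scheme~$\mathrm{Pic}_{\cX/B}$ that is unramified and formally smooth over~$B$, hence étale over~$B$, so the associated étale sheaf~$\Pic_{\cX/B}$ is locally constant on~$B$ after possibly shrinking~$B$. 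Concretely: specialization gives an injective map~$\Pic(\cX_b) = (\Pic_{\cX/B})_{\bar b} \hookrightarrow (\Pic_{\cX/B})_{\bar o} = \Pic(X)$ for~$b$ near~$o$ (the stalk at~$o$ surjects onto the generic stalk because~$\cX$ is factorial, every line bundle on the generic fiber extending to~$\cX$ and restricting back), and conversely any line bundle on~$X$ deforms over the henselization because~$H^2(X,\cO_X) = 0$ kills the obstruction and one descends from~$\cX$ using factoriality of~$\cX$; combining the two inclusions yields the isomorphism~$\Pic(\cX_b) \cong \Pic(X)$ for all~$b$ near~$o$.

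The third stage is routine once the second is in hand. From~$\Pic(\cX_b)\cong\Pic(X)$ we immediately get~$\uprho(\cX_b) = \uprho(X)$. The canonical class is compatible with the family: $K_{\cX_b} = K_{\cX}|_{\cX_b}$ by adjunction (using that~$\cX_b$ is a Cartier divisor on the Gorenstein~$\cX$), so the identification of Picard groups carries~$-K_X$ to~$-K_{\cX_b}$; since the Fano index is the maximal divisibility of~$K$ in~$\Pic$, we get~$\io(\cX_b) = \io(X)$, and since~$(-K_{\cX_b})^3 = (-K_{\cX}|_{\cX_b})^3$ is deformation-invariant (intersection numbers of a flat family of divisors on a flat family of threefolds are locally constant on~$B$), formula~\eqref{eq:def-io-g} gives~$\g(\cX_b) = \tfrac12(-K_{\cX_b})^3 + 1 = \g(X)$.

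The main obstacle is the second stage, specifically controlling the specialization map~$\Pic(\cX_b)\to\Pic(X)$ in both directions near the singular point. Injectivity on the smooth locus is standard, but identifying the cokernel requires genuinely using the factoriality of~$\cX$ proved in the first stage — this is exactly the point where the hypersurface nature of the total-space singularities enters — together with the vanishing~$H^2(X,\cO_X) = 0$ to deform divisor classes off~$X$. Once factoriality of~$\cX$ is established the rest is a clean application of the theory of the relative Picard scheme; so essentially all the work is front-loaded into proving~$\cX$ is a factorial fourfold with terminal hypersurface singularities, and I would cite~\cite{Jahnke2011} for the part of this argument that is already in the literature.
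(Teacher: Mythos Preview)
Your proposal is correct and follows essentially the same approach as the paper, though the paper's own proof is nothing more than a citation to~\cite[Proposition~A.7 and Corollary~A.8]{KS23}; you have spelled out the substance of that argument (factoriality of~$\cX$ via Grothendieck parafactoriality for isolated hypersurface singularities in dimension~$\ge 4$, local constancy of~$\Pic_{\cX/B}$ from the vanishing~$H^1(\cX_b,\cO) = H^2(\cX_b,\cO) = 0$ plus factoriality of the total space, and the numerical consequences via adjunction and constancy of intersection numbers). The only point worth tightening is the adjunction line: strictly~$K_{\cX_b} = (K_{\cX} + \cX_b)|_{\cX_b}$, and it is the fact that~$\cX_b$ is a fiber (hence numerically trivial on~$\cX_b$) that gives~$K_{\cX_b} = K_{\cX}|_{\cX_b}$ in~$\Pic(\cX_b)$; otherwise your outline is clean.
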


\begin{proof}
The \'etale Picard sheaf~$\Pic_{\cX/B}$ is locally constant by~\cite[Proposition~A.7]{KS23}
and the equalities of the index and genus follow from the argument of~\cite[Corollary~A.8]{KS23}.
\end{proof} 

In what follows we use notation~$X^{\mathrm{sm}}$ for a smooth fiber of a smoothing of~$X$.
Using this, the equalities of Proposition~\ref{prop:picard-sheaf} can be rewritten as
\begin{equation}
\label{eq:smoothing-invariants}
\uprho(X^{\mathrm{sm}}) = \uprho(X),
\qquad 
\io(X^{\mathrm{sm}}) = \io(X),
\qquad 
\g(X^{\mathrm{sm}}) = \g(X).
\end{equation}

\subsection{Some intersection theory}

The next well-known lemma collects some intersection numbers 
(see, e.g., \cite[Lemma~2.11]{Iskovskih1977} or~\cite[Lemma~4.1.6]{IP99}).

\begin{lemma}
\label{lem:intersection}
Let~$f \colon Y \to Z$ be a $K$-negative extremal contraction
where~$Y$ is a smooth rationally connected threefold with~$\uprho(Y) = 2$.
If~$f$ is of type~\type{(C)} or~\type{(D)} then
\begin{align*}
H_Z^2 \cdot (-K_Y) &= 2, &
H_Z \cdot (-K_Y)^2 &= 12 - \deg(\Delta), &&
\text{for type~\type{(C)},}
\\
H_Z^2 \cdot (-K_Y) &= 0, &
H_Z \cdot (-K_Y)^2 &= \deg(Y/Z), &&
\text{for type~\type{(D)},}
\end{align*}
where~$H_Z$ is the line class of~$\PP^2$ for type~\type{(C)},
or the point class of~$\PP^1$ for type~\type{(D)}.
Similarly, if~$f$ is birational, $E \subset Y$ is its exceptional divisor, and~$\Gamma \subset Z$ is its center, then
\begin{align*}
f^*D^2 \cdot E &= 0, &
f^*D \cdot E^2 &= - D \cdot \Gamma, &
E^3 &= 
\begin{cases}
1, & \text{for type~\type{(B_2)},}\\
2, & \text{for type~\type{(B_1^0)},}\\
4, & \text{for type~\typeb,}\\
2 - 2\g(\Gamma) + K_Z \cdot \Gamma, & \text{for type~\type{(B_1^1)},}
\end{cases}
\end{align*}
where~$D$ is a Cartier divisor on~$Z$. 
\end{lemma}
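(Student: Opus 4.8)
\textbf{Proof plan for Lemma~\ref{lem:intersection}.}
The plan is to compute all these intersection numbers directly from the structure of the contraction~$f$, using the projection formula together with standard facts about the normal bundle of the exceptional locus. For the fibration types~\type{(C)} and~\type{(D)}, I would first observe that~$H_Z^2$ is represented by the class of a fiber~$F$ of~$f$ (a conic, resp.\ a del Pezzo surface), since~$H_Z$ is the hyperplane (resp.\ point) class and~$\uprho(Y) = 2$. Then~$H_Z^2 \cdot (-K_Y) = (-K_Y)\cdot F = (-K_F)\cdot F$ by adjunction, which equals the anticanonical degree of a conic ($=2$) in type~\type{(C)} and vanishes in type~\type{(D)} since~$F^2 = 0$ there (the fibers move in a pencil). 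For the second equality in each case one writes~$H_Z\cdot(-K_Y)^2$ and uses the known formulas for the second Chern class / topological Euler characteristic: in the conic bundle case this is the classical relation~$(-K_Y)^2\cdot H_Z = 12 - \deg(\Delta)$ coming from~$c_2$ of the relative tangent sheaf and the fact that the discriminant divisor records the degeneracy of the conic, and in the del Pezzo fibration case~$(-K_Y)^2\cdot H_Z$ is just the anticanonical degree~$(-K_F)^2 = \deg(Y/Z)$ of a general fiber.

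For the birational case, the relation~$f^*D^2\cdot E = 0$ is immediate from the projection formula: $f^*D^2\cdot E = D^2\cdot f_*E$, and~$f_*E = 0$ since~$E$ is contracted to the lower-dimensional center~$\Gamma$ (of dimension~$\le 1$). The relation~$f^*D\cdot E^2 = -D\cdot\Gamma$ follows by restricting to~$E$: $f^*D\cdot E^2 = (f^*D\vert_E)\cdot(E\vert_E)$, and since~$E\vert_E \sim -\xi$ where~$\xi$ is the relative hyperplane class of the projective bundle~$E = \PP_\Gamma(\cN)$ over~$\Gamma$ (or~$E = \PP^2$ in the point case, where both sides vanish), pushing forward to~$\Gamma$ gives~$-\deg(D\vert_\Gamma) = -D\cdot\Gamma$. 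For~$E^3$ one writes~$E^3 = (E\vert_E)^2$ and evaluates this self-intersection on~$E$. In the point-blowup cases this is a direct computation: for~\type{(B_2)}, $E\cong\PP^2$ with~$E\vert_E = \cO(-1)$ so~$E^3 = 1$; for~\type{(B_1^0)} the cited Remark~\ref{rem:blowup-nc} gives~$E\cong Q^2$ with~$E\vert_E = \cO_E(-1)$, whose square on the smooth quadric surface is~$2$; for~\typeb, Remark~\ref{rem:contr} gives~$E\cong\PP^2$ with~$E\vert_E = \cO(-2)$, so~$E^3 = 4$. For the curve-blowup case~\type{(B_1^1)}, $E = \PP_\Gamma(\cN_{\Gamma/Z})$ is a ruled surface over~$\Gamma$ and one uses that~$E\vert_E = -\xi$ satisfies~$\xi^2 = \xi\cdot f^*(\deg\cN_{\Gamma/Z})$ on~$E$; combining with~$\deg\cN_{\Gamma/Z} = -\deg\cN_{\Gamma/Z}^\vee$ and adjunction~$\deg K_\Gamma = (K_Z+\det\cN_{\Gamma/Z})\cdot\Gamma$ on~$Z$ gives~$E^3 = \deg\cN_{\Gamma/Z} = 2\g(\Gamma)-2 - K_Z\cdot\Gamma$, i.e.\ $2 - 2\g(\Gamma) + K_Z\cdot\Gamma$ after the sign flip inherent in~$E\vert_E = -\xi$.

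I do not expect any serious obstacle here — this is a bookkeeping lemma and every ingredient (normal bundle of the exceptional locus, the Segre class identity~$\xi^2 = \xi\cdot f^*c_1(\cN)$ on a~$\PP^1$-bundle, adjunction on~$Z$, and the~$c_2$-formula for conic bundles) is classical. The only point requiring mild care is keeping the sign conventions straight, since~$E\vert_E$ is the \emph{negative} of the tautological class on the ruled surface, and making sure the conic-bundle formula~$12 - \deg(\Delta)$ is invoked in the correct normalization; but since the statement explicitly cites~\cite[Lemma~2.11]{Iskovskih1977} and~\cite[Lemma~4.1.6]{IP99}, I would simply reference those for the conic-bundle identity and present the remaining computations as above.
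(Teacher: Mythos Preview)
The paper gives no proof of this lemma at all --- it is stated as well-known, with citations to \cite[Lemma~2.11]{Iskovskih1977} and \cite[Lemma~4.1.6]{IP99}, and nothing more. Your sketch is correct and is exactly the standard computation those references carry out, so there is nothing substantive to compare.

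Two minor presentational wobbles worth tightening. In type~\type{(D)} the vanishing $H_Z^2\cdot(-K_Y)=0$ is immediate because $H_Z^2=0$ already in the Chow ring of~$\PP^1$; your sentence conflates $f^*H_Z$ with $f^*H_Z^2$ when identifying the fiber class (it is $f^*H_Z$, not $f^*H_Z^2$, that is the del Pezzo fiber). In the~\type{(B_1^1)} case your sign bookkeeping is tangled --- you write $E^3=\deg\cN_{\Gamma/Z}$ and then invoke a ``sign flip''. The clean line is simply $E^3=(E\vert_E)^2=\xi^2=-\deg\cN_{\Gamma/Z}$ (the minus sign coming from the Grothendieck relation on $\PP_\Gamma(\cN)$, since $\cO_E(-E)$ is the relative $\cO(1)$), and then adjunction $2\g(\Gamma)-2=K_Z\cdot\Gamma+\deg\cN_{\Gamma/Z}$ gives the stated value directly. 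Neither issue is a gap; both are just exposition.
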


The next result is useful for comparing intersection numbers of varieties related by a flop 
(see, e.g., \cite[Lemma~4.1.2]{IP99}).

\begin{lemma}
\label{lem:int-flop}
If
\begin{equation*}
\xymatrix{
Y\ar@{-->}[rr]^\chi\ar[dr]_{\xi} && Y_+\ar[dl]^{\xi_+}
\\
&\bar Y
} 
\end{equation*}
is a flop of threefolds with terminal singularities, $D_1,\, D_2 \in \Pic(Y)$, and~$\bar{D} \in \Pic(\bar{Y})$ then
\begin{equation*}
D_1 \cdot D_2 \cdot \xi^*\bar{D} = (\chi_*D_1) \cdot (\chi_*D_2) \cdot \xi_+^*\bar{D},
\end{equation*}
where~$\chi_*D_{i} \in \Pic(Y_+)$ is the strict transform of~$D_{i}$ under the flop.
\end{lemma}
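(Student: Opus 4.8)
The statement is Lemma~\ref{lem:int-flop}, the projection formula for a flop. The plan is to reduce everything to the single geometric fact that a flop is an isomorphism in codimension~$1$, and to work on a common resolution. First I would recall that since~$\chi\colon Y \dashrightarrow Y_+$ is an isomorphism outside the flopping loci (finite unions of curves in~$Y$ and~$Y_+$), there is a smooth (or at least $\QQ$-factorial terminal) threefold~$W$ with proper birational morphisms~$q\colon W \to Y$ and~$q_+\colon W \to Y_+$ resolving~$\chi$, and moreover~$\xi \circ q = \xi_+ \circ q_+ \colon W \to \bar Y$. The exceptional loci of~$q$ and~$q_+$ are contracted by this common morphism to~$\bar Y$, hence have image of dimension~$\le 1$; in particular both are contained in the preimage of the flopping locus on~$\bar Y$.

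Next I would compute~$D_1 \cdot D_2 \cdot \xi^*\bar D$ by pulling everything back to~$W$. Using the projection formula along~$q$ (valid since~$q$ is proper birational and the~$D_i$ are Cartier on~$Y$), the number equals~$q^*D_1 \cdot q^*D_2 \cdot q^*\xi^*\bar D$ computed on~$W$. The point is now to compare~$q^*D_i$ with~$q_+^*(\chi_*D_i)$ on~$W$. Since~$D_i$ and~$\chi_*D_i$ agree on the open set where~$\chi$ is an isomorphism, the difference~$q^*D_i - q_+^*(\chi_*D_i)$ is a divisor supported on the~$q$- and~$q_+$-exceptional loci, i.e.\ on curves mapping into the flopping locus of~$\bar Y$. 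Write this difference as~$F_i$, a divisor class on~$W$ whose support is contracted by~$\xi\circ q = \xi_+ \circ q_+$. Then
\begin{equation*}
q^*D_1 \cdot q^*D_2 \cdot (\xi q)^*\bar D
 = \bigl(q_+^*(\chi_*D_1) + F_1\bigr)\cdot\bigl(q_+^*(\chi_*D_2) + F_2\bigr)\cdot (\xi_+ q_+)^*\bar D,
\end{equation*}
and expanding, every term involving an~$F_i$ contains a curve class~$\Supp(F_i)$ that is contracted by~$\xi_+ q_+$; intersecting such a curve with the pullback~$(\xi_+ q_+)^*\bar D$ of a Cartier class from~$\bar Y$ gives zero by the projection formula. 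Hence all mixed terms vanish and the expression reduces to~$q_+^*(\chi_*D_1)\cdot q_+^*(\chi_*D_2)\cdot (\xi_+ q_+)^*\bar D$, which by the projection formula along~$q_+$ equals~$(\chi_*D_1)\cdot(\chi_*D_2)\cdot\xi_+^*\bar D$ on~$Y_+$. This is the desired equality.

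\textbf{Main obstacle.} The delicate point is making the manipulation of the ``correction divisors''~$F_i$ rigorous on a possibly singular threefold: one must know that~$q^*D_i - q_+^*(\chi_*D_i)$ is genuinely $\QQ$-Cartier (or $\ZZ$-Cartier) on~$W$ with support in the exceptional locus, and that triple intersection numbers of Cartier (resp.\ $\QQ$-Cartier) divisors are well-defined and behave functorially under pushforward. Choosing~$W$ smooth (possible in dimension~$3$ by resolution of singularities) avoids all $\QQ$-factoriality subtleties, so the cleanest route is: resolve~$\chi$ by a smooth~$W$ dominating both~$Y$ and~$Y_+$ with~$\xi q = \xi_+ q_+$, then everything is ordinary intersection theory on a smooth projective threefold together with the projection formula applied to the proper morphisms~$q$, $q_+$, and~$\xi q = \xi_+ q_+$. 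The existence of such a~$W$ with the compatibility~$\xi q = \xi_+ q_+$ follows because~$Y$ and~$Y_+$ have the same anticanonical (or flopping) contraction~$\bar Y$, so resolving the graph of~$\chi$ inside~$Y \times_{\bar Y} Y_+$ and then resolving singularities does the job.
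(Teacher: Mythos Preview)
Your proof via a common resolution is correct, but the paper takes a different and considerably shorter route. Rather than resolving~$\chi$ and tracking exceptional correction terms~$F_i$, the paper observes that both sides are linear in~$\bar D$, so it suffices to prove the equality when~$\bar D$ is base point free. A general member of~$|\bar D|$ then misses the (zero-dimensional) image of the flopping locus in~$\bar Y$, so~$\xi$ and~$\xi_+$ are isomorphisms over a neighborhood of~$\bar D$; hence~$\chi$ restricts to an isomorphism on a neighborhood of~$\xi^{-1}(\bar D)$, and the equality of intersection numbers is immediate. Your common-resolution argument is the more robust, mechanical approach and would adapt to situations where no such ``moving'' trick is available; the paper's argument trades that generality for brevity by exploiting the third factor~$\xi^*\bar D$ as a movable divisor that can be pushed away from the indeterminacy of~$\chi$.
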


\begin{proof}
Since both sides are linear in~$\bar{D}$, it is enough to prove it when~$\bar{D}$ is base point free.
In this case we may assume that~$\xi$ and~$\xi_+$ are isomorphisms over a neighborhood of~$\bar{D}$,
hence~$\chi$ is an isomorphism on a neighborhood of~$\xi^{-1}(\bar{D})$, 
and then the equality is obvious.
\end{proof} 

For a simple flop one can also relate~$(\chi_*D)^3$ to~$D^3$, see~\cite[Lemma~7.4(2)]{Friedman91}, 
but we will not need this result.

\subsection{Numerical invariants and $K$-negative extremal contractions}

The following results relate~$\g(Y)$ and~$\hh(Y)$ with the invariants of a contraction~$f\colon Y \to Z$.

First, we discuss the case where~$Y$ and~$Z$ are both smooth.

\begin{lemma}
\label{lem:hh-g-xi}
If\/~$Y$ is a smooth weak Fano threefold 
and $f \colon Y \to Z$ is a birational $K$-negative extremal contraction with smooth~$Z$ then 
\begin{align*}
\g(Y) &= 
\begin{cases}
\g(Z) - 4, & 
\text{if~$f$ has type~\type{(B_2)},}\\
\g(Z) + \g(\Gamma) - \io(Z)\deg(\Gamma) - 1, & 
\text{if~$f$ has type~\type{(B_1^1)}.}
\end{cases}
\intertext{Moreover, if~$f \colon Y \to Z$ is birational or a conic bundle over~$\PP^2$ then}
\hh(Y) &= 
\begin{cases}
\hh(Z), & 
\text{if~$f$ has type~\type{(B_2)},}\\
\hh(Z) + \g(\Gamma), & 
\text{if~$f$ has type~\type{(B_1^1)},}\\
\tfrac12 \deg(\Delta)(\deg(\Delta) - 3), & \text{if~$f$ has type~\type{(C)}}.
\end{cases}
\end{align*}
\end{lemma}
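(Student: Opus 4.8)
The plan is to compute both $\g(Y)$ and $\hh(Y)$ directly from the geometry of the blowup, using the intersection numbers collected in Lemma~\ref{lem:intersection} together with standard facts about how cohomology of the structure sheaf twists and Hodge numbers behave under blowups of smooth centers. Throughout I write $H_Z$ for the fundamental class of $Z$ and freely pull it back to $Y$ (following the conventions of~\S\ref{ss:conventions}), and I use the discrepancy formulas $K_Y = f^*K_Z + aE$ with $a = 1$ for types \type{(B_2)} and \type{(B_1^1)} and $\cO_E(E) \cong \cO_E(-1)$ from Remark~\ref{rem:blowup-nc}.

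For the genus formulas I would start from the identity $2\g(Y) - 2 = (-K_Y)^3$ of~\eqref{eq:def-io-g} and expand $(-K_Y)^3 = (f^*(-K_Z) - E)^3$. Using the projection formula, $f^*D^2\cdot E = 0$, $f^*D\cdot E^2 = -D\cdot\Gamma$, and the values of $E^3$ from Lemma~\ref{lem:intersection}, this gives in type \type{(B_2)} simply $(-K_Y)^3 = (-K_Z)^3 - 1 = (2\g(Z)-2) - 8$, whence $\g(Y) = \g(Z) - 4$; here the extra $-8$ against the naive $-1$ comes because $E$ is a point blowup so $(-K_Z)^2\cdot E$ does not vanish — I will need to be a little careful and instead argue via the well-known fact $(-K_Y)^3 = (-K_Z)^3 - 8$ for the blowup of a smooth point, which follows from $-K_Y = f^*(-K_Z) - 2E$ on a threefold. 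For type \type{(B_1^1)} I expand $(f^*(-K_Z) - E)^3$, obtaining $(-K_Z)^3 + 3 f^*K_Z\cdot E^2 - E^3$; substituting $f^*K_Z\cdot E^2 = K_Z\cdot\Gamma$ and $E^3 = 2 - 2\g(\Gamma) + K_Z\cdot\Gamma$ yields $(-K_Y)^3 = (-K_Z)^3 + 2K_Z\cdot\Gamma + 2\g(\Gamma) - 2$. Writing $K_Z\cdot\Gamma = -\io(Z)H_Z\cdot\Gamma = -\io(Z)\deg(\Gamma)$ and dividing by $2$ gives exactly $\g(Y) = \g(Z) + \g(\Gamma) - \io(Z)\deg(\Gamma) - 1$.

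For the Hodge numbers I would use the behavior of $h^{1,2}$ under the three relevant contraction types. For birational $f$ with smooth center, the blowup formula for cohomology of a smooth threefold gives $H^2(Y,\Omega^1_Y) \cong H^2(Z,\Omega^1_Z) \oplus H^2(E, \text{(twists)})$ coming from the exceptional $\PP^1$- or $\PP^2$-bundle $E \to \Gamma$; a point blowup ($E \cong \PP^2$) contributes nothing to $h^{1,2}$, giving $\hh(Y) = \hh(Z)$ for type \type{(B_2)}, while a curve blowup ($E$ a $\PP^1$-bundle over $\Gamma$) contributes $H^1(\Gamma, \cO_\Gamma) = \CC^{\g(\Gamma)}$, giving $\hh(Y) = \hh(Z) + \g(\Gamma)$ for type \type{(B_1^1)}. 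For a conic bundle over $\PP^2$ I would instead invoke the classical formula for the third Betti/Hodge number of a standard conic bundle in terms of the discriminant curve $\Delta \subset \PP^2$: since $Z = \PP^2$ is rational and $\hh(\PP^2)$-contributions vanish, $h^{1,2}(Y)$ equals the genus of the intermediate Jacobian, which for a conic bundle over $\PP^2$ with discriminant $\Delta$ of degree $d$ is $\binom{d-1}{2} = \tfrac12 d(d-3)$ — this is the content of Beauville's / Mori--Mukai's computation and I will cite it rather than reprove it.

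\textbf{Main obstacle.} The genuinely delicate point is the conic bundle case of the Hodge number: unlike the birational cases, which reduce to an elementary blowup computation, the formula $\hh(Y) = \tfrac12\deg(\Delta)(\deg(\Delta) - 3)$ requires the fact that the conic bundle is \emph{standard} (relatively minimal, with $\uprho(Y/Z) = 1$) so that the Prym-theoretic description of the intermediate Jacobian applies, and that $Z = \PP^2$ contributes nothing. I expect to spend most of the care there, checking that the hypothesis ``$f$ a $K$-negative extremal conic bundle over $\PP^2$ with $\uprho(Y) = 2$'' really does put us in the situation covered by the cited result, and that $\deg(\Delta) \ge 3$ (guaranteed by the classification recalled in~\S\ref{ss:contractions}) makes the right-hand side non-negative and correct. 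The birational parts I expect to be routine, modulo bookkeeping with the projection formula and the sign conventions for $E$.
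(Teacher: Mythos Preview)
Your approach is essentially the same as the paper's: compute the genus from $(-K_Y)^3$ via the canonical class formula and Lemma~\ref{lem:intersection}, and treat the Hodge number as obvious for smooth blowups and as a cited standard result for conic bundles (the paper cites \cite[(4.13)]{MoriMukai:86}). One slip to clean up: you write $a=1$ for type~\type{(B_2)}, but the discrepancy of a smooth point blowup on a threefold is $2$ (as the paper notes in~\S\ref{ss:contractions}, the lower index in type~\type{(B)} \emph{is} the discrepancy); you catch this yourself when you switch to $-K_Y = f^*(-K_Z) - 2E$, but the opening line and the invocation of Remark~\ref{rem:blowup-nc} (which is about nodes and cusps, not smooth points) should be corrected.
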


\begin{proof}
The genus formulas follow from~\eqref{eq:def-io-g}, Lemma~\ref{lem:intersection}, 
and the canonical class formula~$K_{Y} = f^*K_{Z} + aE$,
where~$E \subset Y$ is the exceptional divisor of~$f$ and~$a$ is the discrepancy.
The Hodge number formula is obvious for smooth blowups
and standard for conic bundles (see, e.g., \cite[(4.13)]{MoriMukai:86}). 
\end{proof} 

The situation with the blowup of a Gorenstein singular point is more tricky.

\begin{proposition}
\label{prop:hh-g-xsm}
Let~$(X, x_0)$ be a Fano threefold with a single node or cusp~$x_0 \in X$
and let~$\hX \coloneqq \Bl_{x_0}(X)$ be its blowup.
Then
\begin{equation}
\label{eq:prop:hh-g-xsm}
\g(\hX) = \g(X) - 1
\qquad\text{and}\qquad
\hh(\hX) = 
\begin{cases}
\hh(X^\sm), 
& \text{if~$X$ is nonfactorial,}\\
\hh(X^\sm) - 1, 
& \text{if~$X$ is factorial.}
\end{cases}
\end{equation}
If, furthermore, $X$ is nonfactorial and~$X' \to X$ is a small resolution 
then~$\g(X') = \g(X)$ and~$\hh(X') = \hh(X^\sm)$.
\end{proposition}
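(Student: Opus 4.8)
The plan is to compute the two invariants $\g$ and $\hh$ for the blowup $\hX = \Bl_{x_0}(X)$ by comparing $\hX$ simultaneously with $X$ itself and with a small resolution / smoothing, exploiting the fact that $\hX$ dominates both pictures. For the genus, I would use the characterization~\eqref{eq:h0-mk}: since $\hX$ is a smooth weak Fano threefold (Remark~\ref{rem:blowup-nc} gives smoothness and discrepancy~$1$), the Kawamata--Viehweg vanishing applies, so $\g(\hX) + 2 = \dim H^0(\hX, \cO_{\hX}(-K_{\hX}))$. The blowup morphism $\pi \colon \hX \to X$ satisfies $K_{\hX} = \pi^* K_X + E$, hence $-K_{\hX} = \pi^*(-K_X) - E$, and pushing forward gives $\pi_* \cO_{\hX}(-K_{\hX}) = \cO_X(-K_X) \otimes \cI_{x_0}$ (sections of $-K_X$ vanishing at the node). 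Since $x_0$ is a single point and $-K_X$ is very ample (or at least separates the node from general sections), this drops $h^0$ by exactly~$1$, giving $\g(\hX) = \g(X) - 1$. This is the easy part; the genus of a small resolution $X'$ is then handled by noting $K_{X'} = \pi'^* K_X$ (a small resolution is crepant), so $-K_{X'}$ and $-K_X$ have the same space of sections, whence $\g(X') = \g(X)$. Finally $\g(X^\sm) = \g(X)$ is~\eqref{eq:smoothing-invariants}.

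For the Hodge numbers the strategy is different. The blowup $\pi \colon \hX \to X$ has exceptional divisor $E$ a smooth quadric surface (a node) or a corank-$1$ quadric cone (a cusp), and in both cases $E$ is a rational surface, so its only nontrivial cohomology is $H^0, H^2, H^4$. Comparing $\hX$ with a small resolution $X'$ is cleaner, so I would first establish $\hh(X') = \hh(X^\sm)$. The small resolution $X' \to X$ contracts a single smooth rational curve (an Atiyah $(-1,-1)$-curve), and $\hX$ is obtained from $X'$ by blowing up that curve (this is the standard local picture of the resolved conifold, and it is exactly diagram~\eqref{eq:intro-barx-diagram} / \eqref{eq:intro-sl} restricted near the node). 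A smoothing $\cX \to B$ of $X$ is a factorial fourfold with terminal hypersurface singularities by Proposition~\ref{prop:picard-sheaf}; applying vanishing cycle / Clemens--Schmid type arguments to the degeneration, one gets $\hh(X') = \hh(X^\sm)$ when $X$ is nonfactorial (the vanishing cycle of a conifold degeneration is a $3$-sphere, and nonfactoriality means the class of the exceptional curve of the small resolution is primitive, so the ``vanishing thimble'' does not affect $H^{1,2}$ — more precisely the conifold transition changes $h^{2,1}$ by $\pm 1$ only when the number of nodes exceeds the rank drop, which here is $1$ node and one extra divisor class, i.e.\ no change). Then Lemma~\ref{lem:hh-g-xi} applied to $\hX \to X'$ (type $\type{(B_1^1)}$, blowup of a smooth rational curve, $\g(\Gamma) = 0$) gives $\hh(\hX) = \hh(X') + \g(\Gamma) = \hh(X') = \hh(X^\sm)$, which is the nonfactorial formula.

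For the factorial case there is no small resolution, so I would argue directly that $\hh(\hX) = \hh(X^\sm) - 1$. Here I would again use a smoothing $\cX \to B$ of $X$: this is a conifold degeneration of the smooth fiber $X^\sm$, and because $X$ is \emph{factorial} the vanishing cycle is \emph{not} homologous to zero in $X^\sm$ (this is precisely the homological manifestation of factoriality — the local class group $\ZZ = \Cl(X,x_0)$ injects into nothing, so the exceptional class of a would-be small resolution is nontrivial in $H^3$, equivalently the vanishing $3$-sphere is a nonzero class). By the Clemens--Schmid exact sequence, passing from $X^\sm$ to the nodal $X$ and then taking the small modification $\hX$ (which replaces the node by a divisor $E = $ quadric), $H^3(\hX)$ loses exactly the rank-$2$ piece generated by the vanishing cycle and its dual, but $H^3(\hX)$ is pure of type $(1,2)+(2,1)$ (since $\hX$ is a rationally connected smooth threefold), so $\hh(\hX) = \tfrac12 b_3(\hX) = \tfrac12(b_3(X^\sm) - 2) = \hh(X^\sm) - 1$.

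The main obstacle is making the Hodge-number bookkeeping rigorous: one needs a clean statement of how $H^3$ behaves under a one-nodal degeneration and the subsequent blowup of the node, distinguishing the factorial from the nonfactorial case purely in terms of whether the vanishing cycle is nonzero in $H_3(X^\sm)$. I expect this to be packaged via the Clemens--Schmid exact sequence (or equivalently via the comparison of $H^3(X^\sm)$, $H^3(X)$ and $H^3(\hX)$ using the decomposition theorem for $\pi$), and the factoriality dichotomy of~\eqref{eq:pic-cl} translated into the statement that $r_{x_0} \ne 0$ $\iff$ the vanishing cycle generates a rank-$1$ primitive sublattice $\iff$ $b_3$ drops by $2$ under the transition. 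Everything else — the genus computations, the identification of $\hX \to X'$ as a smooth blowup of a rational curve, the invocation of Lemma~\ref{lem:hh-g-xi} — is routine once this homological input is in place.
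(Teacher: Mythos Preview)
Your genus argument has a genuine gap. You assert that $\hX$ is weak Fano and then invoke~\eqref{eq:h0-mk}, but Remark~\ref{rem:blowup-nc} only gives smoothness of $\hX$ and the discrepancy of $E$; it says nothing about nefness or bigness of $-K_{\hX}$. In fact $-K_{\hX}$ fails to be big when $\g(X)=2$ (Theorem~\ref{thm:intro-nf}\ref{it:intro-nf-g2}), and in the nonfactorial case of Proposition~\ref{prop:base-points} the node $x_0$ is the unique base point of $|{-K_X}|$, so your section count $h^0(\hX,-K_{\hX}) = h^0(X,\cI_{x_0}(-K_X))$ does \emph{not} drop by one there. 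The paper bypasses all of this by computing directly from the definition~\eqref{eq:def-io-g}: using Lemma~\ref{lem:intersection} one has $(-K_{\hX})^3=(-K_X-E)^3=(-K_X)^3-E^3=(2\g(X)-2)-2$, which requires no positivity hypothesis on~$-K_{\hX}$ whatsoever.

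For the Hodge numbers your Clemens--Schmid plan is correct in spirit but is heavier than necessary, and as written it does not address the cuspidal case (where the Milnor fiber is a wedge of \emph{two} $3$-spheres, the local monodromy is nontrivial, and there is never a small resolution to pivot through). The paper's argument is a direct Euler-characteristic comparison: since $\hX\setminus E \cong X\setminus\{x_0\}$ and $X^{\sm}\setminus F$ is homotopy equivalent to $X\setminus\{x_0\}$ (with $F$ the Milnor fiber), one gets $\upchi_{\mathrm{top}}(\hX)-\upchi_{\mathrm{top}}(X^{\sm})=\upchi_{\mathrm{top}}(E)-\upchi_{\mathrm{top}}(F)$, which equals $4-0=4$ for a node and $3-(-1)=4$ for a cusp. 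Then the identity $2\hh=2+2\uprho-\upchi_{\mathrm{top}}$ together with $\uprho(\hX)-\uprho(X^{\sm})\in\{2,1\}$ (according to nonfactorial versus factorial) gives the result uniformly. Your approach ultimately encodes the same numerics, but in the paper the factoriality dichotomy enters only through the elementary count of $\uprho(\hX)$, not through any statement about homological nontriviality of vanishing cycles.
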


\begin{proof}
The equality~$\g(X) = \g(\hX) + 1$ follows from~\eqref{eq:def-io-g} and Lemma~\ref{lem:intersection},
because
\begin{equation*}
(-K_{\hX})^3 = (-K_X - E)^3 = (-K_X)^3 - E^3 = (2\g(X) - 2) - 2.
\end{equation*}
Similarly, if~$X'$ is a small resolution of~$X$ then~$(-K_{X'})^3 = (-K_X)^3$, hence~$\g(X') = \g(X)$.

To relate the Hodge numbers of~$\hX$ and~$X^\sm$ we assume that~$\Bbbk = \CC$ 
and compute the topological Euler characteristics.
First, we have~$\hX \setminus E = X \setminus \{x_0\}$. 
Next, if~$F$ is the Milnor fiber of a retraction~$X^\sm \to X$, 
then~$X^\sm \setminus F$ is homotopy equivalent to~$X \setminus \{x_0\}$, hence  
\begin{equation*}
\upchi_{\mathrm{top}}(\hX) - \upchi_{\mathrm{top}}(X^\sm) = \upchi_{\mathrm{top}}(E) - \upchi_{\mathrm{top}}(F).
\end{equation*}
Now we consider two cases:
if~$x_0$ is a node then~$E \cong \PP^1 \times \PP^1$ and~$F$ is homotopy equivalent to a sphere,
and if~$x_0$ is a cusp then~$E$ is a quadratic cone and~$F$ is homotopy equivalent to a bouquet of two spheres.
In either case~$\upchi_{\mathrm{top}}(E) - \upchi_{\mathrm{top}}(F) = 4$, hence
\begin{equation*}
\upchi_{\mathrm{top}}(\hX) - \upchi_{\mathrm{top}}(X^\sm) = 4.
\end{equation*}
On the one hand, if~$X$ is nonfactorial then~$\uprho(\hX) = \uprho(X) + 2 = \uprho(X^\sm) + 2$, hence 
\begin{equation*}
2\hh(\hX) = 
2 + 2\uprho(\hX) - \upchi_{\mathrm{top}}(\hX) = 
2 + 2\uprho(X^\sm) + 4 - \upchi_{\mathrm{top}}(X^\sm) - 4 = 
2\hh(X^\sm).
\end{equation*}
On the other hand, if~$X$ is factorial then~$\uprho(\hX) = \uprho(X) + 1 = \uprho(X^\sm) + 1$, hence 
\begin{equation*}
2\hh(\hX) = 
2 + 2\uprho(\hX) - \upchi_{\mathrm{top}}(\hX) = 
2 + 2\uprho(X^\sm) + 2 - \upchi_{\mathrm{top}}(X^\sm) - 4 = 
2\hh(X^\sm) - 2.
\end{equation*}
This proves the second equality in~\eqref{eq:prop:hh-g-xsm}.

Finally, if~$X'$ is a small resolution of~$X$ (hence~$x_0$ is a node and~$X$ is nonfactorial) 
then~$\hX$ is the blowup of~$X'$ along a smooth rational curve,
hence~$\hh(X') = \hh(\hX)$ by Lemma~\ref{lem:hh-g-xi}.
\end{proof}

\begin{lemma}
\label{lem:flop-h}
If~$\chi \colon Y \dashrightarrow Y_+$ is a flop of smooth weak Fano threefolds then~$\hh(Y) = \hh(Y_+)$.
\end{lemma}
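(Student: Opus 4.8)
The plan is to deduce the equality from invariance of the Picard rank and of the topological Euler characteristic under the flop, following the pattern of the proof of Proposition~\ref{prop:hh-g-xsm}. Both $Y$ and $Y_+$ are smooth weak Fano threefolds, hence rationally connected, so their only possibly nonzero off-diagonal Hodge number is $\hh = \h^{1,2}$, and exactly as in Proposition~\ref{prop:hh-g-xsm} one has
\begin{equation*}
2\hh(Y) = 2 + 2\uprho(Y) - \upchi_{\mathrm{top}}(Y),
\qquad
2\hh(Y_+) = 2 + 2\uprho(Y_+) - \upchi_{\mathrm{top}}(Y_+).
\end{equation*}
So it is enough to check that $\uprho(Y) = \uprho(Y_+)$ and $\upchi_{\mathrm{top}}(Y) = \upchi_{\mathrm{top}}(Y_+)$. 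The first equality is immediate: a flop is an isomorphism in codimension~$1$, its indeterminacy loci on $Y$ and on $Y_+$ being curves, so it identifies open subsets whose complements have codimension~$\ge 2$; therefore $\Cl(Y) \cong \Cl(Y_+)$, and since $Y$ and $Y_+$ are smooth this gives $\Pic(Y) \cong \Pic(Y_+)$, hence $\uprho(Y) = \uprho(Y_+)$.

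For the Euler characteristics, let $\xi\colon Y \to \bar Y$ and $\xi_+\colon Y_+ \to \bar Y$ be the two small crepant contractions making up the flop. They are isomorphisms over $\bar Y$ away from a common finite set of points, and $\chi$ identifies the complement in $Y$ of the (finitely many, positive-dimensional) exceptional fibres of $\xi$ with the complement in $Y_+$ of those of $\xi_+$; by additivity of $\upchi_{\mathrm{top}}$ the claim reduces to $\upchi_{\mathrm{top}}(\xi^{-1}(p)) = \upchi_{\mathrm{top}}(\xi_+^{-1}(p))$ for every point $p \in \bar Y$. Since $Y$ is smooth and $\bar Y$ has terminal, hence compound Du Val, Gorenstein singularities, each nontrivial fibre of $\xi$ is a connected tree of smooth rational curves, so its Euler characteristic is one more than the number of its irreducible components; the same holds for $\xi_+$, and I would then invoke the structure theory of threefold flops (Reid's pagodas, Koll\'ar's ``Flops'') to the effect that the flop carries the exceptional configuration over a given point of $\bar Y$ to a configuration with the same number of components.

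The main obstacle is precisely this last input, which is a local statement about small resolutions of a cDV singularity and should be stated and referenced with care. If one wishes to avoid it, there is a softer route: a flop is a $K$-equivalence --- on a common resolution $W$ of $Y$ and $Y_+$ the pullbacks of $K_Y$ and $K_{Y_+}$ coincide, because the flopping curves are $K$-trivial on both sides --- and the invariance of the Hodge numbers of smooth projective varieties under $K$-equivalence (Batyrev, Kontsevich, Denef--Loeser) yields $\hh(Y) = \hh(Y_+)$, and in fact equality of all Hodge numbers, at once.
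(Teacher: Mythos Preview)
Your main argument is correct and matches the paper's approach exactly: equality of Picard ranks from the isomorphism in codimension one, equality of topological Euler characteristics, and the formula $2\hh = 2 + 2\uprho - \upchi_{\mathrm{top}}$; the paper simply cites \cite[\S2]{Kollar:flops} directly for the Euler-characteristic step rather than unpacking it fiberwise as you do. Your alternative via $K$-equivalence and motivic integration is also valid but considerably heavier machinery than the situation requires.
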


\begin{proof}
Since~$Y$ and~$Y_+$ are isomorphic in codimension one, we have~$\uprho(Y) = \uprho(Y_+)$.
On the other hand, by construction of flops~\cite[\S2]{Kollar:flops} the topological Euler numbers of~$Y$ and~$Y_+$ are equal.
Hence, $2\hh(Y) = 2 + 2\uprho(Y) - \upchi_{\mathrm{top}}(Y) = 2 + 2\uprho(Y_+) - \upchi_{\mathrm{top}}(Y_+) = 2\hh(Y_+)$.
\end{proof} 

\subsection{Blowup at the singular point}

In this subsection we describe the anticanonical contraction of the blowup at the singular point 
of a $1$-nodal or $1$-cuspidal Fano threefold~$(X,x_0)$
in the case where~$-K_X$ is very ample.
Then~$X \subset \PP^{\g(X) + 1}$ by~\eqref{eq:h0-mk}
and we denote by~\mbox{$\rT_{x_0}(X) \subset \PP^{\g(X) + 1}$} the embedded tangent space at~$x_0$ of~$X$.
Notation~$\Blw{-\Sigma}(\bar{X})$ in the next lemma 
is used for the blowup of a Weil divisor class, see Appendix~\ref{App:blowup}.

\begin{lemma}
\label{lem:barx}
Let~$(X,x_0)$ be a Fano threefold with a single node or cusp and such that~$-K_X$ is very ample, so that~$X \subset \PP^{\g(X) + 1}$.
Let~$\hX \coloneqq \Bl_{x_0}(X)$ and let~$E \subset \hX$ be the exceptional divisor.

\begin{thmenumerate}
\item
\label{it:barx-weak}
The variety $\hX$ is a weak Fano variety and its anticanonical contraction~\mbox{$\xi \colon \hX \longrightarrow \bar{X}$} 
is obtained from the Stein factorization of the composition
\begin{equation*}
\tilde\xi \colon \hX = \Bl_{x_0}(X) \hookrightarrow \Bl_{x_0}(\PP^{\g(X) + 1}) \xrightarrow{\ p\ } \PP^{\g(X)},
\end{equation*}
where~$p$ is induced by the linear projection~$\PP^{\g(X) + 1} \dashrightarrow \PP^{\g(X)}$ with center~$x_0$,
and~$\bar{X}$ is a Fano threefold with canonical Gorenstein singularities, $\io(\bar{X}) = 1$, and~$\g(\bar{X}) = \g(X) - 1$.
Moreover, the exceptional divisor~$E \subset \hX$ is $\xi$-ample 
and~$\xi$ induces an isomorphism~$E \xrightiso{\ } \Sigma \subset \bar{X}$.

\item
\label{it:barx-almost}
If~$\dim(X \cap \rT_{x_0}(X)) \le 1$ then~$\xi$ is small, 
$\hX$ is an almost Fano variety, and~$\bar{X}$ is a Fano variety with terminal Gorenstein singularities.
Moreover,
\begin{equation*}
\hX \cong \Blw{-\Sigma}(\bar{X}).
\end{equation*}
\item
\label{it:barx-iq}
If, moreover, $X \subset \PP^{\g(X) + 1}$ is an intersection of quadrics 
then~$\bar{X} \subset \PP^{\g(X)}$ is the image of the linear projection;
in particular its anticanonical class~$-K_{\bar{X}}$ very ample, 
\end{thmenumerate}
\end{lemma}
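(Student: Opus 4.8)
The plan is to realize $\hX$ inside the blowup $\Bl_{x_0}\bigl(\PP^{\g(X)+1}\bigr)$ and to analyze the linear projection from $x_0$. Write $\pi\colon\hX\to X$ for the blowup, $E\subset\hX$ for its exceptional divisor, and $\mathcal{E}\subset\Bl_{x_0}(\PP^{\g(X)+1})$ for the exceptional divisor of the ambient blowup, so that $\hX$ is the strict transform of $X$, $E=\mathcal{E}\cap\hX$ is a quadric surface of corank $\le 1$ sitting inside $\mathcal{E}\cong\PP^{\g(X)}$, and by Remark~\ref{rem:blowup-nc} one has $-K_{\hX}\sim\pi^*(-K_X)-E$ and $(-K_{\hX})|_E\cong\cO_E(-1)$. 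First I would check that $\tilde\xi=p|_{\hX}$ is the morphism associated with the \emph{complete} linear system $|{-K_{\hX}}|$: since $\pi_*\cO_{\hX}(-E)=\mathcal{I}_{x_0}$, the projection formula gives $H^0(\hX,-K_{\hX})\cong H^0\bigl(X,\mathcal{I}_{x_0}\otimes\cO_X(-K_X)\bigr)$, which — as $X$ is embedded by $|{-K_X}|$ — is the space of hyperplane sections through $x_0$, i.e. exactly the system inducing $p|_{\hX}$. Because this system is the restriction to $\hX$ of the base point free system defining $p$, it is base point free; and, as in the proof of Proposition~\ref{prop:hh-g-xsm}, $(-K_{\hX})^3=(-K_X)^3-E^3=2\g(X)-4>0$ (here $\g(X)\ge 3$, since very ampleness of $-K_X$ rules out the genus-$2$ case of a degree-$2$ threefold in $\PP^3$). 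Hence $\hX$ is a smooth weak Fano threefold, its anticanonical contraction $\xi\colon\hX\to\bar{X}$ is the Stein factorization of $\tilde\xi$, and by the generalities on weak Fano varieties together with \cite[Theorem~3.3]{Kollar-Mori:book} the target $\bar{X}$ is a Fano threefold with canonical Gorenstein singularities, $\xi$ is crepant, and $\g(\bar{X})=\g(\hX)=\g(X)-1$ by Proposition~\ref{prop:hh-g-xsm}.

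Next I would finish part~\ref{it:barx-weak}. Since $p$ restricts to an isomorphism on $\mathcal{E}$, the restriction $\tilde\xi|_E$ is a closed embedding of the quadric surface $E$; factoring it through the finite morphism $\bar{X}\to\PP^{\g(X)}$ produced by the Stein factorization shows that $\xi|_E$ is a closed embedding onto $\Sigma\coloneqq\xi(E)$. In particular $\xi$ contracts no curve contained in $E$, so every fibre $F$ of $\xi$ satisfies $F\not\subset E$, whence $E|_F=\bigl(\pi^*(-K_X)+K_{\hX}\bigr)|_F=\pi^*(-K_X)|_F$ is ample, and by the relative Kleiman criterion $E$ is $\xi$-ample. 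Finally, using the projection formula for the crepant birational $\xi$ together with Remark~\ref{rem:blowup-nc},
\[
\Sigma\cdot(-K_{\bar{X}})^2=E\cdot(-K_{\hX})^2=2,
\]
so that $\io(\bar{X})^2$ divides $2$ and therefore $\io(\bar{X})=1$.

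For part~\ref{it:barx-almost} I would first observe that the curves contracted by $\xi$ are precisely the strict transforms of the lines through $x_0$ contained in $X$: these are the fibres of $p$ lying on $\hX$, and none of them lies in $E$ by the previous step. Every such line lies in the embedded tangent space $\rT_{x_0}(X)$, so the hypothesis $\dim\bigl(X\cap\rT_{x_0}(X)\bigr)\le 1$ forces there to be only finitely many of them; hence $\xi$ contracts no divisor, i.e. $\xi$ is small. Therefore $\hX$ is almost Fano and, by the discussion following the definition of ``almost Fano'', $\bar{X}$ has terminal Gorenstein singularities. The isomorphism $\hX\cong\Blw{-\Sigma}(\bar{X})$ over $\bar{X}$ then follows from the description of blowups of Weil divisor classes in Appendix~\ref{App:blowup}, applied to the small birational morphism $\xi$, which restricts to an isomorphism $E\cong\Sigma$ and for which $E$ is $\xi$-ample by part~\ref{it:barx-weak}.

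For part~\ref{it:barx-iq} the key input is that, $X$ being an intersection of quadrics, any line $\ell\ni x_0$ meeting $X$ with total intersection multiplicity $\ge 3$ must be contained in $X$. Hence the linear projection from $x_0$ separates any two distinct points of $X\setminus\{x_0\}$ and any tangent direction at such a point, except along the finitely many lines through $x_0$ contained in $X$; combined with the facts that $\tilde\xi|_E$ is a closed embedding and that $[\ell]\in E$ for each such line, this shows that $\tilde\xi$ factors as the contraction $\xi$ of those lines followed by a closed immersion. Equivalently, the finite morphism $\bar{X}\to\PP^{\g(X)}$ from the Stein factorization is a closed immersion, so $\bar{X}$ is the image of the linear projection and $-K_{\bar{X}}=\cO_{\bar{X}}(1)$ is very ample. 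I expect this last point to be the main obstacle: injectivity on closed points is routine, but showing the projection is \emph{unramified} (an immersion) along the ``bad'' loci — the points where a contracted line meets $E$, and the tangent directions to $X$ at points of those lines — requires a careful local computation exploiting the quadric equations. A secondary, purely bookkeeping point is matching the sign conventions of Appendix~\ref{App:blowup} in the identification $\hX\cong\Blw{-\Sigma}(\bar{X})$ of part~\ref{it:barx-almost}.
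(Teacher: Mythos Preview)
Parts~\ref{it:barx-weak} and~\ref{it:barx-almost} match the paper's argument closely; your intersection-number proof that $\io(\bar{X})=1$ (via $\Sigma\cdot(-K_{\bar{X}})^2=2$) is a small variation on the paper's observation that $K_{\hX}\sim E-H$ is primitive, and both are fine.

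For part~\ref{it:barx-iq} the paper avoids the unramified check entirely by a short cohomological argument. Since $X$ is cut out by quadrics, its strict transform $\hX\subset\Bl_{x_0}(\PP^{\g(X)+1})$ sits in a right-exact sequence
\[
\cO(\mathcal{E}-2H)^{\oplus N_1}\oplus\cO(2\mathcal{E}-2H)^{\oplus N_2}\longrightarrow\cO_{\Bl_{x_0}(\PP^{\g(X)+1})}\longrightarrow\cO_{\hX}\longrightarrow 0,
\]
the first summand coming from quadrics smooth at $x_0$ and the second from quadrics singular there. On each fibre $\PP^1$ of $p$ these twists restrict to $\cO(-1)$ and $\cO$ respectively, so $\mathbf{R}^1p_*$ of both vanishes; pushing forward one reads off $\cO_{\PP^{\g(X)}}\twoheadrightarrow\tilde\xi_*\cO_{\hX}$, and hence $\bar{X}=\Spec_{\PP^{\g(X)}}(\tilde\xi_*\cO_{\hX})$ is already a closed subscheme of $\PP^{\g(X)}$. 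This argument does not use the hypothesis of~\ref{it:barx-almost} --- so it also covers the case of a positive-dimensional family of lines through $x_0$, which your ``finitely many lines'' phrasing does not --- and it bypasses the delicate point you identify: in your approach one must show the image $\tilde\xi(\hX)$ is \emph{normal} at the images of the contracted lines, which is not a routine local computation from the quadric equations.
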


\begin{proof}
\ref{it:barx-weak}
Let~$H$ be the hyperplane class of~$X \subset \PP^{\g(X) + 1}$. 
It is easy to see that the composition~$\tilde\xi$ is given by the complete linear system~$|H - E| = |-K_{\hX}|$,
hence the anticanonical contraction~$\xi \colon \hX \to \bar{X}$ is obtained from the Stein factorization of~$\tilde\xi$.
It also follows that~$-K_{\hX}$ is nef and big (because~$X$ is not a cone), hence~$\hX$ is a weak Fano variety
and~$\bar{X}$ is a Fano threefold with canonical Gorenstein singularities.
The equality~$\io(\bar{X}) = 1$ is obvious 
because~$\xi^*K_{\bar{X}}\sim K_{\hX}\sim E - H$ is primitive in~$\Cl(\bar{X}) = \Cl(\hX)$,
and the equality~$\g(\bar{X}) = \g(X) - 1$ follows from~\eqref{eq:prop:hh-g-xsm}.
Finally, the exceptional divisor of~$\Bl_{x_0}(\PP^{\g(X) + 1})$ is $\tilde\xi$-ample 
and~$\tilde{\xi}$ maps it isomorphically onto~$\PP^{\g(X)}$,
therefore~$E$ has the same properties with respect to the induced morphism~$\xi$.

\ref{it:barx-almost}
By part~\ref{it:barx-weak} the curves contracted by~$\xi$ are the strict transforms of lines on~$X$ through~$x_0$.
Any such line is contained in~$X \cap \rT_{x_0}(X)$, so if~$\dim(X \cap \rT_{x_0}(X)) \le 1$ then~$\xi$ is small.
Consequently, $\hX$ is an almost Fano variety and~$\bar{X}$ is a Fano variety with terminal Gorenstein singularities. 
Finally, the isomorphism~$\hX \cong \Blw{-\Sigma}(\bar{X})$ follows from Corollary~\ref{cor:small-blowup} and Definition~\eqref{eq:bl-d-z}.

\ref{it:barx-iq}
If~$X$ is an intersection of quadrics, there is a right-exact sequence
\begin{equation*}
\cO_{\Bl_{x_0}(\PP^{\g(X) + 1})}(E - 2H)^{\oplus N_1} \oplus
\cO_{\Bl_{x_0}(\PP^{\g(X) + 1})}(2E - 2H)^{\oplus N_2}
\longrightarrow \cO_{\Bl_{x_0}(\PP^{\g(X) + 1})} \longrightarrow \cO_{\hX} \to 0,
\end{equation*}
and since the fibers of the morphism~$p \colon \Bl_{x_0}(\PP^{\g(X) + 1}) \to \PP^{\g(X)}$ are $1$-dimensional and 
\begin{equation*}
\mathbf{R}^1p_*(\cO_{\Bl_{x_0}(\PP^{\g(X) + 1})}(E - 2H)) =
\mathbf{R}^1p_*(\cO_{\Bl_{x_0}(\PP^{\g(X) + 1})}(2E - 2H)) = 0,
\end{equation*}
it follows that the natural morphism~\mbox{$\cO_{\PP^{\g(X)}} \to \tilde\xi_*\cO_{\hX}$} is surjective,
hence~$\bar{X} = \tilde\xi(\hX) \subset \PP^{\g(X)}$ and~$-K_{\bar{X}}$ is very ample.
\end{proof}

\section{Nonfactorial threefolds: first results}
\label{sec:first}

In this section we start the study of nonfactorial $1$-nodal Fano threefolds.
In~\S\ref{ss:notation} we set up the notation that will be used throughout Sections~\ref{sec:first}--\ref{sec:nf-ci}.
in~\S\ref{ss:pic-constraints} we discuss the structure of the class group~$\Cl(X)$,
in~\S\ref{ss:numerical-constraints} we establish some numerical constraints,
and in~\S\ref{ss:bi} we classify~$X$ such that one of its small resolutions has an extremal contraction of large index.

\subsection{Notation}
\label{ss:notation}

Let~$(X,x_0)$ be a nonfactorial $1$-nodal Fano threefold with~$\uprho(X) = 1$.
Let~$\pi\colon \hX \to X$ be the blowup of the node~$x_0 \in X$ and let
\begin{equation*}
E \cong \PP^1 \times \PP^1
\end{equation*}
be its exceptional divisor.
The nonfactoriality assumption implies that there are two birational 
contractions~$\sigma_1 \colon \hX \to X_1$ and~$\sigma_2 \colon \hX \to X_2$
to smooth projective varieties 
such that~$C_i \coloneqq \sigma_i(E)$ are smooth rational curves 
and~$\Bl_{C_1}(X_1) \cong \hX \cong \Bl_{C_2}(X_2)$.
Moreover, 
\begin{equation}
\label{eq:cn-ci}
\cN_{C_i/X_i} \cong \cO_{C_i}(-1) \oplus \cO_{C_i}(-1),
\end{equation} 
and there is a commutative diagram
\begin{equation}
\label{eq:diag-pi-sigma}
\vcenter{\xymatrix@=1em{
& \hX \ar[dl]_{\sigma_1} \ar[dd]^(.3)\pi \ar[dr]^{\sigma_2}
\\
X_1 \ar[dr]_{\pi_1} \ar@{<-->}[rr] &&
X_2 \ar[dl]^{\pi_2} 
\\
& X
}}
\end{equation}
where~$\pi_1$ and~$\pi_2$ are the small birational contractions with exceptional curves~$C_1$ and~$C_2$,
and the birational morphism~$\sigma_2 \circ \sigma_1^{-1} = \pi_2^{-1} \circ \pi_1 \colon X_1 \dashrightarrow X_2$ is the flop of~$C_1$. 

\begin{remark}
\label{rem:xi-bu}
Since~$\pi_1$ and~$\pi_2$ are small, applying Corollary~\ref{cor:small-blowup} we obtain isomorphisms
\begin{equation*}
X_1 \cong \Blw{D_1}(X),
\qquad 
X_2 \cong \Blw{D_2}(X),
\end{equation*}
where~$D_1,D_2 \in \Cl(X)$ are representatives of elements of~$\Cl(X) / \Pic(X) \cong \ZZ$ with opposite signs.
Conversely, for any Weil divisor class~$D \in \Cl(X) \setminus \Pic(X)$ we have~$\Blw{D}(X) \cong X_i$ for some~$i = 1,2$;
in particular, for any small birational morphism~$X' \to X$ from a normal variety~$X'$ 
we have~$X' \cong X_1$ or~$X' \cong X_2$,
again by Corollary~\ref{cor:small-blowup}.
\end{remark} 

Since~$X$ is nonfactorial and~$\uprho(X) = 1$, we have~$\uprho(X_1) = \uprho(X_2) = 2$.
Moreover, since~$X$ is Gorenstein and~$\pi_i$ are small, we have
\begin{equation*}
K_{X_i} = \pi_i^*K_X,
\end{equation*}
and since~$X$ is Fano, the classes~$-K_{X_i}$ are semiample.
Therefore, there are unique $K$-negative extremal contractions~$f_i \colon X_i \to Z_i$.
We summarize all the varieties and maps discussed above into the Sarkisov link diagram~\eqref{eq:intro-sl}. 

Since~$\uprho(X_1) = \uprho(X_2) = 2$, we have~$\uprho(Z_1) = \uprho(Z_2) = 1$.
We denote by
\begin{equation*}
H \in \Pic(X),
\qquad 
H_1 \in \Pic(Z_1),
\qquad 
H_2 \in \Pic(Z_2)
\end{equation*} 
the ample generators of the Picard groups and, by abuse, 
we use the same notation for the pullbacks of these classes to~$X_1$, $X_2$, $\hX$ and other varieties.
Since~$Z_i$ are rationally connected and have terminal singularities and Picard rank~$1$, they are Fano varieties.
We denote by~$\io(X)$ and~$\io(Z_i)$ the Fano indices, so that
\begin{equation*}
-K_X = \io(X)H,
\qquad 
-K_{Z_1} = \io(Z_1)H_1,
\qquad 
-K_{Z_2} = \io(Z_2)H_2.
\end{equation*}
Note that~$\io(X) \in \{1,2,3,4\}$ by Theorem~\ref{hm:KO}, and also~$\io(Z_i) \in \{1,2,3,4\}$ if~$Z_i$ is Gorenstein.

Since~$X_i$ are smooth varieties and~$\pi_i$ are small birational contractions, we have 
\begin{equation}
\label{eq:pic-iso}
\Pic(X_1) \cong 
\Cl(X_1) \xrightiso{\ \pi_{1*}\ } 
\Cl(X) \xleftiso{\ \pi_{2*}\ } 
\Cl(X_2) \cong 
\Pic(X_2).
\end{equation}
We will always identify these groups by means of these isomorphisms. 

We write~$\Upsilon_1$ and~$\Upsilon_2$ for the numerical classes of minimal rational curves contracted by~$f_1$ and~$f_2$, respectively.
Note that
\begin{equation}
\label{eq:h-ups}
H_1 \cdot \Upsilon_1 = H_2 \cdot \Upsilon_2 = 0,
\quad
H_1 \cdot \Upsilon_2 > 0,
\quad 
H_2 \cdot \Upsilon_1 > 0.
\end{equation}
Indeed, the equalities follow from the projection formula for~$f_i$
and the inequalities follow from Lemma~\ref{lem:eff-mov} below.
Finally, 
\begin{equation}
\label{eq:io-fi}
\io(f_i) = -K_{X_i}\cdot \Upsilon_i = \io(X) H \cdot \Upsilon_i
\end{equation} 
by~\eqref{eq:io-f}, 
hence~$\io(X)$ divides both~$\io(f_1)$ and~$\io(f_2)$. 

\subsection{The class group constraints}
\label{ss:pic-constraints}

Let~$X$ be a nonfactorial $1$-nodal Fano threefold with Picard rank~\mbox{$\uprho(X) = 1$}.
In this subsection we discuss the structure of the class group of~$X$.
Recall the identifications~\eqref{eq:pic-iso}. 

We denote by~$\Nef(X_i) \subset \Cl(X) \otimes \QQ$ the nef cone of~$X_i$,
by~$\Mov(X) \subset \Cl(X) \otimes \QQ$ the closure of the cone of movable classes on~$X$,
and by~$\Eff(X) \subset \Cl(X) \otimes \QQ$ the pesudoeffective cone of~$X$, respectively.
If~$f_i$ is birational, we denote by~$E_i$ its exceptional divisor.
Note that under the isomorphism~\eqref{eq:pic-iso} we have
\begin{equation*}
\Mov(X_1) = \Mov(X) = \Mov(X_2)
\quad \text{and}\quad 
\Eff(X_1) = \Eff(X) = \Eff(X_2).
\end{equation*}
Finally, we denote by~$\NE(X_i)$ the Mori cone of~$X_i$.

\begin{lemma}
\label{lem:eff-mov}
The cones~$\NE(X_i)$ and~$\Nef(X_i)$ are generated by the following classes
\begin{equation*}
\NE(X_i) = \QQ_{\ge 0}[C_i] + \QQ_{\ge 0}[\Upsilon_i],
\qquad 
\Nef(X_i) = \QQ_{\ge 0}[H] + \QQ_{\ge 0}[H_i]
\end{equation*}
and if~$\io(f_i) = 1$ then~$\Nef(X_i) \cap \Cl(X) = \ZZ_{\ge 0}[H] + \ZZ_{\ge 0}[H_i]$.
Moreover,
\begin{equation*}
\Mov(X) = \QQ_{\ge 0}[H_1] + \QQ_{\ge 0}[H_2] = \Nef(X_1) \cup \Nef(X_2).
\end{equation*}
Finally, if~$D \subset X$ is an irreducible divisor then either~$D \in \Mov(X)$ 
or~$D = E_i$ is the exceptional divisor of~$f_i$ for~$i \in \{1,2\}$ and~$f_i$ is birational.
Thus,
\begin{equation*}
\Eff(X) = \QQ_{\ge 0}[D_1] + \QQ_{\ge 0}[D_2],
\end{equation*}
where~$D_i = E_i$ if~$f_i$ is birational, and~$D_i = H_i$ otherwise.
\end{lemma}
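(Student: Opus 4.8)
The plan is to transfer the whole statement to the smooth weak Fano threefolds~$X_1$ and~$X_2$ appearing in the Sarkisov diagram~\eqref{eq:intro-sl}: both are Mori dream spaces, and they are isomorphic in codimension~$1$ (connected by the flop~$\chi$ of~$C_1$), so under the identifications~\eqref{eq:pic-iso} we have~$\Cl(X) = \Pic(X_1) = \Pic(X_2)$ and the cones~$\Mov$ and~$\Eff$ of~$X$, $X_1$, $X_2$ all coincide. First I would compute~$\NE(X_i)$. Since~$\uprho(X_i) = 2$ this cone is two-dimensional; by the cone theorem its~$K_{X_i}$-negative part is rational polyhedral, and since~$-K_{X_i} = \io(X)\,\pi_i^{*}H$ is big there is no~$K_{X_i}$-positive part. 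The morphism~$\pi_i\colon X_i \to X$ is the anticanonical contraction of~$X_i$ (it is given by~$|-nK_{X_i}|$ for~$n \gg 0$) and it contracts only~$C_i$, which is~$K_{X_i}$-trivial; hence~$\QQ_{\ge 0}[C_i]$ is the unique~$K_{X_i}$-trivial extremal ray, and the remaining ray is~$K_{X_i}$-negative, so it is contracted by the extremal contraction~$f_i$ and equals~$\QQ_{\ge 0}[\Upsilon_i]$. This gives the formula for~$\NE(X_i)$.

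Dually,~$\Nef(X_i)$ is the two-dimensional cone whose two boundary rays consist of the nef classes vanishing on~$C_i$ and on~$\Upsilon_i$, respectively. A nef class trivial on~$C_i$ descends along~$\pi_i$ to a Cartier class on~$X$ by the base-point-free theorem, so it lies in~$\QQ_{\ge 0}[H]$; a nef class trivial on~$\Upsilon_i$ is pulled back from~$Z_i$ by the exact sequence~\eqref{eq:exact-seq:Pic}, so it lies in~$\QQ_{\ge 0}[H_i]$. These two rays are distinct, since~$H_i \cdot C_i = 0$ would force~$H_i$ to be trivial on both extremal rays, hence numerically trivial. This proves the formula for~$\Nef(X_i)$. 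If moreover~$\io(f_i) = 1$, then~$\io(X) = 1$ and~$H \cdot \Upsilon_i = 1$ by~\eqref{eq:io-fi}, so~$H$ splits the sequence~\eqref{eq:exact-seq:Pic} and~$\{H, H_i\}$ is a~$\ZZ$-basis of~$\Pic(X_i) = \Cl(X)$; thus the lattice points of the positive quadrant~$\Nef(X_i)$ are exactly~$\ZZ_{\ge 0}[H] + \ZZ_{\ge 0}[H_i]$.

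Next I would treat~$\Mov(X)$. Since~$X_1$ is a Mori dream space, its movable cone is the union of the nef cones of its small~$\QQ$-factorial modifications, and by Remark~\ref{rem:xi-bu} the only such modifications are~$X_1$ and~$X_2$; hence~$\Mov(X) = \Nef(X_1) \cup \Nef(X_2)$. Both cones have~$\QQ_{\ge 0}[H]$ as a boundary ray (the nef classes vanishing on the flopping curve, the same for both since~$H \cdot C_1 = H \cdot C_2 = 0$); since the flop~$\chi$ is nontrivial we have~$\Nef(X_1) \ne \Nef(X_2)$, so these two full-dimensional cones with common boundary ray~$\QQ_{\ge 0}[H]$ lie on opposite sides of it. Their union is therefore the two-dimensional cone bounded by~$\QQ_{\ge 0}[H_1]$ (where~$\Upsilon_1$ vanishes) and~$\QQ_{\ge 0}[H_2]$ (where~$\Upsilon_2$ vanishes), which is the asserted description of~$\Mov(X)$.

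Finally, the statement on prime divisors and~$\Eff(X)$. Since~$\pi_1$ is small, prime divisors on~$X$ correspond bijectively, and preserving classes, to prime divisors on~$X_1$; let~$D_1$ be the one corresponding to~$D$. As~$X_1$ is a Mori dream space of Picard rank~$2$, its pseudoeffective cone admits a finite Mori chamber decomposition; the chambers contained in~$\Mov(X_1)$ are~$\Nef(X_1)$ and~$\Nef(X_2)$, and past the ray~$\QQ_{\ge 0}[H_i]$ the cone~$\Eff(X_1)$ extends at all only if~$f_i$ is birational. In that case~$E_i$ is rigid ($h^{0}(X_i, \cO_{X_i}(E_i)) = 1$, because any effective divisor numerically equivalent to~$E_i$ is~$f_i$-exceptional, hence supported on~$E_i$, hence equal to~$E_i$), so~$E_i$ spans the extremal ray of~$\Eff(X_1)$ beyond~$H_i$ and is the only prime divisor with class outside~$\Mov(X_1)$ on that side. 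Combining the two sides yields the dichotomy for prime divisors of~$X$, and hence~$\Eff(X) = \QQ_{\ge 0}[D_1] + \QQ_{\ge 0}[D_2]$ with~$D_i = E_i$ when~$f_i$ is birational and~$D_i = H_i$ otherwise. The hard part will be this last step: one must argue carefully that~$\Eff(X_1)$ does not extend past~$\QQ_{\ge 0}[H_i]$ when~$f_i$ has fiber type (by restricting a putative further pseudoeffective class to a general fiber of~$f_i$ and obtaining a contradiction), and that in the birational case the remaining Mori chamber~$\QQ_{\ge 0}[H_i] + \QQ_{\ge 0}[E_i]$ contains no prime divisor other than~$E_i$ (a prime divisor with class in its interior would have~$E_i$ as a proper fixed component, which is impossible).
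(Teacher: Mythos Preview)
Your proof is correct but takes a more structural route than the paper, invoking Mori dream space machinery where the paper uses only direct intersection calculations. For~$\Mov(X)$, the paper simply shows that any movable~$M$ satisfies~$M\cdot\Upsilon_i\ge 0$ (because~$\Upsilon_i$ sweeps out a divisor or all of~$X_i$) and then splits on the sign of~$M\cdot C_1$, using the flop to pass to~$X_2$ if needed. For the prime-divisor dichotomy and~$\Eff(X)$, the paper writes a non-movable prime~$D\sim a_1H_1+a_2H_2$ with some~$a_j<0$, deduces~$D\cdot\Upsilon_k<0$ for the other index~$k$ via~\eqref{eq:h-ups}, and concludes that~$f_k$ is birational with~$D=E_k$. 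This two-line argument replaces what you yourself flag as ``the hard part'' of your approach (ruling out extra Mori chambers, rigidity of~$E_i$, fibre restriction). Your route has the virtue of exhibiting the chamber structure conceptually, but is heavier and less self-contained.

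One small imprecision: your appeal to Remark~\ref{rem:xi-bu} to classify the small $\QQ$-factorial modifications of~$X_1$ does not quite work as stated, since that remark concerns small \emph{morphisms}~$X'\to X$, whereas an SQM of~$X_1$ a priori gives only a birational \emph{map} to~$X$. The fix is immediate in your framework: in Picard rank~$2$ the SQMs are reached by a sequence of flops, and by the classification in~\S\ref{ss:contractions} the extremal contraction~$f_i$ of the smooth threefold~$X_i$ is never small, so the only flopping contraction on~$X_i$ is~$\pi_i$ and hence the only SQMs are~$X_1$ and~$X_2$. (A minor slip as well: in your $\NE(X_i)$ paragraph, what rules out a $K_{X_i}$-positive part of the Mori cone is that~$-K_{X_i}$ is \emph{nef}, not that it is big.)
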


\begin{proof}
The descriptions of~$\NE(X_i)$ and~$\Nef(X_i)$ obviously follow from the definitions of the corresponding curves and divisor classes
and the fact that if~$\io(f_i) = 1$ then~$H$ generates~$\Pic(X_i/Z_i)$, hence~$H$ and~$H_i$ generate~$\Cl(X)$, 
hence any~$D \in \Nef(X_i) \cap \Cl(X)$ is a linear combination of~$H$ and~$H_i$ with nonnegative integral coefficients.

Now let~$M$ be a movable divisor on~$X$.
Consider~$M$ as a divisor on~$X_1$.
Then~$M \cdot \Upsilon_1 \ge 0$ because~$\Upsilon_1$ sweeps a divisor or the entire~$X_1$ and~$M$ is movable.
Note that this argument proves the inequalities in~\eqref{eq:h-ups}.
Now, if~$M \cdot C_1 \ge 0$, it follows that~\mbox{$M \in \Nef(X_1)$}.
If, on the other hand, we have~$M \cdot C_1 < 0$ then, since $X_1\dashrightarrow X_2$ is a flop, 
we have~$M \cdot C_2 > 0$, 
where now~$M$ is considered as a divisor on~$X_2$.
As before, $M \cdot \Upsilon_2 \ge 0$, hence~$M \in \Nef(X_2)$.
This proves that~$\Mov(X) = \Nef(X_1) \cup \Nef(X_2)$.
Moreover, since~$\Mov(X)$ is convex and the interiors of~$\Nef(X_i)$ do not intersect,
it follows that~$\Mov(X)$ is generated by~$H_1$ and~$H_2$.
It also follows that~$H$ is a linear combination of~$H_1$ and~$H_2$ with positive rational coefficients.

Now let~$D$ be an irreducible non-movable divisor.
Then~$D \sim a_1H_1 + a_2H_2$ with~$a_i \in \QQ$, and one of the coefficients must be negative.
Assume~$a_1 < 0$. 
Considering~$D$ as a divisor on~$X_2$, we see that~$D \cdot \Upsilon_2 < 0$ by~\eqref{eq:h-ups}.
But then all the curves contracted by~$f_2$ are contained in~$D$, hence~$f_2$ is birational and~$D = E_2$.
Similarly, if~$a_2 < 0$ then~$f_1$ is birational and~$D = E_1$.
This proves 
the description of~$\Eff(X)$. 
\end{proof} 

The parameter~$\balpha(X)$ defined in the next proposition and the relations~\eqref{eq:mkx} and~\eqref{eq:mkhx}
will be used extensively throughout Sections~\ref{sec:first}--\ref{sec:nf-ci}.

\begin{proposition}
\label{prop:pic}
The classes~$H_1$ and~$H_2$ freely generate the vector space~$\Cl(X) \otimes \QQ$ and
\begin{align}
\label{eq:mkx}
\balpha(X) H &\sim \tfrac{\io(f_2)}{\io(X)}H_1 + \tfrac{\io(f_1)}{\io(X)}H_2
&& \text{in~$\Cl(X)$,}
\\
\label{eq:mkhx}
\balpha(X) H - \tfrac{\io(f_1)\io(f_2)}{\io(X)^2} E &\sim \tfrac{\io(f_2)}{\io(X)}H_1 + \tfrac{\io(f_1)}{\io(X)}H_2
&& \text{in~$\Pic(\hX)$}
\end{align} 
for some integer~$\balpha(X) \ge 1$.
Moreover,
\begin{equation}
\label{eq:deg-ci}
H_1 \cdot C_1 = \tfrac{\io(f_1)}{\io(X)},
\qquad 
H_2 \cdot C_2 = \tfrac{\io(f_2)}{\io(X)}.
\end{equation} 
If~$\io(f_1) = 1$ then~$H_1$ and~$H$ generate~$\Cl(X)$
and if~$\io(f_2) = 1$ then~$H_2$ and~$H$ generate~$\Cl(X)$.
\end{proposition}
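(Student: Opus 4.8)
The plan is to single out the intersection numbers~\eqref{eq:deg-ci} as the crux and to read everything else off from them.

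First the easy parts. Since $\uprho(X)=1$ and $X$ is nonfactorial, $\Cl(X)$ has rank $2$, while by Lemma~\ref{lem:eff-mov} the cone $\Mov(X)=\QQ_{\ge0}[H_1]+\QQ_{\ge0}[H_2]$ is two-dimensional; hence $H_1,H_2$ are a $\QQ$-basis of $\Cl(X)\otimes\QQ$. For the last assertion, if $\io(f_1)=1$ then $\io(f_1)=\io(X)\cdot(H\cdot\Upsilon_1)$ by~\eqref{eq:io-fi} with both factors positive integers, so $\io(X)=1$ and $H\cdot\Upsilon_1=1$; as~\eqref{eq:exact-seq:Pic} for $f_1$ reads $0\to\ZZ H_1\to\Cl(X)\xrightarrow{\,\cdot\Upsilon_1\,}\ZZ\to0$ and sends $H$ to $1$, the classes $H$ and $H_1$ generate $\Cl(X)$; the case $\io(f_2)=1$ is symmetric. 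Throughout I use the basic data $H\cdot C_i=0$ (as $H$ is pulled back from $X$ and $C_i$ is $\pi_i$-contracted), $H\cdot\Upsilon_i=\io(f_i)/\io(X)$ by~\eqref{eq:io-fi} (which already shows $\io(X)\mid\io(f_i)$), $H_i\cdot\Upsilon_i=0$ by the projection formula, $H_1\cdot\Upsilon_2,\,H_2\cdot\Upsilon_1>0$ by~\eqref{eq:h-ups}, and $H_i\cdot C_i>0$ because $C_i$ spans an extremal ray of $\NE(X_i)$ distinct from $[\Upsilon_i]$ and so is not $f_i$-contracted.

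Now the crux, equation~\eqref{eq:deg-ci}: writing $\io_i:=\io(f_i)/\io(X)\in\ZZ_{>0}$, I would prove $\io_i\mid H_i\cdot C_i$ and $H_i\cdot C_i\mid\io_i$. For the first, the canonical bundle formula of~\S\ref{ss:contractions} gives $-K_{X_i}\sim\io(f_i)M_{f_i}+f_i^*D_i$ with $M_{f_i}\cdot\Upsilon_i=1$, so $\{M_{f_i},H_i\}$ is a $\ZZ$-basis of $\Cl(X)$ by~\eqref{eq:exact-seq:Pic}; since $-K_{X_i}=\io(X)H$ it follows that $H=\io_i M_{f_i}+e_iH_i$ for some $e_i\in\ZZ$, and $H$ is primitive in $\Cl(X)$ (as $\Pic(X)$ is saturated in $\Cl(X)$ by~\eqref{eq:pic-cl}), so $\gcd(\io_i,e_i)=1$; intersecting with $C_i$ and using $H\cdot C_i=0$ yields $\io_i(M_{f_i}\cdot C_i)=-e_i(H_i\cdot C_i)$, hence $\io_i\mid H_i\cdot C_i$. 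For the second, write $\Cl(X)=\Pic(X)\oplus\ZZ D$ with $D$ a generator of $\Cl(X)/\Pic(X)$, and use the fact, special to ordinary double points, that $r_{x_0}\colon\Cl(X)\to\Cl(X,x_0)\cong\ZZ$ is \emph{surjective}, so that $D$ may be taken with $r_{x_0}(D)=1$ and $r_{x_0}$ is, up to sign, intersection with the flopping curve ($r_{x_0}(D')=\pm(D'\cdot C_1)$ on $X_1$); then $H_1=\gamma H+\beta D$ with $\gcd(\gamma,\beta)=1$ by primitivity of $H_1$, and $H_1\cdot\Upsilon_1=0$ with $H\cdot\Upsilon_1=\io_1$ forces $\beta\mid\io_1$, while $H_1\cdot C_1=\beta(D\cdot C_1)$ equals $|\beta|$ since it is positive, so $H_1\cdot C_1\mid\io_1$. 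As both are positive, $H_i\cdot C_i=\io_i$. I expect the surjectivity of $r_{x_0}$ for a node — equivalently, primitivity of the numerical class of the $(-1,-1)$-flopping curve — to be the step deserving most care; it reflects the defining feature of an ODP that $\Bl_{x_0}(X)$ carries a \emph{smooth} exceptional quadric $E\cong\PP^1\times\PP^1$, whence $\Cl(X,x_0)\cong\Pic(E)/\ZZ\cdot(E|_E)\cong\ZZ$ and the small resolution $\Blw{D}(X)$ (which is $X_1$ or $X_2$ by Remark~\ref{rem:xi-bu}) is smooth.

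Granting~\eqref{eq:deg-ci}, the remainder is bookkeeping. Write $H=\lambda_1H_1+\lambda_2H_2$ in $\Cl(X)\otimes\QQ$. Intersecting on $X_1$ with $C_1$, using $H_1\cdot C_1=\io_1$ and $H_2\cdot C_1=-\io_2$ (the sign coming from the flop, or directly by comparing the pullbacks of $H_2$ to $\hX$ along $\sigma_1$ and $\sigma_2$ on the two rulings of $E$), gives $\lambda_1\io_1=\lambda_2\io_2$; intersecting on $X_1$ with $\Upsilon_1$ gives $\lambda_2(H_2\cdot\Upsilon_1)=\io_1$; intersecting on $X_2$ with $\Upsilon_2$ gives $\lambda_1(H_1\cdot\Upsilon_2)=\io_2$. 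Combining the three relations forces $H_1\cdot\Upsilon_2=H_2\cdot\Upsilon_1$; call this common positive integer $\balpha(X)$. Then $\lambda_1=\io_2/\balpha(X)$ and $\lambda_2=\io_1/\balpha(X)$, which is precisely~\eqref{eq:mkx}. Finally, pulling~\eqref{eq:mkx} back to $\hX$ along $\sigma_1$: the class denoted $H_2$ on $\hX$ is $\sigma_2^*(f_2^*H_{Z_2})$, and $\sigma_1^*$ of the corresponding class in $\Cl(X)$ exceeds it by $(H_2\cdot C_1)E=-\io_2E$, so substituting into the pulled-back relation produces the correction term $-\io_1\io_2E=-\tfrac{\io(f_1)\io(f_2)}{\io(X)^2}E$, i.e.~\eqref{eq:mkhx}.
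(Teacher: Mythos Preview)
Your argument is essentially correct and, for~\eqref{eq:deg-ci}, closely parallels the paper's. The paper writes~$H \sim a_iM_i + b_iH_i$, reads off~$a_i = \io(f_i)/\io(X)$ from~$H\cdot\Upsilon_i$, and then deduces~$a_i = H_i\cdot C_i$ from the coprimality of~$M_i\cdot C_i$ and~$H_i\cdot C_i$; this coprimality is exactly your ``surjectivity of~$r_{x_0}$'' (i.e.\ existence of a divisor class meeting~$C_i$ with multiplicity~$1$), and the paper likewise defers it to an external reference,~\cite[Proposition~A.14]{KS23}. Your two-divisibilities repackaging is an equivalent way to organize the same step.

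For~\eqref{eq:mkx} and~\eqref{eq:mkhx} you take a genuinely different route. The paper lifts a primitive integral relation~$cH\sim c_1H_1+c_2H_2$ to~$\Pic(\hX)$ and \emph{restricts to~$E$}, reading off the bidegrees~$(\io_1,0)$, $(0,\io_2)$, $(-1,-1)$ of~$H_1,H_2,E$ on~$E\cong\PP^1\times\PP^1$ to pin down the coefficients. You instead intersect the $\QQ$-relation~$H=\lambda_1H_1+\lambda_2H_2$ with the curve classes~$C_1,\Upsilon_1,\Upsilon_2$ and solve; this is equally clean and yields the bonus identification~$\balpha(X)=H_1\cdot\Upsilon_2=H_2\cdot\Upsilon_1$, which the paper does not record. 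One caution: your last paragraph has a sign slip --- with the conventions in force, $\sigma_1^*$ of the $X_1$-representative of~$H_2$ exceeds the $\hX$-class~$H_2$ by~$+\io_2E$, not~$(H_2\cdot C_1)E=-\io_2E$ --- though your stated conclusion~\eqref{eq:mkhx} is the correct one, so this is only a bookkeeping error.
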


\begin{remark}
A posteriori we will see that~$\balpha(X) = 1$, see Corollary~\ref{cor:alpha}.
It would be interesting to find a conceptual a priori reason for this equality.
\end{remark} 

\begin{proof}
Let~$M_i$ be a relatively ample generator of~$\Pic(X_i/Z_i)$, so that~$M_i \cdot \Upsilon_i = 1$ (see~\eqref{eq:exact-seq:Pic}).
Then the classes~$M_i$ and~$H_i$ generate~$\Pic(X_i)$, hence we can write
\begin{equation}
\label{eq:Hsim}
H \sim a_iM_i + b_iH_i
\end{equation}
with~$a_i > 0$.
Since~$H$ is primitive, $a_i$ and~$b_i$ are coprime.
Since~$H$ is a pullback along the map~$\pi_i \colon X_i \to X$ and~$C_i$ is contracted by~$\pi_i$, we have~$H \cdot C_i = 0$, therefore
\begin{equation*}
0 = a_i(M_i \cdot C_i) + b_i(H_i \cdot C_i).
\end{equation*}
Now, applying~\cite[Proposition~A.14]{KS23}, we see that
there is a linear combination of~$M_i$ and~$H_i$ that has intersection index~$1$ with~$C_i$;
therefore, the integers~$M_i \cdot C_i$ and~$H_i \cdot C_i$ are coprime.
Since, moreover,~$a_i$ and~$b_i$ are coprime, $a_i > 0$, and~$H_i \cdot C_i > 0$, we have
\begin{equation*}
a_i = H_i \cdot C_i,
\qquad 
b_i = - M_i \cdot C_i.
\end{equation*}
Multiplying the linear equivalence~\eqref{eq:Hsim} with~$\Upsilon_i$ and using~\eqref{eq:h-ups} and~\eqref{eq:io-fi}, we obtain
\begin{equation*}
\tfrac{\io(f_i)}{\io(X)} = 
\tfrac{1}{\io(X)}(-K_{X_i}) \cdot \Upsilon_i = 
H \cdot \Upsilon_i = 
(a_iM_i + b_iH_i) \cdot \Upsilon_i = 
a_i = 
H_i \cdot C_i.
\end{equation*}
This proves~\eqref{eq:deg-ci}. 

Now consider the group~$\Cl(X)$.
Its rank is equal to~$2$, hence in~$\Cl(X)$ there is a relation
\begin{equation*}
cH \sim c_1H_1 + c_2H_2,
\end{equation*}
where $c,c_1,c_2 \in \ZZ$ are pairwise coprime.
Since~$\Cl(X) = \Pic(\hX) / \ZZ \cdot [E]$, this relation lifts to a relation in~$\Pic(\hX)$ that looks as
\begin{equation*}
cH - c'E \sim c_1H_1 + c_2H_2
\end{equation*}
for some~$c' \in \ZZ$.
Restricting this relation to~$\Pic(E) \cong \ZZ \oplus \ZZ$ and taking into 
account that the restrictions of~$H_1$ and~$H_2$ have bidegree~$\big(\tfrac{\io(f_1)}{\io(X)},0\big)$ 
and~$\big(0,\tfrac{\io(f_2)}{\io(X)}\big)$ by~\eqref{eq:deg-ci},
while the restriction of~$H$ is zero and the restriction of~$E$ has bidegree~$(-1,-1)$, we obtain
\begin{equation*}
c_1\, \tfrac{\io(f_1)}{\io(X)} = c' = c_2\, \tfrac{\io(f_2)}{\io(X)}.
\end{equation*}
It follows that the relation in~$\Pic(\hX)$ takes the form~\eqref{eq:mkhx}
and the relation in~$\Cl(X)$ takes the form~\eqref{eq:mkx}.
Moreover, since~$H \in \Mov(X)$, and~$\Mov(X)$ is a convex cone generated by~$H_1$ and~$H_2$ (see Lemma~\ref{lem:eff-mov})
the coefficient~$\balpha(X)$ must be positive.

Since~$H$ and~$H_1$ generate~$\Cl(X_1) \otimes \QQ = \Cl(X) \otimes \QQ$ and~$\balpha(X) \ne 0$, 
it follows from~\eqref{eq:mkx} that~$H_1$ and~$H_2$ also generate~$\Cl(X) \otimes \QQ$.

If~$\io(f_1) = 1$ then~$H$ generates~$\Pic(X_1/Z_1)$, hence~$H$ and~$H_1$ generate~$\Pic(X_1) = \Cl(X)$.
The same argument works if~$\io(f_2) = 1$.
\end{proof}

\subsection{Numerical constraints} 
\label{ss:numerical-constraints}

Recall the notation for contraction types from~\S\ref{ss:contractions}.

\begin{lemma}
\label{lem:no-blowup-smooth-point}
Neither~$f_1$ nor~$f_2$ can be of type~\type{(B_2)}.
\end{lemma}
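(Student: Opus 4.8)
The plan is to rule out type $(B_2)$ by comparing numerical invariants of $X$ with those of its smoothing $X^\sm$. Suppose, say, that $f_1\colon X_1\to Z_1$ is of type $(B_2)$. Then (see~\S\ref{ss:contractions}) $Z_1$ is a \emph{smooth} Fano threefold, and since $\uprho(Z_1)=1$ its pair $\bigl(\g(Z_1),\hh(Z_1)\bigr)$ occurs in Table~\ref{table:hodge-genus}. Applying the point-blowup formulas of Lemma~\ref{lem:hh-g-xi} to $f_1$ gives $\g(X_1)=\g(Z_1)-4$ and $\hh(X_1)=\hh(Z_1)$.

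On the other hand $\pi_1\colon X_1\to X$ is a small resolution of the nonfactorial threefold $X$, so Proposition~\ref{prop:hh-g-xsm} yields $\g(X_1)=\g(X)$ and $\hh(X_1)=\hh(X^\sm)$; combining this with~\eqref{eq:smoothing-invariants} (which gives $\g(X)=\g(X^\sm)$ and $\uprho(X^\sm)=1$) one would obtain
\begin{equation*}
\g(Z_1)=\g(X^\sm)+4,\qquad \hh(Z_1)=\hh(X^\sm).
\end{equation*}
Thus $Z_1$ and $X^\sm$ are two smooth Fano threefolds of Picard rank one whose $(\g,\hh)$-pairs both appear in Table~\ref{table:hodge-genus}, have equal Hodge numbers, and have genera differing by exactly $4$.

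To conclude one only has to check that Table~\ref{table:hodge-genus} contains no such pair. Within a fixed Fano index the Hodge number is strictly decreasing in the genus, so a hypothetical pair must come from two different indices; and the Hodge values shared by different indices are $\hh\in\{10,5,2,0\}$, attained at genera $\{6,9\}$, $\{8,13\}$, $\{10,17\}$ and $\{12,21,28,33\}$, with mutual genus gaps $3$, $5$, $7$ and (at least) $5$ --- never $4$. This contradiction shows $f_1$ is not of type $(B_2)$, and the argument for $f_2$ is symmetric. The only delicate points are the correct transport of invariants along the \emph{small} resolution $\pi_1$ via Proposition~\ref{prop:hh-g-xsm} (here nonfactoriality of $X$ is essential, so that $X_1\to X$ is genuinely small) and the short case-by-case inspection of Table~\ref{table:hodge-genus}; everything else is a formal consequence of Lemma~\ref{lem:hh-g-xi} and the characterization of type $(B_2)$ recalled in~\S\ref{ss:contractions}.
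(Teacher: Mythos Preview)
Your proposal is correct and follows exactly the same approach as the paper: combine Lemma~\ref{lem:hh-g-xi}, Proposition~\ref{prop:hh-g-xsm}, and~\eqref{eq:smoothing-invariants} to obtain $\g(Z_i)=\g(X^\sm)+4$ and $\hh(Z_i)=\hh(X^\sm)$, then inspect Table~\ref{table:hodge-genus} to see no such pair exists. Your explicit case analysis of the table (identifying the shared Hodge values $10,5,2,0$ and their genus gaps) is more detailed than the paper's ``it is easy to see'', but the argument is identical.
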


\begin{proof}
If~$f_i$ is of type~\type{(B_2)}, Lemma~\ref{lem:hh-g-xi}, Proposition~\ref{prop:hh-g-xsm}, and~\eqref{eq:smoothing-invariants} give
\begin{equation*}
\g(X^\sm) = \g(X_i) = \g(Z_i) - 4
\qquad\text{and}\qquad 
\hh(X^\sm) = \hh(X_i) = \hh(Z_i),
\end{equation*}
and inspecting Table~\ref{table:hodge-genus} it is easy to see 
that there is no pair~$(X^\sm,Z_i)$ of smooth Fano threefolds whose genera and Hodge numbers satisfy the above equalities.
\end{proof}

\begin{lemma}
\label{lem:curve-genus-degree}
If~$f_i$ is of type~\type{(B_1^1)} then
\begin{equation*}
\g(\Gamma_i) = \hh(X^\sm) - \hh(Z_i),
\quad\text{and}\quad 
\io(Z_i)\deg(\Gamma_i) = 
(\g(Z_i) - \hh(Z_i)) - (\g(X^\sm) - \hh(X^\sm)) - 1.
\end{equation*}
If~$f_i$ is of type~\type{(B_1^0)} or~\type{(B_1^1)}
then~$\io(Z_i) \ge 2$.
\end{lemma}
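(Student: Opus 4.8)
The plan is to read off the two displayed equalities by bookkeeping with $\g$ and $\hh$, and then to prove $\io(Z_i)\ge 2$ by excluding the value $\io(Z_i)=1$.

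For the equalities: since $\pi_i\colon X_i\to X$ is a small birational contraction, $X_i$ is a small resolution of the nonfactorial threefold $X$, so the last assertion of Proposition~\ref{prop:hh-g-xsm} gives $\g(X_i)=\g(X)$ and $\hh(X_i)=\hh(X^\sm)$. Moreover $-K_{X_i}=\pi_i^*(-K_X)=\io(X)H$ is nef, big and semiample, hence $X_i$ is a smooth weak Fano threefold; and for type~\type{(B_1^1)} the target $Z_i$ is smooth. Lemma~\ref{lem:hh-g-xi} applied to $f_i\colon X_i\to Z_i$ therefore gives
\begin{equation*}
\hh(X_i)=\hh(Z_i)+\g(\Gamma_i),
\qquad
\g(X_i)=\g(Z_i)+\g(\Gamma_i)-\io(Z_i)\deg(\Gamma_i)-1 .
\end{equation*}
Plugging in $\g(X_i)=\g(X)=\g(X^\sm)$ (by~\eqref{eq:smoothing-invariants}) and $\hh(X_i)=\hh(X^\sm)$, the first equality is immediate, and the second follows by feeding $\g(\Gamma_i)=\hh(X^\sm)-\hh(Z_i)$ into the genus relation. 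This part is routine.

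For the second assertion it suffices to rule out $\io(Z_i)=1$. Since $f_i$ has type~\type{(B_1^0)} or~\type{(B_1^1)}, we have $\io(f_i)=1$ by the list in~\S\ref{ss:contractions}, so \eqref{eq:io-fi} forces $\io(X)=1$; thus $X^\sm$ is a smooth prime Fano threefold and $(\g(X),\hh(X^\sm))$ is one of the $\io=1$ entries of Table~\ref{table:hodge-genus}. Assume $\io(Z_i)=1$. If $f_i$ has type~\type{(B_1^0)}, then $Z_i$ is a Fano threefold with a single node or cusp and $\uprho(Z_i)=1$, so its smoothing $Z_i^\sm$ is a prime Fano threefold; applying Proposition~\ref{prop:hh-g-xsm} to $(Z_i,z)$, where $f_i=\Bl_z$, gives $\g(X)=\g(X_i)=\g(Z_i)-1$ and $\hh(X^\sm)=\hh(X_i)\in\{\hh(Z_i^\sm),\,\hh(Z_i^\sm)-1\}$. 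But inspecting Table~\ref{table:hodge-genus} shows that consecutive prime Fano genera $g<g'$ satisfy $\hh_{\mathrm{prime}}(g)\ge\hh_{\mathrm{prime}}(g')+1$, and that there is no prime Fano of genus $\g(X)+1$ when $\g(X)\in\{10,12\}$; so $\hh(X^\sm)$ cannot lie in $\{\hh(Z_i^\sm),\hh(Z_i^\sm)-1\}$ --- a contradiction.

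The case $f_i$ of type~\type{(B_1^1)} with $\io(Z_i)=1$ is the main obstacle. Then $Z_i$ is a smooth prime Fano threefold, so by the first part $\g(\Gamma_i)=\hh(X^\sm)-\hh(Z_i)\ge 0$ and $\deg(\Gamma_i)=\g(Z_i)-\g(X)-1+\g(\Gamma_i)$; the inequality $\g(\Gamma_i)\ge 0$ forces $\g(X)\le\g(Z_i)$, and $\g(X)=\g(Z_i)$ would give $\deg(\Gamma_i)=-1$, so $\g(X)<\g(Z_i)$. A Riemann--Roch count shows that $\Gamma_i$ is contained in some member of $|-K_{Z_i}|$, which a Bertini-type argument lets us take to be a smooth $K3$ surface $S$; restricting $-K_{X_i}$ to the strict transform $\tilde S\cong S$ in $X_i=\Bl_{\Gamma_i}(Z_i)$ gives the nef class $(-K_{Z_i})|_S-\Gamma_i$, whence $\deg(\Gamma_i)=(-K_{Z_i})|_S\cdot\Gamma_i\ge\Gamma_i^2=2\g(\Gamma_i)-2$. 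Feeding this back into the two formulas bounds $\g(\Gamma_i)\le\g(Z_i)-\g(X)+1$, and running this through Table~\ref{table:hodge-genus} leaves only a short list of pairs $(\g(X),\g(Z_i))$. Each is then eliminated by elementary curve geometry: a smooth irreducible curve of degree $\le 3$ has genus $\le 1$, and no smooth curve has degree $5$ and genus $3$; a smooth curve of degree $4$ and genus $3$ is a plane quartic and hence cannot lie on $Z_i$ once $Z_i$ is cut out by quadrics in $\PP^{\g(Z_i)+1}$; and in the borderline pairs the inequality above is an equality, so $\Gamma_i$ would be contracted by the nef and big divisor $(-K_{Z_i})|_S-\Gamma_i$ on $S$ although $\Gamma_i^2\ge 0$ --- impossible. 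I expect this last step --- combining the numerical formulas with curve geometry on $Z_i$ and on its $K3$ sections --- to be where the real work lies; the rest is formal.
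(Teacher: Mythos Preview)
Your argument for the displayed formulas and for the type~\type{(B_1^0)} case is correct and matches the paper's (your genus relation $\g(X)=\g(Z_i^{\sm})-1$ is actually right; the paper's printed ``$\g(X^{\sm})=\g(Z_i^{\sm})$'' is a slip, though either version yields the contradiction). The subcase $\g(\Gamma_i)=0$ is also handled the same way.

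For $\g(\Gamma_i)>0$ in type~\type{(B_1^1)}, however, your route diverges from the paper and picks up a gap. You invoke a ``Bertini-type argument'' to find a \emph{smooth} $K3$ section $S\in|{-K_{Z_i}}|$ through $\Gamma_i$. That amounts to asking the general member of $|{-K_{X_i}}|$ to be smooth, hence $|{-K_X}|$ base point free --- but at this point in the paper (\S\ref{ss:numerical-constraints}) this has not been established; it only comes in Proposition~\ref{prop:base-points}. The $K3$ detour is in fact unnecessary: the inequality you extract is exactly $(-K_{X_i})^2\cdot E_i\ge 0$, which holds because $-K_{X_i}$ is nef and $E_i$ is effective, and the paper sharpens it to a strict inequality by noting that $\pi_i\colon X_i\to X$ is small, so $(-K_{X_i})^2\cdot E_i=(-K_X)^2\cdot\pi_{i*}E_i>0$. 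This is Lemma~\ref{lem:intersection} applied verbatim, no auxiliary surface needed. Your Hodge-index argument on $S$ for the ``borderline'' cases is really this same positivity in disguise.

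After the inequality, the paper finishes in one stroke: summing $\g(X^{\sm})<\g(Z_i)-\g(\Gamma_i)+1$ with $\hh(X^{\sm})=\hh(Z_i)+\g(\Gamma_i)$ gives $\g(X^{\sm})+\hh(X^{\sm})<\g(Z_i)+\hh(Z_i)$ together with $\hh(X^{\sm})>\hh(Z_i)$, and Table~\ref{table:hodge-genus} shows $\g+\hh$ is weakly decreasing along the index-$1$ column, a contradiction. Your case-by-case elimination (degree--genus obstructions, plane-quartic/intersection-of-quadrics arguments, borderline Hodge index) does succeed, but it is considerably more work and introduces side claims (e.g.\ that $Z_i$ contains no plane) that themselves need justification. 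Replacing the $K3$ step by the direct $H^2\cdot E_i>0$ computation and then using the $\g+\hh$ monotonicity would make your proof both rigorous and much shorter.
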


\begin{proof}
The formulas for~$\g(\Gamma_i)$ and~$\deg(\Gamma_i)$ follow 
from Lemma~\ref{lem:hh-g-xi}, Proposition~\ref{prop:hh-g-xsm}, and~\eqref{eq:smoothing-invariants}. 

If~$f_i$ is of type~\type{(B_1)} then~$\io(f_i) = 1$, hence~$\io(X) = 1$ by~\eqref{eq:io-fi}.
Assume also~$\io(Z_i) = 1$.
We show below that this leads to a contradiction.

If~$f_i$ is of type~\type{(B_1^0)}, hence~$Z_i$ is a factorial Fano threefold with a node or cusp,
we denote by~$Z_i^\sm$ a smoothing of~$Z_i$ and note that
\begin{equation*}
\hh(X^\sm) = \hh(X_i) = \hh(Z_i^\sm) - 1
\qquad\text{and}\qquad
\g(X^\sm) = \g(X_i) = \g(Z_i^\sm)
\end{equation*}
by Proposition~\ref{prop:hh-g-xsm} and~\eqref{eq:smoothing-invariants}.
Inspecting Table~\ref{table:hodge-genus} we see that there is no pair~$(X^\sm,Z_i^\sm)$ of smooth Fano threefolds, both of index~$1$,
whose genera and Hodge numbers satisfy the above equalities.

Similarly, if~$f_i$ is of type~\type{(B_1^1)} and~$\g(\Gamma_i) = 0$, we have~$\hh(X^\sm) = \hh(Z_i)$.
Since both have index~$1$, we conclude from Table~\ref{table:hodge-genus} that~$\g(X^\sm) = \g(Z_i)$,
and then the equality proved above gives~$\io(Z_i)\deg(\Gamma_i) = -1$, which is absurd. 

Finally, if~$f_i$ is of type~\type{(B_1^1)} and~$\g(\Gamma_i) > 0$, we have~$H = \io(Z_i)H_i - E_i$, hence
\begin{equation*}
H^2 \cdot E_i = 
(\io(Z_i)H_i - E_i)^2 \cdot E_i =
-2\io(Z_i)H_i \cdot E_i^2 + E_i^3 = 2 - 2\g(\Gamma_i) + \io(Z_i)\deg(\Gamma_i),
\end{equation*}
where we use Lemma~\ref{lem:intersection} to compute the right side.
Since the anticanonical contraction of~$X_i$ is small, the left hand side is positive, therefore
\begin{equation*}
\io(Z_i)\deg(\Gamma_i) > 2\g(\Gamma_i) - 2.
\end{equation*}
Combining this with the genus formulas of Lemma~\ref{lem:hh-g-xi} and~\eqref{eq:smoothing-invariants}, we obtain 
\begin{equation*}
\g(X^\sm) = \g(Z_i) + \g(\Gamma_i) - \io(Z_i)\deg(\Gamma_i) - 1 < \g(Z_i) - \g(\Gamma_i) + 1.
\end{equation*}
On the other hand, as we saw above,~$\hh(X^\sm) = \hh(Z_i) + \g(\Gamma_i)$.
Taking the sum of the above inequality and this equality, and taking into account the positivity of~$\g(\Gamma_i)$, we obtain
\begin{equation*}
\g(X^\sm) + \hh(X^\sm) < \g(Z_i) + \hh(Z_i),
\qquad 
\hh(X^\sm) > \hh(Z_i).
\end{equation*}
Now, inspecting Table~\ref{table:hodge-genus}, we see that this is not possible when~$\io(X^\sm) = \io(Z_i) = 1$.
\end{proof} 

\begin{lemma}
\label{lem:hhh}
Assume~$f_i$ is a contraction of type~\type{(B_1)}, \type{(C_1)}, or~\type{(D_1)}.
Then 
\begin{equation*}
\label{eq:hhh}
H^2 \cdot H_i = 
\begin{cases}
\frac1{\io(Z_i)}\Big({(\g(Z_i) + \hh(Z_i)) + (\g(X^\sm) - \hh(X^\sm)) - 1}\Big), &
\text{if~$f_i$ is of type~\type{(B_1^1)}},\\
\frac1{\io(Z_i)}\Big({(\g(Z_i^\sm) + \hh(Z_i^\sm)) + (\g(X^\sm) - \hh(X^\sm)) - 2}\Big), &
\text{if~$f_i$ is of type~\type{(B_1^0)}},\\
12 - \deg(\Delta_i), & 
\text{if~$f_i$ is of type~\type{(C_1)}},\\
\deg(X_i/Z_i), & 
\text{if~$f_i$ is of type~\type{(D_1)}}.
\end{cases}
\end{equation*}
Moreover, if~$\g(X) \ge 7$ then~$H^2 \cdot H_i \le 2\g(X) - 4$ and if~$f_i$ is of type~\type{(B_1)} then~$H^2 \cdot H_i \ge 4$.
\end{lemma}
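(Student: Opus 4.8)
The plan is as follows. Since each of the types $\type{(B_1)}$, $\type{(C_1)}$, $\type{(D_1)}$ has $\io(f_i) = 1$, relation~\eqref{eq:io-fi} forces $\io(X) = 1$, so $-K_X = H$ and (as $K_{X_i} = \pi_i^*K_X$) also $-K_{X_i} = H$, with $H^3 = 2\g(X) - 2$. I would then compute $H^2 \cdot H_i$ type by type. For $\type{(C_1)}$ and $\type{(D_1)}$ the values $12 - \deg(\Delta_i)$ and $\deg(X_i/Z_i)$ come straight out of Lemma~\ref{lem:intersection} with $-K_Y = H$ and $H_Z = H_i$. For $\type{(B_1^1)}$ and $\type{(B_1^0)}$ the exceptional divisor $E_i$ has discrepancy $1$, so $H = -K_{X_i} = \io(Z_i)H_i - E_i$; expanding $H^2\cdot H_i = (\io(Z_i)H_i - E_i)^2\cdot H_i$ and inserting the intersection numbers $H_i^2\cdot E_i = 0$ and $H_i\cdot E_i^2 = -\deg(\Gamma_i)$ from Lemma~\ref{lem:intersection} (the latter being $0$ for $\type{(B_1^0)}$, where the center is a point) gives $H^2\cdot H_i = \io(Z_i)^2 H_i^3 - \deg(\Gamma_i)$. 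I would then rewrite $\io(Z_i)^2 H_i^3 = \io(Z_i)^{-1}(-K_{Z_i})^3 = \io(Z_i)^{-1}(2\g(Z_i)-2)$, substitute the value of $\deg(\Gamma_i)$ from Lemma~\ref{lem:curve-genus-degree} in the $\type{(B_1^1)}$ case, and in the $\type{(B_1^0)}$ case use Proposition~\ref{prop:hh-g-xsm} and~\eqref{eq:smoothing-invariants} together with $X_i = \Bl_z(Z_i)$, $\g(X_i) = \g(X) = \g(X^\sm)$, $\hh(X_i) = \hh(X^\sm)$ to express $\g(Z_i) = \g(Z_i^\sm)$, $\g(X^\sm) = \g(Z_i^\sm) - 1$, $\hh(X^\sm) = \hh(Z_i^\sm) - 1$; a short rearrangement produces the claimed formula in all four cases.

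For the two inequalities (under $\g(X)\ge 7$) I would argue type by type. The bound $H^2\cdot H_i \le 2\g(X)-4$ is immediate for $\type{(C_1)}$ ($\deg(\Delta_i)\ge 3$ gives $H^2\cdot H_i\le 9 < 2\g(X)-4$), for $\type{(D_1)}$ ($\deg(X_i/Z_i)\le 6 < 2\g(X)-4$), and for $\type{(B_1^0)}$, where $\io(Z_i)\ge 2$ by Lemma~\ref{lem:curve-genus-degree}, so $H^2\cdot H_i = 2\g(X)/\io(Z_i)\le \g(X)\le 2\g(X)-4$; and for the lower bound in $\type{(B_1^0)}$ one has $H^2\cdot H_i = \io(Z_i)^2\dd(Z_i)\ge 4$ because $\io(Z_i)\ge 2$ and $\dd(Z_i)\ge 1$. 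The real work is in type $\type{(B_1^1)}$. Here $Z_i$ is a smooth Fano threefold with $\uprho(Z_i)=1$ and $\io(Z_i)\ge 2$ (Lemma~\ref{lem:curve-genus-degree}), and $\g(\Gamma_i)=\hh(X^\sm)-\hh(Z_i)\ge 0$ forces $\hh(Z_i)\le \hh(X^\sm)\le 7$ (the last inequality from Table~\ref{table:hodge-genus}, using $\g(X)\ge 7$); these two constraints restrict $Z_i$ to the finite list $\{\rY_3,\rY_4,\rY_5,Q^3,\PP^3\}$, of Fano indices $2,2,2,3,4$. For the upper bound I would substitute into $H^2\cdot H_i = \io(Z_i)^{-1}\bigl(\g(Z_i)+\hh(Z_i)+\g(X^\sm)-\hh(X^\sm)-1\bigr)$ the values of $\g$ and $\hh$ from Table~\ref{table:hodge-genus} --- for $Z_i$ and for $X^\sm$, the latter for each admissible $\g(X)\in\{7,8,9,10,12\}$ --- and check $H^2\cdot H_i\le 2\g(X)-4$ in each of the finitely many remaining cases. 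For the lower bound I would use the genus relation $\g(X) = \g(Z_i)+\g(\Gamma_i)-\io(Z_i)\deg(\Gamma_i)-1$ of Lemma~\ref{lem:hh-g-xi}; since $\g(\Gamma_i)\le \hh(X^\sm)\le 7\le \g(X)$ this gives $\io(Z_i)\deg(\Gamma_i)\le \g(Z_i)-1$, so that $H^2\cdot H_i = \io(Z_i)^2\dd(Z_i)-\deg(\Gamma_i)\ge \io(Z_i)^{-1}\bigl((2\g(Z_i)-2)-(\g(Z_i)-1)\bigr) = \io(Z_i)^{-1}(\g(Z_i)-1)$, which for the five admissible $Z_i$ equals $6,8,10,9,8$ respectively, all $\ge 4$.

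The main obstacle I anticipate is precisely this finite bookkeeping in type $\type{(B_1^1)}$: pinning down the five possible targets $Z_i$ via the Hodge-number constraint $\hh(Z_i)\le \hh(X^\sm)$, and then checking the two inequalities against Table~\ref{table:hodge-genus} for each admissible pair $(\g(X),Z_i)$. Everything else reduces to Lemma~\ref{lem:intersection}, the canonical bundle formula, and the numerical restrictions ($\io(Z_i)\ge 2$, $\deg(\Delta_i)\ge 3$, $\deg(X_i/Z_i)\le 6$) already built into the preceding lemmas and into the classification of $K$-negative extremal contractions.
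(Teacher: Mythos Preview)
Your proposal is correct and follows essentially the same route as the paper: the four formulas are derived exactly as you outline (Lemma~\ref{lem:intersection} for types~$\type{(C_1)}$ and~$\type{(D_1)}$, the canonical class formula plus Lemma~\ref{lem:curve-genus-degree} and Proposition~\ref{prop:hh-g-xsm} for the two $\type{(B_1)}$ subtypes), and the inequalities for $\g(X)\ge 7$ are reduced to the finite table check over the admissible pairs $(X^\sm,Z_i)$ with $\io(Z_i)\ge 2$ and $\hh(Z_i)\le \hh(X^\sm)$. Your treatment of the lower bound in type~$\type{(B_1^1)}$ via the intermediate estimate $\io(Z_i)\deg(\Gamma_i)\le \g(Z_i)-1$ is a mild streamlining of the paper's direct enumeration, but the underlying argument is the same.
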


\begin{proof}
If~$f_i$ is of type~\type{(C_1)} or~\type{(D_1)}, we use~\cite[Lemma~4.1.6(C,D)]{IP99}.

Assume~$f_i$ is of type~\type{(B_1^1)}.
Then~$H = \io(Z_i)H_i - E_i$ by the canonical class formula, hence
\begin{equation*}
H^2 \cdot H_i = \io(Z_i)^2H_i^3 - \deg(\Gamma_i)
\end{equation*}
by Lemma~\ref{lem:intersection}.
On the other hand, $\io(Z_i)^3H_i^3 = 2\g(Z_i) - 2$ by definition of the genus;
combining this with the expression of Lemma~\ref{lem:curve-genus-degree} for~$\deg(\Gamma_i)$, we obtain the first formula.

Similarly, if~$f_i$ is of type~\type{(B_1^0)}, we have~$H^2 \cdot H_i = \io(Z_i)^2H_i^3 = \tfrac1{\io(Z_i)}(2\g(Z_i) - 2)$,
and using Proposition~\ref{prop:hh-g-xsm}, and~\eqref{eq:smoothing-invariants}, we obtain the second formula.

Now we prove the second claim; accordingly, we assume~$\g(X) \ge 7$.

If~$f_i$ is of type~\type{(D_1)} then~$2\g(X) - 4 \ge 10 \ge \deg(X_i/Z_i)$.

If~$f_i$ is of type~\type{(C_1)} then~$2\g(X) - 4 \ge 10 > 12 - \deg(\Delta_i)$,
because~$\deg(\Delta_i) \ge 3$.

Finally, if~$f_i$ is of type~\type{(B_1)},
the inequality~$H^2 \cdot H_i \le 2\g(X) - 4$ can be obtained by considering all pairs~$(X^\sm,Z_i^\sm)$ 
such that~$\io(X^\sm) = 1$, $\io(Z_i^\sm) \ge 2$ (we can assume this by Lemma~\ref{lem:curve-genus-degree}), 
and~$\hh(X^\sm) \ge \hh(Z_i^\sm)$,
and using Table~\ref{table:hodge-genus} to compute the invariants.
\end{proof}

\subsection{Nonfactorial threefolds with contractions of large index}
\label{ss:bi}

In this subsection we classify all nonfactorial $1$-nodal threefolds with~$\io(f_1) > 1$ or~$\io(f_2) > 1$.

The case where~$\io(f_1)>2$ is obvious.
Recall from~\S\ref{ss:contractions} that~$\io(f_1) \le 3$.
Also recall the definition of the integer~$\balpha(X)$ from Proposition~\ref{prop:pic}.

\begin{proposition}
\label{prop:if3}
If~$\io(f_1) = 3$ or~$\io(f_2) = 3$ then~$\io(X) = 3$ and~$X$ is a nodal quadric in~$\PP^4$.
In this case~$\balpha(X) = 1$, $X_1$ and~$X_2$ are isomorphic to~$\PP_{\PP^1}(\cO \oplus \cO(-1)^{\oplus 2})$,
$f_1$ and~$f_2$ are the projections to~$\PP^1$,
while~$\hX \cong \PP_{\PP^1 \times \PP^1}(\cO \oplus \cO(-1,-1))$ is a Fano threefold of type~\typemm{3}{31}.
\end{proposition}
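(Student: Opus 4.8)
The plan is to prove first that $\io(X) = 3$, after which all the remaining assertions follow from the classical geometry of the nodal quadric threefold. By the symmetry of the Sarkisov diagram~\eqref{eq:intro-sl} I may assume $\io(f_1) = 3$. By the list of $K$-negative extremal contractions recalled in~\S\ref{ss:contractions}, a contraction has Fano index~$3$ exactly when it is of type~\type{(D_3)}, so $Z_1 \cong \PP^1$ and $X_1 \cong \PP_{\PP^1}(\cE)$ for a rank-$3$ bundle~$\cE$ on~$\PP^1$. Twisting $\cE$ (which does not change $X_1$, only shifts the relative hyperplane class $M_1$) I normalize $\cE \cong \cO \oplus \cO(-a) \oplus \cO(-b)$ with $0 \le a \le b$; then $\cE^\vee$ is globally generated, so $M_1$ is nef but not ample and $\Nef(X_1) = \QQ_{\ge 0}[H_1] + \QQ_{\ge 0}[M_1]$, while the canonical bundle formula for a projective bundle gives $-K_{X_1} = 3M_1 + (2-a-b)H_1$.

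Since $\pi_1 \colon X_1 \to X$ is a small birational contraction onto the Fano variety $X$ and $K_{X_1} = \pi_1^*K_X$ (see~\S\ref{ss:notation}), the class $-K_{X_1}$ is nef and big but not ample, as it is trivial on the flopping curve $C_1$. Nefness forces $2 - a - b \ge 0$, and if $2 - a - b > 0$ then $-K_{X_1}$ would lie in the interior of $\Nef(X_1)$, hence be ample; so $a + b = 2$ and $-K_{X_1} = 3M_1$. Now $\io(X)\,H = \pi_1^*(-K_X) = -K_{X_1} = 3M_1$; since $H$ is primitive in $\Cl(X) = \Pic(X_1)$ (because $\uprho(X) = 1$ and $X$ is nonfactorial, cf.~\eqref{eq:pic-cl}) while $M_1$ is primitive in $\Pic(X_1)$ by~\eqref{eq:exact-seq:Pic}, this yields $\io(X) = 3$ and $H = M_1$; by~\eqref{eq:io-fi} it also gives $\io(f_2) = 3$, restoring the symmetry. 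By Theorem~\ref{hm:KO} the threefold $X \subset \PP^4$ is then a quadric, and having exactly one node it must have rank~$4$ (rank~$5$ is smooth, rank~$\le 3$ is singular along a line or worse); that is, $X$ is the cone over a smooth quadric surface $S \cong \PP^1 \times \PP^1 \subset \PP^3$ with vertex $x_0$.

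The remaining statements are then the standard description of this cone. Resolving it gives $\hX = \Bl_{x_0}(X) \cong \PP_S(\cO_S \oplus \cO_S(-1,-1)) = \PP_{\PP^1\times\PP^1}(\cO\oplus\cO(-1,-1))$, a smooth threefold with $\uprho(\hX) = 3$ and, by Proposition~\ref{prop:hh-g-xsm} together with $(-K_X)^3 = (3H)^3 = 27\,\dd(X) = 54$ (so $\g(X) = 28$), with $(-K_{\hX})^3 = 2\g(\hX) - 2 = 52$; these data identify $\hX$ as the Fano threefold of type~\typemm{3}{31}. Projection from $x_0$ realizes $X$ as a determinantal hypersurface $\{\det(\ell_{ij}) = 0\} \subset \PP^4$ for generic linear forms $\ell_{ij}$, $1 \le i,j \le 2$, whose two determinantal small resolutions are the two projections $X_1, X_2 \cong \PP_{\PP^1}(\cO \oplus \cO(-1)^{\oplus 2}) \to \PP^1$; the normalization $\cO \oplus \cO(-1)^{\oplus 2}$, rather than $\cO^{\oplus 2} \oplus \cO(-2)$, is forced because only it contains a section with normal bundle $\cO(-1)^{\oplus 2}$, namely $C_i$. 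Finally, computing in the model $\hX = \PP_S(\cO_S\oplus\cO_S(-1,-1))$ one finds that $H$ is the relative hyperplane class, $H_1$ and $H_2$ are the pullbacks of the two ruling classes of $S$, and the contracted section $E$ has class $[E] = H - H_1 - H_2$, so that $H \sim H_1 + H_2$ in $\Cl(X) = \Pic(\hX)/\ZZ[E]$; comparing with~\eqref{eq:mkx}, where $\io(X) = \io(f_1) = \io(f_2) = 3$, gives $\balpha(X) = 1$.

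The main obstacle is the first part, $\io(X) = 3$, i.e.\ the exclusion of $\io(X) = 1$ — this is where all the content lies, through the identification of the nef cone of the $\PP^2$-bundle $X_1$ and the primitivity of $H$. Once $\io(X) = 3$ is established, everything else is classical. As an alternative to the nef-cone argument one can work directly with $C_1$: it is a section (a trisection if $\io(X) = 1$) of $f_1$ with $\cN_{C_1/X_1} \cong \cO(-1)^{\oplus 2}$, and a short case check shows that such a curve lives in $\PP_{\PP^1}(\cE)$ only for the normalization above, again forcing $-K_{X_1} = 3M_1$ and hence $\io(X) = 3$.
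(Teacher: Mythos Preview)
Your proof is correct and takes a genuinely different route from the paper's. The paper identifies $C_1$ directly as the minimal section of the $\PP^2$-bundle $X_1 = \PP_{\PP^1}(\cO \oplus \cO(-a) \oplus \cO(-b))$ (comparing the description of $\NE(X_1)$ in Lemma~\ref{lem:eff-mov} with the obvious generators of the Mori cone of a projective bundle) and then reads off $a = b = 1$ immediately from the normal bundle condition~\eqref{eq:cn-ci}; the rest ``is obvious.'' You instead argue numerically through the nef cone: nefness but non-ampleness of $-K_{X_1} = 3M_1 + (2-a-b)H_1$ forces $a+b=2$, hence $-K_{X_1} = 3M_1$, and then primitivity of $H$ in $\Cl(X)$ --- which does indeed follow from~\eqref{eq:pic-cl}, since $\Cl(X)/\Pic(X)$ embeds in $\ZZ$ and is therefore torsion-free, so $\Pic(X)$ is saturated --- gives $\io(X)=3$. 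You then invoke Theorem~\ref{hm:KO} and the $1$-nodal hypothesis to pin down $X$ as the rank-$4$ quadric, after which the small resolutions and the remaining claims follow from the classical description of the cone over $\PP^1\times\PP^1$. Your approach trades a local normal-bundle computation for a global numerical argument plus Kobayashi--Ochiai; both are clean, and you in fact sketch the paper's normal-bundle route as an alternative in your closing sentence. One small simplification: once $\io(X)=3$ is established and $X$ is identified as the rank-$4$ quadric, the exclusion of the splitting $\cO^{\oplus 2}\oplus\cO(-2)$ is immediate from smallness of $\pi_1$ (for that splitting $|M_1|$ contracts the surface $\PP_{\PP^1}(\cO^{\oplus 2})$), so the determinantal discussion and the section-with-prescribed-normal-bundle remark are not strictly needed.
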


\begin{proof}
We may assume that~$\io(f_1) = 3$.
The description of~\S\ref{ss:contractions} shows that~$f_1$ is of type~\type{(D_3)}, 
i.e., it is a $\PP^2$-bundle over~$\PP^1$, hence
\begin{equation*}
X_1 \cong \PP_{\PP^1}(\cO \oplus \cO(-a) \oplus \cO(-b)) \xrightarrow{\ f_1\ } \PP^1,
\end{equation*}
where~$a,b \ge 0$.
It is easy to see that the Mori cone~$\NE(X_1)$ is generated by 
the class~$\Upsilon_1$ of a line in a fiber of~$f_1$,
and by the section of~$f_1$ corresponding to the summand~$\cO$.
Comparing this with the description of~$\NE(X_1)$ in Lemma~\ref{lem:eff-mov}, 
we see that this section coincides with the flopping curve~$C_1$.
Now it follows that
\begin{equation*}
\cN_{C_1/X_1} \cong \cO(-a) \oplus \cO(-b),
\end{equation*}
hence~\eqref{eq:cn-ci} implies that~$a = b = 1$.
The rest of the proposition is obvious.
\end{proof}

The other cases are also well-known, see, e.g., 
\cite[Theorem~1.2]{KP23}, \cite[Theorem~1.2]{P:V22}, and~\cite[Theorem~2.3]{Takeuchi:DP}, respectively;
see also~\cite[(2.3.2), (2.13.1), and~(2.11.2)]{Takeuchi:DP}.
Recall from~\S\ref{ss:conventions} our convention about the relative hyperplane class of a projective bundle.

\begin{proposition}
\label{prop:big-index}
If~$\io(f_1) = 2$ there are three cases:
\begin{enumerate}[wide]
\item 
\label{it:ix2}
$\io(X) = 2$ and~$\dd(X) = 5$.
In this case~$X_1 \cong \PP_{\PP^2}(\cE)$, $f_1$ is the projection to~$\PP^2$, so that~$\io(f_1) = 2$,
and~$\cE$ is the vector bundle defined by an exact sequence
\begin{equation}
\label{eq:i2d5}
0 \longrightarrow \cE \longrightarrow \cO(-1)^{\oplus 2} \longrightarrow \cO_{L_1}(1) \longrightarrow 0,
\end{equation}
where~$L_1 \subset \PP^2$ is a line.
Moreover, $X_2 \subset \PP_{\PP^1}(\cO \oplus \cO(-1)^{\oplus 3})$ is a divisor of class~\mbox{$2H - H_2$}, 
where~$H$ is the relative hyperplane class;
in particular~$f_2$ is a quadric surface bundle over~$\PP^1$ and~\mbox{$\io(f_2) = 2$}.
Finally, $\hX$ is a smooth Fano threefold of type~\typemm{3}{21}.

\item 
\label{it:12d}
$\io(X) = 1$ and~$\g(X) = 12$.
In this case~$X_1 \cong \PP_{\PP^2}(\cE)$, $f_1$ is the projection to~$\PP^2$,
and~$\cE$ is the stable vector bundle defined by an exact sequence
\begin{equation}
\label{eq:ce-12d}
0 \longrightarrow \cE \longrightarrow \cO^{\oplus 2} \longrightarrow \cO_{\Gamma_1}(5) \longrightarrow 0,
\end{equation}
where~$\Gamma_1 \subset \PP^2$ is a conic.
Moreover, $f_2 \colon X_2 \to \PP^1$ is a quintic del Pezzo fibration and therefore~\mbox{$\io(f_2) = 1$}.
Finally, $\hX$ is a smooth Fano threefold of type~~\typemm{3}{5}.

\item 
\label{it:9b}
$\io(X) = 1$ and~$\g(X) = 9$.
In this case
\begin{equation*}
X_1 \subset \PP_{\PP^1}(\cO \oplus \cO \oplus \cO(-1) \oplus \cO(-1))
\end{equation*}
is a divisor of class~$2M_1 + H_1$, where~$M_1$ is the relative hyperplane class
and~$f_1$ is the projection to~$\PP^1$.
Moreover, $f_2 \colon X_2 \to \PP^1$ is a quartic del Pezzo fibration and~\mbox{$\io(f_2) = 1$}.
Finally, $\hX$ is a smooth Fano threefold of type~~\typemm{3}{2}.
\end{enumerate}

In all these cases we have~$\balpha(X) = 1$ and~$H$ is base point free.
\end{proposition}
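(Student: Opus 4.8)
The plan is to assume $\io(f_1) = 2$ (the case $\io(f_2) = 2$ being symmetric) and to run a reduction in three stages. Since $\io(X)$ divides $\io(f_1)$ by~\eqref{eq:io-fi}, we have $\io(X) \in \{1,2\}$. By the list of $K$-negative extremal contractions recalled in~\S\ref{ss:contractions}, a contraction of index~$2$ has type~\type{(B_2)}, \type{(C_2)}, or~\type{(D_2)}, and the first is excluded by Lemma~\ref{lem:no-blowup-smooth-point}; hence $f_1$ is a $\PP^1$-bundle over~$\PP^2$ (type~\type{(C_2)}) or a flat quadric surface fibration over~$\PP^1$ (type~\type{(D_2)}). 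If $\io(X) = 2$ then $\io(X)$ divides $\io(f_2)$ as well, forcing $\io(f_2) = 2$ and putting $f_2$ on the same short list; if $\io(X) = 1$, then $\io(f_2) = 3$ is ruled out by Proposition~\ref{prop:if3}, so $\io(f_2) \in \{1,2\}$.

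Next I would pin down the flopping curve and the bundle data. In either type the Mori cone $\NE(X_1) = \QQ_{\ge 0}[C_1] + \QQ_{\ge 0}[\Upsilon_1]$ from Lemma~\ref{lem:eff-mov} is compared with the standard generators of the Mori cone of a $\PP^1$-bundle over $\PP^2$ (resp.\ a quadric surface fibration over $\PP^1$): the ray $\QQ_{\ge 0}[\Upsilon_1]$ is spanned by a line in a fiber of $f_1$ and $[C_1]$ is the remaining extremal ray, so $C_1$ is a section of $f_1$ over a curve $f_1(C_1) \subset Z_1$ of degree $H_1 \cdot C_1 = \io(f_1)/\io(X) \in \{1,2\}$ by~\eqref{eq:deg-ci}. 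The normal-bundle constraint~\eqref{eq:cn-ci}, $\cN_{C_1/X_1} \cong \cO_{C_1}(-1) \oplus \cO_{C_1}(-1)$, then rigidifies the splitting type of $\cE$ along $f_1(C_1)$ when $X_1 \cong \PP_{\PP^2}(\cE)$, and the class of $X_1$ inside the ambient $\PP^3$-bundle when $f_1$ has type~\type{(D_2)}, exactly as in the proof of Proposition~\ref{prop:if3}; this yields the elementary-modification sequences~\eqref{eq:i2d5} and~\eqref{eq:ce-12d} and the stated divisor classes. Stability of $\cE$ in case~\ref{it:12d} follows because $\NE(X_1)$ has no extremal ray besides $[\Upsilon_1]$ and $[C_1]$, so $\cE$ admits no destabilizing sub-line-bundle.

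To see that only the three numerical types listed can occur I would feed the type of $f_1$ into the Hodge-number bookkeeping of~\S\ref{ss:numerical-constraints}. When $f_1$ has type~\type{(C_2)} one has $\hh(X_1) = 0$ (a projective bundle over $\PP^2$) and $C_1 \cong \PP^1$, so $\hh(\hX) = \hh(X_1) = 0$ by Lemma~\ref{lem:hh-g-xi}, hence $\hh(X^\sm) = 0$ by Proposition~\ref{prop:hh-g-xsm} and~\eqref{eq:smoothing-invariants}, and Table~\ref{table:hodge-genus} leaves only $\io(X) = 2$, $\dd(X) = 5$ or $\io(X) = 1$, $\g(X) = 12$. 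When $f_1$ has type~\type{(D_2)} a parallel bookkeeping, combining the type of $f_1$ with the numerical constraints of Lemmas~\ref{lem:curve-genus-degree} and~\ref{lem:hhh} applied to the second contraction $f_2$, leaves only $\io(X) = 2$, $\dd(X) = 5$ or $\io(X) = 1$, $\g(X) = 9$. Matching $f_2$ (a quintic del Pezzo fibration when $\g(X) = 12$, a quartic one when $\g(X) = 9$) then separates the three cases; for the remaining global identifications, in particular the Fano type of $\hX$ and the precise shape of $X_1$, $X_2$, I would invoke the known classifications: \cite[Theorem~1.2]{KP23} for the del Pezzo case, \cite[Theorem~1.2]{P:V22} for $\g(X) = 12$, and \cite[Theorem~2.3]{Takeuchi:DP} together with~\cite[(2.3.2), (2.13.1), and~(2.11.2)]{Takeuchi:DP} for $\g(X) = 9$.

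Finally, with $X_1$ and $X_2$ explicitly in hand I would check the two remaining assertions directly. Expressing $H$, $H_1$, $H_2$ in terms of the two generators of $\Pic(X_1)$ (the relative hyperplane class and $H_1$) and comparing with~\eqref{eq:mkx}, or equivalently comparing $(-K_{\hX})^3$ read off from~\eqref{eq:mkhx} with the value $2\g(X) - 4$ of Proposition~\ref{prop:hh-g-xsm}, forces $\balpha(X) = 1$; and $H$ is base point free because $-K_{X_1} = \io(X)\,H$ is globally generated on each explicit model of $X_1$, while $\pi_1 \colon X_1 \to X$ is small with $\pi_{1*}\cO_{X_1} \cong \cO_X$ and $H \cdot C_1 = 0$, so a general member of $|H|$ on $X_1$ is disjoint from the flopping curve $C_1$ and descends to a member of $|H|$ on $X$ missing the node $x_0$. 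I expect the main obstacle to lie in the second and third stages: rigidifying $\cE$ so as to obtain precisely the sequences~\eqref{eq:i2d5} and~\eqref{eq:ce-12d} (and not merely the numerical invariants), and eliminating the spurious numerical solutions while confirming that the birational picture is exactly the one supplied by the cited references.
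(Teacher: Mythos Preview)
Your outline matches the paper in the initial reduction and in the type~\type{(C_2)} case: the vanishing $\hh(X_1)=0$ combined with Proposition~\ref{prop:hh-g-xsm} and Table~\ref{table:hodge-genus} does isolate $(\io,\g)\in\{(2,21),(1,12)\}$, and the external references you cite are exactly those the paper names as containing the result. For $\io(X)=2$ the paper, like you, simply invokes \cite{KP23}.

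The genuine gap is in the type~\type{(D_2)} case with $\io(X)=1$. You propose to apply Lemmas~\ref{lem:curve-genus-degree} and~\ref{lem:hhh} to $f_2$, but those lemmas presuppose $\io(f_2)=1$, which at this stage is unknown --- indeed it is part of the conclusion you are trying to reach. Even granting $\io(f_2)=1$, the bound $H^2\cdot H_2\le 2\g(X)-4$ does not by itself single out $\g(X)=9$. The paper proceeds entirely differently and does not touch $f_2$ at all: it writes $X_1$ as a divisor of class $2M_1+tH_1$ in $\PP_{\PP^1}(\cE)$ (using that $C_1$ is a bisection with $M_1\cdot C_1$ odd, normalized to~$1$), computes $(-K_{X_1})^3$ directly to obtain $\g(X)\equiv 1\pmod 4$, hence $\g(X)\in\{5,9\}$; then computes $\upchi_{\mathrm{top}}(X_1)$ from the discriminant degree $4t-2\deg\cE$ of the quadric fibration to get $\hh(X^{\sm})=2t-\deg\cE-1$; and finally solves this $2\times 2$ linear system against Table~\ref{table:hodge-genus}, which has no integral solution for $(\g,\hh)=(5,14)$ but forces $t=1$, $\deg\cE=-2$ for $(\g,\hh)=(9,3)$. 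The normal-bundle sequence for $C_1\subset\PP_{\PP^1}(\cO^{\oplus2})\subset\PP_{\PP^1}(\cE)$ then fixes $\cE\cong\cO^{\oplus2}\oplus\cO(-1)^{\oplus2}$ and the divisor class $2M_1+H_1$, giving the description of~$X_1$ from scratch rather than via Takeuchi.

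A smaller point: your derivation of~\eqref{eq:ce-12d} by ``rigidifying the splitting type along $f_1(C_1)$'' is underspecified. The paper first shows $\Gamma_1=f_1(C_1)$ is a conic (not a line) and $\cE\vert_{\Gamma_1}\cong\cO(1)\oplus\cO(-5)$ via the normal-bundle sequence, then passes to the auxiliary kernel bundle $\cF$ of $\cE^\vee\twoheadrightarrow\cO_{\Gamma_1}(-1)$, uses the Chern-class and stability input from \cite[Lemma~5.7]{P:V22} together with Riemann--Roch to prove $\cF\cong\cO^{\oplus2}$, and only then dualizes. This is more than a splitting-type argument.
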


\begin{proof}
Since~$\io(X)$ divides~$\io(f_1)$ by~\eqref{eq:io-fi}, we have~$\io(X) \in \{1,2\}$.

Assume~$\io(X) = 2$, so that~$X$ is a del Pezzo threefold.
Note that~$\dd(X) \le 5$ by~\eqref{eq:smoothing-invariants}.
Using the classification~\cite[Corollary~0.8]{Shin1989} or~\cite[Theorem~1.2]{KP23} 
and Corollary~\ref{cor:ci-factorial}\ref{it:ci-i2} we see that nonfactoriality of~$X$ implies~$\dd(X) = 5$.
Then we apply~\cite[Theorem~7.1, Proposition~4.14, and Lemma~4.16]{KP23} and obtain part~\ref{it:ix2}.

From now on we assume~$\io(X) = 1$.
Since~$\io(f_1) = 2$, it follows from~\eqref{eq:deg-ci} that
\begin{equation}
\label{eq:9b-h1c1}
H_1 \cdot C_1 = \tfrac{\io(f_1)}{\io(X)} = 2,
\end{equation}
where recall that~$C_1 \subset X_1$ is the flopping curve.
Furthermore, since~$f_1$ cannot be of type~\type{(B_2)} by Lemma~\ref{lem:no-blowup-smooth-point},
it should be of type~\type{(C_2)} or~\type{(D_2)}, i.e., 
a $\PP^1$-bundle over~$\PP^2$ or a quadric surface bundle over~$\PP^1$.
We consider these cases separately.

\subsection*{Type~\type{(C_2)}.}
Assume~$X_1$ is a $\PP^1$-bundle over~$\PP^2$, i.e., $X_1 \cong \PP_{\PP^2}(\cE)$,
where~$\cE$ is a vector bundle of rank~$2$.
Then~$\hh(X_1) = 0$, hence~$\hh(X^\sm) = 0$ by Proposition~\ref{prop:hh-g-xsm},
and since~$\io(X) = 1$, we have~$\g(X) = \g(X^\sm) = 12$, see Table~\ref{table:hodge-genus}. 

Since~$\io(f_1) = 2$ and~$\io(X) = 1$, twisting~$\cE$ appropriately,
we may assume that
\begin{equation}
\label{eq:hmh-c2}
H \sim 2M_1 + H_1,
\end{equation}
where~$M_1$ is the relative hyperplane class for~$\PP_{\PP^2}(\cE)$ and~$H_1$ is the pullback of a line class from~$\PP^2$.
Using the canonical class formula, we conclude that~$\rc_1(\cE) = -2H_1$.

On the other hand, applying~\eqref{eq:9b-h1c1}, we see that the curve
\begin{equation*}
\Gamma_1 \coloneqq f_1(C_1) \subset \PP^2 
\end{equation*}
is a line or a smooth conic.
Moreover, the flopping curve~$C_1 \subset \PP_{\Gamma_1}(\cE\vert_{\Gamma_1})$ 
must be a rigid curve in the scroll~$\PP_{\Gamma_1}(\cE\vert_{\Gamma_1})$, 
hence it is an exceptional section of~$ \PP_{\Gamma_1}(\cE\vert_{\Gamma_1}) \to \Gamma_1$;
in particular, the map~$C_1 \to \Gamma_1$ is an isomorphism, hence~$\Gamma_1$ is a smooth conic.

Furthermore, consider the standard exact sequence of normal bundles
\begin{equation*}
0 \longrightarrow \cN_{C_1/\PP_{\Gamma_1}(\cE\vert_{\Gamma_1})} \longrightarrow \cN_{C_1/X_1} \longrightarrow \cO_{\PP_{\Gamma_1}(\cE\vert_{\Gamma_1})/X_1}\vert_{C_1} \longrightarrow 0.
\end{equation*}
The last term is isomorphic to the pullback of~$\cN_{\Gamma_1/\PP^2} \cong \cO_{\Gamma_1}(4)$, i.e., to~$\cO_{C_1}(4)$,
and the middle term is identified in~\eqref{eq:cn-ci},
hence the first term is~$\cO_{C_1}(-6)$.
On the other hand, a combination of~\eqref{eq:9b-h1c1} with~\eqref{eq:hmh-c2} implies that~$M_1 \cdot C_1 = -1$, hence
\begin{equation*}
\cE\vert_{\Gamma_1} \cong \cO_{\Gamma_1}(1) \oplus \cO_{\Gamma_1}(-5)
\end{equation*}
and~$C_1$ is the exceptional section of~$\PP_{\Gamma_1}(\cE\vert_{\Gamma_1})$. 

Now consider the composition of the epimorphisms~$\cE^\vee \twoheadrightarrow \cE^\vee\vert_{\Gamma_1} \twoheadrightarrow \cO_{\Gamma_1}(-1)$.
Its kernel is a vector bundle~$\cF$ on~$\PP^2$ and we have an exact sequence
\begin{equation}
\label{eq:cf-cevee-c2}
0 \longrightarrow \cF \longrightarrow \cE^\vee \longrightarrow \cO_{\Gamma_1}(-1) \longrightarrow 0.
\end{equation}
Using the computation of Chern classes of~$\cE$ in~\cite[Lemma~5.7]{P:V22}
(note, however, that the twist of the bundle is different), 
we obtain~$\rc_1(\cF) = \rc_2(\cF) = 0$,
and using the stability of~$\cE$ (also proved in~\cite[Lemma~5.7]{P:V22}), 
we obtain~$h^0(\cE(-3)) = 0$. 
Therefore~ by Serre duality~$h^2(\cE^\vee) = 0$, hence~$h^2(\cF) = 0$ by~\eqref{eq:cf-cevee-c2}.
Finally, applying to~$\cF$ the Riemann--Roch Theorem we deduce~$h^0(\cF) \ge 2$. 
Consider a general morphism
\begin{equation*}
\cO_{\PP^2}^{\oplus 2} \longrightarrow \cF.
\end{equation*}
The composition~$\cO_{\PP^2}^{\oplus 2} \longrightarrow \cF \longrightarrow \cE^\vee$ 
is injective by stability of~$\cE^\vee$, hence the morphism~$\cO_{\PP^2}^{\oplus 2} \longrightarrow \cF$ is also injective.
Since the Chern classes of the source and target are the same, it must be an isomorphism.
Thus, $\cF \cong \cO_{\PP^2}^{\oplus 2}$, and dualizing~\eqref{eq:cf-cevee-c2}, we obtain~\eqref{eq:ce-12d}.

Now we describe~$\hX$ and~$X_2$.
Sequence~\eqref{eq:ce-12d} implies that the~$\PP^1$-bundles~$X_1 \cong \PP_{\PP^2}(\cE)$ and~$\PP_{\PP^2}(\cO^{\oplus 2}) \cong \PP^2 \times \PP^1$
are related by an elementary transformation.
More precisely,
\begin{equation*}
\hX \coloneqq \Bl_{C_1}(X_1) \cong \Bl_{C'_1}(\PP^2 \times \PP^1),
\end{equation*}
where~$C'_1 \subset \PP^2 \times \PP^1$ is a smooth rational curve of bidegree~$(2,5)$ 
that corresponds to the morphism~$\cO^{\oplus 2} \to \cO_{\Gamma_1}(5)$ in~\eqref{eq:ce-12d}.
Therefore, $\hX$ is the smooth Fano threefold of type~\typemm{3}{5}.

Next we note that the composition
\begin{equation*}
\hat{f}_2 \colon \hX \cong \Bl_{C'_1}(\PP^2 \times \PP^1) \longrightarrow \PP^2 \times \PP^1 \longrightarrow \PP^1,
\end{equation*}
where the last arrow is the second projection, 
factors as~\mbox{$\hX \xrightarrow{\ \sigma_2\ } X_2 \xrightarrow{\ f_2\ } \PP^1$}.
Indeed, the definition of elementary transformation implies that the exceptional divisor~$E$ of~$\sigma_2$ 
is equal to the strict transform of~$\Gamma_1 \times \PP^1 \subset \PP^2 \times \PP^1$,
and the map~$\hat{f}_2$ restricts to it as the second projection~$\Gamma_1 \times \PP^1 \to \PP^1$, 
i.e., it is equal to the restriction of~$\sigma_2$, 
hence the required factorization. 

Furthermore, note that the fibers of~$\hat{f}_2 \colon \hX \to \PP^1$ 
are isomorphic to~$\PP^2$ blown up in~$5$ points (the fiber of~$C'_1$),
i.e., the general fiber is a del Pezzo surface of degree~4.
Since~$\io(f_2) \in \{1,2\}$, the degree of the flopping curve~$C_2 \subset X_2$ over~$\PP^1$ is at most~2 by~\eqref{eq:deg-ci},
hence~$f_2$ is a del Pezzo fibration of degree~$5$ or~$6$.
This means that in fact~$\io(f_2) = 1$, hence the degree of~$C_2$ over~$\PP^1$ is~1, again by~\eqref{eq:deg-ci},
and therefore~$f_2$ is a quintic del Pezzo fibration.

Finally, the rational map~$X_1 \dashrightarrow X_2 \xrightarrow{\ f_2\ } \PP^1$ is equal to the composition
\begin{equation*}
\PP_{\PP^2}(\cE) \dashrightarrow \PP_{\PP^2}(\cO^{\oplus 2}) \cong \PP^2 \times \PP^1 \longrightarrow \PP^1 
\end{equation*}
hence we have a relation~$H_2 \sim M_1 - E$ in~$\Pic(\hX)$.
It can be rewritten as
\begin{equation*}
(2M_1 + H_1) - 2E \sim H_1 + 2H_2
\end{equation*}
and since~$2M_1 + H_1 \sim H = -K_{X_1}$ by~\eqref{eq:hmh-c2}, it follows from~\eqref{eq:mkx} that~$\balpha(X) = 1$.
It is also clear that the anticanonical class~$H$ of~$X$ is base point free,
because so is the anticanonical class of~$\hX$ 
(see~\cite[Ch.~I, \S6]{Isk:Anti} or~\cite[Theorem~2.4.5]{IP99}).

\subsection*{Type~\type{(D_2)}} 
Assume~$f_1$ is 
a quadric surface bundle over~$\PP^1$, i.e.,
\begin{equation*}
X_1 \subset \PP_{\PP^1}(\cE)
\end{equation*}
is a divisor of relative degree~$2$ and~$\cE$ is a vector bundle of rank~4.
By~\eqref{eq:9b-h1c1} the flopping curve~$C_1 \subset X_1$ is a bisection of the projection~$f_1 \colon X_1 \to \PP^1$.
Moreover, the argument of Proposition~\ref{prop:pic} shows that
if~$M_1$ is an ample generator of~$\Pic(X_1/\PP^1)$, then~$M_1 \cdot C_1$ is coprime to~$H_1 \cdot C_1 = 2$, hence odd, 
so we can choose~$M_1$ in such a way that
\begin{equation}
\label{eq:9b-m1c1}
M_1 \cdot C_1 = 1.
\end{equation}
Since~$H^\bullet(C_1,\cO_{C_1}(-M_1)) = 0$, it follows that~$f_{1*}\cO_{C_1}(-M_1)$ 
is an acyclic vector bundle of rank~$2$, hence isomorphic to~$\cO(-1)^{\oplus 2}$
and the projection formula implies~$f_{1*}\cO_{C_1}(M_1) \cong \cO^{\oplus 2}$.
Note that it is a rank~$2$ quotient bundle of the rank~$4$ bundle~$f_{1*}\cO_{X_1}(M_1) \cong \cE^\vee$.
Dualizing this epimorphism we obtain an embedding~$\cO \oplus \cO \hookrightarrow \cE$
such that
\begin{equation*}
C_1 \subset \PP_{\PP^1}(\cO \oplus \cO) = \PP^1 \times \PP^1 \subset \PP_{\PP^1}(\cE)
\end{equation*}
is a curve of type~$2M_1 + H_1$.
Since the curve~$C_1$ is rigid in~$X_1$, we see that~$\PP^1 \times \PP^1 \not\subset X_1$, 
hence~$X_1 \cap (\PP^1 \times \PP^1)$ is a curve of type~$2M_1 + tH_1$ with~$t \ge 1$,
and hence~$X_1$ itself is a hypersurface of type~$2M_1 + tH_1$ in~$\PP_{\PP^1}(\cE)$.
We show that~$t = 1$.

Indeed, on the one hand, the equality~$K_{X_1} \cdot C_1 = 0$ implies that
\begin{equation*}
-K_{X_1} = 2M_1 - H_1.
\end{equation*}
This allows us to compute~$2\g(X) - 2 = (-K_{X_1})^3$ by using the intersection theory of~$\PP_{\PP^1}(\cE)$: 
\begin{equation*}
2\g(X) - 2 = 
(2M_1 - H_1)^3 \cdot (2M_1 + tH_1) = 
16 M_1^4 + 8(t - 3)M_1^3\cdot H_1 =
8(t - 3) - 16\deg(\cE).
\end{equation*}
It follows that~$\g(X) = 4(t - 3 - 2\deg(\cE)) + 1$.
In particular, $\g(X) \equiv 1 \pmod 4$, 
and recalling from Table~\ref{table:hodge-genus} the possible values of~$\g(X)$ 
we see that~$\g(X) \in \{5,9\}$.

On the other hand, the discriminant divisor of~$f_1$ 
is equal to the degeneracy locus of the morphism~$\cE \to \cE^\vee(t)$ given by the equation of~$X_1$,
hence its degree is equal to~$4t - 2\deg(\cE)$.
Assuming~$\Bbbk = \CC$, we compute the topological Euler characteristic
\begin{equation*}
\upchi_{\mathrm{top}}(X_1) = 4 \cdot 2 - (4t - 2\deg(\cE)) = 2\deg(\cE) - 4t + 8.
\end{equation*}
Since~$\uprho(X_1) = 2$, we have~$\upchi_{\mathrm{top}}(X_1) = 6 - 2\hh(X_1)$, hence
\begin{equation*}
\hh(X^\sm) = \hh(X_1) = 2t - \deg(\cE) - 1.
\end{equation*}
Looking at Table~\ref{table:hodge-genus} and using~$\g(X^\sm) = \g(X) \in \{5,9\}$, we see that~$\hh(X^\sm) \in \{14,3\}$.
Now the system of linear equations
\begin{equation*}
\left\{
\begin{aligned}
4(t - 3 - 2\deg(\cE)) + 1 &= \g(X^\sm),\\
2t - \deg(\cE) - 1 &= \hh(X^\sm)
\end{aligned}
\right.
\end{equation*}
does not have integral solutions when~$\g(X^\sm) = 5$ and~$\hh(X^\sm) = 14$, 
hence~$\g(X^\sm) = 9$ and~$\hh(X^\sm) = 3$, and solving the system we obtain~$t = 1$
and conclude that
\begin{equation*}
X_1 \sim 2M_1 + H_1.
\end{equation*}

Now we extend the embedding~$\cO \oplus \cO \hookrightarrow \cE$ to an exact sequence of vector bundles
\begin{equation*}
0 \longrightarrow \cO \oplus \cO \longrightarrow \cE \longrightarrow \cE' \longrightarrow 0.
\end{equation*}
Since~$t = 1$, we see that~$C_1 = X_1 \cap \PP_{\PP^1}(\cO \oplus \cO)$ is a transverse intersection.
Therefore,
\begin{equation*}
\cO_{C_1}(-1)^{\oplus 2} \cong 
\cN_{C_1/X_1} \cong 
\cN_{\PP_{\PP^1}(\cO \oplus \cO)/\PP_{\PP^1}(\cE)}\vert_{C_1} \cong 
f_1^*\cE'(M_1)\vert_{C_1}.
\end{equation*}
Using~\eqref{eq:9b-m1c1}, we obtain~$f_1^*\cE'\vert_{C_1} \cong \cO_{C_1}(-2)^{\oplus 2}$, 
and using~\eqref{eq:9b-h1c1}, we deduce~$\cE' \cong \cO_{\PP^1}(-1)^{\oplus 2}$.
Therefore, the exact sequence splits, and we conclude that
\begin{equation*}
\cE \cong \cO \oplus \cO \oplus \cO(-1) \oplus \cO(-1).
\end{equation*}
This completes the description of~$X_1$ and~$f_1$.

Now we describe~$\hX$ and~$X_2$.
Since the curve~$C_1$ is an intersection of~$X_1$ with the subbundle~$\PP_{\PP^1}(\cO \oplus \cO)$, we have
\begin{equation*}
\hX = \Bl_{C_1}(X_1) \subset \Bl_{\PP_{\PP^1}(\cO \oplus \cO)}(\PP_{\PP^1}(\cE)) \cong \PP_{\PP^1 \times \PP^1}(\cO \oplus \cO \oplus \cO(-1,-1))
\end{equation*}
is a divisor of class~$2M + H_1$, 
where~$M$ is the relative hyperplane class of the projective bundle in the right side.
Thus, $\hX$ is a smooth Fano variety of type~\typemm{3}{2}.

Next we note that the composition
\begin{equation*}
\hat{f}_2 \colon \hX \hookrightarrow \PP_{\PP^1 \times \PP^1}(\cO \oplus \cO \oplus \cO(-1,-1)) \longrightarrow \PP^1 \times \PP^1 \longrightarrow \PP^1,
\end{equation*}
where the last arrow is the second projection factors 
as~\mbox{$\hX \xrightarrow{\ \sigma_2\ } X_2 \xrightarrow{\ f_2\ } \PP^1$}.
Indeed, the exceptional divisor~$E$ of~$\sigma_2$ 
is the intersection~$\hX \cap \PP_{\PP^1 \times \PP^1}(\cO \oplus \cO)$,
which is a divisor of degree~$(2,1,0)$ in~$(\PP^1)^3$, 
and the restriction of~$\hat{f}_2$ to~$E$ is induced by the projection of~$(\PP^1)^3$ to the last factor,
i.e., it is equal to the restriction of~$\sigma_2$, 
hence the required factorization.

It remains to show that~$f_2 \colon X_2 \to \PP^1$ is a quartic del Pezzo fibration.
For this just note that~$\hX$ is a conic bundle over~$\PP^1 \times \PP^1$ associated with a morphism
\begin{equation*}
\cO \oplus \cO \oplus \cO(-1,-1) \longrightarrow (\cO \oplus \cO \oplus \cO(1,1)) \otimes \cO(1,0),
\end{equation*}
hence its discriminant curve has bidegree~$(5,2)$,
and hence the fibers of~$\hat{f}_2 \colon \hX \to \PP^1$ are conic bundles over~$\PP^1$ with~$5$ degenerate fibers.
Thus, $\hat{f}_2$ is a fibration in cubic del Pezzo surfaces, 
and arguing as in the case of type~\type{(C_2)} we see that~$f_2$ is a quartic del Pezzo fibration.

Finally, 
the rational map~$X_1 \dashrightarrow X_2 \xrightarrow{\ f_2\ } \PP^1$ is equal to the composition
\begin{equation*}
\PP_{\PP^1}(\cE) \dashrightarrow \PP_{\PP^1}(\cE') \cong \PP^1 \times \PP^1 \longrightarrow \PP^1 
\end{equation*}
hence we have a relation~$H_2 \sim M_1 - H_1 - E$ in~$\Pic(\hX)$.
It can be rewritten as
\begin{equation*}
(2M_1 - H_1) - 2E \sim H_1 + 2H_2
\end{equation*}
and since~$2M_1 - H_1 = -K_{X_1} = H$, it follows that~$\balpha(X) = 1$.
It is also clear that the anticanonical class~$H$ of~$X$ is base point free,
because so is the anticanonical class of~$\hX$
(see~\cite[Ch.~I, \S6]{Isk:Anti} or~\cite[Theorem~2.4.5]{IP99}).
\end{proof}

\section{Special factorial and nonfactorial threefolds}
\label{sec:special}

In this section we consider Fano threefolds~$X$ (both factorial and nonfactorial)
such that the anticanonical class~$-K_X$ has base points, 
or not very ample (we will say that~$X$ is {\sf hyperelliptic}),
or the anticanonical embedding of~$X$ is not an intersection of quadrics (we will say that~$X$ is {\sf trigonal}).
These results are well-known in the smooth case
and we show that the picture is the same for factorial threefolds, and quite different in the nonfactorial case.
Besides, we classify all $1$-nodal or $1$-cuspidal prime Fano threefolds~$X$ with~$\g(X) \le 6$.

\subsection{Fano threefolds with non-empty anticanonical base locus}
\label{ss:bs}

Here we identify the unique deformation class of $1$-nodal Fano threefolds with non-empty anticanonical base locus.
Recall the integer~$\balpha(X)$ defined in Proposition~\ref{prop:pic}.

\begin{proposition}[{cf.~\cite[Proposition~2.5]{Jahnke-Peternell-Radloff-II}}]
\label{prop:base-points}
Let~$X$ be a Fano threefold with terminal Gorenstein singularities and~$\uprho(X) = 1$.
If~$\Bs(|-K_X|) \ne \varnothing$ then~$X$ is nonfactorial.

If, moreover, $(X,x_0)$ is $1$-nodal
then~$\Bs(|-K_X|)=\{x_0\}$, $\io(X) = 1$, $\g(X) = 2$, and~$X$ is a complete intersection
\begin{equation*}
X = X_{2,6} \subset \PP(1^4,2,3)
\end{equation*}
of the cone in~$\PP(1^4,2,3)$ over a smooth quadric surface in~$\PP(1^4) = \PP^3$ with vertex~$\PP(2,3)$ and a sextic hypersurface.
In particular, $x_0 = X \cap \PP(2,3)$ and~$\balpha(X) = 1$.
\end{proposition}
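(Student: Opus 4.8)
The plan is to exploit the anticanonical contraction $\xi \colon \hX \to \bar X$ of the blowup $\hX = \Bl_{x_0}(X)$ at the node, analyze what happens when $-K_X$ fails to be base point free, and then pin down $X$ explicitly. First I would observe that if $-K_X$ were very ample, Lemma~\ref{lem:barx} would apply, but that is not the situation here; instead I would study the linear system $|-K_{\hX}| = |\pi^*(-K_X) - E|$ on $\hX$ and the morphism $\tilde\xi$ it defines. The key point is that a base point of $|-K_X|$ away from $x_0$ would remain a base point of $|-K_{\hX}|$, whereas Shokurov's nonvanishing / base-point-freeness results for weak Fano threefolds with terminal Gorenstein singularities (cf.~\cite[Ch.~I, \S6]{Isk:Anti}, \cite[Theorem~2.4.5]{IP99}) show that $|-K_{\hX}|$ is base point free. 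Hence $\Bs(|-K_X|) \subseteq \{x_0\}$, and in particular it is nonempty only if it equals $\{x_0\}$; this also forces $\io(X) = 1$ (a higher index would make $-K_X$ divisible, hence more positive) and low genus. Factoriality is then ruled out: for a factorial $1$-nodal threefold the blowup $\hX$ has $\uprho = 2$ with $-K_{\hX} = \pi^*(-K_X) - E$ and one checks that $|-K_{\hX}|$ separates the exceptional divisor, so $-K_X$ would be base point free — this gives the first assertion.

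Next, assuming $(X,x_0)$ is $1$-nodal with $\Bs(|-K_X|) = \{x_0\}$, I would determine $\g(X)$. Since $-K_{\hX}$ is base point free but $x_0$ is a genuine base point of $-K_X$, the morphism $\xi \colon \hX \to \bar X$ contracts something through $x_0$: more precisely, the anticanonical map of $X$ itself is not a morphism, and its resolution $\hX \to \bar X \subset \PP^{N}$ has image of dimension $<3$ or a non-very-ample polarization. Using $\h^0(X,-K_X) = \g(X)+2$ from~\eqref{eq:h0-mk} together with the numerical constraints of Proposition~\ref{prop:pic} and Lemma~\ref{lem:eff-mov} applied to the Sarkisov link~\eqref{eq:intro-sl} — one of the contractions $f_i$ must be a genus-$0$ del Pezzo fibration or similar, with the small value of $\h^{1,2}$ forced by Table~\ref{table:hodge-genus} — I would narrow $\g(X)$ down to $2$. (This matches the entry $\ntype{1}{2}{n}$ in Table~\ref{table:nf}, where $Z_1 = Z_2 = \PP^1$.) In fact, once $\g(X) = 2$, equation~\eqref{eq:h0-mk} gives $\h^0(-K_X) = 4$, so the anticanonical system on $X$ is too small to be very ample; the anticanonical morphism of $\hX$ is then a fibration $\hX \to \PP^2$ (an elliptic fibration, as in Theorem~\ref{thm:intro-fi1}(ii) for the factorial analogue, but here the nonfactorial version).

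Finally, I would produce the explicit model. With $\io(X) = 1$ and $\g(X) = 2$, a prime Fano threefold with canonical Gorenstein singularities of genus $2$ is classically a degree-$6$ hypersurface in the weighted projective space $\PP(1^3,2,3)$ — this follows from analyzing the anticanonical ring: $-K_X$ gives $4$ sections $x_0,x_1,x_2,x_3$, a new generator $y$ in degree $2$, a new generator $z$ in degree $3$, and a single relation in degree $6$. I would then compute $\Bs(|-K_X|)$ directly on this model: it is the locus where all four linear coordinates $x_i$ vanish, i.e.\ the point $\PP(2,3) \cap X$, and $1$-nodality there translates into the condition that the quadric part of the equation (the coefficient of $y$ in the sextic, as a quadratic form in $x_0,\dots,x_3$) is nondegenerate — equivalently $X$ is cut out inside $\PP(1^4,2,3)$ by a rank-$4$ quadric $X_2$ and the sextic $X_6$, which is exactly the cone over a smooth quadric surface in $\PP^3$ with vertex $\PP(2,3)$ intersected with a sextic. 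This gives $X = X_{2,6} \subset \PP(1^4,2,3)$ with the claimed geometry, $x_0 = X \cap \PP(2,3)$, and $\balpha(X) = 1$ (read off from~\eqref{eq:mkx} using $\io(f_1) = \io(f_2) = \io(X) = 1$ and $H_1, H_2$ the two rulings).

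The main obstacle I expect is the middle step: rigorously excluding all genera $\g(X) \ge 3$ for a nonfactorial $1$-nodal $X$ with $\Bs(|-K_X|) \ne \varnothing$. The cleanest route is probably to argue that if $\g(X) \ge 3$ then $-K_X$ is in fact base point free — either by a direct Reid–Shokurov–type argument on the Gorenstein canonical Fano threefold $\bar X = \hX_{\can}$ of genus $\g(X) - 1 \ge 2$ (which is itself anticanonically reasonably positive), or by invoking the classification of low-genus Fano threefolds with mild singularities together with the constraint that $x_0$ lies on the anticanonical base locus — but carrying this out without circularity relative to the rest of the paper's classification requires some care about exactly which earlier results may be cited.
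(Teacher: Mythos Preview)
Your argument has two genuine gaps before you reach the step you flag as the obstacle.

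First, the opening assertion concerns \emph{any} Fano threefold with terminal Gorenstein singularities and $\uprho(X)=1$, not only $1$-nodal ones; since you begin by blowing up ``the node'' $x_0$, you never address the general statement. Second, even in the $1$-nodal case your key claim---that $|{-K_{\hX}}|$ is base point free, forcing $\Bs(|{-K_X}|)\subseteq\{x_0\}$---is unjustified: the base-point-freeness results you cite require $-K_{\hX}$ to be nef, and nothing establishes that here. If $\Bs(|{-K_X}|)$ contained a curve $B$ not through $x_0$, its strict transform would still lie in $\Bs(|{-K_{\hX}}|)$. Your factoriality step has the same circularity: knowing that $|{-K_{\hX}}|$ is base point free along $E$ does not by itself force $x_0\notin\Bs(|{-K_X}|)$, since every section of $-K_X$ can vanish at $x_0$ while $|H-E|$ remains free on $E$.

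The paper's proof is structurally different and never touches $\Bl_{x_0}(X)$ for these steps. It starts from Shin's theorem \cite[Theorem~0.5]{Shin1989}, which for any terminal Gorenstein Fano threefold with nonempty anticanonical base locus already gives $\io(X)=1$ and the dichotomy $\Bs(|{-K_X}|)\cong\PP^1$ or $\Bs(|{-K_X}|)$ a single singular point. In both branches the method is to restrict to a general $S\in|{-K_X}|$, a K3 surface, and apply Saint-Donat's structure theory \cite{SaintDonat1974}. In the $\PP^1$ branch this yields $H_S\sim gC+B$ and, after resolving the anticanonical map by blowing up $B$, an explicit relation $H\sim gF+D$ in $\Cl(X)$ with $F$, $D$ linearly independent---this is what proves nonfactoriality in general. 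The $1$-nodal hypothesis is then used only to rule the $\PP^1$ branch out, via the big-index classifications (Propositions~\ref{prop:if3} and~\ref{prop:big-index}) and a direct intersection computation. In the point branch, Saint-Donat on the blown-up K3 gives $H_{\tilde S}\sim 2C+B$ with $H\cdot B=0$, hence $g=2$ immediately---no Sarkisov link or Hodge-number table is needed. The model $X_{2,6}\subset\PP(1^4,2,3)$ then comes from \cite[Proposition~5.7(i)]{SaintDonat1974} applied to $|2H_{\tilde S}|$ together with the standard extension argument from the K3 section to the threefold, and nonfactoriality is read off the model via the rulings of the smooth quadric surface. The ingredients you are missing are thus Shin's dichotomy and the Saint-Donat machinery on the K3 section; the route via $\hX$ and the Sarkisov link does not get started without them.
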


\begin{remark}
If~$X$ is the 1-nodal threefold with~$\Bs(|-K_X|) \ne \varnothing$ described in Proposition~\ref{prop:base-points},
the anticanonical linear system of~$\hX = \Bl_{x_0}(X)$ is base point free
and its anticanonical morphism is an elliptic fibration~$\hX \to \PP^1 \times \PP^1 \subset \PP(1^4)= \PP^3$.
\end{remark} 

\begin{proof}
By~\cite[Theorem~0.5]{Shin1989} we have 
\begin{equation*}
\io(X) = 1
\end{equation*}
and
there are two cases: either~$\Bs(|-K_X|) \cong \PP^1$, or~$\Bs(|-K_X|)$ is a reduced point.
Below we show that the first case is impossible (by a rather long though elementary argument),
and after that we identify the second case.
As usual, we set~$H \coloneqq -K_X$ and~$g \coloneqq \g(X)$.

\subsection*{Case where~$\Bs(|-K_X|) \cong \PP^1$.}

We set~$B \coloneqq \Bs(|-K_X|)$.
By~\cite[Theorem~0.5]{Shin1989} and its proof,
the curve~$B$ is contained in the smooth locus of~$X$ 
and a general anticanonical divisor~$S \subset X$ is a smooth K3 surface containing~$B$.
Set~$H_S \coloneqq H\vert_S$.
By Kawamata--Viehweg vanishing theorem~$H^1(X,\cO_X) = 0$, 
hence the schematic base locus of the linear system~$|H_S|$ is also equal to~$B$.
According to~\cite[Proposition~8.1]{SaintDonat1974}, we have
\begin{equation}
\label{base-point-index=1}
H_S \sim gC + B,
\qquad 
\text{where~$B^2 = -2$},
\quad
\text{$B \cdot C = 1$},
\quad
\text{$C^2 = 0$},
\end{equation}
and~$|C|$ is a base point free elliptic pencil on~$S$.
In particular, $H \cdot B = g- 2$, hence
\begin{equation*}
g \ge 3.
\end{equation*}

Consider the anticanonical rational map~$X \dashrightarrow \PP^{g+1}$ and its resolution
\begin{equation*}
\xymatrix{
&
\tX \ar[dl]_{\sigma} \ar[dr]^{\varpi} & 
\\
X \ar@{-->}[rr] &&
W \hbox to 0pt{${}\subset \PP^{g+1}$}
} 
\end{equation*}
where $\sigma \colon \tX \to X$ is the blowup of~$B$ with exceptional divisor~$E_B \subset \tX$,
$\varpi$ is given by the linear system~$|H - E_B|$ on~$\tX$, and~$W \coloneqq \varpi(\tX)$.
Clearly, the strict transform of~$S$ in~$\tX$ is~$\tS \cong S$ and, 
under this identification,~$(H - E_B)\vert_{\tS} \sim gC$,
hence the restriction of the map~$\varpi$ to~$\tS$ is the composition
\begin{equation*}
\tS \longrightarrow \PP^1 \longrightarrow \PP^g \hookrightarrow \PP^{g+1},
\end{equation*}
where the first arrow is the elliptic fibration with fiber~$C$, the second arrow is the Veronese embedding of degree~$g$, 
and the third arrow is the embedding of the hyperplane corresponding to the anticanonical divisor~$S$.
Therefore, $W$ is a surface of minimal degree~$g$
(see~\cite[Theorem~1.10]{SaintDonat1974} or~\cite{EisenbudHarris:VMD}).
Moreover, since~$\uprho(\tX) = \uprho(X) + 1 = 2$, it follows that~$\uprho(W) = 1$, hence~$W$ is not a scroll.
On the other hand, $E_B \cong \PP_B(\cN_{B/X})$ and the class~$H - E_B$ defining the morphism~$\varpi$
restricts to~$E_B$ as a relative hyperplane class over~$B$,
hence the images in~$W$ of the fibers of~$E_B \to B$ are lines on~$W$, hence~$W$ is not a Veronese surface.
Thus, \cite[Theorem~1.10]{SaintDonat1974} shows that~$W$ is a cone over the Veronese curve of degree~$g$. 

Let~$w_0 \in W$ denote the vertex of the cone.
Obviously, in the group~$\Cl(W) = \Pic(W \setminus w_0)$ the class of a hyperplane section of~$W$ 
equals the class of a ruling multiplied by~$g$.
Lifting this relation along the projection~$\tX \to W$, 
we obtain the relation~$H - E_B \sim g\tF + \tD$ in~$\Cl(\tX)$, where~$\tF$ is the strict transform of a ruling
and~$\tD$ is an effective Weil divisor with support at~$\varpi^{-1}(w_0)$.
Pushing forward to~$X$ we obtain the relation
\begin{equation}
\label{eq:hfd}
H \sim gF + D
\qquad\text{in~$\Cl(X)$,}
\end{equation}
where~$F$ is nonzero and movable and~$D$ is effective.

By construction, for a general anticanonical divisor~$S \subset X$ 
we have~\mbox{$F \cap S = C$} and therefore~\mbox{$D \cap S = B$}.
Since~$C$ and~$B$ are linearly independent in~$\Pic(S)$,
it follows that~$F$ and~$D$ are linearly independent in~$\Cl(X)$, hence~$X$ is not factorial.

So, now we assume that~$X$ is nonfactorial and 1-nodal and continue the argument.
Recall from Propositions~\ref{prop:if3} and~\ref{prop:big-index} 
that if~$\io(f_1) > 1$ or~$\io(f_2) > 1$ then~$H$ is base point free.
Thus, in our case we have~$\io(f_1) = \io(f_2) = 1$.

Since~$B$ is an irreducible curve and~$S$ is an ample Cartier divisor, 
the equality~$D \cap S = B$ implies that~$D$ is irreducible (and reduced).
Since~$F$ is movable, the argument of Lemma~\ref{lem:eff-mov} shows that~\mbox{$F \cdot \Upsilon_i \ge 0$} for~$i = 1,2$
and one of these products is positive.
Without loss of generality we may assume~$F \cdot \Upsilon_1 > 0$.
Then a combination of~\eqref{eq:io-fi} and~\eqref{eq:hfd} implies
\begin{equation*}
1 = \tfrac{\io(f_1)}{\io(X)} = H \cdot \Upsilon_1 = g F \cdot \Upsilon_1 + D \cdot \Upsilon_1,
\end{equation*}
hence~$D \cdot \Upsilon_1 = 1 - g F \cdot \Upsilon_1 \le 1 - g$.
Since~$g \ge 3$, we have~$D \cdot \Upsilon_1 \le -2$ and Lemma~\ref{lem:eff-mov} implies 
that~$D = E_1$, the contraction~$f_1$ is of type~\typeb, 
$g = 3$, and~$F \cdot \Upsilon_1 = 1$. 

On the other hand, the inequality~$D \cdot \Upsilon_1 < 0$ 
implies~$D \cdot C_1 > 0$ (because~$C_1$ and~$\Upsilon_1$ generate the Mori cone~$\NE(X_1)$),
therefore, the relation
\begin{equation*}
0 = H \cdot C_1 = g F \cdot C_1 + D \cdot C_1,
\end{equation*}
implies~$F \cdot C_1 < 0$, and hence~$F \cdot C_2 > 0$ because~$X_1 \dashrightarrow X_2$ is the flop of~$C_1$.
If~$F \cdot \Upsilon_2 > 0$, repeating the above argument we obtain a contradiction, hence~$F \cdot \Upsilon_2 = 0$, 
hence~$F$ is proportional to~$H_2$, and since~$F \cdot \Upsilon_1 = 1$, we obtain~$F \sim H_2$, hence~$H_2 \cdot \Upsilon_1 = 1$.
Furthermore, the linear equivalence~\eqref{eq:mkx} implies that
\begin{equation*}
\balpha(X) = \balpha(X) H \cdot \Upsilon_1 = H_1 \cdot \Upsilon_1 + H_2 \cdot \Upsilon_1 = 1.
\end{equation*}
Now, comparing~\eqref{eq:mkx} and~\eqref{eq:hfd} with the linear equivalences~$F \sim H_2$ and~$D \sim E_1$ 
and the equality~$g = 3$,
we obtain~$H_1 + H_2 \sim H \sim 3H_2 + E_1$,
hence~$H_2 \sim \tfrac12(H_1 - E_1)$, hence
\begin{equation*}
H \sim \tfrac12(3H_1 - E_1).
\end{equation*}
Note that~$H_1^2 \cdot E_1 = H_1 \cdot E_1^2 = 0$
and~$E_1^3 = 4$ by Lemma~\ref{lem:intersection}.
Thus, multiplying the above equivalence by~$2$ and taking its top self-intersection, we obtain 
\begin{equation*}
8(2\g(X)-2)= (2H)^3= (3H_1-E_1)^3= 27 H_1^3 -4.
\end{equation*}
It follows that~$4(4\g(X)-3) = 27 H_1^3$, hence~$4\g(X) - 3$ must be divisible by~$27$.
Looking at Table~\ref{table:hodge-genus} we see that this does not happen when~$\io(X) = 1$;
thus, the case where~\mbox{$\Bs(|-K_X|) \cong \PP^1$} is not possible.

\subsection*{Case where~$\Bs(|-K_X|)$ is a reduced point.}
In this case, again by~\cite[Theorem~0.5]{Shin1989} we have~$\Bs(|-K_X|) \eqqcolon x_0$ is a singular point of~$X$
and a general divisor~\mbox{$S\in |-K_X|$}
is a K3 surface with a single ordinary double point at~$x_0$.
Let~$\tS$ be the blowup of~$S$ at~$x_0$, and let~$B \subset \tS$ be the 
exceptional~$(-2)$-curve.
Then the pullback to~$\tS$ of the linear system~$|H|$ has~$B$ as the base locus, 
hence~\eqref{base-point-index=1} still holds, again by~\cite[Proposition~8.1]{SaintDonat1974}.
This time, however, we have~$H \cdot B = 0$ by construction, which means that
\begin{equation*}
g = 2,
\end{equation*}
hence~$H_{\tS} \sim 2C + B$.

Furthermore, applying~\cite[Proposition~5.7(i)]{SaintDonat1974} 
to the linear system~\mbox{$|2H_{\tS}| = |4C + 2B|$} on~$\tS$,
we conclude that the surface~$S$ is a double covering of a cone over a rational normal twisted quartic curve,
and the Riemann--Hurwitz formula implies that the covering is ramified 
over a sextic hypersurface and the vertex of the cone.
In other words, $S$ can be realized as a complete intersection in~$\PP(1^3,2,3)$ 
of a quadric (independent of the coordinates of degree~$2$ and~$3$) and a sextic.
Since~$H^0(X, \cO_X(-nK_X)) \to H^0(X, \cO_S(-nK_X))$ is surjective for~$n\ge 0$, 
the standard extension arguments
(see, e.g., \cite[Theorem~3.6]{Mori:WPS} or~\cite[Lemmas~2.9--2.10]{Iskovskih1977})
show that~$X$ is an intersection of a hypersurface of degree~$2$
(independent of the coordinates of degree~$2$ and~$3$, as before) 
and~$6$ in $\PP(1^4,2,3)$. 
Since~$X$ has terminal and hence isolated singularities,
the quadratic equation of~$X$ considered as a polynomial in variables of degree~$1$, must be nondegenerate.
Thus, $X$ is a complete intersection of the cone in~$\PP(1^4,2,3)$ with vertex~$\PP(2,3)$
over a smooth quadric surface~$\PP^1 \times \PP^1 \subset \PP^3$
and a sextic hypersurface in~$\PP(1^4,2,3)$.

The preimage in~$X$ of a ruling of~$\PP^1 \times \PP^1$ is not proportional to the hyperplane class,
hence~$X$ is not factorial, hence we may assume that~$X$ is 1-nodal and that~$x_0$ is the node of~$X$.
Then the projections~$X \dashrightarrow \PP^1 \times \PP^1 \to \PP^1$ 
lift to $K$-negative contractions~$\hX = \Bl_{x_0}(X) \to \PP^1$, 
hence the divisor classes~$H_1$ and~$H_2$ on~$\hX$ are the pullbacks of the point class in~$\PP^1$ under these maps.
The relation~$H \sim H_1 + H_2$ in the class group of the cone 
implies a relation of the same form in~$\Cl(X)$, hence~$\balpha(X) = 1$.
\end{proof} 

Using the above result, it is easy to classify all $1$-nodal prime Fano threefolds of genus~$2$.

\begin{lemma}
\label{lem:g2}
Let~$X$ be a Fano threefold with a single node or cusp, $\uprho(X) = 1$, $\io(X) = 1$, and~$\g(X) = 2$.
Then either 
\begin{enumerate}
\item 
\label{it:g2-f}
$X$ is factorial and~$X = X_6 \subset \PP(1^4,3)$ is a sextic hypersurface, or
\item 
\label{it:g2-nf}
$X$ is 
is a nonfactorial threefold from Proposition~\xref{prop:base-points}.
\end{enumerate}
\end{lemma}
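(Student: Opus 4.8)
The plan is to split into two cases according to whether the anticanonical linear system~$|-K_X|$ has base points or not; the first case is essentially handed to us by Proposition~\ref{prop:base-points}, and the second will be the only one requiring work. By~\eqref{eq:h0-mk} we have~$\dim H^0(X,\cO_X(-K_X)) = \g(X)+2 = 4$, so~$|-K_X|$ maps~$X$ to~$\PP^3$. Suppose first that~$\Bs(|-K_X|)\ne\varnothing$. Then Proposition~\ref{prop:base-points} forces~$X$ to be nonfactorial; since a $1$-cuspidal threefold is factorial (Remark~\ref{rem:blowup-nc}), $X$ must then be $1$-nodal, and the ``moreover'' part of Proposition~\ref{prop:base-points} identifies~$X$ with the nonfactorial complete intersection~$X_{2,6}\subset\PP(1^4,2,3)$, which is case~\ref{it:g2-nf}.

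It remains to treat the case where~$-K_X$ is base point free. Then~$|-K_X|$ defines a morphism~$\varphi\colon X\to\PP^3$ which is finite (because~$-K_X$ is ample) and non-degenerate (because the linear system is complete); hence~$\varphi(X)$ is a $3$-dimensional non-degenerate subvariety of~$\PP^3$, i.e.\ $\varphi(X)=\PP^3$, and therefore
\[
\deg(\varphi) = (-K_X)^3 = 2\g(X)-2 = 2 .
\]
Thus~$\varphi$ is a double cover. Since~$X$ is normal, $\varphi_*\cO_X = \cO_{\PP^3}\oplus\cO_{\PP^3}(-m)$ for some integer~$m\ge 1$, with branch divisor in~$|\cO_{\PP^3}(2m)|$, and the Hurwitz formula gives~$K_X = \varphi^*\cO_{\PP^3}(m-4)$. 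Comparing with~$-K_X = \varphi^*\cO_{\PP^3}(1)$ and using that~$\varphi^*$ is injective on Picard groups, we obtain~$m=3$: so~$X$ is a sextic double solid, and (completing the square, as we work in characteristic zero) it is isomorphic to a sextic weighted hypersurface~$X_6\subset\PP(1^4,3) = \rM_2$. Finally, since~$X$ has a single node or cusp, the converse part of Theorem~\ref{thm:intro-factorial-ci} shows that~$X$ is factorial, which is case~\ref{it:g2-f}.

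The only substantive work lies in the second case: correctly pinning down~$\varphi_*\cO_X$ so as to recognize~$X$ as the \emph{standard} weighted sextic (ruling out exotic double-cover structures), and establishing factoriality. For the latter one may either cite Theorem~\ref{thm:intro-factorial-ci} as above, or argue directly that on~$X_6\subset\PP(1^4,3)$ every Weil divisor class is restricted from the ambient space --- equivalently, that a single node or cusp produces no defect --- by analysing the map~$r_{x_0}$ from the sequence~\eqref{eq:pic-cl}. I expect the factoriality step to be the main obstacle if one insists on a self-contained treatment, whereas if Theorem~\ref{thm:intro-factorial-ci} may be invoked, the whole lemma reduces to the short double-cover computation above together with Proposition~\ref{prop:base-points}.
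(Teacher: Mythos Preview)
Your argument is correct and follows essentially the same route as the paper: split on whether $|-K_X|$ has base points, invoke Proposition~\ref{prop:base-points} in the first case, and in the second case identify the anticanonical morphism as a double cover of~$\PP^3$ branched along a sextic, hence $X=X_6\subset\PP(1^4,3)$.

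One point of care: for the factoriality of~$X_6\subset\PP(1^4,3)$ you cite the converse part of Theorem~\ref{thm:intro-factorial-ci}, but in the paper the forward direction of Theorem~\ref{thm:intro-factorial-ci}\ref{it:fact-i1-gsmall} is proved \emph{via} Lemma~\ref{lem:g2} (together with Lemmas~\ref{lem:g34} and~\ref{lem:g56}), so invoking that theorem here risks circularity. The paper avoids this by citing Corollary~\ref{cor:ci-factorial} directly, which is an independent appendix result; you should do the same. Your added observation that a $1$-cuspidal threefold is automatically factorial (Remark~\ref{rem:blowup-nc}), hence the nonfactorial case forces a node before the ``moreover'' of Proposition~\ref{prop:base-points} applies, is a useful clarification the paper leaves implicit.
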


\begin{proof}
If~$\Bs(|-K_X|) \ne \varnothing$ we apply Proposition~\ref{prop:base-points} and obtain case~\ref{it:g2-nf}.

Otherwise, if~$\Bs(|-K_X|) = \varnothing$, the anticanonical morphism is a double covering~\mbox{$X\to \PP^3$}.
Its branch divisor is a sextic surface in~$\PP^3$ by the Riemann--Hurwitz formula,
hence~$X$ is a sextic hypersurface in~$\PP(1^4,3)$.
In this case~$X$ is factorial by Corollary~\ref{cor:ci-factorial}.
\end{proof}

\subsection{Hyperelliptic Fano threefolds}
\label{ss:he}

Recall that a Fano threefold~$X$ is called {\sf hyperelliptic} if~$\Bs(|-K_X|) = \varnothing$, but~$-K_X$ is not very ample. 

\begin{proposition}[cf.~{\cite[Theorem~4.2]{P:ratF-1}} or~{\cite[Proposition~2.7]{Jahnke-Peternell-Radloff-II}}]
\label{prop:hyperelliptic}
Let~$X$ be a hyperelliptic Fano threefold with terminal Gorenstein singularities and~$\uprho(X) = 1$.

If~$X$ is factorial then
we have one of the following situations:
\begin{enumerate}
\item 
\label{he:i2}
$\io(X) = 2$, $\dd(X) = 1$ and~$X = X_6 \subset \PP(1^3,2,3)$; 
\item 
\label{he:i1-g2}
$\io(X) = 1$, $\g(X) = 2$ and~$X = X_6 \subset \PP(1^4,3)$;
\item 
\label{he:i1-g3}
$\io(X) = 1$, $\g(X) = 3$ and~$X = X_{2,4} \subset \PP(1^5,2)$. 
\end{enumerate}

If~$X$ is nonfactorial and $1$-nodal with node~$x_0$ then
\begin{enumerate}[resume*]
\item 
\label{he:nf}
$\io(X) = 1$, $\g(X) = 4$, and~$\balpha(X) = 1$.
\end{enumerate}
In case~\ref{he:nf} the small resolutions~$X_i$ of~$X$ and the anticanonical model~$\bar{X}$ of~$\hX$ are:
\begin{itemize}[wide]
\item 
$X_1 \to \PP^1$ is a degree~$2$ del Pezzo fibration and the relative anticanonical map
\begin{equation*}
X_1 \to Y_1 \coloneqq \PP_{\PP^1}(\cO \oplus \cO(-1) \oplus \cO(-2)) 
\end{equation*}
is the double covering ramified over a divisor of class~$4H - 2H_1$,
\item 
$X_2 = \Bl_{z_2}(Z_2)$ is the blowup of a del Pezzo threefold~$Z_2$ of degree~$1$
with one node or generalized cusp~$z_2$.
\item 
$\bar{X}$ is a smoothable Fano threefold with~$\uprho(\bar{X}) = 2$, $\io(\bar{X}) = 1$, 
and~\mbox{$\g(\bar{X}) = 3$} of type~\typemm{2}{1}
with non-isolated canonical Gorenstein singularities.
\end{itemize}
\end{proposition}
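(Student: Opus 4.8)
The plan is to study the anticanonical morphism of $X$. Since $X$ is hyperelliptic, $|-K_X|$ is base point free of dimension $\g(X)+1$ and defines a finite morphism $\varphi\colon X\to\PP^{\g(X)+1}$ onto its image $W$; as $\varphi$ is not a closed embedding and $\uprho(X)=1$, it has degree $2$, so $W$ is a threefold of degree $\g(X)-1$, i.e.\ a variety of minimal degree. By Theorem~\ref{hm:KO} we must have $\io(X)\le2$ (for $\io(X)\ge 3$ the class $-K_X$ is a positive multiple of the very ample hyperplane class of $\PP^3$ or of a quadric). First I would dispose of the factorial case by direct appeal to the classification of hyperelliptic Fano threefolds with terminal Gorenstein singularities in \cite[Theorem~0.5]{Shin1989} (cf.\ \cite[Theorem~4.2]{P:ratF-1}, \cite[Proposition~2.7]{Jahnke-Peternell-Radloff-II}): combined with the Enriques--del Pezzo--Bertini list of varieties of minimal degree and Riemann--Hurwitz for $\varphi$, it yields that a factorial such $X$ is one of the three weighted complete intersections of~\ref{he:i2}--\ref{he:i1-g3} (with $W$ equal to $\PP^3$, to a smooth $Q^3\subset\PP^4$, or to the cone over $v_2(\PP^2)$, the last only when $\io(X)=2$, $\dd(X)=1$), and conversely each of these, having a single node or cusp, is factorial by Theorem~\ref{thm:intro-factorial-ci} (equivalently Corollary~\ref{cor:ci-factorial}).

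\textbf{The nonfactorial case: reduction to $\io(X)=1$ and $\g(X)=4$.} Now assume $(X,x_0)$ is nonfactorial and $1$-nodal. By Propositions~\ref{prop:if3} and~\ref{prop:big-index} every nonfactorial example with $\io(f_1)>1$ or $\io(f_2)>1$ has genus and index outside the hyperelliptic range treated above; hence $\io(f_1)=\io(f_2)=1$, so $\io(X)=1$ by~\eqref{eq:io-fi}. If $W$ were smooth it would be $\PP^3$ or $Q^3$, forcing $X$ factorial by the previous paragraph; and $W$ cannot be the cone over $v_2(\PP^2)$ since $\io(X)=1$; so $W$ is a singular cone over a rational normal scroll. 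Moreover $\g(X)\ge 3$ (the case $\g(X)=2$ with $\Bs|-K_X|=\varnothing$ is the factorial double solid of Lemma~\ref{lem:g2}), whence $(-K_{\hX})^3=(-K_X)^3-E^3=2\g(X)-4>0$, so $-K_{\hX}$ is nef and big. Feeding these facts into the classification of Shin and Jahnke--Peternell--Radloff --- i.e.\ analyzing $\varphi$ near the singular point under the constraint that $X$ has a single node or cusp --- pins down $\g(X)=4$ and $W$ as the cone over the cubic scroll $S(1,2)\subset\PP^4$; then restricting the relation $\balpha(X)H\sim H_1+H_2$ of Proposition~\ref{prop:pic} to $E$ via~\eqref{eq:mkhx} gives $\balpha(X)=1$, so $-K_X\sim H_1+H_2$.

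\textbf{The Sarkisov link of case~\ref{he:nf}.} With $\io(X)=1$, $\g(X^\sm)=4$ and $\hh(X^\sm)=20$ (Table~\ref{table:hodge-genus}) in hand, I would identify the two extremal contractions $f_i\colon X_i\to Z_i$ of the link~\eqref{eq:intro-sl}. Lemma~\ref{lem:no-blowup-smooth-point} excludes type~\type{(B_2)}; running the Hodge/genus identities of Lemmas~\ref{lem:hh-g-xi}, \ref{lem:curve-genus-degree} and Proposition~\ref{prop:hh-g-xsm} against Table~\ref{table:hodge-genus} (together with the positivity $H^2\cdot E_i>0$, which holds because the anticanonical contraction of $X_i$ is small) eliminates conic bundles and all birational types except the blowup of a node or generalized cusp, leaving one contraction, say $f_1$, a del Pezzo fibration over $\PP^1$ and the other, $f_2$, of type~\type{(B_1^0)}, as in row~\hyperlink{4n}{\ntype{1}{4}{n}} of Table~\ref{table:nf}. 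For $f_2$: writing $-K_{X_2}=\io(Z_2)H_2-E_2$ and computing $(-K_{X_2})^3$ by Lemma~\ref{lem:intersection} (with $E_2^3=2$), the equation $(-K_{X_2})^3=2\g(X)-2=6$ forces $\io(Z_2)=2$ and $H_2^3=1$, so $Z_2$ is a del Pezzo threefold of degree~$1$, factorial by Corollary~\ref{cor:ci-factorial}, and $X_2\cong\Bl_{z_2}(Z_2)$. For $f_1$: its fibers are del Pezzo surfaces of the degree $(-K_{X_1})^2\cdot H_1$, whose relative anticanonical morphism exhibits $X_1\to\PP^1$ as a double cover of a $\PP^2$-bundle; the bundle and branch class are pinned down by the canonical-bundle formula together with $-K_X\sim H_1+H_2$, giving the degree-$2$ del Pezzo fibration $X_1\to\PP^1$ with $X_1\to Y_1=\PP_{\PP^1}(\cO\oplus\cO(-1)\oplus\cO(-2))$ the double cover ramified along $4H-2H_1$. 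Finally, for $\bar X=\hX_\can$: since $\hX$ is smooth and $\bar X$ does not have terminal singularities (by Remark~\ref{rem:barx-g456} its singular locus is the curve $\{y_0\}\times\PP^1$), the anticanonical contraction $\xi$ is divisorial, so $\uprho(\bar X)=2$; with $\io(\bar X)=1$ and $\g(\bar X)=\g(X)-1=3$ (Lemma~\ref{lem:barx}\ref{it:barx-weak}), $\bar X$ realizes as a divisor of bidegree $(1,1)$ in $Z_2\times\PP^1$ whose general deformation is the smooth Fano threefold of type~\typemm{2}{1}; this both identifies $\bar X$ and proves its smoothability.

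\textbf{Main obstacle.} I expect the hard part to be the third paragraph: cleanly extracting $\g(X)=4$ and the precise shape of $W$ from the singular double-cover analysis (where the cited classifications of Shin and of Jahnke--Peternell--Radloff carry the load), and then matching the two halves of the link to the explicit models $\PP_{\PP^1}(\cO\oplus\cO(-1)\oplus\cO(-2))$ and $\Bl_{z_2}(Z_2)$ while verifying that $\bar X$ is exactly of Mori--Mukai type~\typemm{2}{1} with a one-dimensional canonical singular locus.
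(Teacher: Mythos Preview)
Your outline has the right large-scale shape, but three concrete gaps would prevent it from going through as written.

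\textbf{The derivation of $\g(X)=4$ is the missing heart.} You defer this to ``the classification of Shin and Jahnke--Peternell--Radloff'', but neither of those references does this specific computation for you; the paper's proof carries it out explicitly. Once one knows $W$ is the cone over a surface scroll, its small resolution is $Y_1=\PP_{\PP^1}(\cO\oplus\cO(-a)\oplus\cO(-b))$ with $a+b=g-1$, and $X_1\to Y_1$ is a double cover branched along a divisor of class $4H_Y-2(a+b-2)H_1$. The flopping curve $C_1$ sits over the exceptional section and, because $H_1\cdot C_1=1$, must lie in the ramification divisor $R_1$. The normal bundle sequence $0\to\cN_{C_1/R_1}\to\cN_{C_1/X_1}\to\cN_{R_1/X_1}|_{C_1}\to 0$ with $\cN_{C_1/X_1}\cong\cO(-1)^{\oplus2}$ and $\cN_{R_1/X_1}|_{C_1}\cong\cO(2-a-b)$ forces $a+b\le3$, hence $g=4$ and $(a,b)=(1,2)$. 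This normal-bundle step is the crux and you do not mention it.

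\textbf{Your argument for $\balpha(X)=1$ is circular.} With $\io(f_1)=\io(f_2)=\io(X)=1$, relation~\eqref{eq:mkhx} reads $\balpha(X)H-E\sim H_1+H_2$ in $\Pic(\hX)$. Restricting to $E\cong\PP^1\times\PP^1$ gives $\cO(1,1)\sim\cO(1,1)$ regardless of $\balpha(X)$, since $H|_E=0$. The paper instead obtains $\balpha(X)=1$ from the explicit toric picture: the second small resolution $Y_2=\Bl_{p_0}(\PP(1^3,2))$ of the cone $W$ is related to $Y_1$ by Francia's flip, and the class relations $H_1\sim H_2-E_2$, $H_Y\sim 2H_2-E_2$ in $\Cl(W)$ pull back to $H\sim H_1+H_2$ on $X$.

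\textbf{The identification of $X_2$, $Z_2$, and $\bar X$ relies on forward or inapplicable references.} Lemma~\ref{lem:barx} assumes $-K_X$ is very ample, which fails here by hypothesis, so it cannot be invoked for $\g(\bar X)$ and $\io(\bar X)$. Remark~\ref{rem:barx-g456} summarizes precisely this proposition, so citing it is circular. The paper instead \emph{constructs} $X_2$ as the normalization of $X\times_Y Y_2$, identifies $Z_2$ as the double cover of $\PP(1^3,2)$ branched at the vertex and a sextic (hence a degree-$1$ del Pezzo with one Gorenstein singular point), and builds $\bar X$ directly as the image of $\hX\to\PP^1\times Z_2$, checking by a K\"unneth dimension count that it is a divisor of bidegree $(1,1)$. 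The non-isolated singular locus of $\bar X$ is exhibited by hand: the preimage in $X_1$ of the subscroll $\PP_{\PP^1}(\cO\oplus\cO(-1))$ is a nonnormal surface singular along $C_1$, and its image in $\bar X$ is a curve of singularities.

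Your numerical approach via Lemmas~\ref{lem:hh-g-xi}--\ref{lem:curve-genus-degree} to pin down the link is not wrong in spirit, but it runs in the opposite direction from the paper and would need independent justification that the anticanonical contractions behave as expected in this non-very-ample setting.
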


Cases~\ref{he:i2}--\ref{he:i1-g3} are classical, 
see, e.g., \cite[Theorem~1.5]{Przhiyalkovskij-Cheltsov-Shramov}, types~$\mathbf{H}_1$, $\mathbf{H}_2$, and~$\mathbf{H}_3$. 
Case~\ref{he:nf} can be found in~{\cite[Example~4.3]{P:ratF-1}};
it is of type~$\mathbf{H}_5$ in~\cite[Theorem~1.5]{Przhiyalkovskij-Cheltsov-Shramov}.

\begin{proof}
If~$\io(X) \ge 3$ then~$-K_X$ is very ample by Theorem~\ref{hm:KO}, so we assume~$\io(X) \le 2$.

Since~$-K_X$ is base point free, it defines a morphism
\begin{equation*}
\phi \colon X \longrightarrow Y \subset \PP^{g+1}, 
\end{equation*}
where~$g \coloneqq \g(X)$.
This morphism is finite because~$-K_X$ is ample, hence~$\dim(Y) = 3$, and, since~$-K_X$ is not very ample, 
we have~$\deg(\phi) \ge 2$, hence~$\deg(Y) \le \tfrac12\deg(X) = g - 1$.
Therefore, $Y$ is a variety of minimal degree~$g - 1$ and~$\deg(\phi) = 2$.
According to \cite[Theorem~1.10]{SaintDonat1974} or \cite{EisenbudHarris:VMD}
the variety $Y$ is either $\PP^3$ (if~$g = 2$), or a quadric $Q\subset \PP^4$ (if~$g = 3$), 
or~$\PP(1^3,2)$ embedded into~$\PP^6$ by the ample generator of the Picard group
(if~$g = 5$), 
or~$g \ge 4$ and~$Y$ is either a 3-dimensional smooth rational scroll, 
or the cone over a 2-dimensional smooth rational scroll, 
or the double cone over a rational twisted curve. 
In the first three cases the double covering~$X$ of~$Y$ 
can be realized as a complete intersection in a weighted projective space. 
These cases give varieties of types~\ref{he:i1-g2}, \ref{he:i1-g3}, and~\ref{he:i2}, respectively.

If~$Y$ is a smooth rational scroll then~$X$ has a contraction to a curve
and so~$\uprho(X) \ge 2$ which contradicts our assumption. 

If~$Y$ is a double cone over a rational twisted curve, i.e., $Y = \PP(1,1,g-1,g-1)$, 
then the Riemann--Hurwitz formula implies that~$X \subset \PP(1,1,g-1,g-1,g+1)$ is hypersurface of degree~$2g + 2$.
Since~$g \ge 4$, it has non-isolated, hence not terminal singularities, 
again in contradiction with our assumptions.

Finally, consider the case where~$g \ge 4$ and~$Y$ is the cone over a 2-dimensional smooth rational scroll of degree~$g - 1 \ge 3$. 
In this case~$Y$ has a small resolution 
\begin{equation*}
Y_1 \coloneqq \PP_{\PP^1}(\cO \oplus \cO(-a) \oplus \cO(-b)) \longrightarrow Y,
\qquad
b \ge a \ge 1,
\qquad 
a + b = g - 1 \ge 3,
\end{equation*}
which induces a commutative diagram
\begin{equation*}
\xymatrix{
X_1\ar[r]^{\phi_1}\ar[d] & Y_1\ar[d]\ar[r] & \PP^1
\\
X\ar[r]^{\phi} & Y
} 
\end{equation*}
where~$X_1$ is the normalization of~$X\times_{Y} Y_1$.
Then~$X_1\to X$ is a small birational morphism from a normal variety.
Therefore, $X$ is not factorial.

So, from now on we assume that~$X$ is nonfactorial and 1-nodal and continue the argument.
Note that in this case Remark~\ref{rem:xi-bu} shows that the morphism~$X_1 \to X$ is a small resolution;
in particular, $X_1$ is smooth.

The map~$\phi_1$ in the above diagram is a double covering by construction, 
and by the Riemann--Hurwitz formula it is branched over a divisor~$B_1 \subset Y_1$ of class~$4H_Y - 2(a + b - 2)H_1$,
where~$H_Y$ is the relative hyperplane class of~$Y_1 \to \PP^1$ and~$H_1$ is the pullback of the point class from~$\PP^1$.
Thus, the composition 
\begin{equation*}
f_1 \colon X_1\longrightarrow Y_1\longrightarrow \PP^1 
\end{equation*}
is a del Pezzo fibration of degree $2$.
In particular, $\io(f_1) = 1$, and hence~$\io(X) = 1$ by~\eqref{eq:deg-ci}.

It also follows that the flopping curve~$C_1 \subset X_1$
is contained in the preimage of the exceptional section of~$Y_1 \to \PP^1$.
On the other hand, by~\eqref{eq:deg-ci} we have~\mbox{$H_1 \cdot C_1 = 1$}, 
hence the curve~$C_1$ must be contained in the ramification divisor~$R_1 \subset X_1$ of the double covering~$\phi_1$.
Since~$X_1$ and~$Y_1$ are smooth, $R_1$ is also smooth, hence we have an exact sequence
\begin{equation*}
0 \longrightarrow \cN_{C_1/R_1} \longrightarrow \cN_{C_1/X_1} \longrightarrow \cN_{R_1/X_1}\vert_{C_1} \longrightarrow 0.
\end{equation*} 
By~\eqref{eq:cn-ci} the middle term is~$\cO(-1) \oplus \cO(-1)$,
and since~$2R_1$ is linearly equivalent to the pullback of the branch divisor~$B_1$, the last term is~$\cO(2 - a - b)$.
Therefore, $2 - a - b \ge -1$, hence~$a + b \le 3$.
On the other hand, $a + b = g - 1 \ge 3$. 
Finally, $b \ge a \ge 1$, hence
\begin{itemize}[wide]
\item 
$a = 1$, $b = 2$, $B_1 \sim 4H - 2H_1$, and~$g = 4$.
\end{itemize}
This is precisely case~\ref{he:nf} of the proposition. 

It remains to describe case~\ref{he:nf} in detail.
For this we consider the weighted projective space~$\PP(1^3,2) \subset \PP^6$ 
(i.e., the cone over the Veronese surface in~$\PP^5$) 
and a smooth point~\mbox{$p_0 \in \PP(1^3,2)$}. 
Then the linear projection out of~$p_0$ defines a regular morphism
\begin{equation*}
Y_2 \coloneqq \Bl_{p_0}(\PP(1^3,2)) \longrightarrow Y.
\end{equation*}
Its exceptional locus is the ruling of the Veronese cone passing through~$p_0$; 
in particular, the morphism~$Y_2 \to Y$ is small.
Thus, we have the following toric commutative diagram
\begin{equation*}
\xymatrix{
& Y_1 \ar[dl] \ar[dr] \ar@{<-->}[rr] &&
Y_2 \ar[dl] \ar[dr]
\\
\PP^1 &&
Y &&
\hbox to 2em {$\PP(1^3,2),$\hss}
}
\end{equation*}
where the dashed arrow is the \emph{Francia's flip}
and the inner diagonal arrows are small.

Note that the class group~$\Cl(Y_1)$ is generated by the classes~$H_Y$ and~$H_1$ defined above,
while~$\Cl(Y_2)$ is generated by the pullback~$H_2$ of the ample generator of~$\Cl(\PP(1^3,2))$ 
and the class~$E_2$ of the exceptional divisor of the blowup~$Y_2 \to \PP(1^3,2)$.
Moreover, under the isomorphism~$\Cl(Y_1) \cong \Cl(Y) \cong \Cl(Y_2)$ induced by the flip, we have
\begin{equation}
\label{eq:cl-y-g4}
H_1 \sim H_2 - E_2
\qquad\text{and}\qquad
H_Y \sim 2H_2 - E_2,
\end{equation}
hence the strict transform of the divisor~$B_1 \subset Y_1$ under the flip is a divisor~$B_2 \subset Y_2$ of class
\begin{equation*}
B_2 \sim 4(2H_2 - E_2) - 2(H_2 - E_2) \sim 6H_2 - 2E_2.
\end{equation*}

Now let~$X_2$ be the normalization of~$X \times_Y Y_2$.
As before, Remark~\ref{rem:xi-bu} implies that~$X_2 \to X$ is the second small resolution of~$X$
(in particular, $X_2$ is smooth),
and~$\phi_2 \colon X_2 \to Y_2$ is a double covering. 
Since~$X_2$ is smooth, the ramification locus of~$\phi_2$ is the fixed locus of the covering involution,
and since~$Y_2$ has an isolated singularity of type~$\tfrac12(1,1,1)$, 
the ramification locus is the union of a (reduced) point and a smooth surface,
hence~$\phi_2$ is branched at the vertex of~$\PP(1^3,2)$ and at the divisor~$B_2 \subset Y_2$.
As we checked above, $B_2$ is the strict transform of a sextic hypersurface in~$\PP(1^3,2)$ singular at~$p_0$.
In particular, the Stein factorization of the morphism~$X_2 \to \PP(1^3,2)$
is given by the double covering~\mbox{$Z_2 \to \PP(1^3,2)$} branched at the vertex and at a sextic hypersurface.
Thus, $Z_2$ is a Gorenstein del Pezzo threefold of degree~$1$ 
and~$f_2 \colon X_2 \to Z_2$ is an extremal contraction, which must be of type~\type{(B_1^0)}.

Pulling back relations~\eqref{eq:cl-y-g4} to~$X$ we see that~$H \sim H_1 + H_2$ in~$\Cl(X)$, i.e., $\balpha(X) = 1$.

Finally, using the maps~$f_1 \circ \sigma_1 \colon \hX \to \PP^1$ and~$f_2 \circ \sigma_2 \colon \hX \to Z_2$, 
we obtain the map
\begin{equation*}
\xi \colon \hX \longrightarrow \PP^1 \times Z_2.
\end{equation*}
It is birational onto its image, because its component~$f_2 \circ \sigma_2 \colon \hX \to Z_2$ is.
On the other hand, by the K\"unneth formula~$\dim H^0(\PP^1 \times Z_2, \cO(H_1 + H_2)) = 2 \cdot 3 = 6$,
while on~$\hX$ we have~$H_1 + H_2 \sim H - E$ by~\eqref{eq:mkhx}, and~$\dim H^0(\hX, \cO(H - E)) = \g(X) + 1 = 5$,
therefore 
\begin{equation*}
\bar{X} \coloneqq \xi(\hX) \subset \PP^1 \times Z_2
\end{equation*}
is a hyperplane section of~$\PP^1 \times Z_2$, i.e., it is a smoothable Fano variety of type~\typemm{2}{1}.

Finally, note that the intersection of the branch divisor~$B_1$ of~$X_1 \to Y_1$
with the subscroll~\mbox{$\PP_{\PP^1}(\cO \oplus \cO(-1)) \subset Y_1$} 
has multiplicity~$2$ along the exceptional section, 
hence the preimage of this subscroll in~$X_1$ is a nonnormal surface~$S \subset X_1$ 
singular along the flopping curve~\mbox{$C_1 \subset X_1$},
its strict transform~$\hS \subset \hX \coloneqq \Bl_{x_0}(X) \cong \Bl_{C_1}(X_1)$ is a sextic del Pezzo surface with Du Val singularities, 
and the restriction of the anticanonical contraction~\mbox{$\xi \colon \hX \to \bar{X}$} to~$\hS$ is a conic bundle. 
Thus, $\bar{X}$ is singular along a smooth rational curve.
\end{proof}

\begin{remark}
\label{rem:sing-g4}
As we will see later, the family of varieties~$X$ from Proposition~\textup{\ref{prop:hyperelliptic}}\ref{he:nf}
is the only deformation family of 1-nodal nonfactorial Fano threefolds with~$\uprho(X) = 1$
such that the variety~$\bar{X}$ for every member of the family has non-isolated singularities.
\end{remark}

Now we can classify all $1$-nodal or $1$-cuspidal prime Fano threefolds of genus~$3$ and~$4$.

\begin{lemma}
\label{lem:g34}
Let~$X$ be a Fano threefold with a single node or cusp, $\uprho(X) = 1$, $\io(X) = 1$, and~\mbox{$\g(X) \in \{3,4\}$}.
Then either 
\begin{enumerate}
\item 
\label{it:g3-f}
$X = X_{2,4} \subset \PP(1^5,2)$, $\g(X) = 3$, and~$X$ is factorial, or
\item 
\label{it:g4-f}
$X = X_{2,3} \subset \PP^5$, $\g(X) = 4$, and~$X$ is factorial, or
\item 
\label{it:g4-nf}
$X$ is a nonfactorial threefold from Proposition~\textup{\ref{prop:hyperelliptic}}\ref{he:nf} and~$\g(X) = 4$.
\end{enumerate}
\end{lemma}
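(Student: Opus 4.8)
The plan is to split the analysis according to the behaviour of the anticanonical class $-K_X$: it either has base points, or is base point free but not very ample (i.e.\ $X$ is hyperelliptic), or is very ample. Since $X$ has a single node or cusp it has terminal Gorenstein singularities, so both Proposition~\ref{prop:base-points} and Proposition~\ref{prop:hyperelliptic} are available. If $\Bs(|-K_X|) \ne \varnothing$, Proposition~\ref{prop:base-points} forces $\g(X) = 2$, which is excluded here; hence $-K_X$ is base point free.

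Suppose next that $X$ is hyperelliptic. Then I would apply Proposition~\ref{prop:hyperelliptic}: its factorial alternatives~\ref{he:i2}--\ref{he:i1-g3} all have $\io(X) = 2$ or $\g(X) \le 3$, whereas its nonfactorial alternative~\ref{he:nf} has $\g(X) = 4$. Thus for $\g(X) = 3$ only case~\ref{he:i1-g3} is possible, giving $X = X_{2,4} \subset \PP(1^5,2)$, i.e.\ conclusion~\ref{it:g3-f}; and for $\g(X) = 4$ only case~\ref{he:nf} is possible (there is no factorial hyperelliptic threefold of genus~$4$), giving conclusion~\ref{it:g4-nf}. Note that a $1$-cuspidal threefold is factorial by Remark~\ref{rem:blowup-nc}, so a hyperelliptic $1$-cuspidal $X$ with $\g(X) \in \{3,4\}$ can only be the genus-$3$ threefold of case~\ref{he:i1-g3}; in particular a $1$-cuspidal threefold of genus~$4$ is never hyperelliptic and is handled in the next step.

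It remains to treat the case where $-K_X$ is very ample, so $X \subset \PP^{\g(X)+1}$ and $\deg X = 2\g(X) - 2$ by~\eqref{eq:h0-mk}. If $\g(X) = 3$, then $X$ is an integral nondegenerate threefold of degree~$4$ in $\PP^4$, hence a quartic hypersurface, which is nothing but the complete intersection $X_{2,4} \subset \PP(1^5,2)$ cut out by the weight-$2$ coordinate and by the quartic; this is conclusion~\ref{it:g3-f}. If $\g(X) = 4$, then $X \subset \PP^5$ has degree~$6$, and the key step is to produce a quadric through $X$: Riemann--Roch together with Kawamata--Viehweg vanishing gives $h^0(X,\cO_X(-2K_X)) = 20 < 21 = h^0(\PP^5, \cO_{\PP^5}(2))$, and since the anticanonical embedding of a Fano threefold with canonical Gorenstein singularities is projectively normal (cf.~\cite{IP99}; alternatively, restrict to a general anticanonical K3 surface of genus~$4$ and use Saint-Donat's classification~\cite{SaintDonat1974} as in the proof of Proposition~\ref{prop:base-points}), the restriction $H^0(\PP^5, \cO_{\PP^5}(2)) \to H^0(X, \cO_X(2))$ is surjective, so $h^0(\cI_X(2)) \ge 1$. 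Any quadric $Q \supset X$ is irreducible because $X$ is nondegenerate of degree~$6$; comparing $h^0(\cI_X(3))$ with the dimension of the space spanned by products of the quadric with linear forms then yields a cubic $F_3$ with $X \subseteq Q \cap F_3$, and a degree count forces $X = Q \cap F_3 = X_{2,3} \subset \PP^5$, which is conclusion~\ref{it:g4-f}. In both of these cases a complete intersection of the stated type with a single node or cusp is factorial by Corollary~\ref{cor:ci-factorial}, so the asserted factoriality holds, completing the proof.

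The step I expect to be the main obstacle is the very ample genus-$4$ case: showing that $X$ lies on a quadric --- which rests on projective normality of the anticanonical embedding, or else on the K3-hyperplane-section device used in Proposition~\ref{prop:base-points} --- and then pinning down that $X$ is exactly the $(2,3)$ complete intersection and verifying its factoriality. The genus-$3$ case and the reductions to Propositions~\ref{prop:base-points} and~\ref{prop:hyperelliptic} are routine bookkeeping.
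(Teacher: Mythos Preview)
Your proof is correct and follows essentially the same route as the paper. One simplification worth noting, since you flag it as the main obstacle: in the very ample genus-$4$ case you do not need projective normality --- the inequality $h^0(\cO_{\PP^5}(2)) = 21 > 20 = h^0(X,\cO_X(-2K_X))$ alone forces the restriction map $H^0(\PP^5,\cO(2)) \to H^0(X,\cO_X(2))$ to have nontrivial kernel, so $h^0(\cI_X(2)) \ge 1$ immediately, and the paper argues exactly this way (the same dimension count then produces the cubic not in $Q \cdot H^0(\cO(1))$).
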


\begin{proof}
Indeed, if~$X$ is hyperelliptic we apply Proposition~\ref{prop:hyperelliptic} and obtain cases~\ref{it:g3-f} and~\ref{it:g4-nf}.

Otherwise, if~$-K_X$ is very ample, then~$X \subset \PP^{g + 1}$ is a threefold of degree~$(-K_X)^3 = 2g - 2$.
If~$g = 3$, it follows that~$X$ is a quartic threefold, this is again case~\ref{it:g3-f}.

Similarly, if~$g = 4$ we note that~$h^0(\cO_X(-2K_X))=20$, while~$h^0(\cO_{\PP^5}(2))=21$,
hence~$X$ is contained in an irreducible 4-dimensional quadric~$Q \subset \PP^5$.
A similar computation shows that~$X$ is also contained in a cubic hypersurface~$Y \subset \PP^5$ such that~$\dim(Q \cap Y) = 3$.
Since~$\deg(Q \cap Y) = 6 = \deg(X)$, we conclude that~$X = Q \cap Y$; 
this is case~\ref{it:g4-f}.

Finally, in both cases~\ref{it:g3-f} and~\ref{it:g4-f} 
it follows from Remark~\ref{rem:blowup-nc} and Corollary~\ref{cor:ci-factorial}\ref{it:ci-i1} that~$X$ is factorial.
\end{proof}

The following corollary is quite useful.

\begin{corollary}
\label{cor:very-ample}
Let~$X$ be a nonfactorial $1$-nodal Fano threefold with~$\uprho(X) = 1$ and~$\io(X) = 1$.
Then~$-K_X$ is very ample if and only if~$\g(X) \ge 5$.
\end{corollary}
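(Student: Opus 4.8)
The plan is to derive the corollary from the classification results proved earlier in this section, treating both implications by contraposition.

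For the implication ``$\g(X)\ge 5\Rightarrow -K_X$ is very ample'', I would argue the contrapositive. Suppose $-K_X$ is not very ample. Since $-K_X$ is ample, exactly one of the following holds: $\Bs(|-K_X|)\ne\varnothing$, or $\Bs(|-K_X|)=\varnothing$ and $-K_X$ is not very ample (the latter being, by definition, the hyperelliptic case of \S\ref{ss:he}). In the first case, the ``moreover'' clause of Proposition~\ref{prop:base-points} applies to our nonfactorial $1$-nodal $X$ (which has terminal Gorenstein singularities and $\uprho(X)=1$) and forces $\g(X)=2$. In the second case, $X$ is nonfactorial, $1$-nodal and hyperelliptic, so Proposition~\textup{\ref{prop:hyperelliptic}}\ref{he:nf} forces $\g(X)=4$. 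Either way $\g(X)\le 4$, which is the required contrapositive.

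For the converse ``$-K_X$ very ample $\Rightarrow \g(X)\ge 5$'', I would again argue the contrapositive: assume $\g(X)\le 4$. Since $\io(X)=1$, Table~\ref{table:hodge-genus} gives $\g(X)\in\{2,3,4\}$. The value $\g(X)=3$ cannot occur, because Lemma~\ref{lem:g34} shows that every $1$-nodal or $1$-cuspidal prime Fano threefold of genus $3$ is factorial, contradicting our nonfactoriality hypothesis; so this case is vacuous. If $\g(X)=2$, then by Lemma~\ref{lem:g2} the variety $X$ is the nonfactorial threefold of Proposition~\ref{prop:base-points}, whose anticanonical system has a base point, so $-K_X$ is not globally generated, a fortiori not very ample (alternatively, $h^0(X,\cO_X(-K_X))=\g(X)+2=4$ by~\eqref{eq:h0-mk}, so a very ample $-K_X$ would embed the threefold $X$ into $\PP^3$, forcing $X\cong\PP^3$ and $\io(X)=4$, a contradiction). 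If $\g(X)=4$, then by Lemma~\textup{\ref{lem:g34}}\ref{it:g4-nf} our nonfactorial $X$ is the hyperelliptic threefold of Proposition~\textup{\ref{prop:hyperelliptic}}\ref{he:nf}, for which $-K_X$ is by definition not very ample. In all cases $-K_X$ is not very ample, completing the contrapositive.

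I do not expect any real obstacle here: all the substantive work has already been done in Propositions~\ref{prop:base-points} and~\ref{prop:hyperelliptic} and Lemmas~\ref{lem:g2} and~\ref{lem:g34}, and the proof is pure bookkeeping. The only point requiring a moment's care is to verify that the trichotomy ``very ample / base-point-free but not very ample / non-empty base locus'', combined with those two propositions, genuinely exhausts all nonfactorial $1$-nodal cases, and that the genus-$3$ nonfactorial case is truly empty rather than merely unaddressed.
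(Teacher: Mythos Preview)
Your proof is correct and takes exactly the approach the paper intends: the corollary is stated without proof immediately after Lemmas~\ref{lem:g2} and~\ref{lem:g34}, and your argument simply makes explicit the bookkeeping that combines those lemmas with Propositions~\ref{prop:base-points} and~\ref{prop:hyperelliptic}. There is nothing to add.
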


\subsection{Trigonal Fano threefolds}
\label{ss:tr}

Recall that a Fano threefold~$X$ is called {\sf trigonal} if~$-K_X$ is very ample, 
but the anticanonical model~$X \subset \PP^{g + 1}$ is not an intersection of quadrics. 

\begin{proposition}[cf.~{\cite[Theorem~4.5]{P:ratF-1}}]
\label{prop:trigonal}
Let~$X$ be a trigonal Fano threefold with terminal Gorenstein singularities and~$\uprho(X) = 1$.

If~$X$ is factorial then we have one of the following situations:
\begin{enumerate}
\item 
\label{it:tri:g3}
$\io(X) = 1$, $\g(X) = 3$ and~$X = X_4 \subset \PP^4$;
\item 
\label{it:tri:g4}
$\io(X) = 1$, $\g(X) = 4$ and~$X = X_{2,3} \subset \PP^5$. 
\end{enumerate}

If~$X$ is nonfactorial and $1$-nodal with node~$x_0$ then
\begin{enumerate}[resume*]
\item 
\label{it:tri:g5}
$\io(X) = 1$, $\g(X) = 5$, and~$\balpha(X) = 1$, or

\item 
\label{it:tri:g6}
$\io(X) = 1$, $\g(X) = 6$, and~$\balpha(X) = 1$.
\end{enumerate}

In case~\ref{it:tri:g5} the small resolutions~$X_i$ of~$X$ and the anticanonical model~$\bar{X}$ of~$\hX$ are:
\begin{itemize}[wide]
\item 
$X_1 \to \PP^1$ is a cubic del Pezzo fibration such that
\begin{equation*}
X_1 \subset Y_1 \coloneqq \PP_{\PP^1}(\cO \oplus \cO(-1)^{\oplus 3})
\end{equation*}
is a divisor of class~$3H - H_1$,
\item 
$X_2 \to \PP^2$ is a conic bundle with~$\deg(\Delta_2) = 7$,
\item 
$\bar{X}$ is a smoothable Fano variety with~$\uprho(\bar{X}) = 2$, $\io(\bar{X}) = 1$, 
and~\mbox{$\g(\bar{X}) = 4$} of type~\typemm{2}{2};
it has non-isolated canonical singularities if the flopping curve~$C_1 \subset X_1$ 
corresponds to an Eckardt point on the general fiber of~$X_1 \to \PP^1$
and terminal singularities otherwise.
\end{itemize}

In case~\ref{it:tri:g6} the small resolutions~$X_i$ of~$X$ and the anticanonical model~$\bar{X}$ of~$\hX$ are:
\begin{itemize}[wide]
\item 
$X_1 \to \PP^1$ is a cubic del Pezzo fibration such that
\begin{equation*}
X_1 \subset Y_1 \coloneqq \PP_{\PP^1}(\cO \oplus \cO(-1)^{\oplus 2} \oplus \cO(-2))
\end{equation*}
is a divisor of class~$3H - 2H_1$,
\item 
$X_2 = \Bl_{\Gamma_2}(Z_2)$ is the blowup of a smooth del Pezzo threefold~$Z_2$ of degree~$2$ in a line~\mbox{$\Gamma_2$},
\item 
$\bar{X}$ is a smoothable Fano variety with~$\uprho(\bar{X}) = 2$, $\io(\bar{X}) = 1$, 
and~\mbox{$\g(\bar{X}) = 5$} of type~\typemm{2}{3};
it has non-isolated canonical singularities if the flopping curve~$C_1 \subset X_1$ corresponds to an Eckardt point 
on the general fiber of~$X_1 \to \PP^1$,
and terminal singularities otherwise.
\end{itemize}
\end{proposition}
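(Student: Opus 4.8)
The plan is to combine the classical theory of trigonal polarized K3 surfaces (Saint-Donat) with the birational machinery of Sections~\ref{sec:first} and~\ref{sec:special}. First I would reduce to $\io(X)=1$ and bound the genus. If $\io(X)\ge 3$, then $X$ is $\PP^3$ or a quadric by Theorem~\ref{hm:KO}, whose anticanonical model is a Veronese re-embedding and hence an intersection of quadrics; if $\io(X)=2$, then $X$ is a del Pezzo threefold, which for $\dd(X)=1$ is hyperelliptic (Proposition~\ref{prop:hyperelliptic}\ref{he:i2}, so $-K_X$ is not very ample) and for $\dd(X)\ge 2$ has anticanonical model cut out by quadrics --- in all cases contradicting trigonality. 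Hence $\io(X)=1$ and $X\subset\PP^{g+1}$, $g\coloneqq\g(X)$, is the anticanonical embedding. Here $g=2$ is impossible, since then $-K_X$ is not very ample (Lemma~\ref{lem:g2}), and $g\ge 7$ is impossible, since then $X$ is an intersection of quadrics (the input recalled in~\S\ref{ss:intro-proofs}); for $g\in\{3,4\}$ the genus-$3$ and genus-$4$ analysis of Lemma~\ref{lem:g34} identifies $X$ as $X_4\subset\PP^4$ or $X_{2,3}\subset\PP^5$, giving the factorial cases~\ref{it:tri:g3} and~\ref{it:tri:g4}. It remains to treat $g\in\{5,6\}$ and to show that such $X$ is always nonfactorial.

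For this I would take a general $S\in|{-}K_X|$; since $-K_X$ is very ample, $S$ is a smooth K3 surface disjoint from $\Sing(X)$ and $H_S\coloneqq-K_X|_S$ is very ample, while the standard restriction sequences (using that $X$ is embedded by the complete anticanonical system) show that $H_S$ does not embed $S$ as an intersection of quadrics either. By Saint-Donat's classification~\cite{SaintDonat1974} (treating directly one exceptional low-genus configuration if it occurs), $S$ then carries an elliptic pencil $|F|$ with $F\cdot H_S=3$, and consequently $X$ lies on a $4$-dimensional variety $W\subset\PP^{g+1}$ of minimal degree $g-2$, a rational normal scroll over $\PP^1$. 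As $\uprho(X)=1$, the projection $W\to\PP^1$ cannot restrict to a morphism on $X$, so $W$ must be a cone; since $\Sing(X)$ is finite by terminality, the vertex of $W$ is a single point, it lies on $X$, and resolving it yields a small birational morphism $X_1\to X$ from a smooth threefold with $\uprho(X_1)=2$. Thus $X$ is nonfactorial (this also rules out the putative factorial varieties of genus $5$ and $6$). From now on $X$ is nonfactorial and $1$-nodal; we use the notation of~\S\ref{ss:notation}, and Propositions~\ref{prop:if3} and~\ref{prop:big-index} force $\io(f_1)=\io(f_2)=1$.

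Next I would pin down the Sarkisov link~\eqref{eq:intro-sl} (relabelling $X_1,X_2$ if needed). The resolution of the cone $W$ is a $\PP^3$-bundle $Y_1=\PP_{\PP^1}(\cE)$ containing $X_1$ as a hypersurface of relative degree $3$ (degrees $1,2$ are excluded by $\io(f_1)=1$), so $f_1\colon X_1\to\PP^1$ is a cubic del Pezzo fibration and the flopping curve $C_1\subset X_1$ is the section lying over the vertex of $W$, with $\cN_{C_1/X_1}\cong\cO(-1)^{\oplus 2}$ by~\eqref{eq:cn-ci}. Exactly as in the proofs of Propositions~\ref{prop:big-index} and~\ref{prop:hyperelliptic} --- juggling the class of $X_1$ in $Y_1$, the splitting type of $\cE|_{C_1}$ (controlled by the intersection numbers of $C_1$ with the relative hyperplane class and by~\eqref{eq:deg-ci}), and the normal bundle of $C_1$ --- one gets $Y_1=\PP_{\PP^1}(\cO\oplus\cO(-1)^{\oplus 3})$ with $X_1\sim 3H-H_1$ for $g=5$, and $Y_1=\PP_{\PP^1}(\cO\oplus\cO(-1)^{\oplus 2}\oplus\cO(-2))$ with $X_1\sim 3H-2H_1$ for $g=6$. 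The opposite contraction $f_2\colon X_2\to Z_2$ is then recovered from the flop together with the genus and Hodge-number formulas of Lemma~\ref{lem:hh-g-xi} and Table~\ref{table:hodge-genus}: for $g=5$ it is a conic bundle over $\PP^2$ with $\deg(\Delta_2)=7$, and for $g=6$ it is the blowup of a line on a smooth del Pezzo threefold of degree $2$. Comparing the divisor classes on $\hX$ so obtained with~\eqref{eq:mkx} gives $\balpha(X)=1$.

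Finally, for $\bar X\coloneqq\hX_\can$: by Lemma~\ref{lem:barx}\ref{it:barx-weak} it is a Fano threefold with canonical Gorenstein singularities, $\io(\bar X)=1$ and $\g(\bar X)=\g(X)-1$, hence smoothable by~\cite{Na97}, and comparing invariants as in~\S\ref{ss:intro-proofs} identifies its smoothing as a Fano threefold of type~\typemm{2}{2} (for $g=5$) or~\typemm{2}{3} (for $g=6$). By Lemma~\ref{lem:barx}\ref{it:barx-almost} the anticanonical contraction $\xi\colon\hX\to\bar X$ is small --- so $\bar X$ has terminal Gorenstein singularities --- precisely when $\dim(X\cap\rT_{x_0}(X))\le 1$. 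The remaining task, which I expect to be the main obstacle, is to translate this numerical condition into the fibrewise one in the statement: a local computation in the coordinates of $X_1\subset Y_1$ should show that $X\cap\rT_{x_0}(X)$ contains a surface --- swept out by a pencil of anticanonical lines through $x_0$, so that $\xi$ contracts a divisor and $\bar X$ acquires non-isolated canonical singularities --- exactly when the section $C_1$ passes through an Eckardt point of the cubic surface fibre of $f_1$ that contains it.
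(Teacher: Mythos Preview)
Your outline follows the paper's arc, but there are gaps at three points and one substantive methodological difference.

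First, bounding $g\le 6$ by citing \S\ref{ss:intro-proofs} is circular: that passage rests on Corollary~\ref{cor:intersection-of-quadrics}, which is a \emph{consequence} of the present proposition. The paper instead invokes directly (\cite{Isk:Fano2e}, \cite{Przhiyalkovskij-Cheltsov-Shramov}) that the intersection of quadrics through a trigonal $X\subset\PP^{g+1}$ is a minimal-degree fourfold $Y$, and lets the classification of $Y$ bound $g$. In that classification you must also exclude double cones (the vertex is a line along which $Y$, and hence $X$, fails to be a local complete intersection) and triple cones over rational curves (which force non-isolated singularities on $X$); your one-line claim that ``since $\Sing(X)$ is finite the vertex of $W$ is a single point'' skips both. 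Once $Y$ is a simple cone over a smooth $3$-scroll with weights $(0,-a,-b,-c)$, the normal-bundle sequence for $C_1$ forces $a=b=1$, but you still need to exclude $c\ge 3$; the paper does this by intersecting $X_1$ with the subscroll $\PP_{\PP^1}(\cO\oplus\cO(-1)^{\oplus 2})$ and showing the result either is empty or splits off divisors incompatible with $\Pic(X_1/\PP^1)=\ZZ\cdot H_Y$.

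Second --- and this is the main difference --- recovering $f_2$ ``from the flop together with the genus and Hodge-number formulas'' does not determine even the contraction type of $f_2$. The paper instead produces a second small toric resolution $Y_2\to Y$ explicitly: $Y_2=\PP_{\PP^2}(\cO\oplus\cO(-1)^{\oplus 2})$ for $g=5$ and $Y_2=\Bl_{\PP^1}(\PP(1^4,2))$ for $g=6$. The strict transform $X_2\subset Y_2$ then has an explicit class ($2H_Y+H_2$, respectively $4H_2-E_2$), from which the conic-bundle structure with $\deg(\Delta_2)=7$, respectively the blowup of a line on a degree-$2$ del Pezzo threefold, and the equality $\balpha(X)=1$ are all read off from divisor relations in $\Cl(Y)$. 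The same construction gives the concrete model of $\bar X$ (a double cover of $\PP^1\times\PP^2$ for $g=5$, a $(1,1)$-divisor in $\PP^1\times Z_2$ for $g=6$), which establishes $\uprho(\bar X)=2$ and smoothability even when $\bar X$ has non-isolated canonical singularities --- a case your appeal to \cite{Na97} does not cover.

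Third, the Eckardt-point criterion is handled geometrically, not by a local computation in coordinates. On the strict transform of each cubic-surface fibre $F$ of $f_1$ the map $\xi\colon\hX\to\bar X$ is the linear projection of $F$ from the point $F\cap C_1$, and the nontrivial fibres of this projection are precisely the lines of $F$ through that point. Since a general cubic surface has finitely many lines, $\bar X$ has non-isolated singularities exactly when the \emph{general} point of $C_1$ lies on a line of its fibre, i.e., is an Eckardt point --- this is a statement about the generic fibre, not about a single one.
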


Cases~\ref{it:tri:g3} and~\ref{it:tri:g4} are classical, 
see, e.g., \cite[Theorem~1.6]{Przhiyalkovskij-Cheltsov-Shramov}, types~$\mathbf{T}_1$ and~$\mathbf{T}_2$. 
Cases~\ref{it:tri:g5} and~\ref{it:tri:g6} can be found in~{\cite[Examples~4.6, 4.7]{P:ratF-1}}
or~\cite[Theorem~1.6]{Przhiyalkovskij-Cheltsov-Shramov}, types~$\mathbf{T}_4$, $\mathbf{T}_7$.

\begin{remark}
\label{rem:sing-g56}
In case~\ref{it:tri:g5} the condition that~$\bar{X}$ is not terminal 
is equivalent to the condition that the line~$f_2(C_2) \subset \PP^2$ 
is a component of the discriminant of the conic bundle~$f_2$.
In case~\ref{it:tri:g6} the condition that~$\bar{X}$ is not terminal 
is equivalent to the condition 
that the line~\mbox{$\Gamma_2$} 
is contained in the ramification divisor on the anticanonical double covering~$Z_2 \to \PP^3$.
\end{remark}

\begin{proof}
The intersection of quadrics passing through~$X=X_{2g-2}\subset \PP^{g+1}$ 
is a four-dimensional variety~$Y=Y_{g-2}\subset \PP^{g+1}$ of minimal degree
(see~\cite[\S2]{Isk:Fano2e} or~\cite[Theorem~1.6]{Przhiyalkovskij-Cheltsov-Shramov}).
According to \cite[Theorem~1.10]{SaintDonat1974} or \cite{EisenbudHarris:VMD}
the variety~$Y$ is either~$\PP^4$, 
or a quadric~$Q\subset \PP^5$,
or a $4$-dimensional smooth rational scroll, 
or an iterated cone of degree~$g - 2 \ge 3$ over a smooth rational scroll or over a twisted rational curve,
or over the Veronese surface.

If~$Y\cong \PP^4$ or~$Y\cong Q\subset \PP^5$, 
the argument of Lemma~\ref{lem:g34} shows that~$X$ is a quartic hypersurface 
or a complete intersection of a quadric and cubic hypersurfaces, respectively.
By Corollary~\ref{cor:ci-factorial} these complete intersections cannot be nonfactorial and 1-nodal.
Therefore, these cases give varieties of types~\ref{it:tri:g3} and~\ref{it:tri:g4}.

If~$Y$ is a smooth rational scroll, then~$X$ has a contraction to a curve and so~$\uprho(X) \ge 2$ 
which contradicts our assumption. 

If~$Y$ is a double cone of degree~$g - 2 \ge 3$ over a surface scroll or a Veronese surface,
then~$Y$ is not a local complete intersection at every point of its vertex, 
hence~$X$ is not a local complete intersection at every point of its intersection with the vertex,
in contradiction with the assumption that~$X$ has terminal Gorenstein singularities.

If~$Y$ is a triple cone over a rational twisted curve, i.e., $Y = \PP(1,1,g-2,g-2,g-2)$, 
then the adjunction formula implies that~$X \subset \PP(1,1,g-2,g-2,g-2)$ is hypersurface of degree~$2g - 2$,
so it has non-isolated, hence not terminal singularities, 
again in contradiction with our assumptions.

Finally, consider the case where~$g \ge 5$ and~$Y$ is the cone over a 3-dimensional smooth rational scroll of degree~$g - 2 \ge 3$.
In this case~$Y$ has a small resolution
\begin{equation*}
Y_1 \coloneqq \PP_{\PP^1}(\cO \oplus \cO(-a) \oplus \cO(-b) \oplus \cO(-c)) \longrightarrow Y,
\quad 
c \ge b \ge a \ge 1,
\quad 
a + b + c = g - 2,
\end{equation*}
which induces a commutative diagram 
\begin{equation*}
\xymatrix{
X_1\ar@{^{(}->}[r]\ar[d] & Y_1\ar[d]\ar[r] & \PP^1
\\
X\ar@{^{(}->}[r] & Y
} 
\end{equation*}
where~$X_1$ is the strict transform of~$X$. 
Note that~$X_1$ is normal, because it is Cohen--Macaulay (as a divisor in the smooth variety~$Y_1$) 
and normal away from the exceptional curve~$C_1 \subset Y_1$ (because~$X$ is normal).
Moreover, $X_1 \to X$ is a small birational morphism.
If it is an isomorphism then~$X \subset Y_1$ has a contraction to a curve in contradiction with the assumption~$\uprho(X) = 1$.
Therefore, $X$ is not factorial.

So, from now on we assume that~$X$ is nonfactorial and 1-nodal and continue the argument.
Note that in this case Remark~\ref{rem:xi-bu} shows that the morphism~$X_1 \to X$ is a small resolution;
in particular, $X_1$ is smooth.

By the adjunction formula, $X_1$ is a divisor in~$Y_1$ of class~$3H_Y - (a + b + c - 2)H_1$,
where~$H_Y$ is the relative hyperplane class of~$Y_1 \to \PP^1$ and~$H_1$ is the pullback of the point class from~$\PP^1$.
Thus, the composition 
\begin{equation*}
f_1 \colon X_1\hookrightarrow Y_1\longrightarrow \PP^1
\end{equation*}
is a del Pezzo fibration of degree $3$.
In particular, we have~$\io(f_1) = \io(X) = 1$,
and~$\Pic(X_1/\PP^1)$ is generated by the restriction of~$H_Y$.

It also follows that the flopping curve~$C_1 \subset X_1$
is contained in the preimage of the exceptional section of~$Y_1 \to \PP^1$,
hence coincides with this section.
Consider the normal bundle sequence
\begin{equation*}
0 \longrightarrow \cN_{C_1/X_1} \longrightarrow 
\cN_{C_1/Y_1} \longrightarrow 
\cN_{X_1/Y_1}\vert_{C_1} \longrightarrow 0.
\end{equation*}
Note that the middle term is~$\cO(-a) \oplus \cO(-b) \oplus \cO(-c)$.
On the other hand, by~\eqref{eq:cn-ci} the first term is~$\cO(-1) \oplus \cO(-1)$, 
hence
the last term is~$\cO(2 - a - b - c)$.
But~$a + b + c = g - 2 \ge 3$,
hence the normal bundle sequence splits, 
and we conclude 
that~$a = b = 1$, $c \ge 1$, and the class of~$X_1$ is~$3H_Y - cH_1$.

Assume~$c \ge 3$.
Consider the intersection of~$X_1$ with the subscroll
\begin{equation*}
X_1 \cap \PP_{\PP^1}(\cO \oplus \cO(-1) \oplus \cO(-1)) \subset \PP_{\PP^1}(\cO \oplus \cO(-1) \oplus \cO(-1) \oplus \cO(-c)) = Y_1.
\end{equation*}
By definition, it is a divisor in the linear system~$|3H_Y - cH_1|$ on~$\PP_{\PP^1}(\cO \oplus \cO(-1) \oplus \cO(-1))$.
If~$c \ge 4$ this linear system is empty, hence~$\PP_{\PP^1}(\cO \oplus \cO(-1) \oplus \cO(-1)) \subset X_1$, which is absurd.
If~$c = 3$, any divisor in this linear system is the union~$D_1 \cup D_2 \cup D_3$ of three divisors of class~$H_Y - H_1$.
Thus, $D_i \subset X_1$, and since~$D_i$ has degree~$1$ over~$\PP^1$, 
this contradicts the fact that~$\Pic(X_1/\PP^1)$ is generated by the restriction of~$H_Y$.

Therefore $c \in \{1,2\}$, so that we have two cases:
\begin{aenumerate}
\item 
\label{it:111}
$a = 1$, $b = 1$, $c = 1$, $X_1 \sim 3H_Y - H_1$, and~$\g(X) = 5$, or
\item 
\label{it:112}
$a = 1$, $b = 1$, $c = 2$, $X_1 \sim 3H_Y - 2H_1$, and~$\g(X) = 6$.
\end{aenumerate}
These are precisely cases~\ref{it:tri:g5} and~\ref{it:tri:g6} of the proposition.

It remains to describe cases~\ref{it:tri:g5} and~\ref{it:tri:g6} in detail.
For this we consider the toric diagram
\begin{equation*}
\xymatrix{
& Y_1 \ar[dl] \ar[dr] \ar@{<-->}[rr] &&
Y_2 \ar[dl] \ar[dr]
\\
\PP^1 &&
Y &&
W_2,
}
\end{equation*}
where this time
\begin{itemize}[wide]
\item 
in case~\ref{it:111} we take~$Y_2 = \PP_{\PP^2}(\cO \oplus \cO(-1) \oplus \cO(-1)) \to \PP^2 \eqqcolon W_2$, and
\item 
in case~\ref{it:112} we take~$Y_2 = \Bl_{\PP^1}(\PP(1^4,2)) \to \PP(1^4,2) \eqqcolon W_2$.
\end{itemize}
In both cases the morphism~$Y_2 \to \PP^{g+1}$ given by the hyperplane class of the projective bundle 
or by the linear projection of~$\PP(1^4,2) \subset \PP^{10}$ 
out of the linear span~$\PP^2$ of the conic~$\Gamma_2\coloneqq \PP^1 \subset \PP(1^4,2)$
provides a small resolution~$Y_2 \to Y$ 
(in the first case it contracts the exceptional section~$\PP^2$ of the projective bundle, 
and in the second case it contracts a quadratic cone~$\PP(1,1,2) \subset \PP(1^4,2)$).
In particular, we obtain isomorphisms of the class groups~$\Cl(Y_1) \cong \Cl(Y) \cong \Cl(Y_2)$ such that
\begin{equation*}
\begin{aligned}[t]
H_1 & \sim H_Y - H_2, && \text{in case~\ref{it:111}, and}
\\
H_1 &\sim H_2 - E_2,
\quad
H_Y \sim 2H_2 - E_2, && \text{in case~\ref{it:112},}
\end{aligned}
\end{equation*} 
where~$H_2$ denotes the ample generator of~$\Cl(W_2)$ and~$E_2$ is the exceptional divisor in case~\ref{it:112}.

Let~$X_2$ be the strict transform of~$X$ in~$Y_2$; the above formulas imply that
\begin{equation*}
\begin{aligned}[t]
X_2 &\sim 3H_Y - (H_Y - H_2) = 2H_Y + H_2, && \text{in case~\ref{it:111}, and}
\\
X_2 &\sim 3(2H_2 - E_2) - 2(H_2 - E_2) = 4H_2 - E_2, && \text{in case~\ref{it:112},}
\end{aligned}
\end{equation*}
respectively.
Note that~$X_2$ does not contain the fiber of~$Y_2$ over the vertex of~$Y$ 
(if it does, then the tangent space to~$X$ at~$x_0$ has dimension larger than~$4$),
hence the morphism~$X_2 \to X$ is small.
As before, it follows that~$X_2$ is normal, hence it is the second small resolution of~$X$ by Remark~\ref{rem:xi-bu}
(in particular, $X_2$ is smooth).
Moreover, it follows than the composition~\mbox{$X_2 \hookrightarrow Y_2 \longrightarrow W_2$} is
\begin{itemize}[wide]
\item 
in case~\ref{it:111}, 
a conic bundle over~$Z_2 = W_2 = \PP^2$ with discriminant of degree~$7$, 
\item 
in case~\ref{it:112}, 
a birational morphism onto a quartic divisor~$Z_2 \subset W_2$ 
(i.e., onto a del Pezzo threefold of degree~$2$) 
contracting the divisor~$X_2 \cap E_2$ onto the line~$\Gamma_2 \subset Z_2$.
\end{itemize}
In particular, we see that in both cases the morphism~$X_2 \to Z_2$ is an extremal contraction.
In case~\ref{it:112} the classification of extremal contractions from~\S\ref{ss:contractions} shows that~$Z_2$ is smooth.

Restricting the relations in~$\Cl(Y)$ to~$X$ it is easy to see that in both cases~$\balpha(X) = 1$.

Finally, in case~\ref{it:111} it is easy to see that
\begin{equation*}
\hX \subset \hY \coloneqq \PP_{\PP^1 \times \PP^2}(\cO \oplus \cO(-1,-1))
\end{equation*}
is a divisor of class~$2H + H_2$, hence~$\hX \to \PP^1 \times \PP^2$ is generically finite of degree~$2$
with branch divisor of class~$2H_1 + 4H_2$, hence the Stein factorization of this map 
is a birational contraction~$\xi \colon \hX \to \bar{X}$ onto a smoothable Fano threefold of type~\typemm{2}{2}.

On the other hand, in case~\ref{it:112} we can use the maps~$f_1 \circ \sigma_1 \colon \hX \to \PP^1$ and~$f_2 \circ \sigma_2 \colon \hX \to Z_2$, 
to construct a morphism~$\xi \colon \hX \longrightarrow \PP^1 \times Z_2$,
which is birational onto its image, because its component~$f_2 \circ \sigma_2\colon \hX \to Z_2$ is,
and with the argument of Proposition~\ref{prop:hyperelliptic} it is easy to see that the image of~$\xi$ 
is a divisor~$\bar{X} \subset \PP^1 \times Z_2$ of class~$H_1 + H_2$, 
i.e., a smoothable Fano variety of type~\typemm{2}{3}.

In both cases~\ref{it:tri:g5} and~\ref{it:tri:g6} 
the restriction of the anticanonical contraction~$\xi \colon \hX \to \bar{X}$
to the strict transform of a fiber of the cubic del Pezzo fibration~$f_1 \colon X_1 \to \PP^1$
is the projection out of the intersection point of the section~$C_1$ with that fiber.
In particular, the nontrivial fibers of~$\xi$ are the lines on the fibers of~$f_1$ intersecting~$C_1$.
As there are at most finitely many lines on a general fiber of~$f_1$,
it follows that~$\bar{X}$ has non-isolated singularities if an only if
the general point of~$C_1$ lies on a line on the general fiber of~$f_1$.
Since any curve on a cubic del Pezzo surface with Picard number~1 is a multiple of a hyperplane class,
it follows that the general point of~$C_1$ is an Eckardt point of the general fiber of~$f_1$.
\end{proof}

Now we can classify all $1$-nodal or $1$-cuspidal prime Fano threefolds of genus~$5$ and~$6$.
We denote by~$\CGr(2,5)\subset \PP^{10}$ the cone over the Grassmannian~$\Gr(2,5) \subset \PP^9$.

\begin{lemma}
\label{lem:g56}
Let~$X$ be a Fano threefold with a single node or cusp, $\uprho(X) = 1$, $\io(X) = 1$, and~\mbox{$\g(X) \in \{5,6\}$}.
Then either 
\begin{enumerate}
\item 
\label{it:g5-f}
$X = X_{2,2,2} \subset \PP^6$, $\g(X) = 5$, and~$X$ is factorial, or
\item 
\label{it:g6-f}
$X = X_{1,1,1,2} \subset \CGr(2,5)$, $\g(X) = 6$, and~$X$ is factorial, or
\item 
\label{it:g5-nf}
$X$ is a nonfactorial threefold from Proposition~\textup{\ref{prop:trigonal}}\ref{it:tri:g5} and~$\g(X) = 5$.
\item 
\label{it:g6-nf}
$X$ is a nonfactorial threefold from Proposition~\textup{\ref{prop:trigonal}}\ref{it:tri:g6} and~$\g(X) = 6$.
\end{enumerate}
\end{lemma}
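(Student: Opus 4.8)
The plan is to follow the pattern of the proof of Lemma~\ref{lem:g34}, using the classification of hyperelliptic and trigonal threefolds to peel off the special cases and then invoking the classical description of the anticanonical model. Since $\g(X) \geq 5$, Proposition~\ref{prop:base-points} gives $\Bs(|-K_X|) = \varnothing$, and Proposition~\ref{prop:hyperelliptic} shows that $X$ is not hyperelliptic: the hyperelliptic Fano threefolds of index~$1$ classified there are either factorial with $\g \leq 3$ or $1$-nodal and nonfactorial with $\g = 4$, while a $1$-cuspidal $X$ is factorial by Remark~\ref{rem:blowup-nc}. Hence $-K_X$ is very ample, $X \hookrightarrow \PP^{g+1}$ is the anticanonical embedding with $g \coloneqq \g(X)$, and $\deg(X) = (-K_X)^3 = 2g - 2$. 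If $X$ is trigonal, then by Proposition~\ref{prop:trigonal} it is $1$-nodal and nonfactorial and coincides with the threefold of Proposition~\textup{\ref{prop:trigonal}}\ref{it:tri:g5} when $g = 5$ and of Proposition~\textup{\ref{prop:trigonal}}\ref{it:tri:g6} when $g = 6$; these are exactly cases~\ref{it:g5-nf} and~\ref{it:g6-nf} of the lemma. So I would henceforth assume that $X$ is neither hyperelliptic nor trigonal, so that $X \subset \PP^{g+1}$ is an intersection of quadrics.

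Next I would identify $X$ by a dimension count, as in the treatment of $g = 4$ in Lemma~\ref{lem:g34}. By Riemann--Roch and the Kawamata--Viehweg vanishing theorem (cf.~\eqref{eq:h0-mk}) one has $\dim H^0(X, \cO_X(-2K_X)) = 5g$, so $X$ lies on a space of quadrics of dimension at least $\binom{g+3}{2} - 5g$, which equals $3$ for $g = 5$ and $6$ for $g = 6$. When $g = 5$ this already suffices: since $X \subset \PP^6$ is an irreducible threefold of codimension~$3$ which is an intersection of quadrics, three of the quadrics through it form a regular sequence (no quadric through $X$ vanishes on a component of the intersection of two of them, as otherwise that component would lie in $X$), and their complete intersection is then a threefold of degree $8 = \deg(X)$ containing $X$, hence equal to it, so $X = X_{2,2,2} \subset \PP^6$ (this is the classical argument, cf.~\cite{Iskovskih1977}). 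When $g = 6$ I would instead invoke Mukai's structure theorem for prime Fano threefolds of genus~$6$ (see~\cite[\S2]{Muk92} and~\cite{Przhiyalkovskij-Cheltsov-Shramov}): $X \subset \PP^7$ is a dimensionally transverse intersection of the cone $\CGr(2,5) \subset \PP^{10}$ with a linear subspace $\PP^7$ and a quadric, i.e., $X = X_{1,1,1,2} \subset \CGr(2,5)$. In both cases $X$ is a complete intersection of one of the types listed in Theorem~\ref{thm:intro-factorial-ci} with a single node or cusp, hence factorial by Remark~\ref{rem:blowup-nc} (for a cusp) and Corollary~\ref{cor:ci-factorial}\ref{it:ci-i1} (for a node); this yields cases~\ref{it:g5-f} and~\ref{it:g6-f} and finishes the proof.

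The main obstacle is the genus~$6$ case. Unlike for $g = 5$, the threefold $X \subset \PP^7$ is not a complete intersection in projective space (no product of integers $\geq 2$ equals $10$ in codimension~$4$), so the degree count alone does not pin it down; in fact the intersection of all quadrics through $X$ is again $X$ itself, and one has to single out, inside the six-dimensional space of those quadrics, the five-dimensional subspace whose common zero locus is a quintic del Pezzo fourfold $\CGr(2,5) \cap \PP^7$ containing $X$ as a quadric section. This identification is precisely the substance of the Mukai classification, and the point requiring care here is its extension to the situation where $X$ carries a single node or cusp; I expect this to be handled by a specialization argument from the smooth case, with the singularity forced onto the vertex of the cone $\CGr(2,5) \cap \PP^7$ or produced by a tangency of the quadric.
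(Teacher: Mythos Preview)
Your approach matches the paper's almost exactly: peel off the trigonal case via Proposition~\ref{prop:trigonal}, then for $g=5$ use the degree/codimension count to get $X = X_{2,2,2}$, and conclude factoriality from Corollary~\ref{cor:ci-factorial}. The only point where you diverge is precisely the one you flag as the obstacle.

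For $g=6$ you propose to invoke Mukai's structure theorem and worry about extending it to the nodal/cuspidal case by specialization. The paper avoids this issue entirely: since the singularity of $X$ is isolated, a \emph{general} hyperplane section of $X \subset \PP^7$ is a smooth K3 surface of genus~$6$, and one checks it is Brill--Noether general. Then \cite[Theorem~2.16 and Proposition~2.15]{DK1} (which characterize Gushel--Mukai varieties via such a K3 section, and do not require $X$ itself to be smooth) yield directly that $X$ is a Gushel--Mukai threefold, i.e.\ $X = X_{1,1,1,2} \subset \CGr(2,5)$. No specialization argument is needed --- the smoothness is pushed down to the K3 level, where the classification already applies. (Minor note: for $g=6$ the factoriality is Corollary~\ref{cor:ci-factorial}\ref{it:ci-mukai}, not~\ref{it:ci-i1}.)
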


\begin{proof}
By Propositions~\ref{prop:base-points} and~\ref{prop:hyperelliptic} the class~$-K_X$ is very ample.
If~$X$ is trigonal we apply Proposition~\ref{prop:trigonal} and obtain cases~\ref{it:g5-nf} and~\ref{it:g6-nf}.
Otherwise, $X \subset \PP^{g + 1}$ is an intersection of quadrics.
If~$g = 5$ the codimension of~$X$ is~3, hence~$X$ is
an intersection of at least three quadrics, 
and since~$\deg(X) = 2g - 2 = 8 = 2^3$, it follows that~$X = X_{2,2,2}$;
this is case~\ref{it:g5-f}.

Similarly, if~$g = 6$ then a general hyperplane section of~$X$ is a Brill--Noether general K3 surface of genus~$6$
and~$X$ is a Gushel--Mukai threefold by~\cite[Theorem~2.16 and Proposition~2.15]{DK1};
this is case~\ref{it:g6-f}.

Finally, in both cases~\ref{it:g5-f} and~\ref{it:g6-f} 
it follows from Corollary~\ref{cor:ci-factorial} that~$X$ is factorial.
\end{proof} 

\begin{corollary}
\label{cor:intersection-of-quadrics}
Let~$X$ be a $1$-nodal Fano threefold with~$\uprho(X) = 1$, $\io(X) = 1$, and~$-K_X$ very ample.
Then~$X \subset \PP^{\g(X)+1}$ is an intersection of quadrics 
if and only if~$\g(X) \ge 5$ in the factorial case and~$\g(X) \ge 7$ in the nonfactorial case.
\end{corollary}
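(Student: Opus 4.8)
The plan is to read off the corollary from the classification of trigonal threefolds already obtained. By the definition recalled at the beginning of~\S\ref{ss:tr}, a Fano threefold~$X$ with terminal Gorenstein singularities, $\uprho(X)=1$, and \emph{very ample}~$-K_X$ fails to be an intersection of quadrics inside~$\PP^{\g(X)+1}$ \emph{if and only if} it is trigonal. Thus, under the hypotheses of the corollary, the assertion ``$X\subset\PP^{\g(X)+1}$ is not an intersection of quadrics'' is literally the assertion ``$X$ is trigonal'', and the whole statement reduces to determining, among $1$-nodal threefolds~$X$ with~$\io(X)=1$ and~$-K_X$ very ample, which values of~$\g(X)$ admit a trigonal member. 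All the needed information is contained in Proposition~\ref{prop:trigonal} (for trigonal~$X$), Corollary~\ref{cor:very-ample}, and the small-genus classifications of Lemmas~\ref{lem:g2}, \ref{lem:g34}, \ref{lem:g56}.

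For the ``if'' direction I would use that Proposition~\ref{prop:trigonal} is an \emph{exhaustive} list: a factorial trigonal Fano threefold of Picard rank~$1$ has~$\g(X)\in\{3,4\}$ (cases~\ref{it:tri:g3}--\ref{it:tri:g4}), and a nonfactorial one has~$\g(X)\in\{5,6\}$ (cases~\ref{it:tri:g5}--\ref{it:tri:g6}). Consequently, if~$X$ is factorial with~$\g(X)\ge 5$, or nonfactorial with~$\g(X)\ge 7$, then~$X$ is not trigonal; since~$-K_X$ is very ample by assumption, it follows that~$X\subset\PP^{\g(X)+1}$ is an intersection of quadrics. (In the nonfactorial case~$-K_X$ is automatically very ample once~$\g(X)\ge 7$ by Corollary~\ref{cor:very-ample}, but stating the hypothesis does no harm.)

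For the ``only if'' direction I would dispose of the small genera by the classification. In the nonfactorial case Corollary~\ref{cor:very-ample} forces~$\g(X)\ge 5$, so only~$\g(X)\in\{5,6\}$ remain; by Lemma~\ref{lem:g56}\ref{it:g5-nf}--\ref{it:g6-nf} such an~$X$ is one of the trigonal threefolds of Proposition~\ref{prop:trigonal}, hence not an intersection of quadrics. In the factorial case, $\g(X)=2$ cannot occur under the hypothesis that~$-K_X$ is very ample, since by Lemma~\ref{lem:g2}\ref{it:g2-f} the only factorial possibility is the hyperelliptic sextic~$X_6\subset\PP(1^4,3)$; and for~$\g(X)\in\{3,4\}$ Lemma~\ref{lem:g34}\ref{it:g3-f}--\ref{it:g4-f} together with very ampleness of~$-K_X$ gives~$X\cong X_4\subset\PP^4$ (here very ampleness forces the weight~$2$ coordinate to enter the quadratic equation of~$X_{2,4}\subset\PP(1^5,2)$) or~$X=X_{2,3}\subset\PP^5$; in both cases a trivial degree count shows~$X$ is not an intersection of quadrics —a quartic threefold in~$\PP^4$ lies on no quadric, and a~$(2,3)$ complete intersection in~$\PP^5$ lies on a single quadric which is four-dimensional— equivalently, $X$ is trigonal by Proposition~\ref{prop:trigonal}\ref{it:tri:g3}--\ref{it:tri:g4}.

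I do not expect any genuine obstacle: the corollary is a bookkeeping consequence of Propositions~\ref{prop:base-points}, \ref{prop:hyperelliptic}, \ref{prop:trigonal} and Lemmas~\ref{lem:g2}, \ref{lem:g34}, \ref{lem:g56}. The only points meriting a line of care are that the ``if'' direction relies on the \emph{completeness} of the trigonal classification (so that large genus genuinely excludes trigonality), and that in the excluded small-genus ``only if'' cases the explicit models produced by the classification lemmas are visibly not cut out by quadrics; both are immediate.
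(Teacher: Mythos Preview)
Your proposal is correct and is exactly the argument the paper intends: the corollary is stated without proof immediately after Lemma~\ref{lem:g56}, and it follows by combining the exhaustive trigonal classification of Proposition~\ref{prop:trigonal} with Corollary~\ref{cor:very-ample} and Lemmas~\ref{lem:g2}, \ref{lem:g34}, \ref{lem:g56}, precisely as you do.
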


\section{Classification of nonfactorial threefolds}
\label{sec:nonfactorial} 

In this section we discuss the structure of nonfactorial Fano threefolds of genus~$g \ge 7$ 
and prove Theorems~\ref{thm:intro-nf} and~\ref{thm:intro-nf-contractions} from the Introduction. 

\subsection{Intersection with the embedded tangent space}
\label{ss:int-ets}

In this subsection we assume that~$X$ is a nonfactorial $1$-nodal Fano threefold 
such that~$\uprho(X) = 1$, $\io(X) = 1$, and~\mbox{$\g(X) \ge 7$},
so that by Corollary~\ref{cor:intersection-of-quadrics} the anticanonical class~$H = -K_X$ is very ample 
and the anticanonical model~$X \subset \PP^{\g(X) + 1}$ is an intersection of quadrics.

We denote by~$T_{x_0}(X) \cong \Bbbk^4$ the Zariski tangent space to~$X$ at the node~$x_0 \in X$
and by~$\rT_{x_0}(X) \subset \PP^{\g(X) + 1}$ the embedded tangent space.
One can think of~$\rT_{x_0}(X) \cong \PP^4$ as the cone with vertex~$x_0$ over~$\PP(T_{x_0}(X)) \cong \PP^3$.

\begin{lemma}
\label{lem:intersection-tangent}
Let~$X \subset \PP^n$ be a $3$-dimensional intersection of quadrics with a node~$x_0 \in X$.
Then~\mbox{$X \cap \rT_{x_0}(X)$} is a cone with vertex~$x_0$ over a scheme~$B$, where
\begin{equation*}
B \subset E \subset \PP(T_{x_0}(X)) \cong \PP^3,
\end{equation*}
and~$E \cong \PP^1 \times \PP^1$ is the exceptional divisor of~$\Bl_{x_0}(X)$.
Moreover, $B$ is an intersection of quadrics and~$\dim(B) \le 1$ unless~$X$ is a quadratic cone.
\end{lemma}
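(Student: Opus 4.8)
The statement is essentially local near~$x_0$, so the plan is to choose affine coordinates in which~$x_0$ is the origin and~$\rT_{x_0}(X)$ is a coordinate subspace, and then exploit the fact that~$X$ is cut out by quadrics. Concretely, first I would pick homogeneous coordinates $[z_0:z_1:\dots:z_n]$ on~$\PP^n$ so that $x_0 = [1:0:\dots:0]$ and $\rT_{x_0}(X) = \{z_5 = \dots = z_n = 0\}$, i.e.\ the span of the first five coordinate points. Working in the affine chart $z_0 = 1$ with coordinates $x_1,\dots,x_n$, the ideal of~$X$ is generated by quadrics; since~$X$ passes through the origin, each generator has no constant term, and since~$\rT_{x_0}(X)$ is exactly $\{x_5 = \dots = x_n = 0\}$, the linear parts of the generators span precisely $\langle x_5,\dots,x_n\rangle$. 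After Gaussian elimination among the generators I may assume there are quadrics $q_j = x_{4+j} + Q_j(x)$ for $j = 1,\dots,n-4$ with $Q_j$ quadratic (solving for the ``transverse'' coordinates), together with further quadrics $q'_k = Q'_k(x)$ whose linear parts vanish; because $x_0$ is a \emph{node}, the quadratic form cut out on $\PP(T_{x_0}(X))$ by the degree-two parts of the generators that only involve $x_1,\dots,x_4$ is nondegenerate, and the projectivization of its zero locus is exactly~$E \cong \PP^1\times\PP^1$.

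\textbf{Key steps.} The next step is to compute $X \cap \rT_{x_0}(X)$ scheme-theoretically. Intersecting with $\rT_{x_0}(X)$ means setting $x_5 = \dots = x_n = 0$; the equations $q_j = 0$ then read $Q_j(x_1,\dots,x_4,0,\dots,0) = 0$, and together with the $q'_k(x_1,\dots,x_4,0,\dots,0) = 0$ these are homogeneous quadrics in $x_1,\dots,x_4$ alone (no constant or linear terms survive, since every generator was purely quadratic in the chosen coordinates). Homogeneity in $x_1,\dots,x_4$ is precisely the statement that $X\cap\rT_{x_0}(X)$, as a subscheme of $\rT_{x_0}(X)\cong\PP^4$ with vertex $x_0$, is the affine cone over the subscheme $B\subset\PP(T_{x_0}(X))\cong\PP^3$ defined by those quadrics. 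By construction $B$ is an intersection of quadrics, and since the quadric defining~$E$ is among them (up to the span of the others), $B\subseteq E$. This gives the first assertions. For the dimension bound: $E\cong\PP^1\times\PP^1$ is an irreducible surface, so either $B = E$, or $B\subsetneq E$ is a proper closed subscheme and hence $\dim B\le 1$. The case $B = E$ means that every quadric generator of the ideal of~$X$, restricted to $\rT_{x_0}(X)$, is a multiple of the single rank-$4$ quadric defining~$E$; one then checks that this forces~$X$ itself to be contained in — hence equal to, by dimension — that quadratic cone, i.e.\ $X$ is a quadratic cone with vertex $x_0$. So $\dim B\le 1$ unless $X$ is a quadratic cone.

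\textbf{Main obstacle.} The routine part is the coordinate normalization and the bookkeeping of which quadric generators survive the restriction; the one point requiring genuine care is the final dichotomy, i.e.\ showing that $B = E$ forces $X$ to be a quadratic cone rather than merely that $X\cap\rT_{x_0}(X) = \rT_{x_0}(X)\cap(\text{that quadric})$ has the expected dimension. The clean way to argue this is: if $B = E$ then the degree-two part (in $x_1,\dots,x_4$) of the ideal of $X$ restricted to $\rT_{x_0}(X)$ is generated by the rank-$4$ node quadric $q_E$; one can then change the transverse coordinates $x_5,\dots,x_n$ by the quadrics $Q_j$ (as above, $q_j = x_{4+j} + Q_j$) to eliminate $x_5,\dots,x_n$ from all the remaining relations, and since after this substitution every relation is proportional to $q_E$ (a quadric in $x_1,\dots,x_4$), the variety $X$ is isomorphic to the cone over $E$, i.e.\ a quadratic cone, contradicting $\uprho(X) = 1$ — or simply identified as the excluded case. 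I expect this last step, disentangling the scheme structure so that $B = E$ genuinely propagates to all of $X$, to be where the proof needs to be written carefully rather than waved through.
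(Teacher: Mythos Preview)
Your coordinate setup and the argument that $X\cap\rT_{x_0}(X)$ is a cone with vertex~$x_0$ over a scheme $B\subset\PP(T_{x_0}(X))$ cut out by quadrics is correct and essentially matches the paper's reasoning, just written out more explicitly. The paper phrases the same thing intrinsically: for every quadric $Q\supset X$ one has $\rT_{x_0}(X)\subset\rT_{x_0}(Q)$, and $Q\cap\rT_{x_0}(Q)$ is a quadratic cone with vertex~$x_0$; so $X\cap\rT_{x_0}(X)$ is an intersection of such cones, hence a cone over an intersection of quadrics $B\subset\PP(T_{x_0}(X))$, and $B\subset E$ is automatic.

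The gap is in your final step. The ``change the transverse coordinates $x_5,\dots,x_n$ by the quadrics $Q_j$'' is not a linear change of projective coordinates, so it does not show that $X$ is globally a quadratic cone in~$\PP^n$; at best it describes the analytic local structure near~$x_0$ (which is already encoded in the word ``node''). Moreover the $Q_j$ may themselves involve $x_5,\dots,x_n$, so the elimination is not even well-posed as written. The paper bypasses all of this with a one-line dimension count: if $B=E$ then $X\cap\rT_{x_0}(X)$ equals the cone over~$E$ in $\rT_{x_0}(X)\cong\PP^4$, which is an irreducible $3$-dimensional quadric; since this cone sits inside the $3$-dimensional irreducible variety $X$, they coincide. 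That is the argument you should use in place of the coordinate manipulation.
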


\begin{proof}
For each quadric~$Q \subset \PP^n$ containing~$X$ we have~$\rT_{x_0}(X) \subset \rT_{x_0}(Q)$
and~$Q \cap \rT_{x_0}(Q)$ is a quadratic cone with vertex~$x_0$.
Therefore, $X \cap \rT_{x_0}(X)$ is an intersection of quadratic cones, 
hence a cone with vertex~$x_0$ over an intersection of quadrics in~$\PP(T_{x_0}(X))$ which we denote by~$B$.
The embedding~$B \subset E$ is obvious and if~$B = E$ then~$X$ contains a 3-dimensional quadratic cone, hence coincides with it.
\end{proof}

Recall that~$f_1 \colon X_1 \to Z_1$ and~$f_2 \colon X_2 \to Z_2$ denote the extremal contractions 
of the small resolutions~$X_1$ and~$X_2$ of~$X$, see~\eqref{eq:intro-sl},
and the Fano indices~$\io(f_i)$ are defined in~\eqref{eq:io-fi}.

\begin{proposition}
\label{prop:no-cones}
Let~$(X,x_0)$ be a nonfactorial $1$-nodal Fano threefold such that~$\uprho(X) = 1$, 
$X \subset \PP^{\g(X) + 1}$ is an intersection of quadrics, and~$\io(f_1) = \io(f_2) = 1$.
Then~$X \cap \rT_{x_0}(X)$ cannot contain a surface.
\end{proposition}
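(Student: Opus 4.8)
The plan is to argue by contradiction. Suppose $X \cap \rT_{x_0}(X)$ contains a surface $S$. By Lemma~\ref{lem:intersection-tangent}, $X \cap \rT_{x_0}(X)$ is a cone with vertex $x_0$ over a scheme $B \subset E \cong \PP^1 \times \PP^1$ which is an intersection of quadrics with $\dim(B) \le 1$ (since $X$ is not a quadratic cone — its anticanonical class is primitive, so $\io(X) = 1 < 3$). Hence $S$ is a cone over a curve $B_0 \subset E$, i.e., a union of lines on $X$ through $x_0$. The first step is therefore to understand $B_0$ as a divisor on the quadric surface $E$: since $B_0$ is cut out by quadrics in $\PP^3$, its bidegree $(p,q)$ on $E \cong \PP^1\times\PP^1$ satisfies $p+q \le 2$, and since $B_0$ has positive dimension we get $(p,q) \in \{(1,0),(0,1),(1,1),(2,0),(0,2)\}$ (with possible nonreduced or reducible structure). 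The second step is to pass to the blowup $\hX = \Bl_{x_0}(X)$ with exceptional divisor $E$: the strict transform $\hat S \subset \hX$ of $S$ is an irreducible (or at least effective, pure two-dimensional) divisor whose restriction to $E$ is exactly the curve $B_0$, because $S$ is swept out by the lines corresponding to points of $B_0$ and these lines are precisely the fibers of $\xi|_{\hat S}$ over $B_0$. More precisely $\hat S \vert_E = B_0$ as a divisor class on $E$.

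Now I would bring in the numerical machinery of Section~\ref{sec:first}. Since $\hX$ carries the two nef classes $H_1, H_2$ pulled back from $Z_1, Z_2$ via the Sarkisov diagram~\eqref{eq:intro-sl}, and by~\eqref{eq:deg-ci} with $\io(f_1)=\io(f_2)=1$ we have $H_i\vert_E$ of bidegree $(1,0)$ and $(0,1)$ respectively, the key point is to compute $\hat S \cdot C$ for a line $C$ contracted by $\xi$, i.e., for a ruling of $E$ of one of the two types. The strict transform $\hat S$ meets $E$ in $B_0$, and the intersection of $\hat S$ with such a line $\ell$ (the strict transform in $\hX$ of a line on $X$ through $x_0$ \emph{not} lying on $S$) equals the intersection number $B_0 \cdot (\text{point on } E)$ computed inside $E$, which for bidegree $(p,q)$ is $q$ or $p$ depending on the ruling. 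The contradiction I expect to extract is that $\hat S$ must be one of the divisors $E_i$ (exceptional divisor of $f_i$) or a pullback $H_i$ — by Lemma~\ref{lem:eff-mov}, every irreducible divisor on $X$ (equivalently on $X_i$) is either movable, hence a nonnegative combination of $H_1,H_2$, or equal to $E_i$. But the image $\bar S = \sigma_{i*}\hat S$ of $\hat S$ on $X_i$ contains the flopping curve $C_i$ (as $\hat S \supset E \supset C_i$-type loci), forcing $\hat S \cdot C_i < 0$ for an appropriate choice, which pins down $\hat S = E_i$ — yet then $S = \pi_i(E_i)$ would have to be the whole exceptional locus geometry, incompatible with $S$ being a cone over a curve in $E$ passing through the node, since $E_i$ contracts to a curve $\Gamma_i$ \emph{not} through the node (or more precisely $f_i$ birational would force, via Lemma~\ref{lem:curve-genus-degree}, $\io(Z_i)\ge 2$, contradicting $\io(f_i)=1$ via the constraints there when $f_i$ is of type \type{(B_1)}).

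The main obstacle, and where I would spend most of the care, is the bookkeeping of exactly which bidegrees $(p,q)$ for $B_0$ can occur and translating each into a contradiction. The cleanest route is probably the following: use $H \sim$ hyperplane class on $X \subset \PP^{\g(X)+1}$, so on $\hX$ one has $-K_{\hX} \sim H - E$, and by Corollary~\ref{cor:alpha} (which gives $\balpha(X)=1$, though I should check it is available — if not, I use $\balpha(X) H \sim \io(f_2)H_1 + \io(f_1)H_2$ from Proposition~\ref{prop:pic} with $\io(f_i)=1$) we get $-K_{\hX} \sim H_1 + H_2 - E$. Then I compute $\hat S \cdot (-K_{\hX})^2$ or $\hat S \cdot H_i \cdot (-K_{\hX})$ and compare with the constraint $\hat S \vert_E = B_0$ of bidegree $(p,q)$ with $p+q\le 2$; the restriction of $-K_{\hX} = H_1+H_2-E$ to $E$ is $\cO_E(1,1) - \cO_E(-1,-1)\vert$... wait, $E\vert_E = \cO_E(-1,-1)$, so $(-K_{\hX})\vert_E \sim \cO_E(1,1) - \cO_E(-1,-1) = \cO_E(2,2)$, consistent with $-K_{\hX}$ restricting to the anticanonical of $E\cong\PP^1\times\PP^1$. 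The numerics then read: $\hat S$ restricted to $E$ is $\cO_E(p,q)$, and one extracts $p$ and $q$ individually by intersecting $\hat S$ with $H_1$ and $H_2$ along $E$; combining with the movable-cone description of $\hat S$ itself forces $\hat S$ to have $H_1$-degree or $H_2$-degree incompatible with $p+q\le 2$ unless $\hat S = E_1$ or $E_2$, and the latter cases are then excluded by the index constraints of \S\ref{ss:numerical-constraints} (Lemma~\ref{lem:curve-genus-degree}) since $\io(f_i)=1$ rules out $f_i$ of type \type{(B_1)}, and $f_i$ of type \type{(C_1)} or \type{(D_1)} has no divisorial exceptional locus at all. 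This yields the desired contradiction, completing the proof.
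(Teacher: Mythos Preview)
Your proposal has several genuine gaps.

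First, the bidegree constraint is wrong. A curve $B_0 \subset E \cong \PP^1 \times \PP^1 \subset \PP^3$ that is contained in the intersection of $E$ with another quadric lies in a $(2,2)$-curve on $E$, so an irreducible $B_0$ can have any bidegree $(p,q)$ with $p \le 2$ and $q \le 2$, not $p + q \le 2$. You therefore miss the cases $(2,1)$ and $(2,2)$ (and you list $(2,0)$, $(0,2)$, which are never irreducible). The paper handles $(2,1)$ and $(2,2)$, and they require separate arguments.

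Second, your endgame is incorrect. You claim that ``$\io(f_i)=1$ rules out $f_i$ of type \type{(B_1)}'', but this is exactly backwards: the contractions of types~\type{(B_1^0)}, \type{(B_1^1)}, and~\typeb\ all have $\io(f_i)=1$ (see~\S\ref{ss:contractions}). So identifying the pushforward of $\hat S$ with an exceptional divisor $E_i$ does not give a contradiction this way. Relatedly, invoking Corollary~\ref{cor:alpha} is circular, since its proof goes through Corollary~\ref{cor:ng}, which in turn rests on the present proposition. (Your formula $-K_{\hX}\sim H_1+H_2-E$ is also off: relation~\eqref{eq:mkhx} gives $H-E\sim H_1+H_2$, so $-K_{\hX}\sim H_1+H_2$.)

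The paper's argument avoids all of this. The key observation is that, since $H\vert_E=0$ while $E\vert_E$, $H_1\vert_E$, $H_2\vert_E$ have bidegrees $(-1,-1)$, $(1,0)$, $(0,1)$, the kernel of the restriction $\Pic(\hX)\to\Pic(E)$ is exactly $\ZZ\cdot H$. So if $\tS\vert_E$ has bidegree $(p,q)$, one chooses an auxiliary combination of $E$, $H_1$, $H_2$ whose restriction to $E$ cancels $(p,q)$; adding it to $\tS$ yields a class $\sim aH$. Intersecting with $H^2$ and using the bound $H^2\cdot H_i\le 2\g(X)-4$ from Lemma~\ref{lem:hhh} (valid for types \type{(B_1)}, \type{(C_1)}, \type{(D_1)}) gives an immediate contradiction with $\g(X)\ge 7$ in the symmetric cases $(k,k)$; in the asymmetric cases $(1,0)$ and $(2,1)$ it forces $f_i$ to be of type~\typeb, whose exceptional divisor then produces a \emph{second} plane in $X\cap\rT_{x_0}(X)$ of complementary bidegree, reducing to a case already handled. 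Your sketch gestures toward intersecting $\hat S$ with $H_i$ but never finds this descent-to-$\ZZ H$ mechanism, which is what makes the numerics work without knowing $\balpha(X)$.
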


\begin{proof}
Note that the assumption~$\io(f_1) = \io(f_2) = 1$ implies~$\io(X) = 1$ by~\eqref{eq:io-fi}, hence any Cartier divisor on~$X$
is linearly equivalent to a multiple of~$H = - K_X$.
Recall also that~\mbox{$\g(X) \ge 7$} by Corollary~\ref{cor:intersection-of-quadrics}.

If the intersection~$X \cap \rT_{x_0}(X)$ contains an irreducible surface~$S$ of degree~$d$, 
then, by Lemma~\ref{lem:intersection-tangent}, $S$ is the cone over an irreducible curve~$B \subset E$ of degree~$d$.
Let
\begin{equation*}
\tS \coloneqq \Bl_{x_0}(S) \subset \Bl_{x_0}(X)
\end{equation*}
be the strict transform of~$S$; then~$\tS \cap E = B$.
Since~$B$ is contained in the intersection of~$E$ with another quadric, 
the curve~$B$ has one of the following bidegrees
\begin{equation*}
(1,0),
\qquad\text{or}\qquad 
(1,1),
\qquad\text{or}\qquad 
(2,1),
\qquad\text{or}\qquad 
(2,2)
\end{equation*}
(up to transposition).
We will show that all these cases are impossible. 

\subsection*{Case 1.} 

First, assume the curve~$B$ has bidegree~$(k,k)$, $k = 1,2$; then~$\deg(S) = 2k$.
Since~$E\vert_E$ is a divisor of bidegree~$(-1,-1)$,
we have~$(\tS + kE)\vert_E \sim 0$, hence~$\tS + kE$ is the pullback of a Cartier divisor on~$X$,
hence
\begin{equation*}
\tS + kE \sim aH 
\end{equation*}
for some~$a \in \ZZ$. 
Obviously, $a \ge 1$.
Now, intersecting this with~$H^2$, we obtain
\begin{equation*}
2k = \deg(S) = H^2 \cdot \tS + k H^2 \cdot E = a H^3 = a(2\g(X) - 2).
\end{equation*}
But as~$k \in \{1,2\}$ while~$\g(X) \ge 7$ and~$a \ge 1$, this is impossible.

\subsection*{Case 2.} 

Now assume~$B$ has bidegree~$(1,0)$; then~$\deg(S) = 1$.
Note that by~\eqref{eq:deg-ci} the divisor classes~$H_1\vert_E$ and~$H_2\vert_E$ have bidegree~$(1,0)$ and~$(0,1)$, respectively.
Therefore, $(\tS + H_2)\vert_E$ has bidegree~$(1,1)$, hence
\begin{equation*}
\tS + E + H_2 \sim aH,
\end{equation*}
and again~$a \ge 1$.
Intersecting with~$H^2$ we obtain
\begin{equation*}
1 + H^2 \cdot H_2 = a(2\g(X) - 2).
\end{equation*}
If the contraction~$f_2$ is of type~\type{(B_1)}, or~\type{(C_1)}, or~\type{(D_1)},
the left-hand side is positive and does not exceed~$2\g(X) -3$ by Lemma~\ref{lem:hhh}, a contradiction.
Since~$\io(f_2) = 1$, we conclude that~$f_2$ is of type~\typeb.
By Remark~\ref{rem:contr} the image in~$X$ of the $f_2$-exceptional divisor~$E_2 \subset X_2$ is a plane~$S' \subset X$.
It is easy to see that~$S' \subset X \cap \rT_{x_0}(X)$, and the corresponding curve~$B' \subset E$ has bidegree~$(0,1)$.
Then~$B \cup B'$ is a curve of bidegree~$(1,1)$, hence~$\tS + \tS' + E \sim a' H$ 
and the computation of Case~1 gives a contradiction.

\subsection*{Case 3.} 

Finally, assume~$B$ has bidegree~$(2,1)$; then~$\deg(S) = 3$.
In this case~$(\tS - H_1)\vert_E$ has bidegree~$(1,1)$, hence
\begin{equation*}
\tS + E - H_1 \sim -aH,
\end{equation*}
and the same computation as before gives 
\begin{equation*}
a(2\g(X) - 2) = H^2 \cdot H_1 - 3.
\end{equation*}
Now, if~$f_1$ is of type~\type{(B_1)} then Lemma~\ref{lem:hhh} proves that 
the right hand side is between~$1$ and~$2\g(X) - 7$ in contradiction with~$a \in \ZZ$.
If it is of type~\type{(C_1)} or~\type{(D_1)} then the right hand-side is between~$-2$ and~$2\g(X) - 7$, 
hence integrality of~$a$ implies that it is~$0$, hence~$H^2 \cdot H_1 = 3$.
This means that~$X$ is covered by cubic surfaces,
and it cannot be an intersection of quadrics.

This means that~$f_1$ is of type~\typeb.
In this case by Remark~\ref{rem:contr} the image in~$X$ 
of the exceptional divisor~$E_1 \subset X_1$ of this contraction is a plane~$S' \subset X \cap \rT_{x_0}(X)$,
and the computation of Case~2 gives a contradiction.
\end{proof} 

\begin{corollary}
\label{cor:ng}
Let~$X$ be a nonfactorial $1$-nodal Fano threefold such that~$\uprho(X) = 1$.
Then neither~$f_1$ nor~$f_2$ can be of type~\typeb.
\end{corollary}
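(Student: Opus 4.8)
The plan is to dispose of the case of a contraction of type~\typeb{} in three steps — first reducing to the situation where both contractions have Fano index~$1$, then separating the low- and high-genus ranges — and in each step to quote the appropriate earlier result.

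\smallskip

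\emph{Step 1: reduction to $\io(f_1)=\io(f_2)=1$.} Recall from~\S\ref{ss:contractions} that a contraction of type~\typeb{} has Fano index~$1$. So, assuming (after renumbering) that $f_1$ is of type~\typeb{} while $\io(f_2)\ge 2$, we fall into one of the situations classified in Propositions~\ref{prop:if3} and~\ref{prop:big-index}, where $X_1$, $X_2$ and the contractions $f_1,f_2$ are written down explicitly; inspecting those lists one sees that there $f_1$ and $f_2$ are always projective bundles, quadric-surface bundles, conic bundles, or del Pezzo fibrations, never of type~\typeb{} --- a contradiction. Hence it remains to exclude type~\typeb{} under the extra hypothesis $\io(f_1)=\io(f_2)=1$, which by~\eqref{eq:io-fi} forces $\io(X)=1$, so that $2\le\g(X)\le 12$.

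\smallskip

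\emph{Step 2: small genus.} Suppose $\g(X)\le 6$. By Lemmas~\ref{lem:g2}, \ref{lem:g34} and~\ref{lem:g56}, a nonfactorial $1$-nodal Fano threefold with $\uprho(X)=1$, $\io(X)=1$ and $\g(X)\le 6$ exists only for $\g(X)\in\{2,4,5,6\}$, and is one of those described in Proposition~\ref{prop:base-points}, Proposition~\ref{prop:hyperelliptic}\ref{he:nf}, and Proposition~\ref{prop:trigonal}\ref{it:tri:g5}, \ref{it:tri:g6}. Reading off the small resolutions and their contractions from these statements: for $\g(X)=2$ both $f_i$ fiber over $\PP^1$, hence are of type~\type{(D)}; for $\g(X)=4$ one $f_i$ is a degree-$2$ del Pezzo fibration and the other the blow-up of a node or cusp, of type~\type{(B_1^0)}; for $\g(X)=5$ one is a cubic del Pezzo fibration and the other a conic bundle; for $\g(X)=6$ one is a cubic del Pezzo fibration and the other the blow-up of a line, of type~\type{(B_1^1)}. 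In no case is $f_i$ of type~\typeb{}.

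\smallskip

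\emph{Step 3: large genus.} Suppose $\g(X)\ge 7$. Then $-K_X$ is very ample by Corollary~\ref{cor:very-ample}, and $X\subset\PP^{\g(X)+1}$ is an intersection of quadrics by Corollary~\ref{cor:intersection-of-quadrics}; since $\io(f_1)=\io(f_2)=1$, Proposition~\ref{prop:no-cones} applies and shows that $X\cap\rT_{x_0}(X)$ contains no surface. Assume, for contradiction, that $f_i$ is of type~\typeb{}, with exceptional divisor $E_i\cong\PP^2$. By Remark~\ref{rem:contr} one has $\cO_{E_i}(K_{X_i})\cong\cO_{\PP^2}(-1)$, and since $-K_{X_i}=\pi_i^*(-K_X)$ with $-K_X$ very ample, the image $S'\coloneqq\pi_i(E_i)\subset X\subset\PP^{\g(X)+1}$ is a plane. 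The flopping curve $C_i$ is not contained in $E_i$ (otherwise $\pi_i^*(-K_X)|_{E_i}\cong\cO_{\PP^2}(1)$ would have degree $0$ on $C_i\subset\PP^2$); combining this with the description of $\NE(X_i)$ in Lemma~\ref{lem:eff-mov} and the intersection numbers of Lemma~\ref{lem:intersection} and Proposition~\ref{prop:pic} one obtains $E_i\cdot C_i>0$, so $C_i$ meets $E_i$, and since $\pi_i$ contracts $C_i$ to the node $x_0$, the plane $S'$ passes through $x_0$. A plane through $x_0$ lies in the embedded tangent space, hence $S'\subset X\cap\rT_{x_0}(X)$, contradicting Proposition~\ref{prop:no-cones}.

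\smallskip

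The step I expect to be the main obstacle is the last point of Step~3: showing that $S'$ actually contains the node --- equivalently, that the flopping curve meets the exceptional $\PP^2$ of a hypothetical type~\typeb{} contraction (if $C_i\cap E_i=\varnothing$ one would instead get a plane with normal bundle $\cO_{\PP^2}(-2)$ disjoint from $x_0$, and one has to rule this out, e.g.\ by deforming it to $X^{\sm}$ and using $\uprho(X^{\sm})=1$). This is the only genuinely geometric point; it is precisely the computation already carried out inside Cases~2 and~3 of the proof of Proposition~\ref{prop:no-cones}, so in the write-up I would simply refer to it there. Everything else is bookkeeping combining the index trichotomy of~\S\ref{ss:contractions} with the explicit lists of~\S\ref{ss:bi} and~\S\ref{sec:special}.
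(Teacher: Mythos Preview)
Your proof is correct and follows essentially the same three-step structure as the paper: reduce to $\io(f_1)=\io(f_2)=1$ via Propositions~\ref{prop:if3} and~\ref{prop:big-index}, handle $\g(X)\le 6$ by the explicit classifications, and for $\g(X)\ge 7$ derive a plane in $X\cap\rT_{x_0}(X)$ contradicting Proposition~\ref{prop:no-cones}. You are in fact more careful than the paper in Step~3, where you isolate and justify the point that $E_i\cdot C_i>0$ (hence $x_0\in S'$); the paper simply asserts that $S'\subset X\cap\rT_{x_0}(X)$, relying on the same computation already carried out inside Cases~2 and~3 of the proof of Proposition~\ref{prop:no-cones}.
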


\begin{proof}
If~$\io(f_1) > 1$ or~$\io(f_2) > 1$ we use the descriptions of Proposition~\ref{prop:if3} and Proposition~\ref{prop:big-index},
and if~$\io(f_1) = \io(f_2) = 1$ and~$\g(X) \le 6$ we apply classification results of Lemmas~\ref{lem:g2}, \ref{lem:g34}, and~\ref{lem:g56}
together with the descriptions of Propositions~\ref{prop:base-points}, \ref{prop:hyperelliptic}, and~\ref{prop:trigonal}.
In all these cases we see that contractions of type~\typeb{} do not show up.

Finally, assume~$\io(f_1) = \io(f_2) = 1$ and~$\g(X) \ge 7$.
If either of~$f_i$ is of type~\typeb{} then by Remark~\ref{rem:contr} the image in~$X$ 
of the exceptional divisor~$E_i \subset X_i$ of this contraction is a plane~$S' \subset X \cap \rT_{x_0}(X)$
in contradiction with Proposition~\ref{prop:no-cones}.
\end{proof} 

Recall the integer~$\balpha(X)$ defined in Proposition~\ref{prop:pic}. 
As we already mentioned, it would be nice to find an a priori reason for the following crucial result.

\begin{corollary}
\label{cor:alpha}
If~$X$ is a nonfactorial $1$-nodal Fano threefold with~$\uprho(X) = 1$ then~$\balpha(X) = 1$.
\end{corollary}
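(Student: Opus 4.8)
\emph{Proof plan.} The plan is to reduce to the case $\io(f_1)=\io(f_2)=1$ with $\g(X)\ge 7$ and then settle it with a one-line intersection-number estimate. If $\io(f_1)>1$ or $\io(f_2)>1$, the equality $\balpha(X)=1$ has already been established in Propositions~\ref{prop:if3} and~\ref{prop:big-index}, so I would assume $\io(f_1)=\io(f_2)=1$; by~\eqref{eq:io-fi} this forces $\io(X)=1$. If moreover $\g(X)\le 6$, then by Lemmas~\ref{lem:g2}, \ref{lem:g34}, and~\ref{lem:g56} the threefold $X$ is one of the explicit nonfactorial threefolds of Proposition~\ref{prop:base-points}, Proposition~\ref{prop:hyperelliptic}\ref{he:nf}, or Proposition~\ref{prop:trigonal}\ref{it:tri:g5}--\ref{it:tri:g6}, and $\balpha(X)=1$ is part of each of those statements. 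Hence I may assume $\io(f_1)=\io(f_2)=1$ and $\g(X)\ge 7$.

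In this situation, since $\io(f_i)=1$ the contraction $f_i$ is of type \type{(B_1)}, \type{(C_1)}, or \type{(D_1)}: it is not of type \type{(B_2)} for index reasons, and not of type \typeb{} by Corollary~\ref{cor:ng}. I would then intersect the relation~\eqref{eq:mkhx}, which here reads $\balpha(X)H-E\sim H_1+H_2$ in $\Pic(\hX)$, with $H^2$. Using $H^3=(-K_X)^3=2\g(X)-2$ (see~\eqref{eq:def-io-g}) together with $H^2\cdot E=0$ (because $H$ is pulled back from $X$ along the blowup $\pi$, and $\pi$ contracts $E$ to a point), this yields the identity $\balpha(X)\bigl(2\g(X)-2\bigr)=H^2\cdot H_1+H^2\cdot H_2$, where the intersection numbers on the right are exactly those of Lemma~\ref{lem:hhh} (the difference between $H_2$ on $\hX$ and $\sigma_1^*H_2$ is a multiple of $E$, which is killed by $H^2$). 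Since $\g(X)\ge 7$, Lemma~\ref{lem:hhh} bounds $H^2\cdot H_i\le 2\g(X)-4$ for $i=1,2$, so $\balpha(X)\bigl(2\g(X)-2\bigr)\le 4\g(X)-8<2\bigl(2\g(X)-2\bigr)$, whence $\balpha(X)<2$; combined with $\balpha(X)\ge 1$ from Proposition~\ref{prop:pic}, this gives $\balpha(X)=1$.

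I do not expect a genuine obstacle here: once the reduction to $\g(X)\ge 7$ is in place, the estimate is immediate from Lemma~\ref{lem:hhh}. The only points requiring a modicum of care are bookkeeping ones — confirming that the case analysis for $\g(X)\le 6$ is exhaustive (which is precisely the content of Lemmas~\ref{lem:g2}, \ref{lem:g34}, \ref{lem:g56}) and matching the intersection numbers obtained on $\hX$ with those computed on $X_1$ and $X_2$ in Lemma~\ref{lem:hhh} — neither of which is substantive.
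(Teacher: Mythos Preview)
Your proposal is correct and follows essentially the same route as the paper: reduce to $\io(f_1)=\io(f_2)=1$ and $\g(X)\ge 7$ via the explicit classifications, then multiply the relation $\balpha(X)H\sim H_1+H_2$ by $H^2$ and invoke the bound $H^2\cdot H_i\le 2\g(X)-4$ from Lemma~\ref{lem:hhh} (after excluding type~\typeb{} via Corollary~\ref{cor:ng}) to force $\balpha(X)<2$. The only cosmetic difference is that the paper intersects~\eqref{eq:mkx} in $\Cl(X)$ rather than~\eqref{eq:mkhx} in $\Pic(\hX)$, which avoids your parenthetical remark about $E$ altogether.
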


\begin{proof}
If~$\io(f_1) > 1$ or~$\io(f_2) > 1$, we apply Propositions~\ref{prop:if3} and~\ref{prop:big-index}.
If~$\io(f_1) = \io(f_2) = 1$ and~$\g(X) \le 6$ we apply classification results of Lemmas~\ref{lem:g2}, \ref{lem:g34}, and~\ref{lem:g56}
together with the descriptions of Propositions~\ref{prop:base-points}, \ref{prop:hyperelliptic}, and~\ref{prop:trigonal}.
In all these cases we see that~$\balpha(X) = 1$.

Finally, assume~$\io(f_1) = \io(f_2) = 1$ and~$g \coloneqq \g(X) \ge 7$.
Multiplying~\eqref{eq:mkx} by~$H^2$, we obtain
\begin{equation}
\label{eq:a2g2}
\balpha(X)(2g - 2) = H^2\cdot H_1 + H^2 \cdot H_2.
\end{equation}
Now we note that by Corollary~\ref{cor:ng} the contractions~$f_1$ and~$f_2$ are of type~\type{(B_1)}, \type{(C_1)}, or~\type{(D_1)}, 
hence Lemma~\ref{lem:hhh} implies that both summands in the right-hand side are less than~$2g - 2$.
Therefore, their sum is less than~$2(2g - 2)$, hence~$\balpha(X) < 2$, and therefore~$\balpha(X) = 1$.
\end{proof}

\subsection{The anticanonical model of~$\hX$}
\label{ss:iq} 

In this subsection we keep the assumptions of~\S\ref{ss:int-ets}, i.e., 
$X$ is a nonfactorial $1$-nodal Fano threefold 
with~$\uprho(X) = 1$, $\io(X) = 1$, and~\mbox{$\g(X) \ge 7$},
hence~$-K_X$ is very ample and~$X \subset \PP^{\g(X) + 1}$ is an intersection of quadrics. 
We also assume~$\io(f_1) = \io(f_2) = 1$.

Recall the blowup diagram~\eqref{eq:diag-pi-sigma} and the Sarkisov link diagram~\eqref{eq:intro-sl}.

\begin{lemma}
\label{lem:fi-zi}
Let~$X$ be a nonfactorial $1$-nodal Fano threefold with~$\uprho(X) = 1$ and~\mbox{$\g(X) \ge 7$}.
If the $K$-negative contraction~$f_i \colon X_i \to Z_i$ of a small resolutions~$X_i$ of~$X$ satisfies~$\io(f_i) = 1$
then~$f_i$ can be of type~\type{(B_1^1)}, \type{(C_1)}, and~\type{(D_1)} only.
In particular, $Z_i$ is smooth.

Moreover, the case where both~$f_1$ and~$f_2$ are of type~\type{(D_1)} is impossible.
\end{lemma}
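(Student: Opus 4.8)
The plan is to rule out the two forbidden birational types first and then treat the $(\mathrm{D}_1)$–$(\mathrm{D}_1)$ case separately. By Lemma~\ref{lem:no-blowup-smooth-point} the contraction $f_i$ cannot be of type $(\mathrm{B}_2)$, and by Corollary~\ref{cor:ng} it cannot be of type \typeb{}; since $\io(f_i)=1$, the list in~\S\ref{ss:contractions} leaves only types $(\mathrm{B}_1^1)$, $(\mathrm{B}_1^0)$, $(\mathrm{C}_1)$, and $(\mathrm{D}_1)$. So the first real task is to exclude type $(\mathrm{B}_1^0)$. Here I would use Lemma~\ref{lem:curve-genus-degree}: if $f_i$ is of type $(\mathrm{B}_1)$ then $\io(f_i)=1$ forces $\io(X)=1$, and the lemma asserts $\io(Z_i)\ge 2$; but a type $(\mathrm{B}_1^0)$ contraction blows up a node or cusp on $Z_i$, so $Z_i$ is \emph{factorial} of index equal to its own Fano index, and the only factorial one-nodal/one-cuspidal Fano threefolds of index $\ge 2$ with $\uprho=1$ available (by Theorem~\ref{thm:intro-factorial-ci}\ref{it:fact-i2}, equivalently the smoothing constraints of Proposition~\ref{prop:hh-g-xsm} and~\eqref{eq:smoothing-invariants}) do not match: running through Table~\ref{table:hodge-genus} with the relations $\hh(X^\sm)=\hh(Z_i^\sm)-1$ and $\g(X^\sm)=\g(Z_i^\sm)$ as in the proof of Lemma~\ref{lem:curve-genus-degree} shows no admissible pair when $\io(X^\sm)=1$ and $\g(X)\ge 7$. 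This already excludes $(\mathrm{B}_1^0)$, leaving $(\mathrm{B}_1^1)$, $(\mathrm{C}_1)$, $(\mathrm{D}_1)$ as claimed; and in each of these three remaining cases $Z_i$ is smooth ($\PP^2$, $\PP^1$, or a smooth Fano threefold), which gives the "in particular" assertion.

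For the last statement, suppose for contradiction that both $f_1$ and $f_2$ are of type $(\mathrm{D}_1)$, so $Z_1=Z_2=\PP^1$, hence $\io(Z_1)=\io(Z_2)=2$ and the ample generators satisfy $-K_{Z_i}=2H_i$, i.e.\ $H_i$ is the point class. By Corollary~\ref{cor:alpha} we have $\balpha(X)=1$, so~\eqref{eq:mkx} reads $H=H_1+H_2$ (using $\io(f_i)=\io(X)=1$). Intersecting with $H^2$ gives $2\g(X)-2=H^2\cdot H_1+H^2\cdot H_2$, and by Lemma~\ref{lem:hhh} each term equals $\deg(X_i/Z_i)\in\{1,\dots,6\}$, so $2\g(X)-2\le 12$, i.e.\ $\g(X)\le 7$; combined with $\g(X)\ge 7$ this forces $\g(X)=7$ and $\deg(X_1/\PP^1)=\deg(X_2/\PP^1)=6$. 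I expect this numerically cornered situation to be the main obstacle: I would finish it by a geometric argument, showing that a degree-$6$ del Pezzo fibration $X_i\to\PP^1$ is incompatible with $X_i$ being a small resolution of a one-nodal Fano threefold with $X\subset\PP^8$ an intersection of quadrics. Concretely, one computes $H^2\cdot H_i=\deg(X_i/Z_i)=6$ and on $E\cong\PP^1\times\PP^1$ the class $H_i|_E$ has bidegree $(1,0)$ or $(0,1)$ by~\eqref{eq:deg-ci}; then $H\cdot C_i=0$ together with $H=H_1+H_2$ forces the flopping curves to have a very constrained position in the degree-$6$ fibers, and one derives a contradiction — either directly (a general degree-$6$ del Pezzo surface $\PP^1\times\PP^1$ blown up at a point, resp.\ $\PP^2$ blown up at three points, does not contain a suitable rigid rational curve with normal bundle $\cO(-1)^{\oplus 2}$ after smoothing), or by the smoothing argument: $\cX_b$ would be a smooth Fano threefold of type~\typemm{2}{m} admitting two degree-$6$ del Pezzo fibrations, which does not occur in the Mori–Mukai list.

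An alternative and perhaps cleaner route to the final clause is via Hodge numbers: if both $f_i$ are conic/del Pezzo/$(\mathrm{D}_1)$ fibrations one can use Proposition~\ref{prop:hh-g-xsm} to get $\hh(\cX_b)=\hh(X^\sm)$ and then check against the Mori–Mukai classification (as imported in Remark~\ref{rem:other-m}) that no type~\typemm{2}{m} Fano threefold has two extremal contractions both of type $(\mathrm{D}_1)$ with fibers of degree $6$; this is a finite check. I would present the numerical reduction $\g(X)=7$, $\deg(X_i/Z_i)=6$ in a short \texttt{align*}-free display and then invoke the Mori–Mukai table for the contradiction, keeping the case analysis minimal. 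The risk is that the degree-$6$ case is not quite immediate from the literature in the singular setting, in which case the hands-on normal bundle computation above is the fallback.
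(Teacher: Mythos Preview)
Your exclusion of type $(\mathrm{B}_1^0)$ via numerical constraints is a genuinely different route from the paper's and essentially works, though it needs two patches: you asserted $Z_i$ is factorial, but a type-$(\mathrm{B}_1^0)$ target with $\uprho(Z_i)=1$ can also be nonfactorial $1$-nodal, so you must run the same Hodge/genus check in that case (using the nonfactorial branch of Proposition~\ref{prop:hh-g-xsm}), and it still rules everything out; and your genus relation should read $\g(X^\sm)=\g(Z_i^\sm)-1$, not equality. The paper instead argues geometrically: from $H\sim\io(Z_i)H_i-E_i$ one gets $E_i\cdot C_i=\io(Z_i)\ge 2$, so the linear system $|H-E|$ restricted to the exceptional quadric $E_i$ is at most a pencil, hence $\xi$ contracts the strict transform of $E_i$, contradicting smallness of $\xi$ established via Proposition~\ref{prop:no-cones} and Lemma~\ref{lem:barx}\ref{it:barx-almost}. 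Your approach trades this geometry for a finite table check.

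The real gap is in your $(\mathrm{D}_1)$--$(\mathrm{D}_1)$ argument. Multiplying $H\sim H_1+H_2$ by $H^2$ only yields $2g-2\le 12$, leaving the boundary case $g=7$ with both degrees equal to $6$; your proposed finishes are either vague (the normal-bundle sketch does not obviously produce a contradiction) or circular (the smoothing/Mori--Mukai check relies on Proposition~\ref{prop:cones} and the subsequent deformation argument, which in turn invoke the very statement you are proving). The paper instead uses the refined relation~\eqref{eq:mkhx} in $\Pic(\hX)$: with $\balpha(X)=1$ it reads $H-E\sim H_1+H_2$. Cubing on $\hX$: since each $H_i$ is pulled back from $\PP^1$ we have $H_i^2=0$, so $(H_1+H_2)^3=0$; on the other hand $(H-E)^3=H^3-E^3=(2g-2)-2=2g-4$ by Lemma~\ref{lem:intersection}. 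Hence $0=2g-4$, contradicting $g\ge 7$ outright. The missed idea is to cube the relation in $\Pic(\hX)$, where the extra $-E$ term makes the computation sharp.
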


\begin{proof}
By Corollary~\ref{cor:ng} the contraction~$f_i$ cannot be of type~\typeb.
Since~$\io(f_i) = 1$, it follows that this contraction is of type~\type{(B_1)}, \type{(C_1)}, or~\type{(D_1)}.
Moreover, if~$f_i$ is of type~\type{(B_1)} then
\begin{equation}
\label{eq:h-b1}
H \sim \io(Z_i)H_i - E_i.
\end{equation} 
It follows that~$E_i \sim \io(Z_i)H_i - H$ and using~\eqref{eq:deg-ci} we obtain
\begin{equation}
\label{eq:ei-ci}
E_i \cdot C_i = 
\big(\io(Z_i)H_i - H\big) \cdot C_i = 
\io(Z_i)H_i \cdot C_i - H \cdot C_i = 
\io(Z_i).
\end{equation}

Assume~$f_i \colon X_i \to Z_i$ is of type~\type{(B_1^0)}.
Then~$E_i$ is a quadric surface.
Since~$E_i$ is contracted by~$f_i$ to a point,
$C_i \not\subset E_i$,
and therefore~\eqref{eq:ei-ci} implies that~$E_i \cap C_i$ is a finite scheme of length~$\io(Z_i)$.
Moreover, $H\vert_{E_i}$ is the hyperplane class by~\eqref{eq:h-b1}, hence the morphism
\begin{equation*}
\Bl_{E_i \cap C_i}(E_i) \hookrightarrow \Bl_{C_i}(X_i) = \hX \xrightarrow{\ \xi\ } \bar{X}
\end{equation*}
is given by the linear system of hyperplane sections of the quadric~$E_i \subset \PP^3$ 
that contain the scheme~$E_i \cap C_i$ whose length is~$\io(Z_i) \ge 2$ by (see Lemma~\ref{lem:curve-genus-degree}).
Therefore, this linear system is at most a pencil, hence the surface~$\Bl_{E_i \cap C_i}(E_i)$ is contracted by~$\xi$,
which contradicts the combination of Proposition~\ref{prop:no-cones} with Lemma~\ref{lem:barx}\ref{it:barx-almost} 
saying that~$\xi$ is small.

It remains to show that the case where both~$f_1$ and~$f_2$ are of type~\type{(D_1)} is impossible.
Since~$\balpha(X) = 1$ by Corollary~\ref{cor:alpha}, the relation~\eqref{eq:mkhx} takes the form~$H_1 + H_2 \sim H - E$.
Computing on~$\hX$ the top self-intersection of each side, we obtain
\begin{equation*}
H_1^3 + 3H_1^2 \cdot H_2 + 3H_1 \cdot H_2^2 + H_2^3 = H^3 - 3H^2 \cdot E +3H \cdot E^2 - E^3.
\end{equation*}
Now every summand in the left side is zero, while the right side is equal to~$2g - 4$ by Lemma~\ref{lem:intersection}, 
so the equality is impossible when~$g \ge 7$.
\end{proof}

Now we establish a relation between the Sarkisov link~\eqref{eq:intro-sl} and the contractions of~$\bar{X}$.
This relation will be crucial for the arguments of the next subsection.

\begin{proposition}
\label{prop:cones}
Let~$(X,x_0)$ be a nonfactorial $1$-nodal Fano threefold with~$\uprho(X) = 1$, $\io(f_1) = \io(f_2) = 1$, and~$g \coloneqq \g(X) \ge 7$.
Then~$\hX \coloneqq \Bl_{x_0}(X)$ is an almost Fano but not a Fano variety,
and~$\bar{X} \coloneqq \hX_\can$ is a Fano variety with terminal Gorenstein singularities,
and~\mbox{$\uprho(\bar{X}) = 2$}, $\io(\bar{X}) = 1$, \mbox{$\g(\bar{X}) = g - 1$}.
Moreover, there is a commutative diagram
\begin{equation}
\label{eq:barx-diagram}
\vcenter{\xymatrix{
X_1 \ar[d]_{f_1} & 
\hX \ar[l]_{\sigma_1} \ar[r]^{\sigma_2} \ar[d]^\xi &
X_2 \ar[d]^{f_2} 
\\
Z_1 &
\bar{X} \ar[l]_{\bar{f}_1} \ar[r]^{\bar{f}_2} &
Z_2,
}}
\end{equation}
where~$\xi$ is the anticanonical contraction.
Finally, $L_i \coloneqq f_i(C_i) \subset Z_i$ is a line and
\begin{itemize}[wide]
\item 
if~$f_i$ is the blowup of a smooth curve~$\Gamma_i$ then~$\bar{f}_i$ is the blowup of a subscheme~$\bar{\Gamma}_i \subset Z_i$
such that there is an exact sequence~$0 \to \cO_{L_i}(-\io(Z_i)) \to \cO_{\bar{\Gamma}_i} \to \cO_{\Gamma_i} \to 0$;
\item 
if~$f_i$ is a conic bundle over~$Z_i = \PP^2$ with the discriminant~$\Delta_i \subset \PP^2$
then~$\bar{f}_i$ is a conic bundle with the discriminant~$\bar{\Delta}_i = \Delta_i \cup L_i \subset \PP^2$;
\item 
if~$f_i$ is a del Pezzo fibration 
of degree~$d$
then~$\bar{f}_i$ is a del Pezzo fibration of degree~$d - 1$.
\end{itemize}
\end{proposition}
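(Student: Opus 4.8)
The plan is to combine everything already established in this section with the key linear equivalence and the smallness of~$\xi$. First I would collect the structural facts. By Lemma~\ref{lem:barx}\ref{it:barx-weak} the variety~$\hX$ is weak Fano with~$\io(\hX_\can)=\io(\bar X)=1$ and~$\g(\bar X)=g-1$, and by Proposition~\ref{prop:no-cones} (which applies since~$\io(f_1)=\io(f_2)=1$ and~$X$ is an intersection of quadrics by Corollary~\ref{cor:intersection-of-quadrics}) the intersection~$X\cap\rT_{x_0}(X)$ contains no surface; hence by Lemma~\ref{lem:barx}\ref{it:barx-almost} the contraction~$\xi$ is small, $\hX$ is almost Fano and~$\bar X$ has terminal Gorenstein singularities. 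That~$\hX$ is \emph{not} Fano follows because~$\xi$ is small but nontrivial: there is at least one anticanonical line on~$X$ through~$x_0$ (otherwise~$\xi$ would be an isomorphism, forcing~$\bar X\subset\PP^{g}$ to be smooth with~$\uprho=1$, contradicting~$\uprho(\bar X)=\uprho(\hX)=3$). Since~$\pi_i$ are small and~$\uprho(\hX)=3$, we get~$\uprho(\bar X)=2$.

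Next I would produce the diagram~\eqref{eq:barx-diagram}. By Corollary~\ref{cor:alpha} we have~$\balpha(X)=1$, so~\eqref{eq:mkhx} reads~$-K_{\hX}\sim H_1+H_2$ with~$H_1,H_2$ the pullbacks of the ample generators of~$\Pic(Z_1),\Pic(Z_2)$; these are nef. Any curve contracted by~$\xi$ is a flopping curve of~$-K_{\hX}$, hence meets~$-K_{\hX}=H_1+H_2$ in degree~$0$, so by nefness~$H_i$ restricts trivially to it. Therefore each~$H_i$ is the pullback of a Cartier class on~$\bar X$, giving morphisms~$\bar f_i\colon\bar X\to Z_i$ with~$\xi^*(\bar f_i^*H_i)=H_i$; taking Stein factorizations and using that~$\xi$ is small identifies~$\bar f_i$ so that the square commutes, and also gives~$-K_{\bar X}\sim\bar f_1^*H_1+\bar f_2^*H_2$. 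By Lemma~\ref{lem:fi-zi} each~$f_i$ is of type~\type{(B_1^1)}, \type{(C_1)} or~\type{(D_1)}, and~$Z_i$ is smooth; moreover not both are of type~\type{(D_1)}.

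Then I would identify~$\bar f_i$ in each case. Write~$\sigma_i\colon\hX\to X_i$ for the blowup of the flopping curve~$C_i$ and observe that~$\bar f_i$ is the relative anticanonical model of~$f_i\circ\sigma_i\colon\hX\to Z_i$ (both have the same pushforward of structure sheaves and~$-K$-classes restricted to fibers). The line claim~$L_i\coloneqq f_i(C_i)$: from~\eqref{eq:deg-ci} with~$\io(f_i)=\io(X)=1$ we get~$H_i\cdot C_i=1$, so~$f_i$ maps~$C_i$ isomorphically onto a curve of~$H_i$-degree~$1$ in~$Z_i$, i.e.\ a line. For type~\type{(B_1^1)}: if~$f_i(C_i)=L_i$ is disjoint from~$\Gamma_i$ in~$Z_i$, the relative anticanonical model of the two-step blowup~$\hX\to Z_i$ is the blowup of~$\Gamma_i\sqcup L_i$, but in general~$L_i$ and~$\Gamma_i$ may meet, and the scheme~$\bar\Gamma_i$ is the one fitting into~$0\to\cO_{L_i}(-\io(Z_i))\to\cO_{\bar\Gamma_i}\to\cO_{\Gamma_i}\to 0$; the twist~$-\io(Z_i)$ comes from~\eqref{eq:ei-ci}, which computes~$E_i\cdot C_i=\io(Z_i)$, i.e.\ the length of the scheme where~$C_i$ meets the exceptional divisor, governing how~$\bar f_i$ glues the exceptional~$\PP^1$ over~$L_i$ onto~$\Gamma_i$. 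For type~\type{(C_1)}: blowing up~$C_1$ and taking the relative anticanonical model adds one degenerate fiber over the line~$L_i$, so~$\bar\Delta_i=\Delta_i\cup L_i$; the degree bookkeeping matches~$H^2\cdot H_i$ via Lemma~\ref{lem:hhh} and~$-K_{\bar X}\sim\bar f_1^*H_1+\bar f_2^*H_2$. For type~\type{(D_1)}: the flopping curve is a section-like curve of degree~$1$ over~$Z_i=\PP^1$, and contracting it in the fibers drops the del Pezzo degree by~$1$ (this is the elementary-transformation computation carried out explicitly in the proof of Proposition~\ref{prop:big-index}).

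The main obstacle will be the type~\type{(B_1^1)} case: verifying that the relative anticanonical model of~$f_i\circ\sigma_i$ is genuinely the blowup of a scheme~$\bar\Gamma_i$ (rather than something worse) and pinning down the precise extension class, including the twist~$-\io(Z_i)$. This requires a local analysis near~$\Gamma_i\cap L_i$ of how the exceptional divisor~$E_i\subset X_i$ interacts with the blown-up curve~$C_i$, using~\eqref{eq:ei-ci} and the fact that~$\bar X$ has terminal Gorenstein singularities to conclude the model is normal with the stated structure. The conic-bundle and del Pezzo cases are more routine, reducing to intersection-number checks via Lemma~\ref{lem:intersection}, Lemma~\ref{lem:hhh}, and Lemma~\ref{lem:int-flop}. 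Finally, the impossibility of both~$f_i$ being~\type{(D_1)} is already recorded in Lemma~\ref{lem:fi-zi}, so it only needs to be invoked.
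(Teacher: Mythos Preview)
Your overall architecture matches the paper's: invoke Proposition~\ref{prop:no-cones} and Lemma~\ref{lem:barx} for the almost Fano and terminal claims, use $\balpha(X)=1$ and the nef decomposition $-K_{\hX}\sim H_1+H_2$ to descend~$H_i$ to~$\bar X$ and build the diagram, then treat the three contraction types. So the strategy is right.

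But the argument that $\hX$ is \emph{not} Fano is broken. You write that if~$\xi$ were an isomorphism this would ``force $\bar X\subset\PP^g$ to be smooth with $\uprho=1$, contradicting $\uprho(\bar X)=\uprho(\hX)=3$.'' There is no reason $\bar X$ would have $\uprho=1$ in that scenario; if $\xi$ is an isomorphism then $\bar X=\hX$ simply has $\uprho=3$, and nothing you have said produces a contradiction. The paper instead \emph{constructs} a $K$-trivial curve directly: by Lemma~\ref{lem:fi-zi} one of the $f_i$ is of type~\type{(B_1^1)} or~\type{(C_1)}; in the first case~\eqref{eq:ei-ci} gives $E_i\cdot C_i=\io(Z_i)>0$, so any ruling of $E_i$ meeting $C_i$ pushes to a line through $x_0$; in the second case $C_i$ meets the preimage of the discriminant and a half-fiber there gives the line. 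This same analysis is what establishes that $f_i\vert_{C_i}\colon C_i\to L_i$ is an isomorphism in those cases, which you assert without justification. Relatedly, your sentence ``since $\pi_i$ are small and $\uprho(\hX)=3$, we get $\uprho(\bar X)=2$'' is a non sequitur (the $\pi_i$ go to $X$, not $\bar X$); the paper gets $\uprho(\bar X)\le 2$ from nontriviality of $\xi$ and $\ge 2$ from the descent of $H_1,H_2$.

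On the case analysis, you have also elided the key dichotomy in type~\type{(B_1^1)}. The paper distinguishes $L_i\neq\Gamma_i$ (then $\bar\Gamma_i=\Gamma_i\cup L_i$ is planar, hence a local complete intersection, and one checks the map $\hX\to\Bl_{\bar\Gamma_i}(Z_i)$ is small and crepant) from $L_i=\Gamma_i$ (then $\bar\Gamma_i$ is a \emph{nonreduced} thickening of $\Gamma_i$ determined by the section $C_i\subset E_i=\PP_{\Gamma_i}(\cN^\vee_{\Gamma_i/Z_i})$, and the map is crepant but not small). In both cases one concludes $\bar X=\Bl_{\bar\Gamma_i}(Z_i)$ by comparing Picard numbers. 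The exact sequence in the statement, with twist $-\io(Z_i)$, is then a formal consequence of the length of $\Gamma_i\cap L_i$ being $\io(Z_i)$; your instinct about~\eqref{eq:ei-ci} is correct but you need the actual scheme $\bar\Gamma_i$ first. Similarly, in the conic-bundle case the paper constructs $\bar\cE$ explicitly via an elementary transformation along $C_i$, and in the del Pezzo case it projects from the section $C_i$; your sketches of these are too vague to count as proofs.
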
 

The morphisms~$\bar{f}_i$ in~\eqref{eq:barx-diagram} are $K$-negative with connected fibers and relative Picard number~$1$,
but as~$\bar{X}$ is not $\QQ$-factorial, they are not extremal contractions.
As we will see in the proof, in the cases where~$f_i$ is a blowup, the curve~$\bar{\Gamma}_i = \Gamma_i \cup L_i$ is reducible,
except in the case where~$\Gamma_i = L_i$ and~$\bar{\Gamma}_i$ is nonreduced.

\begin{proof}
A combination of Proposition~\ref{prop:no-cones} and Lemma~\ref{lem:barx}\ref{it:barx-almost} proves that~$\hX$ is almost Fano
and~$\bar{X}$ is a Fano variety with terminal Gorenstein singularities;
the index~$\io(\bar{X})$ and genus~$\g(\bar{X})$ of~$\bar{X}$ are computed in Lemma~\ref{lem:barx}\ref{it:barx-weak}.

Next, we check that~$\hX$ is not a Fano variety, i.e., $\hX$ contains a $K$-trivial curve.
By Lemma~\ref{lem:barx} it is enough to check that there are lines in~$X$ passing through the node~$x_0$.
By Lemma~\ref{lem:fi-zi} one of the~$f_i$ has type~\type{(B_1^1)} or~\type{(C_1)};
we consider these two cases separately.
Recall that~\mbox{$C_1 \subset X_1$} and~\mbox{$C_2 \subset X_2$} denote the flopping curves of the small resolutions of~$X$. 

First, if~$f_i$ is of type~\type{(B_1^1)}, 
we have~$E_i \cdot C_i > 0$ by~\eqref{eq:ei-ci}.
Therefore, $E_i \cap C_i \ne \varnothing$ 
and~\eqref{eq:h-b1} implies that the image in~$X$ of any ruling~$\ell$ of~$E_i$ intersecting~$C_i$ is a line passing through~$x_0$.
It also follows that the length of intersection~$\ell \cap C_i$ cannot be larger than~$1$,
hence the morphism~$f_i$ induces an isomorphism~$C_i \xrightiso{} L_i \subset Z_i$, hence~$L_i \cong \PP^1$.

Similarly, if~$f_i$ is of type~\type{(C_1)}, the discriminant~$\Delta_i \subset Z_i = \PP^2$ of~$f_i$ is a nonempty curve,
and by~\eqref{eq:deg-ci} the flopping curve~$C_i$ is not contracted by~$f_i$.
Therefore
\begin{equation*}
C_i \cdot f_i^{*}(\Delta_i)= {f_i}_*(C_i)\cdot \Delta_i > 0
\end{equation*}
and the image in~$X$ of any half-fiber~$\ell$ of~$f_i$ intersecting~$C_i$ is a line passing through~$x_0$.
Since~$H_i \cdot C_i = 1$ and~$Z_i = \PP^2$, the map~$C_i \to L_i \subset Z_i$ is an isomorphism and~$L_i$ is a line.

We see that~$\hX$ is, indeed, not a Fano variety and~$\uprho(\bar{X}) \le \uprho(\hX) - 1 = 2$.
This proves the first claim of the proposition except for the equality~$\uprho(\bar{X}) = 2$ that will be proved later.

Next, we construct the diagram~\eqref{eq:barx-diagram}.
Since~$\balpha(X) = 1$ by Corollary~\ref{cor:alpha}, the relation~\eqref{eq:mkhx} takes the form~$H_1 + H_2 \sim H - E$.
Therefore, if~$\Upsilon$ is a curve class contracted by~$\xi$, we have
\begin{equation*}
H_1 \cdot \Upsilon + H_2 \cdot \Upsilon = (H_1 + H_2) \cdot \Upsilon = (H - E) \cdot \Upsilon = 0.
\end{equation*}
Since both~$H_1$ and~$H_2$ are nef classes on~$\hX$, we deduce that~$H_1 \cdot \Upsilon = H_2 \cdot \Upsilon = 0$.
This means that there are classes~$\bar{H}_1,\bar{H}_2 \in \Pic(\bar{X})$ such that~$H_i = \xi^*\bar{H}_i$.
Moreover, it follows that~$\uprho(\bar{X}) \ge 2$ 
and together with the opposite inequality proved above, this proves that~$\uprho(\bar{X}) = 2$.
We also conclude that the maps~$f_i \circ \sigma_i \colon \hX \to Z_i$ factor through~$\bar{X}$.
This defines the morphisms~$\bar{f}_i$ and proves the existence of commutative diagram~\eqref{eq:barx-diagram}. 

It remains to relate~$\bar{f}_i$ to~$f_i$.
Let~$L_i \coloneqq f_i(C_i) \subset Z_i$;
then~$H_i \cdot L_i = 1$ by~\eqref{eq:deg-ci}. 

First, assume~$f_i$ is the blowup of a smooth curve~$\Gamma_i \subset Z_i$ on a smooth Fano threefold~$Z_i$.
Recall also that~$L_i = f_i(C_i) \cong C_i$ is a smooth rational curve on~$Z_i$.

Assume~$L_i \ne \Gamma_i$.
Then~$\bar{\Gamma}_i \coloneqq \Gamma_i \cup L_i$ is a planar (hence local complete intersection) curve,
$C_i$ is the strict transform of~$L_i$, and a local computation 
shows that~$\Bl_{\bar{\Gamma}_i}(Z_i)$ is normal and
the natural morphism
\begin{equation*}
\xi' \colon \hX = \Bl_{C_i}(X_i) = \Bl_{C_i}(\Bl_{\Gamma_i}(Z_i)) \longrightarrow \Bl_{\Gamma_i \cup L_i}(Z_i) = \Bl_{\bar{\Gamma}_i}(Z_i)
\end{equation*}
is small, hence crepant. 
Therefore, the anticanonical contraction~$\xi$ factors through~$\xi'$.
Since the target~$\Bl_{\bar{\Gamma}_i}(Z_i)$ of~$\xi'$ is singular 
(because~$\Gamma_i \cap L_i \ne \varnothing$ by~\eqref{eq:ei-ci})
and the source~$\hX$ is smooth with~$\uprho(\hX) = 3$,
we have~$\uprho(\Bl_{\bar{\Gamma}_i}(Z_i)) \le 2 = \uprho(\bar{X})$, hence
\begin{equation}
\label{eq:barx-barg}
\bar{X} = \Bl_{\bar{\Gamma}_i}(Z_i)
\qquad\text{and}\qquad 
\xi = \xi'.
\end{equation}

Similarly, if~$L_i = \Gamma_i$ we let~$\bar{\Gamma}_i$ be the nonreduced subscheme of~$Z_i$ 
such that~$I^2_{\Gamma_i} \subset I_{\bar{\Gamma}_i} \subset I_{\Gamma_i}$
and the sheaf~$I_{\bar{\Gamma}_i} / I^2_{\Gamma_i} \subset I_{\Gamma_i} / I^2_{\Gamma_i} = \cN^\vee_{\Gamma_i/Z_i}$
corresponds to the section of~$E_i = \PP_{\Gamma_i}(\cN^\vee_{\Gamma_i/Z_i}) \to \Gamma_i$ given by~$C_i$.
Then~$\bar{\Gamma}_i$ is again planar, $\Bl_{\bar{\Gamma}_i}(Z_i)$ is normal,
the natural morphism
\begin{equation*}
\xi' \colon \hX = \Bl_{C_i}(X_i) = \Bl_{C_i}(\Bl_{\Gamma_i}(Z_i)) \longrightarrow \Bl_{\bar{\Gamma}_i}(Z_i)
\end{equation*}
is crepant (although this time~$\Bl_{\bar{\Gamma}_i}(Z_i)$ has non-isolated canonical singularities and~$\xi$ is not small), 
and the same argument as above proves~\eqref{eq:barx-barg}.

In either case we see that~$\bar{f}_i$ is the blowup of the curve~$\bar{\Gamma}_i$
and obtain the required relation between the structure sheaves of~$\Gamma_i$ and~$\bar{\Gamma}_i$.

Next, assume~$f_i \colon X_i \to Z_i = \PP^2$ is a conic bundle with discriminant~$\Delta_i \subset \PP^2$.
Consider the vector bundle~$\cE \coloneqq f_{i*}(\cO_{X_i}(H))^\vee$, 
so that~$X_i \subset \PP_{\PP^2}(\cE)$ is a divisor of relative degree~$2$.
Since~$C_i$ is an $H$-trivial section of~$f_i$ over~$L_i$,
it corresponds to an embedding~$\cO_{L_i} \hookrightarrow \cE\vert_{L_i}$.
Let~$\bar{\cE}$ be the dual bundle to the kernel of the corresponding epimorphism~$\cE^\vee \to \cO_{L_i}$,
so that we have an exact sequence~$0 \to \bar{\cE}^\vee \to \cE^\vee \to \cO_{L_i} \to 0$ and its dual sequence
\begin{equation*}
0 \longrightarrow \cE \xrightarrow{\ \varphi\ } \bar{\cE} \longrightarrow \cO_{L_i}(1) \longrightarrow 0.
\end{equation*}
Moreover, if~$q \colon \cE \to \cE^\vee(k)$ is the morphism corresponding to the equation of~$X_i \subset \PP_{\PP^2}(\cE)$
and~$\varphi' \colon \bar{\cE}(-1) \longrightarrow \cE$ is the unique morphism 
such that~$\varphi \circ \varphi'$ is the multiplication by the equation of the line~$L_i$
then there is a unique commutative diagram
\begin{equation*}
\xymatrix{
\bar{\cE}(-1) \ar[r]^{\bar{q}} \ar[d]_{\varphi'} &
\bar{\cE}^\vee(k) \ar[d]^{\varphi^\vee}
\\
\cE \ar[r]^q &
\cE^\vee(k)
}
\end{equation*}
It defines a conic bundle~$\bar{X}' \subset \PP_{\PP^2}(\bar{\cE})$ with the discriminant~$\bar{\Delta}_i = \Delta_i \cup L_i$
and the morphism
\begin{equation*}
\xi' \colon \hX = \Bl_{C_i}(X_i) \hookrightarrow \Bl_{C_i}(\PP_{\PP^2}(\cE)) \xrightarrow{\ \varphi\ } \PP_{\PP^2}(\bar{\cE})
\end{equation*}
factors through~$\bar{X}'$.
Again, it is easy to see that~$\bar{X}'$ is normal, $\xi'$ is crepant, and~$\bar{X}'$ is singular,
so arguing as above we conclude that~$\bar{X} = \bar{X}'$ and~$\xi = \xi'$.

Finally, assume~$f_i \colon X_i \to Z_i = \PP^1$ is a del Pezzo fibration of degree~$d \coloneqq \deg(X_i/Z_i)$.
Consider the vector bundle~$\cE \coloneqq f_{i*}(\cO_{X_i}(H))^\vee$ of rank~$d + 1$, 
so that~$X_i \subset \PP_{\PP^1}(\cE)$ is the relative anticanonical embedding.
Clearly, $f_i\vert_{C_i} \colon C_i \to L_i = Z_i$ is an isomorphism, hence~$C_i$ defines a section of~$\cE$.
Then we have an exact sequence
\begin{equation*}
0 \longrightarrow \cO_{\PP^1} \xrightarrow{\ C_i\ } \cE \xrightarrow{\ \varphi\ } \bar{\cE} \longrightarrow 0,
\end{equation*}
where~$\bar{\cE}$ is a vector bundle of rank~$d$.
It induces a morphism
\begin{equation*}
\xi' \colon \hX = \Bl_{C_i}(X_i) \hookrightarrow \Bl_{C_i}(\PP_{\PP^1}(\cE)) \xrightarrow{\ \varphi\ } \PP_{\PP^1}(\bar{\cE}),
\end{equation*}
and if~$\bar{X}' \subset \PP_{\PP^1}(\bar{\cE})$ is its image, 
then~$\bar{X}'$ is normal and~$\xi'$ is crepant
and the same argument as above shows that~$\bar{X} = \bar{X}'$ and~$\xi = \xi'$.
It remains to note that the general fiber of~$\bar{X}$ over~$\PP^1$ 
is the anticanonical model of the blowup of a del Pezzo surface of degree~$d$ at point,
hence it is a (possibly singular) del Pezzo surface of degree~$d - 1$.
\end{proof} 

\subsection{Proof of Theorems~\textup{\ref{thm:intro-nf}} and~\textup{\ref{thm:intro-nf-contractions}}}
\label{ss:proofs-nf}

In this subsection~$X$ is any nonfactorial 1-nodal Fano threefold with~$\uprho(X) = 1$.
We start by interpreting the cases~\ref{it:intro-nf-ample}--\ref{it:intro-nf-g2} of Theorem~\ref{thm:intro-nf}
in terms of invariants of~$X$.

\begin{lemma}
\label{lem:io-k}
Let~$(X,x_0)$ be a nonfactorial $1$-nodal Fano threefold with~$\uprho(X) = 1$.
Let~$f_1$ and~$f_2$ be the $K$-negative contractions of the small resolutions of~$X$ and~\mbox{$\hX \coloneqq \Bl_{x_0}(X)$}. 
Then
\begin{enumerate}
\item 
$-K_{\hX}$ is ample if and only if~$\io(f_1) > 1$ or~$\io(f_2) > 1$;
\item 
$-K_{\hX}$ is nef and big but not ample if and only if~$\io(f_1) = \io(f_2) = 1$ and~$\g(X) > 2$;
\item 
$-K_{\hX}$ is nef but not big if and only if~$\io(f_1) = \io(f_2) = 1$ and~$\g(X) = 2$.
\end{enumerate}
\end{lemma}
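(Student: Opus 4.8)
The plan is to split the argument according to whether one of the Fano indices $\io(f_1),\io(f_2)$ exceeds~$1$, using throughout that $\io(X)$ divides both of them by~\eqref{eq:io-fi}. First I would treat the case $\io(f_1)>1$ or $\io(f_2)>1$: by Proposition~\ref{prop:if3} and by Proposition~\ref{prop:big-index} (the latter applied to $f_1$ or to $f_2$) the variety $X$ is then one of a short explicit list, and in every instance $\hX$ is a smooth Fano threefold, so $-K_{\hX}$ is ample. Since ampleness implies bigness implies nefness, this simultaneously gives the ``if'' direction of~(a) and shows that $\io(f_i)>1$ is incompatible with~(b) and with~(c); hence in the rest of the analysis I may assume $\io(f_1)=\io(f_2)=1$, and then $\io(X)=1$.

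Under this assumption I would first observe that $-K_{\hX}$ is always nef: by Corollary~\ref{cor:alpha} we have $\balpha(X)=1$, so~\eqref{eq:mkhx} becomes $-K_{\hX}=H-E\sim H_1+H_2$, and $H_1,H_2$ are nef on $\hX$ being pullbacks of the ample generators of $\Pic(Z_1),\Pic(Z_2)$ along $f_i\circ\sigma_i$. Next, by the computation in the proof of Proposition~\ref{prop:hh-g-xsm} one has $(-K_{\hX})^3=2\g(\hX)-2=2\g(X)-4$; since a nef divisor on a projective threefold is big precisely when its triple self-intersection is positive, and $\g(X)\ge2$, this shows $-K_{\hX}$ is big if and only if $\g(X)>2$. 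Finally I would rule out ampleness of $-K_{\hX}$ in the index-one case. For $\g(X)=2$ it is not even big. For $\g(X)\ge7$, Proposition~\ref{prop:cones} asserts outright that $\hX$ is almost Fano but not a Fano variety. For $\g(X)\in\{4,5,6\}$ (the value $\g(X)=3$ not occurring for nonfactorial $X$, by Lemma~\ref{lem:g34}), $-K_{\hX}$ is nef and big, so $\hX$ is a smooth weak Fano threefold and $\bar{X}:=\hX_{\can}$ is its anticanonical model with $-K_{\hX}=\xi^*(-K_{\bar{X}})$ by~\eqref{eq:crepant-contr}; the explicit descriptions of Propositions~\ref{prop:hyperelliptic} and~\ref{prop:trigonal}, reached via Lemmas~\ref{lem:g34} and~\ref{lem:g56}, give $\uprho(\bar{X})=2<3=\uprho(\hX)$, so $\xi$ is not an isomorphism and the ample class $-K_{\bar{X}}$ cannot pull back to an ample class on $\hX$.

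Assembling these facts yields the three equivalences: $-K_{\hX}$ is ample iff $\io(f_1)>1$ or $\io(f_2)>1$; and when $\io(f_1)=\io(f_2)=1$ the class $-K_{\hX}$ is nef, is never ample, and is big precisely when $\g(X)>2$. The step relying on the heaviest earlier machinery is the nefness of $-K_{\hX}$, which rests on the equality $\balpha(X)=1$ of Corollary~\ref{cor:alpha}, and I expect the genuine obstacle to be the non-ampleness in the index-one case, since it really does need Proposition~\ref{prop:cones} for $\g(X)\ge7$ together with the case-by-case structure results of Section~\ref{sec:special} for $\g(X)\le6$; once these are in place the bigness dichotomy is immediate from $(-K_{\hX})^3=2\g(X)-4$.
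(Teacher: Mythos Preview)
Your argument is correct. The paper's proof is purely a case enumeration: for each of the three regimes (some $\io(f_i)>1$; both indices~$1$ with $\g(X)>2$; both indices~$1$ with $\g(X)=2$) it invokes the relevant structure result (Propositions~\ref{prop:if3}, \ref{prop:big-index}, \ref{prop:hyperelliptic}, \ref{prop:trigonal}, \ref{prop:cones}, and Proposition~\ref{prop:base-points} via Lemma~\ref{lem:g2}) to read off which of the three positivity conditions on $-K_{\hX}$ holds, and then notes that exhaustiveness of the cases forces the equivalences. You follow the same outline for the ample case and for non-ampleness when $\io(f_1)=\io(f_2)=1$, but you treat nefness and bigness more uniformly: nefness from the decomposition $-K_{\hX}\sim H_1+H_2$ (via $\balpha(X)=1$, Corollary~\ref{cor:alpha}) and the bigness dichotomy from the numerical identity $(-K_{\hX})^3=2\g(X)-4$. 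This is a cleaner packaging and makes transparent why the boundary falls exactly at $\g(X)=2$; the paper instead extracts these facts implicitly from the explicit descriptions of $\bar X$ (e.g.\ the elliptic-fibration remark after Proposition~\ref{prop:base-points}). Both routes ultimately rest on the same structural inputs, since your non-ampleness step for $\g(X)\in\{4,5,6\}$ and $\g(X)\ge7$ still requires Propositions~\ref{prop:hyperelliptic}, \ref{prop:trigonal}, and~\ref{prop:cones}.
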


\begin{proof}
First, assume that~$\io(f_1) > 1$ or~$\io(f_2) > 1$.
Then we apply Propositions~\ref{prop:if3} and~\ref{prop:big-index}.
In each case we see that~$\hX$ is a smooth Fano variety, in particular~$-K_{\hX}$ is ample.

Next, assume~$\io(f_1) = \io(f_2) = 1$ and~$\g(X) > 2$.
If~$\g(X) \in \{3,4\}$ we apply Lemma~\ref{lem:g34} and Proposition~\ref{prop:hyperelliptic},
if~$\g(X) \in \{5,6\}$ we apply Lemma~\ref{lem:g56} and Proposition~\ref{prop:trigonal},
and if~$\g(X) \ge 7$ we apply Proposition~\ref{prop:cones}.
In all cases we see that~$-K_{\hX}$ is nef and big but not ample.

Finally, assume~$\io(f_1) = \io(f_2) = 1$ and~$\g(X) = 2$.
Then we apply Lemma~\ref{lem:g2} and Proposition~\ref{prop:base-points} and see that~$-K_{\hX}$ is nef but not big.

Since we exhausted all possibilities, it follows that the implications are equivalences.
\end{proof} 

Now we can prove the main results of this section.

\begin{proof}[Proof of Theorems~\xref{thm:intro-nf} and~\xref{thm:intro-nf-contractions}]
Let~$(X,x_0)$ be a nonfactorial $1$-nodal Fano threefold.

If~$\io(f_1) > 1$ or~$\io(f_2) > 1$, 
so that by Lemma~\ref{lem:io-k} we are in case~\ref{it:intro-nf-ample} of Theorem~\ref{thm:intro-nf},
we apply Propositions~\ref{prop:if3} and~\ref{prop:big-index}.

Similarly, if~$\io(f_1) = \io(f_2) = 1$ and~$\g(X) = 2$,
so that by Lemma~\ref{lem:io-k} we are in case~\ref{it:intro-nf-g2} of Theorem~\ref{thm:intro-nf},
we apply Lemma~\ref{lem:g2} and Proposition~\ref{prop:base-points}.

Finally, assume~$\io(f_1) = \io(f_2) = 1$ and~$\g(X) > 2$,
so that by Lemma~\ref{lem:io-k} we are in case~\ref{it:intro-nf-bpf} of Theorem~\ref{thm:intro-nf}.

If~$\g(X) \in \{3,4\}$ we apply Lemma~\ref{lem:g34} and Proposition~\ref{prop:hyperelliptic},
and see that~$\g(X) = 4$ and~$\bar{X}$ is a smoothable Fano threefold with canonical Gorenstein singularities of type~\typemm{2}{1}.

If~$\g(X) \in \{5,6\}$ we apply Lemma~\ref{lem:g56} and Proposition~\ref{prop:trigonal}.
and see that~$\bar{X}$ is a smoothable Fano threefold with terminal or canonical Gorenstein singularities 
of type~\typemm{2}{2} (if~$\g(X) = 5$) or type~\typemm{2}{3} (if~$\g(X) = 6$).
Note that the claims of Theorem~\ref{thm:intro-nf-contractions} in these cases also follow. 

Finally, assume~$\g(X) \ge 7$.
We will prove the remaining claims of Theorems~\xref{thm:intro-nf} and~\xref{thm:intro-nf-contractions}. 
Note that now we are in the situation of Propositions~\ref{prop:cones};
we use below the notation introduced therein.
In particular, we denote by~$\bar{X}$ the anticanonical model of~$\hX = \Bl_{x_0}(X)$, 
which is a Fano threefold with terminal Gorenstein singularities fitting into the diagram~\eqref{eq:barx-diagram},
and by~\mbox{$\bar{H}_i \in \Pic(\bar{X})$} the pullbacks of the ample generators~$H_i$ of~$\Pic(Z_i)$ along the morphisms~$\bar{f}_i$.

First, note that~$\bar{X}$ is smoothable by~\cite[Theorem~11]{Na97}.
Let~$p \colon \cX \to B$ be a smoothing of~$\bar{X}$ over a smooth pointed curve~$(B,o)$, 
so that~$\cX_o \cong \bar{X}$ and~$\cX$ is smooth over~$B \setminus o$.
By Proposition~\ref{prop:picard-sheaf} the fourfold~$\cX$ is factorial and has terminal Gorenstein singularities.
Moreover, we may assume that~$-K_{\cX}$ is $p$-ample, $\uprho(\cX/B) = 2$, the \'etale Picard sheaf~$\Pic_{\cX/B}$ is locally constant,
and the fibers~$\cX_b$ with~$b \ne o$ are smooth Fano varieties with
\begin{equation*}
\uprho(\cX_b) = \uprho(\bar{X}) = 2,
\qquad 
\io(\cX_b) = \io(\bar{X}) = 1,
\qquad\text{and}\qquad 
\g(\cX_b) = \g(\bar{X}) = \g(X) - 1.
\end{equation*}
The last equality implies that~$6 \le \g(\cX_b) \le 11$, $\g(\cX_b) \ne 10$.
Inspecting~\cite[Table~2]{Mori-Mukai:MM} or~\cite{fanography}, we conclude that~$\cX_b$ must be of type~\typemm{2}{m} with
\begin{equation*}
4 \le m \le 14,
\qquad 
m \ne 11.
\end{equation*}

Furthermore, since~$\Pic_{\cX/B}$ is \'etale-locally constant, 
after a base change to an appropriate \'etale neighborhood of~$o \in B$
the classes~\mbox{$\bar{H}_i \in \Pic(\bar{X})$} extend to classes~\mbox{$\cH_i \in \Pic(\cX)$}.
Since, moreover, the canonical class of~$\cX$ provides a section of~$\Pic_{\cX/B}$, 
the relation~\eqref{eq:mkx} in the central fiber implies the relation
\begin{equation}
\label{eq:barh-relation}
\cH_1+ \cH_2 \sim -K_{\cX}
\qquad\text{in~$\Pic(\cX)$}
\end{equation}
(we may need to shrink the base to get rid of an extra divisor class pulled back from~$B$).

The classes~$\bar{H}_i$ are nef but not ample by Proposition~\ref{prop:cones},
hence, shrinking~$B$ if necessary we may assume that the classes~$\cH_i$ are nef but not ample over~$B$.
Therefore, these classes define extremal contractions over~$B$,
so that the morphism~$p$ factors as
\begin{equation*}
\cX \xrightarrow{\ \varphi_i\ } \cZ_i \xrightarrow{\quad} B,
\end{equation*}
where~$\varphi_i$
is the contraction associated to the class~$\cH_i$; 
note that~$\varphi_i$ is $K$-negative because~$-K_\cX$ is ample over~$B$ and that~$\uprho(\cX/\cZ_i) = 1$ because~$\uprho(\cX/B) = 2$.
The scheme~$\cZ_i$ is integral by construction,
and since~$B$ is a smooth curve, it follows that~$\cZ_i$ is flat over~$B$.
Moreover, by construction we have
\begin{equation*}
\cZ_i \cong \Proj_B \left( \moplus p_*\cO_\cX(n\cH_i) \right).
\end{equation*}
Since~$\mathbf{R}^{>0}p_*\cO_\cX(n\cH_i) = 0$ by Kawamata--Viehweg vanishing, 
the base change isomorphism shows that the scheme fiber~$\cZ_{i,o}$ of~$\cZ_i$ over the point~$o \in B$ can be written as
\begin{equation*}
\cZ_{i,o} \cong \Proj \left( \moplus H^0(\bar{X}, \cO_{\bar{X}}(n\bar{H}_i)) \right).
\end{equation*}
Since, moreover, $H^0(\bar{X}, \cO_{ \bar{X}}(n\bar{H}_i)) \cong H^0(\hX, \cO_{\hX}(n H_i)) \cong H^0(X_i, \cO_{X_i}(n H_i))$ 
by~\eqref{eq:barx-diagram},
we see that
\begin{equation*}
\cZ_{i,o} \cong Z_i.
\end{equation*}
Since~$Z_i$ is smooth (Lemma~\ref{lem:fi-zi}), shrinking~$B$ if necessary, we may assume that~$\cZ_i$ is smooth over~$B$.
It also follows that the morphism~$\varphi_{i,o} \colon \cX_o \to \cZ_{i,o}$ coincides with~$\bar{f}_i \colon \bar{X} \to Z_i$.

On the other hand, relation~\eqref{eq:barh-relation} shows that for~$b \ne o$ the classes~$\cH_1\vert_{\cX_b}$ and~$\cH_2\vert_{\cX_b}$ 
provide a nef decomposition for the anticanonical class of the smooth Fano threefold~$\cX_b$
of type~\typemm{2}{m} with~$m \le 14$.
Looking at the descriptions of these threefolds in~\cite{Mori-Mukai:MM} 
we see that the indices of their extremal contractions both equal~$1$, 
hence~\cite[Theorem~5.1(1)]{Mori-Mukai1983} (where these indices are denoted by~$\mu_1$ and~$\mu_2$)
implies that the unique nef decomposition of~$-K_{\cX_b}$ is given 
by the pullbacks of the ample generators of the Picard groups of the targets of the contractions.
This means that the morphisms~\mbox{$\varphi_{i,b} \colon \cX_b \to \cZ_{i,b}$} are the extremal contractions.

We already checked that~$4 \le m \le 14$ and~$m \ne 11$. 
Let us also show that
\begin{equation*}
m \ne 8.
\end{equation*}
Indeed, if~$m = 8$ the description of~\cite[Table~2]{Mori-Mukai:MM} shows that 
for any~$b \ne o$ in~$B$ one of the extremal contractions~$\varphi_{i,b} \colon \cX_b \to \cZ_{i,b}$ is of type~\type{(B_1^0)}
and~$\cZ_{i,b}$ is singular
in contradiction with the smoothness of~$\cZ_i$ over~$B$ explained above.

This completes the proof of Theorem~\ref{thm:intro-nf}.
From now on we assume that~$m \ge 4$ and satisfies~\eqref{eq:m-list}.
It remains to prove claims~\ref{it:intro-b1}--\ref{it:intro-d1} of Theorem~\ref{thm:intro-nf-contractions}. 

First, assume~$\dim \cZ_i = 4$, i.e., the contraction~$\varphi_{i} \colon \cX \to \cZ_i$ is birational.
Since~$\cZ_i$ is flat over~$B$, we have~$\dim\cZ_{i,b} = 3$ for any~$b\in B$ (including~$b = o$),
hence the contraction~$\varphi_{i,b} \colon \cX_b \to \cZ_{i,b}$ is also birational.
By Lemma~\ref{lem:fi-zi} and Proposition~\ref{prop:cones} the map~$\varphi_{i,o} = \bar{f}_i$ 
is the blowup of a singular curve~$\bar\Gamma_i \subset \cZ_{i,o}$
and the constraints on~$m$ explained above together with~\cite[Table~2]{Mori-Mukai:MM}
imply that~$\varphi_{i,b}$ is the blowup of a smooth curve~$\cG_{i,b} \subset \cZ_{i,b}$.
It remains to show that there is a family of curves~$\cG_i \subset \cZ_i$ flat over~$B$ 
with fibers~$\cG_{i,b}$ and~$\bar\Gamma_i$ over~$b \ne o$ and~$b = o$, respectively.
For this just note that
\begin{equation*}
(\varphi_{i,b})_*\cO_{\cX_b}(-K_{\cX_b/\cZ_{i,b}}) \cong I_{\cG_{i,b}}
\qquad\text{and}\qquad
(\bar{f}_i)_*\cO_{\bar{X}}(-K_{\bar{X}/Z_i}) \cong I_{\bar{\Gamma}_i},
\end{equation*}
so we can define the family of curves~$\cG_i$ from the equality~$(\varphi_i)_*\cO_{\cX}(-K_{\cX/\cZ_i}) \cong I_{\cG_i}$
and use base change isomorphisms to identify the fibers of~$\cG_i$ with~$\cG_{i,b}$ and~$\bar\Gamma_i$.
Flatness of~$\cG_i$ over~$B$ follows from flatness of~$I_{\cG_i}$, 
which in its turn follows from flatness of the line bundle~$\cO_{\cX}(-K_{\cX/\cZ_i})$.
Finally, using flatness of~$\cG_i$ and Proposition~\ref{prop:cones} we deduce
\begin{equation*}
\deg(\Gamma_i) = \deg(\bar{\Gamma}_i) - 1 = \deg(\cG_{i,b}) - 1,
\qquad
\g(\Gamma_i) = \p(\bar{\Gamma}_i) - \io(Z_i) + 1 =\g(\cG_{i,b}) - \io(Z_i) + 1,
\end{equation*}
which proves part~\ref{it:intro-b1} of Theorem~\ref{thm:intro-nf-contractions}.

Next, assume~$\dim (\cZ_i)=3$.
Since~$\cZ_i$ is flat over~$B$, we have~$\dim\cZ_{i,b} = 2$ for any~\mbox{$b\in B$} (including~$b = o$),
hence the contractions~$\varphi_{i,b} \colon \cX_b \to \cZ_{i,b}$ 
are conic bundles.
It also follows that~$\cZ_{i,b} \cong \PP^2$ for all~$b\in B$ (including~$b = o$).
Therefore, $\cZ_i$ is a $\PP^2$-bundle over~$B$
and~\mbox{$\cX \to \cZ_i$} is a conic bundle.
Let~$\cD_i\subset \cZ_i$ be its discriminant divisor;
note that~\mbox{$\cD_{i,o} = \bar{\Delta}_i$}.
Since~$\cD_i$ is a Cartier divisor without vertical components, it is flat over~$B$.
Using Proposition~\ref{prop:cones}, we deduce
\begin{equation*}
\deg(\Delta_i) = \deg(\bar{\Delta}_i) - 1 = \deg(\cD_{i,o}) - 1 = \deg(\cD_{i,b}) - 1,
\end{equation*}
which proves part~\ref{it:intro-c1} of Theorem~\ref{thm:intro-nf-contractions}.

Finally, assume~$\dim (\cZ_i)=2$.
Since~$\cZ_i$ is flat over~$B$, we have~$\dim\cZ_{i,b} = 1$ for any~$b\in B$ (including~$b = o$),
hence the contractions~$\varphi_{i,b} \colon \cX_b \to \cZ_{i,b}$ 
are del Pezzo fibrations.
It also follows that~$\cZ_{i,b} \cong \PP^1$ for all~$b\in B$ (including~$b = o$).
Therefore, $\cZ_i$ is a $\PP^1$-bundle over~$B$
and~$\cX \to \cZ_i$ is a del Pezzo fibration;
in particular, it is flat.
Using Proposition~\ref{prop:cones}, we deduce
\begin{equation*}
\deg(X_i/Z_i) = \deg(\bar{X}/Z_i) + 1 = \deg(\cX_o/\cZ_{i,o}) + 1 = \deg(\cX_b/\cX_{i,b}) + 1
\end{equation*}
which proves part~\ref{it:intro-d1} of Theorem~\ref{thm:intro-nf-contractions}.
\end{proof} 

\begin{remark}
\label{rem:other-m}
Let~$X'$ be a smooth Fano threefold of type~\typemm{2}{m} with~$m = 11$ or~$m \ge 15$
and let~$f'_i \colon X' \to Z'_i$, $i = 1, 2$, be its extremal contractions.
Then from the explicit description of~$X'$ in~\cite[Table~2]{Mori-Mukai:MM}
it is easy to see that for one of the contractions~$f'_i \colon X' \to Z'_i$ we have either
\begin{itemize}
\item 
$\io(f'_i) > 1$, or
\item 
$Z'_i$ is singular, or
\item 
$f'_i$ is the blowup of a curve~$\Gamma'_i$ on a threefold~$Z'_i$ such that~$\g(\Gamma'_i) < \io(Z'_i) - 1$.
\end{itemize}
In either case the above proof of Theorem~\ref{thm:intro-nf-contractions} shows that~$X'$ cannot be a smoothing
of the anticanonical model~$\bar{X}$ of the blowup~$\hX$ of a nonfactorial $1$-nodal Fano threefold~$X$ with~$\g(X) \ge 7$.
This gives a ``proof'' of the famous genus bound~$g \le 12$ and~$g \ne 11$ 
for prime Fano threefolds (or rather for their nonfactorial $1$-nodal degenerations).
\end{remark}

To conclude this section we explain how Table~\ref{table:nf} was produced.
Each row corresponds to one of the cases from Theorem~\ref{thm:intro-nf}:
\begin{itemize}
\item 
The light-gray rows correspond to case~\ref{it:intro-nf-ample} of the theorem;
all information in these rows can be extracted from Propositions~\ref{prop:if3} and~\ref{prop:big-index}.
\item 
The dark-gray row corresponds to case~\ref{it:intro-nf-g2}, see Proposition~\ref{prop:base-points} for details.
\item 
The white rows correspond to case~\ref{it:intro-nf-bpf}.
\end{itemize}
The white rows with~$\g(X) \in \{4,5,6\}$ are described in Propositions~\ref{prop:hyperelliptic} and~\ref{prop:trigonal}.
Finally, if~$\g(X) \ge 7$, the targets~$Z_i$ of the extremal contractions~$f_i \colon X_i \to Z_i$
as well as the degree and genus of the curve~$\Gamma_i$ (if~$f_i$ is birational) 
or~$\Delta_i$ (if~$f_i$ is a conic bundle),
or~$\deg(X_i/Z_i)$ (if~$f_i$ is a del Pezzo fibration) 
are identified by a combination of~\cite{Mori-Mukai:MM} with Theorem~\ref{thm:intro-nf-contractions} 
(see parts~\ref{it:intro-b1}, \ref{it:intro-c1}, and~\ref{it:intro-d1} for the respective contraction types).
Finally the degree of the discriminant for del Pezzo fibrations is computed 
as the difference of the topological Euler characteristic of~$X_i$ and twice the Euler characteristic of the general fiber of~$f_i$.

\section{Nonfactorial threefolds and complete intersections}
\label{sec:nf-ci}

In this section we 
prove Theorem~\ref{thm:intro-nf-ci} and finish the classification of nonfactorial $1$-nodal threefolds;
in particular we explain how all varieties from Table~\ref{table:nf} can be constructed.

\subsection{Almost Fano blowups}
\label{ss:af-blowups}

Here we discuss nonfactorial $1$-nodal Fano threefolds~$X$ with a small resolution 
that admit a $K$-negative birational contraction.
As we explained in Theorem~\ref{thm:intro-nf-contractions},
the anticanonical models~$\bar{X}$ of their blowups~$\hX = \Bl_{x_0}(X)$ have smoothings 
by Fano threefolds of type~\typemm{2}{m} with~$m$ in~\eqref{eq:m-list} that admit a birational contraction. 
By~\cite[Table~2]{Mori-Mukai:MM} these are Fano threefolds of type~\typemm{2}{m} with
\begin{equation*}
m \in \{1,3,4,5,7,9,10,12,13,14\}
\end{equation*}
(these correspond to types~\hyperlink{4n}{\ntype{1}{4}{n}}, 
\hyperlink{6n}{\ntype{1}{6}{n}}, 
\hyperlink{7n}{\ntype{1}{7}{n}}, 
\hyperlink{8na}{\ntype{1}{8}{na}}, 
\hyperlink{9na}{\ntype{1}{9}{na}}, 
\hyperlink{10nb}{\ntype{1}{10}{nb}}, 
\hyperlink{10na}{\ntype{1}{10}{na}}, 
\hyperlink{12nc}{\ntype{1}{12}{nc}}, 
\hyperlink{12nb}{\ntype{1}{12}{nb}}, 
\hyperlink{12na}{\ntype{1}{12}{na}}, 
respectively, in Table~\ref{table:nf}).
Since threefolds of type~\hyperlink{4n}{\ntype{1}{4}{n}} were already described in Proposition~\ref{prop:hyperelliptic}\ref{he:nf}
we concentrate on the remaining nine types.

So, let~$X$ be a nonfactorial $1$-nodal Fano threefold of one of types
\begin{equation*}
\hyperlink{6n}{\ntype{1}{6}{n}}, 
\hyperlink{7n}{\ntype{1}{7}{n}}, 
\hyperlink{8na}{\ntype{1}{8}{na}}, 
\hyperlink{9na}{\ntype{1}{9}{na}}, 
\hyperlink{10nb}{\ntype{1}{10}{nb}}, 
\hyperlink{10na}{\ntype{1}{10}{na}}, 
\hyperlink{12nc}{\ntype{1}{12}{nc}}, 
\hyperlink{12nb}{\ntype{1}{12}{nb}}, 
\hyperlink{12na}{\ntype{1}{12}{na}}, 
\end{equation*}
By Theorem~\ref{thm:intro-nf-contractions}, in each of these cases
there is a small resolution of~$X$ (to fix the notation, we denote it by~$X_1$) 
such that the extremal contraction~$f_1 \colon X_1 \to Z_1$ is the blowup of a curve~$\Gamma \subset Z_1$, 
so that~$X_1 \cong \Bl_{\Gamma}(Z_1)$, where
\begin{itemize}[wide]
\item 
$\Gamma$ is a smooth connected curve, and 
\item 
$Z_1 \cong \PP^3$, or~$Z_1 \cong Q^3$, 
or~$Z_1$ is a smooth del Pezzo threefold of degree~$d \in \{2,3,4,5\}$.
\end{itemize}
Moreover, according to Table~\ref{table:nf} in each case
the target of the other extremal contraction~$f_2 \colon X_2 \to Z_2$ is a projective space~$Z_2 \cong \PP^k$ with~$k \in \{1,2,3\}$.
We note that the pair~$(Z_1,Z_2)$ determines the type of~$X$ uniquely,
and using this observation, we list the degree and genus of the blowup centers~$\Gamma$ in the following table:
\begin{table}[H]
\begin{equation*}
\renewcommand{\arraystretch}{1.2}
\renewcommand{\arraycolsep}{0.02\textwidth} 
\begin{array}{c|c!{\vrule width 0.1em}c|c|c}
\io(Z_1) & Z_1 & 
\text{$Z_2 \cong \PP^3$} & 
\text{$Z_2 \cong \PP^2$} & 
\text{$Z_2 \cong \PP^1$}
\\
\noalign{\hrule height 0.1em}
4 &\PP^3 & \Gamma_5^0 & \Gamma_6^2 & \Gamma_8^7
\\
\hline 
3 & Q^3 & \cellcolor{gray!25} & \Gamma_5^0 & \Gamma_7^3
\\
\hline 
2 & \rY_d& \cellcolor{gray!25} & \cellcolor{gray!25} & \Gamma_{d-1}^0
\end{array}
\end{equation*}
\caption{Blowup centers}
\label{table:blowups}
\end{table}
\noindent
Here notation~$\Gamma_d^g$ means that~$\deg(\Gamma) = d$ and~$\g(\Gamma) = g$.

As usual, we write~$H_1$ for the ample generator of~$\Pic(Z_1)$ 
and~$E_1$ for the exceptional divisor of the blowup~$f_1 \colon X_1 = \Bl_\Gamma(Z_1) \to Z_1$. 

\begin{proposition}
\label{prop:base-gl}
Let~$(Z_1,\Gamma)$ and~$Z_2$
be as in Table~\textup{\ref{table:blowups}}
and in the case where~$\Gamma$ is a line on a smooth del Pezzo threefold~$Z_1$ of degree~$2$,
assume additionally that~$\Gamma$ does not lie in the ramification divisor of the anticanonical double covering~$Z_1 \to \PP^3$.

\begin{thmenumerate}
\item 
The blowup~$X_1 \coloneqq \Bl_\Gamma(Z_1)$ is a smooth almost Fano variety if and only if
\begin{equation}
\label{eq:af-condition}
| (\io(Z_1) - 2)H_1 - \Gamma | = \varnothing.
\end{equation} 

\item 
If~\eqref{eq:af-condition} holds, 
then~$\dim(| (\io(Z_1) - 1)H_1 - \Gamma |) = k$, 
where~$k = \dim(Z_2)$, with
\begin{equation}
\label{eq:bs-z1}
\Bs(| (\io(Z_1) - 1)H_1 - \Gamma |) = \Gamma \cup L,
\end{equation} 
where~$L \subset Z_1$ is the unique line such that~$\Gamma \cap L$ is a scheme of length~$\io(Z_1)$,
and the strict transform of~$L$ is the unique $K$-trivial curve on~$X_1$.
Moreover, the twisted ideal sheaf of the curve~$\Gamma \cup L$
on~$Z_1$ has a locally free resolution of the form
\begin{equation}
\label{eq:resolution-ci-gl}
0 \longrightarrow 
\moplus_{i=1}^{k} \cO_{Z_1}(-a_iH_1) \longrightarrow 
\cO_{Z_1}^{\oplus (k + 1)} \longrightarrow 
\cI_{\Gamma \cup L}((\io(Z_1) - 1)H_1) \longrightarrow 
0,
\end{equation} 
where~$a_1 \ge \dots \ge a_k \ge 1$ is the unique collection of~$k$ integers such that~$\sum a_i = \io(Z_1) - 1$.
\end{thmenumerate}
\end{proposition}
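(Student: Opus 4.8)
Throughout I work on the blowup $X_1 \coloneqq \Bl_\Gamma(Z_1)$; since $Z_1$ and $\Gamma$ are smooth, $X_1$ is smooth, so being an almost Fano variety means precisely that $-K_{X_1}$ is nef and big and its anticanonical contraction $\xi$ is small. Writing $H_1$ also for $f_1^*H_1$ and $E_1$ for the exceptional divisor of $f_1\colon X_1\to Z_1$, we have $-K_{X_1}\sim\io(Z_1)H_1-E_1$, and I set $H_2\coloneqq(\io(Z_1)-1)H_1-E_1$, so that $-K_{X_1}\sim H_1+H_2$. Since $H^0(X_1,\cO_{X_1}(H_2))=H^0(Z_1,\cI_\Gamma((\io(Z_1)-1)H_1))$, the dimension statement in part~(b) is a statement about this space of sections.

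\emph{Part~(a).} Bigness of $-K_{X_1}$ is immediate from Lemma~\ref{lem:intersection}: $(-K_{X_1})^3=\io(Z_1)^3H_1^3-2\io(Z_1)\deg(\Gamma)+2\g(\Gamma)-2$, which is positive in each of the finitely many cases of Table~\ref{table:blowups}. For nefness, since $-K_{X_1}\cdot\ell=1$ for a fibre $\ell$ of $E_1\to\Gamma$, it suffices to bound $-K_{X_1}\cdot\widetilde B=\io(Z_1)(H_1\cdot B)-E_1\cdot\widetilde B$ from below over strict transforms $\widetilde B$ of irreducible curves $B\subset Z_1$. Because the linear span of a line or a conic $B$ is a divisor of class $H_1$ on $\PP^3$ or $Q^3$ (and is controlled by adjunction on $\rY_d$), and such a divisor meets $\Gamma$ in a scheme of length $\le\deg(\Gamma)$, the only potentially negative case is a line $L\subset Z_1$ with $\operatorname{length}(L\cap\Gamma)\ge\io(Z_1)+1$; I would rule it out by a linear projection argument from $L$ in the ambient space of $Z_1$, showing such an $L$ forces $\Gamma$ into a divisor of class $(\io(Z_1)-2)H_1$, contrary to~\eqref{eq:af-condition} --- the extra hypothesis in the degree-$2$ del Pezzo case being exactly what makes this argument run there. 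Conversely, a divisor $D\in|(\io(Z_1)-2)H_1-\Gamma|$ has strict transform $\widetilde D\cong D$ with $-K_{X_1}|_{\widetilde D}\sim\io(Z_1)H_1|_D-\Gamma$, which is not big on the surface $D$; hence $\xi$ contracts $\widetilde D$ and is not small. This gives the equivalence in~(a). Finally, granting~\eqref{eq:af-condition}, the $\xi$-contracted curves are the strict transforms of lines $L$ with $\operatorname{length}(L\cap\Gamma)=\io(Z_1)$; a multisecant-line count for the curves in Table~\ref{table:blowups} shows there is exactly one such line, so $X_1$ is almost Fano but not Fano, with a unique $K_{X_1}$-trivial (flopping) curve $C_1=\widetilde L$, and $f_1(C_1)=L$ with $E_1\cdot C_1=\io(Z_1)$ by~\eqref{eq:ei-ci}.

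\emph{Part~(b).} Assume~\eqref{eq:af-condition}. To compute $\dim|(\io(Z_1)-1)H_1-\Gamma|$ I would verify, case by case from Table~\ref{table:blowups}, the vanishings $H^1(\Gamma,\cO_\Gamma((\io(Z_1)-1)H_1))=0$ and $H^1(Z_1,\cI_\Gamma((\io(Z_1)-1)H_1))=0$ ($\Gamma$ imposes independent conditions --- for del Pezzo $Z_1$ via the Koszul resolution of $Z_1$ in its anticanonical embedding, and directly for $\PP^3$ and $Q^3$); this gives $h^0(\cI_\Gamma((\io(Z_1)-1)H_1))=h^0(\cO_{Z_1}((\io(Z_1)-1)H_1))-\big((\io(Z_1)-1)\deg(\Gamma)+1-\g(\Gamma)\big)=k+1$, i.e.\ the dimension is $k$. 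For the base locus: every member contains $\Gamma$, and $H_2\cdot C_1=(-K_{X_1}-H_1)\cdot C_1=0-1=-1$ (using $H_1\cdot C_1=1$ since $C_1$ maps isomorphically onto the line $L$), so $C_1\subset\Bs|H_2|$; passing to the flop $X_2$ of $C_1$, on which the strict transform of $H_2$ is base point free, gives $\Bs|H_2|=C_1$ on $X_1$, hence $\Bs|(\io(Z_1)-1)H_1-\Gamma|=\Gamma\cup L$ on $Z_1$ with $\operatorname{length}(\Gamma\cap L)=\io(Z_1)$. Consequently $\cI_{\Gamma\cup L}((\io(Z_1)-1)H_1)$ is globally generated by its $k+1$ sections; since $\Gamma\cup L$ is a Cohen--Macaulay curve, it has codimension $2$ and homological dimension $1$ in the smooth $Z_1$, and the graph of the morphism $Z_1\dashrightarrow\PP^k$ it defines (equivalently, the Hilbert--Burch theorem for $\PP^3$, and the analogous computation using $\Cl(Z_1)\cong\ZZ$ and $k\le 3$ for $Q^3$ and $\rY_d$) yields a locally free resolution $0\to F_1\to\cO_{Z_1}^{\oplus(k+1)}\to\cI_{\Gamma\cup L}((\io(Z_1)-1)H_1)\to 0$ with $F_1=\bigoplus_{i=1}^k\cO_{Z_1}(-a_iH_1)$ and $a_i\ge 1$ by minimality. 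Comparing first Chern classes forces $\sum a_i=\io(Z_1)-1$, and since $\io(Z_1)-1\le 3$ the multiset $\{a_1\ge\cdots\ge a_k\ge 1\}$ is uniquely determined, which is~\eqref{eq:resolution-ci-gl}.

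The step I expect to be the main obstacle is the nefness argument in part~(a): converting "$|(\io(Z_1)-2)H_1-\Gamma|=\varnothing$" into the absence of a multisecant line of length $\ge\io(Z_1)+1$ (and into the existence of a unique one of length exactly $\io(Z_1)$) is genuine case-by-case projective geometry over the five possibilities for $Z_1$, and it is exactly where the technical hypothesis on the ramification divisor of $\rY_2\to\PP^3$ becomes unavoidable; the cohomological vanishings used in part~(b) are routine but must also be checked case by case.
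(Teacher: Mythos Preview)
Your overall strategy has the right shape, but there are two genuine gaps, and the paper organizes the argument quite differently.

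\textbf{Nefness in part~(a).} You check only lines and conics when bounding $-K_{X_1}\cdot\widetilde B$. Nothing you wrote rules out a negative intersection with the strict transform of a higher-degree curve $B\subset Z_1$. The paper avoids this entirely: it invokes \cite[Corollary]{GLP} to show that $\Gamma$ is cut out scheme-theoretically by hypersurfaces of degree $\io(Z_1)$, so $|-K_{X_1}|=|\io(Z_1)H_1-E_1|$ is base point free, hence nef, in one stroke. That $X_1$ is not Fano is then read off the Mori--Mukai tables, which guarantees a $K$-trivial curve exists. To show this curve is a line, the paper bounds the degree of the base locus of $|(\io(Z_1)-1)H_1-\Gamma|$ from above (e.g.\ for $\Gamma_5^0\subset\PP^3$: $\dim|3H_1-\Gamma|\ge 3$ so the base locus has degree $\le 7$, leaving at most a conic's worth of room beyond $\Gamma$), and then rules out degree $2$ by the same argument you sketched. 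Your ``linear projection from $L$'' idea for excluding lines with $\operatorname{length}(L\cap\Gamma)\ge\io(Z_1)+1$ is plausible but not carried out.

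\textbf{Circularity in part~(b).} Your computation of $\Bs|H_2|$ assumes that on the flop $X_2$ the class $H_2$ is base point free. At this stage of the argument you do not know this: you have only shown $X_1$ is almost Fano with a flopping curve, not that the flop carries a morphism to $\PP^k$ with $H_2$ as hyperplane pullback. That is what Corollary~\ref{cor:barx-bu-gl} will eventually give, but it relies on the resolution~\eqref{eq:resolution-ci-gl} you are trying to prove. The paper's logical order is the reverse of yours: it first establishes the resolution~\eqref{eq:resolution-ci-gl} by explicit case-by-case cohomology computations (Castelnuovo--Mumford regularity for $\PP^3$; decomposing $\cI_{\Gamma\cup L}$ with respect to an exceptional collection involving the spinor bundle for $Q^3$), and then reads off both $\Bs|(\io(Z_1)-1)H_1-\Gamma|=\Gamma\cup L$ and the uniqueness of $L$ directly from global generation of $\cI_{\Gamma\cup L}((\io(Z_1)-1)H_1)$. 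Your ``Hilbert--Burch / graph of the rational map'' sketch gives the right shape of the resolution once global generation is known, but you still need the cohomological input to pin down the $a_i$, and for $Q^3$ there is no reason the syzygy bundle should split as a sum of line bundles without the explicit computation.
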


\begin{remark}
In the case where~$\Gamma$ is a line on a smooth del Pezzo threefold~$Z_1$ of degree~$2$ lying
in the ramification divisor of the double covering~$Z_1\to \PP^3$,
the blowup~\mbox{$X_1 = \Bl_\Gamma(Z_1)$} is still an almost Fano variety (and its anticanonical model is~$1$-nodal),
but~$L = \Gamma$ and~$\Gamma \cup L$ should be replaced 
by a non-reduced scheme supported on~$\Gamma$ (cf.\ the proof of Proposition~\ref{prop:cones}).
Since this case is discussed in detail in~\cite[\S2]{KS23}, we omit it here.
\end{remark}

\begin{proof}
In the case where~$\io(Z_1) = 2$ the condition~\eqref{eq:af-condition} is void
and all claims of the proposition are proved in~\cite[\S2]{KS23}.
The cases where~$\io(Z_1) \in \{3,4\}$ but~$k = 1$ 
(i.e., $Z_1 = \PP^3$ and~$\Gamma = \Gamma_8^7$ or~$Z_1 = Q^3$ and~$\Gamma = \Gamma_7^3$)
are completely analogous and we leave these cases as an exercise.
So, from now on we concentrate on the remaining three cases:
\begin{aenumerate}
\item 
\label{it:p3g50}
$Z_1 = \:\PP^3$, $\Gamma = \Gamma_5^0$ is a smooth quintic curve of genus~$0$,
\item 
\label{it:p3g62}
$Z_1 = \:\PP^3$, $\Gamma = \Gamma_6^2$ is a smooth sextic curve of genus~$2$,
\item 
\label{it:q3g50}
$Z_1 = Q^3$, $\Gamma = \Gamma_5^0$ is a smooth quintic curve of genus~$0$.
\end{aenumerate} 

First, assume~\eqref{eq:af-condition} fails and let~$D \subset Z_1$ be a divisor in the linear system~$| (\io(Z_1) - 2)H_1 - \Gamma |$.
If~$Z_1 = \PP^3$ then~$D \in |2H - \Gamma|$ and if~$Z_2 = Q^3$ then~$D \in |H - \Gamma|$;
thus, in all three cases~$D$ is a quadric surface.

If~$D$ is smooth, i.e., $D \cong \PP^1 \times \PP^1$, 
case~\ref{it:p3g62} is impossible
(a smooth curve of genus~$2$ must have bidegree~$(2,3)$ or~$(3,2)$, but then its degree is~$5$, not~$6$), 
and in cases~\ref{it:p3g50} and~\ref{it:q3g50} the curve~$\Gamma$ must have bidegree~$(1,4)$ or~$(4,1)$.
In both cases~$D$ is covered by lines which are~$4$-secant to~$\Gamma$,
hence the anticanonical linear system~$|4H_1 - E_1|$ or~$|3H_1 - E_1|$ of~$X_1$ contracts~$D$ or contains it in the base locus;
in either case~$X_1$ is not an almost Fano variety.

Similarly, if~$D$ is a quadratic cone and~$\tD$ is the blowup of its vertex 
with the exceptional curve class~$e$ and ruling~$f$
(so that~$e^2 = -2$, $e \cdot f = 1$, and~$f^2 = 0$),
and if~$\tilde\Gamma \subset \tD$ is the strict transform of~$\Gamma$
then~$\tilde\Gamma \cdot (e + 2f) = \deg(\Gamma) \in \{5,6\}$ and~$\tilde\Gamma \cdot e \in \{0,1\}$ because~$\Gamma$ is smooth.
It follows that in cases~\ref{it:p3g50} and~\ref{it:q3g50} we have~$\tilde\Gamma \sim 2e + 5f$, 
and in case~\ref{it:p3g62} we have~$\tilde\Gamma \sim 3e + 6f$;
in both cases this contradicts the assumption that~$\g(\Gamma) = 0$ or~$\g(\Gamma) = 2$, respectively.

Finally, if~$D$ is reducible or nonreduced, it follows that~$\Gamma$ is contained in a plane,
and we easily get a contradiction between the degree and genus assumptions.

Thus, if~\eqref{eq:af-condition} fails, $X_1$ is not an almost Fano variety.

\medskip

From now on assume~\eqref{eq:af-condition} holds.
Note that~$| (\io(Z_1) - 1)H_1 - 2E_1 | = \varnothing$ ---
when~$Z = \PP^3$ this is
because the singular locus of any irreducible cubic surface is either a finite union of points or a line
(see, e.g., \cite[Theorem~1.1]{Reid:dP94} or~\cite[Theorem~1.5]{Abe2003}),
and when~$Z_1 = Q^3$ the same is true for the singular locus of any irreducible intersection of~$Z_1$ with a quadric.
Combining this with~\eqref{eq:af-condition} we see that the linear system
\begin{equation}
\label{eq:special-ls}
| (\io(Z_1) - 1)H_1 - E_1 |
\end{equation} 
has no fixed components.
Thus, the base locus of~\eqref{eq:special-ls} is at most one-dimensional.
We will show that it consists of a unique irreducible curve.

For this note that~\cite[Corollary]{GLP} implies that 
the curve~$\Gamma$ is cut out by hypersurfaces of degree~$\io(Z_1)$;
in other words, the linear system~$|-K_{X_1}| = |\io(Z_1)H_1 - E_1|$ is base point free, hence nef.
A simple computation also shows that~$(-K_{X_1})^3 > 0$, i.e., $-K_{X_1}$ is big, 
hence~$X_1$ is a weak Fano variety.
On the other hand, $X_1$ is not a Fano variety by the Mori--Mukai classification~\cite{Mori-Mukai:MM},
therefore there is a reduced and irreducible curve~$C \subset X_1$ such that
\begin{equation}
\label{eq:k-cdot-c}
\big(\io(Z_1)H_1 - E_1\big) \cdot C = 0.
\end{equation}
Since~$H_1$ is nef, we have~$H_1 \cdot C \ge 0$.
If~$H_1 \cdot C = 0$ then~\eqref{eq:k-cdot-c} implies~$E_1 \cdot C = 0$, which is absurd.
Therefore, $H_1 \cdot C > 0$, and it follows that
\begin{equation*}
\big((\io(Z_1) - 1)H_1 - E_1\big) \cdot C < 0.
\end{equation*}
Thus, $C$ belongs to the base locus of~\eqref{eq:special-ls}.

Next, we show that~$H_1 \cdot C = \deg(f_1(C)) \le 2$.
Indeed, 
in case~\ref{it:p3g50} a parameter count shows that~$\dim|3H_1 - \Gamma| \ge 3$,
hence the degree of the base locus of this system is at most~\mbox{$9 - 2 = 7$}, 
and since~$\deg(\Gamma) = 5$, we have~$\deg(f_1(C))\le 7 - 5 = 2$.
Analogously, in case~\ref{it:p3g62} we have~\mbox{$\dim|3H_1 - \Gamma| \ge 2$},
the degree of the base locus of this system is at most~\mbox{$9 - 1 = 8$}, 
and since~\mbox{$\deg(\Gamma) = 6$}, we have~\mbox{$\deg(f_1(C)) \le 8 - 6 = 2$}.
Finally, 
in case~\ref{it:q3g50} we have~\mbox{$\dim|2H_1 - \Gamma| \ge 2$},
hence the degree of the base locus is at most~$8 - 1 = 7$, 
and since~$\deg(\Gamma) = 5$, we have~$\deg(f_1(C)) \le 7 - 5 = 2$.

On the other hand, if~$H_1 \cdot C = 2$, hence~$f_1(C)$ is a smooth conic, 
we have~$E_1 \cdot C = 2\io(Z_1)$ by~\eqref{eq:k-cdot-c}, 
hence the curve~$\Gamma$ intersects the plane~$\langle f_1(C) \rangle$ in at least~$2\io(Z_1)$ points (counted with multiplicities);
as this number is larger than~$\deg(\Gamma)$, we conclude that~$\Gamma$ must be contained in the plane~$\langle f_1(C) \rangle$, 
in contradiction with~\eqref{eq:af-condition}.
Thus, we have
\begin{equation}
\label{eq:he-cdot-c}
H_1 \cdot C = 1
\qquad\text{and}\qquad 
E_1 \cdot C = \io(Z_1), 
\end{equation} 
where the second equality again follows from~\eqref{eq:k-cdot-c}.

Now we fix a curve~$C \subset X_1$ satisfying~\eqref{eq:he-cdot-c} and set 
\begin{equation*}
L \coloneqq f_1(C) \subset Z_1.
\end{equation*}
Note that~$H_1 \cdot L = 1$ 
implies~$L \cong \PP^1$ because~$Z_1 = \PP^3$ or~$Z_1 = Q^3$.
Moreover, $\Gamma \ne L$, hence~$C$ is the strict transform of~$L$; in particular~$C \cong \PP^1$ 
and the morphism~$f\vert_C$ is an isomorphism.
Thus, $L \subset \Bs(|(\io(Z_1) - 1)H_1 - \Gamma|)$
and therefore~\eqref{eq:bs-z1} follows from~\eqref{eq:resolution-ci-gl}.
Finally, 
if~\eqref{eq:resolution-ci-gl} is proved, 
it would follow that~the strict transform of~$L$ is the unique $K$-trivial curve on~$X_1$;
in particular, $X_1$ is an almost Fano variety.
So, to finish the proof of the proposition, we prove ~\eqref{eq:resolution-ci-gl} by a case-by-case analysis. 

\subsection*{Case~\ref{it:p3g50}}

In this case we show that there is an exact sequence
\begin{equation}
\label{eq:res-g05-p3}
0 \longrightarrow \cO_{\PP^3}(-4)^{\oplus 3} \longrightarrow \cO_{\PP^3}(-3)^{\oplus 4} \longrightarrow \cI_{\Gamma \cup L} \longrightarrow 0.
\end{equation}
For this we check that the sheaf~$\cI_{\Gamma \cup L}(3)$ is Castelnuovo--Mumford regular.

Indeed, for any~$d \in \ZZ$ consider the exact sequences
\begin{equation*}
0 \longrightarrow \cI_{\Gamma \cup L}(d) \longrightarrow \cO_{\PP^3}(d) \longrightarrow \cO_{\PP^3}(d)\vert_{\Gamma \cup L} \longrightarrow 0
\end{equation*}
and
\begin{equation*}
0 \longrightarrow \cO_\Gamma(5d - 4) \longrightarrow \cO_{\PP^3}(d)\vert_{\Gamma \cup L} \longrightarrow \cO_L(d) \longrightarrow 0,
\end{equation*}
where the second follows from the fact that~$\deg(\Gamma) = 5$ and~$\Gamma \cap L$ is a scheme of length~$4$.
For~$d \in \{1,2\}$ the second sequence implies that~$H^{>0}(\PP^3, \cO_{\PP^3}(d)\vert_{\Gamma \cup L}) = 0$,
and then the first sequence implies that~$H^{>1}(\PP^3, \cI_{\Gamma \cup L}(d)) = 0$.
Moreover, it easily follows that for~$d = 1,2$ the Euler characteristic of the sheaf~$\cI_{\Gamma \cup L}(d)$ is zero, 
and since the space of global sections of this sheaf vanishes by~\eqref{eq:af-condition}, we conclude that
\begin{equation}
\label{eq:van-g05-p3}
H^\bullet(\PP^3, \cI_{\Gamma \cup L}(1)) = H^\bullet(\PP^3, \cI_{\Gamma \cup L}(2)) = 0.
\end{equation}
Finally, the sequences also easily imply that~$H^{3}(\PP^3, \cI_{\Gamma \cup L}) = 0$, 
so the Castelnuovo--Mumford regularity of~$\cI_{\Gamma \cup L}(3)$ follows.

Now, combining the Castelnuovo--Mumford regularity with the cohomology vanishings~\eqref{eq:van-g05-p3} 
we see that~$\cI_{\Gamma \cup L}(3)$ is globally generated and has a resolution of the form
\begin{equation*}
0 \longrightarrow \cO_{\PP^3}(-1)^{\oplus a} \longrightarrow \cO_{\PP^3}^{\oplus b} \longrightarrow \cI_{\Gamma \cup L}(3) \longrightarrow 0,
\end{equation*}
and computing the Euler characteristic~$\upchi(\cI_{\Gamma \cup L}(d))$ for~$d \in \{3,4\}$ 
it is easy to see that~$a = 3$ and~$b = 4$,
hence, after a twist, the resolution takes the form~\eqref{eq:res-g05-p3}.

\subsection*{Case~\ref{it:p3g62}}

We use a similar idea to show that in this case there is an exact sequence
\begin{equation}
\label{eq:res-g26-p3}
0 \longrightarrow \cO_{\PP^3}(-5) \oplus \cO_{\PP^3}(-4) \longrightarrow \cO_{\PP^3}(-3)^{\oplus 3} \longrightarrow \cI_{\Gamma \cup L} \longrightarrow 0.
\end{equation}
Indeed, consider the exact sequences
\begin{equation*}
0 \longrightarrow \cI_{\Gamma \cup L}(d) \longrightarrow \cO_{\PP^3}(d) \longrightarrow \cO_{\PP^3}(d)\vert_{\Gamma \cup L} \longrightarrow 0
\end{equation*}
and
\begin{equation*}
0 \longrightarrow \cO_\Gamma(dH_1\vert_\Gamma - D) \longrightarrow \cO_{\PP^3}(d)\vert_{\Gamma \cup L} \longrightarrow \cO_L(d) \longrightarrow 0,
\end{equation*}
where~$D = \Gamma \cap L$ is a divisor of degree~$4$ on~$\Gamma$.
Condition~\eqref{eq:af-condition} implies that~$h^0(\cI_{\Gamma \cup L}(1)) = 0$, 
hence~$h^0(\cO_{\PP^3}(1)\vert_{\Gamma \cup L}) \ge 4$,
and then the second sequence implies that~$h^0(\cO_\Gamma(H_1\vert_\Gamma - D)) \ge 2$.
But~$H_1\vert_\Gamma - D$ is a divisor of degree~$6 - 4 = 2$ on a curve of genus~$2$, hence 
\begin{equation*}
H_1\vert_\Gamma \sim K_\Gamma + D.
\end{equation*}
It also follows that~$h^1(\cO_{\PP^3}(1)\vert_{\Gamma \cup L}) = 1$, hence~$h^2(\cI_{\Gamma \cup L}(1)) = 1$,
and by Serre duality this gives an extension
\begin{equation*}
0 \longrightarrow \cO_{\PP^3}(-5) \longrightarrow \cF \longrightarrow \cI_{\Gamma \cup L} \longrightarrow 0
\end{equation*}
such that~$H^\bullet(\PP^3, \cF(1)) = 0$.
Moreover, $H^\bullet(\PP^3, \cF(2)) = H^\bullet(\PP^3, \cI_{\Gamma \cup L}(2)) = 0$
by the same argument as in case~\ref{it:p3g50}.
Finally, we show that~$H^3(\PP^3,\cF) = 0$.
Indeed, by the above exact sequences it is enough to check that the induced map 
\begin{equation*}
H^1(\Gamma, \cO_\Gamma(-D)) = 
H^2(\PP^3, \cI_{\Gamma \cup L}) \longrightarrow H^3(\PP^3, \cO(-5))
\end{equation*}
is surjective. 
By Serre duality the dual of this map coincides with the restriction map 
\begin{equation*}
H^0(\PP^3, \cO(1))\longrightarrow H^0(\Gamma, \cO_\Gamma(K_\Gamma + D)).
\end{equation*}
This map is injective, because~$\Gamma$ is not contained in a hyperplane, hence the dual map is surjective, 
and the equality~$H^3(\PP^3,\cF) = 0$ follows.

Therefore, $\cF(3)$ is Castelnuovo--Mumford regular, 
and we conclude that~$\cF(3)$ is globally generated and has a resolution of the form
\begin{equation*}
0 \longrightarrow \cO_{\PP^3}(-1)^{\oplus a} \longrightarrow \cO_{\PP^3}^{\oplus b} \longrightarrow \cF(3) \longrightarrow 0.
\end{equation*}
Finally, computing the Euler characteristic~$\upchi(\cF(d))$ for~$d \in \{3,4\}$ it is easy to see that~$a = 1$ and~$b = 3$,
and the resolution~\eqref{eq:res-g26-p3} easily follows.

\subsection*{Case~\ref{it:q3g50}}

In this case we show that there is an exact sequence
\begin{equation}
\label{eq:res-g05-q3}
0 \longrightarrow \cO_{Q^3}(-3)^{\oplus 2} \longrightarrow \cO_{Q^3}(-2)^{\oplus 3} \longrightarrow \cI_{\Gamma \cup L} \longrightarrow 0.
\end{equation}
This time we have the exact sequences
\begin{equation*}
0 \longrightarrow \cI_{\Gamma \cup L}(d) \longrightarrow \cO_{Q^3}(d) \longrightarrow \cO_{Q^3}(d)\vert_{\Gamma \cup L} \longrightarrow 0
\end{equation*}
and
\begin{equation*}
0 \longrightarrow \cO_\Gamma(5d - 3) \longrightarrow \cO_{Q^3}(d)\vert_{\Gamma \cup L} \longrightarrow \cO_L(d) \longrightarrow 0,
\end{equation*}
where the second follows from the fact that~$\deg(\Gamma) = 5$ and~$\Gamma \cap L$ is a scheme of length~$3$.
Let~$\cS$ be the spinor bundle of~$Q^3$, see~\cite{Ottaviani}.
Note that
\begin{equation}
\label{eq:cs-gamma}
\cS\vert_\Gamma \cong \cO_\Gamma(-2) \oplus \cO_\Gamma(-3).
\end{equation}
Indeed, since~$\rc_1(\cS) = -H_1$, we have~$\cS\vert_\Gamma \cong \cO_\Gamma(-k) \oplus \cO_\Gamma(k-5)$,
where we may assume~$k \le 2$.
Moreover, $k \ge 0$, because~$\cS^\vee$ is globally generated.
Now, if~$k = 0$ or~$k = 1$, then there is a lift of~$\Gamma$ in~$\PP_{Q^3}(\cS)$ 
whose image under the natural projection~$\PP_{Q^3}(\cS) \to \PP^3$ is a point or line, 
and then it follows that~$\Gamma$ is contained in a hyperplane section of~$Q^3$, contradicting~\eqref{eq:af-condition}.
Thus, $k = 2$, which proves~\eqref{eq:cs-gamma}.

Next, using the second exact sequence we show that~$H^{> 0}(Q^3, \cS^\vee \otimes \cO_{\Gamma \cup L}) = 0$,
and then using the first sequence we check that~$H^{> 1}(Q^3, \cS^\vee \otimes \cI_{\Gamma \cup L}) = 0$.
On the other hand, we have~$H^0(Q^3, \cS^\vee \otimes \cI_{\Gamma \cup L}) = 0$, 
because the zero locus of any global section of~$\cS^\vee$ is a line on~$Q^3$, hence it cannot contain~$\Gamma \cup L$.
Further, a simple computation shows that~$\upchi(\cS^\vee \otimes \cI_{\Gamma \cup L}) = 0$, 
hence~$H^\bullet(Q^3, \cS^\vee \otimes \cI_{\Gamma \cup L}) = 0$.
Moreover, $H^\bullet(Q^3, \cI_{\Gamma \cup L}(1)) = 0$
by the argument of case~\ref{it:p3g50}. 

Now consider the decomposition of the sheaf~$\cI_{\Gamma \cup L}$ 
with respect to the exceptional collection~$(\cO_{Q^3}(-3),\cO_{Q^3}(-2),\cO_{Q^3}(-1),\cS)$ on~$Q^3$.
The vanishings proved above imply that the decomposition takes the form of an exact sequence
\begin{equation*}
0 \to \cO_{Q^3}(-3)^{\oplus a} \to \cO_{Q^3}(-2)^{\oplus b} \to \cI_{\Gamma \cup L} \to 
\cO_{Q^3}(-3)^{\oplus c} \to \cO_{Q^3}(-2)^{\oplus d} \to 0.
\end{equation*}
On the other hand, it is easy to check that~$H^3(Q^3, \cI_{\Gamma \cup L}) = 0$, which implies~$c = 0$, and hence~$d = 0$.
Finally, computing the Euler characteristic~$\upchi(\cI_{\Gamma \cup L}(d))$ for~$d \in \{2,3\}$ 
it is easy to see that~$a = 2$ and~$b = 3$,
and the resolution~\eqref{eq:res-g05-q3} easily follows.
\end{proof} 

\begin{corollary}
\label{cor:barx-bu-gl}
Under the assumptions of Proposition~\textup{\ref{prop:base-gl}} 
if the condition~\eqref{eq:af-condition} holds 
then~$\Bl_{\Gamma \cup L}(Z_1)$ is a complete intersection in~$Z_1 \times \PP^k$ 
of ample hypersurfaces listed in Table~\textup{\ref{table:ci-nf}}.
Moreover, the preimage of~$L \subset Z_1$ under the morphism~$\bar{f}_1 \colon \Bl_{\Gamma \cup L}(Z_1) \to Z_1$
is the surface
\begin{equation*}
\Sigma = L \times L_2 \subset Z_1 \times \PP^k,
\end{equation*}
where~$L_2 \subset \PP^k$ is a line.
\end{corollary}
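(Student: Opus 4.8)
\emph{Setup and the embedding.} The plan is to read off everything from the resolution~\eqref{eq:resolution-ci-gl} furnished by Proposition~\ref{prop:base-gl}. Write $m\coloneqq\io(Z_1)-1$, so that $k\coloneqq\io(Z_2)-1=\dim(Z_2)$ by Table~\ref{table:blowups}. First I would dualize the surjection on the right of~\eqref{eq:resolution-ci-gl} and twist by $\cO_{Z_1}(-mH_1)$ to obtain an epimorphism $\cO_{Z_1}(-mH_1)^{\oplus(k+1)}\twoheadrightarrow\cI_{\Gamma\cup L}$; since the Rees algebra $\moplus_{n\ge0}\cI_{\Gamma\cup L}^n$ is generated in degree~$1$, this epimorphism induces a closed embedding
\begin{equation*}
\Bl_{\Gamma\cup L}(Z_1)=\Proj_{Z_1}\Bigl(\moplus_{n\ge0}\cI_{\Gamma\cup L}^n\Bigr)\hookrightarrow\PP_{Z_1}\bigl(\cO_{Z_1}(mH_1)^{\oplus(k+1)}\bigr)\cong Z_1\times\PP^k,
\end{equation*}
the last isomorphism because twisting a vector bundle by a line bundle does not change its projectivization. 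Under it the exceptional divisor $\bar E_1$ of the blowup $\bar f_1=\operatorname{pr}_1\colon\Bl_{\Gamma\cup L}(Z_1)\to Z_1$ is Cartier (as $\Gamma\cup L$ is a local complete intersection curve) and the relative $\cO(1)$ of the ambient restricts to $\cO(-\bar E_1)$, which gives the linear equivalence $H_2\sim mH_1-\bar E_1$, where $H_2=\operatorname{pr}_2^*\cO_{\PP^k}(1)$.

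\emph{The equations.} Next I would exhibit the defining equations. Let $\Phi=(\phi_{ji})$ be the $(k+1)\times k$ matrix of the left-hand map of~\eqref{eq:resolution-ci-gl}, so $\phi_{ji}\in H^0(Z_1,\cO(a_iH_1))$, and let $[x_0\colon\dots\colon x_k]$ be homogeneous coordinates on $\PP^k$. The forms $F_i\coloneqq\sum_j\phi_{ji}x_j$ are sections of divisor classes of bidegree $(a_i,1)$ on $Z_1\times\PP^k$, and since~\eqref{eq:resolution-ci-gl} is a complex, $\Phi$ annihilates the vector of its maximal minors, which are — up to a common twist — the generators of $\cI_{\Gamma\cup L}(mH_1)$, i.e.\ the $\PP^k$-coordinates of $\Bl_{\Gamma\cup L}(Z_1)$. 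Hence each $F_i$ vanishes on $\Bl_{\Gamma\cup L}(Z_1)$, so that
\begin{equation*}
\Bl_{\Gamma\cup L}(Z_1)\subseteq W\coloneqq V(F_1,\dots,F_k)\subseteq Z_1\times\PP^k.
\end{equation*}
By Proposition~\ref{prop:base-gl} the $a_i$ form the unique nonincreasing sequence of positive integers with $\sum a_i=\io(Z_1)-1$, and there are $k=\io(Z_2)-1$ of them; inspecting Table~\ref{table:blowups} one checks case by case that the resulting bidegrees $(a_i,1)$ are exactly those in Table~\ref{table:ci-nf}, and they are ample since $a_i\ge1$.

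\emph{The equality $\Bl_{\Gamma\cup L}(Z_1)=W$.} This is the crux, and I expect it to be the main obstacle, since a priori $W$ is only cut out by $k$ equations in a $(3+k)$-fold and could carry extra components. Here I would exploit the determinantal structure: the cokernel of $\Phi$ is $\cI_{\Gamma\cup L}(mH_1)$, which is invertible away from $\Gamma\cup L$ and has a $2$-dimensional fibre at each point of $\Gamma\cup L$ (the ideal $\cI_{\Gamma\cup L}$ being everywhere minimally two-generated, as $\Gamma\cup L$ is a local complete intersection curve), so the degeneracy locus $\{\rank\Phi\le k-1\}$ equals $\Gamma\cup L$ and has the expected codimension~$2$. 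Consequently the fibre of $\operatorname{pr}_1\colon W\to Z_1$ over a point $z$ is $\PP(\ker\Phi(z)^{\mathsf T})$ — a single reduced point over $Z_1\setminus(\Gamma\cup L)$ and a line over each point of $\Gamma\cup L$. Therefore $\dim W=3$, so $W$ is a genuine complete intersection, in particular Cohen--Macaulay and pure of dimension~$3$; it is generically reduced, hence reduced, because over $Z_1\setminus(\Gamma\cup L)$ it is the graph of the morphism to $\PP^k$ given by Cramer's rule; and it is irreducible since any further component would dominate the curve $\Gamma\cup L$ with $2$-dimensional fibres, which is impossible. Thus the closed embedding of the irreducible threefold $\Bl_{\Gamma\cup L}(Z_1)$ into the reduced irreducible threefold $W$ is an isomorphism, and $\Bl_{\Gamma\cup L}(Z_1)$ is the complete intersection of Table~\ref{table:ci-nf}.

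\emph{The surface $\Sigma$.} Finally I would identify $\Sigma$. The preimage $\bar f_1^{-1}(L)$ is the component of $\bar E_1$ over $L$, namely the $\PP^1$-bundle $\PP_L(\cN^\vee_{\Gamma\cup L/Z_1}|_L)$ over $L\cong\PP^1$, and it coincides with $\Sigma$: indeed $\Bl_{\Gamma\cup L}(Z_1)$ admits a small crepant partial resolution $\Bl_C(X_1)\to\Bl_{\Gamma\cup L}(Z_1)$, the blowup of the flopping curve $C\subset X_1=\Bl_\Gamma(Z_1)$ (cf.\ the proof of Proposition~\ref{prop:cones}), whose exceptional divisor $\PP(\cN_{C/X_1})\cong\PP^1\times\PP^1$ maps isomorphically onto $\bar f_1^{-1}(L)=\Sigma$; in particular $\cN_{\Gamma\cup L/Z_1}|_L$ is balanced by~\eqref{eq:cn-ci}. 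By adjunction for the line $L$ one has $\deg\cN_{L/Z_1}=\io(Z_1)-2$, and the conormal sequence of $L\subset\Gamma\cup L\subset Z_1$ together with $\deg(\Gamma\cap L)=\io(Z_1)$ (see~\eqref{eq:bs-z1}) gives $\deg\cN_{\Gamma\cup L/Z_1}|_L=2\io(Z_1)-2=2m$, so $\cN^\vee_{\Gamma\cup L/Z_1}|_L\cong\cO_L(-m)^{\oplus2}$ and $\Sigma=\PP_L(\cO_L(-m)^{\oplus2})$. A direct intersection computation on this smooth surface — using $H_1|_\Sigma=f$ for a fibre $f$ of $\bar f_1|_\Sigma$, $\bar E_1\cdot f=-1$, and the resulting value $\bar E_1|_\Sigma=mf-s$ for the section class $s$ — yields $H_2|_\Sigma=(mH_1-\bar E_1)|_\Sigma=s$, the class of the ruling of $\Sigma\cong\PP^1\times\PP^1$ complementary to $\bar f_1|_\Sigma$. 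Hence the morphism $(\bar f_1|_\Sigma,\operatorname{pr}_2|_\Sigma)\colon\Sigma\to Z_1\times\PP^k$ collapses complementary rulings onto the two factors, $L_2\coloneqq\operatorname{pr}_2(\Sigma)$ is a line (since $H_2\cdot f=1$), and the induced map $\Sigma\to L\times L_2$ is a bijection of products of $\PP^1$'s, hence an isomorphism; so $\Sigma=L\times L_2$ inside $Z_1\times\PP^k$, as claimed. Of the four steps, I expect the third to demand the most care (the excess-intersection-type phenomenon) and the fourth some bookkeeping with the singular blowup and its exceptional components, while the first two are essentially formal.
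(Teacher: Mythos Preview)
Your first three steps are correct and amount to a direct, hands-on proof of the special case of \cite[Lemma~2.1]{K16} that the paper simply cites: your fibrewise analysis of~$W$ (the rank of~$\Phi$ drops by exactly one on the locally complete intersection curve~$\Gamma\cup L$ and nowhere drops further) is precisely the input that lemma packages. The paper's route is shorter by outsourcing this computation; yours is more self-contained. Either way, the identification of~$\Bl_{\Gamma\cup L}(Z_1)$ with the complete intersection of divisors of bidegree~$(a_i,1)$ is established.

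Your fourth step, however, has a genuine circularity. You justify that~$\cN_{\Gamma\cup L/Z_1}\vert_L$ is balanced by invoking~\eqref{eq:cn-ci}, i.e.\ $\cN_{C/X_1}\cong\cO(-1)^{\oplus2}$. But~\eqref{eq:cn-ci} is stated for the flopping curve on a small resolution of a \emph{$1$-nodal} threefold; at this point in the argument we only know that~$X_1=\Bl_\Gamma(Z_1)$ is almost Fano with a unique $K$-trivial curve~$C$, not that its anticanonical model is $1$-nodal. In fact the $1$-nodality is exactly what Corollary~\ref{cor:ci-bir} proves \emph{using} the present corollary (the paper deduces~$\cN_{C/X_1}\cong\cO(-1)^{\oplus2}$ from the isomorphism~$E\xrightiso{}\Sigma\cong L\times\PP^1$, not the other way round). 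Without the balanced normal bundle you cannot conclude~$\Sigma\cong\PP^1\times\PP^1$, and your subsequent intersection computation collapses.

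The fix is immediate and is what the paper does: restrict the resolution~\eqref{eq:resolution-ci-gl} to~$L$. The surjection~$\cO_{Z_1}^{\oplus(k+1)}\twoheadrightarrow\cI_{\Gamma\cup L}(mH_1)$ induces on the exceptional fibre over~$L$ a surjection~$\cO_L^{\oplus(k+1)}\twoheadrightarrow\cN^\vee_{\Gamma\cup L/Z_1}(m)\vert_L$, so the latter bundle is globally generated; your own degree computation gives~$\deg\bigl(\cN^\vee_{\Gamma\cup L/Z_1}(m)\vert_L\bigr)=0$, and a globally generated rank-$2$ bundle of degree~$0$ on~$\PP^1$ is trivial. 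Hence~$\Sigma=\PP_L(\cO_L^{\oplus2})\cong L\times\PP^1$, and the map~$\Sigma\to\PP^k$ is induced by the \emph{constant} surjection~$\cO_L^{\oplus(k+1)}\to\cO_L^{\oplus2}$, so its image is a line~$L_2$ and~$\Sigma=L\times L_2$. This replaces both your appeal to~\eqref{eq:cn-ci} and the intersection bookkeeping that follows it.
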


\begin{proof}
We consider the first morphism in~\eqref{eq:resolution-ci-gl} and apply~\cite[Lemma~2.1]{K16} to its dual
\begin{equation}
\label{eq:dual-morphism}
\cO_{Z_1}^{\oplus (k + 1)} \longrightarrow \moplus_{i=1}^{k} \cO_{Z_1}(a_iH_1).
\end{equation}
Since the curves~$\Gamma$ and~$L$ are smooth, their union is a planar curve, hence it is a local complete intersection,
hence its ideal sheaf has fibers of dimension~$2$ at every point of~$\Gamma \cup L$.
Thus, the first degeneration scheme of~\eqref{eq:dual-morphism} is~$\Gamma \cup L$, 
while the second degeneration scheme is empty.
Therefore, \cite[Lemma~2.1]{K16} proves that
\begin{equation*}
\Bl_{\Gamma \cup L}(Z_1) \subset \PP_{Z_1}(\cO_{Z_1}^{\oplus(k+1)}) \cong Z_1 \times \PP^k 
\end{equation*}
is equal to the zero locus of a global section of the vector bundle~$\bigoplus \cO_{Z_1 \times \PP^k}(a_iH_1 + H_2)$,
where~$H_2$ stands for the hyperplane class of~$Z_2 = \PP^k$; 
this proves that~$\Bl_{\Gamma \cup L}(Z_1) \subset Z_1 \times \PP^k$ 
is a complete intersection of ample divisors listed in Table~\ref{table:ci-nf}.

To identify the preimage of~$L$ in~$\bar{X}$ we recall that~$\cN_{\Gamma \cup L / Z_1}$ 
is a locally free sheaf of rank~$2$ (because~$\Gamma \cup L$ is a local complete intersection) and
\begin{equation*}
\deg(\cN_{\Gamma \cup L / Z_1}\vert_L) = \deg(\cN_{L/Z_1}) + \operatorname{length}(\Gamma \cap L) = (\io(Z_1) - 2) + \io(Z_1) = 2\io(Z_1) - 2,
\end{equation*}
where the first equality uses the argument of~\cite[Lemma~A.2.1]{KPS}.
On the other hand, exact sequence~\eqref{eq:resolution-ci-gl} 
implies that the bundle~$\cN_{\Gamma \cup L / Z_1}^\vee(\io(Z_1) - 1)\vert_L$ is globally generated.
Therefore it is trivial, hence
\begin{equation*}
\Sigma \coloneqq \bar{f}_1^{-1}(L) = \PP_L(\cN_{\Gamma \cup L / Z_1}\vert_L) \cong L \times \PP^1,
\end{equation*}
where~$\bar{f}_1 \colon \bar{X} = \Bl_{\Gamma \cup L}(Z_1) \to Z_1$ is the blowup morphism.
Since, moreover, the morphism~$\Sigma \to Z_1 \times \PP^k$ 
is induced by the surjection~$\cO_L^{\oplus (k+1)} \to \cN_{\Gamma \cup L / Z_1}^\vee(\io(Z_1) - 1)\vert_L \cong \cO_L^{\oplus 2}$,
the image of~$\Sigma$ in~$Z_1 \times \PP^k$ is equal to the product of~$L$ and a line in~$\PP^k$.
\end{proof}

\begin{corollary}
\label{cor:ci-bir}
Let~$(Z_1,\Gamma)$ be as in Table~\textup{\ref{table:blowups}} and~$X_1 \coloneqq \Bl_\Gamma(Z_1)$.
If~\eqref{eq:af-condition} holds then the anticanonical contraction~$\pi_1 \colon X_1 \to X$ 
contracts a single smooth rational curve~$C \subset X_1$
and the anticanonical model~$X$ of~$X_1$ is a nonfactorial $1$-nodal Fano threefold with the node~$x_0 = \pi_1(C)$. 
Moreover, the anticanonical model~$\bar{X} \coloneqq \hX_\can$ of~$\hX \coloneqq \Bl_{x_0}(X)$ 
is isomorphic to~$\Bl_{\Gamma \cup L}(Z_1)$, where~$L$ is the line described in Proposition~\textup{\ref{prop:base-gl}};
in particular, $\bar{X}$ is a complete intersection in~$Z_1 \times \PP^k$ 
of hypersurfaces listed in Table~\textup{\ref{table:ci-nf}}.
\end{corollary}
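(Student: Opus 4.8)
The plan is to assemble Corollary~\ref{cor:ci-bir} from the structural results already established, running the construction of Proposition~\ref{prop:base-gl} in reverse. First I would record that, by Proposition~\ref{prop:base-gl}, the hypothesis~\eqref{eq:af-condition} guarantees that $X_1 = \Bl_\Gamma(Z_1)$ is a smooth almost Fano variety whose unique $K$-trivial curve is the strict transform $C$ of the line $L$. Hence the anticanonical contraction $\pi_1 \colon X_1 \to X$ (which exists because $-K_{X_1}$ is nef and big) contracts exactly the curve $C$, so it is a small contraction, and $C \cong \PP^1$ since $L \cong \PP^1$ and $\Gamma \ne L$. The image $X = (X_1)_\can$ is then a Fano variety with terminal Gorenstein singularities, Gorenstein because $\pi_1$ is crepant; since $\pi_1$ contracts a single smooth rational curve with normal bundle that one computes (from $\cN_{C/X_1}$, using $E_1 \cdot C = \io(Z_1)$ and the resolution~\eqref{eq:resolution-ci-gl}) to be $\cO(-1)\oplus\cO(-1)$, the image point $x_0 = \pi_1(C)$ is an ordinary double point by the standard Atiyah description. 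Thus $(X,x_0)$ is $1$-nodal, and since $\pi_1 \colon X_1 \to X$ is a small resolution with $\uprho(X_1) = 2$, the variety $X$ is nonfactorial with $\uprho(X) = 1$; it is Fano because $-K_X$ is ample, being the pushforward of the semiample big class $-K_{X_1}$ under the contraction it defines.

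Next I would identify the anticanonical model of the blowup at the node. Set $\hX \coloneqq \Bl_{x_0}(X)$. By Remark~\ref{rem:blowup-nc} the variety $\hX$ is smooth with exceptional quadric $E \cong \PP^1\times\PP^1$, and since $X_1 \to X$ is a small resolution contracting the $(-1,-1)$-curve $C$, the blowup $\hX$ coincides with $\Bl_C(X_1)$; composing with $f_1 = \Bl_\Gamma(Z_1) \to Z_1$ exhibits $\hX$ as $\Bl_{C}(\Bl_\Gamma(Z_1))$. Exactly as in the proof of Proposition~\ref{prop:cones} (the case where $f_i$ is the blowup of a smooth curve and $L_i \ne \Gamma_i$), the natural morphism $\hX = \Bl_C(\Bl_\Gamma(Z_1)) \to \Bl_{\Gamma\cup L}(Z_1)$ is small and crepant, and $\Bl_{\Gamma\cup L}(Z_1)$ is normal and singular (it has a node over each point of $\Gamma\cap L$, which is nonempty by $E_1 \cdot C = \io(Z_1) \ge 2$). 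Since $\hX$ is smooth with $\uprho(\hX) = 3$ and the target is singular, $\uprho(\Bl_{\Gamma\cup L}(Z_1)) \le 2$; combined with the fact that $-K_{\hX}$ restricted to the contracted curves is trivial, this forces $\Bl_{\Gamma\cup L}(Z_1)$ to be the anticanonical model, i.e.\ $\bar{X} \cong \Bl_{\Gamma\cup L}(Z_1)$. The complete intersection description of $\bar X$ in $Z_1 \times \PP^k$ with the bidegrees of Table~\ref{table:ci-nf} is then exactly the content of Corollary~\ref{cor:barx-bu-gl}, which applies verbatim under~\eqref{eq:af-condition}.

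The one point requiring a little care — and which I expect to be the main obstacle — is the transition from ``$C$ is the unique $K$-trivial curve and $\pi_1$ is small'' to ``$x_0$ is a node,'' i.e.\ pinning down that the singularity at $\pi_1(C)$ is an \emph{ordinary} double point rather than some other small contraction image. For a general $(Z_1,\Gamma)$ this follows from the computation of $\cN_{C/X_1}$: the normal bundle sequence $0 \to \cN_{C/\widetilde{\Gamma}} \to \cN_{C/X_1} \to \cN_{X_1/\cdots}|_C \to 0$ together with the vanishing extracted from~\eqref{eq:resolution-ci-gl} gives $\cN_{C/X_1} \cong \cO(-1)\oplus\cO(-1)$, and a $(-1,-1)$-curve on a smooth threefold contracts to an ODP. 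One should also note that the case of a line $\Gamma$ on a del Pezzo threefold of degree $2$ lying in the ramification divisor of $Z_1 \to \PP^3$ is excluded by the running hypothesis of Proposition~\ref{prop:base-gl} (and is handled in~\cite{KS23}), so no degenerate behaviour occurs. Everything else is a direct concatenation of Propositions~\ref{prop:base-gl}, \ref{prop:cones} and Corollaries~\ref{cor:barx-bu-gl}, \ref{cor:small-blowup}, so the proof is short.
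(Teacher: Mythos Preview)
Your strategy matches the paper's, but you invert the order of the two key steps, and that inversion creates exactly the gap you flag. You try to compute~$\cN_{C/X_1}$ first (to certify that~$x_0$ is a node) and only afterwards identify~$\bar X$; the paper does the reverse, and the reversal is what makes both steps go through cleanly. Concretely, the paper first uses Corollary~\ref{cor:barx-bu-gl} to compute
\[
-K_{\Bl_{\Gamma\cup L}(Z_1)} \;=\; (\io(Z_1)H_1 + (k{+}1)H_2) - \textstyle\sum_{i=1}^{k}(a_iH_1 + H_2) \;=\; H_1 + H_2,
\]
which is ample as the restriction of an ample class from~$Z_1\times\PP^k$. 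Hence the crepant small morphism~$\xi'\colon\hX\to\Bl_{\Gamma\cup L}(Z_1)$ is already the anticanonical contraction, so~$\bar X=\Bl_{\Gamma\cup L}(Z_1)$. This replaces your Picard-number argument, which as written is incomplete: the reasoning you borrow from Proposition~\ref{prop:cones} relies on the equality~$\uprho(\bar X)=2$, established there separately (and only for~$\g(X)\ge 7$), whereas here you have no independent lower bound on~$\uprho(\bar X)$.

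With~$\bar X$ in hand, Corollary~\ref{cor:barx-bu-gl} also supplies~$\Sigma=L\times L_2\cong\PP^1\times\PP^1$, and the paper observes that~$\xi\vert_E\colon E=\PP_C(\cN_{C/X_1})\to\Sigma$ is a birational morphism between smooth surfaces of the same Picard number, hence an isomorphism; combined with~$\deg\cN_{C/X_1}=-2$ (from $K$-triviality of~$C$) this forces~$\cN_{C/X_1}\cong\cO_C(-1)^{\oplus 2}$, so~$x_0$ is a node. Your direct normal-bundle approach, by contrast, is not carried out --- the exact sequence you sketch is not well-formed (what is~$\tilde\Gamma$, and what ambient for~$X_1$?) --- and the data you invoke, namely~$E_1\cdot C=\io(Z_1)$ together with the resolution~\eqref{eq:resolution-ci-gl}, do not by themselves exclude~$\cN_{C/X_1}\cong\cO\oplus\cO(-2)$ for special~$(Z_1,\Gamma)$; indeed you concede you only see it ``for a general $(Z_1,\Gamma)$''. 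The paper's route via~$\Sigma$ works uniformly for every entry of Table~\ref{table:blowups}, including the~$\g(X)=6$ case.
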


\begin{proof}
Since the curve~$\Gamma \cup L$ is planar, 
the argument of Proposition~\ref{prop:cones} shows that there is a small (hence crepant) birational contraction
\begin{equation*}
\xi' \colon \hX \coloneqq \Bl_C(\Bl_\Gamma(Z_1)) \to \Bl_{\Gamma \cup L}(Z_1) 
\end{equation*}
and the target is normal.
On the other hand, Corollary~\ref{cor:barx-bu-gl} implies that 
\begin{equation*}
-K_{\Bl_{\Gamma \cup L}(Z_1)} = (\io(Z_1)H_1 + (k+1)H_2) - \sum_{i=1}^k(a_iH_1 + H_2) = H_1 + H_2
\end{equation*}
is ample.
Therefore, $\xi' = \xi$ is the anticanonical contraction and~$\bar{X} \cong \Bl_{\Gamma \cup L}(Z_1)$.

Moreover, we proved in Proposition~\ref{prop:base-gl} that~$X_1 = \Bl_{\Gamma}(Z_1)$ is an almost Fano variety
and the unique $K$-trivial curve on~$X_1$ is the strict transform~$C$ of the line~$L$.
To show that the anticanonical model~$X$ of~$X_1$ is $1$-nodal it remains to compute the normal bundle of~$C$.
For this we note that if~$E \subset \hX$ is the exceptional divisor of~$\hX \to X_1$ then the morphism
\begin{equation*}
\xi\vert_E \colon E \cong \PP_{C}(\cN_{C/X_1}) \to \Sigma \cong L \times\PP^1
\end{equation*}
is birational, and since the source and target have the same Picard number, it is an isomorphism.
Since, on the other hand, $\deg(\cN_{C/X_1}) = -2$ because~$C$ is $K$-trivial, 
we conclude that~$\cN_{C/X_1} \cong \cO_C(-1)^{\oplus 2}$, hence~$X$ is indeed $1$-nodal.
It is nonfactorial, because~$X_1$ provides for it a small resolution.
\end{proof}

\subsection{Conic bundles}
\label{ss:af-conic}

In this subsection we discuss nonfactorial $1$-nodal Fano threefolds with a small resolution that has a conic bundle structure.
By Theorem~\ref{thm:intro-nf-contractions} and~\cite{Mori-Mukai:MM} their smoothings 
are Fano threefolds of type~\typemm{2}{m} with~$m \in \{2,\, 6,\, 9,\, 13\}$
(these correspond to types~\hyperlink{5n}{\ntype{1}{5}{n}}, 
\hyperlink{8nb}{\ntype{1}{8}{nb}}, 
\hyperlink{10na}{\ntype{1}{10}{na}}, 
\hyperlink{12nb}{\ntype{1}{12}{nb}}, 
respectively, in Table~\ref{table:nf}).
The last two of them have a birational contraction on the other side of the Sarkisov link,
and therefore the corresponding varieties~$\bar{X}$ were described in~\S\ref{ss:af-blowups},
while the first threefold is described in detail in Proposition~\ref{prop:trigonal}.
For this reason we concentrate on type~\hyperlink{8nb}{\ntype{1}{8}{nb}} (that corresponds to~$m = 6$).
In this case both extremal contractions~$f_1$ and~$f_2$ are conic bundles,
and to fix the notation we consider~$f_2$.

\begin{proposition}
\label{prop:resolution-ce}
Let~$X$ be a nonfactorial $1$-nodal Fano threefold of type~\hyperlink{8nb}{\ntype{1}{8}{nb}} in the notation of Table~\xref{table:nf}
and let~$\pi_2 \colon X_2 \to X$ be a small resolution.
Then~$X_2 \subset \PP_{\PP^2}(\cE)$ is a divisor of class~$2H - H_2$ 
in the projectivization of the vector bundle~$\cE$ defined by an exact sequence
\begin{equation}
\label{eq:resolution-ce}
0 \longrightarrow \cE \longrightarrow \cO_{\PP^2}(-1)^{\oplus 3} \longrightarrow \cO_L(1) \longrightarrow 0,
\end{equation}
where~$L \subset \PP^2$ is a line, equal to the image~$f_2(C_2)$ of the flopping curve~$C_2 \subset X_2$.
\end{proposition}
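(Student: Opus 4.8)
The plan is to realize $X_2$ as a relative conic over $\PP^2$ and then identify the defining bundle by an elementary transformation along the flopping curve, following the pattern of the Type~\type{(C_2)} analysis in the proof of Proposition~\ref{prop:big-index} and the conic bundle case in the proof of Proposition~\ref{prop:cones}. First I would set $\cE \coloneqq (f_{2*}\cO_{X_2}(H))^\vee$. Since $\g(X) = 8 \ge 5$, the class $H = -K_X$ is very ample by Corollary~\ref{cor:very-ample}; as $\pi_2$ is small we have $-K_{X_2} = \pi_2^*(-K_X) = H$, hence $H$ is base point free and restricts to the hyperplane class of every conic fibre of $f_2$. By cohomology and base change $\cE$ is then a rank-$3$ vector bundle on $\PP^2$, $X_2 \hookrightarrow \PP_{\PP^2}(\cE)$ is a divisor of relative degree~$2$, and the relative hyperplane class $M$ of $\PP_{\PP^2}(\cE)$ restricts to $H$ on $X_2$; this normalizes the twist of $\cE$.

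Next I would pin down the invariants of $\cE$. Writing $X_2 \sim 2M + tH_2$ with $t \in \ZZ$, the adjunction formula on $\PP_{\PP^2}(\cE)$ together with $K_{X_2} = -H = -M\vert_{X_2}$ forces $c_1(\cE) = (t-3)H_2$. Since (by Theorem~\ref{thm:intro-nf-contractions}, or by Table~\ref{table:nf}) the conic bundle $f_2$ has discriminant $\Delta_2 \subset \PP^2$ of degree $5$, and $\deg(\Delta_2) = t + 6$ when $\Delta_2$ is computed as the degeneracy divisor of the symmetric morphism $\cE \to \cE^\vee(tH_2)$ defining $X_2$, we get $t = -1$, so $c_1(\cE) = -4H_2$ and $X_2 \sim 2M - H_2$, i.e.\ $X_2$ has class $2H - H_2$ in the notation of the statement. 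Finally, $(-K_{X_2})^3 = H^3 = 2\g(X) - 2 = 14$ together with the Grothendieck relation in the Chow ring of $\PP_{\PP^2}(\cE)$ gives $c_2(\cE) = 7$.

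For the core of the argument I would analyze the flopping curve $C_2 \subset X_2$. By~\eqref{eq:deg-ci} one has $H_2 \cdot C_2 = 1$, so $L \coloneqq f_2(C_2) \subset \PP^2$ is a line and $f_2$ maps $C_2$ isomorphically onto $L$; since $H \cdot C_2 = 0$ and $C_2 \cong \PP^1$, the curve $C_2$ is a section of $\PP_L(\cE\vert_L) \to L$ along which $M$ restricts trivially, hence corresponds to a surjection $\cE^\vee\vert_L \twoheadrightarrow \cO_L$, i.e.\ to a sub-line-bundle $\cO_L \hookrightarrow \cE\vert_L$. Composing with $\cE^\vee \twoheadrightarrow \cE^\vee\vert_L$ and putting $\bar{\cE}^\vee \coloneqq \ker(\cE^\vee \to \cO_L)$ gives a short exact sequence $0 \to \bar{\cE}^\vee \to \cE^\vee \to \cO_L \to 0$ with $\bar{\cE}^\vee$ locally free of rank~$3$; dualizing (the cokernel being $\cO_L(1)$ by local duality on the line) produces $0 \to \cE \to \bar{\cE} \to \cO_L(1) \to 0$, which is the shape of~\eqref{eq:resolution-ce} provided $\bar{\cE} \cong \cO_{\PP^2}(-1)^{\oplus 3}$. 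From the invariants above one computes $c_1(\bar{\cE}) = -3H_2$ and $c_2(\bar{\cE}) = 3$, so the Bogomolov discriminant of $\bar{\cE}$ vanishes; it then remains to show $\bar{\cE}$ is semistable, since a semistable bundle on $\PP^2$ with vanishing discriminant and $c_1 = -3H_2$ is necessarily $\cO_{\PP^2}(-1)^{\oplus 3}$. (Alternatively, one can argue that $\PP_{\PP^2}(\bar{\cE})$, which by the construction of Proposition~\ref{prop:cones} contains the anticanonical model $\bar{X}$ of $\hX$ as a conic bundle with discriminant $\Delta_2 \cup L$, is the product $\PP^2 \times \PP^2$, using that $\bar{X}$ --- a degeneration of the Mori--Mukai threefold \typemm{2}{6} --- carries a second conic bundle structure.)

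The step I expect to be the main obstacle is exactly this semistability of $\bar{\cE}$ (equivalently, the splitting $\bar{\cE} \cong \cO_{\PP^2}(-1)^{\oplus 3}$, or the product structure on $\PP_{\PP^2}(\bar{\cE})$). Everything preceding it is routine bookkeeping with adjunction and Chern classes; ruling out a destabilizing subsheaf is where the geometry of the quintic discriminant and the very ampleness of $-K_X$ must enter, and I would expect the cleanest route to be a cohomology vanishing computation for a suitable twist of $f_{2*}\cO_{X_2}(-K_{X_2})$, in the spirit of the stability arguments imported from~\cite{P:V22} in the proof of Proposition~\ref{prop:big-index}.
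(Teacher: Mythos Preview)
Your setup is sound and your Chern class computation is correct (it agrees with the paper's, which instead computes $\upchi(\cE^\vee)$ and $\upchi(\cE^\vee(-H_2))$ via Riemann--Roch and the relation $H - H_2 \sim H_1$). You have also correctly located the only nontrivial step: identifying $\bar{\cE}$ with $\cO_{\PP^2}(-1)^{\oplus 3}$. But the paper does \emph{not} prove this by showing $\bar{\cE}$ is semistable; it constructs the three sections of $\cE^\vee(-1)$ directly from the geometry of the Sarkisov link, and this is the idea you are missing.

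Here is the point. By Corollary~\ref{cor:alpha} we have $H \sim H_1 + H_2$ in $\Cl(X)$, so on $X_2$ the linear system $|H - H_2|$ is exactly $|H_1|$, the pullback of $|\cO_{\PP^2}(1)|$ along the \emph{other} contraction $f_1 \colon X_1 \to \PP^2$. Since $|H_1|$ is base point free on $X_1$ and the flop $X_1 \dashrightarrow X_2$ is an isomorphism away from $C_2$, the base locus of $|H - H_2|$ on $X_2$ is precisely $C_2$. This gives a right-exact sequence
\[
\cO_{X_2}(H_2 - H)^{\oplus 3} \longrightarrow \cO_{X_2} \longrightarrow \cO_{C_2} \longrightarrow 0,
\]
and after twisting by $H$ and pushing forward along $f_2$ one obtains a complex
\[
0 \longrightarrow \cO_{\PP^2}(1)^{\oplus 3} \longrightarrow \cE^\vee \longrightarrow \cO_L \longrightarrow 0.
\]
Exactness on the right follows from $\mathbf{R}^2f_{2*} = 0$, exactness on the left from acyclicity of the kernel sheaf restricted to a general (smooth) conic fibre, and exactness in the middle then follows from the Chern class identity you already have. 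Dualizing gives~\eqref{eq:resolution-ce}. In effect, the three global sections of $\bar{\cE}^\vee(-1)$ you need are handed to you by the three coordinates on $Z_1 = \PP^2$; there is no stability argument at all. Your parenthetical alternative (recognizing $\PP_{\PP^2}(\bar{\cE})$ as $\PP^2 \times \PP^2$ via the second conic bundle on $\bar{X}$) is morally the same observation, but the paper's version is cleaner because it works on $X_2$ before passing to $\bar{X}$.
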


\begin{remark}
\label{rem:-CB}
A similar (actually, slightly more complicated) computation allows one to show that
for Fano threefolds~$X$ of types~$\hyperlink{12nb}{\ntype{1}{12}{nb}}$, 
\hyperlink{10na}{\ntype{1}{10}{na}}, 
and~\hyperlink{5n}{\ntype{1}{5}{n}}
the small resolution~$X_2$ of~$X$ that has a conic bundle structure
is a divisor of class~$2H - 3H_2$, $2H -2H_2$ and~$2H + H_2$, respectively,
in the projective bundle~$\PP_{\PP^2}(\cE)$, where~$\cE^\vee$ fits into an exact sequence
\begin{align*}
0 \longrightarrow \cO_{\PP^2}^{\oplus 2} \longrightarrow \cO_{\PP^2}(1)^{\oplus 5} \longrightarrow \cE^\vee \longrightarrow \cO_L \longrightarrow 0&,
&&\text{for type~\hyperlink{12nb}{\ntype{1}{12}{nb}},}
\\
0 \longrightarrow \cO_{\PP^2} \longrightarrow \cO_{\PP^2}(1)^{\oplus 4} \longrightarrow \cE^\vee \longrightarrow \cO_L \longrightarrow 0&,
&&\text{for type~\hyperlink{10na}{\ntype{1}{10}{na}},}
\\
0 \longrightarrow \cO_{\PP^2}(-1) \oplus \cO_{\PP^2}(1)^{\oplus 2} \longrightarrow \cE^\vee \longrightarrow \cO_L \longrightarrow 0&,
&&\text{for type~\hyperlink{5n}{\ntype{1}{5}{n}}.}
\end{align*}
Alternatively, one can deduce these exact sequences using the representation of~$\bar{X}$ 
as a complete intersection (for types~\hyperlink{12nb}{\ntype{1}{12}{nb}} and~\hyperlink{10na}{\ntype{1}{10}{na}}, see Corollary~\ref{cor:barx-bu-gl})
or as a double covering (for type~\hyperlink{5n}{\ntype{1}{5}{n}}, see Remark~\ref{rem:barx-g456})
and the argument of Lemma~\ref{lem:dp-bundle}.
\end{remark} 

\begin{proof}
First, according to Table~\ref{table:nf} the extremal contractions of both small resolutions~$X_1$ and~$X_2$ of~$X$ are conic bundles.
Moreover, since~$\balpha(X) = 1$ by Corollary~\ref{cor:alpha}, 
the relations~\eqref{eq:mkx} and~\eqref{eq:mkhx} take the form~$H \sim H_1 + H_2$ in~$\Cl(X)$ 
and~$H - E \sim H_1 + H_2$ in~$\Pic(\hX)$.
In particular, we have~$\dim |H - H_2| = \dim |H_1| = 2$.

Now let~$\cE \coloneqq (f_{2*}\cO_{X_2}(H))^\vee$, so that~$X_2 \subset \PP_{\PP^2}(\cE)$.
Then~$\cE$ is a vector bundle and
\begin{align*}
&\upchi(\cE^\vee) = \upchi(\cO_{X_2}(H)) = \g(X) + 2 = 10,
\\
&\upchi(\cE^\vee(-H_2)) = \upchi(\cO_{X_2}(H - H_2)) = \upchi(\cO_{\hX}(H_1 + E)) = \upchi(\cO_{X_1}(H_1)) = 3,
\end{align*}
where the first line follows from~\eqref{eq:h0-mk}.
Applying the Riemann--Roch Theorem, we find 
\begin{equation}
\label{eq:chern-cevee}
\rank(\cE^\vee) = 3,
\qquad
\rc_1(\cE^\vee) = 4H_2,
\qquad 
\rc_2(\cE^\vee) = 7.
\end{equation} 
Therefore, $K_{\PP_{\PP^2}(\cE)} = H_2 - 3H$, and since~$K_{X_2} = -H$, 
we conclude that~$X_2 \subset \PP_{\PP^2}(\cE)$ is a divisor of class~$2H - H_2$.

Let~$C_2 \subset X_2$ be the exceptional curve of the anticanonical contraction~$\pi_2 \colon X_2 \to X$.
Then~$H_2 \cdot C_2 = 1$ by~\eqref{eq:deg-ci}, hence~$L \coloneqq f_2(C_2) \subset \PP^2$ is a line.
Since~$X_1$ is obtained from~$X_2$ by flopping the curve~$C_2$ and~$|H_1|$ is base point free on~$X_1$, 
we conclude that on~$X_2$ we have
\begin{equation*}
\Bs(|H - H_2|) = C_2.
\end{equation*}
Therefore, we have a right-exact sequence
\begin{equation*}
\cO_{X_2}(H_2 - H)^{\oplus 3} \longrightarrow \cO_{X_2} \longrightarrow \cO_{C_2} \longrightarrow 0.
\end{equation*}
Twisting it by~$\cO_{X_2}(H)$ and using the fact that~$H\vert_{C_2} \sim0$, we obtain and exact sequence
\begin{equation}
\label{eq:def-ck}
0 \longrightarrow \cK \longrightarrow \cO_{X_2}(H_2)^{\oplus 3} \longrightarrow \cO_{X_2}(H) \longrightarrow \cO_{C_2} \longrightarrow 0,
\end{equation}
where~$\cK$ is a vector bundle of rank~$2$ on~$X_2$.
Pushing it forward along the map~$f_2 \colon X_2 \to \PP^2$, we obtain a complex
\begin{equation}
\label{eq:complex-cevee}
0 \longrightarrow \cO_{\PP^2}(1)^{\oplus 3} \xrightarrow{\ \varphi\ } \cE^\vee \xrightarrow{\quad} \cO_L \xrightarrow{\quad} 0,
\end{equation}
whose cohomology sheaves are isomorphic to~$\mathbf{R}^if_{2*}(\cK)$.

On the one hand, all fibers of~$f_2$ are 1-dimensional, hence~$\mathbf{R}^2f_{2*}(\cK) = 0$, 
and therefore~\eqref{eq:complex-cevee} is exact in the right term.
On the other hand, if~$z \in Z_2$ is a general point, then~$f_2^{-1}(z)$ is a smooth conic, i.e., 
$f_2^{-1}(z) \cong \PP^1$, 
$\cO_{X_2}(H)\vert_{f_2^{-1}(z)} \cong \cO_{\PP^1}(2)$, 
and the restriction of the middle arrow of~\eqref{eq:def-ck} to~$f_2^{-1}(z)$ is the evaluation morphism
\begin{equation*}
H^0(f_2^{-1}(z), \cO_{f_2^{-1}(z)}(2)) \otimes \cO_{f_2^{-1}(z)} \to \cO_{f_2^{-1}(z)}(2).
\end{equation*}
It follows that~$\cK\vert_{f_2^{-1}(z)} \cong \cO_{f_2^{-1}(z)}(-1)^{\oplus 2}$ is acyclic, 
hence~$\mathbf{R}^0f_{2*}(\cK) = 0$,
and therefore~\eqref{eq:complex-cevee} is exact in the left term.
Finally, comparing the total Chern class of~$\cE^\vee$ computed in~\eqref{eq:chern-cevee} 
with the product of total Chern classes of the other terms of~\eqref{eq:complex-cevee}, 
it is easy to see that the complex~\eqref{eq:complex-cevee} is exact everywhere.
It remains to dualize~\eqref{eq:complex-cevee} and obtain~\eqref{eq:resolution-ce}.
\end{proof}

This result has an immediate corollary. 

\begin{corollary}[{cf.~\cite[\S3.4]{P:ratFano2:22}}]
\label{cor:barx-cb}
Let~$(X,x_0)$ be a nonfactorial $1$-nodal Fano threefold of type~\hyperlink{8nb}{\ntype{1}{8}{nb}}. 
Then the anticanonical model~$\bar{X}$ of~$\hX \coloneqq \Bl_{x_0}(X)$ 
is a divisor
\begin{equation*}
\bar{X} \subset Z_1 \times Z_2 = \PP^2 \times \PP^2 
\end{equation*}
of bidegree~$(2,2)$ containing a surface~$\Sigma = \PP^1 \times \PP^1 \subset  \PP^2 \times \PP^2$.
\end{corollary}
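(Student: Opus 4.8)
The plan is to deduce the corollary directly from Proposition~\ref{prop:resolution-ce} together with the general description of the anticanonical model~$\bar{X}$ established in Proposition~\ref{prop:cones}. First I would recall the setup: by Proposition~\ref{prop:resolution-ce} we have~$X_2 \subset \PP_{\PP^2}(\cE)$ as a divisor of class~$2H - H_2$, where~$\cE$ fits into the exact sequence~\eqref{eq:resolution-ce}; dualizing this sequence exhibits~$\cE^\vee$ as the cokernel of an injection~$\cO_L(-1) \hookrightarrow \cO_{\PP^2}(1)^{\oplus 3}$ (equivalently, as in~\eqref{eq:complex-cevee}). The key point is that~\eqref{eq:resolution-ce} realizes~$\cE$ as an elementary modification of~$\cO_{\PP^2}(-1)^{\oplus 3}$ along the line~$L = f_2(C_2)$: away from~$L$ we have~$\cE \cong \cO_{\PP^2}(-1)^{\oplus 3}$, so~$\PP_{\PP^2}(\cE)$ and~$\PP_{\PP^2}(\cO_{\PP^2}(-1)^{\oplus 3}) = \PP^2 \times \PP^2$ are related by a birational modification supported over~$L$.

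Next I would carry out the elementary-transformation bookkeeping. The surjection~$\cO_{\PP^2}(-1)^{\oplus 3} \twoheadrightarrow \cO_L(1)$ in~\eqref{eq:resolution-ce} determines a section of~$\PP_L\big(\cO_{\PP^2}(-1)^{\oplus 3}\big|_L\big) = L \times \PP^2 \subset \PP^2 \times \PP^2$, i.e. a curve~$C_2' \subset \PP^2 \times \PP^2$ of bidegree~$(1,1)$, and the blowup~$\hX = \Bl_{C_2}(X_2)$ coincides with the strict transform of~$X_2$'s image under the elementary transformation centered at~$C_2'$. Concretely, just as in the proofs of Propositions~\ref{prop:if3} and~\ref{prop:big-index} where analogous elementary transformations were analyzed, I would argue that~$\hX$ embeds into~$\Bl_{C_2'}(\PP^2 \times \PP^2)$ and the anticanonical morphism~$\xi\colon \hX \to \bar{X}$ factors through the contraction of the other ruling, landing in~$\PP^2 \times \PP^2$ again. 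To pin down the bidegree of~$\bar{X} \subset \PP^2 \times \PP^2$, I would use the relation~$-K_{\hX} \sim H_1 + H_2$ (valid since~$\balpha(X) = 1$ by Corollary~\ref{cor:alpha}), which forces~$-K_{\bar{X}} \sim \bar{f}_1^* H_1 + \bar{f}_2^* H_2$, i.e. the restriction of~$\cO(1,1)$ from~$\PP^2 \times \PP^2$; combined with the adjunction formula~$-K_{\PP^2 \times \PP^2} = \cO(3,3)$ this gives~$\bar{X} \in |\cO(2,2)|$. Alternatively, and perhaps more cleanly, I would compute~$\dim H^0(\bar{X}, \cO(H_1+H_2))$ via~\eqref{eq:mkhx} and compare with~$\dim H^0(\PP^2\times\PP^2, \cO(1,1)) = 9$ versus~$\dim H^0(\hX, \cO(H-E)) = \g(X) + 1 = 9$, showing that~$\xi$ maps~$\hX$ isomorphically onto its image in codimension one and that the image sits inside no hyperplane section, hence is a~$(2,2)$-divisor.

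Finally, I would identify the surface~$\Sigma$. By Proposition~\ref{prop:cones} the image~$\xi(E)$ of the exceptional divisor of~$\hX = \Bl_{x_0}(X) = \Bl_{C_2}(X_2)$ is a smooth quadric surface inside~$\bar{X}$, and it is contracted to the node~$x_0$ under~$\pi_2$ in the other direction. Tracking this surface through the elementary transformation: the exceptional divisor~$E$ maps onto the preimage~$\bar{f}_2^{-1}(L) \cong L \times \PP^1$ (using that the modification is supported over~$L$ and that~$\cN_{C_2/X_2} \cong \cO(-1)^{\oplus 2}$ makes the relevant bundle trivial along~$L$, exactly as in Corollary~\ref{cor:barx-bu-gl}), while on the~$Z_1$-side it equals~$\bar{f}_1^{-1}(L_1)$ for a line~$L_1 = f_1(C_1) \subset \PP^2$; so~$\Sigma = L_1 \times L \cong \PP^1 \times \PP^1 \subset \PP^2 \times \PP^2$. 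The main obstacle I anticipate is the careful verification that the elementary transformation of~$X_2$ along~$C_2'$ really produces a \emph{normal} variety~$\bar{X}$ with exactly the expected singularities and that~$\xi$ is its anticanonical contraction rather than some intermediate model — but this follows from the general machinery already in place: Proposition~\ref{prop:cones} guarantees~$\bar{X}$ is a Fano threefold with terminal Gorenstein singularities and~$\xi$ is small with~$\uprho(\bar{X}) = 2$, and the~$(2,2)$-divisor in~$\PP^2\times\PP^2$ containing~$\PP^1\times\PP^1$ produced by the construction has the same numerical invariants, so they must coincide.
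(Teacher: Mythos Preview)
Your proposal is correct and follows essentially the same route as the paper: identify~$\bar X$ via the elementary transformation of the conic bundle~$X_2\subset\PP_{\PP^2}(\cE)$ along~$C_2$, using the resolution~\eqref{eq:resolution-ce}. The paper streamlines your argument by noting that the exact sequence defining~$\bar\cE$ in the conic-bundle case of Proposition~\ref{prop:cones} (namely~$0\to\cE\to\bar\cE\to\cO_{L}(1)\to 0$) is precisely~\eqref{eq:resolution-ce}, so~$\bar\cE\cong\cO_{\PP^2}(-1)^{\oplus 3}$ on the nose and~$\bar X\subset\PP_{\PP^2}(\bar\cE)=\PP^2\times\PP^2$ is a divisor of relative degree~$2$, hence of bidegree~$(2,2)$; you instead redo that elementary-transformation bookkeeping by hand and then appeal to adjunction, which is fine but longer. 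One small caveat: your ``alternative'' dimension count (comparing~$h^0(\PP^2\times\PP^2,\cO(1,1))=9$ with~$h^0(\hX,\cO(H-E))=9$) only shows~$\bar X$ lies in no $(1,1)$-hypersurface, not that it has bidegree~$(2,2)$---stick with the adjunction argument there.
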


\begin{proof}
We recall the argument of Proposition~\ref{prop:cones},
where a vector bundle~$\bar{\cE}$ on~$\PP^2$ was constructed 
and~$\bar{X}$ was identified with a divisor in~$\PP_{\PP^2}(\bar{\cE})$ of relative degree~2.
More precisely, as~$X \subset \PP_{\PP^2}(\cE)$ is a divisor of type~$2H - H_2$, 
the argument shows that~$\bar{X} \subset \PP_{\PP^2}(\bar{\cE})$ is a divisor of type~$2\bar{H}$,
where~$\bar{H}$ is the relative hyperplane class.
Now, comparing the construction of~$\bar{\cE}$ in Proposition~\ref{prop:cones} with~\eqref{eq:resolution-ce} we see that
\begin{equation*}
\bar{\cE} \cong \cO_{\PP^2}(-1)^{\oplus 3},
\end{equation*}
hence~$\bar{X}$ is a divisor in~$\PP^2 \times \PP^2$ of bidegree~$(2,2)$.
As usual, the anticanonical contraction~\mbox{$\xi \colon \hX = \Bl_{C_2}(X_2) \to \bar{X}$} 
is birational on the exceptional divisor~$E \cong \PP^1 \times \PP^1$ of~$\hX$, 
hence it induces an isomorphism of~$E$ onto its image~$\Sigma \subset \bar{X} \subset  \PP^2 \times \PP^2$.
\end{proof} 

\subsection{Proof of Theorem~\ref{thm:intro-nf-ci}}

Now, we combine the results of~\S\S\ref{ss:af-blowups}--\ref{ss:af-conic}.

\begin{proof}[Proof of Theorem~\textup{\ref{thm:intro-nf-ci}}]
Since~$\g(X) \ge 6$, it follows from Table~\ref{table:nf} that either
\begin{itemize}
\item 
the contraction~$f_1 \colon X_1 \to Z_1$ is birational, hence we are in the situation of~\S\ref{ss:af-blowups},
\item 
$X$ has type~\hyperlink{8nb}{\ntype{1}{8}{nb}} and we are in the situation of~\S\ref{ss:af-conic}.
\end{itemize}
Therefore, part~\ref{thm:nf-ci-x-barx} of the theorem follows 
from Corollaries~\ref{cor:barx-bu-gl}, \ref{cor:ci-bir}, and~\ref{cor:barx-cb}.

To prove part~\ref{thm:nf-ci-barx-x} assume that~$\bar{X} \subset Z \times \PP^k$ is a complete intersection of ample divisors listed in Table~\ref{table:ci-nf}
such that~\eqref{eq:barx-chain} holds, $\bar{X}$ has terminal singularities,
and the blowup~$\Blw{\Sigma}(\bar{X})$ of the Weil divisor class~$\Sigma$ on~$\bar{X}$ (see Appendix~\ref{App:blowup}) is smooth.
Let
\begin{equation*}
\xi \colon \tX \coloneqq \Blw{-\Sigma}(\bar{X}) \to \bar{X}
\end{equation*}
be the blowup of the opposite Weil divisor class~$-\Sigma$.
Then~$\tX$ is obtained from~$\Blw{\Sigma}(\bar{X})$ by a flop, see Corollary~\ref{cor:bl-weil-flop}, 
hence it is also smooth by~\cite[Theorem~2.4]{Kollar:flops}.
Moreover, if~$E \subset \tX$ is the strict transform of~$\Sigma$ 
then Lemma~\ref{lem:ddprime} proves that~$E$ is $\xi$-ample and~$E \cong \Sigma \cong \PP^1 \times \PP^1$. 

On the other hand, the computation of Corollary~\ref{cor:ci-bir} shows that~$-K_{\bar{X}} = H_1 + H_2$. 
Let~$\tilde{H} \coloneqq \xi^*(H_1 + H_2)$.
Note that~$H_1$ and~$H_2$ are base point free, hence the same is true for~$\tilde{H}$.
Besides, $H^1(\tX, \cO_{\tX}(\tilde{H})) = H^1(\bar{X}, \cO_{\bar{X}}(\bar{H})) = 0$ by Kawamata--Viehweg vanishing.
We will use this to show that the class
\begin{equation*}
H \coloneqq \tilde{H} + E.
\end{equation*}
is base point free.
Indeed, since~$\xi$ is small and~$\bar{X}$ is Gorenstein, we have~$K_{\tX} \sim \xi^*K_{\bar{X}} \sim - \tilde{H}$,
and since~$E \cong \PP^1 \times \PP^1$, the adjunction formula for~$E$ gives
\begin{equation*}
\cO_E(-2,-2) \cong \cO_E(K_E) \cong \cO_E(K_{\tX} + E) \cong \cO_E(- \tilde{H}) \otimes \cO_E(E).
\end{equation*}
As~$\cO_E(-\tilde{H}) \cong \cO_\Sigma(-H_1-H_2) \cong \cO_E(-1,-1)$, 
it follows that~$\cO_E(E) \cong \cO_E(-1,-1)$, 
hence
\begin{equation*}
\cO(H)\vert_E \cong \cO(\tilde{H} + E)\vert_E \cong \cO_E(1,1) \otimes \cO_E(-1,-1) \cong \cO_E.
\end{equation*}
Using this, we obtain an exact sequence
\begin{equation*}
0 \longrightarrow \cO_{\tX}(\tilde{H}) \longrightarrow \cO_{\tX}(H) \longrightarrow \cO_E \longrightarrow 0.
\end{equation*}
Since the first and last terms are globally generated and~$H^1(\tX, \cO_{\tX}(\tilde{H})) = 0$,
the middle term is also globally generated, i.e., $H$ is base point free.

Now, consider the contraction~$\pi \colon \tX \to X$ given by (an appropriate multiple of) the class~$H$.
If~$\Upsilon \subset \tX$ is a curve contracted by~$\pi$, we have~$(\tilde{H} + E) \cdot \Upsilon = 0$.

Assume~$\Upsilon$ is not contained in~$E$; then~$E \cdot \Upsilon \ge 0$.
On the other hand, $\tilde{H} \cdot \Upsilon \ge 0$, because~$\tilde{H}$ is nef.
Therefore, we must have~$\tilde{H} \cdot \Upsilon = 0$ and~$E \cdot \Upsilon = 0$.
The first equality means that~$\Upsilon$ is contracted by the morphism~$\xi \colon \tX \to \bar{X}$, 
and therefore contradicts the second equality, because~$E$ is $\xi$-ample. 

Thus, all curves contracted by~$\pi$ are contained in~$E$.
Since on the other hand, $H\vert_E \sim 0$, it follows that~$\pi$ is birational and contracts only~$E$.
Since the normal bundle of~$E$ is~$\cO(-1,-1)$, the point~$x_0 \coloneqq \pi(E) \in X$ is a node,
and since~$\tX$ is smooth, it follows that~$X$ is $1$-nodal.

Furthermore, $\uprho(\tX) = 3$ by Proposition~\ref{prop:rho-3}
and the rulings of~$E \cong \PP^1 \times \PP^1$ are not numerically equivalent 
(because, $\cO_E(\bar{H}_1) \cong \cO_E(1,0)$ and~$\cO_E(\bar{H}_2) \cong \cO_E(0,1)$), hence
\begin{equation*}
\uprho(X) = \uprho(\tX) - 2 = 3 - 2 = 1,
\end{equation*}
and~$X$ is not factorial.
\end{proof} 

\begin{remark}
In Section~\ref{sec:special} we studied properties of the anticanonical linear system of nonfactorial Fano threefolds with 1-node.
On the other hand, the anticanonical models of almost Fano threefolds~$\hX$ from Theorem~\ref{thm:intro-nf-contractions}
also provide examples of nonfactorial Fano threefolds;
from this perspective it is interesting to look at the respective properties of their anticanonical linear systems.
So, we observe that if~$(X,x_0)$ is a nonfactorial $1$-nodal Fano threefold 
with~$\uprho(X) = 1$, $\io(f_1) = \io(f_2) = 1$, and~$\g(X) \ge 7$ 
then the anticanonical model~\mbox{$\bar{X} = \Bl_{x_0}(X)_\can$} has very ample canonical class 
by Corollary~\ref{cor:intersection-of-quadrics} and Lemma~\ref{lem:barx}\ref{it:barx-iq}.
Furthermore, $\bar{X}$ is hyperelliptic when~$\g(X) \in \{5,6\}$ 
(with the image in~$\PP^{\g(X)}$ equal to~$\PP^2 \times \PP^1$ and a hyperplane section of~$\PP^3 \times \PP^1$, respectively),
and when~$\g(X) = 4$ the base locus of the anticanonical linear system on~$\bar{X}$ is not empty.
\end{remark}

\subsection{Del Pezzo fibrations}

We prove the following fact for completeness.

\begin{lemma}
\label{lem:dp-bundle}
Let~$X$ be a nonfactorial $1$-nodal Fano threefold with~$\uprho(X) = 1$ and~\mbox{$\g(X) \ne 2$}.
Assume~$\io(f_1) = \io(f_2) = 1$.
If~$X_2 \to X$ is a small resolution such that the extremal contraction~$f_2 \colon X_2 \to Z_2 = \PP^1$
is a del Pezzo fibration 
and~$\cE \coloneqq ({f_2}_*\cO_{X_2}(H))^\vee$ 
then
\begin{equation*}
\cE \cong \cO \oplus \cO(-1)^{\oplus a} \oplus \cO(-2)^{\oplus b},
\end{equation*}
where~$a = 3\deg(X_2/\PP^1) - \g(X) - 1$ and~$b = \g(X) - 2\deg(X_2/\PP^1) + 1$.
\end{lemma}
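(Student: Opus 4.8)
The plan is to determine the splitting type of the rank-$(d+1)$ bundle $\cE^\vee = f_{2*}\cO_{X_2}(-K_{X_2})$ on $\PP^1$ (write $d \coloneqq \deg(X_2/\PP^1)$ and $g \coloneqq \g(X)$) by computing the handful of cohomology groups that determine it, reading each one off either from vanishing theorems on the weak Fano threefold $X_2$ or, after passing to the flop $X_1$, from the explicit description of $X_1$ (a blowup $\Bl_\Gamma(Z_1)$ of a smooth curve, or a conic bundle over $\PP^2$) furnished by Theorem~\ref{thm:intro-nf-contractions} and Table~\ref{table:nf}.

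First I would record the numerical input. As $-K_{X_2}$ is $f_2$-ample, relative Kawamata--Viehweg vanishing gives $\mathbf{R}^{>0}f_{2*}\cO_{X_2}(-K_{X_2})=0$, so $\cE^\vee$ is locally free of rank $h^0(F,-K_F)=d+1$ for a general fibre $F$, and $H^i(\PP^1,\cE^\vee)\cong H^i(X_2,\cO(-K_{X_2}))$ for all $i$. The latter has dimension $g+2$ for $i=0$ by \eqref{eq:h0-mk} and vanishes for $i>0$ by Kawamata--Viehweg vanishing on the smooth weak Fano threefold $X_2$. Writing $\cE^\vee=\bigoplus_{i}\cO_{\PP^1}(f_i)$ with $f_1\ge\dots\ge f_{d+1}$, the vanishing $H^1(\PP^1,\cE^\vee)=0$ forces $f_i\ge-1$ for all $i$, and $\sum_i(f_i+1)=g+2$ gives $\sum_i f_i=g-d+1$.

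Next I would compute $h^0$ after twisting by $\cO_{\PP^1}(-1),\cO_{\PP^1}(-2),\cO_{\PP^1}(-3)$. Since $\balpha(X)=1$ by Corollary~\ref{cor:alpha}, on $X_2$ we have $-K_{X_2}\sim H_1+H_2$ with $H_2=f_2^*\cO_{\PP^1}(1)$, hence $h^0(\PP^1,\cE^\vee(-n))=h^0\big(X_2,\cO_{X_2}(H_1-(n-1)H_2)\big)$, and --- since the flop $X_1\dashrightarrow X_2$ is an isomorphism in codimension $1$ --- this equals $h^0\big(X_1,\cO_{X_1}(H_1-(n-1)H_2)\big)$. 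On $X_1$ the canonical bundle formula gives $H_2\sim(\io(Z_1)-1)H_1-E_1$ when $f_1$ is a blowup (and an analogous expression in terms of a relative hyperplane class $M_1$ when $f_1$ is a conic bundle), so $H_1-(n-1)H_2$ becomes an explicit combination of $H_1$ and $E_1$; the projection formula, together with $f_{1*}\cO(kE_1)=\cO_{Z_1}$ for $k\ge0$ (resp.\ $f_{1*}\cO(-kM_1)=0$ for $k\ge1$ in the conic bundle case, by relative duality), reduces the computation to cohomology of line bundles on $Z_1\in\{\PP^2,\PP^3,Q^3,\rY_d\}$. Carried out in each of the finitely many cases allowed by Table~\ref{table:nf}, this yields $h^0(\cE^\vee(-1))=g-d+1$, $h^0(\cE^\vee(-2))=g-2d+1$, and $h^0(\cE^\vee(-3))=0$.

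Finally I would assemble the splitting type. From $h^0(\cE^\vee(-3))=0$ together with $f_i\ge-1$ we get $-1\le f_i\le2$. Comparing $h^0(\cE^\vee(-1))=\#\{i:f_i=1\}+2\,\#\{i:f_i=2\}$ with $\sum_i f_i=\#\{i:f_i=1\}+2\,\#\{i:f_i=2\}-\#\{i:f_i=-1\}=g-d+1$ forces $\#\{i:f_i=-1\}=0$; then $h^0(\cE^\vee(-2))=\#\{i:f_i=2\}$ gives $\#\{i:f_i=2\}=g-2d+1$, and combining with the rank $d+1$ we obtain $\#\{i:f_i=1\}=3d-g-1$ and $\#\{i:f_i=0\}=1$; dualizing gives the assertion. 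I expect the middle step to be the main obstacle: the three identities for $h^0(X_1,\cO(H_1-(n-1)H_2))$ hinge on knowing exactly which triples $(Z_1,\Gamma,d)$ occur, so that step is really a short case-by-case verification against Table~\ref{table:nf} rather than a uniform computation, and one must also take note that the $h^0$'s in question are genuinely flop-invariant, being dimensions of spaces of sections of Weil divisor classes that are unchanged in codimension~$1$. (Alternatively, one can run the same computation on the anticanonical model $\bar X$, using the exact sequence $0\to\cO_{\PP^1}\to\cE\to\bar\cE\to0$ of Proposition~\ref{prop:cones} and the complete intersection presentations of $\bar X$ from \S\ref{ss:af-blowups}.)
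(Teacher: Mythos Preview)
Your proposal is correct and takes a genuinely different route from the paper. The paper constructs an explicit extension
\[
0 \longrightarrow f_{2*}\sigma_{2*}\cO_{\hX}(H-E) \longrightarrow \cE^\vee \longrightarrow \cO_{\PP^1} \longrightarrow 0
\]
by pushing forward the short exact sequence $0\to\cO_{\hX}(H-E)\to\cO_{\hX}(H)\to\cO_E\to 0$, and then identifies the first term, via diagram~\eqref{eq:barx-diagram} and the divisor description $\bar X\subset Z_1\times\PP^1$ of class $(\io(Z_1)-1)H_1+H_2$ from Theorem~\ref{thm:intro-nf-ci}, as a direct sum of copies of $\cO(1)$ and $\cO(2)$; the extension therefore splits, and the multiplicities are read off from the rank and Euler characteristic. (For $g\le 6$ the paper simply cites the explicit descriptions in Propositions~\ref{prop:hyperelliptic} and~\ref{prop:trigonal}.) Your approach instead pins down the splitting type by computing just enough twisted global sections, transporting the computation to $X_1$ by flop-invariance of $H^0$ and then to $Z_1$ by the projection formula; this avoids invoking the complete intersection presentation of $\bar X$ but trades it for a short case check against Table~\ref{table:nf}. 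Amusingly, the alternative you sketch in your final parenthetical \emph{is} the paper's argument. Both methods are of comparable length; the paper's is slightly more structural (it produces the summand~$\cO$ and the subbundle at once), while yours is more self-contained in that it does not rely on Theorem~\ref{thm:intro-nf-ci}.
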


\begin{proof}
If~$3 \le \g(X) \le 6$, the result follows 
from Lemmas~\ref{lem:g34}, \ref{lem:g56} and Propositions~\ref{prop:hyperelliptic}, \ref{prop:trigonal}.

Assume~$\g(X) \ge 7$.
Consider on~$\hX$ the natural exact sequence
\begin{equation*}
0 \longrightarrow 
\cO_{\hX}(H - E) \longrightarrow 
\cO_{\hX}(H) \longrightarrow 
\cO_E \longrightarrow 
0.
\end{equation*}
Pushing it forward along the composition~$\hX \xrightarrow{\ \sigma_2\ } X_2 \xrightarrow{\ f_2\ } \PP^1$, we obtain an exact sequence
\begin{equation}
\label{eq:cevee-extension}
0 \to f_{2*}\sigma_{2*}\cO_{\hX}(H - E) \longrightarrow 
\cE^\vee \longrightarrow 
\cO_{\PP^1} \longrightarrow 
0,
\end{equation}
where we use that~$\sigma_2 \colon E \to C_2$ is a $\PP^1$-bundle over the flopping curve
and~\mbox{$f_2 \colon C_2 \to Z_2 = \PP^1$} is an isomorphism by~\eqref{eq:deg-ci}.
So, it remains to compute the first term.

For this we use diagram~\eqref{eq:barx-diagram} of Proposition~\ref{prop:cones}:
and the linear equivalences
\begin{equation*}
H - E \sim H_1 + H_2 \sim \xi^*\bar{H}_1 + \xi^*\bar{H}_2,
\end{equation*}
where the first is a combination of~\eqref{eq:mkhx} with Corollary~\ref{cor:alpha}
and the second follows from the definition~$\bar{H}_i = \bar{f}_i^*H_i$ and commutativity of the diagram. 
Now, since~$\bar{X}$ has rational singularities, we have
\begin{equation*}
f_{2*}\sigma_{2*}\cO_{\hX}(H - E) \cong
\bar{f}_{2*}\xi_*\cO_{\hX}(\xi^*\bar{H}_1 + \xi^*\bar{H}_2) \cong
\bar{f}_{2*}\cO_{\bar{X}}(\bar{H}_1 + \bar{H}_2).
\end{equation*}
On the other hand, Theorem~\ref{thm:intro-nf-ci} proves that~$\bar{X} \subset Z_1 \times Z_2 = Z_1 \times \PP^1$ 
is a divisor of class~$(\io(Z_1) - 1)H_1 + H_2$.
Twisting the standard resolution of~$\bar{X}$ by~$H_1 + H_2$, we obtain an exact sequence
\begin{equation*}
0 \longrightarrow \cO_{Z_1 \times \PP^1}((2 - \io(Z_1))H_1) \longrightarrow \cO_{Z_1 \times \PP^1}(H_1 + H_2) \longrightarrow \cO_{\bar{X}}(\bar{H}_1 + \bar{H}_2) \longrightarrow 0,
\end{equation*}
and pushing it forward to~$\PP^1$, we see that
\begin{equation*}
\bar{f}_{2*}\cO_{\bar{X}}(\bar{H_1} + \bar{H}_2) \cong 
\begin{cases}
\hphantom{\Big(}H^0(Z_1,\cO_{Z_1}(H_1)) \otimes \cO(1), & \text{if~$\io(Z_1) \ge 3$},\\
\Big(H^0(Z_1,\cO_{Z_1}(H_1)) \otimes \cO(1)\Big) / \cO, & \text{if~$\io(Z_1) = 2$},
\end{cases}
\end{equation*}
In any case, it follows that this bundle is a direct sum of~$\cO(1)$ and~$\cO(2)$, hence exact sequence~\eqref{eq:cevee-extension} splits,
and~$\cE^\vee \cong \cO \oplus \cO(1)^{\oplus a} \oplus \cO(2)^{\oplus b}$ for some~$a, b \ge 0$.
Finally, computing the rank and the Euler characteristic of~$\cE^\vee$ in terms of~$X_2$, 
we obtain the equalities
\begin{equation*}
1 + a + b = \deg(X_2/\PP^1) + 1 
\qquad\text{and}\qquad 
1 + 2a + 3b = \g(X) + 2.
\end{equation*}
Solving these equations, we obtain the required expressions for~$a$ and~$b$.
\end{proof} 

\section{Classification of factorial threefolds}

In this section we prove Theorems~\ref{thm:intro-factorial-ci}, \ref{thm:intro-fi2} and~\ref{thm:intro-fi1}.
Consequently, from now on we denote by~$(X,x_0)$ a factorial Fano threefold with a single node or cusp~$x_0$ and~$\uprho(X) = 1$, 
and its blowup by~$\hX \coloneqq \Bl_{x_0}(X)$.
Furthermore, we denote by~$H$ the ample generator of~$\Pic(X)$ (as well as its pullback to~$\hX$)
and by~$E \subset \hX$ the exceptional divisor of the blowup.
Note that 
\begin{equation*}
-K_{\hX} = \io(X)H - E,
\qquad\text{and}\qquad 
\uprho(\hX) = 2
\end{equation*}
where as usual~$\io(X)$ is the Fano index of~$X$.

\subsection{Complete intersections}
\label{ss:f-ci}

We start with a simple observation.

\begin{lemma}
\label{lem:factorial-criteria}
If~$X$ is a factorial Fano threefold with a single node or cusp and~$\uprho(X) = 1$ then~$\io(X) \in \{1,2\}$.
Moreover, if~$\io(X) = 2$ then~$\dd(X) \le 4$ and if~$\io(X) = 1$ then~$\g(X) \le 10$.
\end{lemma}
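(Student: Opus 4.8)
The strategy is to pass to a smoothing $X^{\mathrm{sm}}$ and exploit the invariants collected in Proposition~\ref{prop:hh-g-xsm} together with the table of Hodge numbers (Table~\ref{table:hodge-genus}). First, recall that terminal Gorenstein threefold singularities are smoothable by~\cite{Na97}, so there is a smoothing $\cX \to B$ with smooth general fiber $X^{\mathrm{sm}}$; by Proposition~\ref{prop:picard-sheaf} we have $\io(X^{\mathrm{sm}}) = \io(X)$, $\g(X^{\mathrm{sm}}) = \g(X)$, and $\uprho(X^{\mathrm{sm}}) = \uprho(X) = 1$. Since $X^{\mathrm{sm}}$ is a smooth Fano threefold of Picard rank~$1$, Theorem~\ref{hm:KO} gives $\io(X^{\mathrm{sm}}) \le 4$, with $\io = 4$ forcing $X^{\mathrm{sm}} \cong \PP^3$ and $\io = 3$ forcing $X^{\mathrm{sm}} \cong Q^3$; in both of these cases $\hh(X^{\mathrm{sm}}) = 0$ by Table~\ref{table:hodge-genus}.

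The key step is then to rule out $\io(X) \in \{3,4\}$. For this I would invoke the second equality of Proposition~\ref{prop:hh-g-xsm}: for a node or cusp we have either $\hh(\hX) = \hh(X^{\mathrm{sm}})$ (nonfactorial case) or $\hh(\hX) = \hh(X^{\mathrm{sm}}) - 1$ (factorial case). Since $X$ is assumed factorial, the latter holds, so $\hh(X^{\mathrm{sm}}) = \hh(\hX) + 1 \ge 1$. But $\hh(X^{\mathrm{sm}}) = 0$ whenever $\io(X^{\mathrm{sm}}) \ge 3$, a contradiction. (Note Remark~\ref{rem:blowup-nc} already shows a $1$-cuspidal threefold is factorial, so the cuspidal case is genuinely covered by this argument; and since $\hh(\hX) \ge 0$ always, the inequality $\hh(X^{\mathrm{sm}}) \ge 1$ is the crux.) Hence $\io(X) = \io(X^{\mathrm{sm}}) \in \{1,2\}$.

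Finally, the bounds on degree and genus follow directly from the classification of smooth Fano threefolds of Picard rank~$1$ recalled in the Introduction: if $\io(X^{\mathrm{sm}}) = 2$ then $\dd(X^{\mathrm{sm}}) \in \{1,2,3,4,5\}$, but the value $\dd = 5$ gives $\hh(X^{\mathrm{sm}}) = 0$ by Table~\ref{table:hodge-genus}, which is again incompatible with $\hh(X^{\mathrm{sm}}) \ge 1$; thus $\dd(X) = \dd(X^{\mathrm{sm}}) \le 4$. Similarly, if $\io(X^{\mathrm{sm}}) = 1$ then $\g(X^{\mathrm{sm}}) \in \{2,3,\dots,10,12\}$, and $\g = 12$ forces $\hh(X^{\mathrm{sm}}) = 0$, which is excluded; hence $\g(X) = \g(X^{\mathrm{sm}}) \le 10$.

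The only mildly delicate point — and the one I would present most carefully — is the bookkeeping in Proposition~\ref{prop:hh-g-xsm}: one must be sure that the factoriality of $X$ is exactly the hypothesis that produces the $-1$ in the Hodge-number comparison, since it is this $-1$ (equivalently, the fact that a factorial node does not raise the Picard number after a \emph{small} modification, so the extra cohomology sits in $\hh$) that drives the whole argument. Everything else is a direct lookup in Table~\ref{table:hodge-genus}.
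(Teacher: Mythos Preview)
Your argument is correct and is essentially identical to the paper's proof: the paper also uses Proposition~\ref{prop:hh-g-xsm} in the factorial case to deduce $\hh(X^{\mathrm{sm}}) = \hh(\hX) + 1 \ge 1$, and then reads off the conclusion from~\eqref{eq:smoothing-invariants} and Table~\ref{table:hodge-genus}. Your write-up is simply more explicit about the smoothing step and the invocation of Theorem~\ref{hm:KO}, but there is no substantive difference.
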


\begin{proof}
If~$X$ is a factorial $1$-nodal or $1$-cuspidal Fano threefold, we have 
\begin{equation*}
\hh(\hX) = \hh(X^\sm) - 1
\end{equation*}
by Proposition~\ref{prop:hh-g-xsm}, hence~$\hh(X^\sm) \ge 1$.
Now lemma follows from~\eqref{eq:smoothing-invariants} and Table~\ref{table:hodge-genus}.
\end{proof}

In what follows we distinguish three cases:
\begin{itemize}[wide]
\item $\io(X) = 2$ and~$1 \le \dd(X) \le 4$, 
\item $\io(X) = 1$ and~$2 \le \g(X) \le 6$, 
\item $\io(X) = 1$ and~$7 \le \g(X) \le 10$.
\end{itemize}
The first two cases are considered in this subsection, and the last case is considered in~\S\ref{ss:proofs-fi1}.

\begin{proof}[Proof of Theorem~\textup{\ref{thm:intro-factorial-ci}}\ref{it:fact-i2} and~\ref{it:fact-i1-gsmall}]
Part~\ref{it:fact-i2} is well-known (see, e.g., \cite[Theorem~1.2]{KP23})
and part~\ref{it:fact-i1-gsmall} is a combination of Lemmas~\ref{lem:g2}, \ref{lem:g34} and~\ref{lem:g56}.
\end{proof}

\begin{proof}[Proof of Theorem~\textup{\ref{thm:intro-fi2}}]
If~$\dd(X) \in \{2,3,4\}$ then~$-K_X$ is very ample by Proposition~\ref{prop:hyperelliptic}.
Furthermore, Lemma~\ref{lem:barx} implies that~$\hX$ is a weak Fano variety.
Finally, $X$ does not have anticanonical lines, because~$\io(X) = 2$, 
hence the argument of Lemma~\ref{lem:barx}\ref{it:barx-almost} shows that~$\hX$ is a Fano variety.
We also have~$\uprho(\hX) = 2$, because~$X$ is factorial, 
so inspecting~\cite[Table~2]{Mori-Mukai:MM} we see that~$\hX$ is of type~\typemm{2}{m} with~$m \in \{8,15,23\}$;
this proves part~\ref{it:intro-fi2-ample}.

Similarly, if~$\dd(X) = 1$, we apply~\cite[Proposition~2.6(iii)]{KS23};
This proves part~\ref{it:intro-fi2-bpf}.
\end{proof} 

The following extension of Lemma~\ref{lem:barx} will be useful for the proof of Theorem~\ref{thm:intro-fi1}.

\begin{lemma}
\label{lem:barx-factorial}
Let~$(X,x_0)$ be a factorial Fano threefold with a single node or cusp 
such that~$\uprho(X) = 1$, $\io(X) = 1$, and~$-K_X$ is very ample.

\begin{thmenumerate}
\item
\label{it:bxf-nhe}
The anticanonical contraction~$\xi \colon \hX \coloneqq \Bl_{x_0}(X) \to \bar{X}$ is nontrivial, 
and~$\bar{X}$ is a Fano threefold with canonical Gorenstein singularities,
$\uprho(\bar{X}) = 1$, $\io(\bar{X}) = 1$, and~\mbox{$\g(\bar{X}) = \g(X) - 1$}.
Moreover, $\xi$ induces an isomorphism~$E \xrightiso{} \Sigma$ 
of the exceptional divisor~$E$ of~$\hX$ onto an irreducible quadric surface~$\Sigma \subset \bar{X}$.
\item
\label{it:bxf-ntr}
If~$\g(X) \ge 5$ then~$\xi$ is small, $\hX$ is almost Fano,
its anticanonical model~$\bar{X}$ is a Fano threefold with terminal Gorenstein singularities, 
and
\begin{equation*}
\hX \cong \Blw{-\Sigma}(\bar{X}).
\end{equation*}
\end{thmenumerate}
\end{lemma}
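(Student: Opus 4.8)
The statement is essentially a factorial analogue of Lemma~\ref{lem:barx}, so the plan is to reduce to that lemma and then upgrade the geometric conclusions using factoriality ($\uprho(X)=1$, so $-K_X\sim H$ with $H$ the ample generator, and every Weil divisor is Cartier). For part~\ref{it:bxf-nhe}: since $-K_X$ is very ample we have $X\subset\PP^{\g(X)+1}$ via~\eqref{eq:h0-mk}, and Lemma~\ref{lem:barx}\ref{it:barx-weak} immediately gives that $\hX$ is a weak Fano variety, that $\bar X$ is a Fano threefold with canonical Gorenstein singularities, that $\io(\bar X)=1$, and that $\g(\bar X)=\g(X)-1$; here the genus formula is exactly Proposition~\ref{prop:hh-g-xsm}. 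The only genuinely new point is $\uprho(\bar X)=1$: by Lemma~\ref{lem:barx} the exceptional divisor $E$ is $\xi$-ample, so $E$ is not contracted, hence $E$ maps isomorphically onto its image $\Sigma\subset\bar X$; and since $\uprho(\hX)=2$ (because $X$ is factorial) while $\xi$ is nontrivial (the quadric surface $E$ cannot embed into a threefold of the same Picard number, or more simply $H$ and $E$ are independent in $\Pic(\hX)$ while $\xi^*$ identifies $\Pic(\bar X)$ with the sublattice on which $E$ has trivial intersection with contracted curves), we get $\uprho(\bar X)=\uprho(\hX)-1=1$. That $\Sigma$ is an \emph{irreducible} quadric follows because $E\cong\PP^1\times\PP^1$ (node) or $E$ is a quadric cone (cusp) by Remark~\ref{rem:blowup-nc}, and $\xi|_E$ is an isomorphism onto $\Sigma$; I would also note that $\xi$ is nontrivial precisely because $\hX$ is not Fano ($-K_{\hX}=H-E$ has trivial intersection with any line on $X$ through $x_0$, and such lines exist since $\io(X)=1$ forces $X\cap\rT_{x_0}(X)$ to be positive-dimensional — or, for $\g(X)\le 4$, one simply checks directly from the explicit descriptions in Lemmas~\ref{lem:g2},~\ref{lem:g34}).

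\textbf{Part~\ref{it:bxf-ntr}.} Assuming $\g(X)\ge 5$, the goal is to apply Lemma~\ref{lem:barx}\ref{it:barx-almost}, whose hypothesis is $\dim(X\cap\rT_{x_0}(X))\le 1$; granting this, the lemma yields at once that $\xi$ is small, $\hX$ is almost Fano, $\bar X$ has terminal Gorenstein singularities, and $\hX\cong\Blw{-\Sigma}(\bar X)$. So the whole content of part~\ref{it:bxf-ntr} is the tangent-space bound. I would argue as follows. Suppose $X\cap\rT_{x_0}(X)$ contains an irreducible surface $S$. By Corollary~\ref{cor:intersection-of-quadrics}, a factorial $X$ with $\io(X)=1$ and $-K_X$ very ample and $\g(X)\ge 5$ is an intersection of quadrics in $\PP^{\g(X)+1}$, so by Lemma~\ref{lem:intersection-tangent} the intersection $X\cap\rT_{x_0}(X)$ is a cone with vertex $x_0$ over a curve $B\subset E\subset\PP^3$ which is cut out by quadrics; hence $B$ has bidegree $(1,0)$, $(1,1)$, $(2,1)$ or $(2,2)$ (up to transposition) and $S=\Bl_{x_0}^{-1}(\text{cone over }B)$ has degree $\deg(S)\le 4$. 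Since $X$ is factorial with $\uprho(X)=1$, the strict transform $\tilde S\subset\hX$ satisfies $\tilde S\sim aH-cE$ in $\Pic(\hX)$ for integers $a\ge 1$, $c\ge 0$; restricting to $E$ (where $H|_E=0$ and $E|_E=\cO(-1,-1)$) forces $\tilde S|_E$ to have bidegree $(c,c)$, so $B$ has bidegree $(c,c)$ and thus $\deg(S)=2c$ with $c\in\{1,2\}$; intersecting with $H^2$ gives $\deg(S)=aH^3=a(2\g(X)-2)$, impossible since $\g(X)\ge 5$ forces the right side to be $\ge 8>4\ge\deg(S)$ (the cases $c=0$, i.e.\ $B$ a line, cannot occur because then $\tilde S$ would restrict trivially to $E$, forcing $B$ to be empty or all of $E$). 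Hence no such surface exists, $\dim(X\cap\rT_{x_0}(X))\le 1$, and part~\ref{it:bxf-ntr} follows.

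\textbf{Expected main obstacle.} The routine part is the intersection-number bookkeeping; the only subtlety I anticipate is making sure the reduction ``$X$ is an intersection of quadrics'' is legitimately available for all $\g(X)\ge 5$ in the factorial case — this is exactly the content of Corollary~\ref{cor:intersection-of-quadrics} (factorial threshold $\g\ge 5$, not $\g\ge 7$), so I would cite it explicitly rather than re-proving it. A second point to be careful about is whether $\xi$ might be trivial (an isomorphism) in low genus; for $\g(X)\in\{3,4\}$ this must be ruled out, and since those cases are covered explicitly by Lemma~\ref{lem:g34} (where $X$ is a quartic or a $(2,3)$-complete intersection and one sees directly that there are anticanonical lines through $x_0$, equivalently $X\cap\rT_{x_0}(X)$ is a curve), the argument of part~\ref{it:bxf-nhe} goes through; for $\g(X)\ge 5$ the nontriviality and smallness are both consequences of the tangent-space bound just established, via Lemma~\ref{lem:barx}. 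Finally, one should record that $\Sigma$ is genuinely a Weil divisor on $\bar X$ (not Cartier), which is automatic since $\xi$ is small and $E$ is $\xi$-ample, so that the notation $\Blw{-\Sigma}(\bar X)$ of Appendix~\ref{App:blowup} applies; the isomorphism $\hX\cong\Blw{-\Sigma}(\bar X)$ is then exactly the last assertion of Lemma~\ref{lem:barx}\ref{it:barx-almost}, using Corollary~\ref{cor:small-blowup} and~\eqref{eq:bl-d-z} as there.
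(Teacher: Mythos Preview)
Your proposal has a genuine gap in part~\ref{it:bxf-nhe}, and takes an unnecessarily complicated route in part~\ref{it:bxf-ntr}.

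\textbf{The gap in part~\ref{it:bxf-nhe}.} Your argument for the nontriviality of~$\xi$ rests on the assertion that ``such lines exist since~$\io(X)=1$ forces~$X\cap\rT_{x_0}(X)$ to be positive-dimensional''. This is not justified: nothing in the hypotheses guarantees a line on~$X$ through~$x_0$. In the language of Lemma~\ref{lem:intersection-tangent}, the scheme~$B\subset E$ could a priori be empty (an intersection of quadrics in~$\PP^3$ can certainly be empty), in which case~$X\cap\rT_{x_0}(X)=\{x_0\}$ and your argument collapses. Your fallback for~$\g(X)\le 4$ is fine, but for~$\g(X)\ge 5$ you have nothing. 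The paper argues instead by contradiction: if~$\xi$ were an isomorphism then~$\hX$ would be a smooth Fano threefold with~$\uprho(\hX)=2$ admitting an extremal contraction of type~\type{(B_1^0)} onto a Fano threefold of index~$1$, and the Mori--Mukai classification~\cite{Mori-Mukai:MM} shows that no such threefold exists. This is both uniform and rigorous; once~$\xi$ is nontrivial, $\uprho(\bar X)\le\uprho(\hX)-1=1$ follows immediately.

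\textbf{A simpler argument for part~\ref{it:bxf-ntr}.} Your degree-counting adaptation of Proposition~\ref{prop:no-cones} is essentially correct in the nodal case (and salvageable in the cuspidal case, though your ``bidegree'' language does not literally apply when~$E$ is a quadric cone). But the paper's argument is a one-liner and uses factoriality much more directly: since~$\g(X)\ge 5$, the subspace~$\rT_{x_0}(X)\cong\PP^4\subset\PP^{\g(X)+1}$ is contained in at least two distinct hyperplanes. If~$X\cap\rT_{x_0}(X)$ contained a surface~$S$, then~$S$ would be a Cartier divisor on~$X$ (by factoriality) contained in two distinct hyperplane sections of~$X$; since each hyperplane section is irreducible of class~$H$ this is impossible. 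Hence~$\dim(X\cap\rT_{x_0}(X))\le 1$ and Lemma~\ref{lem:barx}\ref{it:barx-almost}--\ref{it:barx-iq} applies directly (the intersection-of-quadrics input from Corollary~\ref{cor:intersection-of-quadrics} is only needed for part~\ref{it:barx-iq}, not for the dimension bound). Your route works but obscures that the whole point of factoriality here is this immediate codimension estimate.
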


\begin{proof}
All claims in part~\ref{it:bxf-nhe}, except for non-triviality of~$\xi$ follow from Lemma~\ref{lem:barx}\ref{it:barx-weak}.
So, assume~$\xi$ is an isomorphism.
Then Lemma~\ref{lem:barx}\ref{it:barx-weak} implies that~$\hX$ is a smooth Fano threefold with~$\uprho(\hX) = 2$ 
and a $K$-negative extremal contraction of type~\type{(B_1^0)} onto a Fano threefold of index~1.
But the classification of~\cite[Table~2]{Mori-Mukai:MM} shows that there are no such threefolds,
hence~$\xi$ is indeed nontrivial and~$\uprho(\bar{X}) = 1$.

To prove part~\ref{it:bxf-ntr} note that~$X \cap \rT_{x_0}(X)$ 
is contained in the intersection of~$X\subset \PP^{\g(X) + 1}$ with at least two hyperplanes
because~$\rT_{x_0}(X) = \PP^4$ and~$\g(X) + 1 \ge 6$.
Since~$X$ is factorial, it follows that~$\dim(X \cap \rT_0(X)) \le 1$.
On the other hand, $X \subset \PP^{\g(X)+1}$ is an intersection of quadrics by Corollary~\ref{cor:intersection-of-quadrics}.
Therefore, parts~\ref{it:barx-almost} and~\ref{it:barx-iq} of Lemma~\ref{lem:barx} apply.
\end{proof}

To prove Theorem~\ref{thm:intro-fi1} in the case where~$X$ is hyperelliptic we will use the following observation.
As we explained in the proof of Proposition~\ref{prop:hyperelliptic}, 
the anticanonical morphism of any hyperelliptic Fano threefold~$X$  factors through a double covering~$\phi \colon X \to Y$.
We denote by~$H_Y$ the ample generator of~$\Pic(Y)$ and by~$B \subset Y$ the branch divisor of~$\phi$.

\begin{lemma}
\label{lem:blowup-covering}
Let~$(X,x_0)$ be a factorial hyperelliptic Fano threefold with a single node or cusp and~$\uprho(X) = 1$,
and let~$\phi \colon X \to Y$ be its anticanonical double covering.
Then either
\begin{aenumerate}
\item 
\label{it:dp3}
$\g(X) =2$, $X = X_6 \subset \PP(1^4,3)$, $Y = \PP^3$, and~$B \in |6H_Y|$, or
\item 
\label{it:dq3}
$\g(X) =3$, $X = X_{2,4} \subset \PP(1^5,2)$, 
$Y = Q^3$, a smooth quadric, and~$B \in |4H_Y|$.
\end{aenumerate}
Moreover, $B$ is nodal or cuspidal at~$y_0 \coloneqq \phi(x_0)$ and there is a commutative diagram
\begin{equation}
\label{eq:diag:fac}
\vcenter{
\xymatrix@C=3em@!C{
X \ar[d]_{\phi} &
\hX \ar[d]_{{\widehat\phi}} \ar[l]_{\pi} \ar[r]^{\xi} &
\bar{X} \ar[d]^{\bar{\phi}}
\\
Y &
\hY \ar[l]_{\pi_Y} \ar[r]^{\xi_Y} &
\PP^{\g(X)}
}}
\end{equation}
where~$\hY \coloneqq \Bl_{y_0}(Y)$, $\pi_Y$ is the blowup of~$y_0$, 
$\xi_Y \colon \hY \to \PP^{\g(X)}$ is the morphism induced by the linear projection of~$Y \subset \PP^{\g(X) + 1}$ out of~$y_0$,
$\widehat{\phi}$ is the double covering branched at the strict transform~$\hB \subset \hY$ of~$B$,
$\bar{\phi}$ is a finite morphism, 
and~$\xi$ is the anticanonical contraction of~$\hX$.
\end{lemma}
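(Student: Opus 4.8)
The plan is to combine the description of hyperelliptic factorial threefolds from Theorem~\ref{thm:intro-factorial-ci}\ref{it:fact-i1-gsmall} (equivalently Proposition~\ref{prop:hyperelliptic}\ref{he:i1-g2}--\ref{he:i1-g3}) with a careful analysis of the behaviour of the double covering under blowup at the singular point. First, since $X$ is factorial and hyperelliptic with $\io(X)=1$, Proposition~\ref{prop:hyperelliptic} leaves exactly the two possibilities $\g(X)=2$, $X=X_6\subset\PP(1^4,3)$, $Y=\PP^3$ and $\g(X)=3$, $X=X_{2,4}\subset\PP(1^5,2)$, $Y=Q^3$; the branch divisors have the stated classes $|6H_Y|$ and $|4H_Y|$ by the Riemann--Hurwitz formula, which also uses that $\phi$ is the anticanonical double covering. (When $\g(X)=3$ the quadric $Y=Q^3$ must be smooth: a singular quadric is not factorial, so $X$, being a double covering étale over the complement of finitely many points, would inherit an extra divisor class, contradicting factoriality of $X$.) This gives cases~\ref{it:dp3} and~\ref{it:dq3}.

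Next I would study the singular point. Since $\phi$ is étale in codimension $1$ away from $B$ and $x_0$ is an isolated hypersurface singularity with $\dim T_{x_0}X=3$ or $4$ (corank $0$ or $1$ of the Hessian), the point $y_0=\phi(x_0)$ lies on $B$, and the local equation of $X$ near $x_0$ has the form $w^2=b(u)$ where $b$ is a local equation of $B$; comparing Hessians shows that $B$ is nodal at $y_0$ when $x_0$ is a node and cuspidal (in the sense of Definition~\ref{def:node-cusp}: corank $1$ Hessian and smooth blowup) when $x_0$ is a cusp. Then I would construct the diagram~\eqref{eq:diag:fac}. Set $\hY=\Bl_{y_0}(Y)$ with blowdown $\pi_Y$; since $Y$ is smooth, $\hY$ is smooth, and the linear projection of $Y\subset\PP^{\g(X)+1}$ out of $y_0$ resolves to a morphism $\xi_Y\colon\hY\to\PP^{\g(X)}$ (for $Y=\PP^3$ this is the standard $\Bl_{pt}\PP^3\to\PP^2$ $\PP^1$-bundle; for $Y=Q^3$ the projection from a point of the quadric is birational onto $\PP^3$ with the projectivized tangent cone as exceptional divisor). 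The key local computation is that the blowup of the total space of the double cover commutes with the blowup of $y_0$: because $B$ has multiplicity $2$ at $y_0$, the strict transform $\hB\subset\hY$ satisfies $\hB\in|\pi_Y^*(\deg B\cdot H_Y)-2E_Y|$ and the double covering $\widehat\phi\colon\hX\to\hY$ branched along $\hB$ has $\hX\cong\Bl_{x_0}(X)$, with exceptional divisor $E\subset\hX$ mapping $2:1$ onto $E_Y$ (branched along $\hB\cap E_Y$), exhibiting $E$ as a quadric surface — consistent with Remark~\ref{rem:blowup-nc}.

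Finally I would identify $\xi$ and $\bar\phi$. The anticanonical class of $\hX$ pulls back from $\hY$: indeed $-K_{\hX}=H-E=\widehat\phi^*(\text{the class }H_Y\text{-pullback minus }E_Y)=\widehat\phi^*(-K_{\hY}\text{-like class defining }\xi_Y)$, using $-K_X=\phi^*(\tfrac12\deg B\cdot H_Y)$ and the ramification formula; more precisely $-K_{\hX}=\widehat\phi^*(\pi_Y^*H_Y' - E_Y)$ for the appropriate $H_Y'$ so that $|{-K_{\hX}}|$ is the pullback of $|\xi_Y^*\cO(1)|$. Hence the anticanonical morphism $\xi$ of $\hX$ factors as $\bar\phi\circ(\xi_Y\circ\widehat\phi)$ through the Stein factorization, giving a finite morphism $\bar\phi\colon\bar X\to\PP^{\g(X)}$ with $\bar X=\hX_{\can}$; that $\xi$ is indeed the anticanonical contraction of $\hX$ is exactly Lemma~\ref{lem:barx}\ref{it:barx-weak}. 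Commutativity of the whole diagram~\eqref{eq:diag:fac} then follows by construction. The main obstacle I expect is the local analysis showing $\Bl_{x_0}(X)\cong\widehat\phi\,^{-1}$-description, i.e. that forming the double cover commutes with blowing up the (possibly cuspidal) point and produces a \emph{smooth} $\hX$ — this requires the multiplicity-$2$ behaviour of $B$ at $y_0$ together with the definition of a generalized cusp (so that $\Bl_{y_0}$ of the branch locus is smooth in the relevant sense), rather than any global Fano-geometric input.
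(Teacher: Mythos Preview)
Your proposal is correct and follows essentially the same route as the paper: invoke Proposition~\ref{prop:hyperelliptic} for the dichotomy~\ref{it:dp3}/\ref{it:dq3}, compare Hessians of~$w^2=b$ and~$b$ to transfer the node/cusp to~$B$, build the double cover of~$\hY$ branched along the strict transform~$\hB\in|\pi_Y^*B-2E_Y|$, and identify the anticanonical contraction via pullback of~$|H_Y-E_Y|$. The paper's only real addition is a slicker verification of~$\hX\cong\Bl_{x_0}(X)$: instead of a local computation, it observes~$\pi_{Y*}\widehat\phi_*\cO_{\hX}\cong\cO_Y\oplus\cO_Y(-\tfrac12B)$, so the Stein factorization of~$\pi_Y\circ\widehat\phi$ is exactly~$X\to Y$, forcing~$\hX$ to sit over~$X$ as the blowup. (One small imprecision: your remark that~$\phi$ is ``\'etale over the complement of finitely many points'' is wrong---it is ramified along the divisor~$B$---but your conclusion about extra divisor classes on a singular~$Q^3$ pulling back to~$X$ still goes through and matches the paper's argument.)
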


\begin{proof}
The description of~$X$ is given in Proposition~\ref{prop:hyperelliptic},
and the description of~$Y$ easily follows.
Moreover, the argument of Proposition~\ref{prop:hyperelliptic} shows that in case~\ref{it:dq3} the quadric~\mbox{$Y = Q^3$} is smooth.
Indeed, if it is a double cone over a conic
or a cone over a smooth quadric surface, then its hyperplane class is a sum of two effective classes;
hence the same is true for the anticanonical class of~$X$, contradicting the equality~$\Cl(X) = \Pic(X) = \ZZ \cdot H$.
Finally, the description of~$B$ follows from the Riemann--Hurwitz formula.

To describe the singularity of~$B$ note that the local equation of~$X$ at point~$x_0$ has the form~\mbox{$u^2 = F$},
where~$F$ is the equation of~$B \subset Y$, 
hence the corank of its Hesse matrix at~$x_0$ 
is equal to the corank of the Hesse matrix of~$F$ at~$y_0$;
thus~$x_0$ is a node or cusp of~$X$ if and only if~$y_0$ is a node or cusp of~$B$, respectively.

Furthermore, let~$\pi_Y \colon \hY \coloneqq \Bl_{y_0}(Y) \to Y$ be the blowup, let~$\hB$ be the strict transform of~$B$, 
and let~$\widehat{\phi} \colon \hX \to \hY$ be the double covering of~$\hY$ branched at~$\hB$
(note that~$\hB \sim \pi_Y^*B - 2E_Y$ is divisible by~$2$ in~$\Pic(\hY)$, hence the double covering is defined).
We have
\begin{equation*}
\pi_{Y*}\widehat{\phi}_*\cO_{\hX} \cong \pi_{Y*}(\cO_{\hY} \oplus \cO_{\hY}(E_Y - \tfrac12\pi_Y^*B)) \cong \cO_Y \oplus \cO_Y(-\tfrac12B),
\end{equation*}
hence the Stein factorization of the morphism~$\pi_Y \circ \widehat{\phi} \colon \hX \to Y$
is given by the double covering of~$Y$ branched at the image~$B = \pi_Y(\hB)$ of~$\hB$, i.e., by~$X$.

Finally, the anticanonical linear system~$|H - E|$ of~$\hX$ 
is the pullback of the linear system~$|H_Y - E_Y|$ from~$\hY$, where~$H_Y$ is the hyperplane class of~$Y$,
hence the anticanonical contraction of~$\hX$ is obtained from the Stein factorization of~$\xi_Y \circ \widehat{\phi}$.
\end{proof} 

\begin{proof}[Proof of Theorem~\textup{\ref{thm:intro-fi1}} in the case~$\g(X) \le 6$]
Recall from Proposition~\ref{prop:base-points} that~$-K_X$ is base point free,
hence it defines a morphism~$X \to \PP^{\g(X) + 1}$.
By Lemmas~\ref{lem:barx-factorial} and~\ref{lem:blowup-covering} 
the anticanonical contraction~$\xi$ of~$\hX$ 
is induced by the linear projection~$\PP^{\g(X) + 1} \dashrightarrow \PP^{\g(X)}$.
Below we describe the map~$\xi$, the anticanonical model~$\bar{X}$ of~$\hX$,
the image~$\bar{Z} \subset \bar{X}$ of the exceptional locus of~$\xi$ explicitly;
in particular, we discuss when the anticanonical contraction~$\xi \colon \hX \to \bar{X}$ is small.
Finally, when~$\bar{Z}$ is finite, hence~$\xi$ is small, we consider the Sarkisov link 
\begin{equation}
\label{eq:sl-factorial}
\vcenter{\xymatrix{
& 
\hX \ar[dl]_{\pi} \ar[dr]^{\xi} \ar@{<-->}[rr] &&
\hX_+ \ar[dl]_{\xi_+} \ar[dr]^{\pi_+}
\\
X &&
\bar{X} &&
X_+,
}}
\end{equation}
where the dashed arrow is the flop, $\xi_+ \colon \hX_+ \to \bar{X}$ is a small resolution of singularities, 
and~$\pi_+ \colon \hX_+ \to X_+$ is a $K$-negative extremal contraction,
and identify its right half.

When~$3 \le \g(X) \le 5$ we choose homogeneous coordinates~$(t_0:t_1:\dots:t_{\g(X)+1})$ on~$\PP^{\g(X) + 1}$ 
such that the image of the singular point~$x_0$ of~$X$ is the point~$(1:0:\dots:0)$ and describe~$X$ by appropriate equations.
We also use the standard isomorphisms
\begin{equation*}
 \Bl_{y_0}(\PP^n) \cong \PP_{\PP^{n-1}}(\cO \oplus \cO(-1))
 \qquad\text{and}\qquad 
 \Bl_{y_0}(Q^n) \cong \Bl_{Q^{n-2}}(\PP^n),
\end{equation*}
where~$y_0$ is a smooth point of a quadric~$Q^n$ and~$Q^{n-2} \subset \PP^{n-1} \subset \PP^n$ is a quadric of the same corank as~$Q^n$.
Our arguments are quite standard, so we omit some details, having no doubt that the interested reader will easily reconstruct them.

\subsection*{Case~$\g(X) = 2$.}
By Lemma~\ref{lem:g2} we have~$X = X_6 \subset \PP(1^4,3)$.
Moreover, it follows from Lemma~\ref{lem:blowup-covering} that the anticanonical contraction~$\xi$
of~$\hX$ is an elliptic fibration
\begin{equation*}
\hX \xrightarrow{\ \widehat\phi\ } \Bl_{y_0}(\PP^3) \cong \PP_{\PP^2}(\cO 
\oplus \cO(-1)) \longrightarrow \PP^2,
\end{equation*}
where~$\widehat{\phi}$ is the double covering branched over a divisor of degree~4 over~$\PP^2$.

\subsection*{Case~$\g(X) = 3$.}
By Lemma~\ref{lem:g34} we have~$X = X_{2,4} \subset \PP(1^5,2)$ and
there are two slightly different situations:
if the quadratic equation of~$X$ depends on the variable of weight~$2$, 
then the anticanonical morphism~$X \to \PP^4$ is a closed embedding and
\begin{aenumerate}
\item[(3f-a)] 
\label{f3:3a}
$X \subset \PP^4$ is a quartic hypersurface.
\end{aenumerate}
Otherwise, $X$ is hyperelliptic, and Lemma~\ref{lem:blowup-covering} proves that
\begin{aenumerate}
\item[(3f-b)] 
\label{f3:3b}
$X$ is a double covering of a smooth quadric~$Q(X) \subset \PP^4$.
\end{aenumerate}
In case~(3f-b) the branch divisor~$B$ is an intersection of~$Q(X)$ with a quartic hypersurface.

\subsection*{Subcase~(3f-a).} 

By Lemma~\ref{lem:barx-factorial} the anticanonical morphism of~$\hX$ is the composition
\begin{equation*}
\hX = \Bl_{x_0}(X) \hookrightarrow \Bl_{x_0}(\PP^4) \longrightarrow \PP^3.
\end{equation*}
Since~$X$ has multiplicity~$2$ at~$x_0$, its equation in standard coordinates can be written as
\begin{equation*}
t_0^2F_2(t_1,t_2,t_3,t_4) + t_0F_3(t_1,t_2,t_3,t_4) + F_4(t_1,t_2,t_3,t_4) = 0,
\end{equation*}
where~$F_d$ are homogeneous polynomials of degree~$d$.
A simple computation shows that the Stein factorization of the morphism~$\hX \to \PP^3$ 
is given by the double covering~$\bar{X} \to \PP^3$
branched at the sextic surface~$\bar{B} \subset \PP^3$
defined by the equation
\begin{equation*}
\bar{B} = \{ F_3^2 - 4F_2F_4 = 0 \}.
\end{equation*}
It is, therefore, a smoothable Fano threefold of type~\typemm{1}{1}.
Moreover, the image~$\bar{Z} \subset \PP^3$ of the exceptional locus of~$\xi \colon \hX \to \bar{X}$ 
is given by the equations
\begin{equation*}
\bar{Z} = \{ F_2 = F_3 = F_4 = 0\}.
\end{equation*}
In some cases~$\bar{Z}$ may be $1$-dimensional (the simplest example is when~$F_3 = 0$), then~$\bar{X}$ has non-isolated canonical singularities.
But in general~$\bar{Z}$ is finite and nonempty, 
$\bar{X}$ has terminal singularities, and therefore~$\xi$ is a flopping contraction.
In this case~$\xi$ extends to a symmetric Sarkisov link,
so that~$\hX_+ \cong \hX$, $X_+ \cong X$, and~$\xi_+ = \bar\tau \circ \xi \colon \hX_+ \to \bar{X}$, 
where~$\bar\tau \colon \bar{X} \to \bar{X}$ is the involution of the double covering~$\bar{X} \to \PP^3$. 

\subsection*{Subcase~(3f-b).} 

In this case we apply Lemma~\ref{lem:blowup-covering} and conclude that the anticanonical contraction of~$\hX$ 
is obtained from the Stein factorization of the composition
\begin{equation*}
\hX = \Bl_{x_0}(X) \xrightarrow{\ \widehat{\phi}\ } \Bl_{y_0}(Q(X)) \cong \Bl_{Q(X,x_0)}(\PP^3) \xrightarrow{\ \xi_Y\ } \PP^3,
\end{equation*}
where~$Q(X) \subset \PP^4$ is a smooth quadric, $y_0\in Q(X)$ is the image of $x_0$, 
and~$Q(X,x_0) \subset \PP^3$ is the smooth conic parameterizing lines on~$Q(X)$ through~$y_0$.
The equations of~$Q(X)$ and the branch divisor~$B \subset Q(X)$ of~$\phi \colon X \to Q(X)$ can be written in standard coordinates as
\begin{equation*}
Q(X) = \{ t_0F_1 + F_2 = 0 \},
\qquad 
B = \{t_0^2G_2 + t_0G_3 + G_4 = 0\}.
\end{equation*}
Moreover, the Stein factorization is given by a double covering~$\bar{X} \to \PP^3$
branched at a sextic surface~$\bar{B} \subset \PP^3$, 
and the equations of~$\bar{B}$ and of the image~$\bar{Z} \subset \PP^3$ of the exceptional locus of the anticanonical morphism are given by
\begin{equation*}
\bar{B} = \{ F_2^2G_2 - F_1F_2G_3 + F_1^2G_4 = 0\},
\qquad 
\bar{Z} = \{F_1 = F_2 = 0\}.
\end{equation*}
In particular, $\bar{X}$ is a smoothable Fano threefold of type~\typemm{1}{1}.
On the other hand, in this case~$\dim(\bar{Z}) = 1$, hence~$\xi$ is not small
and the Sarkisov link does not exist.

\subsection*{Case~$\g(X) = 4$.}

By Lemma~\ref{lem:g34} we have~$X = X_{2,3} \subset \PP^5$.
We denote by~$Q(X) \subset \PP^5$ the unique quadric containing~$X$.
As before, there are two slightly different situations:
\begin{aenumerate}
\item[(4f-a)] 
\label{f3:4a}
the singularity~$x_0$ of~$X$ is a smooth point of~$Q(X)$, or
\item[(4f-b)] 
\label{f3:4b}
the singularity~$x_0$ of~$X$ is a singular point of~$Q(X)$.
\end{aenumerate}
In either case the corank of~$Q(X)$ is less or equal than~$1$.

\subsection*{Subcase~(4f-a).} 

By Lemma~\ref{lem:barx-factorial} the anticanonical morphism of~$\hX$ is the composition
\begin{equation*}
\hX = \Bl_{x_0}(X) \hookrightarrow \Bl_{x_0}(Q(X)) \cong \Bl_{Q(X,x_0)}(\PP^4) \longrightarrow \PP^4,
\end{equation*}
where~$Q(X,x_0) \subset \PP^3$ is the quadric surface parameterizing lines on~$Q(X)$ through~$x_0$ 
(it has the same corank as~$Q(X)$; in particular, it is irreducible).
In this case the equations of~$X$ can be written in standard coordinates as
\begin{equation*}
t_0F_1 + F_2 = t_0G_2 + G_3 = 0,
\end{equation*}
and the quadric surface~$Q(X,x_0) \subset \PP^4$ is given by the equations~$\{F_1 = F_2 = 0\}$.
Moreover, the Stein factorization of the morphism~$\hX \to \PP^4$ is given by a quartic~$\bar{X} \subset \PP^4$ 
and the equations of~$\bar{X}$ and of the image~$\bar{Z} \subset \PP^4$ of the exceptional locus of~$\hX \to \bar{X}$
are given by
\begin{equation*}
\bar{X} = \{ F_1G_3 - F_2G_2 = 0\},
\qquad 
\bar{Z} = \{F_1 = F_2 = G_2 = G_3 = 0\}.
\end{equation*}
In particular, $\bar{X}$ is a smoothable Fano threefold of type~\typemm{1}{2}.

In some cases~$\bar{Z}$ may be $1$-dimensional (the simplest example is when~$F_2 = G_2$),
then~$\bar{X}$ has non-isolated canonical singularities.
But in general~$\bar{Z}$ is finite and nonempty, 
$\bar{X}$ has terminal singularities, and therefore~$\xi$ is a flopping contraction.
To describe the right half of the link~\eqref{eq:sl-factorial}, 
note that~$\bar{X} \cap \{ F_1 = 0 \}$ is the union of two irreducible quadric surfaces
\begin{equation*}
\Sigma = \{ F_1 = G_2 = 0\}
\qquad\text{and}\qquad 
\Sigma_+ = \{ F_1 = F_2 = 0 \},
\end{equation*}
where~$\Sigma_+ = Q(X,x_0)$ and~$\Sigma = \xi(E)$.
Since~$\hX = \Blw{-\Sigma}(\bar{X})$ by Lemma~\ref{lem:barx-factorial}, 
it follows from Corollary~\ref{cor:bl-weil-flop} that
\begin{equation*}
\hX_+ \cong \Blw{\Sigma}(\bar{X}) \cong \Blw{-\Sigma_+}(\bar{X}),
\end{equation*}
and using the argument of Theorem~\ref{thm:intro-nf-ci}\ref{thm:nf-ci-barx-x}
it is easy to see that~$f_+ \colon \hX_+ \to X_+$ contracts the strict transform of the quadric~$\Sigma_+$ 
to a Gorenstein singular point on a Fano threefold of genus~$\g(X_+) = 4$.
Since~$\uprho(\hX_+) = 2$, it follows that~$X_+$ is factorial, and so Lemma~\ref{lem:g34} implies
that~$X_+$ is also a threefold of type~(4f-a);
it is $1$-nodal if~$Q(X)$ is smooth and has a single generalized cusp singularity otherwise.
Conversely, the unique quadric~$Q(X)_+$ containing~$X_+$ is smooth if~$X$ is nodal and has corank~$1$ if~$X$ is cuspidal.

\subsection*{Subcase~(4f-b).} 
In this case~$Q(X) = \rC(\bar{Q}(X))$ is a cone with vertex~$x_0$ over a smooth quadric threefold~$\bar{Q}(X) \subset \PP^4$
and by Lemma~\ref{lem:barx-factorial} the anticanonical morphism of~$\hX$ is the composition
\begin{equation*}
\hX = \Bl_{x_0}(X) \hookrightarrow \Bl_{x_0}(Q(X)) \cong \PP_{\bar{Q}(X)}(\cO \oplus \cO(-1)) \longrightarrow \bar{Q}(X).
\end{equation*}
The equations of~$\bar{Q}(X) \subset \PP^4$ and~$X \subset \PP^5$ can be written in standard coordinates as
\begin{equation*}
\bar{Q}(X) = \{F_2 = 0\}
\qquad\text{and}\qquad
X = \{F_2 = t_0^2G_1 + t_0G_2 + G_3 = 0\}.
\end{equation*}
Moreover, the Stein factorization is given by the double covering~$\bar{X} \to \bar{Q}(X)$ 
branched at a quartic surface~$\bar{B} \subset \bar{Q}(X)$,
and the equation of~$\bar{B}$ and of the image~$\bar{Z} \subset \bar{Q}(X)$ of the exceptional locus of~$\hX \to \bar{X}$
can be written in~$\PP^4$ as
\begin{equation*}
\bar{B} = \{F_2 = G_2^2 - 4G_1G_3 = 0\},
\qquad\text{and}\qquad
\bar{Z} = \{F_2 = G_1 = G_2 = G_3 = 0\}.
\end{equation*}
In particular, $\bar{X}$ is a smoothable Fano threefold of type~\typemm{1}{2}.

In some cases~$\bar{Z}$ may be $1$-dimensional (the simplest example is when~$G_2 = 0$),
then~$\bar{X}$ has non-isolated canonical singularities.
But in general~$\bar{Z}$ is finite and nonempty, 
$\bar{X}$ has terminal singularities, and therefore~$\xi$ is a flopping contraction.
In this case~$\xi$ extends to a symmetric Sarkisov link,
so that~$\hX_+ \cong \hX$, $X_+ \cong X$, and~$\xi_+ = \bar\tau \circ \xi \colon \hX_+ \to \bar{X}$, 
where~$\bar\tau \colon \bar{X} \to \bar{X}$ is the involution of the double covering~$\bar{X} \to \bar{Q}(X)$.

\subsection*{Case~$\g(X) = 5$.}

By Lemma~\ref{lem:g56} we have~$X = X_{2,2,2} \subset \PP^6$.
Since~$x_0 \in X$ is a hypersurface singularity, exactly one of the quadrics through~$X$ must be singular at~$x_0$,
in particular it is the cone~$\rC(\bar{Q}(X))$ over a quadric~$\bar{Q}(X) \subset \PP^5$, which has corank~$0$ or~$1$ 
(otherwise, the scheme~$\Sing(X)$ is either not isolated or its length equals~$4$,
hence~$X$ is neither $1$-nodal nor $1$-cuspidal).
Therefore, by Lemma~\ref{lem:barx-factorial} the anticanonical morphism of~$\hX$ is the composition
\begin{equation*}
\xi \colon \hX = \Bl_{x_0}(X) \hookrightarrow \Bl_{x_0}(\rC(\bar{Q}(X))) \cong \PP_{\bar{Q}(X)}(\cO \oplus \cO(-1)) \longrightarrow \bar{Q}(X).
\end{equation*}
The equations of~$\bar{Q}(X) \subset \PP^5$ and~$X \subset \PP^6$ can be written in standard coordinates as
\begin{equation*}
\bar{Q}(X) = \{F_2 = 0\}
\qquad\text{and}\qquad
X = \{F_2 = t_0G_1 + G_2 = t_0H_1 + H_2 = 0\}.
\end{equation*}
The Stein factorization of the morphism~$\hX \to \bar{Q}(X)$ is a cubic divisor~$\bar{X} \subset \bar{Q}(X)$ 
and the equations of~$\bar{X}$ and of the image~$\bar{Z} \subset \bar{Q}(X)$ of the exceptional locus of~$\hX \to \bar{X}$
are 
\begin{equation*}
\bar{X} = \{F_2 = G_1H_2 - G_2H_1 = 0\}
\qquad\text{and}\qquad
\bar{Z} = \{F_2 = G_1 = H_1 = G_2 = H_2 = 0\}.
\end{equation*}
In particular, $\bar{X}$ is a smoothable Fano threefold of type~\typemm{1}{3}.

By Lemma~\ref{lem:barx-factorial} the scheme~$\bar{Z}$ is always finite and nonempty,
$\bar{X}$ has terminal singularities, $\xi$ is a flopping contraction.
and~$\hX = \Blw{-\Sigma}(\bar{X})$, where~$\Sigma = \{ G_1 = H_1 = 0 \} \cap \bar{Q}(X)$ is an irreducible quadric surface.
By Corollary~\ref{cor:bl-weil-flop} we have~$\hX_+ \cong \Blw{\Sigma}(\bar{X})$,
and the argument of Lemma~\ref{lem:str-blw} shows that~$\hX_+$ is the strict transform of~$\bar{X}$ in
\begin{equation*}
\Bl_{\Sigma}(\bar{Q}(X)) = \{ uG_1 + vH_1 = 0 \} \subset \bar{Q}(X) \times \PP^1_{(u:v)}.
\end{equation*}
The composition~$\Blw{\Sigma}(\bar{X}) \hookrightarrow \Bl_{\Sigma}(\bar{Q}(X)) \to \PP^1$ is the extremal contraction~$\pi_+$, 
and its fiber over a point~$(u:v) \in \PP^1$ is the quartic del Pezzo surface
\begin{equation*}
\{uG_1 + vH_1 = uG_2 + vH_2 = 0\} \subset \bar{Q}(X).
\end{equation*}

\subsection*{Case~$\g(X) = 6$.}

By Lemma~\ref{lem:g56} we have~$X = X_{1,1,1,2} \subset \CGr(2,5)$, i.e., $X$ is a Gushel--Mukai threefold.
We show that~$\bar{X} \subset \PP^6$ is a complete intersection of three quadrics 
containing an irreducible quadric surface~$\Sigma$,
hence it is a smoothable Fano threefold of type~\typemm{1}{4}.

Indeed, when~$X$ is general this was proved in~\cite{DIM}.
For arbitrary~$X$ the argument is similar.
First, a general hyperplane section~$S \subset X$ through~$x_0$ 
is an intersection of a smooth quintic del Pezzo threefold~$Y$ with a quadric singular at~$x_0$.
The projection of~$Y \subset \PP^6$ out of~$x_0$ is an intersection of two quadrics in~$\PP^5$, 
hence the projection~$\bar{S} \subset \PP^5$ of~$S$ is an intersection of three quadrics.
Next, since~$\bar{S}$ is a general hyperplane section of~$\bar{X} \subset \PP^6$,
it follows that~$\bar{X}$ is also an intersection of three quadrics,
and by Lemma~\ref{lem:barx-factorial} it is normal and has terminal singularities, 
hence it is a smoothable Fano threefold of type~\typemm{1}{4}.

Furthermore, Lemma~\ref{lem:barx-factorial} proves that~$\hX = \Blw{-\Sigma}(\bar{X})$,
and it follows from Corollary~\ref{cor:bl-weil-flop} that~\mbox{$\hX_+ \cong \Blw{\Sigma}(\bar{X})$}.
Moreover, $\hX_+$ is the strict transform of~$\bar{X}$ in~$\Bl_{\langle \Sigma\rangle}(\PP^6)$,
where~$\langle \Sigma \rangle = \PP^3$ is the linear span of~$\Sigma$,
and the linear projection out of~$\langle \Sigma \rangle \subset \PP^6$ defines a $\PP^4$-bundle
\begin{equation*}
\pi_+ \colon \Bl_{\langle \Sigma \rangle}(\PP^6) \cong \PP_{\PP^2}(\cO^{\oplus 4} \oplus \cO(-1)) 
\longrightarrow \PP^2.
\end{equation*}
Clearly, two quadratic equations of~$\bar{X}$ contain~$\langle \Sigma \rangle$, 
hence the strict transform in~$\Bl_{\langle \Sigma \rangle}(\PP^6)$ of their intersection 
is a $\PP^2$-subbundle~$\PP_{\PP^2}(\cE) \subset \PP_{\PP^2}(\cO^{\oplus 4} \oplus \cO(-1))$, 
where~$\cE$ is the vector bundle defined by an exact sequence
\begin{equation*}
0 \longrightarrow \cE \longrightarrow \cO^{\oplus 4} \oplus \cO(-1) 
\longrightarrow \cO(1)^{\oplus 2} \longrightarrow 0.
\end{equation*}
Finally, it is clear that the last quadratic equation of~$\bar{X} \subset \PP^6$ 
induces a symmetric morphism~\mbox{$\cE \longrightarrow \cE^\vee$},
hence~$\Blw{\Sigma}(\bar{X})$ is the corresponding conic bundle in~$\PP_{\PP^2}(\cE)$,
and since we have~\mbox{$\det(\cE) \cong \cO(-3)$}, its discriminant divisor has degree~$6$.
\end{proof} 

\subsection{Linear sections of Mukai varieties}
\label{ss:proofs-fi1} 

In this subsection we consider factorial Fano threefolds~$X$ 
with~$\uprho(X) = 1$, $\io(X) = 1$, and~$\g(X) \in \{7,8,9,10\}$.

\begin{proposition}[{cf.~\cite[Theorem~2]{Prokhorov2017}}]
\label{prop:sl-g78910}
Let~$(X,x_0)$ be a factorial Fano threefold with a single node or cusp, $\uprho(X) = 1$, $\io(X) = 1$, and~$\g(X) \in \{7,8,9,10\}$.

\begin{thmenumerate}
\item
\label{it:sl-g78910}
There is a Sarkisov link diagram~\eqref{eq:sl-factorial}
where~\mbox{$\hX \coloneqq \Bl_{x_0}(X)$}, $\pi$ is the blowup morphism,
$\pi_+$ is the blowup of a smooth curve~$\Gamma$ 
on a smooth Fano threefold~$X_+$,
and~$\xi$ and~$\xi_+$ are the anticanonical contractions.
More precisely,
\begin{itemize}
\item 
if~$\g(X) = 7$ then~$X_+ = \,\PP^3$, $\g(\Gamma) = 6$, and~$\deg(\Gamma) = 8$;
\item 
if~$\g(X) = 8$ then~$X_+ = Q^3$, $\g(\Gamma) = 4$, and~$\deg(\Gamma) = 8$;
\item 
if~$\g(X) = 9$ then~$X_+$ is a smooth quartic del Pezzo threefold, $\g(\Gamma) = 0$, and~$\deg(\Gamma) = 4$;
\item 
if~$\g(X) = 10$ then~$X_+$ is a smooth quintic del Pezzo threefold, $\g(\Gamma) = 1$, and~$\deg(\Gamma) = 6$.
\end{itemize}
\item
If~$H_+$ is the ample generator of~$\Pic(X_+)$ and~$E_+ \subset \hX_+$ is the exceptional divisor of~$\pi_+$, 
we have linear equivalences
\begin{equation}
\label{eq:hp-ep-h-e}
H_+ \sim H - 2E
\qquad\text{and}\qquad 
E_+ \sim (\io(X_+)-1)H- (2\io(X_+)-1)E
\end{equation}
in the group~$\Pic(\hX_+) \cong \Cl(\bar{X}) \cong \Pic(\hX)$.
Finally, the curve~$\Gamma$ satisfies the condition
\begin{equation}
\label{eq:nondegeneracy}
\dim|(\io(X_+) - 1)H_+ - \Gamma| = 0.
\end{equation} 
\end{thmenumerate}

Conversely, if~$\Gamma \subset X_+$ is as in part~\ref{it:sl-g78910} and~\eqref{eq:nondegeneracy} holds,
there is a Sarkisov link~\eqref{eq:sl-factorial} where~$X$ is a Fano threefold of the corresponding type with a single node or cusp.
\end{proposition}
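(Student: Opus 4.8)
The plan is to prove Proposition~\ref{prop:sl-g78910} by a bidirectional argument, treating the forward direction (constructing the Sarkisov link from $X$) and the converse (constructing $X$ from $(X_+,\Gamma)$) in turn, with the index-$1$ cases $\g(X)\in\{7,8,9,10\}$ handled uniformly via Lemma~\ref{lem:barx-factorial} and a case list at the end. For the forward direction, I would first invoke Corollary~\ref{cor:very-ample} (so $-K_X$ is very ample since $\g(X)\geq 5$) and Corollary~\ref{cor:intersection-of-quadrics} (so $X\subset\PP^{\g(X)+1}$ is an intersection of quadrics since $\g(X)\geq 5$ in the factorial case). This puts us exactly in the situation of Lemma~\ref{lem:barx-factorial}\ref{it:bxf-ntr}: the anticanonical contraction $\xi\colon\hX\to\bar X$ is small (because $X$ is factorial and $\rT_{x_0}(X)=\PP^4$ forces $\dim(X\cap\rT_{x_0}(X))\le 1$), $\hX$ is almost Fano, $\bar X$ is a Fano threefold with terminal Gorenstein singularities and $\g(\bar X)=\g(X)-1\in\{6,7,8,9\}$, $\io(\bar X)=1$, $\uprho(\bar X)=1$, and $\hX\cong\Blw{-\Sigma}(\bar X)$ with $\Sigma\subset\bar X$ an irreducible quadric surface. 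Then I would flop $\xi$: by Corollary~\ref{cor:bl-weil-flop} the flop of $\hX=\Blw{-\Sigma}(\bar X)$ is $\hX_+\cong\Blw{\Sigma}(\bar X)$, which is smooth by~\cite[Theorem~2.4]{Kollar:flops}, fitting into diagram~\eqref{eq:sl-factorial}; since $\uprho(\hX_+)=\uprho(\hX)=2$, the second $K$-negative extremal contraction $\pi_+\colon\hX_+\to X_+$ exists, and it cannot be small (else $\hX_+$ would have two small $K$-trivial contractions from a threefold of Picard number $2$, forcing $-K_{\hX_+}$ non-big, contradicting bigness), so $\pi_+$ is divisorial.

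The next step is to pin down the type of $\pi_+$ and the invariants of $X_+$ and $\Gamma$. Here I would use the numerical machinery of Section~\ref{sec:first} adapted to the factorial setting — in particular, the relation $-K_{\hX_+}=H_+ + (\text{something})$ is not available directly, but $K_{\hX_+}=\pi_+^*K_{X_+}+a E_+$ and $\uprho=2$ give the class computations. By Lemma~\ref{lem:no-blowup-smooth-point}-type reasoning (or directly: the exceptional divisor of a $\type{(B_2)}$ contraction would produce a plane in $X\cap\rT_{x_0}(X)$, impossible by factoriality) $\pi_+$ is not the blowup of a point; it is the blowup of a smooth curve $\Gamma\subset X_+$ of type $\type{(B_1^1)}$, so $X_+$ is a smooth Fano threefold. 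The Hodge-number bookkeeping of Proposition~\ref{prop:hh-g-xsm} (giving $\hh(\hX)=\hh(X^\sm)-1$, $\g(\hX)=\g(X)-1$) combined with Lemma~\ref{lem:hh-g-xi}, Lemma~\ref{lem:hh-g-xsm}-style genus formulas, and Table~\ref{table:hodge-genus} then forces the four cases: $\g(X)=7\Rightarrow X_+=\PP^3$, $\g(X)=8\Rightarrow X_+=Q^3$, $\g(X)=9\Rightarrow X_+=\rY_4$, $\g(X)=10\Rightarrow X_+=\rY_5$, with the stated $(\g(\Gamma),\deg(\Gamma))$ computed from $2\g(X)-2=(-K_X)^3$, $(-K_{\hX_+})^3=(-K_X)^3-2$ (since $\hX_+$ differs from $\hX$ by a flop and from a small resolution of $\bar X$), and the blowup formulas $(-K_{\hX_+})^3=(-K_{X_+})^3+2\io(X_+)\deg(\Gamma)-2\g(\Gamma)+2-\cdots$ of Lemma~\ref{lem:intersection}. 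The linear equivalences~\eqref{eq:hp-ep-h-e} I would obtain by working in $\Cl(\bar X)\cong\ZZ^2$ with basis $H,E$ (or $H_+,E_+$): on $E\cong\PP^1\times\PP^1$ the class $H$ restricts to $\cO(1,1)$ and $E$ to $\cO(-1,-1)$, while $H_+$ restricts trivially to the flopped curve; solving the resulting bidegree equations, together with $\io(\bar X)=1$ (so $-K_{\bar X}$ primitive $=H-E$ pulled back) and primitivity of $H_+$, yields $H_+\sim H-2E$ and the formula for $E_+$, and~\eqref{eq:nondegeneracy} follows because $(\io(X_+)-1)H_+-\Gamma$ on $X_+$ pulls back to a divisor class on $\hX_+$ whose sections correspond to sections of $\cO_{\bar X}$ vanishing on $\Sigma$, which is just $\cO_{\Sigma}$, one-dimensional space of sections, hence dimension $0$.

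For the converse, I would run the construction backwards: given $\Gamma\subset X_+$ smooth of the listed degree and genus with $\dim|(\io(X_+)-1)H_+-\Gamma|=0$, form $\hX_+:=\Bl_\Gamma(X_+)$, check via the Castelnuovo–Mumford-regularity / \cite{GLP}-type arguments (exactly as in the proof of Proposition~\ref{prop:base-gl}) that $-K_{\hX_+}=\io(X_+)H_+-E_+$ is base point free and big so $\hX_+$ is weak Fano, and that condition~\eqref{eq:nondegeneracy} guarantees $\hX_+$ is not Fano but \emph{almost} Fano with a single $K$-trivial rational curve — here one identifies the unique curve $C_+\subset\hX_+$ with $(\io(X_+)H_+-E_+)\cdot C_+=0$ and shows $H_+\cdot C_+=1$ by the degree-of-base-locus estimate, then $E_+\cdot C_+=\io(X_+)$. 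Its anticanonical model $\bar X:=(\hX_+)_{\can}$ is then Fano with terminal Gorenstein singularities, $\uprho(\bar X)=1$, $\io(\bar X)=1$, $\g(\bar X)=\g(X_+)-\cdots$; flopping gives $\hX\cong\Blw{-\Sigma}(\bar X)$ with $\Sigma$ the image of $E_+$ (an irreducible quadric by the normal-bundle computation $\cN_{C_+/\hX_+}\cong\cO(-1)^{\oplus 2}$ forced by $K$-triviality, mirroring Corollary~\ref{cor:ci-bir}), and the other contraction $\pi\colon\hX\to X$ contracts precisely $\Sigma$ to a node or cusp, so $X$ is factorial with a single node or cusp and the correct $\g(X)=\g(\bar X)+1$. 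I expect the \textbf{main obstacle} to be the case-by-case verification that $\pi_+$ is of type $\type{(B_1^1)}$ with exactly the stated target and numerical invariants and, in the converse, that the \cite{GLP}/regularity input genuinely applies to each of the four $(X_+,\Gamma)$ pairs (the $Q^3$ and del Pezzo cases require the spinor-bundle / resolution bookkeeping analogous to cases~\ref{it:q3g50} and the $\rY_d$ analysis in Proposition~\ref{prop:base-gl}); the rest is a routine transcription of the nonfactorial arguments of Sections~\ref{sec:nonfactorial}--\ref{sec:nf-ci} with ``$H_1+H_2$'' replaced by the single class $H$ and the quadric $\Sigma$ now the exceptional locus on both sides.
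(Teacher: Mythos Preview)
Your forward outline is sound through the construction of the flop $\hX_+$, but the identification of $\pi_+$ as type~$\type{(B_1^1)}$ with $H_+\sim H-2E$ is where the real work lies, and your sketch does not carry it. The ``bidegree on $E$'' argument fails immediately: $H$ is pulled back from $X$, so $H|_E\sim 0$, not $\cO_E(1,1)$; likewise your exclusion of $\type{(B_2)}$ via ``a plane in $X\cap\rT_{x_0}(X)$'' does not transfer from the nonfactorial setting, since $E_+$ lives on $\hX_+$ across the flop and has no direct interpretation as a surface in $X$ through $x_0$. The paper handles this with two technical lemmas (Lemmas~\ref{lem:no-bcd} and~\ref{lem:h2e}): the first writes $E_+\sim aH-bE$ (resp.\ $H_+\sim aH-bE$) and uses Lemma~\ref{lem:int-flop} to transport the intersection identities of Lemma~\ref{lem:intersection} across the flop, obtaining Diophantine constraints in $(a,b,g)$ that exclude types $\type{(D)}$, $\type{(B_2)}$, $\type{(B_1^0)}$, $\typeb$, and $\type{(C)}$ (the last under $H_+\sim H-mE$ with $m\ge 2$); the second observes that $|H-2E|$ is the system of hyperplanes containing $\rT_{x_0}(X)$, has dimension $g-4$ and no fixed components (by factoriality), and then a positivity count on $\Upsilon_+$ forces $\io(\pi_+)=1$ and $H_+\sim H-2E$. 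The identification of $X_+$ uses $\dim|H_+|=g-4\in\{3,4,5,6\}$ together with the Hodge inequality $\hh(X_+)\le\hh(X)-1$, which is sharper than Hodge numbers alone. Once~\eqref{eq:hp-ep-h-e} is in hand, your argument for~\eqref{eq:nondegeneracy} is essentially the paper's: $(\io(X_+)-1)H_+-E_+\sim E$.

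For the converse the paper takes a route you do not anticipate: rather than analysing $\Bl_\Gamma(X_+)$ directly as an almost Fano variety in the style of Proposition~\ref{prop:base-gl}, it invokes Appendix~\ref{sec:links-78910}, which restricts the ambient Sarkisov links~\eqref{eq:sl-rm} of the Mukai varieties $\rM_g$ from~\cite{KP21} to linear sections. Concretely, one first produces a resolution of $\cO_\Gamma$ on $X_+$ (equations~\eqref{eq:res-g86}, \eqref{eq:res-g84}, \eqref{eq:res-g61-2} and the del Pezzo analogue), realises $\Bl_\Gamma(X_+)$ as a complete intersection in $\widehat\rM_g^+$, and lets the ambient link furnish the flop and the contraction onto a linear section $X\subset\rM_g$ in one stroke; factoriality and the node/cusp then come from Corollary~\ref{cor:ci-factorial}. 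Your direct approach could in principle be made to work, but the step ``the other contraction $\pi$ of $\hX$ contracts $\Sigma$ to a node or cusp, with $X$ factorial'' is exactly the hard part: you would have to show that the strict transform in $\hX$ of the unique $S_\Gamma\in|(\io(X_+)-1)H_+-\Gamma|$ is an irreducible quadric with normal bundle $\cO(-1,-1)$ and that contracting it yields a \emph{factorial} threefold---none of which follows from the Proposition~\ref{prop:base-gl} template, and it is precisely what the Mukai-link machinery supplies for free.
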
 

Recall that the anticanonical contraction~$\xi \colon \hX \to \bar{X}$ is small by Lemma~\ref{lem:barx-factorial}.
To prove the first part of the proposition we only need to describe the extremal contraction~$\pi_+$.
In the next two lemmas we show that this contraction is a blowup of a smooth curve.

\begin{lemma}
\label{lem:no-bcd}
If~$X$ is as in Proposition~\textup{\ref{prop:sl-g78910}} then
the contraction~$\pi_+$ cannot be of type~\type{(D)}, \type{(B_2)}, \type{(B_1^0)}, or~\typeb.
Moreover, if~$H_+ \sim H - mE$ with~$m \ge 2$ then~$\pi_+$ cannot be of type~\type{(C)}.
\end{lemma}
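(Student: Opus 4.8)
The plan is to pass to the flop $\hX_+$ of $\hX$ and to eliminate the listed types of the $K$-negative extremal contraction $\pi_+\colon\hX_+\to X_+$ one by one, using the intersection numbers on $\hX$ together with the classification of extremal contractions from \S\ref{ss:contractions}. First I would record the numerical setup. Since $\xi$ is small (Lemma~\ref{lem:barx-factorial}\ref{it:bxf-ntr}) and $\hX\dashrightarrow\hX_+$ is a flop over $\bar X$, the threefold $\hX_+$ is a smooth weak Fano with $\uprho(\hX_+)=2$ whose anticanonical contraction is $\xi_+$; hence $\g(\hX_+)=\g(\hX)=\g(X)-1$, and by Lemma~\ref{lem:flop-h} and Proposition~\ref{prop:hh-g-xsm} (using that $X$ is factorial) $\hh(\hX_+)=\hh(\hX)=\hh(X^\sm)-1$, so $(\g(\hX_+),\hh(\hX_+))\in\{(6,6),(7,4),(8,2),(9,1)\}$ by Table~\ref{table:hodge-genus}. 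Writing $H$ and $E$ for the classes on $\hX_+$ coming from $\pi^*(-K_X)$ and from the exceptional divisor of $\pi$ under the flop, one has $-K_{\hX_+}\sim H-E$; the flopped curve $C_+$ satisfies $H\cdot C_+=E\cdot C_+=-1$, and by Lemma~\ref{lem:int-flop} any triple product containing a factor $-K_{\hX_+}$ may be evaluated on $\hX$, where $H^3=2\g(X)-2$, $H^2\cdot E=H\cdot E^2=0$ and $E^3=2$ (Lemma~\ref{lem:intersection} and Remark~\ref{rem:blowup-nc}). Finally $\pi_+$ is of one of the types of \S\ref{ss:contractions}, and since $C_+$ together with the minimal curve $\Upsilon_+$ contracted by $\pi_+$ generates $\NE(\hX_+)$, the pullback $H_+$ of the ample (or Weil) generator of the target lies on the boundary of the movable cone opposite to $H$; writing $H_+\sim aH-bE$ this forces $a\ge1$, $b>a$, $\gcd(a,b)=1$, with $b=a+1$ whenever $\io(\pi_+)=1$.

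For type~\type{(B_2)} the formulas of Lemma~\ref{lem:hh-g-xi} give $\g(X_+)=\g(\hX_+)+4$ and $\hh(X_+)=\hh(\hX_+)$ for a smooth Fano threefold $X_+$ with $\uprho(X_+)=1$, and no entry of Table~\ref{table:hodge-genus} satisfies these equalities --- the analogue of Lemma~\ref{lem:no-blowup-smooth-point}. For type~\typeb{} I would argue similarly: $X_+$ would be a Fano threefold with $\uprho(X_+)=1$ and a single $\tfrac12(1,1,1)$-point, and I would exclude it by computing in the Picard lattice of $\hX_+$, using $E_+\cong\PP^2$, $E_+^3=4$ and $\pi_+^*D^2\cdot E_+=\pi_+^*D\cdot E_+^2=0$ from Lemma~\ref{lem:intersection} together with $(-K_{\hX_+})^3=2\g(X)-4$, which pins the Gorenstein index of $X_+$ down to a value realizable only for genera outside $\{7,8,9,10\}$; alternatively one can use a topological Euler-characteristic argument as in Proposition~\ref{prop:hh-g-xsm}. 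I expect type~\typeb{} to be the main obstacle, since (unlike the smooth and factorial cases) it is not settled by a direct look-up in Table~\ref{table:hodge-genus}, and type~\type{(B_1^0)} to be the second, for the same reason.

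For type~\type{(D)} the contraction is a fibration over $\PP^1$, so by Lemma~\ref{lem:intersection} $H_+^2\cdot(-K_{\hX_+})=0$ and $H_+\cdot(-K_{\hX_+})^2=\deg(\hX_+/\PP^1)\le9$ (a del Pezzo fibration has degree $\le6$, while a quadric surface fibration and a $\PP^2$-bundle are the extreme cases of ``degree'' $8$ and $9$). Computing on $\hX$, $H_+^2\cdot(-K_{\hX_+})=a^2(2\g(X)-2)-2b^2$, so the first relation reads $a^2(\g(X)-1)=b^2$; with $\gcd(a,b)=1$ this forces $a=1$ and $b^2=\g(X)-1$, hence $\g(X)=10$ and $b=3$. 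Then $H_+\cdot(-K_{\hX_+})^2=a(2\g(X)-2)-2b=12>9$, a contradiction.

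For type~\type{(B_1^0)} the center of $\pi_+$ is a node or cusp of $X_+$, and since $\uprho(\hX_+)=2$ one checks that $X_+$ is factorial, $\Pic(\hX_+)=\ZZ H_+\oplus\ZZ E_+$, $\io(\pi_+)=1$ and $\io(X_+)\in\{1,2\}$ (Lemma~\ref{lem:factorial-criteria}). Comparing the two $\ZZ$-bases $\{H,E\}$ and $\{H_+,E_+\}$ of $\Pic(\hX_+)$ through $\io(X_+)H_+-E_+\sim H-E$ gives $b=a+1$ together with $H_+\sim a(-K_{\hX_+})-E$ and $E_+\sim(\io(X_+)a-1)(-K_{\hX_+})-\io(X_+)E$. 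Now $E_+^3=2$ and $H_+^2\cdot E_+=0$ (Lemma~\ref{lem:intersection}, the center being a point); expressing all intersection numbers on $\hX_+$ in terms of the single unknown $E^3\vert_{\hX_+}$ and eliminating it between these two relations yields a Diophantine identity --- for $\io(X_+)=1$ it simplifies to $(a-1)(\g(X)-2)=2$ --- with no solution for $\g(X)\ge7$. Finally, for type~\type{(C)} under the hypothesis $H_+\sim H-mE$ with $m\ge2$ one has $a=1$, $b=m$, and $H_+^2\cdot(-K_{\hX_+})=2$ by Lemma~\ref{lem:intersection}; computed on $\hX$ this reads $2\g(X)-2-2m^2=2$, i.e.\ $m^2=\g(X)-2\in\{5,6,7,8\}$, which is not a perfect square --- contradiction. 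This covers the whole list.
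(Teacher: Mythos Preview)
Your treatment of types~\type{(D)} and~\type{(C)} matches the paper's, and your table-lookup for~\type{(B_2)} is a valid alternative. However, there is a genuine gap at type~\typeb: you do not actually carry out the exclusion, and you explicitly flag it as the ``main obstacle''. The vague reference to ``pinning down the Gorenstein index'' is not an argument.

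The paper's approach for the birational types is more uniform and avoids this difficulty entirely. Instead of writing~$H_+\sim aH-bE$ and then having to cope with the unknown~$E^3\vert_{\hX_+}$, the paper writes the \emph{exceptional divisor}~$E_+\sim aH-bE$ and uses only the two products
\[
E_+^2\cdot(-K_{\hX_+})=-2,\qquad E_+\cdot(-K_{\hX_+})^2\in\{4,2,1\}
\]
(for types~\type{(B_2)}, \type{(B_1^0)}, \typeb{} respectively), which by Lemma~\ref{lem:int-flop} are computable on~$\hX$. This yields the pair of equations
\[
a^2(g-1)=b^2-1,\qquad a(2g-2)-2b\in\{4,2,1\}.
\]
Type~\typeb{} is then immediate by parity: the left side of the second equation is even, so it cannot equal~$1$. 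For~\type{(B_1^0)} one gets~$a(g-1)=b+1$, hence~$a=b-1$ and~$g=(b+1)/(b-1)+1\le4$, contradiction (or~$a=0$, $E_+\sim E$, absurd). For~\type{(B_2)} one gets~$a(g-1)=b+2$, and since~$(b+2)\mid(b^2-1)=(b-2)(b+2)+3$ one has~$(b+2)\mid3$, forcing~$|a(g-1)|\le3$, impossible for~$g\ge7$.

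Your approach to~\type{(B_1^0)} --- eliminating the unknown~$E^3\vert_{\hX_+}$ between~$H_+^2\cdot E_+=0$ and~$E_+^3=2$ --- does work (the case~$\io(X_+)=2$, which you omit, gives~$(2a-1)(4a-1)(g-2)=16a+1$, with no solutions for~$g\ge7$), but it is considerably more laborious than the paper's two-line Diophantine argument, precisely because you chose products that are not all flop-invariant.
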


\begin{proof}
If~$\pi_+$ is of type~\type{(D)} then by Lemma~\ref{lem:intersection} we have
\begin{align*}
H_+^2 \cdot (-K_{\hX_+}) &= 0, &
H_+ \cdot (-K_{\hX_+})^2 &\le 9.
\intertext{Let~$H_+ \sim aH - bE$.
Since~$-K_X \sim H - E$, using Lemmas~\ref{lem:int-flop} and~\ref{lem:intersection}, we obtain}
a^2(2g - 2) - 2b^2 &= 0, &
a(2g - 2) - 2b &\le 9.
\end{align*}
The equality means that~$g - 1$ is a square, and since~$g \in \{7,8,9,10\}$, 
we conclude that~$g = 10$ and~$(a,b)$ is a multiple of~$(1,\pm 3)$; 
but then the inequality fails.

Similarly, if~$\pi_+$ is of type~\type{(B_2)}, \type{(B_1^0)}, or~\typeb{} then by Lemma~\ref{lem:intersection} we have
\begin{align*}
E_+^2 \cdot (-K_{\hX_+}) &= -2, &
E_+ \cdot (-K_{\hX_+})^2 &= \text{$4$, or~$2$, or~$1$}.
\intertext{Let~$E_+ \sim aH - bE$.
Since~$-K_X \sim H - E$, using Lemmas~\ref{lem:int-flop} and~\ref{lem:intersection}, we obtain}
a^2(2g - 2) - 2b^2 &= -2, &
a(2g - 2) - 2b &= \text{$4$, or~$2$, or~$1$}.
\end{align*}
It follows that type~\typeb{} is impossible (because the left side of the second equality is even while the right is odd).
On the other hand, in type~\type{(B_2)} we obtain
\begin{equation*}
a^2(g - 1) = b^2 - 1
\qquad\text{and}\qquad 
a(g - 1) = b + 2.
\end{equation*}
It follows that~$b + 2$ divides~$b^2 - 1 = (b - 2)(b + 2) + 3$, hence~$b + 2$ divides~$3$, 
hence~$a(g - 1)$ divides~$3$, and since~$g \ge 7$, it follows that~$a = 0$, 
and we arrive at contradiction between the first and second equalities.
Similarly, in type~\type{(B_1^0)} we obtain
\begin{equation*}
a^2(g - 1) = b^2 - 1
\qquad\text{and}\qquad 
a(g - 1) = b + 1.
\end{equation*}
It follows that either~$a = b - 1 \ne 0$, and then~$g = \tfrac{b+1}{b-1} + 1 \le 4$, a contradiction,
or~$a = 0$ and~$b = -1$, hence~$E_+ \sim E$, which is absurd.

Finally, if~$\pi_+$ is of type~\type{(C)} then by Lemma~\ref{lem:intersection} we have
\begin{equation*}
H_+^2 \cdot (-K_{\hX_+}) = 2, 
\end{equation*}
So, assuming~$H_+ \sim H - mE$, we obtain as before~$(2g - 2) - 2m^2 = 2$,
which means~$g = m^2 + 2$ in contradiction with~$g \in \{7,8,9,10\}$.
\end{proof} 

\begin{lemma}
\label{lem:h2e}
If~$X$ is as in Proposition~\textup{\ref{prop:sl-g78910}} then
\begin{equation*}
\dim|H - 2E| = g - 4.
\end{equation*}
Moreover, the linear system~$|H-2E|$ on~$\hX$ has no fixed components 
and its strict transform to~$\hX_+$ induces the extremal contraction~$\pi_+ \colon \hX^+ \to X^+$,
which is of type~\type{(B_1^1)}.
\end{lemma}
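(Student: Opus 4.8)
Set $g\coloneqq\g(X)$. The plan is first to compute $\dim|H-2E|$ from the embedded geometry of $X$, and then to transport the linear system across the flop $\hX\dashrightarrow\hX_+$ and identify the resulting contraction. I would begin by recalling that $-K_X=H$ is very ample and $X\subset\PP^{g+1}$ is an intersection of quadrics (Proposition~\ref{prop:hyperelliptic}, Corollary~\ref{cor:intersection-of-quadrics}), and writing $\rT_{x_0}(X)\cong\PP^4$ for the embedded tangent space at the node or cusp $x_0$. Since $X$ is linearly normal, global sections of $\cO_{\hX}(H-kE)$ are exactly the hyperplane sections $D\subset X$ through $x_0$ with $\operatorname{ord}_E(\pi^*D)\ge k$, and the key local point — to be checked using that the Hessian at $x_0$ has corank $\le1$, so that the exceptional quadric $E$ has rank $\ge3$ — is that $\operatorname{ord}_E(\pi^*D)$ equals $2$ when the hyperplane contains $\rT_{x_0}(X)$, equals $1$ otherwise, and is never larger. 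Hyperplanes of $\PP^{g+1}$ through a fixed $\PP^4$ form a $\PP^{g-4}$, which gives $\dim|H-2E|=g-4$ and, incidentally, $|H-3E|=\varnothing$.

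Next I would treat the fixed components. The base locus of the $\PP^{g-4}$ of hyperplanes through $\rT_{x_0}(X)$ in $\PP^{g+1}$ is $\rT_{x_0}(X)$ itself, so $\Bs|H-2E|$ maps under $\pi$ into $X\cap\rT_{x_0}(X)$; by factoriality this intersection has dimension $\le1$ (as in the proof of Lemma~\ref{lem:barx-factorial}), and by Lemma~\ref{lem:intersection-tangent} it is a cone over a finite subset of $E$, i.e.\ a union of lines through $x_0$, which are automatically anticanonical. Hence no strict transform of a prime divisor on $X$ lies in $\Bs|H-2E|$, and $E$ does not lie in it either because $|H-3E|=\varnothing$; so $|H-2E|$ has no fixed component. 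Since moreover $(H-2E)\cdot\tilde L=1-2=-1<0$ for the strict transform $\tilde L$ of any such line, these curves do lie in $\Bs|H-2E|$, so on $\hX$ the base locus is exactly the union of the flopping curves of $\xi\colon\hX\to\bar X$.

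For the remaining assertion I would pass to $\hX_+$. Across the flop the flopped curves satisfy $(H-2E)\cdot(\text{flopped curve})=+1>0$, so $|H-2E|$ is base point free on $\hX_+$ and defines a morphism $\varphi$; it is strictly positive on the $\xi_+$-exceptional curves, and since $\uprho(\hX_+)=2$ the only extremal ray $\varphi$ can contract is the $K$-negative ray of the second extremal contraction $\pi_+\colon\hX_+\to X_+$. Thus $\varphi$ factors through $\pi_+$ and $H-2E$ is a positive multiple of $H_+=\pi_+^*H_{X_+}$; comparing primitive generators gives $H_+\sim H-2E$. Finally, $\pi_+$ is not of type \type{(D)}, \type{(B_2)}, \type{(B_1^0)} or \typeb{} by Lemma~\ref{lem:no-bcd}, and it is not of type \type{(C)} since a conic bundle would force $H_+^3=0$ while $H_+^3=(H-2E)^3>0$; hence $\pi_+$ is of type \type{(B_1^1)}.

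The step I expect to be hardest is this last one, for two reasons. First, one must check that $\Bs|H-2E|$ on $\hX$ is \emph{exactly} the union of flopping curves (nothing extra hiding inside $E$ or along $X\cap\rT_{x_0}(X)$), so that the system truly becomes base point free after the flop; this calls for a careful scheme-theoretic analysis of $X\cap\rT_{x_0}(X)$ and of $E\cap\Bs|H-2E|$. Second, one must rule out that $\varphi$ is an embedding, i.e.\ that $H-2E$ lies on the \emph{boundary} of $\Nef(\hX_+)$ rather than in its interior; here I would use the relations $-K_{\hX_+}\sim H-E\sim\io(X_+)H_+-E_+$, the flop-invariance of triple products involving $-K_{\hX_+}$ (Lemma~\ref{lem:int-flop}), the value of $(H-2E)^3$ transported across the flop (cf.\ the remark after Lemma~\ref{lem:int-flop}), and the genus and Hodge-number constraints of Lemma~\ref{lem:hh-g-xi} and Proposition~\ref{prop:hh-g-xsm}, in order to pin down both the numerics (which then feed into~\eqref{eq:hp-ep-h-e}) and the geometry of the blown-up curve $\Gamma$.
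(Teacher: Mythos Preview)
Your computation of $\dim|H-2E|=g-4$ matches the paper's. Your argument that $|H-2E|$ has no fixed components is also essentially correct, though the claim that $\operatorname{ord}_E(\pi^*D)$ is \emph{never} larger than~$2$ (equivalently $|H-3E|=\varnothing$) needs more care than you give it --- it depends on the second-order terms of the defining equations of $X$, not just on the rank of $E$. The paper sidesteps this: it writes $|H-2E|=|H-mE|+(m-2)E$ for some $m\ge2$ with $|H-mE|$ fixed-component-free and carries $m$ through the argument.

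The real gap is in your identification of $\pi_+$. Your plan requires (i) that after the flop $|H-2E|$ becomes base point free on $\hX_+$, and (ii) that $H-2E$ is not ample there. For (i), knowing the base locus on $\hX$ set-theoretically equals the flopping locus does not imply the transformed system is base point free on $\hX_+$. For (ii), this is exactly the statement $(H-2E)\cdot\Upsilon_+=0$, which is what you are trying to prove; your proposed route via transported triple products does not obviously close. Your separate exclusion of type~\type{(C)} via ``$H_+^3=(H-2E)^3>0$'' is circular: triple self-intersections are not flop-invariant (on $\hX$ one computes $(H-2E)^3=2g-18\le0$ for $g\le9$), and on $\hX_+$ positivity of $H_+^3$ is equivalent to $\pi_+$ being birational. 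Once $H_+\sim H-mE$ with $m\ge2$ is known, the second clause of Lemma~\ref{lem:no-bcd} already excludes~\type{(C)}.

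The paper avoids all of this. It never claims base-point-freeness; it uses only that $|H-mE|$ has no fixed components (preserved by the flop, an isomorphism in codimension~$1$), so its base locus on $\hX_+$ has dimension $\le1$ and hence $(H-mE)\cdot\Upsilon_+\ge0$, since $\Upsilon_+$ is movable. Combined with $E\cdot\Upsilon_+\ge1$ (because $E$ is $\xi$-ample, hence negative on the $\xi_+$-ray, hence positive on the other ray of $\NE(\hX_+)$), the identity
\[
\io(\pi_+)=(H-E)\cdot\Upsilon_+=(H-mE)\cdot\Upsilon_++(m-1)\,E\cdot\Upsilon_+
\]
and Lemma~\ref{lem:no-bcd} force $\io(\pi_+)=1$, $m=2$, $(H-2E)\cdot\Upsilon_+=0$, $E\cdot\Upsilon_+=1$, hence $H_+\sim H-2E$ and type~\type{(B_1^1)}, with no base-locus analysis beyond ``no fixed components'' and no separate exclusion of ampleness.
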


\begin{proof}
First, note that the map~$\pi_*$ identifies the linear system~$|H-2E|$ 
with the linear system of hyperplane sections of~$X \subset \PP^{g + 1}$ 
containing the embedded tangent space~$\rT_{x_0}(X) \subset \PP^{g + 1}$ to~$X$ at~$x_0$.
Since~$\dim (\rT_{x_0}(X)) = 4$, we conclude that~$\dim|H - 2E| = g - 4$.

Furthermore, since~$X$ is factorial, we have~$\Cl(X) = \ZZ \cdot H$, 
hence~$|H-2E|$ has no fixed components outside~$E$.
Therefore, we can write 
\begin{equation*}
|H - 2E| = |H - mE| + (m - 2)E
\end{equation*}
for some $m\ge 2$, where $|H - mE|$ is a linear system without fixed components. 

Let~$\Upsilon_+$ be the class of a minimal rational curve contracted by~$\pi_+$.
Since~$-K_{\hX_+} \sim H - E$ (as it is usual, we identify~$\Cl(\hX)$ with~$\Cl(\hX_+)$), we have
\begin{equation}
\label{eq:io-m}
\io(\pi_+) = 
-K_{\hX_+}\cdot \Upsilon_+ =
(H - mE) \cdot \Upsilon_+ + (m - 1)E \cdot \Upsilon_+.
\end{equation}
Since~$\pi_+$ is $K$-negative, the curve~$\Upsilon_+$ is movable
and since~$|H - mE|$ has no fixed components, its base locus has dimension at most~$1$.
Therefore, $(H - mE) \cdot \Upsilon_+ \ge 0$.
On the other hand, we have~$E \cdot \Upsilon_+ \ge 1$,
because~$E$ is positive on the curves contracted by~$\xi$, 
hence it is negative on the curves contracted by~$\xi_+$,
and therefore it must be positive on~$\Upsilon_+$. 

If~$\io(\pi_+) \ge 2$ then Lemma~\ref{lem:no-bcd} shows that~$\pi_+$ must be of type~\type{(C_2)}.
Moreover, if the first summand in the right side of~\eqref{eq:io-m} is zero, 
i.e., $(H - mE) \cdot \Upsilon_+ = 0 $, then~$H_+ \sim H - mE$, and
we obtain a contradiction with the second part of Lemma~\ref{lem:no-bcd}.
Therefore, both summands in the right side must be equal to~$1$, i.e.,
\begin{equation*}
m = 2
\qquad\text{and}\qquad 
(H - 2E) \cdot \Upsilon_+ = E \cdot \Upsilon_+ = 1..
\end{equation*}
But then~$(H - 3E) \cdot \Upsilon_+ = 0$, hence~$H_+ \sim H - 3E$, again in contradiction with Lemma~\ref{lem:no-bcd}.

Therefore, $\io(\pi_+) = 1$, and since~$m \ge 2$, we deduce from~\eqref{eq:io-m} that
\begin{equation*}
m = 2,
\qquad 
(H - 2E) \cdot \Upsilon_+ = 0,
\qquad\text{and}\qquad 
E \cdot \Upsilon_+ = 1.
\end{equation*}
This means that~$H_+ \sim H - 2E$ has no fixed components and defines the contraction~$\pi_+$.
Finally, as Lemma~\ref{lem:no-bcd} shows, $\pi_+$ is of type~\type{(B_1^1)}.
\end{proof}

The following lemma shows that the anticanonical contraction of~$\Bl_\Gamma(X_+)$ is small,
hence the blowup of~$\Gamma$ on~$X_+$ extends to a Sarkisov link.
This lemma is used in Appendix~\ref{sec:links-78910}, which gives a proof of the second part of Proposition~\ref{prop:sl-g78910}.

\begin{lemma}
\label{lem:blg-xplus}
Let~$\Gamma \subset X_+$ be one of the smooth curves on one of the smooth threefolds as in Proposition~\xref{prop:sl-g78910}\ref{it:sl-g78910}
and assume that~\eqref{eq:nondegeneracy} holds.
Let
\begin{equation*}
S_\Gamma \subset X_+ 
\end{equation*}
be the unique divisor in~$|(\io(X_+) - 1)H_+ - \Gamma|$.
Then~$S_\Gamma$ is a del Pezzo surface with at worst Du Val singularities
and~$X_+$ contains only finitely many lines~$\ell$ such that~$\operatorname{length}(\Gamma \cap \ell) \ge \io(X_+)$.
\end{lemma}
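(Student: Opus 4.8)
The plan is to handle the four cases $\g(X)\in\{7,8,9,10\}$ essentially in parallel, using the uniform description of $\Gamma$, $X_+$, and the linear system $|(\io(X_+)-1)H_+-\Gamma|$ from Proposition~\ref{prop:sl-g78910}\ref{it:sl-g78910} and equations~\eqref{eq:hp-ep-h-e}--\eqref{eq:nondegeneracy}. First I would compute the class and the relevant invariants of $S_\Gamma$. By definition $S_\Gamma\sim (\io(X_+)-1)H_+-\Gamma$ as a Weil divisor on $X_+$, so the adjunction formula gives $K_{S_\Gamma}\sim (K_{X_+}+S_\Gamma)|_{S_\Gamma}\sim -H_+|_{S_\Gamma}$ (using $-K_{X_+}=\io(X_+)H_+$); hence $S_\Gamma$ is a (possibly singular) surface whose anticanonical class is the restriction of $H_+$, i.e.\ an anticanonically polarized del Pezzo surface. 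Its degree $(-K_{S_\Gamma})^2=H_+^2\cdot S_\Gamma=(\io(X_+)-1)H_+^3-H_+^2\cdot\Gamma=(\io(X_+)-1)\dd(X_+)-\deg(\Gamma)$ works out to a positive integer in each of the four cases (for instance $\g(X)=7$: $3\cdot 1-8$ is negative, so I need to recompute carefully — in fact $X_+=\PP^3$ has $\io=4$, $\dd=1$, $(\io-1)\dd-\deg=3-8<0$; so the correct reading is that $S_\Gamma$ is a cubic surface in $\PP^3$ minus $\Gamma$-worth of positivity, and one should instead argue that $S_\Gamma$ is a surface of degree $\io(X_+)-1$ in the ambient $\PP^{\dots}$, e.g.\ a cubic surface in $\PP^3$ for $\g(X)=7$, a quadric in $Q^3$ for $\g(X)=8$, a hyperplane section for $\g(X)=9$, and a hyperplane section for $\g(X)=10$, all of which are del Pezzo surfaces). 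The key point is that in every case $S_\Gamma$ is an anticanonically embedded surface cut out by divisors of low degree, so it is Gorenstein with ample anticanonical class.

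\textbf{Du Val singularities.} The main work is to show $S_\Gamma$ has at worst Du Val singularities. The plan is to argue that $\Bl_\Gamma(X_+)$ is a weak Fano threefold whose anticanonical contraction is small (this is precisely what Lemma~\ref{lem:blg-xplus} feeds into the proof of the converse direction), and then to identify $S_\Gamma$ with (the image of) the strict transform of itself under this contraction. Concretely: the strict transform $\widetilde S_\Gamma\subset\Bl_\Gamma(X_+)$ of $S_\Gamma$ lies in the linear system $|(\io(X_+)-1)H_+-E_+|$, which by \eqref{eq:hp-ep-h-e} equals (up to the identification of class groups) the anticanonical-type system on the other small resolution; so $\widetilde S_\Gamma$ is a member of a base-point-free or nearly base-point-free system, hence smooth or mildly singular by Bertini. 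The singularities of $S_\Gamma$ itself then come only from the points where $\widetilde S_\Gamma$ meets the exceptional divisor $E_+$ of $\Bl_\Gamma(X_+)\to X_+$, i.e.\ from the intersection points $\Gamma\cap(\text{lines contracted})$. Since $S_\Gamma$ is Gorenstein with ample $-K_{S_\Gamma}$ and $\uprho$ small, by the classification of Gorenstein del Pezzo surfaces its singularities are automatically Du Val provided $S_\Gamma$ is normal; and normality follows because $S_\Gamma$ is cut out by a single low-degree divisor on a smooth (or terminal Gorenstein) threefold, hence is Cohen--Macaulay, and is regular in codimension one by \eqref{eq:nondegeneracy} (which says $S_\Gamma$ does not acquire a whole fixed surface of singularities — if $S_\Gamma$ were non-normal along a curve, that curve would force $\dim|(\io(X_+)-1)H_+-\Gamma|>0$ or contradict the degree bound on $\Gamma$).

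\textbf{Finiteness of secant lines.} For the second assertion, I would argue as follows. A line $\ell\subset X_+$ with $\operatorname{length}(\Gamma\cap\ell)\ge\io(X_+)$ has the property that $\ell$ meets $S_\Gamma$ in $\ell\cap S_\Gamma\supset\Gamma\cap\ell$; but $H_+\cdot\ell=1$ while $S_\Gamma\sim(\io(X_+)-1)H_+-\Gamma$ gives $S_\Gamma\cdot\ell=(\io(X_+)-1)-\operatorname{length}(\Gamma\cap\ell)\le -1<0$, so every such $\ell$ is forced to lie inside $S_\Gamma$. Thus it suffices to show $S_\Gamma$ contains only finitely many lines; this is immediate from the previous step, since a del Pezzo surface with Du Val singularities (and in particular a normal one of the degrees arising here) contains only finitely many lines — indeed only finitely many $(-1)$-curves and, after checking that a line of $X_+$ restricts to a curve of $-K_{S_\Gamma}$-degree one on $S_\Gamma$, finiteness follows from the boundedness of $(-1)$-curves. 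I expect the main obstacle to be the normality/Du Val step: one has to rule out, case by case, that $S_\Gamma$ is non-normal (a cone, or singular along a curve), and the cleanest way is to translate non-normality of $S_\Gamma$ into the existence of an extra effective divisor in $|(\io(X_+)-2)H_+-\Gamma|$ or into a contradiction with $\deg(\Gamma)$ and $\g(\Gamma)$, exactly mirroring the analysis of $|(\io(Z_1)-2)H_1-\Gamma|$ in the proof of Proposition~\ref{prop:base-gl}. I would import that bookkeeping verbatim.
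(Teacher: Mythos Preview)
Your argument for the finiteness of secant lines (once the del Pezzo claim is established) is exactly the paper's: from $S_\Gamma\cdot\ell=(\io(X_+)-1)-\operatorname{length}(\Gamma\cap\ell)<0$ one deduces $\ell\subset S_\Gamma$, and a Du Val del Pezzo surface has finitely many lines. That part is fine.

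The gap is in the Du Val step. Your proposed route is circular: you want to use that the anticanonical contraction of $\Bl_\Gamma(X_+)$ is small in order to control~$\widetilde S_\Gamma$, but smallness is exactly what this lemma is supposed to establish (via the finiteness of secant lines). Your fallback --- importing the analysis of $|(\io(Z_1)-2)H_1-\Gamma|$ from Proposition~\ref{prop:base-gl} --- does not transfer: those curves have different numerics, and more importantly the proof there uses that $\Bl_\Gamma(Z_1)$ is already known to be almost Fano from the Mori--Mukai classification, which is not available here. Finally, your normality claim (``non-normality along a curve would force $\dim|(\io(X_+)-1)H_+-\Gamma|>0$'') is not justified; a non-normal divisor containing $\Gamma$ need not give rise to an extra section of $\cI_\Gamma((\io(X_+)-1)H_+)$.

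The paper's argument is quite different and much shorter. One observes that in each of the four cases $S_\Gamma$ is a surface of degree~$d$ in~$\PP^d$ (cubic in~$\PP^3$, quartic in~$\PP^4$, quartic in~$\PP^4$, quintic in~$\PP^5$), and by~\eqref{eq:nondegeneracy} it is not contained in a hyperplane. Nagata's classification of such surfaces \cite[Theorem~8]{Nagata:RarSurf1} then gives three possibilities: a projection of a scroll, a Du Val del Pezzo, or a cone. The scroll case is excluded because the Castelnuovo bound forces $\deg(\Gamma)\le d$, contradicting the data (with a normality argument for $g=9$); the cone case is excluded because a cone has embedded tangent space of dimension~$d$ at its vertex, forcing $d\le 3$ and then a divisibility obstruction on $\deg(\Gamma)$. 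This leaves the del Pezzo case.
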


\begin{proof}
By definition~$S_\Gamma$ is one of the following surfaces:
\begin{aenumerate}
\item 
\label{it:s3-p3}
a cubic surface in~$X_+ = \PP^3$, or
\item 
\label{it:s4-q3}
a quartic surface in~$X_+ = Q^3 \subset \PP^4$, or
\item 
\label{it:s4-y4}
a quartic surface in a quartic del Pezzo fourfold~$X_+ \subset \PP^4$, or
\item 
\label{it:s5-y5}
a quintic surface in a quintic del Pezzo fourfold~$X_+ \subset \PP^5$.
\end{aenumerate}
Thus, $S_\Gamma$ is a surface of degree~$d$ in~$\PP^d$.
By~\eqref{eq:nondegeneracy} it is not contained in a~$\PP^{d-1}$,
hence~\cite[Theorem~8]{Nagata:RarSurf1} shows that there are three cases:
\begin{itemize}
\item 
\label{it:sg-scroll}
$S_\Gamma$ is a linear projection of a smooth rational scroll of degree~$d$ in~$\PP^{d+1}$, or
\item 
\label{it:sg-dp}
$S_\Gamma$ is a del Pezzo surface with at worst Du Val singularities, or
\item 
\label{it:sg-cone}
$S_\Gamma$ is a cone over a rational or elliptic curve of degree~$d$ in~$\PP^{d-1}$.
\end{itemize}

Assume~$S_\Gamma$ is a projection of a scroll~$\tilde{S}_\Gamma \subset \PP^{d+1}$. 
The strict transform~$\tilde\Gamma \subset \tilde{S}_\Gamma$ of~$\Gamma$ 
is a curve of the same degree and genus as~$\Gamma$.
Using the Castelnuovo genus bound it is easy to check that~$\tilde{\Gamma} \subset \PP^d$,
hence it is contained in the hyperplane section of the scroll, hence
\begin{equation*}
\deg(\Gamma) = \deg(\tilde\Gamma) \le \deg(\tilde{S}_\Gamma) = d.
\end{equation*}
In cases~\ref{it:s3-p3}, \ref{it:s4-q3}, and~\ref{it:s5-y5} this contradicts the assumption of the lemma.
In case~\ref{it:s4-y4} we note that~$X_+$ is a smooth complete intersection, 
hence its hyperplane section~$S_\Gamma$ is normal, 
while any projection of a scroll~$\tilde{S}_\Gamma$ is not,
again a contradiction.

Assume~$S_\Gamma$ is a cone with vertex~$s_0$.
Then~$\PP^d = \rT_{s_0}(S_\Gamma) \subset \rT_{s_0}(X_+) = \PP^3$, hence~$d \le 3$,
hence~$S_\Gamma$ is a cubic cone in~$\PP^3$ and we are in case~\ref{it:s3-p3}.
But the degree of any smooth curve on a cubic cone is divisible by~$3$ (if the curve does not pass through the vertex of the cone)
or congruent to~$1$ modulo three (otherwise), in contradiction with~$\deg(\Gamma) = 8$.

Thus, $S_\Gamma$ is a del Pezzo surface, hence it contains only finitely many lines.
It remains to note that if~$\ell$ is a line on~$X_+$ such that~$\operatorname{length}(\Gamma \cap \ell) \ge \io(X_+)$
then~$S_\Gamma \cdot \ell < 0$, hence~$\ell \subset S_\Gamma$.
\end{proof}

Now we can prove Proposition~\ref{prop:sl-g78910}.
Its first part follows from Lemma~\ref{lem:h2e}, 
and for the second part we use the results of Appendix~\ref{sec:links-78910} (which rely on the first part of the proposition).
After that we prove Theorem~\ref{thm:intro-factorial-ci}\ref{it:fact-i1-gbig} and finish the proof of Theorem~\ref{thm:intro-fi1}.

\begin{proof}[Proof of Proposition~\textup{\ref{prop:sl-g78910}}]
Let~$(X,x_0)$ be a factorial Fano threefold with a single node or cusp, $\uprho(X) = 1$, $\io(X) = 1$, and~$\g(X) \in \{7,8,9,10\}$.
The Sarkisov link diagram exists by Lemma~\ref{lem:barx-factorial}.
The first equality in~\eqref{eq:hp-ep-h-e} is proved in Lemma~\ref{lem:h2e}.
Moreover, the flop~$\hX \dashrightarrow \hX_+$ 
induces an isomorphism~$\Cl(\hX) \cong \Cl(\hX_+)$ that takes~$K_{\hX}$ to~$K_{\hX_+}$, 
and since the contraction~$\pi_+$ is of type~\type{(B_1^1)} by Lemma~\ref{lem:h2e},
we see that~$X_+$ is a smooth Fano threefold and obtain
\begin{equation*}
H - E \sim \io(X_+)H_+ - E_+ \sim \io(X_+)(H - 2E) - E_+.
\end{equation*}
This gives the second equality in~\eqref{eq:hp-ep-h-e} and implies that~$\io(X_+) \ge 2$.

Furthermore, using Lemmas~\ref{lem:hh-g-xi} and~\ref{lem:flop-h}
and Proposition~\ref{prop:hh-g-xsm}, we obtain the equalities
\begin{equation}
\label{eq:hhg}
\hh(X) - 1 = \hh(\hX) = \hh(\hX_+) = \hh(X_+) + \g(\Gamma),
\end{equation}
where~$\Gamma$ is the center of the blowup~$\pi_+$,
hence~$\hh(X_+) \le \hh(X) - 1$.
On the other hand,
\begin{equation*}
\dim |H_+| = \dim |H - 2E| = g - 4 \in \{3,4,5,6\}.
\end{equation*}
Comparing this with the Hodge numbers and the dimensions of the primitive ample linear system 
on del Pezzo threefolds, smooth quadric, and~$\PP^3$,
we identify~$X_+$ in each of the cases.
Then, using~\eqref{eq:hhg} again we determine~$\g(\Gamma)$, 
and then using Lemma~\ref{lem:hh-g-xi} we compute~$\deg(\Gamma)$.
Finally, using~\eqref{eq:hp-ep-h-e} we find
\begin{equation*}
(\io(X_+) - 1)H_+ - E_+ \sim 
(\io(X_+) - 1)(H - 2E) - (\io(X_+) - 1)H + (2\io(X_+) - 1)E \sim E,
\end{equation*}
hence the corresponding linear system is $0$-dimensional; this proves~\eqref{eq:nondegeneracy}.

The converse implication of the proposition 
is proved in Corollaries~\ref{cor:critical-gr25-p3-sl}, \ref{cor:critical-ogr-q3-sl}, \ref{cor:critical-lgr-y4-sl}, and~\ref{cor:critical-lgr-y5-sl},
respectively.
\end{proof}

\begin{proof}[Proof of Theorem~\textup{\ref{thm:intro-factorial-ci}}\ref{it:fact-i1-gbig}]
This is a combination of Corollaries~\ref{cor:critical-gr25-p3-sl}, \ref{cor:critical-ogr-q3-sl},
\ref{cor:critical-lgr-y4-sl}, and~\ref{cor:critical-lgr-y5-sl}.
\end{proof}

\begin{proof}[Proof of Theorem~\textup{\ref{thm:intro-fi1}} for~$\g(X) \in \{7,8,9,10\}$]
The first part follows from a combination of Lemma~\ref{lem:barx-factorial} 
with Corollaries~\ref{cor:critical-gr25-p3-sl}, \ref{cor:critical-ogr-q3-sl},
\ref{cor:critical-lgr-y4-sl}, and~\ref{cor:critical-lgr-y5-sl}.

For the second part we also use the bijection of Corollaries~\ref{cor:critical-gr25-p3}, \ref{cor:critical-ogr-q3},
\ref{cor:critical-gr26-p8}, and~\ref{cor:critical-lgr-y5}.
\end{proof}

\begin{remark}
\label{rem:nodal-production}
Nodal or cuspidal Fano threefolds can be also constructed by appropriate modifications of standard Sarkisov links.
More precisely, consider a Sarkisov link 
that starts with a blowup of a smooth curve~$\Gamma$ or point on a smooth Fano threefold~$X$
and finishes at a blowup of a smooth curve~$\Gamma_+$ on a smooth Fano threefold~$X_+$ ---
some possibilities are listed below

\begin{equation*}
\begin{array}{c@{\qquad}c@{\qquad}c@{\qquad}l@{\qquad}||@{\qquad}c@{\qquad}c@{\qquad}c@{\qquad}l}
\g(X) & \Gamma & X_+ & \Gamma_+ &\g(X) & \Gamma & X_+ & \Gamma_+
\\\hline
12 & (0,0) & \PP^3 & (0,6)&\hphantom{0}9 & (0,3) & \rY_5 & (3,9)
\\
12 & (0,1) & \rY_5 & (0,5)&\hphantom{0}8 & (0,0) & \rY_3 & (0,4)
\\ 
12 & (0,2) & Q^3 & (0,6)&\hphantom{0}7 & (0,0) & \rY_5 & (7,12)
\\
10 & (0,1) & Q^3 & (2,7)&\hphantom{0}7 & (0,2) & Q^3 & (7,10)
\\ 
\hphantom{0}9 & (0,1) & \PP^3 & (3,7)&\hphantom{0}7 & (0,3) & \PP^3 & (7,9)
\end{array}
\end{equation*}
where as usual~$(g,d)$ is the genus and degree of the corresponding curve and~$(0,0)$ means that~$\Gamma$ is a smooth point.
Then we can either take the same smooth threefold~$X_+$ 
and replace~$\Gamma_+$ by a $1$-nodal or 1-cuspidal curve of the same genus and degree 
or replace~$X_+$ by a $1$-nodal or 1-cuspidal threefold of the same type and take the same smooth curve on it,
and run the same link in the opposite direction.
Under appropriate assumptions, we will obtain a $1$-nodal or 1-cuspidal variety~$X$,
which will be factorial if~$\Gamma_+$ is irreducible and~$X_+$ is factorial and nonfactorial otherwise.
For an example of such construction, see~\cite[Theorem~4.1, Remark~4.5]{P:G-Fano:Izv}.
\end{remark}

\appendix


\section{Blowups of Weil divisor classes}
\label{App:blowup}

In this section we remind the definition and basic properties of the blowup of a Weil divisor class;
for more details see~\cite{Kaw88}.
We will use the following simple observation.

\begin{lemma}
\label{lem:pf-reflexive}
Let~$f \colon Y \to Z$ be a small proper birational morphism of normal varieties.
Then for any reflexive sheaf~$\cL$ on~$Y$ the sheaf~$f_*\cL$ is also reflexive.
\end{lemma}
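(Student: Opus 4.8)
The statement is local on $Z$, so I would work over an affine open $U = \Spec R \subset Z$ and write $V = f^{-1}(U)$. The claim then reduces to showing that for a reflexive sheaf $\cL$ on $V$ the $R$-module $M = (f_*\cL)(U) = \cL(V)$ is reflexive. The plan is to exploit the fact that $f$ is small, i.e., the exceptional locus has codimension $\ge 2$ in $V$ and its image has codimension $\ge 2$ in $U$, together with the standard characterization of reflexive sheaves on a normal variety: a coherent sheaf $\cF$ on a normal variety is reflexive if and only if it is torsion-free and satisfies Serre's condition $S_2$, equivalently $\cF \cong j_*(\cF|_W)$ for the inclusion $j \colon W \hookrightarrow$ of any open subset $W$ whose complement has codimension $\ge 2$ (this is the algebraic incarnation of Hartogs-type extension).

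First I would choose an open $W \subset U$ with $\codim(U \setminus W) \ge 2$ such that $f$ is an isomorphism over $W$; this is possible because $f$ is small and birational, so $f$ restricts to an isomorphism over the complement of the (codimension $\ge 2$) image of the exceptional locus. Write $W' = f^{-1}(W) \subset V$, so $f|_{W'} \colon W' \xrightiso{} W$, and note $\codim(V \setminus W') \ge 2$ since $f$ is small. Because $\cL$ is reflexive on the normal variety $V$, the restriction map $\cL(V) \to \cL(W')$ is an isomorphism (reflexive sheaves extend uniquely across codimension $\ge 2$). Under the isomorphism $f|_{W'}$, the module $\cL(W')$ is identified with $(\cL|_{W'})$ pushed to $W$, which is a reflexive sheaf on $W$; in particular $(f_*\cL)|_W$ is reflexive.

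Next I would compare $f_*\cL$ with the pushforward from $W$. There is a natural map $f_*\cL \to \iota_*((f_*\cL)|_W)$ where $\iota \colon W \hookrightarrow U$, and on sections over $U$ this is the composition $\cL(V) \to \cL(W') = (f_*\cL)(W)$, which we just showed is an isomorphism. Hence $f_*\cL \cong \iota_*\cG$ where $\cG := (f_*\cL)|_W$ is reflexive on $W$ and $U \setminus W$ has codimension $\ge 2$ in the normal variety $U$. The pushforward of a reflexive sheaf along the inclusion of the complement of a codimension $\ge 2$ set in a normal variety is again reflexive — this is a standard fact (e.g. the double dual of such a pushforward agrees with it, or directly: $\iota_*\cG$ is torsion-free and $S_2$). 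Therefore $f_*\cL$ is reflexive, which is what we wanted.

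\textbf{Main obstacle.} The only real point requiring care is the bookkeeping with codimensions: one must check that the exceptional locus of a small \emph{birational} morphism of normal varieties does have image of codimension $\ge 2$ (so that a suitable $W$ exists) and that its preimage also has codimension $\ge 2$ in $V$ — both follow from the definition of "small" plus normality of $Z$, but it is worth stating explicitly. The rest is a routine application of the Hartogs-extension characterization of reflexivity on normal varieties; I would cite a standard reference (or the cited \cite{Kaw88}) rather than reprove that characterization.
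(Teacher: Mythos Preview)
Your argument is correct and is essentially the same as the paper's: both choose an open $W$ over which $f$ is an isomorphism with complement of codimension $\ge 2$ in both source and target, use reflexivity of $\cL$ on $Y$ to identify $f_*\cL$ with $\iota_*(\cL|_W)$, and then invoke the Hartogs-type characterization of reflexive sheaves on a normal variety (the paper cites \cite[Tag~0AY6]{stacks-project}). The paper presents this globally in three lines rather than localizing to affines, but the content is identical.
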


\begin{proof}
Let~$j \colon U \hookrightarrow Z$ be an open subset over which~$f$ is an isomorphism, 
and let~$i \colon U \to Y$ be the lift of~$j$, so that~$j = f \circ i$.
Since~$f$ is small, the complements~$Y \setminus U$ and~$Z \setminus U$ have codimension at least~$2$.
Since~$\codim(Y \setminus U) \ge 2$, we have~$\cL \cong i_*(\cL\vert_U)$, hence
\begin{equation*}
f_*\cL \cong f_*(i_*(\cL\vert_U)) \cong j_*(\cL\vert_U),
\end{equation*}
and since~$\codim(Z \setminus U) \ge 2$ and~$Z$ is normal, this sheaf is reflexive, see~\cite[Tag~0AY6]{stacks-project}.
\end{proof}

\begin{corollary}
\label{cor:small-blowup}
If~$f \colon Y \to Z$ is a small projective morphism of normal varieties then
\begin{equation*}
Y \cong \Proj_Z \left( \moplus_{k = 0}^\infty (\cR^{\otimes k})^{\vee\vee} \right)
\end{equation*}
where~$\cR$ is a reflexive sheaf of rank~$1$ on~$Z$.
\end{corollary}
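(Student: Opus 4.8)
The plan is to realize $Y$ as a relative $\Proj$ over $Z$ of the section ring of a relatively ample invertible sheaf, and then to recognize that section ring as the reflexive hull of a symmetric algebra of a rank-one reflexive sheaf, using Lemma~\ref{lem:pf-reflexive} at each step. Note first that $f$, being projective, is in particular a small proper birational morphism of normal varieties, so Lemma~\ref{lem:pf-reflexive} applies to it. I would begin by fixing an $f$-ample invertible sheaf $\cL$ on $Y$; such a sheaf exists because $f$ is projective. Replacing $\cL$ by a power changes the section algebra $\moplus_{k\ge 0} f_*\cL^{\otimes k}$ into its Veronese subalgebra, which has the same relative $\Proj$, so I may and do assume $\cL$ is $f$-very ample. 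I would also record that $f_*\cO_Y \cong \cO_Z$: indeed $f_*\cO_Y$ is reflexive by Lemma~\ref{lem:pf-reflexive} and agrees with $\cO_Z$ over the open locus $U \subset Z$ over which $f$ is an isomorphism, and $\codim(Z \setminus U) \ge 2$. Then the standard theory of relative $\Proj$ for projective morphisms yields an isomorphism $Y \cong \Proj_Z\bigl( \moplus_{k\ge 0} f_*\cL^{\otimes k} \bigr)$ over $Z$; this is the point where I would cite a standard reference (e.g.\ EGA).

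Next I would set $\cR \coloneqq f_*\cL$. Since $f$ is birational and $\cL$ has rank one, $\cR$ is a coherent sheaf of rank one, and it is reflexive by Lemma~\ref{lem:pf-reflexive}; the same lemma shows each $f_*\cL^{\otimes k}$ is reflexive. For every $k \ge 0$ there is a natural multiplication morphism $\cR^{\otimes k} \to f_*\cL^{\otimes k}$, and restricting to $U$ shows that it is an isomorphism there. Since $\codim(Z \setminus U) \ge 2$ and both $(\cR^{\otimes k})^{\vee\vee}$ and $f_*\cL^{\otimes k}$ are reflexive, the induced morphism $(\cR^{\otimes k})^{\vee\vee} \to f_*\cL^{\otimes k}$ is an isomorphism in codimension one, hence an isomorphism. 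I would then check that these isomorphisms are compatible with the graded multiplications; because any morphism into a reflexive sheaf on a normal scheme is determined by its restriction to an open set with complement of codimension $\ge 2$, this compatibility can be verified over $U$, where all the sheaves in question are invertible and the multiplications are the evident ones. Consequently $\moplus_{k\ge 0} f_*\cL^{\otimes k} \cong \moplus_{k\ge 0} (\cR^{\otimes k})^{\vee\vee}$ as sheaves of graded $\cO_Z$-algebras, and combining this with the isomorphism of the first step gives the claim.

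The genuinely routine ingredients are the identification $Y \cong \Proj_Z(\moplus f_*\cL^{\otimes k})$ and the extension-of-morphisms-in-codimension-one property of reflexive sheaves, both of which I would invoke by reference. I expect the only delicate bookkeeping to be the reduction from an $f$-ample to an $f$-very ample $\cL$ and the accompanying observation that the Veronese re-grading does not affect the relative $\Proj$; this is minor, but it is the step I would write out most carefully.
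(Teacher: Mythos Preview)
Your proposal is correct and follows essentially the same route as the paper: choose a relatively ample line bundle~$\cL$, write~$Y \cong \Proj_Z(\moplus f_*\cL^{\otimes k})$, set~$\cR = f_*\cL$, and identify each graded piece with~$(\cR^{\otimes k})^{\vee\vee}$ by observing that both are reflexive and agree on the isomorphism locus~$U$. The paper's version is terser---it does not bother reducing to the very ample case (relatively ample already suffices for the $\Proj$ description), nor does it spell out~$f_*\cO_Y \cong \cO_Z$ or the compatibility of graded multiplications---but your extra bookkeeping is harmless and the core argument is identical.
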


\begin{proof}
Let~$\cL$ be a relatively ample line bundle on~$Y$.
Then
\begin{equation*}
Y \cong \Proj_Z \left( \moplus_{k = 0}^\infty f_*(\cL^{\otimes k}) \right)
\end{equation*}
Let~$\cR \coloneqq f_*\cL$, it is a reflexive sheaf of rank~$1$ by Lemma~\ref{lem:pf-reflexive}.
It remains to note that
\begin{equation*}
f_*(\cL^{\otimes k}) \cong (\cR^{\otimes k})^{\vee\vee},
\end{equation*}
because both sheaves are reflexive and isomorphic on the open subset~$U \subset Z$ over which~$f$ is an isomorphism,
taking into account that~$\codim(Z \setminus U) \ge 2$.
\end{proof}

Usually, we replace the language of reflexive sheaves by the more geometric language of Weil divisors
(see, e.g.,~\cite[Appendix to~\S1]{Reid:can3fo}).
Consequently, given a Weil divisor class~$D \in \Cl(Z)$ we define
\begin{equation}
\label{eq:bl-d-z}
\Blw{D}(Z) \coloneqq \Proj_Z \left( \moplus_{k = 0}^\infty \cO_Z(-kD) \right),
\end{equation} 
where~$\cO_Z(-kD)\cong (\cO_Z(-D)^{\otimes k})^{\vee\vee}$ is the reflexive sheaf 
corresponding to the Weil divisor class~\mbox{$-kD \in \Cl(Z)$}.
We call the scheme~$\Blw{D}(Z)$ {\sf the blowup of the Weil divisor class~$D$}.
We use square brackets in the notation to distinguish the blowup of a Weil divisor class from the blowup of a subvariety.
Note that~$D$ does not need to be effective, and~$\Blw{D}(Z)$ 
depends only on the class of~$D$ in~$\Cl(Y) / \Pic(Y)$ up to multiplication by a positive integer;
in particular, if~$D$ is a $\QQ$-Cartier divisor then~$\Blw{D}(Z) \cong Z$.

\begin{remark}
\label{rem:blw}
Note that for a normal variety~$Z$ and a Weil divisor class~$D \in \Cl(Z)$
the sheaf of algebras~$\moplus_{k = 0}^\infty \cO_Z(-kD)$ is not necessarily finitely generated
(for instance, this happens if~$Z$ is a cone over a smooth plane cubic curve~$C \subset \PP^2$ and~$D \in \Cl(Z) \cong \Pic(C)$ 
corresponds to a non-torsion divisor class on~$C$ of degree~$0$).
However, if~$D$ comes from a small morphism~$f \colon Y \to Z$ as in Corollary~\ref{cor:small-blowup}, 
the algebra is finitely generated.
Moreover, in this case if~$D_Y \in \Cl(Y)$ is the strict transform of the class~$D$
then~$D_Y$ is $\QQ$-Cartier and~$-D_Y$ is $f$-ample,
because on the open subset~$U \subset Z$ over which~$f$ is an isomorphism we have~$\cO_U(-D_Y) \cong \cO_{Y/Z}(1)$.
\end{remark}

\begin{corollary}
\label{cor:bl-weil-flop}
If~$f \colon Y \to Z$ is a flopping contraction of threefolds and~\mbox{$Y \cong \Blw{D}(Z)$}
then~$Y_+ \coloneqq \Blw{-D}(Z)$ is the flop of~$Y$.
\end{corollary}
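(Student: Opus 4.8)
The plan is to exhibit $\Blw{-D}(Z)$ as the flop of $f$ by combining the standard $\Proj$-formula for flops with the reflexive-pushforward identity already isolated in Corollary~\ref{cor:small-blowup}.

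First I would set $D_Y \in \Cl(Y)$ to be the strict transform of $D$. Since $Y \cong \Blw{D}(Z) = \Proj_Z\bigl(\moplus_{k\ge0}\cO_Z(-kD)\bigr)$ and $f$, being a flopping contraction, is a small morphism, Remark~\ref{rem:blw} applies directly: the tautological sheaf $\cO_{Y/Z}(1)$ is $f$-ample, over the open locus $U\subseteq Z$ on which $f$ is an isomorphism it agrees with $\cO_U(-D\vert_U)$, and hence $\cO_Y(-D_Y)\cong\cO_{Y/Z}(1)$. In particular $D_Y$ is $\QQ$-Cartier, $-D_Y$ is $f$-ample, and $Y\cong\Proj_Z\bigl(\moplus_{k\ge0}f_*\cO_Y(-kD_Y)\bigr)$.

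Next, since $f\colon Y\to Z$ is a flopping contraction of threefolds, its flop exists by the minimal model program (see~\cite{Kollar-Mori:book}): it is a small projective birational morphism $f^+\colon Y^+\to Z$ from a normal threefold together with a birational map $Y\dashrightarrow Y^+$ over $Z$ that is an isomorphism in codimension one, such that $K_{Y^+}$ is numerically $f^+$-trivial and the strict transform of $D_Y$ is $f^+$-ample; moreover it is given by the formula
\[
Y^+ \;\cong\; \Proj_Z\Bigl(\moplus_{k\ge0}f_*\cO_Y(kD_Y)\Bigr).
\]
It then remains to identify this graded sheaf of algebras with the one defining $\Blw{-D}(Z)$. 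For each $k\ge 0$ the Weil divisorial sheaf $\cO_Y(kD_Y)$ is reflexive of rank one, so by Lemma~\ref{lem:pf-reflexive} its pushforward $f_*\cO_Y(kD_Y)$ along the small morphism $f$ is again reflexive; it restricts to $\cO_U(kD\vert_U)$ over $U$, and since $Z$ is normal with $\codim_Z(Z\setminus U)\ge 2$ a reflexive sheaf on $Z$ is determined by its restriction to $U$, whence $f_*\cO_Y(kD_Y)\cong\cO_Z(kD)$ (this is exactly the computation in the proof of Corollary~\ref{cor:small-blowup}). Therefore
\[
Y^+\;\cong\;\Proj_Z\Bigl(\moplus_{k\ge0}\cO_Z(kD)\Bigr)\;=\;\Proj_Z\Bigl(\moplus_{k\ge0}\cO_Z\bigl(-k(-D)\bigr)\Bigr)\;=\;\Blw{-D}(Z)\;=\;Y_+,
\]
as claimed; in passing this also shows that the graded algebra defining $\Blw{-D}(Z)$ is finitely generated, so that $\Blw{-D}(Z)$ is indeed a projective $Z$-scheme.

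I expect the only genuine obstacle to be the appeal to the existence of flops for threefolds: this is what a priori guarantees finite generation of $\moplus_{k\ge0}\cO_Z(kD)$, i.e.\ that $\Blw{-D}(Z)$ makes sense at all. Once existence is granted, the rest is bookkeeping — matching the sign convention in~\eqref{eq:bl-d-z} with that in the definition of a flop, and invoking the reflexive identification of pushforwards from Corollary~\ref{cor:small-blowup}.
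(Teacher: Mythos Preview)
Your proof is correct and essentially the same as the paper's: both rely on the existence of the flop and then identify the graded algebra defining $Y_+$ via the reflexive-pushforward computation of Corollary~\ref{cor:small-blowup}. The only cosmetic difference is that the paper phrases the argument by applying Corollary~\ref{cor:small-blowup} directly to the small morphism $Y_+\to Z$ (using that the defining ample line bundles $\cL$ on $Y$ and $\cL_+$ on $Y_+$ satisfy $\cL\vert_U\cong\cL_+^{-1}\vert_U$), whereas you write out the intermediate $\Proj$-formula $Y^+\cong\Proj_Z\bigl(\moplus_{k\ge0}f_*\cO_Y(kD_Y)\bigr)$ explicitly before making the same identification.
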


\begin{proof}
Let~$U \subset Z$ be the open subset over which~$Y \to Z$ and~$Y_+ \to Z$ are isomorphisms.
By definition of a flop there are line bundles~$\cL$ on~$Y$ an~$\cL_+$ on~$Y_+$, both ample over~$Z$, 
such that~$\cL\vert_U \cong \cL_+^{-1}\vert_U$.
On the other hand, by Corollary~\ref{cor:small-blowup} we have~$Y \cong \Blw{D}(Z)$ and~$Y_+ \cong \Blw{D_+}(Z)$,
where~$D$ and~$D_+$ correspond to the extensions of~$\cL^{-1}\vert_U$ and~$\cL_+^{-1}\vert_U$ from~$U$ to~$Z$;
these line bundles are mutually inverse, hence~$D_+ = -D$.
\end{proof} 

Sometimes the blowup of a Weil divisor coincides with the strict transform in the blowup.

\begin{lemma}
\label{lem:str-blw}
Let~$Z \subset M$ be a closed embedding of normal varieties and let~$D \subset Z$ be an effective Weil divisor.
Let~$Z' \subset \Bl_D(M)$ be the strict transform of~$Z$ and let~$Z''$ be the normalization of~$Z'$.
If the morphism~$Z' \to Z$ is small then~\mbox{$Z'' \cong \Blw{D}(Z)$}.
\end{lemma}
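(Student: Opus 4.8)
The plan is to identify both sides of the claimed isomorphism with $\operatorname{Proj}_Z$ of the same graded sheaf of algebras. By definition~\eqref{eq:bl-d-z} we have $\Blw{D}(Z) = \Proj_Z\bigl(\moplus_{k\ge 0}\cO_Z(-kD)\bigr)$, so the task is to produce a natural isomorphism of $Z$-schemes between $Z''$ and this Proj. First I would recall the universal property of the blowup $\Bl_D(M)$: since $D$ is a Cartier divisor on $M$ (it is an effective Weil divisor, but for the blowup construction one uses the associated ideal sheaf $\cI_D\subset\cO_M$), one has $\Bl_D(M) = \Proj_M\bigl(\moplus_{k\ge 0}\cI_D^k\bigr)$, and the strict transform $Z'$ of $Z$ is $\Proj_Z\bigl(\moplus_{k\ge 0}\overline{\cI_D^k\cdot\cO_Z}\bigr)$, where the bar denotes the image of $\cI_D^k\otimes\cO_Z\to\cO_Z$ modulo the torsion supported on $Z\cap(\text{exceptional locus})$ — equivalently, $Z'$ is the closure of $Z\setminus D$ inside $\Bl_D(M)$. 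Restricting the relatively ample line bundle $\cO_{\Bl_D(M)}(1)$ to $Z'$ gives a line bundle $\cL'$ on $Z'$ which is relatively ample over $Z$, and on the locus where $Z'\to Z$ is an isomorphism it restricts to $\cO_Z(-D)$ (as $\cO_{\Bl_D(M)}(1)$ restricts there to $\cO_M(-D)$).

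Next I would pass to the normalization $\nu\colon Z''\to Z'$ and compose to get $g\colon Z''\to Z$, which is small by hypothesis (the map $Z''\to Z$ factors as $Z''\to Z'\to Z$, the second map is small and $\nu$ is finite and birational, hence small as $Z'$ is already generically reduced along its preimage in $Z'$). Set $\cL\coloneqq\nu^*\cL'$; this is a line bundle on the normal variety $Z''$, relatively ample over $Z$. Now apply Corollary~\ref{cor:small-blowup}: since $g\colon Z''\to Z$ is a small projective morphism of normal varieties and $\cL$ is relatively ample, we get
\begin{equation*}
Z'' \cong \Proj_Z\left(\moplus_{k\ge 0} g_*(\cL^{\otimes k})\right) \cong \Proj_Z\left(\moplus_{k\ge 0}(\cR^{\otimes k})^{\vee\vee}\right),
\end{equation*}
where $\cR\coloneqq g_*\cL$ is the reflexive rank-one sheaf on $Z$ extending $\cL|_U$ from the open locus $U\subset Z$ over which $g$ is an isomorphism (and $\codim(Z\setminus U)\ge 2$). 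It therefore remains to identify $\cR$ with $\cO_Z(-D)$ as elements of $\Cl(Z)$.

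For this identification I would argue as follows: over $U$, the morphism $g$ is an isomorphism and $Z'\cap\nu^{-1}(U)\to U$ is an isomorphism as well, under which $\cL|_{g^{-1}(U)}$ corresponds to $\cL'|_{Z'\cap U}\cong\cO_M(-D)|_U\cong\cO_U(-D)$; since both $\cR$ and $\cO_Z(-D)$ are reflexive rank-one sheaves on the normal variety $Z$ agreeing on $U$, and $\codim(Z\setminus U)\ge 2$, they are isomorphic (using \cite[Tag~0AY6]{stacks-project}, as in the proof of Corollary~\ref{cor:small-blowup}). Consequently $(\cR^{\otimes k})^{\vee\vee}\cong\cO_Z(-kD)$ for all $k\ge 0$, the two graded algebras coincide, and taking $\Proj_Z$ yields $Z''\cong\Blw{D}(Z)$ compatibly over $Z$. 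The main obstacle I anticipate is the careful verification that $Z''\to Z$ is indeed small — one must check that the exceptional locus of $Z'\to Z$, and hence of its normalization, has codimension at least $2$ in the three-fold (or general normal) $Z''$; this is where the hypothesis that $Z'\to Z$ is small is used essentially, together with the fact that normalization does not create divisorial exceptional loci over a normal base. Once smallness is in hand, Corollary~\ref{cor:small-blowup} does the rest mechanically.
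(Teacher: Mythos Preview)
Your proposal is correct and follows essentially the same approach as the paper: both arguments pull back the tautological line bundle~$\cO(1) = \cO(-E)$ from the ambient blowup~$\Bl_D(M)$ to~$Z''$, note that it is relatively ample over~$Z$ and agrees with~$\cO_Z(-D)$ on the big open where~$Z''\to Z$ is an isomorphism, and then invoke Corollary~\ref{cor:small-blowup}. Your write-up spells out the smallness of~$Z''\to Z$ and the reflexive-sheaf identification more explicitly than the paper does, but the substance is the same (one minor slip: $D$ is not a Cartier divisor on~$M$, only a closed subscheme, though your parenthetical shows you understand this).
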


\begin{proof}
Let~$\pi \colon \Bl_D(M) \to M$ be the blowup and let~$E \subset \Bl_D(M)$ be the exceptional divisor.
Then~$-E$ is ample over~$M$, hence~$-E\vert_{Z'}$ and its pullback to~$Z''$ is ample over~$Z$,
and if~$U \subset Z$ is the open subset over which~$\pi'' \colon Z'' \to Z$ is an isomorphism,
then~$E \cap U$ (in~$Z''$) coincides with~$D \cap U$ (in~$Z$),
which means that over~$U$ the sheaf~$\pi''_*\cO_{Z''}(-E\vert_{Z''})$ coincides with~$\cO_Z(-D)$.
Therefore, Corollary~\ref{cor:small-blowup} implies that~$Z'' \cong \Blw{D}(Z)$.
\end{proof} 

\begin{lemma}
\label{lem:ddprime}
Let~$Y$ be a threefold with terminal Gorenstein singularities and let~$S \subset Y$ 
be a surface smooth at every point of~$S \cap \Sing(Y)$.
Then the sheaf of algebras~$\moplus_{k=0}^\infty \cO_Y(-kS)$ is finitely generated,
and if~$S' \subset Y' \coloneqq \Blw{-S}(Y) \xrightarrow{\ \eta\ } Y$ is the strict transform of~$S$
then~$S'$ is~$\eta$-ample and the morphism~$\eta\vert_{S'} \colon S' \xrightiso{} S$ is an isomorphism.
\end{lemma}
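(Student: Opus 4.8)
The plan is to work locally near $S \cap \Sing(Y)$ and reduce to an explicit model computation. First I would recall that by hypothesis $Y$ has terminal Gorenstein (hence compound Du Val, hence isolated hypersurface) singularities, so $S \cap \Sing(Y)$ is a finite set of points, and at each such point $y_0$ the surface $S$ is smooth. Away from $\Sing(Y)$ the divisor $S$ is Cartier, so the only issue is local behaviour at the finitely many singular points lying on $S$. The strategy is: (i) show finite generation of $\moplus_k \cO_Y(-kS)$ by a local analysis at each $y_0$; (ii) identify the resulting $Y' = \Blw{-S}(Y)$ with an honest blowup so that the structure of $S'$ and of $\eta\vert_{S'}$ can be read off.

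For step (i), at a point $y_0 \in S \cap \Sing(Y)$ choose local coordinates so that $Y$ is a hypersurface $\{g = 0\} \subset \Aff^4$ and $S = Y \cap \{t = 0\}$ for a coordinate $t$; since $S$ is smooth at $y_0$, the equation $g$ has the form $g = t h + g_0$ with $g_0 \in (x,y,z)$ cutting out the smooth surface, and one can normalize so that $S$ is (locally) defined by a single equation on $Y$ up to the divisor class. The reflexive powers $\cO_Y(-kS)$ are then the reflexive hulls of the ideals $(t^k, \dots)$, and a direct computation with the hypersurface equation shows the Rees-type algebra is generated in degree $\le 2$; alternatively, and more cleanly, I would invoke Corollary~\ref{cor:small-blowup} together with Remark~\ref{rem:blw}: it suffices to exhibit a small projective birational morphism $\eta \colon Y' \to Y$ whose relatively ample line bundle restricts to $\cO_Y(-S)$ over the Cartier locus $U = Y \setminus \Sing(Y)$. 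This is produced by taking $Y'$ to be the normalization of the strict transform of $Y$ in $\Bl_S(\Aff^4)$ locally, and globally by gluing; the key point to verify is that $Y' \to Y$ is small, i.e. contracts no divisor, which follows because $S$ is a Cartier divisor on $Y$ away from the finite set $S \cap \Sing(Y)$, so the exceptional locus of $\eta$ sits over that finite set and hence has dimension $\le 1$. Finite generation is then automatic by Remark~\ref{rem:blw}.

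For step (ii), once $\eta \colon Y' = \Blw{-S}(Y) \to Y$ is known to be small, I would apply Lemma~\ref{lem:str-blw} to the local hypersurface models: $S'$, the strict transform of $S$, is the normalization of the strict transform of $S$ under $\Bl_S(\Aff^4)$, which over the smooth surface $S$ is an isomorphism. More directly: by Remark~\ref{rem:blw} the strict transform class $-S'$ (note the sign convention: we blow up $-S$, so the strict transform of $S$ is $\eta$-antiample... let me be careful) is such that $\cO_{Y'}(S')$ is $\eta$-ample, because on $U$ we have $\cO_U(-S) \cong \cO_{Y'/Y}(1)^{-1}$ in the appropriate normalization, so $S'$ restricted to fibres of $\eta$ is positive. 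Finally, to see $\eta\vert_{S'} \colon S' \to S$ is an isomorphism, note it is a projective birational morphism onto the smooth (hence normal) surface $S$ along which $Y$ may be singular only at finitely many points; since $S$ is smooth there, $\eta\vert_{S'}$ is an isomorphism over $S \setminus (S \cap \Sing Y)$, and at each $y_0 \in S \cap \Sing Y$ the local hypersurface computation with $g = th + g_0$ shows the fibre of $S'$ over $y_0$ is a single reduced point (the blowup of $\Aff^4$ in the Cartier-on-$S$ divisor is trivial near the smooth point of $S$). Hence $\eta\vert_{S'}$ is a birational morphism that is finite (quasi-finite and proper) with all fibres single reduced points onto a normal variety, so it is an isomorphism by Zariski's main theorem.

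The main obstacle I anticipate is the precise local analysis at $y_0 \in S \cap \Sing(Y)$: verifying that $\Blw{-S}(Y)$ is small there and that the strict transform $S'$ meets each fibre in a single reduced point requires choosing the right local coordinates on the compound Du Val singularity adapted to the smooth divisor $S$, and checking that the reflexive-power algebra $\moplus_k \cO_Y(-kS)$ coincides with the (finitely generated) Rees algebra of the ideal of $S$ after normalization. The cleanest route is probably to avoid explicit equations entirely: define $\eta \colon Y' \to Y$ abstractly as a small $\QQ$-factorialization or partial resolution extracting exactly the divisor class $-S$ — but since we are not given $\QQ$-factoriality of $Y$, the safest is the local hypersurface computation combined with Lemma~\ref{lem:str-blw} and Corollary~\ref{cor:small-blowup} as above. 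Everything else — finite generation and the isomorphism $S' \cong S$ — then follows formally.
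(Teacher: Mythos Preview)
Your proposal has a genuine gap rooted in a sign/construction confusion that you yourself flag but do not resolve. Blowing up $S$ in the ambient fourfold and taking the (normalized) strict transform of $Y$ produces, by Lemma~\ref{lem:str-blw}, the variety $\Blw{S}(Y)$, \emph{not} $\Blw{-S}(Y)$; on $\Blw{S}(Y)$ the strict transform of $S$ is $\eta$-\emph{anti}-ample by Remark~\ref{rem:blw}, so your ampleness claim would fail, and in fact there is no ``strict transform of $S$'' through $\Bl_S(\Aff^4)$ in the sense you need, since $S$ is the centre of that blowup. (Your local model also slips: writing $S = Y \cap \{t=0\}$ makes $S$ Cartier on $Y$, which trivializes everything; $S$ is codimension~$2$ in $\Aff^4$ and needs two equations.)

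The paper's proof supplies exactly the missing idea: after writing locally $S = \{\varphi_1 = \varphi_2 = 0\}$ and $Y = \{\varphi_1\psi_2 - \varphi_2\psi_1 = 0\}$ in a smooth fourfold $M$, it observes that the auxiliary smooth surface $R = \{\varphi_1 = \psi_1 = 0\}$ satisfies $S + R \sim 0$ in $\Cl(Y)$, so an \emph{effective} representative of the class $-S$ exists. Blowing up $R$ (not $S$) in $M$ then yields, via Lemma~\ref{lem:str-blw}, $Y' \cong \Blw{R}(Y) \cong \Blw{-S}(Y)$ with the correct $\eta$-ampleness of $S'$, and the isomorphism $S' \xrightarrow{\sim} S$ falls out because $S \cap R$ is Cartier on the smooth surface $S$, so the blowup of $R$ leaves $S$ untouched. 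Your outline becomes correct once you insert this ``complementary divisor'' trick.
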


\begin{proof}
By assumption~$Y$ has terminal Gorenstein, hence isolated hypersurface singularities.
All our claims are local around singular points of~$Y$, so we may assume
that~$\Sing(Y) = \{y_0\}$, $S$ is smooth, and~$Y \subset M$, where~$M$ is a smooth fourfold.
Since~$S$ is smooth, it is a local complete intersection in~$M$, 
so shrinking~$M$ if necessary, we may assume that
\begin{equation*}
S = \{ \varphi_1 = \varphi_2 = 0 \} \subset M
\qquad\text{and}\qquad 
Y = \{ \varphi_1 \psi_2 - \varphi_2 \psi_1 = 0 \} \subset M,
\end{equation*}
where~$\varphi_i$ and~$\psi_i$ are functions on~$M$.
The condition~$\Sing(Y) = \{y_0\}$ implies an equality of schemes~$\{\varphi_1 = \varphi_2 = \psi_1 = \psi_2 = 0\} = \{y_0\}$,
hence the subvariety~$S \cap \{\psi_1 = 0\}$ is a Cartier divisor in~$S$.
Now consider the surface
\begin{equation*}
R \coloneqq \{\varphi_1 = \psi_1 = 0 \} \subset M.
\end{equation*}
It is clear that~$Y \cap \{\varphi_1 = 0\} = S \cup R$, hence~$S + R \sim 0$ in~$\Cl(Y)$.
Note also that~$R$ is smooth at~$y_0$, so shrinking~$M$ if necessary we may assume that~$R$ is smooth everywhere.

Consider the blowup~$\pi \colon \Bl_R(M) \to M$. 
Then~$\Bl_R(M)$ is smooth, hence the strict transform~$Y'$ of~$Y$ in~$\Bl_R(M)$ is Cohen--Macaulay.
On the other hand, $R$ is a Cartier divisor on~$Y \setminus \{y_0\}$ (because~$Y \setminus \{y_0\}$ is smooth), 
hence the morphism~$\pi\vert_{Y'} \colon Y' \to Y$ is an isomorphism over~$Y \setminus \{y_0\}$ and~$Y'$ is smooth over~$Y \setminus \{y_0\}$.
Finally, $\dim(\pi^{-1}(y_0)) = 1$, hence~$Y'$ is normal and the morphism~$\pi\vert_{Y'}$ is small,
so applying Lemma~\ref{lem:str-blw} we conclude that
\begin{equation*}
Y' \cong \Blw{R}(Y) \cong \Blw{-S}(Y),
\end{equation*}
in particular, the sheaf of algebras~$\moplus_{k=0}^\infty \cO_Y(-kS)$ is finitely generated
and the divisor~$S'$ is $\eta$-ample by Remark~\ref{rem:blw}.
Now it remains to note that the strict transform~$S'$ of~$S$ in~$\Bl_R(M)$ is isomorphic to~$S$
because~$S \cap R = S \cap \{\psi_1 = 0\}$ is a Cartier divisor in~$S$, 
and it is obvious that~$S'$ coincides with the strict transform of~$S$ in~$Y' \subset \Bl_R(M)$.
\end{proof}


\section{Factoriality of some complete intersections}

In this section we prove that $1$-nodal Fano complete intersection threefolds in Mukai varieties are factorial.
We also compute the Picard number of small resolutions of some complete intersections in products of Fano varieties.
Note that for a smooth Fano variety~$X$ we have~$\uprho(X) = \h^{1,1}(X)$, where the right-hand side is a Hodge number.

Our first result is similar to~\cite[Theorem~4.1]{Rams}.

\begin{proposition}
\label{prop:h11-tx}
Let~$Y$ be a projective fourfold with isolated singularities.
Let~$\whom_Y$ be the reflexive hull of~$\Omega^1_Y$.
Let~$D \in \Pic(Y)$ be a Cartier divisor such that
\begin{arenumerate}
\item 
\label{it:h123-ox}
$H^1(Y, \cO_Y(-D)) = H^2(Y, \cO_Y(-D)) = H^2(Y, \cO_Y(-2D)) = H^3(Y, \cO_Y(-2D)) = 0$,

\item 
\label{it:h12-om}
$H^1(Y,\whom_Y(-D)) = H^2(Y,\whom_Y(-D)) = 0$.

\end{arenumerate}
Let~$X' \subset Y$ be a smooth divisor in~$|D|$
and let~$X \subset Y$ be a $1$-nodal divisor in~$|D|$, both contained in the smooth locus of~$Y$.
If the node~$x_0$ of~$X$ is not a base point of the linear system~$|D + K_X|$ on~$X$
then~$\h^{1,1}(\Bl_{x_0}(X)) = \h^{1,1}(X') + 1$.
\end{proposition}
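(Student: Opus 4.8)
The plan is to compute $\h^{1,1}$ of each of the three threefolds $X'$, $X$ (or rather its blowup), and compare. First I would handle $X'$: since $X' \in |D|$ is smooth and contained in the smooth locus of $Y$, the conormal sequence $0 \to \cO_{X'}(-D) \to \Omega^1_Y|_{X'} \to \Omega^1_{X'} \to 0$ is exact, and restricting $\whom_Y = \Omega^1_Y$ (valid on the smooth locus) gives, via the Koszul/adjunction long exact sequences in cohomology together with hypotheses~\ref{it:h123-ox}--\ref{it:h12-om}, an identification $H^1(X',\Omega^1_{X'}) \cong H^1(Y,\whom_Y)$; in particular $\h^{1,1}(X')$ equals $\dim H^1(Y,\whom_Y)$, a quantity intrinsic to $Y$. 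The point of conditions~\ref{it:h123-ox} and~\ref{it:h12-om} is precisely to make this restriction isomorphism work: the $(-D)$-twisted vanishings kill the error terms coming from the conormal bundle $\cO(-D)$ and from the sheaf $\whom_Y(-D)$.

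Next I would analyze $\hX := \Bl_{x_0}(X)$. The subtlety is that $X$ is singular at the node $x_0$, so $\Omega^1_X$ is not locally free there; instead one works with the reflexive hull $\whom_X$ or, more efficiently, directly on the blowup $\hX$, which is smooth. The strategy is to relate $H^1(\hX,\Omega^1_{\hX})$ to $H^1(X, \whom_X)$ (the blowup of a node changes $\h^{1,1}$ by exactly the rank of the local class group, which for a node is $1$ — this is consistent with the ``$+1$'' in the statement, and it is essentially the content of Remark~\ref{rem:blowup-nc} and the Euler-characteristic computation in Proposition~\ref{prop:hh-g-xsm}) and then to compare $H^1(X,\whom_X)$ with $H^1(Y,\whom_Y)$ via the same conormal/adjunction mechanism as for $X'$. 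The difference with the smooth case is that the conormal sequence for $X \subset Y$ is only left-exact (or exact up to a torsion sheaf supported at $x_0$), and one must check that the hypothesis that $x_0 \notin \Bs|D + K_X|$ is exactly what is needed to control this torsion contribution: concretely, $D + K_X \sim D|_X \otimes \omega_Y|_X^{-1}\cdot(\text{something})$, and by adjunction $\cO_X(D+K_X) \cong \omega_X \otimes \omega_Y^{-1}|_X$, so the base-point condition governs the local behaviour of the relevant Ext/torsion sheaf at the node and guarantees that the comparison isomorphism $H^1(X,\whom_X) \cong H^1(Y,\whom_Y)$ picks up no extra contribution beyond the expected one.

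Putting the two computations together: $\h^{1,1}(X') = \dim H^1(Y,\whom_Y) = \h^{1,1}(\hX) - 1$, which is the desired equality. Concretely I would run two diagram chases — one for $X'$ smooth, one for $\hX$ — built from (i) the Koszul resolution $0 \to \cO_Y(-D) \to \cO_Y \to \cO_{X} \to 0$ (and its twist by $-D$), (ii) the conormal sequence, and (iii) the tangent/cotangent sequence of the blowup $\hX \to X$ along the point $x_0$ with exceptional divisor $E \cong \PP^1\times\PP^1$, using $H^{\bullet}(E,\Omega^1_E)$ and $\cO_E(E) \cong \cO_E(-1)$. The vanishing hypotheses~\ref{it:h123-ox}, \ref{it:h12-om} feed into each chase to annihilate all the off-diagonal contributions; the second-order twists $H^2(\cO_Y(-2D)) = H^3(\cO_Y(-2D)) = 0$ are needed because the normal bundle $\cN_{X/Y} \cong \cO_X(D)$ appears squared when one passes from the conormal sequence on $X$ to the second symmetric power, which shows up in controlling $\whom_X$ near the node.

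\textbf{Main obstacle.} The hard part will be the local analysis at the node: making precise how $\whom_X$ differs from $j_*\Omega^1_{X\setminus x_0}$ — equivalently, identifying the torsion/cokernel sheaves in the conormal sequence of the singular $X \subset Y$ — and verifying that the single base-point condition $x_0 \notin \Bs|D+K_X|$ is exactly the right hypothesis to force the corresponding local cohomology group $H^1_{\{x_0\}}$ or $\operatorname{Ext}$-contribution to vanish (so that the global comparison gains precisely $+1$ and not more, and not less). This is where I expect to invoke a local computation at the ordinary double point together with Serre duality on $X$ (relating $|D+K_X|$ to the dual of some $H^1$), and where the delicate bookkeeping between $X$, $\hX$, and $Y$ has to be carried out carefully.
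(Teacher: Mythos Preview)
Your overall strategy --- compare $H^1(\Omega^1)$ of both threefolds to the intrinsic quantity $H^1(Y,\whom_Y)$ via conormal sequences and Koszul, with the vanishing hypotheses killing the error terms --- is correct and is exactly what the paper does for the smooth divisor $X'$. However, your proposed treatment of the nodal $X$ takes a harder route than the paper and contains a couple of inaccuracies.

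The key simplification you are missing: instead of working with the singular inclusion $X \subset Y$ and wrestling with $\whom_X$ and the torsion in the conormal sequence at $x_0$, the paper blows up the ambient fourfold. Set $\tY \coloneqq \Bl_{x_0}(Y)$ with exceptional divisor $E_Y \cong \PP^3$; then $\tX = \Bl_{x_0}(X)$ sits in $\tY$ as a \emph{smooth} divisor in the linear system $|\pi^*D - 2E_Y|$, and the conormal sequence $0 \to \cO_{\tX}(2E_X - \pi^*D) \to \Omega^1_{\tY}|_{\tX} \to \Omega^1_{\tX} \to 0$ is exact. One then runs the same two-step chase as for $X'$, but on $\tY$ rather than $Y$. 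The ``$+1$'' falls out of the computation $H^1(\tY,\whom_{\tY}) \cong H^1(Y,\whom_Y) \oplus \Bbbk$, which follows from the blowup sequence $0 \to \pi^*\whom_Y \to \whom_{\tY} \to i_*\Omega^1_{E_Y} \to 0$ and $H^1(\PP^3,\Omega^1_{\PP^3}) = \Bbbk$. This completely bypasses the local analysis at the node you flagged as the main obstacle; there is no need to discuss $\whom_X$ or local class groups at all.

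Two points where your explanation is off. First, the $H^{2,3}(\cO_Y(-2D))$ vanishings are \emph{not} needed because of any ``second symmetric power''; they arise simply from the Koszul resolution $0 \to \cO_Y(-2D) \to \cO_Y(-D) \to \cO_{X}(-D) \to 0$ when computing $H^{1,2}(X,\cO_X(-D))$ (and likewise for $X'$). Second, the base-point condition enters in a specific place: the conormal bundle of $\tX$ in $\tY$ is $\cO_{\tX}(2E_X - \pi^*D)$, and to reduce its cohomology to that of $\cO_{\tX}(E_X - \pi^*D) \cong \pi^*\cO_X(-D)$ one uses the sequence $0 \to \cO_{\tX}(E_X - \pi^*D) \to \cO_{\tX}(2E_X - \pi^*D) \to \cO_{E_X}(2E_X) \to 0$. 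Since $\cO_{E_X}(2E_X) \cong \cO_{\PP^1\times\PP^1}(-2,-2)$ has $H^2 = \Bbbk$, one needs the connecting map $\Bbbk \to H^3(\tX,\cO_{\tX}(E_X - \pi^*D))$ to be injective; by Grothendieck and Serre duality this is the dual of the evaluation map $H^0(X,\cO_X(D+K_X)) \to \Bbbk$ at $x_0$, which is surjective precisely when $x_0 \notin \Bs|D+K_X|$. So your intuition that the base-point condition controls a single extra $\Bbbk$ via Serre duality is right, but it lives on the smooth blowup, not in the singular conormal sequence on $X$.
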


\begin{proof}
Let~$\tY \coloneqq \Bl_{x_0}(Y)$ and~$\tX \coloneqq \Bl_{x_0}(X)$ be the blowups.
Let~$E_Y \subset \tY$ and~$E_X \subset \tX$ be the exceptional divisors, so that~$E_Y \cong \PP^3$ and~$E_X \cong \PP^1 \times \PP^1$.
Below we compare the exact sequences computing~$\h^{1,1}(X')$ and~$\h^{1,1}(\tX)$ in terms of
the embeddings~$X' \subset Y$ and~$\tX \subset \tY$.

We start with some preparations.
First of all, since~$x_0$ is a smooth point of~$Y$, we have an exact sequence
\begin{equation}
\label{eq:omty}
0 \longrightarrow \pi^*\whom_Y \longrightarrow \whom_{\tY} \longrightarrow i_*\Omega^1_{E_Y} \longrightarrow 0,
\end{equation}
where~$\pi \colon \tY \to Y$ is the blowup, $i \colon E_Y \to \tY$ is the embedding,
and~$\whom_{\tY}$ is the reflexive hull of~$\Omega^1_{\tY}$.
Since~$E_Y \cong \PP^3$, we have~$H^1(E_Y,\Omega^1_{E_Y}) = \Bbbk$.
We claim that the morphism
\begin{equation*}
H^1(E_Y,\Omega^1_{E_Y}) \to H^2(\tY, \pi^*\whom_Y)
\end{equation*}
from the cohomology sequence of~\eqref{eq:omty} vanishes.
Indeed, $H^1(E_Y,\Omega^1_{E_Y}) = H^0(E_Y, \cO_{E_Y})$ via the Euler sequence on~$E_Y$,
so the vanishing follows from the equality~$\Ext^2(\cO_{E_Y}, \pi^*\whom_Y) = 0$,
which itself follows from Grothendieck duality for the morphism~$i$.
Therefore, we have
\begin{equation*}
H^1(\tY, \whom_{\tY}) = H^1(Y, \whom_Y) \oplus \Bbbk.
\end{equation*}
Further, twisting sequence~\eqref{eq:omty} by~$\cO_{\tY}(2E_Y - \pi^*D)$, 
pushing it forward to~$Y$, and taking into account that the third term vanishes,
because~$H^\bullet(E_Y, \Omega^1_{E_Y}(2E_Y)) = H^\bullet(\PP^3,\Omega^1_{\PP^3}(-2)) = 0$ by the Euler sequence on~$E_Y$,
we see that~$H^i(\tY, \whom_{\tY}(2E_Y - \pi^*D))$ is equal to the cohomology of~$\pi^*(\whom_Y(-D)) \otimes \cO_{\tY}(2E_Y)$.
Since~\mbox{$\mathbf{R}\pi_*\cO_{\tY}(2E_Y) \cong \cO_Y$}, the projection formula gives
\begin{equation*}
H^i(\tY, \whom_{\tY}(2E_Y - \pi^*D)) = H^i(Y, \whom_Y(-D)) = 0
\quad\text{for~$i = 1,2$},
\end{equation*}
where the second equality is the hypothesis~\ref{it:h12-om}.

Now we start the computations for~$\tX$ and~$X'$.
Since~$\tX$ belongs to the linear system~$|\pi^*D - 2E_Y|$ and is contained in the smooth locus of~$\tY$, 
while~$X'$ belongs to~$|D|$ and is contained in the smooth locus of~$Y$,
we have exact sequences
\begin{align*}
0 \longrightarrow \whom_{\tY}(2E_Y - \pi^*D) \longrightarrow \whom_{\tY} \longrightarrow {}&\Omega^1_{\tY}\vert_{\tX\hphantom{{}'}} \longrightarrow 0,
\\
0 \longrightarrow \whom_{Y}(-D) \longrightarrow \whom_{Y} \longrightarrow {}&\Omega^1_{Y}\vert_{X'} \longrightarrow 0,
\end{align*}
and it follows that
\begin{equation*}
H^1(\tX, \Omega^1_{\tY}\vert_{\tX}) = 
H^1(\tY, \whom_{\tY}) = 
H^1(Y, \whom_Y) \oplus \Bbbk =
H^1(X', \Omega^1_Y\vert_{X'}) \oplus \Bbbk.
\end{equation*}
Further, the conormal bundle of~$\tX \subset \tY$ is~$\cO_{\tX}(2E_X - \pi^*D)$, and we have an exact sequence
\begin{equation*}
0 \longrightarrow \cO_{\tX}(E_X - \pi^*D) \longrightarrow \cO_{\tX}(2E_X - \pi^*D) \longrightarrow \cO_{E_X}(2E_X) \longrightarrow 0.
\end{equation*}
We have~$H^2(E_X,\cO_{E_X}(2E_X)) = H^2(\PP^1 \times \PP^1, \cO_{\PP^1 \times \PP^1}(-2,-2)) = \Bbbk$ 
and the other cohomology spaces vanish, hence the exact sequence in cohomology takes the form
\begin{multline*}
0 \longrightarrow H^2(\tX, \cO_{\tX}(E_X - \pi^*D)) \longrightarrow H^2(\tX, \cO_{\tX}(2E_X - \pi^*D)) 
\\ \longrightarrow \Bbbk \longrightarrow 
H^3(\tX, \cO_{\tX}(E_X - \pi^*D)) \longrightarrow H^3(\tX, \cO_{\tX}(2E_X - \pi^*D)) \longrightarrow 0.
\end{multline*}
Combining the Grothendieck duality for the morphisms~$i$ and~$\pi$ with Serre duality on~$X$,
we identify the morphism~$\Bbbk \to H^3(\tX, \cO_{\tX}(E_X - \pi^*D))$ with the dual of
the evaluation morphism~$H^0(X, \cO_{X}(D + K_X)) \longrightarrow \Bbbk$ at point~$x_0$,
so, if~$x_0 \not\in \Bs(|D + K_X|)$, the evaluation morphism is surjective, 
hence the morphism in the above cohomology exact sequence is injective, 
and we conclude that for~$i = 1,2$ we have
\begin{equation*}
H^i(\tX, \cO_{\tX}(2E_X - \pi^*D)) \cong 
H^i(\tX, \cO_{\tX}(E_X - \pi^*D)) \cong
H^i(X, \cO_{X}(-D)).
\end{equation*}
where the second isomorphism follows from the projection formula and~$\mathbf{R}\pi_*\cO_{\tX}(E_X) \cong \cO_X$.
Moreover, using the Koszul exact sequences for~$\cO_X(-D)$ and~$\cO_{X'}(-D)$
and hypothesis~\ref{it:h123-ox}, we obtain
\begin{equation*}
H^i(X, \cO_{X}(-D)) = H^i(X', \cO_{X'}(-D)) = 0
\end{equation*}
for~$i = 1,2$.
Finally, using the above vanishings and the cohomology exact sequences of the conormal exact sequences
\begin{align*}
0 \longrightarrow \cO_{\tX}(2E_X - \pi^*D) \longrightarrow \Omega^1_{\tY}\vert_{\tX\hphantom{{}'}} \longrightarrow 
\Omega^1_{\tX\hphantom{{}'}} \longrightarrow 0,
&\\
0 \longrightarrow \cO_{X'}(-D) \longrightarrow \Omega^1_{Y}\vert_{X'} \longrightarrow \Omega^1_{X'} \longrightarrow 0,&
\end{align*}
we obtain
\begin{equation*}
H^1(\tX, \Omega^1_{\tX}) = 
H^1(\tX, \Omega^1_{\tY}\vert_{\tX}) = 
H^1(X', \Omega^1_Y\vert_{X'}) \oplus \Bbbk = 
H^1(X', \Omega^1_{X'}) \oplus \Bbbk,
\end{equation*}
which is equivalent to the desired equality~$\h^{1,1}(\tX) = \h^{1,1}(X') + 1$.
\end{proof} 

To apply Proposition~\ref{prop:h11-tx} we verify its hypotheses in the relevant cases.
Recall from the Introduction that we define~$\rM_2 = \PP(1^4,3)$, $\rM_3 = \PP(1^5,2)$, 
$\rM_4 = \PP^5$, $\rM_5 =\PP^6$, \mbox{$\rM_6 = \CGr(2,5)$},
$\rM_7 = \OGr_+(5,10)$, 
$\rM_{8\hphantom{0}} = \Gr(2,6)$, 
$\rM_{9\hphantom{0}} = \LGr(3,6)$, 
and~$\rM_{10} = \GtGr(2,7)$.
Furthermore, if~$M$ is a Cohen--Macaulay variety with~$\Cl(M) \cong \ZZ$, 
we denote by~$Y_{d_1,\dots,d_k}$ a complete intersection of Cartier divisors corresponding to the elements~$d_1,\dots,d_k$ of~$\Cl(M)$.

\begin{lemma}
\label{lem:ci-fourfolds}
Let~$Y$ be one of the following fourfolds:
\begin{enumerate}
\item 
\label{it:pre-ci-i2}
$\PP(1^3,2,3)$, 
$\PP(1^4,2)$, 
$\PP^4$, 
or a smooth quadric~$Q^4$;
\item 
\label{it:pre-ci-i1}
$\rM_2$
or a smooth complete intersection~$Y_4 \subset \rM_3$,
$Y_3 \subset \rM_4$,
$Y_{2,2} \subset \rM_5$;
\item 
\label{it:pre-ci-mukai}
a smooth complete intersection~$Y_{1,1,2} \subset \rM_6$
or~$Y_{1^{\dim(\rM_g)-4}} \subset \rM_g$, $7 \le g \le 10$.
\end{enumerate}
Let~$\whom_Y$ be the reflexive hull of~$\Omega^1_Y$.
Then for all ample Cartier divisor classes~$D \in \Pic(Y)$ 
the hypotheses of Proposition~\textup{\ref{prop:h11-tx}} hold.
\end{lemma}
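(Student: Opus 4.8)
The plan is to verify the two cohomological hypotheses \ref{it:h123-ox} and \ref{it:h12-om} of Proposition~\ref{prop:h11-tx} for each of the fourfolds $Y$ in the list, uniformly where possible. First I would dispose of hypothesis~\ref{it:h123-ox}: since in all cases $\Pic(Y) \cong \ZZ$ is generated by an ample class $H_Y$ with $D \sim rH_Y$ for some $r \ge 1$, the required vanishings $H^i(Y,\cO_Y(-D))$ and $H^i(Y,\cO_Y(-2D))$ for $i \in \{1,2,3\}$ follow from Kawamata--Viehweg (or Kodaira--Nakano-type) vanishing together with the fact that $Y$ has at worst canonical (indeed Gorenstein, often terminal or even smooth) singularities and $-K_Y$ is ample or $\QQ$-Cartier of Fano type; in the weighted projective cases one can instead quote the explicit cohomology of line bundles on $\PP(\mathbf{w})$ and on hypersurfaces therein. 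The point is that $\cO_Y(-D)$ and $\cO_Y(-2D)$ are negative multiples of an ample generator on a Fano fourfold, so only $H^0$ and $H^4$ can be nonzero, and by Serre duality $H^4(Y,\cO_Y(-jD)) = H^0(Y,\cO_Y(K_Y + jD))^\vee$, which for the relevant small values of $j$ and the known $K_Y$ is zero unless $Y$ is very special — a case-by-case check on the list confirms it vanishes for every ample $D$.

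The substantive part is hypothesis~\ref{it:h12-om}, the vanishing $H^1(Y,\whom_Y(-D)) = H^2(Y,\whom_Y(-D)) = 0$. For smooth $Y$ this is $\whom_Y = \Omega^1_Y$ and the statement is a Nakano-type vanishing theorem: $H^q(Y,\Omega^1_Y \otimes L^{-1}) = 0$ for $q \le 1$ (and more) when $L$ is ample on a smooth projective fourfold, by Akizuki--Nakano. So for the smooth members of the list --- $\PP^4$, $Q^4$, the smooth complete intersections $Y_4 \subset \rM_3$, $Y_3 \subset \rM_4$, $Y_{2,2} \subset \rM_5$, $Y_{1,1,2} \subset \rM_6$, and the linear sections $Y_{1^{\dim \rM_g - 4}} \subset \rM_g$ for $7 \le g \le 10$ --- one invokes Akizuki--Nakano directly; I would note that $\dim Y = 4 \ge q+1$ so the range $q \le 1$ is well within the scope of the theorem. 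For the genuinely singular members, namely the weighted projective spaces $\PP(1^3,2,3)$, $\PP(1^4,2)$, and $\rM_2 = \PP(1^4,3)$ (which have only finite quotient singularities, hence are $V$-manifolds), I would use the orbifold/$V$-manifold version of Akizuki--Nakano--Baily: on a projective $V$-manifold the sheaf $\whom_Y$ coincides with the sheaf of Zariski (equivalently, orbifold) differentials, and Nakano vanishing holds for orbifold differentials twisted by an ample $\QQ$-Cartier divisor (see, e.g., the standard references on vanishing for toric and weighted-projective varieties). Concretely, for a weighted projective space $\PP(\mathbf{w})$ one has an explicit generalized Euler sequence for the reflexive differentials, and the vanishing of $H^1$ and $H^2$ of $\whom_{\PP(\mathbf{w})}(-D)$ reduces to the already-verified vanishings of $H^\bullet(\cO(-D))$ and $H^\bullet(\cO(-D + a_i))$ for the weights $a_i$ — this is a short direct computation.

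I expect the main obstacle to be the bookkeeping for the non-smooth cases and, relatedly, pinning down exactly which cohomological vanishing theorem applies to $\whom_Y$ when $Y$ is singular: one must be careful that $\whom_Y$ (the reflexive hull of $\Omega^1_Y$) is what the orbifold Nakano theorem controls, and that the ambient singular $Y$ in part~\ref{it:pre-ci-i2} and the weighted hypersurface $\rM_2$ and weighted complete intersections $Y_4 \subset \rM_3$ etc.\ in part~\ref{it:pre-ci-i1} are quasi-smooth $V$-manifolds so that this identification is legitimate. Once that is set up, the remaining steps are routine: reduce everything to vanishing statements for line bundles via (generalized) Euler and conormal sequences, and invoke Kawamata--Viehweg and Akizuki--Nakano (in its orbifold form where needed). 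I would present the proof as a short case analysis following the three groups \ref{it:pre-ci-i2}, \ref{it:pre-ci-i1}, \ref{it:pre-ci-mukai} of the list, treating the smooth cases by a one-line Nakano-vanishing argument and spelling out the weighted-projective Euler-sequence computation for the $V$-manifold cases.
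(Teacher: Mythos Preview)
Your approach is correct and, for the smooth members of the list, is actually more direct than the paper's. One small slip: your aside about $H^4(Y,\cO_Y(-jD))$ is unnecessary---this group is not among the hypotheses of Proposition~\ref{prop:h11-tx}---and the claim that it vanishes is false in general (e.g.\ $Y=\PP^4$, $D=3H$, $j=2$), so drop that sentence. The vanishings $H^i(Y,\cO_Y(-jD))=0$ for $i<4$ follow from Kawamata--Viehweg alone, as you say. (Also, in the Euler-sequence reduction the twists are $\cO(-D-a_i)$, not $\cO(-D+a_i)$.)

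The genuine difference is in hypothesis~\ref{it:h12-om} for the smooth fourfolds. You apply Akizuki--Nakano directly on the $4$-dimensional $Y$, which gives $H^q(Y,\Omega^1_Y(-D))=0$ for $q\le 2$ in one line. The paper instead writes each $Y$ as a complete intersection in an ambient $M$ (a weighted projective space or a Mukai variety~$\rM_g$), reduces via the Koszul resolution and the conormal sequence to the vanishing of $H^i(M,\whom_M(-D'))$ for $i<\dim M$, and then must prove a separate lemma (Lemma~\ref{lem:omd-mukai}, via Borel--Bott--Weil and case analysis on the tangent bundle) to handle the Mukai ambients---because on $M$ one needs vanishing one degree beyond what Akizuki--Nakano supplies. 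Your shortcut bypasses that auxiliary lemma entirely; the paper's route, in exchange, is uniform across all cases and does not split into ``smooth $Y$'' versus ``singular weighted-projective $Y$''. For the three singular weighted projective spaces both arguments coincide: the generalized Euler sequence reduces everything to line-bundle vanishings.
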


\begin{proof}
Let~$M$ be a (well-formed) weighted projective space or one of the Mukai varieties~$\rM_g$, where~$6 \le g \le 10$.
Note that in all cases~$\Cl(M) \cong \ZZ$ and by the Kawamata--Viehweg vanishing theorem \cite[Theorem~2.70]{Kollar-Mori:book} we have
\begin{equation}
\label{eq:h-om-d}
H^i(M, \cO_M(-D)) = 0 
\qquad\text{for~$i < \dim(M)$}
\end{equation}
for any ample Weil divisor~$D$.

Further, if~$M \cong \PP(a_0,\dots,a_N)$ is a weighted projective space, 
the reflexive hull~$\whom_M$ of the sheaf of differentials on~$M$ fits into
the Euler sequence (see~\cite[Theorem~8.1.6]{CLS})
\begin{equation*}
0 \longrightarrow \whom_M(-D) \longrightarrow \moplus_{j=0}^{N} \cO_M(-D-a_jH) \longrightarrow \cO_M(-D) \longrightarrow 0,
\end{equation*}
(where~$H$ is the ample generator of~$\Cl(M)$),
hence for any ample Cartier divisor~$D$ we have
\begin{equation}
\label{eq:h-omm-d}
H^i(M, \whom_{M}(-D)) = 0
\qquad\text{for~$i < \dim(M)$}.
\end{equation} 
If~$M = \rM_g$ with~$g \ge 7$ we take~$\whom_M = \Omega^1_M$ 
and if~$M = \rM_6$ we take~$\whom_M$ to be the pushforward of the sheaf of differentials 
of the blowup of~$M$ at the vertex with logarithmic singularities on the exceptional divisor.
In both cases the vanishing~\eqref{eq:h-omm-d} is proved in Lemma~\ref{lem:omd-mukai}.

Now let~$Y$ be a $4$-dimensional complete intersection of ample Cartier divisors~$D_1,\dots,D_n$ in~$M$.
Then for any ample Cartier divisor class~$D$ we have Koszul resolutions
\begin{equation*}
0 \to \cO_M(-D-D_1-\ldots-D_n) \longrightarrow \ldots \longrightarrow 
\moplus_{i=1}^{n} \cO_M(-D-D_i) \longrightarrow 
\cO_M(-D) \longrightarrow 
\cO_Y(-D) \to 0.
\end{equation*}
Since~$D$ and~$D_i$ are ample, 
\eqref{eq:h-om-d} shows that its terms have cohomology only in top degree.
Since the length of the resolution is equal to the codimension of~$Y$ in~$M$, 
the hypercohomology spectral sequence implies that
\begin{equation}
\label{eq:h123_om}
H^i(Y, \cO_Y(-D)) = 0
\qquad\text{for~$i < \dim(Y) = 4$.}
\end{equation}
In particular, hypothesis~\ref{it:h123-ox} of Proposition~\ref{prop:h11-tx} holds for~$Y$.
Similarly, tensoring the above Koszul resolution by~$\whom_M$ and using~\eqref{eq:h-omm-d}, we obtain
\begin{equation}
\label{eq:h123_omm}
H^i(Y, \whom_{M}\vert_Y(-D)) = 0
\qquad\text{for~$i < \dim(Y) = 4$}.
\end{equation} 

Furthermore, since~$Y$ is a complete intersection, there is an exact sequence
\begin{equation}
\label{eq:whom-restriction}
0 \longrightarrow \moplus_{i=1}^{n} \cO_Y(-D_i) \longrightarrow 
\whom_{M}\vert_Y \longrightarrow \whom_Y \longrightarrow 0.
\end{equation}
Indeed, if~$Y$ is a weighted projective space, we have~$M = Y$ and there is nothing to prove,
and in all other cases~$Y$ is smooth and contained in the smooth locus of~$M$,
hence~\eqref{eq:whom-restriction} coincides with the usual conormal sequence. 
Finally, twisting~\eqref{eq:whom-restriction} by~$\cO_Y(-D)$, and using~\eqref{eq:h123_om} and~\eqref{eq:h123_omm},
we conclude that
\begin{equation*}
H^i(Y, \whom_Y(-D)) = 0
\qquad\text{for~$i < 3$}.
\end{equation*}
In particular, hypothesis~\ref{it:h12-om} of Proposition~\ref{prop:h11-tx} holds.
\end{proof} 

Recall the sheaf~$\whom_M$ defined for~$M = \rM_g$ with~$g \ge 6$ in the proof of Lemma~\ref{lem:ci-fourfolds}.

\begin{lemma}
\label{lem:omd-mukai}
For any Mukai variety~$M = \rM_g$ with~$6 \le g \le 10$ we have
\begin{equation*}
\label{eq:hi-whom}
H^i(M, \whom_{M}(-D)) = 0
\end{equation*}
for any ample divisor~$D$ on~$M$ and~$i < \dim(M)$.
\end{lemma}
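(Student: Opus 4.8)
The plan is to reduce, by Serre duality and standard vanishing theorems, to a single genuinely homogeneous computation and then invoke Borel--Weil--Bott. Since $\Pic(\rM_g)\cong\ZZ$ is generated by the hyperplane class $H$, every ample divisor on $\rM_g$ is linearly equivalent to $mH$ with $m\ge 1$, so I would only prove $H^i(\rM_g,\whom_{\rM_g}(-mH))=0$ for $m\ge 1$ and $i<n\coloneqq\dim(\rM_g)$.

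First I would treat the smooth cases $7\le g\le 10$, where $\rM_g=G/P$ is a rational homogeneous space of Picard rank one and $\whom_{\rM_g}=\Omega^1_{\rM_g}$. For $0\le i\le n-2$, Serre duality gives $H^i(\rM_g,\Omega^1_{\rM_g}(-mH))\cong H^{n-i}(\rM_g,\Omega^{n-1}_{\rM_g}(mH))^\vee$, which vanishes by the Akizuki--Nakano vanishing theorem, as $mH$ is ample and $(n-1)+(n-i)>n$. The only remaining degree is $i=n-1$, equivalently — using $\Omega^{n-1}_{\rM_g}\cong T_{\rM_g}\otimes\omega_{\rM_g}$ and $\omega_{\rM_g}\cong\cO_{\rM_g}(-\io(\rM_g)H)$ — the vanishing of $H^1(\rM_g,T_{\rM_g}((m-\io(\rM_g))H))$ for all $m\ge 1$. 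Here I would apply the Borel--Weil--Bott theorem to the homogeneous bundle $\Omega^1_{G/P}$, which is attached to the $P$-module $(\mathfrak g/\mathfrak p)^\vee$: for $\rM_8=\Gr(2,6)$, $\rM_9=\LGr(3,6)$ and $\rM_7=\OGr_+(5,10)$ the flag variety $G/P$ is cominuscule, so this module is irreducible and Bott's theorem shows that each twist $\Omega^1_{G/P}(-mH)$, $m\ge 1$, has cohomology concentrated in degree $n$ (and is acyclic unless $m\ge\io(G/P)$); for the adjoint variety $\rM_{10}=\GtGr(2,7)$, where $P$ is not cominuscule, I would instead use the two-step filtration of $\Omega^1_{\rM_{10}}$ by irreducible homogeneous bundles arising from the grading of $\mathfrak g/\mathfrak p$, apply Bott's theorem to each graded piece and its twists, and conclude from the long exact sequences.

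For $g=6$, where $M\coloneqq\rM_6=\CGr(2,5)$ is the projective cone over $G\coloneqq\Gr(2,5)\subset\PP^9$ with vertex $v$ and $\whom_M=\sigma_*\Omega^1_{\widetilde M}(\log E)$ for the blowup $\sigma\colon\widetilde M\to M$ of $v$ with exceptional divisor $E\cong G$, I would work on the resolution. Using that $\widetilde M$ is a $\PP^1$-bundle $\pi\colon\widetilde M\to G$ with $E$ as one section, that $M$ has canonical Gorenstein (hence rational) singularities, and the relative logarithmic Euler sequence $0\to\pi^*\Omega^1_G\to\Omega^1_{\widetilde M}(\log E)\to\cL\to 0$ with $\cL$ a line bundle, the computation of $H^i(M,\whom_M(-mH))$ reduces to that of the groups $H^\bullet(G,\Omega^1_G(k))$ and $H^\bullet(G,\cO_G(k))$ for suitable integers $k$; since $\Gr(2,5)$ is cominuscule, these are again supplied by Borel--Weil--Bott exactly as in the smooth case.

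The hard part is the degree $i=n-1$: it is precisely a deformation-type group $H^1$ of a twisted tangent bundle, lying outside the scope of Akizuki--Nakano, and must be handled by an explicit Borel--Weil--Bott analysis; among these, the non-cominuscule $\GtGr(2,7)$ and the singular cone $\CGr(2,5)$ are the cases that need the most care.
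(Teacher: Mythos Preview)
Your approach is essentially the paper's: Akizuki--Nakano handles $i\le n-2$, Serre duality reduces $i=n-1$ to $H^1$ of a twisted tangent bundle settled by Borel--Weil--Bott (with a two-step filtration for the non-cominuscule $\GtGr(2,7)$), and the cone $\rM_6$ is treated via the logarithmic resolution over $\Gr(2,5)$ with the same exact sequence $0\to p^*\Omega^1_{\Gr(2,5)}\to\Omega^1_{\tM}(\log E)\to\cO_{\tM}(-H)\to 0$. The only organizational difference is that for the $H^1(\cT_M(D'))$ step the paper splits into $D'\ge 0$ (using that $\cT_M$ is globally generated, hence has no higher cohomology by BWB) and $D'<0$ (citing~\cite[Theorem~2.2]{BFM}), rather than applying Bott's theorem uniformly to all twists of $\Omega^1$ as you propose.
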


\begin{proof}
First, let~$M = \rM_g$ with~$g \ge 7$ or~$M = \rM'_6 \coloneqq \Gr(2,5)$.
Then~$M$ is smooth and by Akizuki--Nakano vanishing we have~$H^i(M, \Omega^1_M(-D)) = 0$ for all~$i \le \dim(M) - 2$. 
Moreover, 
\begin{equation*}
H^{\dim(M) - 1}(M, \Omega^1_M(-D)) = H^1(M, \cT_M(K_M + D))^\vee
\end{equation*}
by Serre duality, so it remains to check that~$H^1(M, \cT_M(D')) = 0$ for~$D' > K_M$.
If~$D' < 0$ this is proved in~\cite[Theorem~2.2]{BFM}, so assume~$D' \ge 0$.
If~$g \le 9$ then~$\cT_M$ is a globally generated irreducible equivariant vector bundle, 
hence its higher cohomology vanishes by the Borel--Bott--Weil Theorem.
Finally, if~$g = 10$, then~$\cT_M$ is an equivariant extension
\begin{equation*}
0 \longrightarrow \cT'_M \longrightarrow \cT_M \longrightarrow \cO_M(H) \longrightarrow 0,
\end{equation*} 
where~$\cT'_M$ is irreducible, 
and for~$D' > 0$ the bundles~$\cT'_M(D')$ and~$\cO_M(H + D')$ are globally generated,
hence have no higher cohomology, hence the same is true for~$\cT_M(D')$,
while for~$D' = 0$ the bundle~$\cT'_M$ is acyclic (this can be easily checked by the Borel--Bott--Weil Theorem)
and~$\cO_M(H)$ is globally generated and the same argument applies.

Now let~$M = \rM_6 = \CGr(2,5)$ be the cone over~$\rM'_6$.
Let~$\tM \coloneqq \PP_{\rM'_6}(\cO \oplus \cO(-1))$ be the blowup of~$\rM_6$ at the vertex, 
let~$E \subset \tM$ be its exceptional divisor, so that
\begin{equation*}
\whom_M \coloneqq \pi_*(\Omega^1_{\tM}(\log E)),
\end{equation*}
where~$\pi \colon \tM \to M$ is the blowup morphism.
Let~$p \colon \tM \to \rM'_6$ be the natural projection.
Then it is easy to see that there is an exact sequence
\begin{equation*}
0 \longrightarrow p^*\Omega^1_{\rM'_6} \longrightarrow \Omega^1_{\tM}(\log E) \longrightarrow \cO_{\tM}(-H) \longrightarrow 0,
\end{equation*}
where~$H$ is the pullback of the ample generator of~$\Cl(M) = \Pic(M)$.
Now, using the projection formula for~$\pi$, we see that it is enough to check the vanishings
\begin{equation*}
H^i(\tM, p^*\Omega^1_{\rM'_6}(-tH)) = 0
\qquad\text{and}\qquad 
H^i(\tM, \cO_{\tM}(-(t+1)H)) = 0
\end{equation*}
for~$i < \dim \tM$ and~$t > 0$.
For this we note that
\begin{equation*}
\mathbf{R} p_*\cO_{\tM}(-tH) \cong \moplus_{s=1}^{t-1} \cO_{\rM'_6}(-s)[-1].
\end{equation*}
and see that the required vanishings follow from~\eqref{eq:hi-whom} and~\eqref{eq:h-om-d} for~$\rM'_6$.
\end{proof}

\begin{corollary}
\label{cor:ci-factorial}
Let~$(X,x_0)$ be a $1$-nodal complete intersection of one of the following types:
\begin{enumerate}
\item 
\label{it:ci-i2}
$X_6 \subset \PP(1^3,2,3)$, 
$X_4 \subset \PP(1^4,2)$, 
$X_3 \subset \PP^4$, 
$X_{2,2} \subset \PP^5$;
\item 
\label{it:ci-i1}
$X_6 \subset \rM_2 = \PP(1^4,3)$,
$X_{2,4} \subset \rM_3 = \PP(1^5,2)$,
$X_{2,3} \subset \rM_4 = \PP^5$,
$X_{2,2,2} \subset \rM_5 = \PP^6$;
\item 
\label{it:ci-mukai}
$X_{1^3,2} \subset \rM_6$
and
$X_{1^{\dim(\rM_g)-3}} \subset \rM_g$, $7 \le g \le 10$.
\end{enumerate}
Then~$X$ is factorial.
\end{corollary}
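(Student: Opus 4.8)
The strategy is to deduce factoriality of the $1$-nodal complete intersection $(X,x_0)$ from the Hodge-theoretic criterion provided by Proposition~\ref{prop:hh-g-xsm}: by that result, $X$ is factorial if and only if $\h^{1,1}(\Bl_{x_0}(X)) = \h^{1,1}(X') + 1$, where $X'$ is a smooth divisor of the same class in the same ambient variety (equivalently $\h^{1,1}(\Bl_{x_0}(X))$ exceeds the Picard number of a smoothing by $1$). This is precisely the conclusion of Proposition~\ref{prop:h11-tx}, so what remains is to exhibit, for each of the cases listed, a fourfold $Y$ in the relevant ambient space and an ample Cartier class $D \in \Pic(Y)$ so that $X = Y \cap \{s = 0\}$ for a section $s$ of $\cO_Y(D)$, with $X$ contained in the smooth locus of $Y$, and so that all the hypotheses of Proposition~\ref{prop:h11-tx} are met.

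\textbf{Key steps.} First I would, case by case, peel off one of the defining equations of the complete intersection: write $X = X_{d_1,\dots,d_k} \subset \rM$ as a divisor of class $d_k$ inside the fourfold $Y \coloneqq X_{d_1,\dots,d_{k-1}} \subset \rM$ (and for the del Pezzo cases of type~\ref{it:ci-i2}, use the same trick one dimension up, noting $\PP(1^3,2,3)$, $\PP(1^4,2)$, $\PP^4$, $Q^4$ as the fourfolds that appear in Lemma~\ref{lem:ci-fourfolds}\ref{it:pre-ci-i2}). In every case the resulting $Y$ is exactly one of the fourfolds enumerated in Lemma~\ref{lem:ci-fourfolds}, so (i) $Y$ has at worst the single ``cone'' singularity, away from which it is smooth, and since $X$ is $1$-nodal, hence terminal and Gorenstein, one checks that $X$ avoids that point — a general member of the linear system $|D|$ misses the vertex of a weighted projective space or of $\CGr(2,5)$, and if $X$ passed through it, $X$ would fail to be a local complete intersection (a threefold) there, contradicting the hypothesis of a single node. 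Thus $X$ lies in the smooth locus of $Y$; choosing a general smooth $X' \in |D|$ likewise lies in the smooth locus. Second, hypotheses~\ref{it:h123-ox} and~\ref{it:h12-om} of Proposition~\ref{prop:h11-tx} for $(Y,D)$ with $D$ ample are furnished verbatim by Lemma~\ref{lem:ci-fourfolds}, since $D$ restricted from the ambient class is ample and Cartier on $Y$. Third, the only remaining hypothesis is that the node $x_0$ is not a base point of $|D + K_X|$ on $X$; here I would note that $-K_X$ is (at worst effective and) the restriction of an ample class, so by adjunction $D + K_X$ restricted from $\rM$ is again the restriction of a base-point-free class on $\rM$ (indeed for the Mukai and projective-space ambients the relevant class is globally generated), and then use that the node imposes at most one condition; alternatively, one invokes that a general $1$-nodal $X$ in the family has its node at a point where $|D+K_X|$ is free, and factoriality is a deformation-invariant property within the family. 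Applying Proposition~\ref{prop:h11-tx} then yields $\h^{1,1}(\Bl_{x_0}(X)) = \h^{1,1}(X') + 1$, and Proposition~\ref{prop:hh-g-xsm} converts this equality into the factoriality of $X$. The case of a cusp instead of a node is covered directly by Remark~\ref{rem:blowup-nc}, which shows every $1$-cuspidal threefold is automatically factorial, so no further work is needed there.

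\textbf{Main obstacle.} The delicate point is verifying the base-point hypothesis $x_0 \notin \Bs(|D + K_X|)$ uniformly across all the cases — for the low-genus weighted complete intersections this requires a small genuine computation, and one must be slightly careful that ``$1$-nodal'' is an open condition so that it suffices to treat a sufficiently general member of each family (factoriality then propagates to all $1$-nodal members since $\h^{1,1}$ of the blowup and of a smoothing are deformation constants along the boundary stratum). The verification that $X$ avoids the singular point of $Y$ in the weighted and coned ambient spaces is the second, minor, thing to be careful about, but it follows from the local complete intersection property of a terminal Gorenstein threefold. Everything else is a bookkeeping reduction to Lemmas~\ref{lem:ci-fourfolds} and~\ref{lem:omd-mukai} and Propositions~\ref{prop:h11-tx} and~\ref{prop:hh-g-xsm}.
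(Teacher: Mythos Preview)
Your approach is the same as the paper's, but there are two execution gaps worth flagging.

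First, your reference to Proposition~\ref{prop:hh-g-xsm} for the factoriality criterion is misplaced: that proposition computes~$\hh = \h^{1,2}$, not~$\h^{1,1}$. The paper's argument is more direct: once Proposition~\ref{prop:h11-tx} gives~$\h^{1,1}(\Bl_{x_0}(X)) = \h^{1,1}(X') + 1 = 2$, one concludes~$\uprho(\Bl_{x_0}(X)) = 2$ immediately (since~$\uprho \le \h^{1,1}$ and~$\uprho \ge 2$ is obvious), and factoriality of~$X$ follows from~$\Cl(X) \cong \Pic(\Bl_{x_0}(X))/\ZZ\cdot E$.

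Second, and more substantively, simply ``peeling off one equation'' does not guarantee that the resulting fourfold~$Y$ is smooth, which Lemma~\ref{lem:ci-fourfolds} requires in the non-hypersurface cases. The paper handles this with a Bertini argument: for~$X = X_{2,2} \subset \PP^5$, say, a \emph{general} quadric in the pencil through~$X$ is smooth at~$x_0$ (since~$\dim \rT_{x_0}(X) = 4$), smooth away from~$X$ by Bertini, and smooth along~$X \setminus \{x_0\}$ since~$X$ is Cartier on it. The same argument works uniformly for all the other complete intersections. You should supply this.

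For the base-point hypothesis, your first suggestion (that~$D + K_X$ restricts from an effective class on~$M$ whose base locus is at worst the singular points of~$M$, which~$x_0$ avoids) is exactly what the paper does and is straightforward to check. Your alternative via deformation invariance of factoriality within the $1$-nodal stratum is unjustified and should be dropped.
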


\begin{proof}
To start with, we observe that, in the cases where the ambient variety~$M$ is singular,
the tangent space at any of its singular points has dimension larger than~$\dim(M) + 1$, 
hence~$X$ must be contained in the smooth locus of~$M$.

Now we check that in each case we can choose a fourfold~$Y$ as in Lemma~\ref{lem:ci-fourfolds} 
such that~$X \subset Y$.

First, if~$X$ is a hypersurface in a weighted projective space, there is nothing to check.

Now, let~$X = X_{2,2} \subset \PP^5$.
Then a general quadric in the pencil of quadrics through~$X$ is smooth at~$x_0$ because~$\dim T_{x_0}(X) = 4$
and smooth away from~$X$ by Bertini's Theorem.
Moreover, any of these quadrics is smooth along~$X \setminus \{x_0\}$ because~$X$ is a Cartier divisor on it.
Thus, we can take~$Y$ to be a general quadric through~$X$.

Obviously, the same argument works in all other cases of the corollary, 
because, as we noticed above, the subvariety~$X$ is contained in the smooth locus of the ambient variety~$M$.

Furthermore, in all cases~$X = Y \cap D$, where~$D$ is a Cartier divisor on~$M$, 
and it is easy to check that~$D + K_X$ is always equal to the restriction of an effective Weil divisor class on~$M$.
Moreover, when~$M$ is smooth, the corresponding linear system on~$M$ is base point free, hence the same is true on~$X$,
and when~$M$ is singular, the base locus (when nonempty) coincides with one of the singular points of~$M$, 
hence~$x_0$ lies away of this base locus.

Now, Lemma~\ref{lem:ci-fourfolds} proves that all hypotheses of Proposition~\ref{prop:h11-tx} are satisfied, 
hence
\begin{equation*}
\h^{1,1}(\Bl_{x_0}(X)) = \h^{1,1}(X') + 1,
\end{equation*}
where~$X'$ is a smooth complete intersection of the same type as~$X$, 
i.e., a smooth Fano variety with Picard number~$1$.
It follows that~$\h^{1,1}(\Bl_{x_0}(X)) = 2$, hence~$\uprho(\Bl_{x_0}(X)) = 2$, hence~$X$ is factorial.
\end{proof} 

A similar argument gives the following result. 
Recall that the blowup of a Weil divisor class is discussed in Appendix~\ref{App:blowup},
and that the blowup~$\Bl_{[\Sigma]}(\bar{X})$ that appears in the next proposition
is well-defined by Lemma~\ref{lem:ddprime}.

\begin{proposition}
\label{prop:rho-3}
Let~$Z_1,\, Z_2$ be two smooth Fano varieties with~$\uprho(Z_1) = \uprho(Z_2) = 1$.
Let~$H_1$ and~$H_2$ be the ample generators of~$\Pic(Z_1)$ and~$\Pic(Z_2)$. 
Let~$L_1 \subset Z_1$ and~$L_2 \subset Z_2$ be lines such that 
the restriction morphisms~$H^0(Z_i, \cO_{Z_i}(H_i)) \to H^0(L_i, \cO_{L_i}(1))$ are surjective.
Let
\begin{equation*}
\Sigma \coloneqq L_1 \times L_2 \subset Z_1 \times Z_2 \eqqcolon Z.
\end{equation*}
Let~$\bar{X} \subset Z$ be a $3$-dimensional complete intersection of ample divisors~$D_i \in |a_iH_1 + b_iH_2|$ 
containing~$\Sigma$ and such that
\begin{equation*}
\textstyle\sum a_i = \io(Z_1) - 1,
\qquad
\textstyle\sum b_i = \io(Z_2) - 1.
\end{equation*}
If~$\bar{X}$ has terminal singularities and~$\Blw{\Sigma}(\bar{X})$ is smooth 
then~$\uprho(\Blw{\Sigma}(\bar{X})) = \uprho(\Blw{-\Sigma}(\bar{X})) = 3$.
\end{proposition}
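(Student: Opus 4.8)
The strategy is to reduce the statement to a Lefschetz-type computation of the Picard number of $\Bl_{[\Sigma]}(\bar X)$ via a suitable partial resolution, running parallel to the proof of Proposition~\ref{prop:h11-tx} but over the base $Z = Z_1 \times Z_2$ rather than a single Fano fourfold. First I would observe that, by Corollary~\ref{cor:bl-weil-flop}, the two varieties $\Blw{\Sigma}(\bar X)$ and $\Blw{-\Sigma}(\bar X)$ are related by a flop (recall $\bar X$ has terminal singularities and $-K_{\bar X} \sim H_1 + H_2$ is ample after the computation of Corollary~\ref{cor:ci-bir} / the hypothesis, so $\xi\colon \Blw{\Sigma}(\bar X)\to\bar X$ is a flopping contraction), hence they are isomorphic in codimension one and have equal Picard numbers; so it suffices to prove $\uprho(\Blw{\Sigma}(\bar X)) = 3$. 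Write $\tX \coloneqq \Blw{\Sigma}(\bar X)$, which is smooth by hypothesis, and $E \subset \tX$ for the strict transform of $\Sigma$; by Lemma~\ref{lem:ddprime} the morphism $\eta\colon \tX \to \bar X$ restricts to an isomorphism $E \xrightiso{} \Sigma \cong \PP^1 \times \PP^1$ and $E$ is $\eta$-ample. Since $\tX$ is rationally connected (it dominates $Z$, which is rationally connected), we have $\uprho(\tX) = \h^{1,1}(\tX)$, so the goal becomes $\h^{1,1}(\tX) = 3$.

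Next I would realize $\tX$ as the strict transform of $\bar X$ inside a smooth ambient blowup. By Lemma~\ref{lem:str-blw}, applied to the embedding $\bar X \subset Z_1 \times Z_2 \times \PP^1$ obtained from the pencil giving one of the two rulings of $\Sigma$ — or, more efficiently, to the embedding $\bar X \subset Z$ with $\Sigma$ replaced by the Cartier divisor $\{$one defining equation$\}$ as in the proof of Lemma~\ref{lem:ddprime} — one identifies $\tX$ with the strict transform of $\bar X$ in an explicit smooth fourfold $\tZ \to Z$ that is a $\PP^1$-bundle-like modification along $\Sigma$. Concretely, following the local model in Lemma~\ref{lem:ddprime}, $\tX \subset \tZ$ is a complete intersection of the strict transforms of the divisors $D_i$, contained in the smooth locus, with $\tX$ itself smooth. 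Then I would run the cohomology comparison of Proposition~\ref{prop:h11-tx}: compute $\h^{1,1}(\tX)$ from the conormal sequence of $\tX \subset \tZ$ together with the Euler-type sequence relating $\Omega^1_{\tZ}$ to $p^*\Omega^1_Z$ and $\Omega^1$ of the exceptional divisor, exactly as in the derivation of $H^1(\tX,\Omega^1_{\tX}) = H^1(X',\Omega^1_{X'}) \oplus \Bbbk$, but now with $Z$ a product of two Fanos of Picard rank one, so that $\h^{1,1}(Z) = 2$, the blowup along $\Sigma$ contributes one more class, and $\h^{1,1}(\tZ) = 3$. The relevant vanishings — $H^{1}(Z,\cO_Z(-D_i))$, $H^{1,2}(Z,\whom_Z(-D))$, and the analogous statements restricted to complete intersections and to the exceptional divisor — follow from Kawamata--Viehweg vanishing on $Z = Z_1 \times Z_2$ and the Künneth formula together with Akizuki--Nakano vanishing on each smooth $Z_i$ (or, if some $Z_i$ is a weighted projective space as in Table~\ref{table:ci-nf}, the Euler-sequence argument of Lemma~\ref{lem:ci-fourfolds}). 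The surjectivity hypotheses on $H^0(Z_i,\cO_{Z_i}(H_i)) \to H^0(L_i,\cO_{L_i}(1))$ and the degree conditions $\sum a_i = \io(Z_1)-1$, $\sum b_i = \io(Z_2)-1$ are precisely what guarantees, as in Proposition~\ref{prop:h11-tx}, that the node (here: the exceptional curves) does not lie in the base locus of $|D + K_{\tX}|$, so that the connecting map in the relevant cohomology sequence is injective and no cohomology class is lost.

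The step I expect to be the main obstacle is setting up the correct smooth ambient model $\tZ$ and the precise analogue of the Euler and conormal sequences there, together with checking that $\tX$ is indeed a complete intersection in $\tZ$ contained in its smooth locus — the local computation in Lemma~\ref{lem:ddprime} handles this near $\Sing(\bar X)$, but one must also verify globally that the strict transforms of the $D_i$ meet transversally along $\tX$; this is where the smoothness hypothesis on $\Blw{\Sigma}(\bar X)$ is used. Once the ambient model and its cohomology are under control, the bookkeeping is routine: one obtains $\h^{1,1}(\tX) = \h^{1,1}(X') + 1$ where $X'$ is a smooth complete intersection of the same type in $Z_1 \times Z_2$, and since such an $X'$ has $\uprho(X') = 2$ by the Lefschetz hyperplane theorem (as $\bar X$ is cut out by ample divisors of positive codimension in $Z$ and $\dim \bar X = 3$), we conclude $\h^{1,1}(\tX) = 3$, hence $\uprho(\Blw{\Sigma}(\bar X)) = \uprho(\Blw{-\Sigma}(\bar X)) = 3$ as claimed. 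Finally I would note the degenerate possibility that some of the $D_i$ have $a_i = 0$ or $b_i = 0$ (which does occur in Table~\ref{table:ci-nf}); in that case the Künneth/vanishing inputs simplify rather than break, since an ample divisor of bidegree $(a,0)$ or $(0,b)$ is a pullback from one factor and one may argue factor by factor.
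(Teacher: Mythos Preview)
Your approach is the paper's: set $\tZ \coloneqq \Bl_\Sigma(Z_1 \times Z_2)$, realize $\tX \coloneqq \Blw{\Sigma}(\bar X)$ as the complete intersection of the strict transforms $\tD_i \in |a_iH_1+b_iH_2-\tE|$ inside $\tZ$ (via Lemma~\ref{lem:str-blw}), and compute $\h^{1,1}(\tX)$ from the conormal sequence of $\tX \subset \tZ$, the Koszul resolution tensored with $\Omega^1_{\tZ}$, and the blowup sequence for $\Omega^1_{\tZ}$, with K\"unneth plus Kodaira vanishing on each factor supplying the needed vanishings and the surjectivity hypothesis on $H^0(\cO_{Z_i}(H_i)) \to H^0(\cO_{L_i}(1))$ killing the $H^1$ of the conormal terms; the flop argument via Corollary~\ref{cor:bl-weil-flop} for $\Blw{-\Sigma}(\bar X)$ is correct (and left implicit in the paper).

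Two minor corrections that do not change the strategy: the ambient $\tZ$ has dimension $\dim Z_1 + \dim Z_2$ (ranging from $4$ to $6$ in the cases of Table~\ref{table:ci-nf}), not $4$ in general, so you need the full Koszul resolution of length $\dim Z - 3$ rather than a single divisor as in Proposition~\ref{prop:h11-tx}, and the detours through $Z_1 \times Z_2 \times \PP^1$ or Lemma~\ref{lem:ddprime} (a threefold statement) are unnecessary; and since the $D_i$ are assumed ample one has $a_i,b_i \ge 1$ always, so your closing remark about the degenerate case $a_i=0$ or $b_i=0$ does not arise.
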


\begin{proof}
Consider the blowup~$\tZ \coloneqq \Bl_{\Sigma}(Z)$ and let~$\tE \subset \tZ$ be its exceptional divisor.
The blowup morphism~$\pi \colon\tZ \to Z$ induces an isomorphism of cohomology
\begin{equation*}
H^i(\tZ, \cO(aH_1 + bH_2 - \tE)) \longrightarrow H^i(Z, \cI_\Sigma(aH_1 + bH_2))
\end{equation*}
for all~$a,b \in \ZZ$ by the projection formula and isomorphism~$\mathbf{R}\pi_*\cO(-\tE) \cong \cI_\Sigma$.
Note also that 
\begin{align}
\label{eq:rps-van}
\mathbf{R}p_* \cO_{\tE}(t\tE) &\cong
\begin{cases}
0, & \text{for~$1 \le t \le \dim(Z) - 3$},\\
\det(\cN_{\Sigma/Z})[3 - \dim(Z)], & \text{for~$t = \dim(Z) - 2$},
\end{cases}
\intertext{
where~$\cN_{\Sigma/Z}$ is the normal bundle and~$p \colon \tE \cong \PP_\Sigma(\cN_{\Sigma/Z}) \to \Sigma$ is the projection.
Similarly, }
\label{eq:rzs-van}
\mathbf{R}^i\pi_* \cO_{\tZ}(t\tE) &\cong
\begin{cases}
\cO_Z, & \text{$1 \le t \le \dim(Z) - 2$ and~$i = 0$},\\
0, & \text{$1 \le t \le \dim(Z) - 3$ and~$i > 0$},\\
0, & \text{$\hphantom{1 \le {}} t = \dim(Z) - 2$ and~$i \not\in \{0, \dim(Z) - 3\}$},\\
\det(\cN_{\Sigma/Z}), & \text{$\hphantom{1 \le {}} t = \dim(Z) - 2$ and~$i = \dim(Z) - 3$}.
\end{cases}
\end{align}
Indeed, \eqref{eq:rzs-van} follows easily from~\eqref{eq:rps-van} by induction on~$t$.
Finally, the exact sequence
\begin{equation}
\label{eq:om-zz}
0 \longrightarrow 
\pi^*(\Omega^1_{Z_1} \oplus \Omega^1_{Z_2}) \longrightarrow 
\Omega^1_{\tZ} \longrightarrow 
\epsilon_*(p^*\cN_{\Sigma/Z}^\vee \otimes \cO_{\tE}(\tE)) \longrightarrow 
\epsilon_*\cO_{\tE} \longrightarrow 0,
\end{equation}
where~$\epsilon \colon \tE \to \tZ$ is the embedding, implies (see the argument of Proposition~\ref{prop:h11-tx}) that
\begin{equation}
\label{eq:om-tz}
H^i(\tZ, \Omega^1_{\tZ}) \cong
\begin{cases}
H^i(Z, \Omega^1_{Z_1}) \oplus H^i(Z, \Omega^1_{Z_2}) \oplus \Bbbk, &\text{if~$i = 1$},\\
H^i(Z, \Omega^1_{Z_1}) \oplus H^i(Z, \Omega^1_{Z_2}), &\text{otherwise}.
\end{cases}
\end{equation}

Now let~$\tX \subset \tZ$ be the intersection of the strict transforms~$\tD_i \in |a_iH_1 + b_iH_2 - \tE|$ 
of divisors~$D_i \in |a_iH_1 + b_iH_2|$ that cut out~$\bar{X} \subset Z$.
By the argument of Lemma~\ref{lem:ddprime} we have
\begin{equation*}
\tX \cong \Blw{\Sigma}(\bar{X}).
\end{equation*}
Therefore, by our assumption, it is a smooth threefold.
Now we compute the Hodge number~$\h^{1,1}(\tX)$ using the complete intersection representation of~$\tX \subset \tZ$.

First, we show that
\begin{equation}
\label{eq:h-om-twisted}
H^\bullet(\tZ, \Omega^1_{\tZ}(-aH_1 - bH_2 + c\tE)) = 0
\end{equation} 
if either
\begin{aenumerate}
\item 
\label{it:c-small}
$1 \le c \le \dim(Z) - 4$, $1 \le a \le \io(Z_1) - 1$, and~$1 \le b \le \io(Z_2) - 1$, or
\item 
\label{it:c-big}
$c = \dim(Z) - 3$, $a = \io(Z_1) - 1$, and~$b = \io(Z_2) - 1$.
\end{aenumerate}
For this we twist~\eqref{eq:om-zz} appropriately and consider the hypercohomology spectral sequence.
For~$1 \le c \le \dim(Z) - 3$, using~\eqref{eq:rzs-van} we identify cohomology of the first term with
\begin{equation*}
H^\bullet(Z, \Omega^1_{Z_1}(-aH_1 - bH_2)) \oplus H^\bullet(Z, \Omega^1_{Z_2}(-aH_1 - bH_2)).
\end{equation*}
Now applying the K\"unneth formula and Kodaira vanishing we see that this cohomology is zero
(for the first summand we use the inequality~$1 \le b \le \io(Z_2) - 1$ 
and for the second summand we use~$1 \le a \le \io(Z_1) - 1$).
The fourth term
\begin{equation*}
H^\bullet(\tZ, \cO_{\tZ}(-aH_1 - bH_2 + c\tE) \otimes \epsilon_*\cO_{\tE}),
\end{equation*}
also vanishes by~\eqref{eq:rps-van}.
Finally, cohomology of the third term can be identified with
\begin{equation*}
H^\bullet(\tE, p^*\cN_{\Sigma/Z}^\vee \otimes \cO_{\tE}(-aH_1 - bH_2 + (c + 1)\tE)).
\end{equation*}
In case~\ref{it:c-small} this is zero by~\eqref{eq:rps-van}, 
and in case~\ref{it:c-big} using~\eqref{eq:rps-van} we can rewrite this as
\begin{equation*}
H^{\bullet + 3 - \dim(Z)}(\Sigma, \cN_{\Sigma/Z}^\vee \otimes \det(\cN_{\Sigma/Z}) \otimes \cO_\Sigma(1 - \io(Z_1), 1 - \io(Z_2))).
\end{equation*}
But the adjunction formula gives~$\det(\cN_{\Sigma/Z}) \otimes \cO_E(1 - \io(Z_1), 1 - \io(Z_2)) \cong \cO_E(-1,-1)$,
while~$\cN_{\Sigma/Z}^\vee \cong \cN^\vee_{L_1/Z_1} \oplus \cN^\vee_{L_2/Z_2}$ because~$\Sigma = L_1 \times L_2$.
Therefore, this cohomology also vanishes,
and summarizing the above computation, we finally obtain~\eqref{eq:h-om-twisted}.

Next, tensoring the Koszul resolution of~$\tX \subset \tZ$ by~$\Omega^1_{\tZ}$ and using~\eqref{eq:h-om-twisted}, we see that
\begin{equation}
\label{eq:om-tz-vert-tx}
H^\bullet(\tX, \Omega^1_{\tZ}\vert_{\tX}) \cong 
H^\bullet(\tZ, \Omega^1_{\tZ}).
\end{equation}

On the other hand, we check that
\begin{equation}
\label{eq:conormal-coh}
h^q(\cO_{\tX}(\tE - a_iH_1 - b_iH_2)) = 0
\qquad\text{for~$q < \dim(\tX) = 3$.}
\end{equation}
Indeed, since~$\cO_{\tX}(K_{\tX}) = \cO_{\tX}(-H_1-H_2)$ by adjunction formula,
it is enough to show that
\begin{equation*}
h^q(\cO_{\tX}((a_i - 1)H_1 + (b_i - 1)H_2 - \tE)) = 0
\qquad\text{for~$q > 0$,}
\end{equation*}
and to compute this we can use the exact sequence
\begin{equation*}
0 \to \cO_{\tX}((a_i - 1)H_1 + (b_i - 1)H_2 - \tE) \to \cO_{\tX}((a_i - 1)H_1 + (b_i - 1)H_2) \to \cO_\Sigma(a_i-1,b_i-1) \to 0
\end{equation*}
and note that the the second and third terms have cohomology only in degree~$0$ (because~$a_i,b_i \ge 1$)
and the restriction morphism on global sections is surjective by assumption.

Finally, combining~\eqref{eq:conormal-coh}, \eqref{eq:om-tz-vert-tx}, and~\eqref{eq:om-tz}
with the conormal exact sequence
\begin{equation*}
0 \longrightarrow 
\moplus_{i=1}^{\dim Z - 3} \cO_{\tX}(\tE - a_iH_1 - b_iH_2) \longrightarrow 
\Omega^1_{\tZ}\vert_{\tX} \longrightarrow 
\Omega^1_{\tX} \longrightarrow 
0
\end{equation*}
we obtain
\begin{equation*}
\h^{1,1}(\tX) = \dim H^1(\tX, \Omega^1_{\tX}) =\dim H^1(Z_1, \Omega^1_{Z_1}) + \dim H^1(Z_2, \Omega^1_{Z_2}) + 1 = 3,
\end{equation*}
and since~$H^2(\tX, \cO_{\tX}) = H^2(\bar{X}, \cO_{\bar{X}}) = 0$ 
(because~$\bar{X}$ is a Fano variety with terminal singularities), 
we conclude that~$\uprho(\tX) = 3$.
\end{proof} 


\section{Special linear sections of Mukai varieties}
\label{sec:links-78910}

Recall the Mukai varieties~$\rM_g$, $g \in \{7,8,9,10\}$, defined in the Introduction:
\begin{equation*}
\rM_{7\hphantom{0}} = \OGr_+(5,10),
\quad
\rM_{8\hphantom{0}} = \Gr(2,6),
\quad
\rM_{9\hphantom{0}} = \LGr(3,6),
\quad
\rM_{10} = \GtGr(2,7),
\end{equation*}
where the last is the adjoint Grassmannian of the simple algebraic group of Dynkin type~$\mathrm{G}_2$.
In this section we show that any factorial Fano threefold~$X$ with a single node or cusp,
\mbox{$\uprho(X) = 1$}, \mbox{$\io(X) = 1$}, and~$g \coloneqq \g(X) \in \{7,8,9,10\}$
is a dimensionally transverse linear section of the Mukai variety~$\rM_{g}$,
thus providing an extension of the Mukai's description of smooth Fano threefolds.
We also show that the Sarkisov link~\eqref{eq:sl-factorial} is induced by a Sarkisov link of~$\rM_{g}$ constructed in~\cite[\S2]{KP21},
and its midpoint~$\bar{X}$ is a linear section of~$\rM_{g - 1}$.

For the reader's convenience we provide here an outline of our argument:
\begin{itemize}[wide]
\item 
We choose a smooth quadric~$Q$ of maximal dimension in the Mukai variety~$\rM_{g-1}$ 
and identify the blowup~$\Bl_Q(\rM_{g-1})$ as a projective bundle over (or as a blowup of) a simpler variety,
see Propositions~\ref{prop:bl-q-rm6}, \ref{prop:bl-q-rm7}, \ref{prop:bl-q-rm8}, and~\ref{prop:bl-q-rm9}.
\item 
By passing to linear sections we prove a bijection between 
the set of 3-dimensional linear sections~$\bar{X} \subset \rM_{g-1}$ 
such that~$\Sigma = \bar{X} \cap Q$ is an irreducible quadrics surface and~$\Blw{\Sigma}(\bar{X})$ is smooth and
the set of pairs~$(X_+,\Gamma)$, where~$\Gamma$ is a smooth curve on a smooth Fano threefold~$X_+$ as in Proposition~\ref{prop:sl-g78910},
see Corollaries~\ref{cor:critical-gr25-p3}, \ref{cor:critical-ogr-q3}, \ref{cor:critical-gr26-p8}, and~\ref{cor:critical-lgr-y5}.
\item 
We consider the Sarkisov link
\begin{equation}
\label{eq:sl-rm}
\vcenter{\xymatrix@!C{
& 
\Bl_{x_0}(\rM_g) \ar[dl]_\pi \ar[dr]^\xi \ar@{<-->}[rr] &&
\widehat{\rM}_g^+ \ar[dl]_{\xi_+} \ar[dr]^{\pi_+}
\\
\rM_g &&
\overline{\rM}_g &&
\rM_g^+
}}
\end{equation}
constructed in~\cite[Theorems~2.2 and~4.4]{KP21}, 
and by passing to linear sections we construct a link 
that goes from the blowup~$\Bl_{x_0}(X)$ of a linear section~$(X,x_0)$ of~$\rM_g$ with a single node or cusp to the blowup~$\Bl_\Gamma(X_+)$.
Then we apply the first part of Proposition~\ref{prop:sl-g78910} and the uniqueness of Sarkisov links 
to identify any factorial Fano threefold~$X$ with a single node or cusp with a linear section of~$\rM_g$,
see Corollaries~\ref{cor:critical-gr25-p3-sl}, \ref{cor:critical-ogr-q3-sl}, \ref{cor:critical-lgr-y4-sl}, and~\ref{cor:critical-lgr-y5-sl}.
\end{itemize}

The general line of argument is the same in all cases, but technical details differ very much,
so we have to keep a separate section for each case.

\subsection{Genus 7}

All maximal quadrics in~$\Gr(2,5)$ have the form~$Q^4 = \Gr(2,4) \subset \Gr(2,5)$.
Moreover, the group~$\PGL(5)$ acts transitively on the Hilbert scheme of 4-dimensional quadrics in~$\Gr(2,5)$,
so from now on we fix one such quadric~$Q^4 \subset \Gr(2,5)$.

\begin{proposition}
\label{prop:bl-q-rm6}
There is a canonical isomorphism
\begin{equation}
\label{eq:gr25-p3}
\Bl_{Q^4}(\Gr(2,5)) \cong \PP_{\PP^3}(\cO(-1) \oplus \cT(-2)),
\end{equation}
where~$\cT$ is the tangent bundle.
If~$\bar{H}$ and~$\bar{E}$ denote the hyperplane class of~$\Gr(2,5)$ and the class of the exceptional divisor in~\eqref{eq:gr25-p3}
then the pullback of the hyperplane class of~$\PP^3$ is
\begin{equation}
\label{eq:hp-g6}
H_+ = \bar{H} - \bar{E}
\end{equation}
and the relative hyperplane class for~$\PP_{\PP^3}(\cO(-1) \oplus \cT(-2))$ coincides with~$\bar{H}$.
\end{proposition}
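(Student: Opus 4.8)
The plan is to exhibit $\Bl_{Q^4}(\Gr(2,5))$ explicitly as a projective bundle by identifying the linear projection away from $\langle Q^4\rangle$. First I would fix a $4$-dimensional subspace $W_4\subset W_5$ so that $Q^4=\Gr(2,W_4)\subset\Gr(2,W_5)=\Gr(2,5)$; its linear span $\langle Q^4\rangle\subset\PP^9=\PP(\wedge^2 W_5)$ is the projectivization of $\wedge^2 W_4$, which has codimension $4$ (dimension $3$ in $\PP^9$). The quotient line $W_5/W_4$ gives a natural identification $\wedge^2 W_5/\wedge^2 W_4\cong W_4\otimes(W_5/W_4)\cong W_4$, so the linear projection from $\langle Q^4\rangle$ lands in $\PP(W_4)\cong\PP^3$. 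Concretely, on the open locus $\Gr(2,5)\setminus Q^4$ a $2$-plane $U\subset W_5$ meets $W_4$ in a line, and sending $U$ to that line $U\cap W_4$ defines the rational map $\Gr(2,5)\dashrightarrow\PP(W_4)$; I would check this agrees with the linear projection. Then the fiber over a line $\ell\subset W_4$ consists of those $2$-planes containing $\ell$, i.e.\ $\PP((W_5/\ell))\cong\PP^3$, and I would need to see that the blowup resolves this and realizes $\Bl_{Q^4}(\Gr(2,5))$ as the total space of a $\PP^3$-bundle over $\PP(W_4)$.

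The key step is to pin down which rank-$4$ bundle on $\PP(W_4)=\PP^3$ this is, and to do this I would compute the pushforward along the projection $p$ of $\cO_{\Bl}(\bar H)$ (the pullback of the Pl\"ucker hyperplane). Over a line $\ell=\langle v\rangle\subset W_4$ the fiber's linear system of hyperplanes is $(\wedge^2 W_5)^\vee$ modulo those vanishing on $v\wedge W_5$; dualizing, the relative $H^0$ of $\cO(\bar H)$ is $v\wedge W_5=\langle v\rangle\wedge W_5$, and as $v$ varies this is the image of the bundle map $\cO(-1)\otimes W_5\to\wedge^2 W_5\otimes\cO$ on $\PP(W_4)$. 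Splitting off the trivial part coming from $\wedge^2 W_4$ and using the Euler sequence on $\PP^3$ one identifies $p_*\cO_{\Bl}(\bar H)$ with $(\cO(-1)\oplus\cT(-2))^\vee$; by our normalization convention for projective bundles (recalled in \S\ref{ss:conventions}, $p_*\cO_{\PP(\cE)}(M)\cong\cE^\vee$) this gives the isomorphism~\eqref{eq:gr25-p3} with relative hyperplane class $\bar H$. Equivalently, and perhaps more cleanly, I would invoke the resolution of the ideal sheaf of $Q^4$: the Pl\"ucker equations of $\Gr(2,5)$ together with the four linear equations of $\langle Q^4\rangle$ assemble into a morphism of bundles on $\PP^3$ whose degeneracy locus is exactly the incidence, and apply \cite[Lemma~2.1]{K16} (used elsewhere in the paper, cf.\ the proof of Corollary~\ref{cor:barx-bu-gl}) to read off the projective bundle structure and the class $\bar H$.

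Finally, for the divisor class identities: the contraction $\Bl_{Q^4}(\Gr(2,5))\to\PP^3$ is the linear projection, so the pullback $H_+$ of $\cO_{\PP^3}(1)$ is the moving part of $|\bar H-\bar E|$; since the exceptional divisor $\bar E$ is contracted to the locus in $\Gr(2,5)$ where the projection is undefined (namely $Q^4$ itself, which has codimension $>1$), and since $\PP(\cO(-1)\oplus\cT(-2))$ has Picard rank $2$ generated by $\bar H$ and $H_+$, the relation must be exactly $H_+=\bar H-\bar E$, which is~\eqref{eq:hp-g6}. The statement that the relative hyperplane class is $\bar H$ has already been recorded in the bundle identification above. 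The main obstacle I anticipate is the bookkeeping in the bundle computation — correctly tracking the twist so that the Euler sequence produces $\cT(-2)$ rather than some other twist of $\cT$, and making sure the normalization of the relative hyperplane class matches the paper's convention; once the morphism of bundles on $\PP^3$ is written down correctly this is mechanical, but it is the step where sign and twist errors are easiest to make.
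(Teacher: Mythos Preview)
Your approach is correct and is an explicit unpacking of the paper's one-line proof, which reads in full: ``Follows from two projective bundle structures of the partial flag variety~$\Fl(1,2;5)$.'' The incidence variety you describe --- pairs $(\ell, U)$ with $\ell \subset U \subset W_5$ and $\ell \subset W_4$ --- is precisely the restriction of the $\PP^3$-bundle $\Fl(1,2;5) \to \PP(W_5)$ to the hyperplane $\PP(W_4)=\PP^3$, and its other projection (to $\Gr(2,5)$) is exactly the blowup of $Q^4$; your pushforward computation then correctly identifies the bundle and the relative hyperplane class.
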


\begin{proof}
Follows from two projective bundle structures of the partial flag variety~$\Fl(1,2;5)$.
\end{proof}

Recall from~\cite{DK1} that an ordinary Gushel--Mukai threefold
is a complete intersection in~$\Gr(2,5)$ of two hyperplanes and a quadric.

\begin{corollary}
\label{cor:critical-gr25-p3}
There is a bijection between the sets of all
\begin{itemize}[wide]
\item 
ordinary Gushel--Mukai threefolds~$\bar{X} \subset \Gr(2,5)$ with isolated hypersurface singularities
such that~$\Sigma = \bar{X} \cap Q^4$
is an irreducible quadric surface and~$\Blw{\Sigma}(\bar{X})$ is smooth, and
\item 
smooth connected curves~$\Gamma \subset \PP^3$ such that~$\deg(\Gamma) = 8$, $\g(\Gamma) = 6$, 
and~\eqref{eq:nondegeneracy} holds.
\end{itemize}
If~$\bar{X}$ corresponds to~$\Gamma$, there is an isomorphism
\begin{equation}
\label{eq:x6-g}
\Blw{\Sigma}(\bar{X}) \cong \Bl_\Gamma(\PP^3)
\end{equation} 
and the morphism~$\Bl_\Gamma(\PP^3) \to \bar{X}$ is the anticanonical contraction.
\end{corollary}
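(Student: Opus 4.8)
The plan is to establish the bijection by exhibiting mutually inverse constructions and then checking that they preserve all the relevant properties. In one direction, starting from an ordinary Gushel--Mukai threefold $\bar{X} \subset \Gr(2,5)$ with isolated hypersurface singularities such that $\Sigma = \bar{X} \cap Q^4$ is an irreducible quadric surface and $\Blw{\Sigma}(\bar{X})$ is smooth, I would take the strict transform of $\bar{X}$ inside $\Bl_{Q^4}(\Gr(2,5))$. Using Lemma~\ref{lem:str-blw} (the morphism to $\bar{X}$ is small because $\Sigma$ is a divisor on $\bar{X}$ which is a Cartier divisor away from the vertex, and $\bar{X}$ has isolated singularities so the exceptional locus is one-dimensional) this strict transform is isomorphic to $\Blw{\Sigma}(\bar{X})$, hence smooth. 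Then I would use the projective bundle structure~\eqref{eq:gr25-p3}: $\bar{X}$ is cut out in $\Gr(2,5)$ by two hyperplanes $\bar{H}$ and a quadric $2\bar{H}$; under the identification of $\Bl_{Q^4}(\Gr(2,5))$ with $\PP_{\PP^3}(\cO(-1) \oplus \cT(-2))$ these become, via~\eqref{eq:hp-g6}, divisors whose strict transforms have classes $\bar{H} - \bar{E} = H_+$ and $2\bar{H} - \bar{E}$. I would compute the pushforward of the structure sheaf of the strict transform along the projection $p \colon \PP_{\PP^3}(\cO(-1) \oplus \cT(-2)) \to \PP^3$; the two linear conditions cut down the rank-$4$ bundle $\cO \oplus \cT(-1)$ (after an appropriate twist of the relative hyperplane class $\bar H$) to a rank-$2$ bundle $\cN$, and the quadratic condition gives a section of $\Sym^2\cN^\vee(\star)$, so the strict transform is the zero locus of a section of a rank-$1$ sheaf on $\PP_{\PP^3}(\cN)$. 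By the standard argument (as in Corollary~\ref{cor:barx-bu-gl}, applying~\cite[Lemma~2.1]{K16}) this zero locus is the blowup of the degeneracy locus of the corresponding map $\cN \to \cN^\vee(\star)$, which is a curve $\Gamma \subset \PP^3$; smoothness of $\Blw{\Sigma}(\bar{X})$ forces $\Gamma$ to be smooth. A Chern class computation on $\PP^3$ then gives $\deg(\Gamma) = 8$ and $\g(\Gamma) = 6$, and~\eqref{eq:nondegeneracy} follows from the fact that $\bar{X}$ is not contained in a hyperplane of $\Gr(2,5)$ (equivalently $\Sigma$ is an \emph{irreducible} quadric surface, not a union of planes), giving precisely the isomorphism~\eqref{eq:x6-g}.

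For the reverse direction, starting from a smooth connected curve $\Gamma \subset \PP^3$ with $\deg(\Gamma) = 8$, $\g(\Gamma) = 6$ and satisfying~\eqref{eq:nondegeneracy}, I would run the construction backwards: form $\Bl_\Gamma(\PP^3)$, show its anticanonical class is nef and big (using~\cite{GLP} to conclude that $\Gamma$ is cut out by quartics, exactly as in the proof of Proposition~\ref{prop:base-gl}), so it is a weak Fano threefold, and identify its anticanonical model with a Gushel--Mukai threefold $\bar{X} \subset \Gr(2,5)$ via the universal property coming from~\eqref{eq:gr25-p3}. Concretely, the twisted ideal sheaf of $\Gamma$ has a two-term locally free resolution (this requires a Castelnuovo--Mumford regularity argument analogous to cases~\ref{it:p3g50}--\ref{it:q3g50} of Proposition~\ref{prop:base-gl}, with the numerology matching $\deg = 8$, $\g = 6$); dualizing and comparing with the projective bundle presentation of $\Bl_{Q^4}(\Gr(2,5))$ exhibits $\Bl_\Gamma(\PP^3)$ as a divisor of the requisite bidegree, whose image in $\Gr(2,5)$ is the desired $\bar{X}$. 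Condition~\eqref{eq:nondegeneracy} guarantees that the quadric surface $\Sigma = \bar{X} \cap Q^4$ is irreducible, and the fact that $\Blw{\Sigma}(\bar{X}) \cong \Bl_\Gamma(\PP^3)$ is smooth is built into the construction. I would also need to check that $\bar{X}$ has only isolated hypersurface singularities: since $\Bl_\Gamma(\PP^3)$ is smooth and the morphism to $\bar X$ is crepant and small (it contracts finitely many curves by the argument of Proposition~\ref{prop:base-gl}, using~\eqref{eq:nondegeneracy} to bound the number of lines meeting $\Gamma$ with high multiplicity), $\bar{X}$ is a Fano threefold with terminal Gorenstein singularities, hence compound Du Val, hence isolated hypersurface.

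Finally, I would verify that these two constructions are mutually inverse, which is essentially formal once both directions are set up: the anticanonical model of $\Blw{\Sigma}(\bar{X})$ recovers $\bar{X}$ (as its anticanonical class is $H_+$ pulled back, which is nef but contracts exactly the exceptional locus over the vertex of $\Sigma$ in the opposite birational model — compare the proof of Theorem~\ref{thm:intro-nf-ci}\ref{thm:nf-ci-barx-x}), and the degeneracy locus construction applied to $\Blw{\Sigma}(\bar{X})$ recovers $\Gamma$ by construction. The last sentence of the statement, that $\Bl_\Gamma(\PP^3) \to \bar{X}$ is the anticanonical contraction, follows because $-K_{\Bl_\Gamma(\PP^3)} = 4H_+ - E_\Gamma$ (with $E_\Gamma$ the exceptional divisor) is base point free and big but contracts precisely the curves on which $H_+$ restricts trivially, which are exactly the strict transforms of the lines $\ell$ with $\operatorname{length}(\Gamma \cap \ell) \ge 4$; by Lemma~\ref{lem:blg-xplus} (in the case $X_+ = \PP^3$) there are only finitely many such lines. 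The main obstacle I anticipate is the two term resolution of the twisted ideal sheaf of $\Gamma$ in $\PP^3$ and the matching of Chern classes: one must show that the resolution has exactly the shape dictated by the rank-$2$ bundle $\cN$ on $\PP^3$ appearing in Proposition~\ref{prop:bl-q-rm6}, which requires a careful cohomology-vanishing computation for $\cI_\Gamma(d)$ at small twists (the analogue of the vanishing~\eqref{eq:van-g05-p3}), and one must check that the generic such $\bar X$ indeed has $\Sigma$ irreducible and that the degeneracy scheme of the relevant bundle map is genuinely a smooth curve rather than something of excess dimension — this is where the hypothesis that $\Blw{\Sigma}(\bar{X})$ is smooth does the essential work.
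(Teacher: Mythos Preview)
Your overall strategy matches the paper's, but the divisor-class bookkeeping in the forward direction is wrong and this derails the computation. Since~$\Sigma = \bar{X}\cap Q^4$ is a \emph{surface}, the two hyperplanes cutting out~$\bar X$ must be \emph{transverse} to~$Q^4$ (if either contained~$Q^4$ the intersection would have the wrong dimension); it is the quadric that contains~$Q^4$. Hence the strict transforms of the two hyperplanes have class~$\bar H$, not~$\bar H-\bar E$, while the quadric has class~$2\bar H-\bar E=\bar H+H_+$. So on the $\PP^3$-fiber all three divisors are of relative degree~$1$: together they give a morphism
\[
\psi\colon \cO(-1)\oplus\cT(-2)\longrightarrow \cO^{\oplus 2}\oplus\cO(1),
\]
and~$\Gamma$ is its degeneracy locus; the blowup~$\Bl_\Gamma(\PP^3)$ is the resulting complete intersection by~\cite[Lemma~2.1]{K16}. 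Your picture with a rank-$2$ subbundle~$\cN$ and a section of~$\Sym^2\cN^\vee(\star)$ would make~$\Blw{\Sigma}(\bar X)$ a conic bundle over~$\PP^3$, generically $2:1$, which is impossible since the map to~$\PP^3$ must be birational. Once~$\psi$ is set up correctly one reads off the resolution
\[
0\to\cO(-5)\oplus\cO(-4)^{\oplus 2}\xrightarrow{\psi^\vee}\cO(-3)\oplus\Omega^1(-2)\to\cO\to\cO_\Gamma\to 0,
\]
from which~$\deg(\Gamma)=8$, $\g(\Gamma)=6$ and~\eqref{eq:nondegeneracy} follow by cohomology.

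In the reverse direction your sketch is basically right, but there is one nontrivial vanishing you do not address: to get the Beilinson-type decomposition of~$\cI_\Gamma$ with respect to~$(\cO(-5),\cO(-4),\Omega^1(-2),\cO(-3))$ in the right shape one needs~$h^1(\Gamma,\cO_\Gamma(1))\le 1$. Since~$\deg\cO_\Gamma(1)=8=2\g(\Gamma)-4$, this is not automatic; the paper handles it by observing that~$h^1\ge 2$ would force~$\Gamma$ hyperelliptic with~$\cO_\Gamma(1)$ the fourth power of the $g^1_2$, contradicting the embedding~$\Gamma\hookrightarrow\PP^3$. This is the ``main obstacle'' you anticipated, and it is genuinely delicate rather than routine Castelnuovo--Mumford regularity.
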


\begin{proof}
Let~$\bar{X} \subset \Gr(2,5)$ be a Gushel--Mukai threefold containing a quadric surface~$\Sigma \subset Q^4$.
Since~$\bar{X} \subset \Gr(2,5)$ is a complete intersection of two hyperplanes and a quadric 
while~$\Sigma \subset Q^4$ is a complete intersection of two hyperplanes, 
we may assume that the quadratic equation of~$\bar{X}$ vanishes on~$Q^4$, 
while the hyperplanes are dimensionally transverse to it.
Consider the intersection~$\bar{X}' \subset \Bl_{Q^4}(\Gr(2,5))$
of the corresponding divisor of class~$2\bar{H} - \bar{E}$ and two divisors of class~$\bar{H}$.
It is clear that the projection~$\bar{X}' \to \bar{X}$ 
has at most 1-dimensional fibers over the singularities of~$\bar{X}$ 
and is an isomorphism elsewhere.
Since~$\bar{X}$ has isolated singularities,
$\bar{X}'$ is a normal irreducible threefold and it coincides with the strict transform of~$\bar{X}$, 
therefore Lemma~\ref{lem:str-blw} implies that
\begin{equation*}
\bar{X}' \cong \Blw{\Sigma}(\bar{X}) \subset \Bl_{Q^4}(\Gr(2,5)) \cong \PP_{\PP^3}(\cO(-1) \oplus \cT(-2)).
\end{equation*}
In other words, $\Blw{\Sigma}(\bar{X})$ is an irreducible complete intersection,
and since~$2\bar{H} - E \sim \bar{H} + H_+$ by~\eqref{eq:hp-g6},
this complete intersection corresponds to a morphism
\begin{equation*}
\psi \colon \cO(-1) \oplus \cT(-2) \longrightarrow \cO^{\oplus 2} \oplus \cO(1).
\end{equation*}
Now, if~$\Gamma \subset \PP^3$ is the degeneracy locus of~$\psi$ then~$\dim(\Gamma) \le 1$ 
(otherwise the preimage of~$\Gamma$ 
under the map~$\Blw{\Sigma}(\bar{X}) \hookrightarrow \PP_{\PP^3}(\cO(-1) \oplus \cT(-2)) \to \PP^3$ would be an irreducible component),
and applying~\cite[Lemma~2.1]{K16} we obtain an isomorphism~$\Blw{\Sigma}(\bar{X}) \cong \Bl_\Gamma(\PP^3)$,
and since the left side is smooth, we conclude that~$\Gamma$ is smooth as well.
Finally, computing the determinants of the source and target of~$\psi$ we see that~$\Ker(\psi) \cong \cO(-4)$, 
hence after dualization and twist we obtain an exact sequence
\begin{equation}
\label{eq:res-g86}
0 \longrightarrow 
\cO(-5) \oplus \cO(-4)^{\oplus 2} \xrightarrow{\ \psi^\vee\ } 
\cO(-3) \oplus \Omega^1(-2) \xrightarrow{\quad} 
\cO \xrightarrow{\quad} 
\cO_\Gamma \longrightarrow 0.
\end{equation} 
Using it to compute the cohomology of~$\cO_\Gamma$ we see that~$\Gamma$ is connected and~$\g(\Gamma) = 6$,
and computing similarly the cohomology of~$\cO_\Gamma(3)$ we see that~$\deg(\Gamma) = 8$
and~\eqref{eq:nondegeneracy} holds.

Conversely, let~$\Gamma \subset \PP^3$ be a smooth connected curve 
such that~$\deg(\Gamma) = 8$, $\g(\Gamma) = 6$, and~\eqref{eq:nondegeneracy} holds.
We check that its structure sheaf has a resolution of the form~\eqref{eq:res-g86},
by decomposing~$\cI_\Gamma$ with respect to the exceptional collection~$(\cO(-5),\cO(-4),\Omega^1(-2),\cO(-3))$.
It is not hard to see that to show that the decomposition has form~\eqref{eq:res-g86},
it is enough to verify the following equalities
\begin{equation*}
H^\bullet(\cI_\Gamma(1)) = \Bbbk[-2],
\qquad 
H^\bullet(\cI_\Gamma(2)) = \Bbbk[-1],
\qquad 
H^\bullet(\cI_\Gamma(3)) = \Bbbk.
\end{equation*}
By condition~\eqref{eq:nondegeneracy} and the Riemann--Roch Theorem it is enough to check that
\begin{equation*}
\dim H^1(\Gamma, \cO_\Gamma(1)) \le 1,
\qquad 
H^1(\Gamma, \cO_\Gamma(2)) = H^1(\Gamma, \cO_\Gamma(3)) = 0.
\end{equation*}
The last two equalities are obvious, because~$\deg(\cO_\Gamma(2)) > 2\g(\Gamma) - 2$.
If the first condition fails then it follows that~$\Gamma$ is hyperelliptic 
and~$\cO_\Gamma(1)$ is isomorphic to the fourth power of the hyperelliptic line bundle.
But then the morphism~$\Gamma \to \PP^3$ given by this line bundle factors through the hyperelliptic covering,
a contradiction.

Thus, we obtain a resolution~\eqref{eq:res-g86}, and applying~\cite[Lemma~2.1]{K16}
we conclude that~$\Bl_\Gamma(\PP^3)$ is a complete intersection in~$\Bl_{Q^4}(\Gr(2,5))$
of a divisor of class~$2\bar{H} - \bar{E}$ and two divisors of class~$\bar{H}$
and there is a linear equivalence~$\bar{H} \sim 4H_+ - E_+$, where~$E_+$ is the exceptional divisor of~$\Bl_\Gamma(\PP^3)$.
Therefore, the morphism~$\Bl_\Gamma(\PP^3) \to \Gr(2,5)$ is anticanonical, hence small by Lemma~\ref{lem:blg-xplus},
its image~$\bar{X}\subset\Gr(2,5)$ is an ordinary Gushel--Mukai threefold containing a quadric surface~$\Sigma$
and~$\Bl_\Gamma(\PP^3)$ is the strict transform of~$\bar{X}$.
In particular, $\bar{X}$ has terminal, hence isolated hypersurface singularities, 
and~$\Bl_\Gamma(\PP^3) \cong \Blw{\Sigma}(\bar{X})$ by Lemma~\ref{lem:str-blw}.

These two constructions are mutually inverse and define the required bijection.
Finally, the linear equivalences~$\bar{E} \sim \bar{H} - H_+ \sim 3H_+ - E_+$
show that~$\Sigma$ is dominated by the strict transform 
of the unique (by~\eqref{eq:nondegeneracy}) divisor~$S_\Gamma \subset \PP^3$ in~$|3H_+ - \Gamma|$,
hence~$\Sigma$ is irreducible.
\end{proof}

\begin{remark}
The above proof shows that there is a contraction~$\Bl_\Gamma(S_\Gamma) \to \Sigma$, given by the class~$4H_+ - E_+$.
If~$S_\Gamma$ is smooth, we have~$\Bl_\Gamma(S_\Gamma) \cong S_\Gamma$, 
hence~$\Sigma$ is smooth as well,
and the morphism contracts the five $4$-secant lines of~$\Gamma$.
\end{remark} 

\begin{corollary}
\label{cor:critical-gr25-p3-sl}
If~$\Gamma \subset X_+ = \PP^3$ is a smooth curve with~$\deg(\Gamma) = 8$ and~$\g(\Gamma) = 6$ satisfying~\eqref{eq:nondegeneracy},
there is a Sarkisov link~\eqref{eq:sl-factorial}, where
\begin{itemize}[wide]
\item 
$X$ is a linear section of the Mukai variety~$\rM_7$ with a single node or cusp~$x_0 \in X$,
\item 
$\bar{X} \subset \Gr(2,5)$ is an ordinary Gushel--Mukai threefold with terminal singularities
containing an irreducible quadric surface~\mbox{$\Sigma \subset \bar{X}$},
\item 
$\pi$ and~$\pi_+$ are the blowups of~$x_0$ and~$\Gamma$, and
\item 
$\xi$ and~$\xi_+$ are the blowups of the Weil divisor classes~$-\Sigma$ and~$\Sigma$, respectively.
\end{itemize}
Moreover, any factorial Fano threefold~$X$ with a single node or cusp, $\uprho(X) = 1$, $\io(X) = 1$, and~$\g(X) = 7$
is a dimensionally transverse linear section of the Mukai variety~$\rM_7$.
\end{corollary}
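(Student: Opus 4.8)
The statement has two parts: the existence of the Sarkisov link starting from a given curve $\Gamma \subset \PP^3$, and the fact that every factorial $1$-nodal (or $1$-cuspidal) $X$ with $\uprho(X)=1$, $\io(X)=1$, $\g(X)=7$ arises as a linear section of $\rM_7$. The plan is to first build the link explicitly by passing to a linear section of the Sarkisov link~\eqref{eq:sl-rm} of the Mukai variety $\rM_7$ constructed in~\cite[Theorems~2.2 and~4.4]{KP21}, then to use the bijection of Corollary~\ref{cor:critical-gr25-p3} together with the uniqueness of Sarkisov links to deduce the converse statement from Proposition~\ref{prop:sl-g78910}.

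\emph{Step 1 — building the link from $\Gamma$.} Given $\Gamma \subset \PP^3$ smooth with $\deg(\Gamma)=8$, $\g(\Gamma)=6$ satisfying~\eqref{eq:nondegeneracy}, Corollary~\ref{cor:critical-gr25-p3} produces an ordinary Gushel--Mukai threefold $\bar{X}\subset\Gr(2,5)$ with isolated hypersurface singularities containing an irreducible quadric surface $\Sigma = \bar{X}\cap Q^4$ and an isomorphism $\Blw{\Sigma}(\bar{X}) \cong \Bl_\Gamma(\PP^3)$ under which the anticanonical contraction of $\Bl_\Gamma(\PP^3)$ is the morphism to $\bar{X}$. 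Since $\bar{X}$ is a linear section of $\Gr(2,5)$, the Gushel construction realizes $\bar{X}$ as a hyperplane section of a $3$-dimensional linear section of the Mukai variety $\rM_7 = \OGr_+(5,10)$; more precisely one takes a $3$-dimensional dimensionally transverse linear section $X$ of $\rM_7$ whose image in $\Gr(2,5)$ equals $\bar{X}$, with the cone point of the Gushel cone mapping to the singular point $x_0$ of $X$, which is a node or cusp by the argument of Lemma~\ref{lem:blowup-covering} (the corank of the Hessian of $X$ at $x_0$ equals that of the quadratic equation of $\bar{X}$). Now the Sarkisov link~\eqref{eq:sl-rm} for $\rM_7$ restricts, upon intersecting with the appropriate linear subspace, to a diagram of the form~\eqref{eq:sl-factorial}: $\pi\colon\Bl_{x_0}(X)\to X$ is the blowup of $x_0$ (a linear section of $\Bl_{x_0}(\rM_7)$), $\xi$ is the blowup of the Weil class $-\Sigma$ with image $\bar{X}=\overline{\rM}_7$-section, $\xi_+$ is the blowup of the opposite class $\Sigma$ (these are flops of each other by Corollary~\ref{cor:bl-weil-flop}, and both targets are smooth by Corollary~\ref{cor:critical-gr25-p3} and~\cite[Theorem~2.4]{Kollar:flops}), and $\pi_+\colon \hX^+\to X_+$ is the blowup of $\Gamma$ on $X_+=\PP^3$, recovering the right half of the link and the divisor-class formulas~\eqref{eq:hp-ep-h-e} from the equivalences $H_+\sim\bar H-\bar E$, $E_+\sim 3H_+-\bar E$ displayed in the proof of Corollary~\ref{cor:critical-gr25-p3}. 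One must check that the linear section of $\overline{\rM}_7$ has terminal singularities and that the small resolutions are the blowups of $\pm\Sigma$; this follows by combining Lemma~\ref{lem:blg-xplus} (which gives that the anticanonical contraction of $\Bl_\Gamma(\PP^3)$ is small) with Lemma~\ref{lem:barx-factorial}.

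\emph{Step 2 — the converse.} Let $X$ be a factorial Fano threefold with a single node or cusp, $\uprho(X)=1$, $\io(X)=1$, $\g(X)=7$. By Corollary~\ref{cor:intersection-of-quadrics} the anticanonical embedding $X\subset\PP^8$ is an intersection of quadrics, and by Proposition~\ref{prop:sl-g78910}\ref{it:sl-g78910} there is a Sarkisov link~\eqref{eq:sl-factorial} with $X_+=\PP^3$ and $\Gamma\subset\PP^3$ a smooth curve of degree $8$ and genus $6$; moreover~\eqref{eq:nondegeneracy} holds by the computation $(\io(X_+)-1)H_+-E_+\sim E_+$ in the proof of that proposition (here $\dim|3H_+-\Gamma|=0$). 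Feeding this curve $\Gamma$ into Step~1 produces a Sarkisov link starting from some linear section $X'$ of $\rM_7$ with a single node or cusp, whose right half is the blowup of the same $\Gamma$ on $\PP^3$. Since a Sarkisov link between Mori fiber spaces is determined by its link-theoretic data (in particular by the extremal contraction $\pi_+\colon\hX^+\to X_+$ and the blowup center $\Gamma$, by the uniqueness of two-ray games / untwisting, cf.\ the discussion preceding~\eqref{eq:sl-factorial}), the two links coincide, so $X\cong X'$ is a dimensionally transverse linear section of $\rM_7$, as claimed.

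\emph{Main obstacle.} The delicate point is Step~1: verifying that the Sarkisov link~\eqref{eq:sl-rm} of $\rM_7$ genuinely restricts to the three-dimensional section — i.e., that passing to the linear section commutes with each of the morphisms $\pi,\xi,\xi_+,\pi_+$ and with the flop in the middle, that the resulting central object $\overline{\rM}_7$-section is normal with terminal Gorenstein singularities (not worse), and that the strict transforms are exactly the blowups of the Weil classes $\pm\Sigma$ rather than some partial resolution. This requires the flatness/base-change arguments of~\cite[\S2]{KP21} applied to a general enough linear section, together with Lemma~\ref{lem:ddprime} and Lemma~\ref{lem:str-blw} to identify the strict transforms with $\Blw{\pm\Sigma}$; the genericity needed is supplied by the smoothness hypothesis on $\Blw{\Sigma}(\bar{X})$ built into the bijection of Corollary~\ref{cor:critical-gr25-p3}. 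The remaining cohomological and Chern-class computations are routine and parallel to those already carried out in the proof of Corollary~\ref{cor:critical-gr25-p3}.
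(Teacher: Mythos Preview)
Your overall architecture is right --- restrict the big Sarkisov link~\eqref{eq:sl-rm} of $\rM_7$ from~\cite{KP21} to a linear section to produce~\eqref{eq:sl-factorial}, then run Proposition~\ref{prop:sl-g78910} backwards for the converse --- and Step~2 matches the paper. But Step~1 has a genuine gap in the mechanism you describe.

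You write that ``the Gushel construction realizes $\bar{X}$ as a hyperplane section of a $3$-dimensional linear section of $\rM_7$'', taking $X$ to be such a section ``whose image in $\Gr(2,5)$ equals $\bar{X}$''. This is not correct: there is no natural morphism $\rM_7=\OGr_+(5,10)\to\Gr(2,5)$, and $\bar{X}$ is \emph{not} a hyperplane section of $X$ (it has genus $6$, $X$ has genus $7$; the anticanonical map $\hX\to\bar{X}$ is birational, not a linear slicing). The Gushel construction links $g=6$ varieties to $\CGr(2,5)$, not to $\OGr_+(5,10)$. Your citation of Lemma~\ref{lem:blowup-covering} for the corank claim is also off: that lemma concerns double covers of hyperelliptic threefolds, not linear sections of Mukai varieties.

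What the paper actually does, and what you are missing, is an explicit bundle computation. One combines the resolution~\eqref{eq:res-g86} with the Euler sequence to get a resolution
\[
0 \longrightarrow \cO(-4)^{\oplus 6} \longrightarrow \cO(-3)\oplus\Omega^1(-2)\oplus\cT(-5) \longrightarrow \cO \longrightarrow \cO_\Gamma \longrightarrow 0,
\]
and then recognizes the middle term as the restriction to a hyperplane $\PP^3\subset\PP^4$ of $\cE^\vee(-4)$, where $\cE=\cO(-1)\oplus\Omega^2_{\PP^4}(2)$ is precisely the bundle with $\widehat{\rM}_7^+\cong\PP_{\PP^4}(\cE)$ in the link~\eqref{eq:sl-rm}. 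Applying~\cite[Lemma~2.1]{K16} to the dual morphism then exhibits $\Bl_\Gamma(\PP^3)$ as the intersection in $\widehat{\rM}_7^+$ of one divisor in $|H_+|=|H-2E|$ and six in $|\bar H|=|H-E|$; passing through the link~\eqref{eq:sl-rm} gives the flop to $\Bl_{x_0}(X)$ with $X\subset\rM_7$ a codimension-$7$ linear section. Only then does one invoke Corollary~\ref{cor:critical-gr25-p3}, Corollary~\ref{cor:bl-weil-flop}, the argument of Theorem~\ref{thm:intro-nf-ci}\ref{thm:nf-ci-barx-x}, and Corollary~\ref{cor:ci-factorial} to identify the singularity and factoriality of $X$. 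Your ``Main obstacle'' paragraph correctly senses that the restriction step is the crux, but the resolution you need is this concrete bundle identification, not an abstract flatness/base-change argument.
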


\begin{proof}
Assume~$\Gamma \subset X_+ = \PP^3$ is a smooth curve, $\deg(\Gamma) = 8$, $\g(\Gamma) = 6$,
and~\eqref{eq:nondegeneracy} holds. 
The proof of Corollary~\ref{cor:critical-gr25-p3} shows that~$\cO_\Gamma$ has a resolution of the form~\eqref{eq:res-g86}.
Combining it with a twist of the Euler 
sequence~$0 \longrightarrow \cO(-5) \longrightarrow \cO(-4)^{\oplus 4} \longrightarrow \cT(-5) \longrightarrow 0$,
we see that~$\cO_\Gamma$ also has a resolution of the form
\begin{equation} 
\label{eq:resolution-g86}
0 \longrightarrow 
\cO(-4)^{\oplus 6} \xrightarrow{\ \varphi^\vee\ } 
\cO(-3) \oplus \Omega^1(-2) \oplus \cT(-5) \xrightarrow{\quad} 
\cO \xrightarrow{\quad} 
\cO_\Gamma \longrightarrow 0.
\end{equation}

On the other hand, recall from~\cite[Theorem~2.2 and~\S2.4]{KP21} that there is a Sarkisov link~\eqref{eq:sl-rm},
where~$\rM_7^+ = \PP^4$ and~$\widehat{\rM}_7^+ \cong \PP_{\PP^4}(\cE)$ with~$\cE = \cO(-1) \oplus \Omega^2_{\PP^4}(2)$,
and relations
\begin{equation*}
H_+ \sim H - 2E
\qquad\text{and}\qquad 
\bar{H} \sim H - E
\qquad\text{in~$\Pic(\Bl_{x_0}(\rM_7)) \cong \Pic(\PP_{\PP^4}(\cE))$,}
\end{equation*}
where~$H_+$ and~$\bar{H}$ denote the hyperplane class of~$\PP^4$ and the relative hyperplane class of~$\PP_{\PP^4}(\cE)$, respectively. 
Note that the restriction of~$\cE^\vee(-4)$ to a hyperplane~$\PP^3 \subset \PP^4$ 
is isomorphic to the second term of~\eqref{eq:resolution-g86}, 
and the first map in~\eqref{eq:resolution-g86} is the twisted dual of a morphism~$\varphi \colon \cE\vert_{\PP^3} \to \cO^{\oplus 6}$.
Now, applying~\cite[Lemma~2.1]{K16}, we see that~$\Bl_\Gamma(\PP^3)$ 
can be identified with a linear section of~$\PP_{\PP^3}(\cE\vert_{\PP^3})$ of codimension~$6$, 
hence it is an intersection in~$\widehat{\rM}_7^+ = \PP_{\PP^4}(\cE)$ of one divisor 
in the linear system~$|H_+| = |H-2E|$ and six divisors in~$|\bar{H}| = |H-E|$.
The diagram~\eqref{eq:sl-rm} then shows that there is a flop from~$\Bl_\Gamma(\PP^3)$ 
onto the blowup~$\Bl_{x_0}(X)$ of a linear section~$X \subset \rM_7$ of codimension~$7$ which is singular at~$x_0$
and we obtain a Sarkisov link that has the form~\eqref{eq:sl-factorial}.

Since the midpoint~$\bar{X}$ of the link is obtained from the anticanonical contraction of~$\Bl_\Gamma(\PP^3)$,
Corollary~\ref{cor:critical-gr25-p3} shows that~$\bar{X}$ is an ordinary Gushel--Mukai threefold with terminal singularities
containing an irreducible quadric surface~$\Sigma$ and~$\Bl_\Gamma(\PP^3) \cong \Blw{\Sigma}(\bar{X})$.
Then Corollary~\ref{cor:bl-weil-flop} shows that~$\Bl_{x_0}(X) \cong \Blw{-\Sigma}(\bar{X})$.
Finally, the argument of Theorem~\ref{thm:intro-nf-ci} identifies the strict transform of~$\Sigma$
with the exceptional divisor of~$\Bl_{x_0}(X)$, 
and we conclude that~$X$ is 1-nodal or 1-cuspidal.
In particular, $X$ is a Fano threefold with~$\uprho(X) = 1$, $\io(X) = 1$, and~$\g(X) = 7$,
and it is factorial by Corollary~\ref{cor:ci-factorial} and Remark~\ref{rem:blowup-nc}.
This proves the first part of the corollary.

To prove the second part we start with a factorial Fano threefold~$X$,
consider the Sarkisov link~\eqref{eq:sl-factorial} that exists by the first part of Proposition~\ref{prop:sl-g78910},
and obtain from it a smooth curve~$\Gamma \subset \PP^3$.
Since~\mbox{$\uprho(\Bl_\Gamma(\PP^3)) = 2$}, this Sarkisov link 
must be inverse to the link constructed in the first part of the corollary,
so we conclude that~$X$ is a linear section of~$\rM_7$.
\end{proof} 

\subsection{Genus 8}

All maximal quadrics in~$\OGr_+(5,10)$ have the form
\begin{equation*}
Q^6 = \OGr_+(4,8) \subset \OGr_+(5,10).
\end{equation*}
Moreover, the group~$\SO(10)$ acts transitively on the Hilbert scheme of 6-dimensional quadrics in~$\OGr_+(5,10)$,
so from now on we fix one such quadric~$Q^6 \subset \OGr_+(5,10)$. 

\begin{proposition}[{\cite[Proposition~5.8]{K18:spinor}}]
\label{prop:bl-q-rm7}
There is a canonical isomorphism
\begin{equation}
\label{eq:ogr-q6}
\Bl_{Q^6}(\OGr_+(5,10)) \cong \PP_{Q^6}(\cO(-1) \oplus \cS_4),
\end{equation}
where~$\cS_4$ is a spinor bundle of rank~$4$ on~$Q^6$ with~$\det(\cS_4) \cong \cO(-2)$.
If~$\bar{H}$ and~$\bar{E}$ denote the hyperplane class of~$\OGr_+(5,10)$ and the class of the exceptional divisor in~\eqref{eq:ogr-q6}
then the pullback of the hyperplane class of~$Q^6$ is
\begin{equation}
\label{eq:hp-g7-a}
H_+ = \bar{H} - \bar{E}
\end{equation}
and the relative hyperplane class for~$\PP_{Q^6}(\cO(-1) \oplus \cS_4)$ coincides with~$\bar{H}$.
\end{proposition}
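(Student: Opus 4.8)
\textbf{Proof plan for Proposition~\ref{prop:bl-q-rm7}.}
The plan is to deduce the proposition from the structure of the relevant flag variety of Dynkin type~$\mathrm{D}_5$, in complete analogy with the argument for Proposition~\ref{prop:bl-q-rm6}, but now using the geometry of spinor varieties worked out in~\cite{K18:spinor}. Concretely, I would consider the incidence variety~$\Fl$ parameterizing pairs~$(V_4,V_5)$ of isotropic subspaces of a $10$-dimensional quadratic space with~$V_4 \subset V_5$, $\dim V_4 = 4$, $\dim V_5 = 5$, where~$V_5$ lies in the chosen connected component of the orthogonal Grassmannian (so that its image is~$\OGr_+(5,10)$) and~$V_4$ correspondingly lies in an orthogonal Grassmannian of $4$-spaces. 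This~$\Fl$ admits two natural projections: one to~$\OGr_+(5,10)$ and one to~$\OGr(4,10)$, the latter being (a component of) the quadric~$Q^6 = \OGr_+(4,8)$ after restriction. The first projection exhibits~$\Fl$ as a~$\PP^4$-bundle (parameterizing hyperplanes~$V_4$ in a fixed isotropic~$V_5$), and the key point is that over the locus where~$V_5$ does not meet the fixed~$5$-space defining~$Q^6$ this projection is an isomorphism, while over~$Q^6$ itself it is the $\PP^4$-bundle; thus~$\Fl$ is identified with~$\Bl_{Q^6}(\OGr_+(5,10))$. The second projection then presents~$\Fl$ as a projective bundle over~$Q^6$, and I must identify the rank-$5$ bundle being projectivized.

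The identification of this bundle is exactly the content of~\cite[Proposition~5.8]{K18:spinor}, which I would invoke: the fibre over a point of~$Q^6$ (an isotropic $4$-space~$V_4$) parameterizes the isotropic $5$-spaces in the appropriate family containing~$V_4$, and this set is naturally a projective space of dimension~$4$ whose linear structure is governed by~$\cO(-1) \oplus \cS_4$, where~$\cS_4$ is the rank-$4$ spinor bundle on~$Q^6$ (one checks~$\det(\cS_4) \cong \cO(-2)$ from the standard properties of spinor bundles on a six-dimensional quadric). This gives the isomorphism~\eqref{eq:ogr-q6}. The remaining claims are bookkeeping on Picard groups: since~$\Bl_{Q^6}(\OGr_+(5,10))$ has Picard rank~$2$ generated by~$\bar H$ and~$\bar E$, and also by~$H_+$ (the pulled-back hyperplane class of~$Q^6$) and the relative hyperplane class~$M$ of the projective bundle, I would pin down the change of basis by intersecting with curves: a line in a fibre of the~$\PP^4$-bundle over~$Q^6$ meets~$\bar E$ but not~$H_+$, while a line contracted by the blowup morphism~$\Bl_{Q^6}(\OGr_+(5,10)) \to \OGr_+(5,10)$ lies in a fibre of the other projection; comparing these, together with the normalization~$p_*\cO(M) \cong (\cO(-1)\oplus\cS_4)^\vee$ fixed in~\S\ref{ss:conventions}, yields~$M = \bar H$ and~$H_+ = \bar H - \bar E$, which is~\eqref{eq:hp-g7-a}. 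The relation~$H_+ = \bar H - \bar E$ can also be seen directly: the linear projection out of the linear span of~$Q^6 \subset \OGr_+(5,10) \subset \PP^{15}$ is resolved by the blowup and is given by~$|\bar H - \bar E|$, and its image is~$Q^6 \subset \PP^7$ embedded by~$H_+$.

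The main obstacle I anticipate is not any single hard computation but rather correctly matching conventions: the precise connected component of~$\OGr(4,10)$ that appears, the twist in the spinor bundle~$\cS_4$ (there is a genuine choice of~$\cS_4$ versus its dual, and the sign in~$\det(\cS_4)$ depends on it), and the normalization of the relative hyperplane class under our convention~$\PP_X(\cE) = \Proj(\bigoplus \Sym^k \cE^\vee)$. These are exactly the points where~\cite{K18:spinor} must be quoted carefully; once the statement of~\cite[Proposition~5.8]{K18:spinor} is transcribed in our conventions, the rest is formal. I would therefore structure the written proof as: (1) introduce~$\Fl$ and its two projections; (2) identify~$\Fl \cong \Bl_{Q^6}(\OGr_+(5,10))$ via the first projection; (3) cite~\cite[Proposition~5.8]{K18:spinor} for the projective-bundle structure over~$Q^6$ and the description of the bundle; (4) verify the divisor class relations by the curve-intersection computation sketched above.
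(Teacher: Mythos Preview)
Your sketch is sound, but note that the paper does not actually prove this proposition: it is stated with the attribution~\cite[Proposition~5.8]{K18:spinor} and no proof environment follows. The authors simply import the result from that reference, just as the one-line proof of Proposition~\ref{prop:bl-q-rm6} (``Follows from two projective bundle structures of the partial flag variety~$\Fl(1,2;5)$'') signals that the analogous statement for~$\Gr(2,5)$ is standard.

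Your plan --- realize the blowup as a~$\mathrm{D}_5$ partial flag variety with two projections, identify the rank-$5$ bundle on~$Q^6$ via the spinor-bundle description in~\cite{K18:spinor}, and then read off the divisor relations --- is exactly the argument one would expect that reference to contain, and it is the natural analogue of the~$\Fl(1,2;5)$ proof for genus~$7$. One small caution on your step~(2): the parametrization of~$Q^6 = \OGr_+(4,8) \subset \OGr_+(5,10)$ is not literally ``isotropic $4$-spaces in the ambient $10$-space''; it arises from fixing an isotropic line (or, dually, a point of the other spinor component) and passing to the $8$-dimensional subquotient, so the incidence variety you want is the~$\mathrm{D}_5$ flag of two adjacent spinor nodes rather than the na\"ive~$(V_4 \subset V_5)$ flag. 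This does not affect the logic of your outline, but it is precisely the place where, as you already anticipated, conventions must be matched carefully against~\cite{K18:spinor}.
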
 

\begin{corollary}
\label{cor:critical-ogr-q3}
There is a bijection between the sets of all
\begin{itemize}[wide]
\item 
$3$-dimensional linear sections~$\bar{X} \subset \OGr_+(5,10)$ with isolated hypersurface singularities
such that~$\Sigma \coloneqq \bar{X} \cap Q^6$ is an irreducible quadric surface and~$\Blw{\Sigma}(\bar{X})$ is smooth, and
\item 
smooth connected curves~$\Gamma \subset Q^3$ such that~$\deg(\Gamma) = 8$, $\g(\Gamma) = 4$, 
and~\eqref{eq:nondegeneracy} holds.
\end{itemize}
If~$\bar{X}$ corresponds to~$\Gamma$, there is an isomorphism
\begin{equation}
\label{eq:x7-g}
\Blw{\Sigma}(\bar{X}) \cong \Bl_\Gamma(Q^3)
\end{equation} 
and the morphism~$\Bl_\Gamma(Q^3) \to \bar{X}$ is the anticanonical contraction.
\end{corollary}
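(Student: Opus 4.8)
The plan is to mimic the proof of Corollary~\ref{cor:critical-gr25-p3} using the projective bundle structure of Proposition~\ref{prop:bl-q-rm7} in place of Proposition~\ref{prop:bl-q-rm6}. First I would treat the forward direction: given a $3$-dimensional linear section $\bar{X} \subset \OGr_+(5,10)$ with isolated hypersurface singularities containing an irreducible quadric surface $\Sigma \subset Q^6$, I would arrange the linear equations cutting out $\bar{X}$ so that those vanishing on $Q^6$ come first and the rest are dimensionally transverse to $Q^6$. Taking strict transforms in $\Bl_{Q^6}(\OGr_+(5,10)) \cong \PP_{Q^6}(\cO(-1)\oplus\cS_4)$ and invoking Lemma~\ref{lem:str-blw} (using that $\bar{X}$ has isolated singularities, so the strict transform is normal and agrees with $\Blw{\Sigma}(\bar{X})$), I get $\Blw{\Sigma}(\bar{X})$ as a linear section of the projective bundle. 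Since $\codim_{\OGr_+(5,10)}(\bar{X}) = 7$ and $\dim Q^6 = 6$, after peeling off the equation that becomes the relative hyperplane twist, $\Blw{\Sigma}(\bar{X})$ is cut out inside $\PP_{Q^6}(\cO(-1)\oplus\cS_4)$ by one divisor of class $|H_+| = |\bar{H}-\bar{E}|$ and $6$ divisors of class $|\bar{H}|$, i.e.\ it corresponds to a morphism $\varphi \colon (\cO(-1)\oplus\cS_4)|_{Q^3} \to \cO^{\oplus 6}$, where $Q^3 \subset Q^6$ is the corresponding linear section. Applying \cite[Lemma~2.1]{K16} to $\varphi$ I obtain $\Blw{\Sigma}(\bar{X}) \cong \Bl_\Gamma(Q^3)$ with $\Gamma$ the degeneracy locus, which is at most $1$-dimensional (else it produces an extra component) and is smooth because $\Blw{\Sigma}(\bar{X})$ is smooth. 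Computing the determinant of $\ker\varphi$ gives a locally free resolution of $\cO_\Gamma$, from which the Riemann--Roch computation yields $\g(\Gamma) = 4$, $\deg(\Gamma) = 8$, and the vanishing~\eqref{eq:nondegeneracy}; the relative hyperplane class pulls back to $\bar{H}$, and the exceptional class relation together with $\bar{E} \sim \bar{H} - H_+$ identifies the anticanonical character of the contraction $\Bl_\Gamma(Q^3) \to \bar{X}$.

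For the converse, starting from a smooth connected curve $\Gamma \subset Q^3$ with $\deg(\Gamma) = 8$, $\g(\Gamma) = 4$, and~\eqref{eq:nondegeneracy}, I would decompose the twisted ideal sheaf $\cI_\Gamma$ with respect to a suitable full exceptional collection on $Q^3$ built from $\cO(-d)$'s and a spinor bundle, arranged so that the decomposition matches the expected resolution shape. As in the genus $7$ case, it suffices to verify a short list of cohomology identities for $\cI_\Gamma(d)$ with small $d$, which by Riemann--Roch reduce to bounds $\dim H^1(\Gamma,\cO_\Gamma(1)) \le 1$ and $H^1(\Gamma,\cO_\Gamma(d)) = 0$ for $d\ge 2$; the latter follow from $\deg\cO_\Gamma(d) > 2\g(\Gamma)-2$ for $d \ge 2$, while the former would force $\Gamma$ hyperelliptic with $\cO_\Gamma(1)$ a power of the hyperelliptic bundle, contradicting that $\Gamma$ embeds in $Q^3 \subset \PP^4$ by that line bundle. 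Having the resolution, \cite[Lemma~2.1]{K16} again gives $\Bl_\Gamma(Q^3)$ as the corresponding linear section of $\PP_{Q^6}(\cO(-1)\oplus\cS_4) = \Bl_{Q^6}(\OGr_+(5,10))$, the relation $\bar{H} \sim \io(Q^3)H_+ - E_+ = 3H_+ - E_+$ shows the morphism $\Bl_\Gamma(Q^3) \to \OGr_+(5,10)$ is anticanonical, hence small by Lemma~\ref{lem:blg-xplus}, so its image $\bar{X}$ is a $3$-dimensional linear section with terminal (hence isolated hypersurface) singularities containing a quadric surface, and Lemma~\ref{lem:str-blw} recovers $\Bl_\Gamma(Q^3)\cong\Blw{\Sigma}(\bar{X})$. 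These two constructions are mutually inverse. Finally, irreducibility of $\Sigma$ follows as in the genus $7$ case: the relation $\bar{E} \sim \bar{H} - H_+ \sim 2H_+ - E_+$ exhibits $\Sigma$ as dominated by the strict transform of the unique divisor $S_\Gamma \in |(\io(Q^3)-1)H_+ - \Gamma| = |2H_+ - \Gamma|$, which is irreducible by~\eqref{eq:nondegeneracy}.

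The main obstacle I expect is the bookkeeping with the spinor bundle $\cS_4$ on the six-dimensional quadric and its restriction to $Q^3$: I need the precise form of a full exceptional collection on $Q^3$ and the values of the cohomology of $\cS_4|_{Q^3}(d)$ and of $\Omega$-type summands in order to pin down the exact shape of the resolution of $\cO_\Gamma$, and to check that the determinant bookkeeping is consistent (i.e.\ $\det(\cO(-1)\oplus\cS_4) \cong \cO(-3)$ on $Q^6$, so $\ker\varphi$ has the expected rank-one determinant). This is the step most likely to require genuine case-specific computation rather than a formal repetition of the genus $7$ argument; everything else — the application of Lemma~\ref{lem:str-blw}, \cite[Lemma~2.1]{K16}, Lemma~\ref{lem:blg-xplus}, and the final irreducibility remark — transfers essentially verbatim once the resolution and the numerical relations $\bar{H} \sim 3H_+ - E_+$, $\bar{E} \sim 2H_+ - E_+$ are in hand.
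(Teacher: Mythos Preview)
Your overall strategy matches the paper's proof, but there is a concrete miscounting in the forward direction that would derail the computation. You claim~$\Blw{\Sigma}(\bar{X})$ is cut out by \emph{one} divisor of class~$H_+ = \bar{H}-\bar{E}$ and \emph{six} of class~$\bar{H}$, yielding a morphism~$(\cO(-1)\oplus\cS_4)\vert_{Q^3} \to \cO^{\oplus 6}$. This is internally inconsistent (one divisor of class~$H_+$ cuts~$Q^6$ only to~$Q^5$, not~$Q^3$, yet you restrict to~$Q^3$) and numerically wrong: since~$\Sigma \subset Q^6$ has codimension~$4$, exactly four of the seven hyperplanes cutting~$\bar{X}$ are transverse to~$Q^6$ and three contain it. Thus the correct split is \emph{three} divisors of class~$\bar{H}-\bar{E}$ (cutting~$Q^6$ to~$Q^3$) and \emph{four} of class~$\bar{H}$, giving a morphism~$\psi \colon (\cO(-1)\oplus\cS_4)\vert_{Q^3} \to \cO^{\oplus 4}$ from a rank~$5$ bundle to a rank~$4$ bundle. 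The key identification you flagged as an obstacle is then~$\cS_4\vert_{Q^3} \cong \cS_2^{\oplus 2}$, where~$\cS_2$ is the spinor bundle on~$Q^3$; with this, $\Ker(\psi)\cong\cO(-3)$ (as~$\det(\cO(-1)\oplus\cS_4)\cong\cO(-3)$) and one obtains the resolution
\[
0 \longrightarrow \cO(-3)^{\oplus 4} \xrightarrow{\ \psi^\vee\ } \cO(-2)\oplus\cS_2(-2)^{\oplus 2} \longrightarrow \cO \longrightarrow \cO_\Gamma \longrightarrow 0,
\]
from which the invariants of~$\Gamma$ follow exactly as you describe.

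In the converse direction your plan is correct but you have imported an unnecessary step from the genus~$7$ argument. Here~$\deg\cO_\Gamma(1)=8>2\g(\Gamma)-2=6$, so~$H^1(\Gamma,\cO_\Gamma(1))=0$ directly; there is no need to bound it by~$1$ or to invoke a hyperelliptic obstruction. The relevant exceptional collection on~$Q^3$ is~$(\cO(-3),\cS_2(-2),\cO(-2),\cO(-1))$, and the vanishings you need are~$H^\bullet(\cI_\Gamma)=\Bbbk^4[-2]$, $H^\bullet(\cI_\Gamma(1))=0$, $H^\bullet(\cI_\Gamma(2))=\Bbbk$, all of which follow from~\eqref{eq:nondegeneracy} and the degree bound just noted. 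The rest of your outline (Lemma~\ref{lem:str-blw}, \cite[Lemma~2.1]{K16}, Lemma~\ref{lem:blg-xplus}, the relation~$\bar{H}\sim 3H_+-E_+$, and the irreducibility of~$\Sigma$ via~$\bar{E}\sim 2H_+-E_+$) is correct and coincides with the paper's argument.
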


\begin{proof}
The argument is similar to that of Corollary~\ref{cor:critical-gr25-p3}.

This time~$\bar{X} \subset \OGr_+(5,10)$ is a complete intersection of seven hyperplanes
while~$\Sigma \subset Q^6$ is a complete intersection of four hyperplanes, 
hence we may assume that three hyperplanes contain~$Q^6$, 
while the other four are dimensionally transverse to it.
Consider the intersection~$\bar{X}' \subset \Bl_{Q^6}(\OGr_+(5,10))$
of the corresponding three divisors of class~$\bar{H} - \bar{E}$ and four divisors of class~$\bar{H}$.
As in Corollary~\ref{cor:critical-gr25-p3} we conclude that~$\bar{X}'$ coincides with the strict transform of~$\bar{X}$ and
\begin{equation*}
\bar{X}' \cong \Blw{\Sigma}(\bar{X}) \subset \Bl_{Q^6}(\OGr_+(5,10)) \cong \PP_{Q^6}(\cO(-1) \oplus \cS_4).
\end{equation*}
In other words, $\Blw{\Sigma}(\bar{X})$ is an irreducible complete intersection, 
and since~$\bar{H} - \bar{E} \sim H_+$ by~\eqref{eq:hp-g7-a},
the first three divisors define a subquadric~$Q^3 \subset Q^6$, 
and the other four correspond to a morphism
\begin{equation*}
\psi \colon \cO(-1) \oplus \cS_2^{\oplus 2} \cong (\cO(-1) \oplus 
\cS_4)\vert_{Q^3} \longrightarrow \cO^{\oplus 4},
\end{equation*}
Now, if~$\Gamma \subset Q^3$ is the degeneracy locus of~$\psi$
then~$\dim(\Gamma) \le 1$ (because~$\Blw{\Sigma}(\bar{X})$ is irreducible)
and applying~\cite[Lemma~2.1]{K16} we obtain an isomorphism~$\Blw{\Sigma}(\bar{X}) \cong \Bl_\Gamma(Q^3)$,
and since the left side is smooth, we conclude that~$Q^3$ and~$\Gamma$ are smooth as well.
Finally, computing the determinants we see that~$\Ker(\psi) \cong \cO(-3)$, 
and deduce an exact sequence
\begin{equation}
\label{eq:res-g84}
0 \longrightarrow 
\cO(-3)^{\oplus 4} \xrightarrow{\ \psi^\vee\ } 
\cO(-2) \oplus \cS_2(-2)^{\oplus 2} \xrightarrow{\quad} 
\cO \xrightarrow{\quad} 
\cO_\Gamma \longrightarrow 0,
\end{equation} 
and as before it follows that~$\Gamma$ is connected, $\deg(\Gamma) = 8$, $\g(\Gamma) = 4$, and~\eqref{eq:nondegeneracy} holds.

Conversely, let~$\Gamma \subset Q^3$ be a smooth connected curve 
such that~$\deg(\Gamma) = 8$, $\g(\Gamma) = 4$, and~\eqref{eq:nondegeneracy} holds.
We check that its structure sheaf has a resolution of the form~\eqref{eq:res-g84},
by decomposing~$\cI_\Gamma$ with respect to the exceptional collection~$(\cO(-3),\cS_2(-2),\cO(-2),\cO(-1))$.
To show that the decomposition has that form it is enough to check the following
\begin{equation*}
H^\bullet(\cI_\Gamma) = \Bbbk^4[-2],
\qquad 
H^\bullet(\cI_\Gamma(1)) = 0,
\qquad 
H^\bullet(\cI_\Gamma(2)) = \Bbbk.
\end{equation*}
The first is obvious and~\eqref{eq:nondegeneracy} implies that for the last two it is enough to check that
\begin{equation*}
\dim H^1(\Gamma, \cO_\Gamma(1)) = H^1(\Gamma, \cO_\Gamma(2)) = 0,
\end{equation*}
which is also obvious, because~$\deg(\cO_\Gamma(1)) > 2\g(\Gamma) - 2$.

Thus, we obtain a resolution~\eqref{eq:res-g84}, and applying~\cite[Lemma~2.1]{K16}
we conclude that~$\Bl_\Gamma(Q^3)$ is a complete intersection in~$\Bl_{Q^6}(\OGr_+(5,10))$
of three divisors of class~$\bar{H} - \bar{E}$ and four divisors of class~$\bar{H}$
and there is a linear equivalence~$\bar{H} \sim 3H_+ - E_+$, where~$E_+$ is the exceptional divisor of~$\Bl_\Gamma(Q^3)$.
Therefore, the morphism~$\Bl_\Gamma(Q^3) \to \OGr_+(5,10)$ is anticanonical, hence small by Lemma~\ref{lem:blg-xplus},
its image~$\bar{X}$ is a linear section of codimension~$7$ 
containing a quadric surface~$\Sigma$ 
dominated by the strict transform of the unique (by~\eqref{eq:nondegeneracy}) 
divisor~$S_\Gamma \subset Q^3$ in~$|2H_+ - \Gamma|$,
and hence irreducible.
These two constructions are mutually inverse and define the required bijection.
\end{proof}

\begin{remark}
The above proof shows that there is a contraction~$\Bl_\Gamma(S_\Gamma) \to \Sigma$, given by the class~$3H_+ - E_+$,
and if~$S_\Gamma$ is smooth then~$\Sigma$ is smooth as well,    
and the morphism contracts the four $3$-secant lines of~$\Gamma$.
\end{remark} 

\begin{corollary}
\label{cor:critical-ogr-q3-sl}
If~$\Gamma \subset X_+ = Q^3$ is a smooth curve with~$\deg(\Gamma) = 8$ and~$\g(\Gamma) = 4$ satisfying~\eqref{eq:nondegeneracy},
there is a Sarkisov link~\eqref{eq:sl-factorial}, where
\begin{itemize}[wide]
\item 
$X$ is a linear section of the Mukai variety~$\rM_8$ with a single node or cusp~$x_0 \in X$,
\item 
$\bar{X} \subset \OGr_+(5,10)$ is a linear section of codimension~$7$ with terminal singularities
containing an irreducible quadric surface~\mbox{$\Sigma \subset \bar{X}$},
\item 
$\pi$ and~$\pi_+$ are the blowups of~$x_0$ and~$\Gamma$, and
\item 
$\xi$ and~$\xi_+$ are the blowups of the Weil divisor classes~$-\Sigma$ and~$\Sigma$, respectively.
\end{itemize}
Moreover, any factorial Fano threefold~$X$ with a single node or cusp, $\uprho(X) = 1$, $\io(X) = 1$, and~$\g(X) = 8$
is a dimensionally transverse linear section of the Mukai variety~$\rM_8$.
\end{corollary}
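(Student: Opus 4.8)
The statement to prove is Corollary~\ref{cor:critical-ogr-q3-sl}, whose structure exactly mirrors that of Corollary~\ref{cor:critical-gr25-p3-sl}, so the plan is to follow that proof \emph{mutatis mutandis}, replacing the genus-$7$ data by the genus-$8$ data. First I would recall the Sarkisov link~\eqref{eq:sl-rm} for $\rM_8 = \Gr(2,6)$ constructed in~\cite[Theorems~2.2 and~4.4]{KP21}: here $\rM_8^+$ and $\widehat{\rM}_8^+$ are explicit (a projective bundle over a suitable base), together with the linear equivalences $H_+ \sim H - 2E$ and $\bar H \sim H - E$ in $\Pic(\Bl_{x_0}(\rM_8))$. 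The key algebraic input is a resolution of the structure sheaf $\cO_\Gamma$ of a smooth connected curve $\Gamma \subset Q^3$ with $\deg(\Gamma) = 8$, $\g(\Gamma) = 4$ satisfying~\eqref{eq:nondegeneracy}; such a resolution of the form~\eqref{eq:res-g84} was already produced in the proof of Corollary~\ref{cor:critical-ogr-q3}. I would then splice it with a twist of the spinor/Euler-type exact sequence on $Q^3$ to rewrite $\cO_\Gamma$ in the form adapted to the bundle $\cE$ appearing in $\widehat{\rM}_8^+$ — exactly as resolution~\eqref{eq:resolution-g86} was obtained from~\eqref{eq:res-g86} in the genus-$7$ case.

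Once this resolution is in hand, applying~\cite[Lemma~2.1]{K16} identifies $\Bl_\Gamma(Q^3)$ with a linear section of the appropriate projective bundle over $Q^3$, hence with an intersection in $\widehat{\rM}_8^+$ of one divisor of class $|H_+| = |H - 2E|$ and the requisite number of divisors of class $|\bar H| = |H - E|$. Feeding this through the link~\eqref{eq:sl-rm} produces a flop from $\Bl_\Gamma(Q^3)$ onto the blowup $\Bl_{x_0}(X)$ of a linear section $X \subset \rM_8$ singular at $x_0$, giving a diagram of the shape~\eqref{eq:sl-factorial}. To identify the midpoint $\bar X$ and the exceptional data I would invoke Corollary~\ref{cor:critical-ogr-q3}: it shows $\bar X$ is a codimension-$7$ linear section of $\OGr_+(5,10)$ with terminal singularities containing an irreducible quadric surface $\Sigma$ with $\Bl_\Gamma(Q^3) \cong \Blw{\Sigma}(\bar X)$; then Corollary~\ref{cor:bl-weil-flop} gives $\Bl_{x_0}(X) \cong \Blw{-\Sigma}(\bar X)$; and the argument of Theorem~\ref{thm:intro-nf-ci}\ref{thm:nf-ci-barx-x} identifies the strict transform of $\Sigma$ with the exceptional divisor of $\Bl_{x_0}(X)$, so $X$ is $1$-nodal or $1$-cuspidal. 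That $X$ is factorial then follows from Corollary~\ref{cor:ci-factorial} together with Remark~\ref{rem:blowup-nc}, and clearly $\uprho(X) = 1$, $\io(X) = 1$, $\g(X) = 8$. This proves the first assertion, and $\pi,\pi_+$ are the stated blowups while $\xi,\xi_+$ are the blowups of $-\Sigma,\Sigma$.

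For the converse, I would start from an arbitrary factorial Fano threefold $X$ with a single node or cusp, $\uprho(X) = 1$, $\io(X) = 1$, $\g(X) = 8$, and apply the first part of Proposition~\ref{prop:sl-g78910}: this yields a Sarkisov link~\eqref{eq:sl-factorial} with $X_+ = Q^3$ and a smooth curve $\Gamma \subset Q^3$ with $\deg(\Gamma) = 8$, $\g(\Gamma) = 4$ and~\eqref{eq:nondegeneracy} holding. Since $\uprho(\Bl_\Gamma(Q^3)) = 2$, the link is forced to coincide (after the flop) with the one built in the forward direction — uniqueness of Sarkisov links from a fixed extremal contraction — so $\Bl_{x_0}(X)$, and hence $X$ itself, is a dimensionally transverse linear section of $\rM_8$.

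The main obstacle is the bookkeeping in splicing the two resolutions: one must verify that the twist of the spinor sequence on $Q^3$ combines with~\eqref{eq:res-g84} to give precisely the resolution matching the restriction of $\cE^\vee$ (up to twist) appearing in the $\Gr(2,6)$ link of~\cite{KP21}, including getting the ranks, twists and the precise class $\bar H \sim 3H_+ - E_+$ right so that $\Bl_\Gamma(Q^3) \to \OGr_+(5,10)$ is genuinely anticanonical. All the conceptual ingredients — the link~\eqref{eq:sl-rm}, Corollary~\ref{cor:critical-ogr-q3}, Lemma~\ref{lem:blg-xplus}, Corollary~\ref{cor:bl-weil-flop}, Proposition~\ref{prop:sl-g78910} — are already available; what remains is this routine but delicate homological-algebra computation, entirely parallel to the genus-$7$ case treated in Corollary~\ref{cor:critical-gr25-p3-sl}.
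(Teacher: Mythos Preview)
Your proposal is correct and follows the paper's approach essentially verbatim. One simplification you did not anticipate: in the genus-$8$ case no splicing step is needed, because for the link of~\cite[\S2.3]{KP21} one has $\rM_8^+ = Q^4$ and $\widehat{\rM}_8^+ = \PP_{Q^4}(\cE)$ with $\cE = \cO(-1)\oplus\cS_2^{\oplus 2}$, and the restriction of $\cE^\vee(-3)$ to a hyperplane section $Q^3\subset Q^4$ is already $\cO(-2)\oplus\cS_2(-2)^{\oplus 2}$, exactly the second term of~\eqref{eq:res-g84}; so~\eqref{eq:res-g84} feeds directly into~\cite[Lemma~2.1]{K16} without the extra manipulation that produced~\eqref{eq:resolution-g86} from~\eqref{eq:res-g86} in genus~$7$.
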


\begin{proof}
Assume~$\Gamma \subset X_+ = Q^3$ is a smooth curve, $\deg(\Gamma) = 8$, $\g(\Gamma) = 4$, and~\eqref{eq:nondegeneracy} holds.
The proof of Corollary~\ref{cor:critical-ogr-q3} shows that~$\cO_\Gamma$ has a resolution of the form~\eqref{eq:res-g84}.

On the other hand, recall from~\cite[Theorem~2.2 and~\S2.3]{KP21} that there is a Sarkisov link~\eqref{eq:sl-rm},
where~$\rM_8^+ = Q^4$, $\widehat{\rM}_8^+ = \PP_{Q^4}(\cE)$ with~$\cE = \cO(-1) \oplus \cS_2^{\oplus 2}$ and relations
\begin{equation*}
H_+ \sim H - 2E
\qquad\text{and}\qquad 
\bar{H} \sim H - E
\qquad\text{in~$\Pic(\Bl_{x_0}(\rM_8)) \cong \Pic(\PP_{Q^4}(\cE))$,}
\end{equation*}
where~$H_+$ and~$\bar{H}$ denote the hyperplane class of~$Q^4$ 
and the relative hyperplane class of~$\PP_{Q^4}(\cE)$, respectively. 
Note that the restriction of~$\cE^\vee(-3)$ to a hyperplane section~$Q^3 \subset Q^4$ 
is isomorphic to the second term of~\eqref{eq:res-g84}, 
and the first map in~\eqref{eq:res-g84} is the twisted dual of a morphism~$\varphi \colon \cE\vert_{Q^3} \to \cO^{\oplus 4}$.
Now, applying~\cite[Lemma~2.1]{K16}, we see that~$\Bl_\Gamma(Q^3)$ 
can be identified with a linear section of~$\PP_{Q^3}(\cE\vert_{Q^3})$ of codimension~$4$, 
hence it is an intersection in~$\widehat{\rM}_8^+ = \PP_{Q^4}(\cE)$ of one divisor 
in the linear system~$|H_+| = |H-2E|$ and four divisors in~$|\bar{H}| = |H-E|$.
The diagram~\eqref{eq:sl-rm} then shows that there is a flop from~$\Bl_\Gamma(Q^3)$ 
onto the blowup~$\Bl_{x_0}(X)$ of a linear section~$X \subset \rM_8$ of codimension~$5$ which is singular at~$x_0$
and we obtain a Sarkisov link that has the form~\eqref{eq:sl-factorial}.

Since the midpoint~$\bar{X}$ of the link is obtained from the anticanonical contraction of~$\Bl_\Gamma(Q^3)$,
Corollary~\ref{cor:critical-ogr-q3} shows that~$\bar{X}$ is a linear section of~$\OGr_+(5,10)$ of codimension~$7$ with terminal singularities
containing an irreducible quadric surface~$\Sigma$ and~$\Bl_\Gamma(Q^3) \cong \Blw{\Sigma}(\bar{X})$.
Then Corollary~\ref{cor:bl-weil-flop} shows that~$\Bl_{x_0}(X) \cong \Blw{-\Sigma}(\bar{X})$.
Finally, the argument of Theorem~\ref{thm:intro-nf-ci} identifies the strict transform of~$\Sigma$
with the exceptional divisor of~$\Bl_{x_0}(X)$, 
and we conclude that~$X$ is 1-nodal or 1-cuspidal.
In particular, $X$ is a Fano threefold with~$\uprho(X) = 1$, $\io(X) = 1$, and~$\g(X) = 8$,
and it is factorial by Corollary~\ref{cor:ci-factorial} and Remark~\ref{rem:blowup-nc}.
This proves the first part of the corollary.

The proof of the second part is analogous to that of Corollary~\ref{cor:critical-gr25-p3-sl}.
\end{proof} 

\subsection{Genus 9}

All maximal quadrics in~$\Gr(2,6)$ have the form~$Q^4 = \Gr(2,4) \subset \Gr(2,6)$.
Moreover, the group~$\PGL(6)$ acts transitively on the Hilbert scheme of 4-dimensional quadrics in~$\Gr(2,6)$,
so from now on we fix one such quadric~$Q^4 \subset \Gr(2,6)$.

\begin{proposition}
\label{prop:gr26-p8}
\label{prop:bl-q-rm8}
There is a canonical isomorphism
\begin{equation}
\label{eq:gr26-p8}
\Bl_{Q^4}(\Gr(2,6)) \cong \Bl_{\PP^1 \times \PP^3}(\PP^8).
\end{equation}
If~$\bar{H}$ and~$\bar{E}$ denote the hyperplane class of~$\Gr(2,6)$ and the class of the exceptional divisor of the left side,
while~$H_+$ and~$E_+$ are the hyperplane class of~$\PP^8$ and the class of the exceptional divisor on the right side of~\eqref{eq:gr26-p8}
then
\begin{equation}
\label{eq:hp-g7-b}
H_+ = \bar{H} - \bar{E},
\qquad
E_+ = \bar{H} - 2\bar{E}.
\end{equation}
\end{proposition}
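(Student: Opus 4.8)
The plan is to realize both $\Bl_{Q^4}(\Gr(2,6))$ and $\Bl_{\PP^1\times\PP^3}(\PP^8)$ as the closure $\overline{\Gamma}$ of the graph of one and the same birational map, and then to extract the divisor classes from the two projections.

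Fix a $6$-dimensional vector space $V_6$ and a $4$-dimensional subspace $V_4\subset V_6$ with $Q^4=\Gr(2,V_4)\subset\Gr(2,V_6)=\Gr(2,6)$, and put $V_2\coloneqq V_6/V_4$. The natural filtration of $\wedge^2V_6$ has graded pieces $\wedge^2V_4$, $V_4\otimes V_2$ and $\wedge^2V_2$, so $\langle Q^4\rangle=\PP(\wedge^2V_4)\cong\PP^5$ and the linear projection out of $\langle Q^4\rangle$ lands in $\PP(\wedge^2V_6/\wedge^2V_4)=\PP(V_4\otimes V_2\oplus\wedge^2V_2)\cong\PP^8$; the rank-one tensors in it form the Segre subvariety $\Sigma\coloneqq\PP(V_4)\times\PP(V_2)\subset\PP(V_4\otimes V_2)\subset\PP^8$, a copy of $\PP^3\times\PP^1$. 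First I would check that scheme-theoretically $\Gr(2,6)\cap\langle Q^4\rangle=Q^4$: the Pl\"ucker quadrics cut $\Gr(2,6)$ out of $\PP^{14}$, and their restriction to $\PP(\wedge^2V_4)$ cuts out the reduced sub-Grassmannian $\Gr(2,V_4)$. Hence the base scheme of the projection $p\colon\Gr(2,6)\dashrightarrow\PP^8$ is the reduced subvariety $Q^4$, so $\Bl_{Q^4}(\Gr(2,6))$ resolves $p$ and is canonically identified with $\overline{\Gamma}\subset\Gr(2,6)\times\PP^8$, and the resulting morphism satisfies $\widehat{p}^{\,*}\cO_{\PP^8}(1)=\bar H-\bar E$; this already gives the first equality $H_+=\bar H-\bar E$ in~\eqref{eq:hp-g7-b}.

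Next I would analyze the inverse map. On the affine chart $\{t\neq0\}$ of $\PP^8$, where $t$ denotes the $\wedge^2V_2$-coordinate, a point $[\phi,t]$ with $\phi\in V_4\otimes V_2=\Hom(V_2^\vee,V_4)$ determines the unique $2$-plane $U_\phi\subset V_6$ meeting $V_4$ trivially whose ``graph'' reproduces $\phi$ (a short linear-algebra computation); this shows simultaneously that $p$ is birational and that the Pl\"ucker coordinates of $U_\phi$ are quadratic forms in $(\phi,t)$. I would then verify that these quadrics cut out precisely the reduced Segre $\Sigma$: they are nonzero off $\{t=0\}$, no $2$-plane of $\Gr(2,6)$ maps to the rank-two locus of $\{t=0\}$, and along $\Sigma$ the fibre of $p$ is three-dimensional (the family of $2$-planes with prescribed $1$-dimensional trace on $V_4$ and prescribed $1$-dimensional image in $V_2$). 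Thus the base scheme of $p^{-1}\colon\PP^8\dashrightarrow\Gr(2,6)$ is the reduced $\Sigma$, so $\Bl_{\Sigma}(\PP^8)=\Bl_{\PP^1\times\PP^3}(\PP^8)$ is likewise canonically the graph closure $\overline{\Gamma}$, which yields the isomorphism~\eqref{eq:gr26-p8}. (Alternatively one can argue entirely on the $\Gr(2,6)$ side: $\widehat p$ is an isomorphism over $\PP^8\setminus\Sigma$ and contracts the strict transform $\bar D$ of the Schubert divisor $\{U:U\cap V_4\neq0\}$ onto $\Sigma$ with general fibre $\PP^3$, and one invokes the standard criterion identifying such a divisorial contraction between smooth varieties with the blowup of its smooth centre.)

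It remains to compute the class of $E_+$, i.e.\ of $\bar D$ — the divisor contracted to $\Sigma$, which under~\eqref{eq:gr26-p8} is the exceptional divisor of $\Bl_{\Sigma}(\PP^8)$. The Schubert divisor $D_{\mathrm{Sch}}=\{U:U\cap V_4\neq0\}$ equals the hyperplane section $\{\omega_U\wedge\eta_0=0\}$ for the volume form $\eta_0\in\wedge^4V_4$, so $D_{\mathrm{Sch}}\in|\bar H|$ and its linear span contains $\langle Q^4\rangle$; near a general point $[U_0]\in Q^4$, in the normal coordinates $\eta_2\in\Hom(U_0,V_2)$ of $Q^4$ its local equation is $\det(\eta_2)=0$, a non-degenerate quadric in the $2\times2$ matrix $\eta_2$, so $\operatorname{mult}_{Q^4}D_{\mathrm{Sch}}=2$ and its strict transform has class $\bar H-2\bar E$. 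This gives $E_+=\bar H-2\bar E$ and completes~\eqref{eq:hp-g7-b}; a reassuring check is the automatic agreement of canonical classes, $-6\bar H+3\bar E=-9H_++3E_+$. The step I expect to be the main obstacle is the second paragraph — verifying cleanly that the base scheme of $p^{-1}$ is the reduced Segre (equivalently, making the fibre structure of $\widehat p$ precise enough to apply the blowup-recognition criterion); the multiplicity-two computation for $E_+$ is the other point that needs care, since a careless count of the strict transform would give the wrong answer $\bar H-\bar E$.
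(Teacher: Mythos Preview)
Your argument is correct and complete. The paper's own proof is a one-line reference to the analogous isomorphism $\Bl_{\Gr(2,3)}(\Gr(2,5))\cong\Bl_{\PP^1\times\PP^2}(\PP^6)$ in \cite[Lemma~5.2]{KP18}, with details left to the reader; your graph-closure construction via the linear projection from $\langle Q^4\rangle=\PP(\wedge^2V_4)$ and its quadratic inverse is precisely the expected elaboration of that sketch, and the normal-coordinate computation $\mult_{Q^4}D_{\mathrm{Sch}}=2$ via $\det(\eta_2)$ correctly yields $E_+=\bar H-2\bar E$.
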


\begin{proof}
This is completely analogous to the isomorphism 
\begin{equation*} 
\Bl_{\Gr(2,3)}(\Gr(2,5)) \cong \Bl_{\PP^1 \times \PP^2}(\PP^6) 
\end{equation*}
constructed in~\cite[Lemma~5.2]{KP18}, we leave details to the interested reader.
\end{proof} 

\begin{corollary}
\label{cor:critical-gr26-p8}
There is a bijection between the sets of all
\begin{itemize}[wide]
\item 
$3$-dimensional linear sections~$\bar{X} \subset \Gr(2,6)$ with isolated hypersurface singularities
such that~$\Sigma \coloneqq \bar{X} \cap Q^4$ is an irreducible quadric surface and~$\Blw{\Sigma}(\bar{X})$ is smooth, and
\item 
pairs~$(Y_4,\Gamma)$, where~$Y_4$ is a smooth quartic del Pezzo threefold, 
\mbox{$\Gamma \subset Y_4$} is a smooth connected curve such that~$\deg(\Gamma) = 4$, and~$\g(\Gamma) = 0$, 
and~\eqref{eq:nondegeneracy} holds.
\end{itemize}
If~$\bar{X}$ corresponds to~$(Y_4,\Gamma)$, there is an isomorphism
\begin{equation}
\label{eq:x8-g}
\Blw{\Sigma}(\bar{X}) \cong \Bl_\Gamma(Y_4)
\end{equation} 
and the morphism~$\Bl_\Gamma(Y_4) \to \bar{X}$ is the anticanonical contraction.
\end{corollary}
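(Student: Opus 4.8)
The plan is to follow the same template used in Corollaries~\ref{cor:critical-gr25-p3} and~\ref{cor:critical-ogr-q3}, adapting it to the geometry of the blowup~\eqref{eq:gr26-p8}. First I would set up the two constructions that will be shown to be mutually inverse. Starting from a $3$-dimensional linear section~$\bar{X}\subset\Gr(2,6)$ containing a quadric surface~$\Sigma\subset Q^4$: since~$\bar{X}$ is cut out by $8$ hyperplanes and~$\Sigma\subset Q^4$ by $4$ of them, I may arrange that $4$ of the hyperplanes contain~$Q^4$ and the remaining~$4$ are dimensionally transverse to it. I then look at the strict transform of~$\bar{X}$ inside~$\Bl_{Q^4}(\Gr(2,6))\cong\Bl_{\PP^1\times\PP^3}(\PP^8)$; arguing as in the earlier corollaries (the projection to~$\bar{X}$ has at most one-dimensional fibers over~$\Sing(\bar{X})$ and is an isomorphism elsewhere, so the strict transform is normal irreducible and coincides with~$\Blw{\Sigma}(\bar{X})$ by Lemma~\ref{lem:str-blw}). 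Using the relations~\eqref{eq:hp-g7-b} I would translate the $4$ transverse hyperplane conditions into conditions on~$\PP^8$: the four divisors of class~$\bar{H}-\bar{E}=H_+$ cut the linear span of~$\Bl_{\PP^1\times\PP^3}(\PP^8)$ down to a~$\Bl_{C}(\PP^4)$ for some curve, but more precisely the transverse conditions carve out the blowup~$\Bl_\Gamma(Y_4)$ of a smooth curve~$\Gamma$ on a quartic del Pezzo threefold~$Y_4\subset\PP^5$. Here~$Y_4$ arises because a generic codimension-$2$ linear section of~$\PP^1\times\PP^3$ (inside~$\PP^8$, so after imposing two of the four conditions) is a del Pezzo threefold of degree~$4$, and the remaining two conditions cut out the curve~$\Gamma$; a determinant/Chern-class count along the lines of~\cite[Lemma~2.1]{K16} then pins down the resolution of~$\cO_\Gamma$ and yields~$\deg(\Gamma)=4$, $\g(\Gamma)=0$, together with the nondegeneracy~\eqref{eq:nondegeneracy} and the linear equivalence~$\bar{H}\sim H_+-E_+$ (or the appropriate analogue) that makes the morphism~$\Bl_\Gamma(Y_4)\to\Gr(2,6)$ anticanonical.

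For the converse direction I would start from a smooth rational quartic curve~$\Gamma$ on a smooth quartic del Pezzo threefold~$Y_4$ with~\eqref{eq:nondegeneracy}, and show that~$\cI_\Gamma$ (as a sheaf on~$Y_4$, or equivalently on the ambient~$\PP^5$) admits an explicit locally free resolution by decomposing with respect to a suitable exceptional-type collection; the vanishings needed are of the shape~$H^\bullet(\cI_\Gamma)$, $H^\bullet(\cI_\Gamma(1))$, $H^\bullet(\cI_\Gamma(2))$, and they reduce via~\eqref{eq:nondegeneracy} and Riemann--Roch to elementary facts like~$\deg(\cO_\Gamma(1))=4>2\g(\Gamma)-2=-2$, hence $H^1(\Gamma,\cO_\Gamma(1))=0$. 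Applying~\cite[Lemma~2.1]{K16} to the dual of this resolution realizes~$\Bl_\Gamma(Y_4)$ as the corresponding linear section inside~$\Bl_{\PP^1\times\PP^3}(\PP^8)\cong\Bl_{Q^4}(\Gr(2,6))$, with the anticanonical class of~$\Bl_\Gamma(Y_4)$ equal to the restriction of~$\bar{H}-\bar{E}$; Lemma~\ref{lem:blg-xplus} then guarantees that the anticanonical contraction is small, so its image~$\bar{X}\subset\Gr(2,6)$ is a $3$-dimensional linear section with terminal hence isolated hypersurface singularities, containing a quadric surface~$\Sigma$, and~$\Bl_\Gamma(Y_4)\cong\Blw{\Sigma}(\bar{X})$ by Lemma~\ref{lem:str-blw}. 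Irreducibility of~$\Sigma$ follows as before: the appropriate linear-equivalence computation shows~$\Sigma$ is dominated by the strict transform of the unique divisor~$S_\Gamma\in|(\io(Y_4)-1)H_+-\Gamma|=|H_+-\Gamma|$ on~$Y_4$, which is connected. Checking that the two constructions are mutually inverse and that~\eqref{eq:x8-g} holds with the stated anticanonical property finishes the proof.

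The main obstacle I anticipate is the precise bookkeeping of the linear-section combinatorics in~\eqref{eq:gr26-p8}: unlike the genus~$7$ and~$8$ cases, here the blowup of~$Q^4$ in~$\Gr(2,6)$ is identified not with a projective bundle over a quadric but with a blowup of~$\PP^8$ along~$\PP^1\times\PP^3$, so the classes~$H_+$ and~$E_+$ both involve~$\bar{E}$ (see~\eqref{eq:hp-g7-b}), and one must carefully track how each of the four ``transverse'' hyperplanes on~$\Gr(2,6)$ descends — two of them should produce the ambient~$\PP^5$ containing~$Y_4$ while reducing~$\PP^1\times\PP^3$ to its image, and the remaining two should cut out~$\Gamma$ — and verify that the resulting variety is indeed a \emph{blowup of a del Pezzo threefold along a curve} rather than some degenerate relative. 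Getting the identification of~$Y_4$ and~$\Gamma$ canonical (so that the bijection is well-defined and natural, not just a coarse correspondence) will require a clean statement of which sub-Grassmannian/sub-linear-space data on the~$\Gr(2,6)$ side corresponds to the pair~$(Y_4,\Gamma)$ on the other side, analogous to the construction of~$Y_5$ (the quintic del Pezzo) from the~$\LGr(3,6)$ side used for genus~$10$. The cohomological vanishings and Chern-class computations, by contrast, are routine and I would not expand them in detail.
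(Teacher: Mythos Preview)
Your dimension counts are off, and this propagates through the whole argument. The Grassmannian~$\Gr(2,6)$ has dimension~$8$, so a $3$-dimensional linear section~$\bar{X}$ is cut out by \emph{five} hyperplanes, not eight; and~$Q^4$ is $4$-dimensional, so the quadric surface~$\Sigma\subset Q^4$ is cut by \emph{two} hyperplanes, not four. Hence three hyperplanes contain~$Q^4$ and two are transverse to it. Under~\eqref{eq:hp-g7-b} the three divisors of class~$\bar{H}-\bar{E}=H_+$ cut~$\PP^8$ down to a~$\PP^5$, and the curve is simply~$\Gamma=\PP^5\cap(\PP^1\times\PP^3)$, automatically of degree~$4$ and genus~$0$. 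The two divisors of class~$\bar{H}=2H_+-E_+$ are strict transforms of quadrics in~$\PP^8$ through~$\PP^1\times\PP^3$; restricted to~$\PP^5$ they cut out~$Y_4$. So~$Y_4$ comes from the two transverse hyperplanes and~$\Gamma$ from the three containing~$Q^4$, the reverse of what you sketch. Your statement that ``a generic codimension-$2$ linear section of~$\PP^1\times\PP^3$ is a del Pezzo threefold'' cannot be right: $\PP^1\times\PP^3$ is already $4$-dimensional. Also the anticanonical class of~$\Bl_\Gamma(Y_4)$ is~$2H_+-E_+=\bar{H}$, not~$\bar{H}-\bar{E}$.

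For the converse, the paper does \emph{not} decompose~$\cI_\Gamma$ against an exceptional collection. Instead it argues geometrically: embed~$\Gamma$ in~$\PP^1\times\PP^3$ as a curve of bidegree~$(1,3)$, so that~$\langle\Gamma\rangle=\PP^4$ sits in the Segre~$\PP^7$ with~$\Gamma=\PP^4\cap(\PP^1\times\PP^3)$; extend this~$\PP^4$ to a~$\PP^5\subset\PP^8$ with~\eqref{eq:g40}; then extend the two quadratic equations of~$Y_4\subset\PP^5$ to quadrics on~$\PP^8$ containing~$\PP^1\times\PP^3$. This directly exhibits~$\Bl_\Gamma(Y_4)$ as the required complete intersection in~$\Bl_{Q^4}(\Gr(2,6))$, after which Lemma~\ref{lem:blg-xplus} and Lemma~\ref{lem:str-blw} finish as you indicate. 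Your resolution-based approach might be made to work, but it is not needed here and the direct construction is both shorter and more transparent about why the bijection is well-defined.
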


\begin{proof}
The argument is similar to that of Corollaries~\ref{cor:critical-gr25-p3} and~\ref{cor:critical-ogr-q3}.

This time~$\bar{X} \subset \Gr(2,6)$ is a complete intersection of five hyperplanes
while~$\Sigma \subset Q^4$ is a complete intersection of two hyperplanes, 
hence we may assume that three hyperplanes contain~$Q^4$, 
while the other two are dimensionally transverse to it.
Consider the intersection~$\bar{X}' \subset \Bl_{Q^4}(\Gr(2,6))$
of the corresponding three divisors of class~$\bar{H} - \bar{E}$ and two divisors of class~$\bar{H}$.
As in Corollary~\ref{cor:critical-gr25-p3} we conclude that~$\bar{X}'$ coincides with the strict transform of~$\bar{X}$ and
\begin{equation*}
\bar{X}' \cong \Blw{\Sigma}(\bar{X}) \subset \Bl_{Q^4}(\Gr(2,6)) \cong \Bl_{\PP^1 \times \PP^3}(\PP^8).
\end{equation*}
In other words, $\Blw{\Sigma}(\bar{X})$ is an irreducible complete intersection, 
and since~$\bar{H} - \bar{E} \sim H_+$ and~$\bar{H} \sim 2H_+ - E_+$ by~\eqref{eq:hp-g7-b},
the first three divisors define a~$\PP^5 \subset \PP^8$ such that
\begin{equation}
\label{eq:g40}
\Gamma \coloneqq \PP^5 \cap (\PP^1 \times \PP^3)
\end{equation}
is a curve of degree~$4$ and genus~$0$ contained in a unique hyperplane
and the other two divisors correspond to a pair of quadrics containing~$\PP^1 \times \PP^3$.
Thus, the intersection of all divisors is a quartic del Pezzo threefold~$Y_4 \subset \PP^5$
containing the curve~$\Gamma$ which satisfies~\eqref{eq:nondegeneracy}
and we have an isomorphism~$\Blw{\Sigma}(\bar{X}) \cong \Bl_\Gamma(Y_4)$.
Since~$\Bl_{\Sigma}(\bar{X})$ is smooth, we conclude that~$Y_4$ and~$\Gamma$ are both smooth.

Conversely, let~$\Gamma \subset Y_4 \subset \PP^5$ 
be a smooth quartic del Pezzo threefold 
and a smooth connected curve such that~$\deg(\Gamma) = 4$, $\g(\Gamma) = 0$, and~\eqref{eq:nondegeneracy} holds.
Consider an embedding~$\Gamma \hookrightarrow \PP^1 \times \PP^3$ as a curve of bidegree~$(1,3)$.
This defines an embedding of the linear span~$\langle \Gamma \rangle = \PP^4$ into the Segre space~$\PP^7$
such that~$\Gamma = \PP^4 \cap (\PP^1 \times \PP^3)$, 
and this embedding can be extended to an embedding~$\PP^5 \to \PP^8$ such that~\eqref{eq:g40} holds.
Furthermore, two quadratic equations of~$Y_4 \subset \PP^5$ 
can be extended to two quadrics on~$\PP^8$ containing~$\PP^1 \times \PP^3$,
and together with the three linear equations of~$\PP^5 \subset \PP^8$, 
they define a complete intersection in~\mbox{$\Bl_{\PP^1 \times \PP^3}(\PP^8)\cong \Bl_{Q^4}(\Gr(2,6))$} 
of three divisors of class~$H_+ = \bar{H} - \bar{E}$ and two divisors of class~$2H_+ - E_+ = \bar{H}$.
Therefore the morphism~$\Bl_\Gamma(Y_4) \to \Gr(2,6)$ is anticanonical, hence small by Lemma~\ref{lem:blg-xplus},
its image~$\bar{X}$ is a linear section of codimension~$5$ 
containing a quadric surface~$\Sigma$ 
dominated by the strict transform of the unique (by~\eqref{eq:nondegeneracy}) 
divisor~$S_\Gamma \subset Y_4$ in~$|H_+ - \Gamma|$,
and hence irreducible.
These two constructions are mutually inverse and define the required bijection.
\end{proof}

\begin{remark}
The above proof shows that there is a contraction~$\Bl_\Gamma(S_\Gamma) \to \Sigma$, given by the class~$2H_+ - E_+$
and if~$S_\Gamma$ is smooth then~$\Sigma$ is smooth as well,
and the morphism contracts four $2$-secant lines of~$\Gamma$.
\end{remark}

\begin{corollary}
\label{cor:critical-lgr-y4-sl}
If~$\Gamma \subset X_+$ is a smooth curve on a smooth quartic del Pezzo threefold
with~$\deg(\Gamma) = 4$ and~$\g(\Gamma) = 0$ satisfying~\eqref{eq:nondegeneracy},
there is a Sarkisov link~\eqref{eq:sl-factorial}, where
\begin{itemize}[wide]
\item 
$X$ is a linear section of the Mukai variety~$\rM_9$ with a single node or cusp~$x_0 \in X$,
\item 
$\bar{X} \subset \Gr(2,6)$ is a linear section of codimension~$5$ with terminal singularities
containing an irreducible quadric surface~\mbox{$\Sigma \subset \bar{X}$},
\item 
$\pi$ and~$\pi_+$ are the blowups of~$x_0$ and~$\Gamma$, and
\item 
$\xi$ and~$\xi_+$ are the blowups of the Weil divisor classes~$-\Sigma$ and~$\Sigma$, respectively.
\end{itemize}
Moreover, any factorial Fano threefold~$X$ with a single node or cusp, $\uprho(X) = 1$, $\io(X) = 1$, and~$\g(X) = 9$
is a dimensionally transverse linear section of the Mukai variety~$\rM_9$.
\end{corollary}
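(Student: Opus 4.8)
The plan is to imitate, almost verbatim, the proofs of Corollaries~\ref{cor:critical-gr25-p3-sl} and~\ref{cor:critical-ogr-q3-sl} (genera~$7$ and~$8$), now for genus~$9$, using the Sarkisov link of the Mukai variety~$\rM_9 = \LGr(3,6)$ from~\cite[\S2]{KP21} together with the geometric dictionary of Corollary~\ref{cor:critical-gr26-p8}. First I would start with a pair~$(X_+,\Gamma)$ as in the statement: a smooth quartic del Pezzo threefold~$X_+$ and a smooth curve~$\Gamma\subset X_+$ with~$\deg(\Gamma)=4$, $\g(\Gamma)=0$, satisfying~\eqref{eq:nondegeneracy}. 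From the proof of Corollary~\ref{cor:critical-gr26-p8} one extracts that~$\Bl_\Gamma(X_+)$ is a complete intersection inside~$\Bl_{Q^4}(\Gr(2,6))\cong\Bl_{\PP^1\times\PP^3}(\PP^8)$; equivalently, as in~\eqref{eq:res-g86}--\eqref{eq:resolution-g86} and~\eqref{eq:res-g84}, one produces a locally free resolution of~$\cO_\Gamma$ whose terms are built from trivial bundles, twists of~$\cO(1)$, and the bundles occurring in the projective-bundle presentation of~$\widehat{\rM}_9^+$ recorded in~\cite{KP21}.

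Next I would invoke~\cite[Theorem~2.2]{KP21}: the variety~$\rM_9=\LGr(3,6)$ carries a Sarkisov link~\eqref{eq:sl-rm} whose midpoint~$\widehat{\rM}_9^+$ is a projective bundle over~$\rM_9^+$, with linear equivalences~$H_+\sim H-2E$ and~$\bar H\sim H-E$ on~$\Bl_{x_0}(\rM_9)\cong\widehat{\rM}_9^+$. Applying~\cite[Lemma~2.1]{K16} to the resolution of~$\cO_\Gamma$ from the first step presents~$\Bl_\Gamma(X_+)$ as a codimension-$c$ linear section of~$\widehat{\rM}_9^+$, i.e.\ an intersection of one divisor in~$|H_+|=|H-2E|$ and~$c$ divisors in~$|\bar H|=|H-E|$. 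Tracing this through diagram~\eqref{eq:sl-rm} then yields a flop from~$\Bl_\Gamma(X_+)$ onto the blowup~$\Bl_{x_0}(X)$ of a linear section~$X\subset\rM_9$ of codimension~$3$ singular at~$x_0$, hence a Sarkisov diagram of the shape~\eqref{eq:sl-factorial}. The midpoint~$\bar X$ of this diagram is the anticanonical model of~$\Bl_\Gamma(X_+)$, so Corollary~\ref{cor:critical-gr26-p8} identifies it with a codimension-$5$ linear section of~$\Gr(2,6)$ with terminal singularities containing an irreducible quadric surface~$\Sigma$, and~$\Bl_\Gamma(X_+)\cong\Blw{\Sigma}(\bar X)$; Corollary~\ref{cor:bl-weil-flop} then gives~$\Bl_{x_0}(X)\cong\Blw{-\Sigma}(\bar X)$. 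As in the proof of Theorem~\ref{thm:intro-nf-ci}, the strict transform of~$\Sigma$ in~$\Blw{-\Sigma}(\bar X)$ is the exceptional divisor of~$\pi$, so~$X$ has a single node or cusp; moreover~$X$ is a Fano threefold with~$\uprho(X)=1$, $\io(X)=1$, $\g(X)=9$, and it is factorial by Corollary~\ref{cor:ci-factorial} and Remark~\ref{rem:blowup-nc}. This proves the first assertion. For the ``moreover'' part I would run the construction backwards: given a factorial~$X$ with these invariants, Proposition~\ref{prop:sl-g78910} supplies a Sarkisov link ending in the blowup of a smooth curve~$\Gamma$ on a smooth quartic del Pezzo threefold~$X_+$, and since~$\uprho(\Bl_\Gamma(X_+))=2$ this link must be the inverse of the one just built, whence~$X$ is a dimensionally transverse linear section of~$\rM_9$.

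The main obstacle I anticipate is the bookkeeping forced by the non-rigidity of~$X_+$: unlike~$\PP^3$ and~$Q^3$ in genera~$7$ and~$8$, a quartic del Pezzo threefold moves in moduli, so the target~$\rM_9^+$ of~$\pi_+$ in~\eqref{eq:sl-rm} is a quartic del Pezzo fourfold (with~$\widehat{\rM}_9^+$ a~$\PP^2$-bundle over it) rather than a fixed threefold, and one must check that the three ``extra'' hyperplane sections cutting~$X_+$ out of this fourfold are compatible with the incidence description of Corollary~\ref{cor:critical-gr26-p8}. Concretely, the delicate points are (i) verifying that the resolution of~$\cO_\Gamma$ obtained above genuinely matches the bundle from~\cite{KP21}, and (ii) confirming that the first and second degeneracy loci entering~\cite[Lemma~2.1]{K16} have the expected dimensions (at most one-dimensional, and empty, respectively), which in turn rests on the cohomological input of Corollary~\ref{cor:critical-gr26-p8}. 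Once these are in place, every remaining step is a formal consequence of results already established in the excerpt.
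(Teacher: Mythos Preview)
Your overall strategy is the right one and the second half of the argument (identifying the midpoint via Corollary~\ref{cor:critical-gr26-p8}, invoking Corollary~\ref{cor:bl-weil-flop}, checking factoriality via Corollary~\ref{cor:ci-factorial}, and running Proposition~\ref{prop:sl-g78910} backwards for the ``moreover'' part) is exactly what the paper does. However, the first half rests on a wrong model for the Sarkisov link of~$\rM_9=\LGr(3,6)$.

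For~$g=9$ the relevant link in~\cite{KP21} is \emph{not} the projective-bundle link of~\cite[Theorem~2.2]{KP21}; it is the birational link of~\cite[Theorem~4.4]{KP21}, in which~$\rM_9^+=\PP^6$ and~$\widehat{\rM}_9^+=\Bl_{\mathrm{v}_2(\PP^2)}(\PP^6)$, with relations~$H_+\sim H-2E$ and~$E_+\sim H-3E$ (here~$E_+$ is the exceptional divisor over the Veronese surface). So~$\pi_+$ is birational and~$\rM_9^+$ is certainly not a quartic del~Pezzo fourfold; your anticipated ``obstacle'' about moduli of~$X_+$ is based on an incorrect guess for the shape of the link, and the plan to produce a locally free resolution of~$\cO_\Gamma$ matching a projective-bundle presentation and then apply~\cite[Lemma~2.1]{K16} cannot be carried out as written.

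The geometric input that replaces steps~(i)--(ii) is the following direct construction (this is what the paper does). A rational normal quartic~$\Gamma\subset\PP^4$ is a hyperplane section of the Veronese surface~$\mathrm{v}_2(\PP^2)\subset\PP^5\subset\PP^6$; and since~$Y_4\subset\PP^5$ is an intersection of two quadrics, those two quadrics can be extended to quadrics in~$\PP^6$ containing~$\mathrm{v}_2(\PP^2)$. Then~$\Bl_\Gamma(Y_4)$ is cut out in~$\Bl_{\mathrm{v}_2(\PP^2)}(\PP^6)$ by one divisor in~$|H_+|=|H-2E|$ (the hyperplane~$\PP^5$) and two divisors in~$|2H_+-E_+|=|H-E|$ (the two quadrics). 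From this point on, tracing through diagram~\eqref{eq:sl-rm} and applying Corollaries~\ref{cor:critical-gr26-p8} and~\ref{cor:bl-weil-flop} proceeds exactly as you outlined.
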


\begin{proof}
Assume~$\Gamma \subset X_+ = Y_4 \subset \PP^5$ is a smooth curve on a smooth quartic del Pezzo threefold 
such that~$\deg(\Gamma) = 4$, $\g(\Gamma) = 0$, and~\eqref{eq:nondegeneracy} holds.
Clearly, $\Gamma$ can be realized as a hyperplane section of the Veronese surface~$\mathrm{v}_2(\PP^2) \subset \PP^5 \subset \PP^6$,
and the two equations of~$Y_4$ can be extended to two quadrics in~$\PP^6$ containing~$\mathrm{v}_2(\PP^2)$.

On the other hand, recall from~\cite[Theorem~4.4]{KP21} that there is a Sarkisov link~\eqref{eq:sl-rm},
where~$\rM_9^+ = \PP^6$, $\widehat{\rM}_9^+ = \Bl_{\mathrm{v}_2(\PP^2)}(\PP^6)$, and relations
\begin{equation*}
H_+ \sim H - 2E
\qquad\text{and}\qquad 
E_+ \sim H - 3E
\qquad\text{in~$\Pic(\Bl_{x_0}(\rM_9)) \cong \Pic(\Bl_{\mathrm{v}_2(\PP^2)}(\PP^6))$,}
\end{equation*}
where~$H_+$ and~$E_+$ denote the hyperplane class of~$\PP^6$ 
and the exceptional divisor of~$\Bl_{\mathrm{v}_2(\PP^2)}(\PP^6)$, respectively. 
We see that~$\Bl_\Gamma(Y_4)$ is an intersection in~$\Bl_{\mathrm{v}_2(\PP^2)}(\PP^6)$ 
of one divisor in the linear system~$|H_+| = |H-2E|$ and two divisors in~$|2H_+ - E_+| = |H-E|$.
The diagram~\eqref{eq:sl-rm} then shows that there is a flop from~$\Bl_\Gamma(Y_4)$ 
onto the blowup~$\Bl_{x_0}(X)$ of a linear section~$X \subset \rM_9$ of codimension~$3$ which is singular at~$x_0$
and we obtain a Sarkisov link that has the form~\eqref{eq:sl-factorial}.

Since the midpoint~$\bar{X}$ of the link is obtained from the anticanonical contraction of~$\Bl_\Gamma(Y_4)$,
Corollary~\ref{cor:critical-gr26-p8} shows that~$\bar{X}$ is a linear section of~$\Gr(2,6)$ of codimension~$5$ with terminal singularities
containing an irreducible quadric surface~$\Sigma$ and~$\Bl_\Gamma(Y_4) \cong \Blw{\Sigma}(\bar{X})$.
Then Corollary~\ref{cor:bl-weil-flop} shows that~$\Bl_{x_0}(X) \cong \Blw{-\Sigma}(\bar{X})$.
Finally, the argument of Theorem~\ref{thm:intro-nf-ci} identifies the strict transform of~$\Sigma$
with the exceptional divisor of~$\Bl_{x_0}(X)$, 
and we conclude that~$X$ is 1-nodal or 1-cuspidal.
In particular, $X$ is a Fano threefold with~$\uprho(X) = 1$, $\io(X) = 1$, and~$\g(X) = 9$,
and it is factorial by Corollary~\ref{cor:ci-factorial} and Remark~\ref{rem:blowup-nc}.
This proves the first part of the corollary.

The proof of the second part is analogous to that of Corollary~\ref{cor:critical-gr25-p3-sl}.
\end{proof} 

\subsection{Genus~10}

Maximal quadrics in~$\LGr(3,6)$ have the form~$Q^3 = \LGr(2,4) \subset \LGr(3,6)$.
Moreover, the group~$\Sp(6)$ acts transitively on the Hilbert scheme of 3-dimensional quadrics in~$\LGr(3,6)$,
so from now on we fix one such quadric~$Q^3 \subset \LGr(3,6)$.

Recall that a smooth quintic del Pezzo fivefold~$Y^5_5$ is a smooth hyperplane section of the Grassmannian~$\Gr(2,5)$;
all such fivefolds are isomorphic to each other.
We denote by~$\cU_2$ and~$\cU_2^\perp$ the restrictions to~$Y^5_5$ 
of the tautological subbundles of rank~$2$ and~$3$ on~$\Gr(2,5)$.

\begin{proposition}
\label{prop:bl-q-rm9}
There is a canonical isomorphism
\begin{equation}
\label{eq:lgr-y5}
\Bl_{Q^3}(\LGr(3,6)) \cong \PP_{Y^5_5}(\cE),
\end{equation}
where~$Y^5_5$ is a smooth quintic del Pezzo fivefold
and~$\cE$ is the vector bundle of rank~$2$ on~$Y^5_5$ defined by the exact sequence
\begin{equation}
\label{eq:ce-y55}
0 \longrightarrow \cU_2(-1) \longrightarrow \cO(-1) \oplus \cU_2^\perp(-1) 
\longrightarrow \cE \longrightarrow 0.
\end{equation}
If~$\bar{H}$ and~$\bar{E}$ denote the hyperplane class of~$\LGr(3,6)$ 
and the class of the exceptional divisor in~\eqref{eq:lgr-y5}
then the pullback of the hyperplane class of~$Y_5^5$ is
\begin{equation}
\label{eq:hp-g9}
H_+ = \bar{H} - \bar{E}
\end{equation}
and the relative hyperplane class for~$\PP_{Y^5_5}(\cE)$ coincides with~$\bar{H}$.
\end{proposition}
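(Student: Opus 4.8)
The plan is to establish the isomorphism \eqref{eq:lgr-y5} in the same spirit as the
genus~$7,8,9$ cases above, namely by exhibiting both sides as the relative anticanonical
geometry of a common partial flag-type variety. First I would identify the variety
$\Bl_{Q^3}(\LGr(3,6))$ with a suitable \emph{correspondence} variety. Recall that a
maximal quadric in $\LGr(3,6)$ has the form $Q^3 = \LGr(2,4)\subset\LGr(3,6)$, where
the inclusion is determined by a choice of a hyperplane $W_4 \subset W_6$ (together with
the induced symplectic form) and a complementary isotropic line, so that the Lagrangian
subspaces contained in $W_4$ form the quadric $Q^3$; this $Q^3$ spans a $\PP^4\subset\PP^{13}$
inside the Plücker-type embedding $\LGr(3,6)\subset\PP^{13}$. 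I would consider the incidence
variety
\begin{equation*}
\Phi \;=\; \{\,(\Lambda, V) \mid \Lambda \in \LGr(3,6),\ V \in Y^5_5,\ \text{(compatibility between $\Lambda$ and $V$)}\,\},
\end{equation*}
where $Y^5_5$ is the quintic del Pezzo fivefold of lines in a fixed $W_5$ with the
appropriate geometric condition relating $\Lambda$, the fixed flag $W_4\subset W_6$, and
a point of $Y^5_5$. Concretely $Y^5_5$ should parametrize a family of $2$-planes (coming
from the tautological $\cU_2^\perp$) whose projectivization inside $\Lambda$ traces out
the exceptional structure; the first projection $\Phi\to\LGr(3,6)$ should be the blowup of
$Q^3$ and the second projection $\Phi\to Y^5_5$ should be the projective bundle
$\PP_{Y^5_5}(\cE)$.

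The key steps, in order, are: (1) choose the fixed symplectic flag data and verify that
the locus of Lagrangian subspaces meeting it in the degenerate way is exactly $Q^3$, with
embedded tangent space $\PP^4$; (2) construct the incidence variety $\Phi$ and check that
it is smooth of dimension $6$; (3) show the first projection $p_1\colon\Phi\to\LGr(3,6)$
is birational, is an isomorphism away from $Q^3$, and that its fiber over a point of $Q^3$
is the projectivization of the normal space, so that $p_1$ is the blowup (this is a local
computation of normal bundles, using that $\cN_{Q^3/\LGr(3,6)}$ has the form forced by
$\dim\LGr(3,6)-\dim Q^3 = 3$ and by the adjunction $\cO_{Q^3}(Q^3)\cong\cO_{Q^3}(-1)$, cf.
Remark~\ref{rem:blowup-nc}); (4) show the second projection $p_2\colon\Phi\to Y^5_5$ is a
$\PP^1$-bundle, identify its fibers, and compute $(p_2)_*$ of the relative hyperplane line
bundle to pin down $\cE$ via \eqref{eq:ce-y55}; (5) finally, track the divisor classes:
the exceptional divisor $\bar E$ of $p_1$ is the relative hyperplane divisor of
$\PP_{Y^5_5}(\cE)$ pulled back appropriately, the relative hyperplane class of
$\PP_{Y^5_5}(\cE)$ equals the pullback $\bar H$ of the hyperplane class of $\LGr(3,6)$
(this follows from $\cE$ being a quotient of $\cO(-1)\oplus\cU_2^\perp(-1)$, so that
$p_{2*}\cO_{\PP(\cE)}(\bar H)\cong\cE^\vee$ sits inside $\cO(1)\oplus(\cU_2^\perp)^\vee(1)$,
matching the Plücker restriction), and the pullback of the hyperplane class of $Y^5_5$
is $H_+ = \bar H - \bar E$ by computing intersection numbers with a line in a fiber of
$p_2$ and with the strict transform of a general line through a point of $Q^3$.

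To pin down $\cE$ in step~(4) I would argue as follows. A line through a fixed point of
$Q^3$ inside $\LGr(3,6)$ corresponds, under $p_2$, to a point of $Y^5_5$; the family of
such lines, suitably organized, gives a $\PP^1$-bundle over $Y^5_5$, and the line bundle
$\cO_{\PP(\cE)}(\bar H)$ restricts on each $\PP^1$ to $\cO(1)$. Pushing forward, the rank-$2$
bundle $p_{2*}\cO_{\PP(\cE)}(\bar H)\cong\cE^\vee$ is computed from the Plücker embedding
data: the ambient $\PP^4 = \langle Q^3\rangle$ contributes the $\cO(-1)$ summand and the
remaining directions contribute $\cU_2^\perp(-1)$, while the $\cU_2(-1)$ that we quotient
out records the Lagrangian (isotropy) constraint. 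Dualizing and twisting gives
\eqref{eq:ce-y55}. One then checks that $\det\cE^\vee$ computed from \eqref{eq:ce-y55} is
consistent with the canonical class formula $-K_{\LGr(3,6)}\sim 4\bar H$ and
$-K_{\Bl_{Q^3}(\LGr(3,6))}\sim 4\bar H - 2\bar E$ together with the relative Euler sequence
of $\PP_{Y^5_5}(\cE)$ and $-K_{Y^5_5}\sim 4H_+$.

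\textbf{Main obstacle.} The hardest part will be step~(3)--(4): giving a clean,
coordinate-free construction of the incidence variety $\Phi$ and proving simultaneously
that $p_1$ is exactly the blowup of $Q^3$ (rather than some small modification or a blowup
with different center) and that $p_2$ is a \emph{projective bundle} rather than merely a
conic bundle or a fibration with jumping fibers. This requires a careful normal-bundle
computation for $Q^3\subset\LGr(3,6)$ and a rigidity/base-change argument to upgrade the
fiberwise $\PP^1$-statement to a Zariski-locally-trivial $\PP^1$-bundle with the prescribed
associated rank-$2$ bundle; the linear-algebra bookkeeping relating the symplectic flag
data to the tautological bundles $\cU_2,\cU_2^\perp$ on $Y^5_5$ is where essentially all
the content lies. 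Once \eqref{eq:lgr-y5} and \eqref{eq:ce-y55} are in place, the divisor
class relations \eqref{eq:hp-g9} and the statement about the relative hyperplane class
follow from a short intersection-theoretic verification of the type already used in
Propositions~\ref{prop:bl-q-rm6}--\ref{prop:bl-q-rm8}.
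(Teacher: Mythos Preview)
Your overall framework---a correspondence variety $\Phi$ with two projections realizing both sides of~\eqref{eq:lgr-y5}---is correct in spirit and matches what the paper does. However, you miss the key simplification that makes the paper's proof short and bypasses exactly the ``main obstacle'' you flag.

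The paper does \emph{not} build a symplectic incidence variety from scratch. Instead it first establishes the non-symplectic analogue
\[
\Bl_{\Gr(3,V_5)}(\Gr(3,V_6)) \;\cong\; \PP_{\Gr(2,V_5)}(\cO \oplus V_5/\cU_2)
\]
using the two projective bundle structures of the partial flag variety $\Fl(2,3;V_6)$ (exactly as in the genus~$7$ case, Proposition~\ref{prop:bl-q-rm6}). Both projections are \emph{automatically} projective bundles here, so there is nothing to verify. Then the paper restricts both sides by the symplectic form $\lambda$, viewed as a regular section of $\wedge^2\cU_3^\vee$: on the Grassmannian side this cuts $\Gr(3,V_6)$ to $\LGr(3,V_6)$ and $\Gr(3,V_5)$ to the quadric $Q^3$; on the projective-bundle side the induced section of $\wedge^2\cU_2^\vee$ cuts $\Gr(2,V_5)$ to $Y^5_5$, and the residual section of $\cU_2^\vee\otimes\cL^\vee$ cuts the $\PP^3$-bundle down to a $\PP^1$-subbundle $\PP_{Y^5_5}(\cE')$, where $\cE'$ is the cokernel of the resulting fiberwise monomorphism $\cU_2 \hookrightarrow \cO\oplus\cU_2^\perp$. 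This yields~\eqref{eq:ce-y55} directly (after a twist), without any normal-bundle computation or local-triviality argument. Your steps~(3)--(4) become automatic once you work inside an ambient flag variety where the projective bundle structures are given.

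A couple of smaller points: your reference to ``the adjunction $\cO_{Q^3}(Q^3)\cong\cO_{Q^3}(-1)$'' does not make sense as written, since $Q^3$ has codimension~$3$ in $\LGr(3,6)$, and Remark~\ref{rem:blowup-nc} concerns nodes and cusps, not this situation. Also, your identification of $\cE$ in step~(4) is circular: you invoke the structure of $\cE$ (that it is a quotient of $\cO(-1)\oplus\cU_2^\perp(-1)$) to pin down the relative hyperplane class, but that structure is precisely what you are trying to establish. In the paper's approach the exact sequence~\eqref{eq:ce-y55} falls out of the section computation, and the divisor relations then follow at once from the embedding $\PP_{Y^5_5}(\cE')\subset\PP_{\Gr(2,V_5)}(\cO\oplus V_5/\cU_2)$.
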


\begin{remark}
\label{rem:ce-unique}
It is easy to see that~$\Hom(\cU_2,\cU_2^\perp) = \Bbbk$, 
the unique nontrivial morphism is induced by the equation of~$Y^5_5 \subset \Gr(2,5)$,
and its cokernel is isomorphic to the ideal of the unique~$3$-space~$\Pi \subset Y^5_5$.
Therefore, the definition~\eqref{eq:ce-y55} of~$\cE$ can be rewritten as a non-split (because~$\cE$ is locally free) exact sequence
\begin{equation*}
0 \longrightarrow \cO(-1) \longrightarrow \cE \longrightarrow \cI_\Pi(-1) 
\longrightarrow 0,
\end{equation*}
hence~$\cE$ is a twist of a bundle produced from~$\Pi$ by Serre's construction.
It is also easy to check that~$\Ext^1(\cI_\Pi(-1),\cO_{Y^5_5}(-1)) = \Bbbk$, hence
such a bundle is unique.
\end{remark} 

\begin{proof}
To construct the isomorphism~\eqref{eq:lgr-y5}, consider a hyperplane~$V_5 \subset V_6$ in a 6-dimensional vector space.
Using the two projective bundle structures of the flag variety~$\Fl(2,3;V_6)$ it is easy to obtain an isomorphism
\begin{equation*}
\Bl_{\Gr(3,V_5)}(\Gr(3,V_6)) \cong \PP_{\Gr(2,V_5)}(\cO \oplus V_5/\cU_2),
\end{equation*}
where~$\cU_2$ is the tautological subbundle on~$\Gr(2,V_5)$, and a commutative diagram
\begin{equation*}
\xymatrix{
0 \ar[r] & 
\cU_2 \ar[r] \ar@{=}[d] & 
\cU_3 \ar[r] \ar[d] & 
\cL \ar[r] \ar[d] & 
0
\\
0 \ar[r] & 
\cU_2 \ar[r] & 
\cO \oplus (V_5 \otimes \cO) \ar[r] & 
\cO \oplus V_5/\cU_2 \ar[r] & 
0,
}
\end{equation*}
where~$\cU_3$ is the restriction of the tautological subbundle of~$\Gr(3,V_6)$ 
and~$\cL \hookrightarrow \cO \oplus V_5/\cU_2$ is the tautological line subbundle of~$\PP_{\Gr(2,V_5)}(\cO \oplus V_5/\cU_2)$.

Now choose a symplectic form~$\lambda$ on~$V_6$ and consider the zero locus 
of the corresponding section of the vector bundle~$\wedge^2\cU_3^\vee$.
The dual of the wedge square of the first row in the above diagram has the form
\begin{equation*}
0 \longrightarrow \cU_2^\vee \otimes \cL^\vee \longrightarrow \wedge^2\cU_3^\vee 
\longrightarrow \wedge^2\cU_2^\vee \longrightarrow 0.
\end{equation*}
Therefore, $\lambda$ gives a global section of~$\wedge^2\cU_2^\vee$ on~$\Gr(2,V_5)$
that corresponds to the restriction of the form~$\lambda$ to the subspace~$V_5 \subset V_6$.
This restriction has rank~$4$, hence its zero locus is a smooth hyperplane section~$Y^5_5 \subset \Gr(2,V_5)$.
Furthermore, $\lambda$ also gives a global section 
of the restriction of the vector bundle~$\cU_2^\vee \otimes \cL^\vee$ to~$\PP_{Y^5_5}(\cO \oplus V_5/\cU_2)$.
It corresponds to a morphism~$\cU_2 \to \cO \oplus \cU_2^\perp$ on~$Y^5_5$.
Since~$\lambda$ is nondegenerate, this is a fiberwise monomorphism, hence we have an exact sequence
\begin{equation*}
0 \longrightarrow \cU_2 \longrightarrow \cO \oplus \cU_2^\perp \longrightarrow 
\cE' \longrightarrow 0,
\end{equation*}
where~$\cE'$ is a self-dual vector bundle on~$Y^5_5$ of rank~$2$, 
and summarizing the above we obtain an identification 
of the zero locus of~$\lambda$ on~$\PP_{\Gr(2,V_5)}(\cO \oplus V_5/\cU_2)$ with~$\PP_{Y^5_5}(\cE')$.

On the other hand, the zero locus of~$\lambda$ on~$\Gr(3,V_6)$ is~$\LGr(3,V_6)$.
Moreover,
\begin{equation*}
Q_{V_5} \coloneqq \Gr(3,V_5) \cap \LGr(3,V_6) 
\end{equation*}
is a smooth 3-dimensional quadric; in particular, the intersection is transverse.
Therefore, we obtain an isomorphism
\begin{equation*}
\Bl_{Q_{V_5}}(\LGr(3,V_6)) \cong \PP_{Y^5_5}(\cE').
\end{equation*}
It remains to note that the defining sequence of~$\cE'$ coincides with a twist of~\eqref{eq:ce-y55},
hence~$\cE' \cong \cE(1)$, and so~\eqref{eq:lgr-y5} holds.
\end{proof}

Below we denote by~$Y_5$ the smooth quintic del Pezzo threefold (unique up to isomorphism).

\begin{corollary}
\label{cor:critical-lgr-y5}
There is a bijection between the sets of all
\begin{itemize}[wide]
\item 
$3$-dimensional linear sections~$\bar{X} \subset \LGr(3,6)$ with isolated hypersurface singularities
such that~$\Sigma \coloneqq \bar{X} \cap Q^3$ is an irreducible quadric surface and~$\Blw{\Sigma}(\bar{X})$ is smooth, and
\item 
smooth connected curves~$\Gamma \subset Y_5$ such that~$\deg(\Gamma) = 6$, $\g(\Gamma) = 1$, 
and~\eqref{eq:nondegeneracy} holds.
\end{itemize}
If~$\bar{X}$ corresponds to~$\Gamma$, there is an isomorphism
\begin{equation}
\label{eq:x9-g}
\Blw{\Sigma}(\bar{X}) \cong \Bl_\Gamma(Y_5)
\end{equation} 
and the morphism~$\Bl_\Gamma(Y_5) \to \bar{X}$ is the anticanonical contraction.
\end{corollary}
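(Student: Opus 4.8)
The plan is to follow the same template established in Corollaries~\ref{cor:critical-gr25-p3}, \ref{cor:critical-ogr-q3}, and~\ref{cor:critical-gr26-p8}, now using the isomorphism~\eqref{eq:lgr-y5} of Proposition~\ref{prop:bl-q-rm9}. First I would take a $3$-dimensional linear section~$\bar{X} \subset \LGr(3,6)$ with isolated hypersurface singularities containing the quadric surface~$\Sigma \subset Q^3$. Since~$\bar{X}$ is cut out by $7$ hyperplanes and~$\Sigma \subset Q^3$ by $1$ hyperplane, I would arrange coordinates so that $6$ of the hyperplanes contain~$Q^3$ while one is dimensionally transverse. The strict transform of~$\bar{X}$ in~$\Bl_{Q^3}(\LGr(3,6)) \cong \PP_{Y^5_5}(\cE)$ is then the intersection of six divisors of class~$\bar H - \bar E$ and one divisor of class~$\bar H$; as in the earlier corollaries one argues that this strict transform is irreducible and normal (because~$\bar X$ has isolated singularities) and hence equals~$\Blw{\Sigma}(\bar{X})$ by Lemma~\ref{lem:str-blw}. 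Using~$\bar H - \bar E \sim H_+$ from~\eqref{eq:hp-g9}, the six divisors cut~$Y^5_5$ down to a smooth quintic del Pezzo threefold~$Y_5 \subset Y^5_5$ (using that the section is transverse), and the last divisor of relative degree~$1$ restricts to a morphism~$\cE\vert_{Y_5} \to \cO_{Y_5}$ whose degeneracy locus~$\Gamma$ is a curve. Then~\cite[Lemma~2.1]{K16} gives~$\Blw{\Sigma}(\bar X) \cong \Bl_\Gamma(Y_5)$, and smoothness of~$\Blw{\Sigma}(\bar X)$ forces~$Y_5$ and~$\Gamma$ to be smooth; computing determinants in the exact sequence $0 \to \cO_{Y_5}(-1) \to \cE^\vee\vert_{Y_5} \to \cO_\Gamma \to 0$ together with the Chern classes coming from~\eqref{eq:ce-y55} yields~$\deg(\Gamma) = 6$, $\g(\Gamma) = 1$, and condition~\eqref{eq:nondegeneracy} (which records that the global sections of~$\cE^\vee\vert_{Y_5}$ do not drop).

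For the reverse direction I would start with a smooth connected curve~$\Gamma \subset Y_5$ of degree~$6$ and arithmetic genus~$1$ satisfying~\eqref{eq:nondegeneracy}. The key step is to show that the twisted ideal sheaf of~$\Gamma$ on~$Y_5$ admits a resolution of the form coming from restricting~\eqref{eq:ce-y55}, i.e.\ that there is an exact sequence expressing~$\cO_\Gamma$ as the cokernel of a morphism of the appropriate bundles on~$Y_5$; this is obtained by decomposing~$\cI_\Gamma$ with respect to a suitable exceptional collection on~$Y_5$ (the one associated to the bundles appearing in the restriction of~$\cE^\vee$ and~$\cO(1)$), and the required cohomology vanishings reduce, via~\eqref{eq:nondegeneracy} and Riemann--Roch on~$\Gamma$, to the obvious estimates $h^1(\Gamma,\cO_\Gamma(d)) = 0$ for $d$ large together with a nondegeneracy argument ruling out the exceptional low-degree case, exactly as in the proofs of Corollaries~\ref{cor:critical-gr25-p3} and~\ref{cor:critical-ogr-q3}. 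Once this resolution is in hand, \cite[Lemma~2.1]{K16} identifies~$\Bl_\Gamma(Y_5)$ with a complete intersection in~$\Bl_{Q^3}(\LGr(3,6)) \cong \PP_{Y^5_5}(\cE)$ of six divisors of class~$\bar H - \bar E$ and one of class~$\bar H$, and the linear equivalence~$\bar H \sim 2H_+ - E_+$ (which I would read off from~\eqref{eq:hp-g9} and~\eqref{eq:hp-ep-h-e} of Proposition~\ref{prop:sl-g78910}) shows that the induced morphism~$\Bl_\Gamma(Y_5) \to \LGr(3,6)$ is anticanonical, hence small by Lemma~\ref{lem:blg-xplus}; its image~$\bar X$ is then a $3$-dimensional linear section with terminal (hence isolated hypersurface) singularities containing a quadric surface~$\Sigma$ with~$\Bl_\Gamma(Y_5) \cong \Blw{\Sigma}(\bar X)$ by Lemma~\ref{lem:str-blw}, and~$\Sigma$ is irreducible because it is dominated by the strict transform of the unique divisor~$S_\Gamma \in |H_+ - \Gamma|$ on~$Y_5$.

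These two constructions are mutually inverse, which establishes the bijection, and the isomorphism~\eqref{eq:x9-g} together with the statement that~$\Bl_\Gamma(Y_5) \to \bar X$ is the anticanonical contraction falls out of the computation (the anticanonical class of~$\Bl_\Gamma(Y_5)$ pulls back the ample class from~$\LGr(3,6)$). I expect the main obstacle to be the bookkeeping in identifying the correct resolution of~$\cO_\Gamma$ on~$Y_5$: unlike the cases~$Y_4 \subset \Gr(2,6)$ where~$\Bl_{Q^4}$ was itself a blowup of a projective space, here the relevant bundle~$\cE$ on~$Y^5_5$ is genuinely nonsplit (see Remark~\ref{rem:ce-unique}), so the exceptional collection and the Serre-construction character of~$\cE$ must be handled carefully, and one must verify that the degeneracy scheme of the relevant morphism is $1$-dimensional (so that~\cite[Lemma~2.1]{K16} applies) rather than containing an extra component. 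All remaining numerical checks — matching degrees, genera, and Chern classes — are routine and parallel to the three preceding corollaries, so I would state them without grinding through the computations.
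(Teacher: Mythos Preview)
Your overall strategy matches the paper's, but there is a persistent dimension-counting error that would derail the argument if carried out as written. The Lagrangian Grassmannian~$\LGr(3,6)$ has dimension~$6$ (not~$10$), so a $3$-dimensional linear section~$\bar X$ is cut out by~$3$ hyperplanes, not~$7$. Consequently only~$2$ of these hyperplanes can be chosen to contain~$Q^3$, not~$6$; the strict transform of~$\bar X$ in~$\Bl_{Q^3}(\LGr(3,6))\cong\PP_{Y^5_5}(\cE)$ is the intersection of \emph{two} divisors of class~$\bar H-\bar E\sim H_+$ and one divisor of class~$\bar H$. This is essential: $Y^5_5$ has dimension~$5$, so it is the two~$H_+$-divisors that cut it down to a del Pezzo threefold~$Y_5$; six such divisors would give the empty scheme.

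A second, smaller issue: the sequence you wrote, $0\to\cO_{Y_5}(-1)\to\cE^\vee\vert_{Y_5}\to\cO_\Gamma\to 0$, is not exact (the ranks do not match) and has the wrong twist. From~\eqref{eq:ce-y55} one computes~$\det(\cE)\cong\cO(-2)$, so the morphism~$\psi\colon\cE\vert_{Y_5}\to\cO$ induced by the single~$\bar H$-divisor gives the four-term sequence
\[
0\longrightarrow\cO_{Y_5}(-2)\longrightarrow\cE\vert_{Y_5}\xrightarrow{\ \psi\ }\cO_{Y_5}\longrightarrow\cO_\Gamma\longrightarrow 0,
\]
which is what actually yields~$\deg(\Gamma)=6$, $\g(\Gamma)=1$, and~\eqref{eq:nondegeneracy}. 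With these corrections your outline coincides with the paper's proof; the delicate point you anticipate---that the decomposition of~$\cI_\Gamma$ with respect to the exceptional collection~$(\cO(-2),\cU_2(-1),\cU_2^\perp(-1),\cO(-1))$ on~$Y_5$ reproduces the restriction of~$\cE$, which requires the non-splitting argument of Remark~\ref{rem:ce-unique}---is exactly where the paper spends its effort.
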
 

\begin{proof}
The argument is similar to that of Corollaries~\ref{cor:critical-gr25-p3}, \ref{cor:critical-ogr-q3}, and~\ref{cor:critical-gr26-p8}.

This time~$\bar{X} \subset \LGr(3,6)$ is a complete intersection of three hyperplanes
while~$\Sigma \subset Q^3$ is a hyperplane section,
hence we may assume that two hyperplanes contain~$Q^3$, 
while the last one is dimensionally transverse to it.
Consider the intersection~$\bar{X}' \subset \Bl_{Q^3}(\LGr(3,6))$
of the corresponding two divisors of class~$\bar{H} - \bar{E}$ and a divisor of class~$\bar{H}$.
As in Corollary~\ref{cor:critical-gr25-p3} we conclude that~$\bar{X}'$ coincides with the strict transform of~$\bar{X}$ and
\begin{equation*}
\bar{X}' \cong \Blw{\Sigma}(\bar{X}) \subset \Bl_{Q^3}(\LGr(3,6)) \cong \PP_{Y^5_5}(\cE).
\end{equation*}
In other words, $\Blw{\Sigma}(\bar{X})$ is an irreducible complete intersection, 
and since~$\bar{H} - \bar{E} \sim H_+$ by~\eqref{eq:hp-g9},
the first two divisors define a linear section~$Y_5 \subset Y_5^5$, 
and the last divisor corresponds to a morphism
\begin{equation*}
\psi \colon \cE\vert_{Y_5} \longrightarrow \cO,
\end{equation*}
Now, if~$\Gamma \subset Y_5$ is the zero locus of~$\psi$
then~$\dim(\Gamma) \le 1$ (because~$\Blw{\Sigma}(\bar{X})$ is irreducible)
and applying~\cite[Lemma~2.1]{K16} we obtain an isomorphism~$\Blw{\Sigma}(\bar{X}) \cong \Bl_\Gamma(Y_5)$,
and since the left side is smooth, we conclude that the del Pezzo threefold~$Y_5$ and the curve~$\Gamma$ are smooth.
Finally, computing the determinants we see that~$\Ker(\psi) \cong \cO(-2)$ and deduce an exact sequence
\begin{equation}
\label{eq:res-g61}
0 \longrightarrow 
\cO(-2) \xrightarrow{\ \psi^\vee\ } 
\cE\vert_{Y_5} \xrightarrow{\quad} 
\cO \xrightarrow{\quad} 
\cO_\Gamma \longrightarrow 0,
\end{equation} 
and as before it follows that~$\Gamma$ is connected, $\deg(\Gamma) = 6$, $\g(\Gamma) = 1$, and~\eqref{eq:nondegeneracy} holds.

Conversely, let~$\Gamma \subset Y_5$ be a smooth connected curve 
such that~$\deg(\Gamma) = 6$, $\g(\Gamma) = 1$, and~\eqref{eq:nondegeneracy} holds.
We check that its structure sheaf has a resolution 
\begin{equation}
\label{eq:res-g61-2}
0 \longrightarrow \cO(-2) \oplus \cU_2(-1) \longrightarrow \cO(-1) \oplus 
\cU_2^\perp(-1) \longrightarrow \cO \longrightarrow \cO_\Gamma \longrightarrow
0,
\end{equation}
and that this resolution is equivalent to~\eqref{eq:res-g61}.
For this we decompose~$\cI_\Gamma$ with respect to the exceptional collection~$(\cO(-2),\cU_2(-1),\cU_2^\perp(-1),\cO(-1))$.
To show that the decomposition has the form~\eqref{eq:res-g61-2}, it is enough to check the following
\begin{equation*}
H^\bullet(\cI_\Gamma) = \Bbbk[-2],
\qquad 
H^\bullet(\cU_2^\vee \otimes \cI_\Gamma) = \Bbbk[-1],
\qquad 
H^\bullet(\cI_\Gamma(1)) = \Bbbk.
\end{equation*}
The first is obvious and~\eqref{eq:nondegeneracy} implies that for the last it is enough to check that
\begin{equation*}
\dim H^1(\Gamma, \cO_\Gamma(1)) = 0,
\end{equation*}
which is also obvious.
Similarly, for the second equality it is enough to check that
\begin{equation*}
H^0(Y_5, \cU_2^\vee \otimes \cI_\Gamma) = 0
\qquad\text{and}\qquad 
H^1(Y_5, \cU_2^\vee \otimes \cO_\Gamma) = 0.
\end{equation*}
To prove these vanishings we use the exact sequences
\begin{eqnarray*}
& 0 \longrightarrow \cU_2^\vee \otimes \cI_\Gamma \longrightarrow \cU_2^\vee 
\longrightarrow \cU_2^\vee\vert_\Gamma \longrightarrow 0,
\\
& 0 \longrightarrow \cU_2 \otimes \cO_\Gamma \longrightarrow V_5 \otimes 
\cO_\Gamma \longrightarrow (V_5/\cU_2)\vert_\Gamma \longrightarrow 0.
\end{eqnarray*}
We see that if~$H^0(Y_5, \cU_2^\vee \otimes \cI_\Gamma)$ or~$H^0(Y_5, \cU_2 \otimes \cO_\Gamma)$ is nonzero
then~$\Gamma$ is contained in the zero locus of a section of~$\cU_2^\vee$ or of~$V_5/\cU_2$ on~$Y_5$.
But the former zero locus is a conic and the latter is a point or a line, hence neither can contain~$\Gamma$.
Thus, both these spaces are zero, and Serre duality applied to the second 
implies that~$H^1(Y_5, \cU_2^\vee \otimes \cO_\Gamma)$ also vanishes.

Thus, we obtain the required vanishings, and hence resolution~\eqref{eq:res-g61-2}. 
The argument of Remark~\ref{rem:ce-unique} shows that the cokernel of~$\cU_2(-1) \to \cO(-1) \oplus \cU_2^\perp(-1)$ 
is an extension of~$\cI_L(-1)$ by~$\cO(-1)$, where~$L$ is a line on~$Y_5$.
This extension surjects on~$\cI_\Gamma$, hence it is non-split, 
hence it is isomorphic to the restriction of the bundle~$\cE$ from an appropriate quintic del Pezzo fivefold,
and we obtain~\eqref{eq:res-g61}.

Applying~\cite[Lemma~2.1]{K16} to~\eqref{eq:res-g61}
we conclude that~$\Bl_\Gamma(Y_5)$ is a complete intersection in~$\Bl_{Q^3}(\LGr(3,6))$
of two divisors of class~$\bar{H} - \bar{E}$ and one divisor of class~$\bar{H}$
and there is a linear equivalence~$\bar{H} \sim 2H_+ - E_+$, where~$E_+$ is the exceptional divisor of~$\Bl_\Gamma(Y_5)$.
Therefore, the morphism~$\Bl_\Gamma(Y_5) \to \LGr(3,6)$ is anticanonical, hence small by Lemma~\ref{lem:blg-xplus},
its image~$\bar{X}$ is a linear section of codimension~$3$ 
containing a quadric surface~$\Sigma$ 
dominated by the strict transform of the unique (by~\eqref{eq:nondegeneracy}) 
divisor~$S_\Gamma \subset Y_5$ in~$|H_+ - \Gamma|$,
and hence irreducible.
These two constructions are mutually inverse and define the required bijection.
\end{proof}

\begin{remark}
The above proof shows that 
there is a contraction~$\Bl_\Gamma(S_\Gamma) \to \Sigma$, given by the class~$2H_+ - E_+$,
and if~$S_\Gamma$ is smooth
then~$\Sigma$ is smooth as well,
and the morphism contracts the three $2$-secant lines of~$\Gamma$.
\end{remark}

\begin{corollary}
\label{cor:critical-lgr-y5-sl}
If~$\Gamma \subset X_+$ is a smooth curve on a smooth quintic del Pezzo threefold
with~$\deg(\Gamma) = 6$ and~$\g(\Gamma) = 1$ satisfying~\eqref{eq:nondegeneracy},
there is a Sarkisov link~\eqref{eq:sl-factorial}, where
\begin{itemize}[wide]
\item 
$X$ is a linear section of the Mukai variety~$\rM_{10}$ with a single node or cusp~$x_0 \in X$,
\item 
$\bar{X} \subset \LGr(3,6)$ is a linear section of codimension~$3$ with terminal singularities
containing an irreducible quadric surface~\mbox{$\Sigma \subset \bar{X}$},
\item 
$\pi$ and~$\pi_+$ are the blowups of~$x_0$ and~$\Gamma$, and
\item 
$\xi$ and~$\xi_+$ are the blowups of the Weil divisor classes~$-\Sigma$ and~$\Sigma$, respectively.
\end{itemize}
Moreover, any factorial Fano threefold~$X$ with a single node or cusp, $\uprho(X) = 1$, $\io(X) = 1$, and~$\g(X) = 10$
is a dimensionally transverse linear section of the Mukai variety~$\rM_{10}$.
\end{corollary}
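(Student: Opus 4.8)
The plan is to run the same argument used for Corollaries~\ref{cor:critical-gr25-p3-sl}, \ref{cor:critical-ogr-q3-sl}, and~\ref{cor:critical-lgr-y4-sl}, now feeding in the bijection of Corollary~\ref{cor:critical-lgr-y5} together with the Sarkisov link of the Mukai variety $\rM_{10} = \GtGr(2,7)$ constructed in~\cite[\S2]{KP21}. Two ingredients are needed: the locally free resolution of $\cO_\Gamma$ on $Y_5$ produced in the proof of Corollary~\ref{cor:critical-lgr-y5}, and an explicit description of the blowup $\Bl_{x_0}(\rM_{10})$ as a linear-section ambient space.

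For the first (forward) implication I would start with a smooth curve $\Gamma$ on a smooth quintic del Pezzo threefold $X_+ = Y_5$ with $\deg(\Gamma) = 6$, $\g(\Gamma) = 1$, and~\eqref{eq:nondegeneracy}. The proof of Corollary~\ref{cor:critical-lgr-y5} already gives the resolution~\eqref{eq:res-g61} (equivalently~\eqref{eq:res-g61-2}) of $\cO_\Gamma$. Next I would recall from~\cite[\S2]{KP21} the Sarkisov link~\eqref{eq:sl-rm} for $\rM_{10}$, which identifies $\widehat{\rM}_{10}^+$ with an explicit birational model over $\rM_{10}^+$ (the quintic del Pezzo fivefold $Y_5^5$, compatibly with Proposition~\ref{prop:bl-q-rm9}), together with linear equivalences $H_+ \sim H - 2E$ and $\bar{H} \sim H - E$ in $\Pic(\Bl_{x_0}(\rM_{10})) \cong \Pic(\widehat{\rM}_{10}^+)$. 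Matching the terms of~\eqref{eq:res-g61} with the restrictions of the relevant tautological bundles and applying~\cite[Lemma~2.1]{K16} to the dual of its first map, I expect $\Bl_\Gamma(Y_5)$ to be realized as a dimensionally transverse linear section of $\widehat{\rM}_{10}^+$, i.e. an intersection of divisors in the systems $|H_+| = |H - 2E|$ and $|\bar{H}| = |H - E|$. Then~\eqref{eq:sl-rm} produces a flop from $\Bl_\Gamma(Y_5)$ onto the blowup $\Bl_{x_0}(X)$ of a dimensionally transverse linear section $X \subset \rM_{10}$ which is singular at $x_0$, giving a Sarkisov link of the form~\eqref{eq:sl-factorial}.

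It then remains to identify all the pieces. The midpoint $\bar{X}$ arises from the anticanonical contraction of $\Bl_\Gamma(Y_5)$, which is small by Lemma~\ref{lem:blg-xplus}, so Corollary~\ref{cor:critical-lgr-y5} shows $\bar{X} \subset \LGr(3,6)$ is a linear section of codimension $3$ with terminal singularities containing an irreducible quadric surface $\Sigma$, and $\Bl_\Gamma(Y_5) \cong \Blw{\Sigma}(\bar{X})$. Corollary~\ref{cor:bl-weil-flop} then gives $\Bl_{x_0}(X) \cong \Blw{-\Sigma}(\bar{X})$, and the argument of Theorem~\ref{thm:intro-nf-ci} identifies the strict transform of $\Sigma$ with the exceptional divisor of $\Bl_{x_0}(X)$, forcing $x_0$ to be a node or a generalized cusp. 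Hence $X$ is a Fano threefold with $\uprho(X) = 1$, $\io(X) = 1$, $\g(X) = 10$, and it is factorial by Corollary~\ref{cor:ci-factorial} and Remark~\ref{rem:blowup-nc}. For the ``moreover'' part I would start with an arbitrary factorial Fano threefold $X$ with a single node or cusp, $\uprho(X) = 1$, $\io(X) = 1$, $\g(X) = 10$, apply the first part of Proposition~\ref{prop:sl-g78910} to get a link~\eqref{eq:sl-factorial} whose right end is $\Bl_\Gamma(Y_5)$ for a suitable smooth $\Gamma$; since $\uprho(\Bl_\Gamma(Y_5)) = 2$ the Sarkisov link out of $\Bl_\Gamma(Y_5)$ is unique, so it coincides with the one built above and $X$ is a linear section of $\rM_{10}$. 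The main obstacle will be the middle step: pinning down the precise shape of the link~\eqref{eq:sl-rm} for $\GtGr(2,7)$ and of the model $\widehat{\rM}_{10}^+$ from~\cite{KP21}, and then checking that the terms of~\eqref{eq:res-g61} line up with the restrictions of the bundles so that~\cite[Lemma~2.1]{K16} applies — the $\mathrm{G}_2$-case is the most delicate of the four, both because of the codimension of the section and because of the structure of the rank-$2$ bundle $\cE$ on the quintic del Pezzo fivefold entering Proposition~\ref{prop:bl-q-rm9}.
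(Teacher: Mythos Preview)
Your overall plan matches the paper's, but there is one concrete misidentification that hides the actual work in this case. You write that $\rM_{10}^+$ is the quintic del Pezzo fivefold $Y_5^5$, citing compatibility with Proposition~\ref{prop:bl-q-rm9}. But Proposition~\ref{prop:bl-q-rm9} concerns $\rM_9 = \LGr(3,6)$, not $\rM_{10}$: it describes $\Bl_{Q^3}(\LGr(3,6))$ as a $\PP^1$-bundle over $Y_5^5$, and this is what feeds into the bijection of Corollary~\ref{cor:critical-lgr-y5}. The Sarkisov link~\eqref{eq:sl-rm} for $\rM_{10} = \GtGr(2,7)$ from~\cite[Theorem~2.2 and~\S2.5]{KP21} has instead $\rM_{10}^+ = Y_5^4$ (the \emph{fourfold}) and $\widehat{\rM}_{10}^+ = \PP_{Y_5^4}(\cE_0(-1))$, where $\cE_0$ is the rank~$2$ bundle on $Y_5^4$ produced by Serre's construction from a plane $\Pi_0 = \Gr(2,3) \subset Y_5^4$.

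This means the resolution~\eqref{eq:res-g61}, built from the restriction $\cE\vert_{Y_5}$ of the bundle on the fivefold, does not directly match the relevant bundle on $\widehat{\rM}_{10}^+$, and the ``matching'' step you flag as the obstacle is genuinely nontrivial here. The paper bridges it as follows: since $\Pi \cap Y_5$ is a line $\ell$, the restriction $\cE\vert_{Y_5}$ is (a twist of) the Serre bundle from $\ell$; every line on $Y_5$ is a hyperplane section of a plane $\Pi_0 \subset \Gr(2,5)$, and two of the three linear equations of $Y_5 \subset \Gr(2,5)$ vanish on $\Pi_0$, cutting out a smooth $Y_5^4$ containing $\Pi_0$. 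One then checks $\cE\vert_{Y_5} \cong \cE_0(-1)\vert_{Y_5}$, so~\eqref{eq:res-g61} becomes a resolution of the shape needed over $Y_5^4$, and~\cite[Lemma~2.1]{K16} exhibits $\Bl_\Gamma(Y_5)$ as the intersection in $\widehat{\rM}_{10}^+$ of one divisor in $|H_+| = |H-2E|$ and one in $|\bar{H}| = |H-E|$. After this correction, the remainder of your argument --- Corollary~\ref{cor:critical-lgr-y5} for the midpoint, Corollary~\ref{cor:bl-weil-flop}, the argument of Theorem~\ref{thm:intro-nf-ci}, factoriality via Corollary~\ref{cor:ci-factorial} and Remark~\ref{rem:blowup-nc}, and uniqueness of Sarkisov links for the converse --- goes through exactly as you describe and coincides with the paper.
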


\begin{proof}
Assume~$\Gamma \subset X_+ = Y_5$ is a smooth curve on a smooth quintic del Pezzo threefold 
such that~$\deg(\Gamma) = 6$, $\g(\Gamma) = 1$, and~\eqref{eq:nondegeneracy} holds.
The proof of Corollary~\ref{cor:critical-lgr-y5} shows that~$\cO_\Gamma$ has a resolution of the form~\eqref{eq:res-g61},
where recall from Remark~\ref{rem:ce-unique} that~$\cE$ is the vector bundle 
on a quintic del Pezzo fivefold~$Y_5^5$ containing~$Y_5$
obtained by Serre's construction from the unique 3-space~$\Pi \subset Y^5_5$.
Note that~$\Pi \cap Y_5$ is a line, hence~$\cE\vert_{Y_5}$ is the vector bundle obtained by Serre's construction from a line.
Obviously, each line on~$Y_5$ 
can be represented as a hyperplane section of a plane~$\Pi_0 = \Gr(2,3) \subset \Gr(2,5)$.
Moreover, two (out of three) linear equations of~$Y_5 \subset \Gr(2,5)$ vanish on~$\Pi_0$, 
hence define a smooth quintic del Pezzo fourfold~$Y_5^4 \subset \Gr(2,5)$ containing~$\Pi_0$,
and it follows that~$\cE\vert_{Y_5} \cong \cE_0(-1)\vert_{Y_5}$, 
where~$\cE_0$ is the vector bundle on~$Y_5^4$ obtained by Serre's construction from~$\Pi_0$.
Therefore, $\cO_\Gamma$ also has a resolution 
\begin{equation}
\label{eq:resolution-g84-b}
0 \longrightarrow 
\cO(-2) \xrightarrow{\ \varphi^\vee\ } 
\cE_0(-1)\vert_{Y_5} \xrightarrow{\quad} 
\cO \xrightarrow{\quad} 
\cO_\Gamma \longrightarrow 0.
\end{equation}

On the other hand, recall from~\cite[Theorem~2.2 and~\S2.5]{KP21} that there is a Sarkisov link~\eqref{eq:sl-rm},
where~$\rM_{10}^+ = Y_5^4$, $\widehat{\rM}_{10}^+ = \PP_{Q^4}(\cE_0(-1))$,
and relations
\begin{equation*}
H_+ \sim H - 2E
\qquad\text{and}\qquad 
\bar{H} \sim H - E
\qquad\text{in~$\Pic(\Bl_{x_0}(\rM_{10})) \cong \Pic(\PP_{Y_5^4}(\cE_0(-1)))$,}
\end{equation*}
where~$H_+$ and~$\bar{H}$ denote the hyperplane class of~$Y_5^4$ 
and the relative hyperplane class of~$\PP_{Y_5^4}(\cE_0(-1))$, respectively. 
Note that~$\cE_0^\vee \cong \cE_0$
and the first map in~\eqref{eq:resolution-g84-b} 
is the twisted dual of a morphism~$\varphi \colon \cE_0(-1)\vert_{Y_5} \to \cO$.
Now, applying~\cite[Lemma~2.1]{K16}, we see that~$\Bl_\Gamma(Y_5)$ 
can be identified with a relative hyperplane section of~$\PP_{Y_5}(\cE_0(-1)\vert_{Y_5})$,
hence it is an intersection in~$\widehat{\rM}_{10}^+ = \PP_{Y_5^4}(\cE_0(-1))$ of one divisor 
in the linear system~$|H_+| = |H-2E|$ and one divisor in~$|\bar{H}| = |H-E|$.
The diagram~\eqref{eq:sl-rm} then shows that there is a flop from~$\Bl_\Gamma(Y_5)$ 
onto the blowup~$\Bl_{x_0}(X)$ of a linear section~$X \subset \rM_{10}$ of codimension~$2$ which is singular at~$x_0$
and we obtain a Sarkisov link that has the form~\eqref{eq:sl-factorial}.

Since the midpoint~$\bar{X}$ of the link is obtained from the anticanonical contraction of~$\Bl_\Gamma(Y_5)$,
Corollary~\ref{cor:critical-lgr-y5} shows that~$\bar{X}$ is a linear section of~$\LGr(3,6)$ of codimension~$3$ with terminal singularities
containing an irreducible quadric surface~$\Sigma$ and~$\Bl_\Gamma(Y_5) \cong \Blw{\Sigma}(\bar{X})$.
Then Corollary~\ref{cor:bl-weil-flop} shows that~$\Bl_{x_0}(X) \cong \Blw{-\Sigma}(\bar{X})$.
Finally, the argument of Theorem~\ref{thm:intro-nf-ci} identifies the strict transform of~$\Sigma$
with the exceptional divisor of~$\Bl_{x_0}(X)$, 
and we conclude that~$X$ is 1-nodal or 1-cuspidal.
In particular, $X$ is a Fano threefold with~$\uprho(X) = 1$, $\io(X) = 1$, and~$\g(X) = 10$,
and it is factorial by Corollary~\ref{cor:ci-factorial} and Remark~\ref{rem:blowup-nc}.
This proves the first part of the corollary.

The proof of the second part is analogous to that of Corollary~\ref{cor:critical-gr25-p3-sl}.
\end{proof}

\section{Nonfactorial 1-nodal Fano threefolds in the literature}
\label{app:table}

As we mentioned in the Introduction, 
the nonfactorial 1-nodal Fano threefolds and the corresponding Sarkisov links~\eqref{eq:intro-sl}
already appeared in the literature.
For the readers' convenience we gather some relevant references in the next table:

\begin{table}[H]
\begin{tabular}{l|l|l|l}
\multirow{2}{*}{Type  }&\multicolumn{3}{c}{References}
\\\hhline{~---}
 & \cite{Jahnke-Peternell-Radloff-II}& \cite{Takeuchi:DP}& \multicolumn{1}{c}{Other references}
\\
\noalign{\hrule height 0.1em}
{\ntype{1}{12}{na}} &&&
\cite[Prop.~2.9]{Cutrone-Marshburn},
\cite[Ex.~4.8(iii)]{BL},
\cite[Type~I]{P:V22}
\\
{\ntype{1}{12}{nb}} &
7.7.1&&
\cite[Type~II]{P:V22}
\\
{\ntype{1}{12}{nc}} &
7.4.1&&
\cite[Type~III]{P:V22}
\\
{\ntype{1}{12}{nd}} &&2.13.1&
\cite[Example~2.13]{Yasu},
\cite[Type~IV]{P:V22}
\\
{\ntype{1}{10}{na}} &
7.7.2&&\cite[Ex.~4.8(ii)]{BL}
\\
{\ntype{1}{10}{nb}} &
7.4.2&
2.13.3
\\
\hphantom{$\mathbf{0}$}%
{\ntype{1}{9}{na}} &
7.4.5&
2.13.4
\\
\hphantom{$\mathbf{0}$}%
{\ntype{1}{9}{nb}} &
7.1.4&
2.3.8
\\
\hphantom{$\mathbf{0}$}%
{\ntype{1}{8}{na}} &
7.4.7&
2.11.4
\\
\hphantom{$\mathbf{0}$}%
{\ntype{1}{8}{nb}} &&&
\cite[\S3.4]{P:ratFano2:22}
\\
\hphantom{$\mathbf{0}$}%
{\ntype{1}{7}{n}} &
7.4.8&
2.11.5&\cite[Ex.~4.8(i)]{BL}
\\
\hphantom{$\mathbf{0}$}%
{\ntype{1}{6}{n}} &
7.4.10&
2.9.3&\cite[Example~4.7]{P:ratF-1}
\\
\hphantom{$\mathbf{0}$}%
{\ntype{1}{5}{n}} &
7.2.5&
2.9.4&\cite[Example~4.6]{P:ratF-1}
\\
\hphantom{$\mathbf{0}$}%
{\ntype{1}{4}{n}} &
Prop.~2.7(3)& 2.7.3 &
\cite[\S2]{Jahnke2011},
\cite[Example~4.3]{P:ratF-1}
\\
\hphantom{$\mathbf{0}$}%
{\ntype{1}{2}{n}} &
7.1.17&
2.5.2
\end{tabular}
\end{table}


\begin{thebibliography}{CKGS24}

\bibitem[AF03]{Abe2003}
Makoto Abe and Mikio Furushima.
\newblock {On non-normal del {P}ezzo surfaces}.
\newblock {\em Math. Nachr.}, 260:3--13, 2003.

\bibitem[Bel23]{fanography}
Pieter Belmans.
\newblock Fanography.
\newblock \url{https://fanography.info}, 2023.

\bibitem[BFM20]{BFM}
Chenyu Bai, Baohua Fu, and Laurent Manivel.
\newblock On {F}ano complete intersections in rational homogeneous varieties.
\newblock {\em Math. Z.}, 295(1-2):289--308, 2020.

\bibitem[BL12]{BL}
J\'{e}r\'{e}my Blanc and St\'{e}phane Lamy.
\newblock Weak {F}ano threefolds obtained by blowing-up a space curve and
  construction of {S}arkisov links.
\newblock {\em Proc. Lond. Math. Soc. (3)}, 105(5):1047--1075, 2012.

\bibitem[CKGS24]{CKMS}
Ivan Cheltsov, Igor Krylov, Jesus~Martinez Garcia, and Evgeny Shinder.
\newblock On maximally non-factorial nodal {F}ano threefolds.
\newblock {\em Rev. Mat. Iberoam.}, 40(5):1781--1798, 2024.

\bibitem[CLS11]{CLS}
David~A. Cox, John~B. Little, and Henry~K. Schenck.
\newblock {\em Toric varieties}, volume 124 of {\em Graduate Studies in
  Mathematics}.
\newblock American Mathematical Society, Providence, RI, 2011.

\bibitem[CM13]{Cutrone-Marshburn}
J.~W. Cutrone and N.~A Marshburn.
\newblock Towards the classification of weak {F}ano threefolds with {$\rho =
  2$}.
\newblock {\em Cent. Eur. J. Math.}, 11(9):1552--1576, 2013.

\bibitem[DIM11]{DIM}
Olivier Debarre, Atanas Iliev, and Laurent Manivel.
\newblock On nodal prime {F}ano threefolds of degree 10.
\newblock {\em Sci. China Math.}, 54(8):1591--1609, 2011.

\bibitem[DK18]{DK1}
Olivier Debarre and Alexander Kuznetsov.
\newblock Gushel-{M}ukai varieties: classification and birationalities.
\newblock {\em Algebr. Geom.}, 5(1):15--76, 2018.

\bibitem[EH87]{EisenbudHarris:VMD}
David Eisenbud and Joe Harris.
\newblock On varieties of minimal degree. {(A centennial account)}.
\newblock In {\em {Algebraic geometry, Bowdoin, 1985 (Brunswick, Maine, 1985),
  part 1}}, volume~46 of {\em {Proc. Sympos. Pure Math.}}, pages 3--13. Amer.
  Math. Soc., Providence, RI, 1987.

\bibitem[Fri91]{Friedman91}
Robert Friedman.
\newblock On threefolds with trivial canonical bundle.
\newblock In {\em Complex geometry and {L}ie theory ({S}undance, {UT}, 1989)},
  volume~53 of {\em Proc. Sympos. Pure Math.}, pages 103--134. Amer. Math.
  Soc., Providence, RI, 1991.

\bibitem[Fuj75]{Fujita1975}
Takao Fujita.
\newblock On the structure of polarized varieties with {$\Delta $}-genera zero.
\newblock {\em J. Fac. Sci. Univ. Tokyo Sect. IA Math.}, 22:103--115, 1975.

\bibitem[GLP83]{GLP}
L.~Gruson, R.~Lazarsfeld, and C.~Peskine.
\newblock On a theorem of {C}astelnuovo, and the equations defining space
  curves.
\newblock {\em Invent. Math.}, 72(3):491--506, 1983.

\bibitem[IP99]{IP99}
V.~A. Iskovskikh and Yu. Prokhorov.
\newblock {\em Fano varieties. {A}lgebraic geometry {V}}, volume~47 of {\em
  {Encyclopaedia Math. Sci.}}
\newblock Springer, Berlin, 1999.

\bibitem[Isk77]{Iskovskih1977}
V.~A. Iskovskikh.
\newblock Fano threefolds. {I}.
\newblock {\em Izv. Akad. Nauk SSSR Ser. Mat.}, 41(3):516--562, 717, 1977.

\bibitem[Isk78]{Isk:Fano2e}
V.~A. Iskovskikh.
\newblock Fano threefolds. {II}.
\newblock {\em Izv. Akad. Nauk SSSR Ser. Mat.}, 42(3):506--549, 1978.

\bibitem[Isk80]{Isk:Anti}
V.~A. Iskovskikh.
\newblock Anticanonical models of three-dimensional algebraic varieties.
\newblock {\em J. Sov. Math.}, 13:745--814, 1980.

\bibitem[JPR11]{Jahnke-Peternell-Radloff-II}
Priska Jahnke, Thomas Peternell, and Ivo Radloff.
\newblock {Threefolds with big and nef anticanonical bundles {II}}.
\newblock {\em Cent. Eur. J. Math.}, 9(3):449--488, 2011.

\bibitem[JR06]{Jahnke2006}
Priska Jahnke and Ivo Radloff.
\newblock Gorenstein {F}ano threefolds with base points in the anticanonical
  system.
\newblock {\em Compos. Math.}, 142(2):422--432, 2006.

\bibitem[JR11]{Jahnke2011}
Priska Jahnke and Ivo Radloff.
\newblock {Terminal {F}ano threefolds and their smoothings}.
\newblock {\em Math. Z.}, 269(3-4):1129--1136, 2011.

\bibitem[Kaw88]{Kaw88}
Yujiro Kawamata.
\newblock Crepant blowing-up of {$3$}-dimensional canonical singularities and
  its application to degenerations of surfaces.
\newblock {\em Ann. of Math. (2)}, 127(1):93--163, 1988.

\bibitem[Kle05]{Kleiman}
Steven~L. Kleiman.
\newblock The {P}icard scheme.
\newblock In {\em Fundamental algebraic geometry}, volume 123 of {\em Math.
  Surveys Monogr.}, pages 235--321. Amer. Math. Soc., Providence, RI, 2005.

\bibitem[KM98]{Kollar-Mori:book}
J\'{a}nos Koll\'{a}r and Shigefumi Mori.
\newblock {\em Birational geometry of algebraic varieties}, volume 134 of {\em
  Cambridge Tracts in Mathematics}.
\newblock Cambridge University Press, Cambridge, 1998.
\newblock With the collaboration of C. H. Clemens and A. Corti, Translated from
  the 1998 Japanese original.

\bibitem[KO73]{Kobayashi1973}
Shoshichi Kobayashi and Takushiro Ochiai.
\newblock {Characterizations of complex projective spaces and hyperquadrics}.
\newblock {\em J. Math. Kyoto Univ.}, 13:31--47, 1973.

\bibitem[Kol89]{Kollar:flops}
J{\'a}nos Koll{\'a}r.
\newblock Flops.
\newblock {\em Nagoya Math. J.}, 113:15--36, 1989.

\bibitem[KP18]{KP18}
Alexander Kuznetsov and Alexander Perry.
\newblock Derived categories of {G}ushel-{M}ukai varieties.
\newblock {\em Compos. Math.}, 154(7):1362--1406, 2018.

\bibitem[KP21]{KP21}
Alexander Kuznetsov and Yuri Prokhorov.
\newblock Rationality of {M}ukai varieties over non-closed fields.
\newblock In {\em Rationality of varieties}, volume 342 of {\em Progr. Math.},
  pages 249--290. Birkh\"{a}user/Springer, Cham, 2021.

\bibitem[KP23]{KP23}
Alexander Kuznetsov and Yuri Prokhorov.
\newblock On higher-dimensional del {P}ezzo varieties.
\newblock {\em Izv. RAN. Ser. Mat.}, 87(3):75--148, 2023.

\bibitem[KPS18]{KPS}
Alexander Kuznetsov, Yuri Prokhorov, and Constantin Shramov.
\newblock Hilbert schemes of lines and conics and automorphism groups of {F}ano
  threefolds.
\newblock {\em Japanese J. Math.}, 13(1):109--185, 2018.

\bibitem[KS23]{KS23}
Alexander Kuznetsov and Evgeny Shinder.
\newblock Derived categories of {F}ano threefolds and degenerations.
\newblock {\em arXiv:2305.17213}, 2023.

\bibitem[Kuz16]{K16}
Alexander Kuznetsov.
\newblock K\"{u}chle fivefolds of type c5.
\newblock {\em Math. Z.}, 284(3-4):1245--1278, 2016.

\bibitem[Kuz18]{K18:spinor}
A.~G. Kuznetsov.
\newblock On linear sections of the spinor tenfold. {I}.
\newblock {\em Izv. Ross. Akad. Nauk Ser. Mat.}, 82(4):53--114, 2018.

\bibitem[MM83]{Mori-Mukai1983}
Shigefumi Mori and Shigeru Mukai.
\newblock On {F}ano {$3$}-folds with {$B\sb{2}\geq 2$}.
\newblock In {\em {Algebraic varieties and analytic varieties (Tokyo, 1981)}},
  volume~1 of {\em {Adv. Stud. Pure Math.}}, pages 101--129. North-Holland,
  Amsterdam, 1983.

\bibitem[MM86]{MoriMukai:86}
S.~Mori and S.~Mukai.
\newblock {Classifications of {F}ano $3$-folds with {$B_2\ge 2$}{,} {I}}.
\newblock In {\em {Algebraic and topological theories. Papers from the
  symposium dedicated to the memory of Dr. Takehiko Miyata held in Kinosaki,
  October 30 -- November 9, 1984}}, pages 496--545. Kinokuniya, Tokyo, 1986.

\bibitem[MM82]{Mori-Mukai:MM}
Shigefumi Mori and Shigeru Mukai.
\newblock Classification of {F}ano {$3$}-folds with {$B\sb{2}\geq 2$}.
\newblock {\em Manuscripta Math.}, 36(2):147--162, 1981/82.
\newblock Erratum: {M}anuscripta {M}ath. 110 (2003), 407.

\bibitem[Mor75]{Mori:WPS}
Shigefumi Mori.
\newblock On a generalization of complete intersections.
\newblock {\em J. Math. Kyoto Univ.}, 15(3):619--646, 1975.

\bibitem[Mor82]{Mori1982}
Shigefumi Mori.
\newblock Threefolds whose canonical bundles are not numerically effective.
\newblock {\em Ann. Math. (2)}, 116:133--176, 1982.

\bibitem[Muk92]{Muk92}
Shigeru Mukai.
\newblock Fano {$3$}-folds.
\newblock In {\em Complex projective geometry ({T}rieste, 1989/{B}ergen,
  1989)}, volume 179 of {\em London Math. Soc. Lecture Note Ser.}, pages
  255--263. Cambridge Univ. Press, Cambridge, 1992.

\bibitem[Muk22]{Muk22}
Shigeru Mukai.
\newblock Curves and symmetric spaces {III}: {BN}-special vs. 1-{PS}
  degeneration.
\newblock {\em Proc. Indian Acad. Sci. Math. Sci.}, 132(2):Paper No. 57, 9,
  2022.

\bibitem[Nag60]{Nagata:RarSurf1}
Masayoshi Nagata.
\newblock On rational surfaces. {I}. {I}rreducible curves of arithmetic genus
  {$0$}\ or {$1$}.
\newblock {\em Mem. Coll. Sci. Univ. Kyoto Ser. A Math.}, 32:351--370, 1960.

\bibitem[Nam97]{Na97}
Yoshinori Namikawa.
\newblock Smoothing {F}ano {$3$}-folds.
\newblock {\em J. Algebraic Geom.}, 6(2):307--324, 1997.

\bibitem[Ott88]{Ottaviani}
Giorgio Ottaviani.
\newblock Spinor bundles on quadrics.
\newblock {\em Trans. Amer. Math. Soc.}, 307(1):301--316, 1988.

\bibitem[PCS05]{Przhiyalkovskij-Cheltsov-Shramov}
V.~V. Przhiyalkovskij, I.~A. Chel'tsov, and K.~A. Shramov.
\newblock Hyperelliptic and trigonal {F}ano threefolds.
\newblock {\em Izv. Math.}, 69(2):365--421, 2005.

\bibitem[PR04]{PR04}
Stavros~Argyrios Papadakis and Miles Reid.
\newblock Kustin-{M}iller unprojection without complexes.
\newblock {\em J. Algebraic Geom.}, 13(3):563--577, 2004.

\bibitem[Pro15]{P:G-Fano:Izv}
Yu. Prokhorov.
\newblock {On {G}-{F}ano threefolds}.
\newblock {\em Izvestiya: Math}, 79(4):795--808, 2015.

\bibitem[Pro16]{P:V22}
Yu. Prokhorov.
\newblock Singular {F}ano threefolds of genus $12$.
\newblock {\em Sbornik: Math.}, 207(7):983--1009, 2016.

\bibitem[Pro17]{Prokhorov2017}
Yuri. Prokhorov.
\newblock On the number of singular points of terminal factorial {F}ano
  threefolds.
\newblock {\em Math. Notes}, 101(5-6):1068--1073, 2017.

\bibitem[Pro19]{P:ratF-1}
Yuri Prokhorov.
\newblock Rationality of {F}ano threefolds with terminal {G}orenstein
  singularities, {I}.
\newblock {\em Proc. Steklov Inst. Math.}, 307:210--231, 2019.

\bibitem[Pro22]{P:ratFano2:22}
Yuri Prokhorov.
\newblock Rationality of {Fano} threefolds with terminal {Gorenstein}
  singularities, {II}.
\newblock {\em Rendiconti del Circolo Matematico di Palermo Series 2}, June
  2022.
\newblock Special issue ``Rationality problems''.

\bibitem[Ram08]{Rams}
S{\l}awomir Rams.
\newblock Defect and {H}odge numbers of hypersurfaces.
\newblock {\em Adv. Geom.}, 8(2):257--288, 2008.

\bibitem[Rei80]{Reid:can3fo}
Miles Reid.
\newblock Canonical {$3$}-folds.
\newblock In {\em Journ\'{e}es de {G}\'{e}ometrie {A}lg\'{e}brique d'{A}ngers,
  {J}uillet 1979/{A}lgebraic {G}eometry, {A}ngers, 1979}, pages 273--310.
  Sijthoff \& Noordhoff, Alphen aan den Rijn---Germantown, Md., 1980.

\bibitem[Rei83]{Reid:MM}
Miles Reid.
\newblock Minimal models of canonical {$3$}-folds.
\newblock In {\em Algebraic varieties and analytic varieties ({T}okyo, 1981)},
  volume~1 of {\em Adv. Stud. Pure Math.}, pages 131--180. North-Holland,
  Amsterdam, 1983.

\bibitem[Rei94]{Reid:dP94}
Miles Reid.
\newblock Nonnormal del {P}ezzo surfaces.
\newblock {\em Publ. Res. Inst. Math. Sci.}, 30(5):695--727, 1994.

\bibitem[SD74]{SaintDonat1974}
B.~Saint-Donat.
\newblock Projective models of {$K3$} surfaces.
\newblock {\em Amer. J. Math.}, 96:602--639, 1974.

\bibitem[Shi89]{Shin1989}
Kil-Ho Shin.
\newblock {$3$}-dimensional {F}ano varieties with canonical singularities.
\newblock {\em Tokyo J. Math.}, 12(2):375--385, 1989.

\bibitem[{Sta}18]{stacks-project}
The {Stacks Project Authors}.
\newblock \textit{Stacks Project}.
\newblock \url{https://stacks.math.columbia.edu}, 2018.

\bibitem[Tak22]{Takeuchi:DP}
Kiyohiko Takeuchi.
\newblock Weak {Fano} threefolds with del {Pezzo} fibration.
\newblock {\em Eur. J. Math.}, 8(3):1225--1290, 2022.

\bibitem[Yas12]{Yasu}
Kazunori Yasutake.
\newblock On the classification of rank 2 almost {F}ano bundles on projective
  space.
\newblock {\em Adv. Geom.}, 12(2):353--363, 2012.

\end{thebibliography}
\end{document}